\newcommand{\uline}[1]{{\hbox to 0pt{\underline{\hphantom{{#1}}}\hss}{#1}}}
\newtheorem{theorem}{Theorem}[chapter]
\newtheorem{lemma}[theorem]{Lemma}
\newtheorem{remark}[theorem]{Remark}
\newtheorem{corollary}[theorem]{Corollary}
\newtheorem{proposition}[theorem]{Proposition}
\theoremstyle{definition}
\newtheorem{definition}[theorem]{Definition}
\newcommand{\QQ}{\mathbb Q}
\newcommand{\CC}{\mathbb C}
\newcommand{\ZZ}{\mathbb Z}
\newcommand{\NN}{\mathbb N}
\newcommand{\PP}{\mathbf P}
\newcommand{\OO}{\mathcal O}
\newcommand{\X}{\mathfrak X}
\newcommand{\6}{\sigma}
\newcommand{\sck}{{\sigma^\vee}}
\newcommand{\Pol}{\mathop{\mathrm{Pol}}}
\newcommand{\Div}{\mathop{\mathrm{CaDiv}}}
\newcommand{\Der}{\mathop{\mathrm{Der}}}
\newcommand{\Hom}{\mathop{\mathrm{Hom}}\nolimits}
\newcommand{\Tor}{\mathop{\mathrm{Tor}}\nolimits}
\newcommand{\coker}{\mathop{\mathrm{coker}}}
\newcommand{\im}{\mathop{\mathrm{im}}}
\newcommand{\tail}{\mathop{\mathrm{tail}}}
\newcommand{\eval}{\mathop{\mathrm{eval}}\nolimits}
\newcommand{\codim}{\mathop{\mathrm{codim}}\nolimits}
\newcommand{\ord}{\mathop{\mathrm{ord}}\nolimits}
\newcommand{\orb}{\mathop{\mathrm{orb}}\nolimits}
\renewcommand{\div}{\mathop{\mathrm{div}}\nolimits}
\newcommand{\inv}{\mathrm{inv}}
\newcommand{\id}{\mathrm{id}}
\newcommand{\Spec}{\mathop{\mathrm{Spec}}}
\newcommand{\Span}{\mathop{\mathrm{Span}}\nolimits}
\newcommand{\vfield}[1]{{\partial/\partial{#1}}}
\newcommand{\rawdivisorbasis}[1]{e_{#1}}
\newcommand{\olrawdivisorbasis}[1]{\overline{e}_{#1}}
\newcommand{\wtrawdivisorbasis}[1]{\widetilde{e}_{#1}}
\newcommand{\rawdivisortransition}[1]{c_{#1}}
\newcommand{\abstractdependentgenerator}[1]{x_{#1}}
\newcommand{\abstractindependentgenerator}[1]{\check{x}_{#1}}
\newcommand{\dependentgeneratorsdegree}[1]{\mathbf x_{#1}}
\newcommand{\oldependentgeneratorsdegree}[1]{\overline{\mathbf x}_{#1}}
\newcommand{\wtdependentgeneratorsdegree}[1]{\widetilde{\mathbf x}_{#1}}
\newcommand{\stdpolyhedronletter}{\Delta}
\newcommand{\stdprimitivepolyhedronletter}{\Xi}
\newcommand{\totalpolyhedronletter}{\overline{\mathbf\Delta}}
\newcommand{\stdvectorfiledxletter}{w}
\newcommand{\stdvectorfiledpletter}{v}
\newcommand{\normalvertexcone}[2]{\mathcal N(#1,#2)}
\newcommand{\primitivelattice}[1]{\mathbf b(#1)}
\newcommand{\latticelength}[1]{|{#1}|}
\newcommand{\sckboundarybasis}[1]{\alpha_{#1}}
\newcommand{\uidegree}[1]{\beta_{#1}}
\newcommand{\uithreadfunction}[1]{h_{#1}}
\newcommand{\oluithreadfunction}[1]{\overline{h}_{#1}}
\newcommand{\wtuithreadfunction}[1]{\widetilde{h}_{#1}}
\newcommand{\uismalltransition}[1]{C^\circ_{#1}}
\newcommand{\uilargetransition}[1]{C_{#1}}
\newcommand{\uidescriptionfunction}[1]{g_{#1}}
\newcommand{\uidescriptionfunctiondiff}[2]{g[#1]_{#2}}
\newcommand{\abstractvectorspace}{\nabla}
\newcommand{\stdpolynomialcoefficient}{a}
\newcommand{\stdpolynomialroot}{b}
\newcommand{\stdpolynomialrootmultiplicity}{c}
\newcommand{\coefficientspace}{V}
\newcommand{\defparpolynomial}{P}
\newcommand{\stdflatmorphism}{\xi}
\newcommand{\bigflatmorphism}{\boldsymbol{\xi}}
\newcommand{\bigtotalspace}{\mathbf S}
\newcommand{\stdbirationalgenerator}{y}
\newcommand{\stdbirationalmap}{b}
\newcommand{\numberofabstractgenerators}{n}
\newcommand{\numberoffixedgenerators}{\mathbf{n}}
\newcommand{\numberoflatticegenerators}{\mathbf{m}}
\newcommand{\numberofabstractopensets}{q}
\newcommand{\numberoffixedopensets}{\mathbf{q}}
\newcommand{\numberofdivisorpoints}{\mathbf{r}}
\newcommand{\numberofessentialdivisorpoints}{\mathbf{r}'}
\newcommand{\numberofprimitivepolyhedra}{\mathbf{R}}
\newcommand{\primitivepolyhedronmultiplicity}[1]{k_{#1}}
\newcommand{\primitivepolyhedronlocalmultiplicity}[2]{n_{#1,#2}}
\newcommand{\shiftstartingpoint}[2]{\ell_{#1,#2}}
\newcommand{\numberofvertices}[1]{\mathbf v(#1)}
\newcommand{\numberofverticespt}[1]{\mathbf v_{#1}}
\newcommand{\numberofverticesptdiff}[1]{\mathbf v'_{#1}}
\newcommand{\indexedvertex}[2]{\mathbf V_{#2}(#1)}
\newcommand{\indexedvertexpt}[2]{\mathbf V_{#1,#2}}
\newcommand{\indexededge}[2]{\mathbf E_{#2}(#1)}
\newcommand{\indexededgept}[2]{\mathbf E_{#1,#2}}
\newcommand{\opensetforvertex}[1]{\mathbf i_{#1}}
\newcommand{\twotorusdelimiter}{\chi_0}
\newcommand{\numberofedges}[1]{\mathbf e(#1)}
\newcommand{\numberofpositiveedges}[1]{\mathbf e^+(#1)}
\newcommand{\numberofnegativeedges}[1]{\mathbf e^-(#1)}
\newcommand{\indexedpositiveedge}[2]{\mathbf E^+_{#2}(#1)}
\newcommand{\indexednegativeedge}[2]{\mathbf E^-_{#2}(#1)}
\newcommand{\indexedfacet}[2]{\mathbf F_{#2}(#1)}
\newcommand{\indexedpositivefacet}[2]{\mathbf F^+_{#2}(#1)}
\newcommand{\indexednegativefacet}[2]{\mathbf F^-_{#2}(#1)}
\newcommand{\giinv}{{\mathcal G_{0,\Theta}^\inv}}
\newcommand{\givinv}{{\mathcal G_{1,\Theta,0}^\inv}}
\newcommand{\gvinv}{{\mathcal G_{0,\OO}^\inv}}
\newcommand{\gviiiinv}{{\mathcal G_{1,\OO,0}^\inv}}
\newcommand{\gvinvl}[1]{{\mathcal G_{0,\OO,#1}^\inv}}
\newcommand{\gviiiinvl}[1]{{\mathcal G_{1,\OO,0,#1}^\inv}}
\newcommand{\gi}{{\mathcal G_{0,\Theta}}}
\newcommand{\giiinv}{{\mathcal G_{1,\Theta,1}^\inv}}
\newcommand{\giipinv}{{\mathcal G_{1,\Theta,1}'^\inv}}
\newcommand{\giippinv}{{\mathcal G_{1,\Theta,1}''^\inv}}
\newcommand{\gii}{{\mathcal G_{1,\Theta,1}}}
\newcommand{\giip}{{\mathcal G_{1,\Theta,1}'}}
\newcommand{\giipp}{{\mathcal G_{1,\Theta,1}''}}
\newcommand{\giii}{{\mathcal G_{1,\Theta,2}}}
\newcommand{\giv}{{\mathcal G_{1,\Theta,0}}}
\newcommand{\gvl}[1]{{\mathcal G_{0,\OO,#1}}}
\newcommand{\gviinvl}[1]{{\mathcal G_{1,\OO,1,#1}^\inv}}
\newcommand{\gvipinvl}[1]{{\mathcal G_{1,\OO,1,#1}'^\inv}}
\newcommand{\gvippinvl}[1]{{\mathcal G_{1,\OO,1,#1}''^\inv}}
\newcommand{\gviinv}{{\mathcal G_{1,\OO,1}^\inv}}
\newcommand{\gvipinv}{{\mathcal G_{1,\OO,1}'^\inv}}
\newcommand{\gvippinv}{{\mathcal G_{1,\OO,1}''^\inv}}
\newcommand{\gvil}[1]{{\mathcal G_{1,\OO,1,#1}}}
\newcommand{\gvipl}[1]{{\mathcal G_{1,\OO,1,#1}'}}
\newcommand{\gvippl}[1]{{\mathcal G_{1,\OO,1,#1}''}}
\newcommand{\gviil}[1]{{\mathcal G_{1,\OO,2,#1}}}
\newcommand{\gviiil}[1]{{\mathcal G_{1,\OO,0,#1}}}
\newcommand{\gv}{{\mathcal G_{0,\OO}}}
\newcommand{\gviii}{{\mathcal G_{1,\OO,0}}}
\newcommand{\gvi}{{\mathcal G_{1,\OO,1}}}
\newcommand{\gvip}{{\mathcal G_{1,\OO,1}'}}
\newcommand{\gvipp}{{\mathcal G_{1,\OO,1}''}}
\newcommand{\giisi}[1]{{\mathcal G_{1,\Theta,1,#1}}}
\newcommand{\giipsij}[1]{{\mathcal G_{1,\Theta,1,#1}}}
\newcommand{\giippsijk}[1]{{\mathcal G_{1,\Theta,1,#1}}}
\newcommand{\giisiinv}[1]{{\mathcal G_{1,\Theta,1,#1}^\inv}}
\newcommand{\giipsijinv}[1]{{\mathcal G_{1,\Theta,1,#1}^\inv}}
\newcommand{\giippsijkinv}[1]{{\mathcal G_{1,\Theta,1,#1}^\inv}}
\newcommand{\gvisi}[1]{{\mathcal G_{1,\OO,1,#1}}}
\newcommand{\gvipsij}[1]{{\mathcal G_{1,\OO,1,#1}}}
\newcommand{\gvippsijk}[1]{{\mathcal G_{1,\OO,1,#1}}}
\newcommand{\gvisiinv}[1]{{\mathcal G_{1,\OO,1,#1}^\inv}}
\newcommand{\gvipsijinv}[1]{{\mathcal G_{1,\OO,1,#1}^\inv}}
\newcommand{\gvippsijkinv}[1]{{\mathcal G_{1,\OO,1,#1}^\inv}}
\newcommand{\giicircinv}{{\mathcal G_{1,\Theta,1}^{\circ\inv}}}
\newcommand{\gvicircinv}{{\mathcal G_{1,\OO,1}^{\circ\inv}}}
\newcommand{\gsi}[1]{{G_{0,\Theta,#1}}}
\newcommand{\gsii}[1]{{G_{1,\Theta,1,#1}}}
\newcommand{\gsiicirc}[1]{{G_{1,\Theta,1}^{\circ #1}}}
\newcommand{\gsiicircgen}[1]{{G_{1,\Theta,1}^{\circ #1}}}
\newcommand{\gsiicircgengen}{{G_{1,\Theta,1}^\circ}}
\newcommand{\gsvi}[1]{{G_{1,\OO,1,#1}}}
\newcommand{\gsvicirc}[1]{{G_{1,\OO,1}^{\circ #1}}}
\begin{document}
\title{Equivariant deformations of algebraic varieties with an action of an algebraic torus of complexity 1}
\author{Rostislav Devyatov}
\date{}
\maketitle

\begin{abstract}
Let $X$ be a 3-dimensional affine variety with a faithful action of a 2-dimensional
torus $T$. Then the space of first order infinitesimal deformations $T^1(X)$ is graded by the 
characters of $T$, and the zeroth graded component $T^1(X)_0$ consists of
all equivariant first order (infinitesimal) deformations.

Suppose that using the construction of such varieties from \cite{ahausen},
one can obtain $X$ from a proper polyhedral divisor $\mathcal D$ on $\mathbb P^1$ such that 
the tail cone of (any of) the used polyhedra is pointed and full-dimensional, 
and all vertices of all polyhedra are lattice points. Then we compute 
$\dim T^1(X)_0$ and find a formally versal equivariant deformation of $X$. 
We also establish a connection between our formula for 
$\dim T^1(X)_0$ and known formulas for the dimensions of the graded
components of $T^1$ of toric varieties.
\end{abstract}

\tableofcontents

\chapter{Introduction}
\section{T-varieties}
As proved and explained in \cite{ahausen}, normal affine varieties of dimension $d$ 
with a faithful action of a $k$-dimensional torus $T$ 
(which are called T-varieties in the sequel) are described by 
so-called proper polyhedral divisors. To define them, 
consider the character lattice $M=\X(T)$, the rational character lattice $M_\QQ=M\otimes_\ZZ\QQ$, 
the dual character lattice $N=\Hom_\ZZ(M,\ZZ)$, and the dual vector space (the dual rational character lattice) 
$N_\QQ=M_\QQ^*$.

A \textit{polyhedron} in a $\QQ$-vector space is the \underline{\smash{nonempty}} intersection of finitely many closed affine half-spaces.
A particular case of a polyhedron is a polyhedral cone, a polyhedron is called a \textit{polyhedral cone}
if it can be obtained as the intersection of finitely many closed \underline{linear} half-spaces, i.~e. 
the boundary of all these half-spaces should contain the origin.
If $\stdpolyhedronletter$ is a polyhedron in a $\QQ$-vector space $V$, its \textit{tail cone}
is defined as the set of vectors $v\in V$ such that for all $a\in\stdpolyhedronletter$
one has $v+a\in \stdpolyhedronletter$. It is denoted by $\tail(\stdpolyhedronletter)$. 
Fig.~\ref{figpolytrivial} shows an example of a polyhedron and of its tail cone.

\begin{figure}[!h]
\begin{center}
\begin{tabular}{cc}
\hspace*{1em}\includegraphics{t1_3fold_figures-1.mps}\hspace*{1em}&\hspace*{1em}\includegraphics{t1_3fold_figures-2.mps}\hspace*{1em}\\[\medskipamount]
(a)&(b)
\end{tabular}
\end{center}
\caption{An example of (a) a polyhedron and (b) its tail cone.}
\label{figpolytrivial}
\end{figure}

All polyhedra in a given $\QQ$-vector space $V$ with a given tail cone $\6$ form 
a semigroup with the operation of Minkowski addition. $\6$ is the neutral element. 
Denote the Grothendick construction
for this semigroup by $\Pol_\6(V)$.

The next object we need to define to study $T$-varieties is a polyhedral divisor. Suppose that we have a normal variety $Y$.
A polyhedral divisor $\mathcal D$ is an element 
of the group $\Pol_\6(N)\otimes_\QQ\Div_\QQ(Y)$, where $\Div_\QQ$ 
is the group of $\QQ$-Cartier divisors.

Now we can say that a T-variety is determined by the following data:
\begin{enumerate}
\item A $(d-k)$-dimensional normal (not necessarily affine) variety $Y$.
\item A pointed cone $\6$ in the rational dual character lattice $N_\QQ=\X(T)^*_\QQ$.
\item A proper (see definition below, in Section \ref{tvarsreal}) polyhedral divisor $\mathcal D$.
\end{enumerate}

As we said, the definition of properness in the whole generality will be given later, 
but in the case we will need it now, namely when $Y=\PP^1$, it is easy to formulate an equivalent 
condition for properness. Namely, 
the polyhedral divisor $\mathcal D$ on $\PP^1$
is proper if and only if it can be written in the form 
$$\mathcal D=\sum_{i=1}^{\numberofdivisorpoints} p_i\otimes \stdpolyhedronletter_{p_i},$$
where $p_i\in\PP^1$ are points, and $\stdpolyhedronletter_{p_i}$ are \textit{polyhedra} 
(they should be "genuine" polyhedra, not elements of the Grothendick group), 
and the Minkowski sum of all polyhedra $\stdpolyhedronletter_{p_i}$
is strictly contained in $\6$.

The construction of a $T$-variety out of these data will be given in Section \ref{tvarsreal}.

\section{Deformations and first order deformations}

For a general reference on deformation theory, see \cite{hartdef}.

In general, a \textit{deformation} of a variety $X$ with a scheme $Z$ with a marked point $z\in Z$ being the 
\textit{parameter space} of the deformation is a flat morphism $\stdflatmorphism\colon Y\to Z$, where $Y$ is a scheme, together with 
an isomorphism $\iota$ between $X$ and $\stdflatmorphism^{-1}(z)$. 
Two deformations $(\stdflatmorphism\colon Y\to Z, \iota\colon X\to \stdflatmorphism^{-1}(z))$ and 
$(\stdflatmorphism'\colon Y'\to Z, \iota'\colon X\to \stdflatmorphism'^{-1}(z))$ with the same parameter space $Z$ 
and the same marked point $z$ are called equivalent if there exists an isomorphism $q\colon Z\to Z'$
such that $\stdflatmorphism=\stdflatmorphism'q$ and $q|_{\stdflatmorphism^{-1}(z)}\iota=\iota'$.

A deformation $(\stdflatmorphism\colon Y\to Z, \iota\colon X\to \stdflatmorphism^{-1}(z))$ 
with a torus action $T:Z$ is called \textit{equivariant}
if $\iota$ is $T$-equivariant and $\stdflatmorphism$ is $T$-invariant.

If $Z$ is the double point, i.~e. $Z=\Spec(\CC[\varepsilon]/\varepsilon^2)$,
and $X$ is affine ($X=\Spec A$), then the set of all possible deformations is denoted by $T^1(X)$, and 
one can define an $A$-module structure on it. See Section \ref{deformsreal} for details.

Deformations can be pulled back from one parameter space to another using fiber product.
In particular, if we have a vector space $Z$, then each tangent vector at the marked point
defines an embedding of the double point into $Z$. We can pullback the deformation from $Z$ 
to the double point and get an element of $T^1(X)$. So we get a map from the tangent space at the marked point 
to $T^1(X)$. In fact, this map is linear. It is called the Kodaira-Spencer map and is an important characteristic of 
the original deformation.

If $M$ is a lattice, and $A$ is an $M$-graded algebra, then $T^1(X)$ 
actually becomes a \textit{graded} $A$-module. Moreover, if $M=\mathfrak X(T)$, 
then an $M$-grading on $A$ is equivalent to a torus action on $\Spec A$, 
and the graded component of $T^1(X)$ of degree zero (it will be further denoted by $T^1(X)_0$)
contains exactly the equivariant deformations. (More precisely, $T^1(X)_0$
contains the set of deformations that can be made equivariant by the appropriate 
choice of a $T$-action on $Z$, but such a choice is unique up to an isomorphism of the deformation.)

\section{Problem setup and main results}\label{probsetup}
We start with a two-dimensional torus $T$. We choose a two-dimensional pointed cone $\6\subset N_{\QQ}$. 
Then we fix a proper polyhedral divisor $\mathcal D$ on $\PP^1$, where
$$\mathcal D=\sum_{i=1}^{\numberofdivisorpoints} p_i\otimes \stdpolyhedronletter_{p_i},$$
$p_i\in\PP^1$ are arbitrary points, and $\stdpolyhedronletter_{p_i}$ are (nonempty) polyhedra with tail cone $\6$.
Additionally, we suppose that all vertices of all polyhedra $\stdpolyhedronletter_{p_i}$ are 
lattice points.\footnote{The $T$-varieties obtained from 
polyhedral divisors without this "lattice point" condition can be obtained from the $T$-varieties under consideration 
by taking the quotient modulo a finite group action.}
In this case all divisors $\mathcal D(\chi)$ are integral, not rational.
The properness condition in this case 
means that the Minkowski sum of all polyhedra $\stdpolyhedronletter_{p_i}$ is strictly contained in $\6$. 
We are going to study the 3-dimensional variety $X$ with an action of the 2-dimensional torus $T$ defined as above 
by $\PP^1$, $\6$, and $\mathcal D$.
More specifically, we are going to find the dimension of the space of equivariant first order deformations of $X$, 
and to find a formally versal equivariant deformation space for $X$.

The dimension of $T^1(X)_0$ is computed in Chapters \ref{sectcohomformula} and \ref{combformula}. 
The answer is given by Theorem \ref{t1aslatticelength}. It can also be formulated as follows.

\begin{theorem}
We maintain the assumptions and the notation introduced above in this section. The dimension
of $T^1(X)_0$ is the sum of two summands:
\begin{enumerate}
\item 
The maximum of 0 and 
$$
-3+\#\{\text{points $p_i$ such that $\stdpolyhedronletter_{p_i}$ is not a shift of $\6$ (i.~e. $\stdpolyhedronletter_{p_i}$
has at least two vertices)}\}).
$$
\item The sum of the amounts of integral points \emph{inside} the \emph{finite part}
of the boundary of $\stdpolyhedronletter_{p_i}$ (i.~e. the boundary of $\stdpolyhedronletter_{p_i}$
except for the two rays). For example, if $\stdpolyhedronletter_{p_i}$ is a shift of $\6$, 
then the finite part of the boundary is just one vertex, and we count zero. 
If $\stdpolyhedronletter_{p_i}$ has one edge, which is a primitive lattice segment, 
then we still count zero since there are no lattice points inside it. But if, 
for example, $\stdpolyhedronletter_{p_i}$ has two edges, and both of them are primitive 
lattice segments, then we count one, because the vertex between these two edges is inside the finite part 
of the boundary.
\end{enumerate}
\end{theorem}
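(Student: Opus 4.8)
The plan is to realize $T^1(X)_0$ as the degree-zero cokernel of the standard first-order deformation map and then to evaluate its dimension combinatorially, mirroring the two-step passage from a cohomological formula to a lattice-point count. I would start from the Altmann--Hausen description $A=\bigoplus_u A_u\chi^u$ with $A_u=H^0(\PP^1,\OO(\mathcal D(u)))$ and fix an explicit $M$-graded presentation $A=R/I$ in which $R$ is a polynomial ring with homogeneous generators attached to the lattice points distinguished by the polyhedra $\stdpolyhedronletter_{p_i}$, and $I$ is generated by two kinds of homogeneous relations: the binomial ``toric'' relations coming from the semigroup $\sck\cap M$, and the ``$\PP^1$-relations'' recording the linear dependencies among the affine forms $t-p_i$ that define the marked points. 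With such a presentation one has $T^1(X)=\coker\big(\Der_\CC(R)\otimes_R A\to \mathcal N\big)$ for the normal module $\mathcal N=\Hom_A(I/I^2,A)$; since all the data are $M$-homogeneous this restricts to $T^1(X)_0=\coker(\phi_0)$ with $\phi_0\colon(\Der_\CC(R)\otimes_R A)_0\to \mathcal N_0$ between finite-dimensional spaces.

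Next I would make $\mathcal N_0$ explicit: an element assigns to each generating relation $f_j$ a degree-zero perturbation $g_j\in A$ (so $f_j\mapsto f_j+\varepsilon g_j$) compatible with the syzygies of $I$, and $\im(\phi_0)$ consists of the perturbations induced by homogeneous degree-zero changes of generators $x_k\mapsto x_k+\varepsilon(\cdots)$, which are the trivial directions. The structural heart of the argument --- what I would package as the cohomological formula --- is that, in degree zero, the two families of relations decouple, so that the cokernel splits as a direct sum of a \emph{horizontal} contribution coming from the $\PP^1$-relations and a \emph{vertical} contribution that further decomposes into one summand per marked point $p_i$ coming from the local toric relations at $p_i$.

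The combinatorial count then computes each contribution. For the horizontal part I would argue that perturbing the $\PP^1$-relations amounts to moving the marked points $p_i$, that only points carrying a nontrivial polyhedron (at least two vertices) produce genuine relations and hence genuine parameters, and that the image of $\phi_0$ removes exactly the $\dim\mathrm{Aut}(\PP^1)=\dim\mathrm{PGL}_2=3$ reparametrization directions; the surviving space is the moduli of the resulting configuration of points on $\PP^1$, of dimension $\max(0,-3+s)$ where $s$ is the number of such points, which is summand (1). For each vertical summand I would match the perturbation space modulo trivial directions with the known formula for the graded pieces of $T^1$ of toric varieties, applied transverse to $\PP^1$ at $p_i$, showing that the surviving directions are indexed precisely by the lattice points strictly interior to the finite part of $\partial\stdpolyhedronletter_{p_i}$ (relative-interior points of the compact edges together with the non-extreme vertices), which yields summand (2). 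Summing produces the asserted dimension.

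I expect the horizontal part to be the main obstacle. The delicate points are to pin down $\im(\phi_0)$ on the $\PP^1$-relations precisely enough to see that it is exactly the $3$-dimensional space of reparametrizations --- so that both the subtraction of $3$ and the truncation $\max(0,\cdot)$ are forced rather than merely plausible --- and to control the syzygies of the mixed (binomial together with $\PP^1$) relations so that the compatibility conditions defining $\mathcal N_0$ do not secretly couple the horizontal and vertical families. Once this decoupling is firmly established, the vertical count should follow comparatively directly from the toric case.
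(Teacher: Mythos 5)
Your outline correctly anticipates the two sources of the answer (a $\mathrm{PGL}_2$-quotient of a point configuration on $\PP^1$, and a per-point lattice count), but two essential steps are missing, and one of them is not merely unproven but is not a valid step as stated. First, the ``decoupling'' of the horizontal and vertical contributions, which you yourself flag as the structural heart, is exactly the hard part and cannot be obtained by inspecting a presentation $A=R/I$ degree by degree: the syzygies genuinely mix the binomial relations with the $\PP^1$-relations. What actually delivers the decomposition is a sheaf-theoretic mechanism: one realizes $T^1(X)=\ker\big(H^1(U,\Theta_X)\to H^1(U,\OO_X^{\oplus\numberofabstractgenerators})\big)$ for a large smooth open $U\subseteq X$ (a generalization of Schlessinger's lemma), applies the Leray spectral sequence for $\pi\colon U\to\PP^1$, and obtains a four-term exact sequence whose outer kernel/cokernel terms are controlled by $H^1(\PP^1,\cdot)$ of the degree-zero part of $\pi_*\Theta$ and $H^0(\PP^1,\cdot)$ of the degree-zero part of $R^1\pi_*\Theta$; the cokernel term vanishes because the relevant pushforward of $\OO_X^{\oplus\numberofabstractgenerators}$ is a sum of sheaves $\OO(\mathcal D(\lambda_i))$ of nonnegative degree on $\PP^1$. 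Note this yields an \emph{extension}, hence additivity of dimensions, not a direct sum splitting of $T^1(X)_0$ as you assert. Your horizontal count is then morally right ($H^1$ of the pushforward tangent sheaf is an $\numberofessentialdivisorpoints$-dimensional space modulo the $3$-dimensional image of $\Gamma(\PP^1,\Theta_{\PP^1})$), but the facts that only \emph{essential} special points contribute and that the quotient is exactly by global vector fields require a careful local analysis of poles of the transition data at each special point.

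Second, and more seriously, your plan for the vertical summand --- ``match the perturbation space \ldots with the known formula for the graded pieces of $T^1$ of toric varieties, applied transverse to $\PP^1$ at $p_i$'' --- is not a proof. The variety $X$ is not toric in general (it is toric essentially only when there are at most two essential special points), and there is no local toric model at $p_i$ whose graded $T^1$ is shown to compute the contribution of $p_i$; such a local-to-global principle would itself need proof and is not available off the shelf. The per-point contribution $-1+\sum_{j=1}^{\numberofverticespt{p}-1}\latticelength{\indexededgept{p}{j}}$ has to be computed directly, by identifying $H^0(\PP^1,R^1\pi_*\Theta)_0$ via a \v{C}ech complex on an explicit affine cover of $U$ indexed by the vertices of the polyhedra, and then determining which cocycles die under the map induced by differentiating the generators; this reduces to order-of-vanishing estimates governed by the lattice lengths of the finite edges (the inequalities $\ord_p(\mu_{i_2,i_1,\chi})=\chi(\indexedvertexpt{p}{j_1}-\indexedvertexpt{p}{j_2})$ and their consequences). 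The comparison with Altmann's toric formula appears in the paper only as a consistency check in the case where $X$ happens to be toric, not as an ingredient of the proof. To salvage your approach you would need either to carry out this direct computation or to prove the local toric reduction you are implicitly assuming.
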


In Chapter \ref{sectksmtoric} we find a fromally versal deformation space for the equivariant deformations of $X$. 
The construction of the total space of this deformation requires more technical details and 
will be given in Section \ref{sectversalconstruction}, but the parameter space is just a vector space. 
In particular, it is smooth, so all equivariant first order deformations are unobstructed.

To prove that the deformation in question is formally versal, we will need to compute the 
Kodaira-Spencer map of a deformation defined by perturbation of generators 
of a \textit{subalgebra} of the polynomial algebra. See 
Section \ref{ksmgens} for more details.
The results about the Kodaira-Spencer map for such deformations may be of independent interest.

Some of these results were preliminarily announced in an arxiv.org preprint by the author, \cite{myarxiv}.

\chapter{Preliminaries}\label{prelimproofs}

\section{T-varieties and polyhedra}\label{tvarsreal}
We will need an explicit construction of a $T$-variety out of a polyhedral divisor.

First, we need one more definition concerning polyhedral cones. If $\6$ is a polyhedral cone in a $\QQ$-vector space $V$, 
its \textit{dual cone} is defined as the set of all vectors $w\in V^*$ such that for all $v\in V$ 
one has $w(v)\ge 0$.

Now we have to define the \textit{evaluation function} $\eval\colon \Pol_\6(V)\times \sck\to \QQ$ 
as follows: $\eval(\stdpolyhedronletter_1-\stdpolyhedronletter_2,f)=
\min_{v\in\stdpolyhedronletter_1}f(v)-\min_{v\in\stdpolyhedronletter_2}f(v)$ for all polyhedra 
$\stdpolyhedronletter_1, \stdpolyhedronletter_2$ with tail cone $\6$ and for all $f\in\sck$. 
One checks directly that this function is well-defined on $\Pol_\6(V)$, that
it is linear in the first argument and is piecewise-linear in the second argument. 
If we fix a polyhedron as the first argument (a real polyhedron, not an element of 
the Grothendick construction, i.~e. $\stdpolyhedronletter_2=\6$), then the resulting function
is also convex. If this polyhedron is of the form $\6+v$, where $v\in V$, 
then this function is linear, not just piecewise-linear. 
If $\stdpolyhedronletter$ is a polyhedron, we shortly call the function 
$\eval_\stdpolyhedronletter\colon \sck\to\QQ$ defined by $\eval_\stdpolyhedronletter(f)=\eval(\stdpolyhedronletter,f)$
the \textit{individual evaluation function of the polyhedron $\stdpolyhedronletter$}.

We are going to construct a $d$-dimensional variety with an action of a $k$-dimensional
torus $T$.
Suppose that we have a $(d-k)$-dimensional normal (not necessarily affine) variety $Y$,
a pointed cone $\6$ in the rational dual character lattice $N_\QQ=\X(T)^*_\QQ$, and
a polyhedral divisor $\mathcal D$ on $Y$.

For every element $\chi\in\sck\cap M$, $\mathcal D$ defines a rational divisor
$\mathcal D(\chi)$ as follows. Notice that $\chi$ can be considered as a function on $N$.
Let $\mathcal D=\sum a_i Z_i\otimes (\stdpolyhedronletter_i-\stdpolyhedronletter_i')$, where $a_i\in\QQ$, $Z_i$'s are irreducible 
hypersurfaces in $Y$, and $\stdpolyhedronletter_i$'s and $\stdpolyhedronletter_i'$'s are polyhedra with the tail cone $\6$. We put 
$\mathcal D(\chi):=\sum a_i\eval(\stdpolyhedronletter_i-\stdpolyhedronletter_i',\chi)Z_i=
\sum a_i(\min_{p\in \stdpolyhedronletter_i}\chi(p)-\min_{p\in \stdpolyhedronletter_i'}\chi(p))Z_i$.

\begin{definition}
A polyhedral divisor $\mathcal D$ is called \textit{principal}, if
it can be written in the form $\mathcal D=\sum \div(f_i)\otimes \alpha_i+\6$, where $f_i$'s are rational
functions on $Y$ and $\alpha_i\in N$.
\end{definition}

\begin{definition}
A polyhedral divisor $\mathcal D$ is called \textit{proper}, if
\begin{enumerate}
\item It can be written in the form $\mathcal D=\sum a_i Z_i\otimes \stdpolyhedronletter_i$, where $a_i\in\QQ$, 
efficient
Cartier divisors in $Y$, 
and $\stdpolyhedronletter_i$'s are polyhedra with the tail cone $\6$ \textbf{and $a_i\ge 0$}.
\item For every $\chi\in \sck\cap M$, $\mathcal D(\chi)$ is 
semiample,
and if $\chi$ is in the interior of $\sck$, $\mathcal D(\chi)$ is 
big.
\end{enumerate}
\end{definition}

Now, notice that if $\chi,\chi'\in \sck\cap M$, then $\mathcal D(\chi)+\mathcal D(\chi)-\mathcal D(\chi+\chi')$ is an effective divisor, so 
a product of (rational) functions from $\Gamma(\PP^1,\mathcal O(\mathcal D(\chi)))$ and from $\Gamma(\PP^1,\mathcal O(\mathcal D(\chi')))$ is in 
$\Gamma(\PP^1,\mathcal O(\mathcal D(\chi+\chi')))$. 
So we have a graded algebra 
$$
A=\bigoplus_{\chi\in\sck\cap M}\Gamma(\PP^1,\mathcal O(\mathcal D(\chi))).
$$
One can prove that if $\mathcal D$ is proper, this algebra is finitely generated. The T-variety in question is $X=\Spec A$. 
Since $A$ is graded, 
$T$ acts on $X$. 
If $\mathcal D$ is proper, $\dim X=d$. 

If we add a principal polyhedral divisor to $\mathcal D$, 
then $A$ will not change as a graded algebra, so $X$ will stay the same, and the action of the torus on $X$ 
will also stay the same.
Notice also that if $\chi,\chi'\in\sck\cap M$ are proportional, then $\mathcal D(\chi+\chi')=
\mathcal D(\chi)+\mathcal D(\chi')$, and in general the function $\chi\mapsto \mathcal D(\chi)$ is piecewise-linear.

Within the construction of $A$ we use, the elements of $\Gamma(\PP^1,\mathcal O(\mathcal D(\chi)))$ may be interpreted in two 
ways: they are rational functions on $Y$ and they are global algebraic functions on $X$. 
If $f\in \Gamma(\PP^1,\mathcal O(\mathcal D(\chi)))$, we will write $\overline f$ for a rational function on $Y$ 
and $\widetilde f$ for a global function on $X$.

\begin{proposition}\label{quotmorph}(see \cite[Theorem 3.1]{ahausen})
There exists a rational surjective map $\pi\colon X\to Y$ such that for every degree $\chi\in\sck\cap M$, for every 
point $x\in X$ such that $\pi$ is defined at $x$, and for every $f,g\in\Gamma(\PP^1,\mathcal O(\mathcal D(\chi)))$ the 
following conditions are equivalent:
\begin{enumerate}
\item $\overline f/\overline g$ is defined at $\pi (x)$ as a rational function.
\item $\widetilde f/\widetilde g$ is defined at $x$ as a rational function.
\end{enumerate}
In this case, $(\overline f/\overline g)(\pi (x))=(\widetilde f/\widetilde g)(x)$.
\end{proposition}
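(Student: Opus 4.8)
The plan is to realize the two interpretations of a homogeneous element of $A$ inside one ambient field and then to read off $\pi$ from the inclusion of $T$-invariants. First I would check that the assignment $\widetilde f\mapsto \overline f\,t^{\chi}$, for $f$ homogeneous of degree $\chi$ and extended additively, defines an injective $M$-graded ring homomorphism $\Phi\colon A\hookrightarrow K(\PP^1)[M]$ into the group algebra of $M$ over $K(\PP^1)=K(Y)$, where $t^{\chi}$ is the monomial attached to $\chi\in M$. Multiplicativity is exactly the fact recalled in the excerpt that the product of $\overline f\in\Gamma(\PP^1,\OO(\mathcal D(\chi)))$ and $\overline g\in\Gamma(\PP^1,\OO(\mathcal D(\chi')))$ is the rational function $\overline f\,\overline g\in\Gamma(\PP^1,\OO(\mathcal D(\chi+\chi')))$; injectivity holds because $\overline f\neq0$ whenever $\widetilde f\neq0$. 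Passing to fraction fields yields an $M$-graded embedding $K(X)=\mathrm{Frac}(A)\hookrightarrow\mathrm{Frac}(K(Y)[M])=K(Y)(M)$.

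Next I would identify the degree-zero, equivalently $T$-invariant, subfield $K(X)_0$. Since $\6$ is pointed, $\sck$ is full-dimensional and $\sck\cap M$ generates $M$, so the homogeneous elements of degree $0$ in $K(Y)(M)$ form exactly the subfield $K(Y)$; hence $K(X)_0\subseteq K(Y)$, because any homogeneous element of $K(X)$ of degree $0$ is a ratio $\widetilde f/\widetilde g$ of homogeneous elements of one and the same degree, which $\Phi$ carries to $\overline f/\overline g\in K(Y)$. For the reverse inclusion I would invoke properness: for $\chi$ in the interior of $\sck$ the divisors $\mathcal D(n\chi)=n\,\mathcal D(\chi)$ are big, hence of unbounded degree on $\PP^1$ as $n\to\infty$, so given any $h\in K(\PP^1)$ I can pick $n$ and a section $\overline g\in\Gamma(\PP^1,\OO(\mathcal D(n\chi)))$ vanishing to high enough order at the poles of $h$ that $\overline f:=h\,\overline g$ again lies in $\Gamma(\PP^1,\OO(\mathcal D(n\chi)))$; then $h=\overline f/\overline g=\Phi(\widetilde f/\widetilde g)$ exhibits $h\in K(X)_0$. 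Thus $K(X)_0=K(Y)$, and the inclusion $K(Y)=K(X)_0\hookrightarrow K(X)$ of fields over $\CC$ defines a dominant rational map $\pi\colon X\dashrightarrow Y$; its surjectivity follows from the construction of $X$, where the structure morphism of the relative $\Spec$ over $\PP^1$ is affine and surjective because $\OO_{\PP^1}\subseteq\bigoplus_{\chi}\OO(\mathcal D(\chi))$. By construction $\widetilde f/\widetilde g=\pi^{*}(\overline f/\overline g)$ for homogeneous $f,g$ of equal degree, which already gives the claimed equality of values wherever both sides are defined.

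It then remains to prove the equivalence of (1) and (2) at a point $x$ where $\pi$ is defined, with $y:=\pi(x)$. The implication $(1)\Rightarrow(2)$, together with the equality of values, is just functoriality of pullback: if $\overline f/\overline g$ is regular at $y$, then $\pi^{*}(\overline f/\overline g)=\widetilde f/\widetilde g$ is regular at $x$ and takes the value $(\overline f/\overline g)(y)$ there. The substantive direction, which I expect to be the main obstacle, is $(2)\Rightarrow(1)$: a priori a pullback may become regular across the indeterminacy locus of the function it is pulled back from, so regularity of $\widetilde f/\widetilde g$ at $x$ does not formally force regularity of $\overline f/\overline g$ at $y$.

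To settle this I would argue by restriction to a curve, exploiting that $Y=\PP^1$ is a smooth curve, where ``not defined at $y$'' means precisely that $y$ is a pole. Assuming $(2)$, suppose for contradiction that $\overline f/\overline g$ had a pole at $y$. Since $\pi$ is dominant and defined at $x$, the fiber through $x$ is a proper closed subset of the threefold $X$, so a general irreducible curve $C\subset X$ through $x$ satisfies $C\not\subseteq\pi^{-1}(y)$ and meets the regular locus of $\widetilde f/\widetilde g$; then $\pi|_C$ is a nonconstant morphism of curves sending $x$ to $y$, and by functoriality $(\widetilde f/\widetilde g)|_C=(\pi|_C)^{*}(\overline f/\overline g)$ has a pole at $x$, of order equal to the ramification index times the pole order at $y$. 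This contradicts the regularity of $\widetilde f/\widetilde g$ at $x$ guaranteed by $(2)$, whose restriction to $C$ would have to be regular at $x$. Hence $\overline f/\overline g$ is regular at $y$, establishing $(2)\Rightarrow(1)$. The only routine points are the genericity of $C$, available since $\dim X=3>\dim Y=1$, and the harmless passage to the normalization of $C$ to read off the pole; neither presents a genuine difficulty.
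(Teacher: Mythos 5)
The paper does not prove this proposition at all: it is quoted verbatim from \cite[Theorem 3.1]{ahausen}, so there is no internal argument to compare against. Your reconstruction is essentially sound and follows the same philosophy as Altmann--Hausen: realize $A$ inside $K(Y)[M]$, identify the degree-zero part of $K(X)$ with $K(Y)$ (pointedness of $\6$ for one inclusion, bigness of $\mathcal D(\chi)$ for the other), and let $\pi$ be the rational map induced by $K(Y)=K(X)_0\hookrightarrow K(X)$; the identity $\widetilde f/\widetilde g=\pi^*(\overline f/\overline g)$ and the implication $(1)\Rightarrow(2)$ then come for free. Two remarks. First, your curve-restriction argument for $(2)\Rightarrow(1)$ works, but it is heavier than necessary: since $Y=\PP^1$ is a smooth curve, ``$\overline f/\overline g$ not defined at $y$'' means exactly that $\overline g/\overline f$ is defined at $y$ with value $0$; by $(1)\Rightarrow(2)$ applied to $g/f$, the function $\widetilde g/\widetilde f$ is then defined at $x$ with value $0$, and if $\widetilde f/\widetilde g$ were also defined at $x$ the product of the two would be defined at $x$ and equal to $0$ rather than $1$. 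This avoids all the genericity and normalization bookkeeping. Second, your justification of surjectivity is the one genuinely thin spot: the field inclusion only gives a \emph{dominant} rational map, whose image a priori misses finitely many points of $\PP^1$, and the fact that the relative $\Spec$ over $\PP^1$ surjects onto $\PP^1$ does not by itself transfer to the affine contraction $X$. Surjectivity is not actually used downstream in the paper (it re-proves $\pi(U)=\PP^1$ separately), but as stated your sentence does not establish it.
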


\section{Deformations}\label{deformsreal}

If $X=\Spec A$ is an affine algebraic variety, we will need to understand how to 
define an $A$-module structure on $T^1(X)$.
Namely, 
choose an embedding $X\hookrightarrow\CC^\numberofabstractgenerators$, then $A$ can be written as 
$A=\CC[\abstractindependentgenerator 1, \ldots, \abstractindependentgenerator{\numberofabstractgenerators}]/I$, where $I$ is an 
ideal. Then $I/I^2$ is an $A$-module. Consider also the following 
$\CC[\abstractindependentgenerator 1,\ldots, \abstractindependentgenerator{\numberofabstractgenerators}]$-module 
$\Theta=\Der \CC[\abstractindependentgenerator 1,\ldots, \abstractindependentgenerator{\numberofabstractgenerators}]$: its elements 
are of the form $\sum g_i\partial/\partial \abstractindependentgenerator i$, 
where $g_i\in\CC[\abstractindependentgenerator 1,\ldots, \abstractindependentgenerator{\numberofabstractgenerators}]$. Every such differential operator 
defines an $A$-homomorphism between $I/I^2$ and $A$: if $g\in I$, then $g/I^2\in I/I^2$ maps to 
$(\sum g_i\partial g/\partial \abstractindependentgenerator i)/I\in A$. If $g\in I^2$, $g=\sum g'_jg''_j$, then $\sum_{i,j} g_i\partial (g'_jg''_j)\partial \abstractindependentgenerator i
=\sum_{i,j}g_ig'_j\partial g''_j/\partial \abstractindependentgenerator i+\sum_{i,j}g_ig''_j\partial g'_j/\partial \abstractindependentgenerator i\in I$, so the map is well-defined. 
If $a\in\CC[\abstractindependentgenerator 1,\ldots, \abstractindependentgenerator{\numberofabstractgenerators}]$, $a/I \in A$, 
then $g\sum g_i\partial a/\partial \abstractindependentgenerator i\in I$, so 
$(\sum g_i\partial (ag)/\partial \abstractindependentgenerator i)/I=
(a\sum g_i\partial g/\partial \abstractindependentgenerator i)/I+(g\sum g_i \partial a/\partial \abstractindependentgenerator i)/I=
(a\sum g_i\partial g/\partial \abstractindependentgenerator i)/I$, and the map is $A$-linear. So in fact we have defined a 
map $\phi\colon \Theta\to\Hom_A(I/I^2,A)$.

Moreover, if $\sum g_i\partial/\partial \abstractindependentgenerator i\in I\Theta$, i.~e. if all $g_i$ are in $I$, then 
$\sum g_i\partial g/\partial \abstractindependentgenerator i\in I$ for all $g\in I$, so $\phi$ is well-defined on $\Theta/I\Theta$, which 
is an $A$-module. It is clear that $\phi$ is $A$-linear.

One can prove that $T^1(X)$ can be identified 
with $\coker\phi$ so that these identifications for all affine varieties together have good 
category-theoretical properties. We will not need these properties explicitly, and we will use 
this identification as a definition of $T^1(X)$. However, we will need to understand how the identification itself works 
exactly, because at some point a first order deformation will arise from a different source
(actually, as a restriction of a deformation over an affine line to a tangent vector at the origin), 
and we will need to understand how it is represented 
by an element of $\coker\psi$. Here is a brief description.

Suppose that we have a first order deformation with a total space $Y$.
It can be shown that $Y$ is an affine scheme.
Denote $B=\CC[Y]$. Then the flat morphism 
$\stdflatmorphism\colon Y\to \Spec \CC[\varepsilon]/\varepsilon^2$ means that $B$ is a 
$(\CC[\varepsilon]/\varepsilon^2)$-module, and the isomorphism $\iota$ determines an 
isomorphism $\iota^*\colon B/\varepsilon B\to A=\CC[X]$.
We keep the assumption that $A$ is generated by $\numberofabstractgenerators$ generators
$\abstractdependentgenerator1,\ldots,\abstractdependentgenerator{\numberofabstractgenerators}$, and 
that $I\subset \CC[\abstractindependentgenerator 1,\ldots, \abstractindependentgenerator{\numberofabstractgenerators}]$
is the ideal such that 
$\CC[\abstractindependentgenerator 1,\ldots, \abstractindependentgenerator{\numberofabstractgenerators}]/I=A$
and that this isomorphism maps $\abstractindependentgenerator i$ to $\abstractdependentgenerator i$ for each $i$.
The flatness of $\stdflatmorphism$ implies that 
there exist elements 
$\widetilde{\abstractdependentgenerator1}, \ldots, \widetilde{\abstractdependentgenerator{\numberofabstractgenerators}}\in B$
such that $\iota^*\widetilde{\abstractdependentgenerator i}=\abstractdependentgenerator i$
and that all elements 
$\varepsilon=\varepsilon\cdot 1, \widetilde{\abstractdependentgenerator1}, \ldots, \widetilde{\abstractdependentgenerator{\numberofabstractgenerators}}$
generate $B$.
It also follows from the flatness of $\stdflatmorphism$ that if $g\in I$, in other words, if 
$g$ is a polynomial in $\numberofabstractgenerators$ variables such that 
$g(\abstractdependentgenerator1,\ldots,\abstractdependentgenerator{\numberofabstractgenerators})=0$, 
then there exists a polynomial $g'\in \CC[\abstractindependentgenerator1,\ldots,\abstractindependentgenerator{\numberofabstractgenerators}]$
such that $g(\abstractdependentgenerator1,\ldots,\abstractdependentgenerator{\numberofabstractgenerators})=
\varepsilon g'(\abstractdependentgenerator1,\ldots,\abstractdependentgenerator{\numberofabstractgenerators})$.
Moreover, it can be shown that this $g'$ is unique modulo $I$. So, we have a well-defined map 
$I\to \CC[\abstractindependentgenerator1,\ldots,\abstractindependentgenerator{\numberofabstractgenerators}]/I=A$, 
and it can also be shown that it is well-defined on $I/I^2$ and is $A$-linear.
We say by definition that the isomorphism between $T^1(X)$ and $\coker\phi$ maps the 
deformation under consideration to the class of this map in $\coker\phi=\Hom_A(I/I^2,A)/\im\phi$.
One still has to show that this is really an isomorphism, but this is a known fact.
Note that a deformation itself only defines a class of a map $I/I^2\to A$ in $\coker\phi$, 
but if we lift the generators of $A$ to $B$, the map $I/I^2\to A$ itself
will be already uniquely determined (while it depends on the choice of the lift).

If $M$ is a lattice, $A$ is $M$-graded, and the generators $\abstractindependentgenerator 1,\ldots, \abstractindependentgenerator{\numberofabstractgenerators}$ are homogeneous,
then one has an $M$-grading on $\CC[\abstractindependentgenerator 1,\ldots, \abstractindependentgenerator{\numberofabstractgenerators}]$ as well. Then $I$ becomes an 
$M$-graded ideal, and $\Theta$ becomes an $M$-graded module with $\deg(\partial/\partial \abstractindependentgenerator i)=-\deg \abstractindependentgenerator i$.
The map $\phi$ preserves this grading, so we have a grading on $T^1(X)$.


\section{Schlessinger's formula for $T^1$}

Extending Schlessinger's result \cite[Lemma 2]{schless}, we prove the following theorem:
\begin{theorem}\label{schlessgen}
Let $X$ be an affine normal algebraic variety, and let $U$ be a non-singular open subset of $X$ 
such that $\codim_X(X\setminus U)\ge 2$. Then $T^1 (X)$ can be computed as follows.
Let $\Theta_X$ denote the tangent sheaf on $X$, and let $\abstractdependentgenerator 1, \ldots, \abstractdependentgenerator{\numberofabstractgenerators}\in\CC[X]$ be a set 
of generators. Consider the following map $\psi\colon\Theta_X\to\OO_X^{\oplus \numberofabstractgenerators}$: it maps 
a (locally defined) vector field 
$\stdvectorfiledxletter$ to 
$(d\abstractdependentgenerator1(\stdvectorfiledxletter), \ldots, d\abstractdependentgenerator{\numberofabstractgenerators}(\stdvectorfiledxletter))$.

Then $T^1(X)=\ker (H^1(U,\Theta_X)\stackrel{H^1(\psi|_U)}\longrightarrow H^1(U,\OO_X^{\oplus \numberofabstractgenerators}))$ as $\CC[X]$-modules.
\end{theorem}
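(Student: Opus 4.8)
The plan is to exhibit $T^1(X)$ as the cokernel of a map of coherent sheaves and then to move the computation onto $U$, exploiting that the two sheaves in question satisfy the codimension-two extension property on the normal variety $X$. Concretely, I would first fix the embedding $X\hookrightarrow\CC^{\numberofabstractgenerators}$ given by $\abstractdependentgenerator 1,\ldots,\abstractdependentgenerator{\numberofabstractgenerators}$, write $A=\CC[\abstractindependentgenerator 1,\ldots,\abstractindependentgenerator{\numberofabstractgenerators}]/I$, let $\mathcal I$ be the ideal sheaf of $X$, and set $\mathcal N:=\mathcal{H}om_{\OO_X}(\mathcal I/\mathcal I^2,\OO_X)$, so that $H^0(X,\mathcal N)=\Hom_A(I/I^2,A)$. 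Applying the left-exact functor $\mathcal{H}om_{\OO_X}(-,\OO_X)$ to the right-exact conormal sequence $\mathcal I/\mathcal I^2\to\Omega_{\CC^{\numberofabstractgenerators}}|_X\to\Omega_X\to 0$ produces a left-exact sequence of coherent sheaves
$$0\to\Theta_X\xrightarrow{\ \psi\ }\OO_X^{\oplus\numberofabstractgenerators}\xrightarrow{\ \phi\ }\mathcal N,$$
where $\psi$ is precisely the map $\stdvectorfiledxletter\mapsto(d\abstractdependentgenerator 1(\stdvectorfiledxletter),\ldots,d\abstractdependentgenerator{\numberofabstractgenerators}(\stdvectorfiledxletter))$ of the statement and $\phi$ is the sheafification of the map $\phi$ of Section \ref{deformsreal}. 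By the definition of $T^1(X)$ recalled there, $T^1(X)=\coker\bigl(H^0(X,\phi)\bigr)$; since $X$ is affine, taking global sections is exact, so this equals $H^0(X,\coker\phi)$.

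Next I would restrict everything to $U$. As $U$ is non-singular, the conormal sequence is locally split over $U$, hence its dual is short exact there:
$$0\to\Theta_X\xrightarrow{\ \psi\ }\OO_X^{\oplus\numberofabstractgenerators}\xrightarrow{\ \phi\ }\mathcal N\to 0\qquad(\text{on }U).$$
The corresponding long exact cohomology sequence on $U$ contains the segment
$$H^0(U,\OO_X^{\oplus\numberofabstractgenerators})\xrightarrow{H^0(\phi|_U)}H^0(U,\mathcal N)\to H^1(U,\Theta_X)\xrightarrow{H^1(\psi|_U)}H^1(U,\OO_X^{\oplus\numberofabstractgenerators}),$$
which gives $\ker\bigl(H^1(\psi|_U)\bigr)=\coker\bigl(H^0(\phi|_U)\bigr)$. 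It remains to identify this cokernel with $\coker(H^0(X,\phi))=T^1(X)$, for which I would use the commutative square formed by $H^0(X,\phi)$, $H^0(\phi|_U)$, and the two restriction maps. The restriction $H^0(X,\OO_X^{\oplus\numberofabstractgenerators})\to H^0(U,\OO_X^{\oplus\numberofabstractgenerators})$ is an isomorphism by the algebraic Hartogs principle, since $X$ is normal and $\codim_X(X\setminus U)\ge 2$. For $H^0(X,\mathcal N)\to H^0(U,\mathcal N)$ I would argue that $\mathcal N$, being the dual of a coherent sheaf on the normal variety $X$, is reflexive, and that a reflexive sheaf $\mathcal F$ on $X$ satisfies $j_*(\mathcal F|_U)=\mathcal F$ for the inclusion $j\colon U\hookrightarrow X$ of the complement of a closed set of codimension $\ge 2$; hence this restriction is an isomorphism too. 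All four maps in the square are $A$-linear, so passing to cokernels yields the claimed $A$-linear identification $T^1(X)=\ker\bigl(H^1(\psi|_U)\bigr)$.

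The main obstacle is precisely the reflexivity of $\mathcal N$ and the resulting codimension-two extension property: this is the step that makes the global section of $\mathcal N$ over $X$ recoverable from $U$, and it is the only place where the normality hypothesis on $X$ is genuinely used. The remaining ingredients — left-exactness of $\mathcal{H}om_{\OO_X}(-,\OO_X)$, local splitting of the conormal sequence over the smooth locus $U$, exactness of $H^0$ on the affine scheme $X$, and commutativity of the restriction square — are routine. Finally, when $A$ is $M$-graded and the generators $\abstractindependentgenerator i$ are homogeneous, every map occurring above is induced by restriction or by $\phi$ and is degree-preserving, so the identification automatically respects the grading on $T^1(X)$.
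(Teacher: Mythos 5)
Your proposal is correct and follows essentially the same route as the paper: both hinge on the short exact sequence $0\to\Theta_X|_U\to\OO_X^{\oplus\numberofabstractgenerators}|_U\to\mathcal I^\vee|_U\to 0$ over the smooth locus, the resulting identification $\ker H^1(\psi|_U)=\coker H^0(\widetilde\phi|_U)$, and the codimension-two extension property for duals of coherent sheaves on a normal variety (which the paper proves via local-cohomology lemmas extending Schlessinger's Lemma 1, and which you invoke as reflexivity). The only cosmetic difference is that you obtain surjectivity of $\widetilde\phi$ on $U$ from the local splitting of the conormal sequence, where the paper cites the vanishing of $\mathcal T^1$ on the smooth locus — these are the same fact.
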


The difference from Lemma 2 in \cite{schless} itself is the following. First, we speak about a normal affine variety $X$, 
while Lemma 2 in \cite{schless} speaks about local geometric schemes. Second, we allow $X$ to have any singularities 
as long as $X$ is normal, while Lemma 2 in \cite{schless} says that the singularity must be isolated. Finally,
here $U$ is an arbitrary smooth open subset of $X$ such that $\codim_X(X\setminus U)\ge 2$, 
while in Lemma 2 in \cite{schless} it must be the smooth locus of $X$. On the other hand, here $X$ is only embedded into 
a vector space, while in \cite{schless} it can be embedded into an arbitrary smooth local geometric scheme $Y$.
However, despite all these differences, the proof of Lemma 2 in \cite{schless} can be used as a proof of 
Lemma \ref{schlessgen} here without any significant changes. 

\begin{proof}[Proof of Theorem \ref{schlessgen}]
If $\mathcal F$ is a coherent sheaf on $X$, denote $\mathcal F^\vee=\Hom_X(\mathcal F, \mathcal O_X)$.
First, we prove three lemmas, which extend Lemma 1 from \cite{schless}. Here we use 
the following notion of \textit{sections and cohomology with support} (for more details, see, for example, \cite[Section II.1]{hartshorne} 
and \cite[Section III.2]{hartshorne}). Given a sheaf $\mathcal F$ on a variety $X$ and 
a 
closed subset $V\subset X$, we denote by $H^0_{V}(X,\mathcal F)$ (or by $\Gamma_V(X,\mathcal F)$) the space of all global sections $s$ 
of $\mathcal F$ that vanish outside $V$ ($s|_{X\setminus V}=0$). We call the space $H^0_{V}(X,\mathcal F)$ 
the space of \textit{global sections of $\mathcal F$ with support on $V$}.
The functor $H^0_V(X,-)$ is left exact, and it has classical right derived functors, 
which are called \textit{cohomology with support} and denoted by $H^i_V(X,-)$.

\begin{lemma}\label{schlesslemmao}
Let $X$ be a normal affine algebraic variety, $U$ be an open subset such that $\codim_X(X\setminus U)\ge 2$, and 
$\mathcal F$ be a free sheaf of finite rank on $X$. Then 
$H^0_{(X\setminus U)}(X,\mathcal F)=H^1_{(X\setminus U)}(X,\mathcal F)=0$. 
\end{lemma}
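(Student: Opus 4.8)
The plan is to reduce everything to the case $\mathcal F=\mathcal O_X$ and then to extract the vanishing from the normality of $X$ via the algebraic analogue of Hartogs' extension theorem. Since $\mathcal F$ is free of finite rank we have $\mathcal F\cong\mathcal O_X^{\oplus r}$, and both functors $H^0_{(X\setminus U)}(X,-)$ and $H^1_{(X\setminus U)}(X,-)$ commute with finite direct sums, so it suffices to treat $\mathcal F=\mathcal O_X$. Write $X=\Spec A$ with $A$ normal; after splitting off the connected (equivalently, irreducible) components—legitimate since a normal variety is a disjoint union of integral normal varieties and the cohomology splits accordingly—I may assume $A$ is a domain. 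Put $Z=X\setminus U$, a closed subset with $\codim_X Z\ge 2$, so in particular $Z\neq X$ and $U$ is dense. The group $H^0_Z(X,\mathcal O_X)$ consists of the global regular functions on $X$ whose restriction to $U$ vanishes; since $A$ is a domain and $U$ is a nonempty open subset, the only such function is $0$, giving $H^0_Z(X,\mathcal O_X)=0$ at once.

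For the first cohomology I would invoke the long exact sequence of cohomology with support,
$$
0\to H^0_Z(X,\mathcal O_X)\to H^0(X,\mathcal O_X)\to H^0(U,\mathcal O_X)\to H^1_Z(X,\mathcal O_X)\to H^1(X,\mathcal O_X)\to\cdots
$$
Because $X$ is affine and $\mathcal O_X$ is coherent, Serre's vanishing gives $H^1(X,\mathcal O_X)=0$, so the sequence identifies $H^1_Z(X,\mathcal O_X)$ with the cokernel of the restriction map $\Gamma(X,\mathcal O_X)\to\Gamma(U,\mathcal O_X)$. Thus the desired vanishing $H^1_Z(X,\mathcal O_X)=0$ is equivalent to the assertion that every regular function on $U$ extends to a regular function on all of $X$.

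The main point—and the step I expect to carry the real content—is precisely this extension property, which is where normality enters. Since $\codim_X Z\ge 2$, every codimension-one point of $X$ lies in $U$; equivalently, for each height-one prime $\mathfrak p$ of $A$ the localization $A_{\mathfrak p}$ is the local ring of a point of $U$. A normal Noetherian domain satisfies the intersection formula $A=\bigcap_{\operatorname{ht}\mathfrak p=1}A_{\mathfrak p}$ inside its fraction field, by Serre's normality criterion. Given $f\in\Gamma(U,\mathcal O_X)$, I would view $f$ as an element of the fraction field that is regular at every point of $U$; it therefore lies in $A_{\mathfrak p}$ for every height-one prime $\mathfrak p$, hence in $A=\Gamma(X,\mathcal O_X)$. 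This proves surjectivity of the restriction map and finishes the argument. The two points requiring care are the observation that the codimension hypothesis forces all height-one primes to correspond to points of $U$ (immediate) and the intersection formula for normal domains, which I would cite as a standard consequence of Serre's $R_1+S_2$ characterization of normality.
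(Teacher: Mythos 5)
Your proof is correct and follows essentially the same route as the paper: the long exact sequence of cohomology with support, the fact that restriction $\Gamma(X,\mathcal F)\to\Gamma(U,\mathcal F)$ is an isomorphism, and the vanishing of $H^1(X,\mathcal F)$ on the affine $X$. The only difference is that the paper simply asserts the restriction map is an isomorphism (from normality, freeness, and $\codim_X(X\setminus U)\ge 2$), whereas you supply the standard justification via $A=\bigcap_{\operatorname{ht}\mathfrak p=1}A_{\mathfrak p}$; that is a welcome but inessential elaboration.
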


\begin{proof}
Write the long exact sequence for cohomology with support:
$$
0\to H^0_{(X\setminus U)}(X,\mathcal F)\to
H^0(X,\mathcal F)\to
H^0(U,\mathcal F)\to
H^1_{(X\setminus U)}(X,\mathcal F)\to
H^1(X,\mathcal F)\to\ldots
$$
$\mathcal F$ is a free sheaf of finite rank,
$X$ is normal, and $\codim_X(X\setminus U)\ge 2$, 
therefore the restriction map
$H^0(X,\mathcal F)\to\Gamma(U,\mathcal F)$
is an isomorphism. Hence, $H^0_{(X\setminus U)}(X,\mathcal F)=0$ and the map 
$H^1_{(X\setminus U)}(X,\mathcal F)\to
H^1(X,\mathcal F)$ is an embedding.
Since $X$ is affine, $H^1(X,\mathcal F)=0$, so
$H^1_{(X\setminus U)}(X,\mathcal F)=0$.
\end{proof}

The following lemma is known, but for convenience of the reader we give a proof here.

\begin{lemma}\label{schlesslemmai}
Let $X$ be a normal affine algebraic variety, $U$ be an open subset such that $\codim_X(X\setminus U)\ge 2$, and 
$\mathcal F$ be a coherent sheaf on $X$ such that there exists a coherent sheaf $\mathcal G$ on $X$ such that 
$\mathcal F=\mathcal G^\vee$. Then $H^0_{(X\setminus U)}(X,\mathcal F)=0$. 
\end{lemma}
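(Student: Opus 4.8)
The plan is to reduce the vanishing for the dual sheaf $\mathcal F=\mathcal G^\vee$ to the already-established vanishing for the structure sheaf, exploiting the fact that a section of $\mathcal G^\vee$ is by definition a sheaf homomorphism $\mathcal G\to\mathcal O_X$. Since $X$ is affine I would pass to module language: write $A=\CC[X]$, a normal Noetherian domain, let $G$ be the finitely generated $A$-module with $\mathcal G=\widetilde G$, and use the identification $\Hom_X(\widetilde G,\mathcal O_X)=\widetilde{\Hom_A(G,A)}$, so that $H^0(X,\mathcal F)=\Hom_A(G,A)$ and a global section of $\mathcal F$ is simply an $A$-homomorphism $\phi\colon G\to A$.

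Now take $s\in H^0_{(X\setminus U)}(X,\mathcal F)$ and let $\phi\colon G\to A$ be the corresponding homomorphism. The support condition says precisely that the restriction $\phi|_U$ is zero, i.e. that $\phi$ becomes the zero map after localizing at any $f$ with $D(f)\subset U$. The key step is then to test $\phi$ against an arbitrary element $g\in G$: the value $\phi(g)\in A=H^0(X,\mathcal O_X)$ has zero restriction to $U$, because restriction of sections commutes with $\phi$ and $\phi|_U=0$. Hence $\phi(g)$ is a global section of $\mathcal O_X$ supported on $X\setminus U$, that is, $\phi(g)\in H^0_{(X\setminus U)}(X,\mathcal O_X)$. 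By Lemma \ref{schlesslemmao} applied to the free rank-one sheaf $\mathcal O_X$ (using that $X$ is normal and $\codim_X(X\setminus U)\ge 2$), this group vanishes, so $\phi(g)=0$. As $g$ was arbitrary, $\phi=0$ and therefore $s=0$.

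The genuinely delicate points are bookkeeping rather than conceptual. I need the identification $\Hom_X(\widetilde G,\mathcal O_X)=\widetilde{\Hom_A(G,A)}$, which holds since $G$ is finitely presented over the Noetherian ring $A$; and I need that ``$s$ is supported on $X\setminus U$'' translates into ``$\phi(g)$ restricts to $0$ on $U$ for every $g$'', which is just the naturality of $\phi$ together with the compatibility of the support condition with restriction of sections. Everything else is the single invocation of Lemma \ref{schlesslemmao} for $\mathcal O_X$, which is where the hypotheses on normality and codimension are actually consumed. I expect the main obstacle, such as it is, to be making the sheaf-versus-module translation airtight; one can bypass it entirely by arguing purely sheaf-theoretically, applying the morphism $\mathcal G\to\mathcal O_X$ to local sections and invoking the vanishing of $H^0_{(X\setminus U)}(X,\mathcal O_X)$ directly.
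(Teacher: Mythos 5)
Your proof is correct, but it reaches Lemma \ref{schlesslemmao} by a different reduction than the paper does. The paper chooses a presentation $0\to\mathcal G'\to\mathcal G''\to\mathcal G\to 0$ with $\mathcal G''$ free, dualizes to embed $\mathcal F=\mathcal G^\vee$ into the free sheaf $\mathcal G''^\vee$, and then uses left exactness of $H^0_{(X\setminus U)}(X,\cdot)$ to inject $H^0_{(X\setminus U)}(X,\mathcal F)$ into $H^0_{(X\setminus U)}(X,\mathcal G''^\vee)=0$. You instead avoid choosing a presentation altogether: you read a global section of $\mathcal G^\vee$ as an $A$-homomorphism $\phi\colon G\to A$, observe that the support condition forces $\phi(g)|_U=0$ for every $g\in G$, and invoke Lemma \ref{schlesslemmao} only for the rank-one free sheaf $\OO_X$. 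Both arguments consume the hypotheses (normality, $\codim_X(X\setminus U)\ge 2$, affineness) in exactly the same place, namely inside Lemma \ref{schlesslemmao}; what your version buys is that it is presentation-free and more elementary, at the cost of the sheaf-versus-module bookkeeping you flag (the identification $\Hom_X(\mathcal G,\OO_X)=\widetilde{\Hom_A(G,A)}$ and the compatibility of the support condition with evaluation), whereas the paper's version is purely functorial and transfers verbatim to the non-affine local statements it later needs in Lemma \ref{schlesslemmaii}. Your closing remark that one can run the evaluation argument purely sheaf-theoretically is accurate and would remove the only delicate point.
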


\begin{proof}
Since $\mathcal G$ is a coherent sheaf, there exists an exact sequence of coherent sheaves on $X$
$$
0\to \mathcal G'\to\mathcal G''\to\mathcal G\to 0,
$$
where $\mathcal G''$ is free. Since $\Hom_X(\cdot, \mathcal O_X)$ and $H^0_{(X\setminus U)}(X,\cdot)$
are left exact functors, the corresponding map
$$
H^0_{(X\setminus U)}(X,\mathcal F)\to
H^0_{(X\setminus U)}(X,\mathcal G''^\vee)
$$
is an embedding. $\mathcal G''$ is free and coherent, i.~e. it is a free sheaf of finite rank, so 
$\mathcal G''^\vee$ is also a free sheaf of finite rank.
By Lemma \ref{schlesslemmao}, $H^0_{(X\setminus U)}(X,\mathcal G''^\vee)=0$. Hence,
$H^0_{(X\setminus U)}(X,\mathcal F)=0$.
\end{proof}

\begin{remark}
Strictly speaking, we will not need this fact later, but the statement of the lemma is closely related to
the notion of a \emph{reflexive} sheaf. Namely, a sheaf $\mathcal F$ is called \emph{reflexive} if 
$\mathcal F^{\vee\vee}=\mathcal F$. Clearly, if $\mathcal F$ is reflexive, then it satisfies the conditions of the lemma, we
can take $\mathcal G=\mathcal F^\vee$. One can prove that the contrary is also true, i.~e. if $\mathcal F$ can be
written as $\mathcal G^\vee$, then $\mathcal F$ is reflexive.
\end{remark}

\begin{lemma}\label{schlesslemmaii}
Let $X$ be a normal affine algebraic variety, $U$ be an open subset such that $\codim_X(X\setminus U)\ge 2$, and 
$\mathcal F$ be a coherent sheaf on $X$ such that there exists a coherent sheaf $\mathcal G$ on $X$ such that 
$\mathcal F=\Hom_X(\mathcal G, \mathcal O_X)$. Then the restriction map $\Gamma(X,\mathcal F)\to\Gamma(U,\mathcal F)$ 
is an isomorphism.
\end{lemma}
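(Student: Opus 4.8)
The plan is to run the same long exact sequence for cohomology with support that was used in Lemma \ref{schlesslemmao}, now for the sheaf $\mathcal F$. Writing $Z=X\setminus U$, the relevant piece is
$$
0\to H^0_Z(X,\mathcal F)\to \Gamma(X,\mathcal F)\to\Gamma(U,\mathcal F)\to H^1_Z(X,\mathcal F).
$$
Injectivity of the restriction map is exactly the vanishing $H^0_Z(X,\mathcal F)=0$, which is precisely Lemma \ref{schlesslemmai} (applicable since $\mathcal F=\mathcal G^\vee$). So the whole statement reduces to proving surjectivity, and for that it suffices to show $H^1_Z(X,\mathcal F)=0$: then the connecting map out of $\Gamma(U,\mathcal F)$ is forced to vanish and restriction is onto.

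To establish $H^1_Z(X,\mathcal F)=0$ I would resolve $\mathcal G$ one step by a free sheaf exactly as in Lemma \ref{schlesslemmai}: choose an exact sequence $0\to\mathcal G'\to\mathcal G''\to\mathcal G\to 0$ with $\mathcal G''$ free of finite rank, and dualize by the contravariant left-exact functor $\Hom_X(-,\mathcal O_X)$. This yields $0\to\mathcal F\to\mathcal G''^\vee\to\mathcal G'^\vee$, so $\mathcal F$ is the kernel of $\mathcal G''^\vee\to\mathcal G'^\vee$. Setting $\mathcal Q=\im(\mathcal G''^\vee\to\mathcal G'^\vee)\subseteq\mathcal G'^\vee$, I get a genuine short exact sequence $0\to\mathcal F\to\mathcal G''^\vee\to\mathcal Q\to 0$. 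Its cohomology-with-support long exact sequence contains the segment
$$
H^0_Z(X,\mathcal Q)\to H^1_Z(X,\mathcal F)\to H^1_Z(X,\mathcal G''^\vee).
$$
The right term vanishes by Lemma \ref{schlesslemmao}, because $\mathcal G''^\vee$ is again free of finite rank. For the left term, I use that $\mathcal Q\hookrightarrow\mathcal G'^\vee$ together with left-exactness of $H^0_Z(X,-)$ to embed $H^0_Z(X,\mathcal Q)\hookrightarrow H^0_Z(X,\mathcal G'^\vee)$; since $\mathcal G'^\vee$ is the dual of the coherent sheaf $\mathcal G'$, Lemma \ref{schlesslemmai} gives $H^0_Z(X,\mathcal G'^\vee)=0$, hence $H^0_Z(X,\mathcal Q)=0$. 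Squeezed between two zeros, $H^1_Z(X,\mathcal F)=0$.

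Combining $H^0_Z(X,\mathcal F)=0$ and $H^1_Z(X,\mathcal F)=0$ in the first displayed sequence shows that $\Gamma(X,\mathcal F)\to\Gamma(U,\mathcal F)$ is an isomorphism. The only genuinely delicate point is the bookkeeping in the second step: one must recognize $\mathcal Q$ not as the dual of a coherent sheaf itself, but merely as a \emph{subsheaf} of such a dual, so that the previous lemmas apply to $\mathcal G'^\vee$ and the vanishing propagates down to $\mathcal Q$ by left-exactness. Everything else is the standard formalism of the two long exact sequences, and the coherence of $\mathcal G'$, $\mathcal Q$, and $\mathcal F$ (as kernels, images, and duals of coherent sheaves on the Noetherian scheme $X$) is routine.
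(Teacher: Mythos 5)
Your proposal is correct and follows essentially the same route as the paper's own proof: both form the short exact sequence $0\to\mathcal F\to\mathcal G''^\vee\to\mathcal Q\to 0$ with $\mathcal Q$ a subsheaf of $\mathcal G'^\vee$, kill $H^0_{(X\setminus U)}(X,\mathcal Q)$ and $H^1_{(X\setminus U)}(X,\mathcal G''^\vee)$ via Lemmas \ref{schlesslemmai} and \ref{schlesslemmao}, and conclude from the two long exact sequences of cohomology with support. The "delicate point" you flag — that $\mathcal Q$ is only a subsheaf of a dual, not itself a dual — is exactly the observation the paper makes as well.
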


\begin{proof}
Again write an exact sequence of coherent sheaves on $X$
$$
0\to \mathcal G'\to\mathcal G''\to\mathcal G\to 0,
$$
where $\mathcal G''$ is free. The dualization functor is left exact, so the corresponding map $\mathcal F\to \mathcal G''\vee$ is 
an embedding, and its cokernel (denote it by $\mathcal Q$) is a subsheaf of $\mathcal G'^\vee$. By Lemma
\ref{schlesslemmai}, $H^0_{(X\setminus U)}(X,\mathcal G'^\vee)=0$.
Since $\mathcal Q$ is a subsheaf of $\mathcal G'^\vee$ and $H^0_{(X\setminus U)}(X,\cdot)$ is a left exact functor, 
$H^0_{(X\setminus U)}(X,\mathcal Q)=0$. Again, since $\mathcal G''$ is free and coherent,
$\mathcal G''^\vee$ is a free sheaf of finite rank. By Lemma \ref{schlesslemmao}, $H^1_{(X\setminus U)}(X,\mathcal G''^\vee)=0$.
We have the following exact sequence of cohomology:
\begin{multline*}
0\to H^0_{(X\setminus U)}(X,\mathcal F)\to
H^0_{(X\setminus U)}(X,\mathcal G''^\vee)\to
H^0_{(X\setminus U)}(X,\mathcal Q)\to\\
H^1_{(X\setminus U)}(X,\mathcal F)\to
H^1_{(X\setminus U)}(X,\mathcal G''^\vee)\to\ldots,
\end{multline*}
and we see that $H^1_{(X\setminus U)}(X,\mathcal F)=0$. By Lemma \ref{schlesslemmai},
$H^0_{(X\setminus U)}(X,\mathcal F)=0$.
Now write the following long exact sequence:
$$
0\to H^0_{(X\setminus U)}(X,\mathcal F)\to
H^0(X,\mathcal F)\to
H^0(U,\mathcal F)\to
H^1_{(X\setminus U)}(X,\mathcal F)\to\ldots
$$
We see that the restriction map $H^0(X,\mathcal F)\to
H^0(U,\mathcal F)$ is an isomorphism.
\end{proof}

Now we are ready to prove Theorem \ref{schlessgen}.
Denote $A=\mathbb C[X]$.
The generators $\abstractdependentgenerator1,\ldots,\abstractdependentgenerator{\numberofabstractgenerators}$ 
define an embedding $X\hookrightarrow \mathbb C^\numberofabstractgenerators=\Spec \mathbb C[\abstractindependentgenerator 1,\ldots,\abstractindependentgenerator{\numberofabstractgenerators}]$
and a morphism of algebras $\mathbb C[\abstractindependentgenerator 1,\ldots,\abstractindependentgenerator{\numberofabstractgenerators}]\to A$ 
so that $\abstractindependentgenerator i\mapsto \abstractdependentgenerator{i}$. Denote the kernel of this 
algebra morphism by $I$.
As we have previously seen, $I/I^2$ is an $A$-module. Denote the corresponding sheaf on $X$ by $\mathcal I$.
Observe that the $A$-module $\Theta/I\Theta$ introduced in the definition of $T^1(X)$ is isomorphic to the free 
$A$-module of rank $\numberofabstractgenerators$ as an $A$-module. The kernel of the map $\phi\colon \Theta/I\Theta \to \Hom_A(I/I^2,A)$ 
consists of all $\numberofabstractgenerators$-tuples $(g_1, \ldots, g_{\numberofabstractgenerators})$ of functions on $X$ such that for all $h\in I$ one has 
$\sum g_i\partial h/\partial \abstractindependentgenerator i=0$ in $A$ (to evaluate this expression, we take arbitrary representatives in the 
cosets corresponding to $g_i$ and to $h$ in $\mathbb C[\abstractindependentgenerator 1,\ldots,\abstractindependentgenerator{\numberofabstractgenerators}]$ 
and in $I$, respectively, we have seen previously
that its value in $A$ does not depend on this choice). In other words, the $\numberofabstractgenerators$-tuple $(g_1, \ldots, g_{\numberofabstractgenerators})$ defines a tangent 
vector field to $X$. The embedding of the tangent bundle on $X$ into the rank $\numberofabstractgenerators$ trivial bundle on $X$ we 
have just obtained coincides with the map $\psi$ in the statement of Theorem \ref{schlessgen}. So, we have the following exact 
sequence of $A$-modules:
$$
0\to \Gamma(X,\Theta_X)\stackrel{\Gamma(\psi|_U)}\longrightarrow A^{\oplus \numberofabstractgenerators}\to \Hom_A(I/I^2,A)\to T^1(X)\to 0.
$$
Since $X$ is affine, we also have an exact sequence of sheaves (denote the sheaf generated by the $A$-module $T^1(X)$ by $\mathcal T^1$):
$$
0\to\Theta_X\stackrel{\psi}\longrightarrow \mathcal O_X^{\oplus \numberofabstractgenerators}\to \mathcal I^\vee\to \mathcal T^1\to 0.
$$
Denote the map between sheaves $\mathcal O_X^{\oplus \numberofabstractgenerators}$ and $\mathcal I^\vee$ by $\widetilde \phi$.
It is known that 
(see, for example, \cite[Exercise 3.5 and Theorem 4.9]{hartdef})
if $U'\subseteq X$ is smooth, then $\Gamma(U',\mathcal T^1)=0$. So, if $U'$ is, 
in addition, affine, we have the following 
exact sequence:
$$
0\to\Gamma(U',\Theta_X)\stackrel{\Gamma(\psi|_{U'})}\longrightarrow \Gamma(U',\mathcal O_X^{\oplus \numberofabstractgenerators})
\stackrel{\Gamma(\widetilde{\phi}|_{U'})}\longrightarrow \Gamma(U',\mathcal I^\vee)\to 0.
$$
In particular, this holds for affine sets $U'$ forming an affine cover of $U$. Therefore, we have the following
exact sequence of sheaves on $U$:
$$
0\to\Theta_X|_U\stackrel{\psi|_U}\longrightarrow \mathcal O_X^{\oplus \numberofabstractgenerators}|_U\stackrel{\widetilde{\phi}|_U}\longrightarrow \mathcal I^\vee|_U\to 0,
$$
and we can write the long exact sequence of cohomology:
\begin{multline*}
0\to H^0(U,\Theta_X)\stackrel{H^0(\psi|_U)}\longrightarrow 
H^0(U,\mathcal O_X^{\oplus \numberofabstractgenerators})\stackrel{H^0(\widetilde{\phi}|_U)}\longrightarrow 
H^0(U,\mathcal I^\vee)\to\\
H^1(U,\Theta_X)\stackrel{H^1(\psi|_U)}\longrightarrow 
H^1(U,\mathcal O_X^{\oplus \numberofabstractgenerators})\to\ldots
\end{multline*}
Denote the map between $H^0(U,\mathcal I^\vee)$ and $H^1(U,\Theta_X)$ by $\delta$. 
We have $\ker H^1(\psi|_U)=\im \delta=H^0(U,\mathcal I^\vee)/\ker \delta=
H^0(U,\mathcal I^\vee)/\im H^0(\widetilde{\phi}|_U)=\coker H^0(\widetilde{\phi}|_U)$.

Recall the exact sequence of $A$-modules we started with:
$$
0\to \Gamma(X,\Theta_X)\stackrel{\Gamma(\psi)}\longrightarrow A^{\oplus \numberofabstractgenerators}\to \Hom_A(I/I^2,A)\to T^1(X)\to 0.
$$
We can write $A^{\oplus \numberofabstractgenerators}$ as $\Gamma(X,\mathcal O_X^{\oplus \numberofabstractgenerators})$. 
and
$\Hom_A(I/I^2,A)$ as $\Gamma(X,\mathcal I^\vee)$. Now we can apply Lemma \ref{schlesslemmaii}. $\Theta_X$ is 
dual to $\Omega_X$, $\mathcal O_X^{\oplus \numberofabstractgenerators}$ is dual to itself, and $\mathcal I^\vee$ is dual to $\mathcal I$ by 
construction. So we can rewrite the exact sequence as follows:
$$
0\to H^0(U,\Theta_X)\stackrel{H^0(\psi|_U)}\longrightarrow 
H^0(U,\mathcal O_X^{\oplus \numberofabstractgenerators})\stackrel{H^0(\widetilde{\phi}|_U)}\longrightarrow 
H^0(U,\mathcal I^\vee)\to
T^1(X)\to 0,
$$
and we see that $\coker H^0(\widetilde{\phi}|_U)=T^1(X)$.
\end{proof}

\begin{remark}
If a torus $T$ acts on $X$ and preserves $U$, all generators of $\CC[X]$ we have are homogeneous, 
and we find an affine covering of $U$ by sets preserved by $T$, 
then $\ker (H^1(U,\Theta_X)\stackrel{H^1(\psi|_U)}\longrightarrow H^1(U,\OO_X^{\oplus \numberofabstractgenerators}))$
becomes a graded $\CC[X]$-module. 
The $\CC[X]$-module $T^1(X)$ also becomes graded (see Section \ref{deformsreal}).

In this case, the argument above proves that $T^1(X)$ is isomorphic to 
$\ker (H^1(U,\Theta_X)\stackrel{H^1(\psi|_U)}\longrightarrow H^1(U,\OO_X^{\oplus \numberofabstractgenerators}))$
as a \emph{graded} $\CC[X]$-module.
\end{remark}

\section{Cech complexes cohomology}

We need 
two
more facts related to Cech complexes. The first proposition explains 
how to 
compute
derived direct images using Cech resolutions.

Let $\mathcal F$ be a quasicoherent sheaf on a separated algebraic variety $U$, and let $\{U_i\}_{i=1}^\numberofabstractopensets$ be an 
affine covering of $U$. 
Consider the following 
\textit{sheaf Cech resolution of $\mathcal F$}: it consists of sheaves $\mathcal F^i$ on $U$, $i\ge 0$, 
and 
$$
\mathcal F^i=\bigoplus_{1\le a_1<a_2<\ldots<a_{i+1}\le \numberofabstractopensets}\mathcal F_{a_1,\ldots,a_{i+1}},
$$
where if $V\subseteq U$ is an open subset, then $\Gamma(V,\mathcal F_{a_1,\ldots,a_{i+1}})=
\Gamma (V\cap U_{a_1}\cap\ldots\cap U_{a_{i+1}},\mathcal F)$. The differentials in 
the resolution are defined in the usual Cech sense: given a section
$$
(x_{a_1,\ldots,a_i})_{1\le a_1<a_2<\ldots<a_i\le \numberofabstractopensets}\in \Gamma (V,\mathcal F^{i-1}),
$$
the differential maps it to 
$$
(y_{a_1,\ldots,a_{i+1}})_{1\le a_1<a_2<\ldots<a_{i+1}\le \numberofabstractopensets}\in \Gamma (V,\mathcal F^i),
$$
where 
$$
y_{a_1,\ldots,a_{i+1}}=
\sum_{j=1}^{i+1}\!\left.\vphantom{\sum}(-1)^j(x_{a_1, \ldots, \widehat{a_j}, \ldots, a_{i+1}})
\right|_{V\cap U_{a_1}\cap\ldots\cap U_{a_{i+1}}}.
$$
Notice that if we take the global sections of all $\mathcal F^i$, we obtain a Cech complex 
of $\mathcal F$ in the "usual", non-sheaf sense.

Suppose we have a map $f\colon U\to Y$, where 
$Y$
is also a separated algebraic variety.

\begin{proposition}\label{computederived}\cite[Proposition III.8.7]{hartshorne}
$$
R^if_*(\mathcal F)=\mathcal H^i(f_*(\mathcal F^\bullet)),
$$
where $\mathcal H^i$ is the $i$th cohomology of the complex formed by $f_*(\mathcal F^i)$ for $i\ge 0$, 
not the $i$th cohomology of a particular sheaf.\qed
\end{proposition}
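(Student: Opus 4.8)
The plan is to realize $\mathcal F^\bullet$ as a resolution of $\mathcal F$ by $f_*$-acyclic sheaves and then invoke the standard homological fact that such resolutions compute right derived functors. First I would augment the complex by the natural map $\mathcal F\to\mathcal F^0=\bigoplus_a\mathcal F_a$ whose components are the restriction maps $\mathcal F\to\mathcal F_a$, and check that
$$0\to\mathcal F\to\mathcal F^0\to\mathcal F^1\to\cdots$$
is exact. Exactness is a stalk-wise question, so I would fix a point $x\in U$ and an index $j$ with $x\in U_j$. Since $U_j$ is open and contains $x$, for every multi-index the stalks at $x$ of $\mathcal F_{a_1,\ldots,a_{i+1}}$ and of $\mathcal F_{j,a_1,\ldots,a_{i+1}}$ are naturally identified (near $x$ intersecting with $U_j$ changes nothing), so inserting and deleting the index $j$ defines the usual Cech contracting homotopy $k$ satisfying $dk+kd=\id$ on the stalk complex. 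This proves exactness.

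The second, and main, step is acyclicity: I claim $R^p f_*\mathcal F^i=0$ for all $p>0$. Because $\mathcal F^i$ is a finite direct sum, it suffices to treat a single summand $\mathcal F_{a_1,\ldots,a_{i+1}}$, which equals $\iota_*(\mathcal F|_W)$ for the inclusion $\iota\colon W:=U_{a_1}\cap\cdots\cap U_{a_{i+1}}\hookrightarrow U$. Here I would use separatedness of $U$: a finite intersection of the affine sets $U_{a_k}$ is again affine, so $\iota$ is an affine morphism and $R^q\iota_*$ vanishes on quasicoherent sheaves for $q>0$. The Leray (composite-functor) spectral sequence for $W\xrightarrow{\iota}U\xrightarrow{f}Y$ then degenerates at $E_2$ and yields $R^p f_*\bigl(\iota_*(\mathcal F|_W)\bigr)\cong R^p(f\iota)_*(\mathcal F|_W)$. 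Finally, $R^p(f\iota)_*$ is the sheafification of $V\mapsto H^p\bigl((f\iota)^{-1}(V),\mathcal F\bigr)$, and for $V\subseteq Y$ affine one has $(f\iota)^{-1}(V)=W\times_Y V$, which is affine; this is where separatedness of $Y$ enters, since the fiber product of two affine schemes over a separated base is affine (it is a closed subscheme of the affine product $W\times V$). Hence the quasicoherent cohomology $H^p$ vanishes for $p>0$, and each $\mathcal F^i$ is $f_*$-acyclic.

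With both steps in hand, the conclusion follows from the general principle that a resolution by objects acyclic for a left exact functor computes its right derived functors: applied to $f_*$ and the resolution $\mathcal F^\bullet$, this gives $R^i f_*(\mathcal F)=\mathcal H^i\bigl(f_*(\mathcal F^\bullet)\bigr)$.

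I expect the acyclicity step to be the main obstacle, both in correctly assembling the spectral-sequence reduction to $R^p(f\iota)_*$ and, above all, in pinpointing the two distinct uses of separatedness: separatedness of $U$, to know that the finite intersections $W$ are affine (so that $\iota$ is an affine morphism with no higher direct images), and separatedness of $Y$, to know that preimages of affine opens under the affine-source morphism $f\iota$ are affine (so that their higher quasicoherent cohomology vanishes). By comparison, the stalk-wise exactness and the concluding homological-algebra step are routine.
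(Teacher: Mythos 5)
Your argument is correct. Note, though, that the paper offers no proof of this proposition at all: it is stated as a citation of \cite[Proposition III.8.7]{hartshorne}, so the only comparison available is with Hartshorne's own proof. That proof takes a different route: it observes that the statement is local on $Y$, reduces to $Y$ affine, identifies $R^if_*\mathcal F$ with the sheaf associated to $H^i(U,\mathcal F)$ (III.8.5) and $\mathcal H^i(f_*\mathcal F^\bullet)$ with the sheaf associated to the \v{C}ech cohomology of the cover, and then invokes the comparison theorem between \v{C}ech and derived-functor cohomology for an affine cover of a separated scheme (III.4.5). You instead prove directly that $\mathcal F^\bullet$ is an $f_*$-acyclic resolution of $\mathcal F$, which is self-contained modulo standard facts and correctly isolates the two uses of separatedness: separatedness of $U$ makes each $W=U_{a_1}\cap\cdots\cap U_{a_{i+1}}$ affine (so $\iota\colon W\hookrightarrow U$ is an affine morphism with $R^q\iota_*=0$ for $q>0$, and the Grothendieck spectral sequence collapses to give $R^pf_*\iota_*(\mathcal F|_W)\cong R^p(f\iota)_*(\mathcal F|_W)$), while separatedness of $Y$ makes $(f\iota)^{-1}(V)=W\times_Y V$ a closed subscheme of the affine $W\times V$, hence affine with vanishing higher quasicoherent cohomology. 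The stalkwise contracting homotopy for exactness and the final appeal to acyclic resolutions computing derived functors are both standard and correctly deployed. What your route buys is a proof that does not presuppose the \v{C}ech-versus-derived-functor comparison theorem; what Hartshorne's buys is brevity given that comparison theorem as a prior result.
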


The second fact gives an easier way to compute the first cohomology of complexes that "look like a Cech complex"
under certain circumstances in any abelian category. Suppose that $\mathcal C$ is an abelian category, 
let $A$ be an object, let $\numberofabstractopensets\in \mathbb\NN$, and let for every $1\le i\le \numberofabstractopensets$ indices $a_i$ 
satisfying $1\le a_1 <\ldots< a_i\le \numberofabstractopensets$ $A_{a_1, \ldots, a_i}$ be a subobject of $A$ (i.~e. an object together with 
a morphism $A_{a_1, \ldots, a_i}\to A$ whose kernel is zero). 
Suppose also that if $(a_j)_{j=1}^i$ is a subsequence of $(b_j)_{j=1}^{i+1}$, 
then $A_{a_1, \ldots, a_i}$ is a subobject of $A_{b_1, \ldots, b_{i+1}}$, and the embedding 
$A_{a_1, \ldots, a_i}\to A_{b_1, \ldots, b_{i+1}}$ commutes with the embeddings of these 
objects into $A$.

Now consider the following complex $B^\bullet$: 
$$
B^i=\bigoplus_{1\le a_1<a_2<\ldots<a_{i+1}\le \numberofabstractopensets} A_{a_1,\ldots,a_{i+1}}, \quad i\ge -1.
$$
Here we allow $i=-1$ and say that $A_{\text{the empty sequence}}=0$, so $B^{-1}=0$
The differential $d\colon B^{i-1}\to B^i$ is defined using a sign-alternating sum, as it is usually 
defined in Cech complexes. Here we have objects in an abelian category, not necessarily abelian groups or 
modules over a ring, so we use universal properties of direct sums to interpret formulas with addition 
and subtraction signs.

We also need the following complex $B'^\bullet$:
$$
B'^i=\bigoplus_{1\le a_1<a_2<\ldots<a_{i+1}\le \numberofabstractopensets} A/A_{a_1,\ldots,a_{i+1}}, \quad i\ge -1.
$$
Here we also allow $i=-1$, and $B'^{-1}=A$.
Again, the differentials are defined "as usual" 
using universal properties of direct sums.


\begin{proposition}\label{hnasquotient}
For $i\ge 0$, $H^i(B^\bullet)=H^{i-1}(B'^\bullet)$. This isomorphism is functorial in $B$ and $B'$ if the embeddings 
$A_{a_1, \ldots, a_i}\to A$ are functorial in $A_{a_1, \ldots, a_i}$ and $A$.
\end{proposition}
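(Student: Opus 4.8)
The plan is to interpolate between $B^\bullet$ and $B'^\bullet$ by the ``constant'' augmented complex built from $A$ itself, and to exploit that this constant complex is acyclic. Concretely, I would introduce the complex $C^\bullet$ with
$$
C^i=\bigoplus_{1\le a_1<a_2<\ldots<a_{i+1}\le \numberofabstractopensets} A,\quad i\ge 0,\qquad C^{-1}=A,
$$
where the single summand in degree $-1$ corresponds to the empty sequence, and whose differentials are the same sign-alternating sums as in $B^\bullet$ and $B'^\bullet$ (the degree $-1$ differential $C^{-1}\to C^0$ being the signed diagonal, i.e.\ the identity into each summand carrying its Cech sign). For each index tuple the embedding $A_{a_1,\ldots,a_{i+1}}\to A$ and the quotient map $A\to A/A_{a_1,\ldots,a_{i+1}}$ assemble, precisely by the stated compatibility of the subobjects with the embeddings into $A$, into morphisms of complexes fitting into a termwise short exact sequence
$$
0\to B^\bullet\to C^\bullet\to B'^\bullet\to 0.
$$
In degree $-1$ this reads $0\to 0\to A\to A\to 0$, and in degrees $i\ge 0$ it is the direct sum over $(a_1,\ldots,a_{i+1})$ of the defining short exact sequences $0\to A_{a_1,\ldots,a_{i+1}}\to A\to A/A_{a_1,\ldots,a_{i+1}}\to 0$.

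The decisive step is that $C^\bullet$ is acyclic. Here $C^\bullet$ is nothing but the augmented simplicial cochain complex of the full simplex on the vertex set $\{1,\ldots,\numberofabstractopensets\}$ with constant coefficients $A$, and such a complex is null-homotopic: contracting the simplex onto the vertex $1$ yields an explicit homotopy $s^i\colon C^i\to C^{i-1}$ which, on the summand indexed by $S=\{a_1<\ldots<a_{i+1}\}$, is (up to the appropriate Cech sign) the identity onto the summand indexed by $S\setminus\{1\}$ when $1\in S$ and is zero otherwise. A direct check of the sign bookkeeping gives $s\,d+d\,s=\mathrm{id}$. Since every map here is phrased through the universal properties of finite direct sums, the argument is valid in the arbitrary abelian category $\mathcal C$, and a null-homotopic identity forces $H^i(C^\bullet)=0$ for every $i$; in particular $H^{-1}(C^\bullet)=\ker(C^{-1}\to C^0)=0$ because the signed diagonal is a monomorphism.

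It then remains to feed the short exact sequence of complexes into the long exact cohomology sequence. For each $i$ this produces
$$
H^{i-1}(C^\bullet)\to H^{i-1}(B'^\bullet)\stackrel{\delta}{\longrightarrow} H^i(B^\bullet)\to H^i(C^\bullet).
$$
Whenever $i\ge 0$ both outer terms vanish by the acyclicity of $C^\bullet$, so the connecting homomorphism $\delta$ is an isomorphism $H^{i-1}(B'^\bullet)\stackrel{\cong}{\longrightarrow} H^i(B^\bullet)$, which is exactly the claimed equality $H^i(B^\bullet)=H^{i-1}(B'^\bullet)$. Functoriality is then automatic: a morphism of the input data, namely an object $A$ together with its subobjects $A_{a_1,\ldots,a_i}$ and the compatible embeddings into $A$, induces a morphism of the short exact sequences $0\to B^\bullet\to C^\bullet\to B'^\bullet\to 0$, and the connecting homomorphism of the long exact sequence is natural with respect to such morphisms.

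I would expect the only genuine subtlety to be establishing the acyclicity of $C^\bullet$ entirely inside an abstract abelian category, where no elements are available: one must define both the Cech differentials and the contracting homotopy solely via universal properties of direct sums, as stipulated in the setup, verify the signs in $s\,d+d\,s=\mathrm{id}$, and handle the augmentation degree $i=-1$ as a separate case. Once acyclicity is in hand, the remainder is a formal consequence of the long exact sequence attached to a short exact sequence of complexes.
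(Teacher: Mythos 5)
Your proof is correct and follows the same skeleton as the paper's: both sandwich the comparison between $B^\bullet$ and $B'^\bullet$ into the termwise short exact sequence $0\to B^\bullet\to B''^\bullet\to B'^\bullet\to 0$ with the constant augmented complex in the middle, prove that middle complex is acyclic, and read off the isomorphism from the long exact cohomology sequence. The only genuine divergence is in how the acyclicity of the constant complex is established. The paper quotes the topological fact that the augmented simplicial cochain complex of a simplex over $\ZZ$ is acyclic, invokes Mitchell's embedding theorem to realize the relevant subcategory of $\mathcal C$ inside a module category, identifies $B''^\bullet$ with $C'^\bullet\otimes_\ZZ A$, and concludes via $\Tor_i(0,A)=0$ using that $C'^\bullet$ is a complex of finite-rank free abelian groups. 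You instead write down the contracting homotopy obtained by coning off the vertex $1$ and verify $sd+ds=\mathrm{id}$ directly; since both $d$ and $s$ are assembled from identities and signs via the universal property of finite direct sums, this works verbatim in an arbitrary abelian category and makes the Mitchell/Tor detour unnecessary. Your route is more elementary and self-contained, and it is also what makes the functoriality claim immediate, since everything reduces to naturality of the connecting homomorphism (a point the paper's proof leaves implicit). The only cost is the sign bookkeeping in the homotopy identity, which you correctly flag; note that with the paper's convention the differential carries signs $(-1)^j$ starting at $j=1$, so the augmentation $C^{-1}\to C^0$ is minus the diagonal, which affects nothing.
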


\begin{proof}
Consider the following complex $B''^\bullet$: 
$$
B''^i=\bigoplus_{1\le a_1<a_2<\ldots<a_{i+1}\le \numberofabstractopensets} A, \quad i\ge -1.
$$
The differential is again the standard Cech differential. Clearly, 
we have an exact sequence of complexes: 
$$
0\to B^\bullet\to B''^\bullet\to B'^\bullet\to 0.
$$

Let us check that $B''^\bullet$ is acyclic.
\begin{lemma}
$B''^\bullet$ is acyclic.
\end{lemma}
\begin{proof}
First, consider the topological complex for a simplex with $\numberofabstractopensets$ vertices, 
i.~e. the following complex $C^\bullet$: 
$$
C^i=\bigoplus_{1\le a_1<a_2<\ldots<a_{i+1}\le \numberofabstractopensets} \ZZ, \quad i\ge 0.
$$
Here we do not allow $i=-1$, and the differential again coincides with the standard Cech differential
(although initially it is defined by topological means).
It is a well-known topological fact that $H^0(C^\bullet)=\ZZ$ and 
$H^i(C^\bullet)=0$ for $i\ne 0$. One easily checks directly that $H^0(C^\bullet)$
consists of classes of the elements of $C^0=\ZZ^{\oplus\numberofabstractgenerators}$
that have all coordinates equal, i.~e. of the elements of the form $(a,a,\ldots,a)$, where $a\in\ZZ$.

Therefore, the following complex $C'^\bullet$ of abelian groups is acyclic: 
$$
C'^i=\bigoplus_{1\le a_1<a_2<\ldots<a_{i+1}\le \numberofabstractopensets} \ZZ, \quad i\ge -1.
$$
Now let us use Mitchell's embedding theorem. Consider the abelian subcategory in $\mathcal C_0$ in $\mathcal C$ 
generated by $A$.
This is a small category, therefore by Mitchell's 
embedding theorem it is equivalent to an abelian subcategory in the category of left modules over a 
(not necessarily commutative) ring $R$.
So, we can consider $B''^\bullet$ as a complex of $R$-modules. In particular, $B''^\bullet$
also becomes a complex of abelian groups, 
and it is acyclic as a complex of $R$-modules iff it is acyclic as a complex of abelian groups.
And for complexes of abelian groups, we clearly have $B''^\bullet=C'^\bullet\otimes_{\ZZ} A$.

Let us deduce that $B''^\bullet$ is also acyclic. 
We cannot be sure that $A$ is a flat object in the category of abelian groups, but we can argue differently.
Since $C'^\bullet$ is acyclic and consists of free abelian groups of finite rank, it can be considered as a projective 
resolution for the abelian group $0$. Then $\Tor_i(0,A)=H^{-i}(C'^\bullet\otimes_\ZZ A)=H^{-i}(B''^\bullet)$.
But $\Tor_i(0,A)=0$, so $B''^\bullet$ is acyclic.
\end{proof}

Now let us write the long exact sequence for the exact triple
$$
0\to B^\bullet\to B''^\bullet\to B'^\bullet\to 0:
$$
$$
\ldots\to H^i(B^\bullet)\to H^i(B''^\bullet)\to H^i(B'^\bullet)\to H^{i+1}(B^\bullet)\to H^{i+1}(B''^\bullet)\to H^{i+1}(B'^\bullet)\to\ldots
$$
We have $H^i(B''^\bullet)=0$ and $H^{i+1}(B''^\bullet)=0$ for all $i\in\ZZ$, so 
$H^i(B'^\bullet)=H^{i+1}(B^\bullet)$.
\end{proof}

\begin{corollary}\label{h1compute}
If $A_{j,k}=A$ for all $1\le j<k\le \numberofabstractopensets$, then $H^1(B^\bullet)=(\bigoplus_{j=1}^\numberofabstractopensets A/A_j)/A$, 
where $A$ is mapped to $\bigoplus_{j=1}^\numberofabstractopensets A/A_j$ diagonally.\qed
\end{corollary}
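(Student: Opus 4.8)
The plan is to deduce this immediately from Proposition~\ref{hnasquotient} by computing the relevant cohomology of the auxiliary complex $B'^\bullet$ in the lowest degrees, where the hypothesis $A_{j,k}=A$ makes everything collapse. Applying Proposition~\ref{hnasquotient} with $i=1$ gives the identification $H^1(B^\bullet)=H^0(B'^\bullet)$, so the entire task reduces to understanding $B'^\bullet$ in degrees $-1$, $0$, and $1$.

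I would first record the low-degree terms of $B'^\bullet$ from its definition: the empty-sequence convention gives $B'^{-1}=A$, the single-index summands give $B'^0=\bigoplus_{j=1}^{\numberofabstractopensets} A/A_j$, and the two-index summands give $B'^1=\bigoplus_{1\le j<k\le\numberofabstractopensets}A/A_{j,k}$. Here is where the hypothesis enters: since $A_{j,k}=A$ for all $j<k$, every summand $A/A_{j,k}$ vanishes, so $B'^1=0$. Consequently the differential $B'^0\to B'^1$ is the zero map and $\ker(B'^0\to B'^1)=B'^0=\bigoplus_j A/A_j$.

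It then remains to identify the differential $d\colon B'^{-1}\to B'^0$, i.e. $A\to\bigoplus_j A/A_j$. The only way to obtain a single-index sequence $(j)$ by deleting one entry from a longer sequence is to delete the single remaining entry of $(j)$ to reach the empty sequence, so the Cech differential sends the empty-sequence component $a\in A$ to the tuple whose $j$-th entry is the class of $a$ in $A/A_j$ (up to an overall sign, which is irrelevant for the image). Thus $\im(d)$ is precisely the diagonal image of $A$ in $\bigoplus_j A/A_j$, and
$$
H^0(B'^\bullet)=\ker(B'^0\to B'^1)/\im(B'^{-1}\to B'^0)=\Bigl(\bigoplus_{j=1}^{\numberofabstractopensets} A/A_j\Bigr)\big/A,
$$
with $A$ embedded diagonally. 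Combined with $H^1(B^\bullet)=H^0(B'^\bullet)$, this is exactly the claim.

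There is essentially no serious obstacle here; the proof is a direct unwinding of the definitions. The only point requiring a little care is the bookkeeping of the Cech differential at the boundary between the empty sequence and the single-index level, where one must confirm that the map $A\to\bigoplus_j A/A_j$ really is the diagonal reduction and that we quotient by its image (so no injectivity of the diagonal map is needed). Everything is taking place in the abelian category $\mathcal C$, but since the relevant maps are all induced by the structural embeddings $A_{a_1,\dots,a_i}\to A$ via universal properties of direct sums, exactly as in the proof of Proposition~\ref{hnasquotient}, no additional hypotheses beyond those already in force are required.
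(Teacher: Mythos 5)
Your proof is correct and is exactly the argument the paper has in mind: the corollary is stated with a \qed precisely because it follows by applying Proposition \ref{hnasquotient} with $i=1$ and observing that $B'^1=\bigoplus_{j<k}A/A_{j,k}=0$ under the hypothesis, so that $H^0(B'^\bullet)$ is the cokernel of the (sign-irrelevant) diagonal map $A\to\bigoplus_j A/A_j$. No gaps.
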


\begin{corollary}\label{h1computegen}
In general, if it is not necessarily true that 
$A_{j,k}=A$ for all $1\le j<k\le \numberofabstractopensets$, then 
$$
H^1(B^\bullet)=\left.\!\left(\ker\Big(\bigoplus_{j=1}^\numberofabstractopensets(A/A_j)\to \bigoplus_{1\le j<k\le \numberofabstractopensets}(A/A_{j,k})\Big )\right)\right/A,
$$ 
where $A$ is mapped to $\bigoplus_{j=1}^\numberofabstractopensets A/A_j$ diagonally.\qed
\end{corollary}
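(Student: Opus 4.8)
The plan is to obtain the formula directly from Proposition \ref{hnasquotient} by unwinding its content in the single degree $i=1$. Applying that proposition with $i=1$ gives a (functorial) isomorphism $H^1(B^\bullet)\cong H^0(B'^\bullet)$, so the entire task reduces to reading off the zeroth cohomology of $B'^\bullet$ at its left end, where the conventions $A_{\text{empty}}=0$ and hence $B'^{-1}=A$ come into play.

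First I would write out the initial terms of $B'^\bullet$ explicitly. Since $B'^{-1}=A$, $B'^0=\bigoplus_{j=1}^{\numberofabstractopensets}A/A_j$, and $B'^1=\bigoplus_{1\le j<k\le\numberofabstractopensets}A/A_{j,k}$, the relevant segment of the complex is
$$
A\longrightarrow \bigoplus_{j=1}^{\numberofabstractopensets}A/A_j\longrightarrow \bigoplus_{1\le j<k\le\numberofabstractopensets}A/A_{j,k}.
$$
Unwinding the Cech differential, the map $B'^{-1}\to B'^0$ sends $x\in A$ to the tuple whose $j$-th entry is the image of $x$ under the projection $A\to A/A_j$, i.e.\ it is exactly the diagonal map, with image the diagonal copy of $A$. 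The map $B'^0\to B'^1$ sends $(x_j)_j$ to the tuple whose $(j,k)$-entry is the signed difference of the images of $x_k$ and $x_j$ under the natural quotients $A/A_k\to A/A_{j,k}$ and $A/A_j\to A/A_{j,k}$; these quotient maps exist precisely because the compatibility hypothesis on subobjects gives $A_j\subseteq A_{j,k}$ and $A_k\subseteq A_{j,k}$.

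Finally I would read off $H^0(B'^\bullet)=\ker(B'^0\to B'^1)/\im(B'^{-1}\to B'^0)$. The kernel is literally the object $\ker\big(\bigoplus_j(A/A_j)\to\bigoplus_{j<k}(A/A_{j,k})\big)$ appearing in the statement, and the image is the diagonal copy of $A$; the sign conventions built into the differential change neither. Substituting into the isomorphism from Proposition \ref{hnasquotient} then yields the asserted description of $H^1(B^\bullet)$. The only point requiring any care — and it is the sole ``obstacle'' — is verifying that the quotient maps defining $B'^0\to B'^1$ are well defined, which is immediate from the subobject-compatibility assumption; everything else is bookkeeping with the $i=-1$ conventions.
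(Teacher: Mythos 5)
Your proposal is correct and is exactly the argument the paper intends: Corollary \ref{h1computegen} is stated with \qed precisely because it is the case $i=1$ of Proposition \ref{hnasquotient} together with reading off $H^0(B'^\bullet)=\ker(B'^0\to B'^1)/\im(B'^{-1}\to B'^0)$ using the conventions $B'^{-1}=A$ and the Cech differential. Your remarks on the well-definedness of the quotient maps $A/A_j\to A/A_{j,k}$ and on the harmlessness of the signs are the only points needing care, and you handle both correctly.
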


\section{Leray spectral sequence}\label{sectleray}

We are going to use the following theorem:
\begin{theorem}
(see, for example, \cite[Section 3.3, page 74]{huybrechts} and \cite[\S III.7, Theorem 7]{gelman})
Let $f\colon X\to Y$ be a morphism of algebraic varieties, and let $\mathcal F$ be a 
quasicoherent sheaf on $X$.
Then there exists a spectral sequence called \textit{Leray spectral sequence} with the second sheet
$$
E_2^{p,q}=H^p(Y, R^qf_*\mathcal F),
$$
where the corresponding differentials map $E_r^{p,q}$ to $E_r^{p+r,q-r+1}$, $r\ge 2$, 
that converges to $H^{p+q}(X,\mathcal F)$. Denote the corresponding filtration 
on $H^{p+q}(X,\mathcal F)$ by $F^\bullet$.

The sheaves $R^qf_*\mathcal F$ can be considered as sheaves of $f_*\OO_X$-modules, 
and $H^p(Y, R^qf_*\mathcal F)$ can be therefore considered as $\CC[X]$-modules.
In this sense, the isomorphism
$$
F^p H^{p+q}(X,\mathcal F)/F^{p+1} H^{p+q}(X,\mathcal F)\cong E_\infty^{p,q}
$$
is an isomorphism of $\CC[X]$-modules.
\end{theorem}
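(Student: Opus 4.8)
The only content requiring proof beyond the cited existence of the Leray spectral sequence is the compatibility of the isomorphism with the $\CC[X]$-module structures, and the plan is to deduce this purely from the functoriality of the spectral sequence in the sheaf $\mathcal F$. Write $\CC[X]=\Gamma(X,\OO_X)=\Gamma(Y,f_*\OO_X)$; the two module structures in the statement are both actions of this one ring. For a fixed element $a\in\CC[X]$, multiplication by $a$ is an $\OO_X$-linear endomorphism $\mu_a\colon\mathcal F\to\mathcal F$ of quasicoherent sheaves. The construction of the Leray spectral sequence (via a Cartan--Eilenberg or injective resolution of $\mathcal F$, as in the cited references) is functorial in $\mathcal F$, so $\mu_a$ induces an endomorphism of the entire spectral sequence: compatible endomorphisms of every page $E_r^{p,q}$ commuting with all differentials, together with an endomorphism of the abutment $H^{p+q}(X,\mathcal F)$ preserving the filtration $F^\bullet$, and these are compatible with the isomorphism $F^pH^{p+q}(X,\mathcal F)/F^{p+1}H^{p+q}(X,\mathcal F)\cong E_\infty^{p,q}$.

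First I would identify the endomorphism induced by $\mu_a$ on each of the two relevant objects with multiplication by $a$. On the abutment this is immediate: the $\CC[X]$-module structure on $H^{p+q}(X,\mathcal F)$ is by definition the functorial action of multiplication endomorphisms, so the endomorphism induced by $\mu_a$ is exactly multiplication by $a$. On the $E_2$-page, recall that the $f_*\OO_X$-module structure on $R^qf_*\mathcal F$ is defined so that, for an open $V\subseteq Y$, a section $b\in\Gamma(V,f_*\OO_X)=\Gamma(f^{-1}(V),\OO_X)$ acts on the presheaf $V\mapsto H^q(f^{-1}(V),\mathcal F)$ through multiplication by $b$ on $\mathcal F|_{f^{-1}(V)}$; taking $V=Y$ and $b=a$ shows that multiplication by $a$ on $R^qf_*\mathcal F$ equals $R^qf_*(\mu_a)$. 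Applying $H^p(Y,-)$ and using the same principle once more, the endomorphism $H^p(Y,R^qf_*(\mu_a))$ of $E_2^{p,q}=H^p(Y,R^qf_*\mathcal F)$ is multiplication by $a$ in its $\CC[X]$-module structure.

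Having made these two identifications, the conclusion is formal: the isomorphism $F^pH^{p+q}(X,\mathcal F)/F^{p+1}H^{p+q}(X,\mathcal F)\cong E_\infty^{p,q}$ intertwines the endomorphism induced by $\mu_a$ on the left with that on the right, and both have just been shown to be multiplication by $a$. Since $a\in\CC[X]$ was arbitrary and the isomorphism is additive, it is $\CC[X]$-linear, as claimed.

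The step I expect to be the main obstacle is the definitional identification on the $E_2$-page: one must verify that the functorially induced endomorphism $R^qf_*(\mu_a)$ really coincides with the module action of $a$ coming from the $f_*\OO_X$-module structure on $R^qf_*\mathcal F$, and likewise that passing to $H^p(Y,-)$ preserves this. This is cleanest to carry out by choosing an explicit injective resolution (or, for the \v Cech computation of Proposition \ref{computederived}, a \v Cech resolution) of $\mathcal F$ on which $\mu_a$ acts term by term and commutes with every differential, so that the induced action on cohomology is visibly the module multiplication. The only care required is to confirm that the $f_*\OO_X$-module structure being used is the standard one and that the functoriality furnished by the cited construction genuinely extends to $E_\infty$ and to the filtration isomorphism, which it does.
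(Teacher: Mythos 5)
Your proposal is correct. Note, however, that the paper does not actually prove this statement: it is quoted from the cited references (\cite[Section 3.3, page 74]{huybrechts}, \cite[\S III.7, Theorem 7]{gelman}), so there is no internal proof to compare against. What you supply is precisely the one ingredient not stated verbatim in those references, namely the $\CC[X]$-linearity of the filtration isomorphism, and your functoriality argument is the standard and correct way to obtain it: multiplication by $a\in\CC[X]=\Gamma(Y,f_*\OO_X)$ is an endomorphism $\mu_a$ of $\mathcal F$ in the category of quasicoherent $\OO_X$-modules, the whole spectral sequence (all pages, the abutment, the filtration, and the comparison isomorphism with $E_\infty$) is functorial in $\mathcal F$, and the induced endomorphisms on $E_2^{p,q}=H^p(Y,R^qf_*\mathcal F)$ and on $H^{p+q}(X,\mathcal F)$ are by definition the respective module multiplications by $a$. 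Your flagged ``main obstacle'' --- checking that $R^qf_*(\mu_a)$ agrees with the module action of $a$ and that $H^p(Y,-)$ preserves this --- is handled exactly as you suggest, by realizing everything on a resolution on which $\mu_a$ acts termwise; this is also consistent with how the paper later computes $R^qf_*$ via the \v Cech resolution of Proposition \ref{computederived}, where the action of $a$ is visibly termwise multiplication commuting with the differentials.
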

Here $R^qf_*$ denotes the $q$th 
derived functors of the direct image functor in quasicoherent sheaf category
(or shortly, "$q$th derived direct image"). 

Notice that if $\dim Y=1$, then (since all sheaves $R^qf_*\mathcal F$ are coherent) 
$H^p(Y, R^qf_*\mathcal F)=0$ for $p\ge 2$ (and $p<0$), so all differentials 
vanish, $E_2^{p,q}=E_\infty^{p,q}$, and we have a short exact sequence
$$
0\to H^1 (Y, R^{q-1}f_*\mathcal F)\to H^q(X,\mathcal F)\to H^0 (Y, R^qf_*\mathcal F)\to 0.
$$

We will also need an explicit description of the maps in this exact sequence for $q=1$. 
By adopting the general construction from \cite[\S III.7]{gelman} one can check that 
the exact sequence above for $q=1$ looks as follows.

Choose an affine open covering $\{V_i\}$ of $Y$. 
Also choose an affine open covering $\{U_{i,j}\}$ of $X$ (here $(i,j)\in\mathfrak I$, 
where $\mathfrak I$ is a finite set of pairs of natural numbers)
so that if $(i,j)\in\mathfrak I$, then $1\le i\le \numberofabstractopensets$, and 
$U_{i,j}\subseteq f^{-1}(V_i)$.

Then the map $H^1 (Y, f_*\mathcal F)\to H^1(X,\mathcal F)$ works as follows:
An element of $H^1 (Y, f_*\mathcal F)$ is represented by a tuple of sections 
$(a_{i,i'})_{1\le i<i'\le \numberofabstractopensets}$, where $a_{i,i'}\in \Gamma(V_i\cap V_{i'},f_*\mathcal F)$
satisfy the cocycle conditions. By definition, 
$\Gamma(V_i\cap V_{i'},f_*\mathcal F)=\Gamma(f^{-1}(V_i\cap V_{i'}), \mathcal F)$, 
and, since $U_{i,i'}\subseteq f^{-1}(V_i)$ 
we can define restrictions $a_{i,i'}|_{U_{i,j}\cap U_{i',j'}}\in \Gamma(U_{i,j}\cap U_{i',j'}, \mathcal F)$
for all $j$ and $j'$ such that $(i,j),(i',j')\in\mathfrak I$.
These sections 
together with zeros for 
the sets $U_{i,j}\cap U_{i',j'}$, where $i=i'$,
form a class in $H^1(X,\mathcal F)$.

The map $H^1(X,\mathcal F)\to H^0 (Y, R^1f_*\mathcal F)$ works as follows:
Suppose that we have an element of $H^1(X,\mathcal F)$ defined by 
sections $a_{i,j,i',j'}\in \Gamma(U_{i,j}\cap U_{i',j'},f_*\mathcal F)$ 
($(i,j),(i',j')\in \mathfrak I$ and $(i,j)<(i',j')$ for some prefixed order on $\mathfrak I$)
satisfying the cocycle conditions. These sections together can be interpreted as 
a global section of the sheaf $\mathcal F^1$ from Proposition \ref{computederived} 
and, therefore, as a global section $s\in \Gamma(Y,f_*\mathcal F^1)$.
It follows from the cocycle conditions on $a_{i,j,i',j'}$ that 
$s\in\Gamma(Y,\ker(\mathcal F^1\to \mathcal F^2))$, so $s$ defines a class in 
$\Gamma(Y,H^1(\mathcal F^\bullet))=H^0(Y,R^1f_*\mathcal F)$.

Finally, consider an even more particular situation. Denote 
$U=\cap_{(i,j)\in\mathfrak I} U_{i,j}$ and $V=\cap_{i=1}^{\numberofabstractopensets}V_i$.
We keep all of the assumptions from the three previous paragraphs, 
but also suppose the following:
\begin{enumerate}
\item $X$ and $Y$ are irreducible.
\item $U$ and $V$ are nonempty.
\item $U\subseteq f^{-1}(V)$.
\item All restriction maps for the sheaf $\mathcal F$ 
to nonempty open subsets are injective (for example, this is true for vector bundles
of finite rank). I.~e., if $W\subseteq W'\subseteq X$ are nonempty open subsets, then 
the restriction map $\Gamma(\mathcal F,W')\to \Gamma(\mathcal F, W)$ is injective.
Then all restriction maps for the sheaf $\mathcal F$ to 
open sets containing $V$ are also injective. 
\item $V=V_i\cap V_{i'}$ if $1\le i<i'\le \numberofabstractopensets$.
This assumption enables us to use Corollary \ref{h1compute} to compute cohomology groups of $\mathcal F$.
\end{enumerate}

We can apply Corollary \ref{h1compute}
to $H^1 (Y, f_*\mathcal F)$ and apply Corollary \ref{h1computegen} 
to $H^1(X,\mathcal F)$.
Moreover, 
we can consider the sheaf $\mathcal F_U=(j_U)_*\mathcal F|_U$.
In other words, for all open subsets $U'\subset X$ set $\Gamma (U',\mathcal F_U)=\Gamma(F,U\cap U')$.
Then all sheaves $\mathcal F_{(i,j)}$ and $\mathcal F_{(i,j),(i',j')}$
from Proposition \ref{computederived} are 
subobjects of $\mathcal F_U$, and,
since the functor $f_*$ is left exact, each sheaf $f_*\mathcal F_{(i,j)}$ and $f_*\mathcal F_{(i,j),(i',j')}$
is a subobject of $f_*\mathcal F_U$.
So, we can also apply Corollary \ref{h1computegen} to 
the first cohomology of the complex $f_*\mathcal F^\bullet$.
We get the following isomorphisms: 
$$
H^1 (Y, f_*\mathcal F)=\left.\!\left(\bigoplus_{i=1}^{\numberofabstractopensets}
\Big(\Gamma(V,f_*\mathcal F)/\Gamma(V_i,f_*\mathcal F)\Big)\right)\right/\Gamma(V,f_*\mathcal F),
$$
\begin{multline*}
H^1 (X, \mathcal F)=\\
\ker \left(\bigoplus_{(i,j)\in \mathfrak I}\Big(\Gamma(U,\mathcal F)/\Gamma(U_{i,j},\mathcal F)\Big)\right.\!
\to \bigoplus_{\substack{(i,j),(i',j')\in \mathfrak I\\(i,j)<(i',j')}}\Big(\Gamma(U,\mathcal F)/\Gamma(U_{i,j}\cap U_{i',j'},\mathcal F)\Big)
\left.\!\left.\!\vphantom{\bigoplus_{(i,j)\in \mathfrak I}}
\right)\right/\Gamma(U,\mathcal F),
\end{multline*}
and
$$
H^1 (f_*\mathcal F^\bullet)=\left.\!\left(
\ker \bigoplus_{(i,j)\in\mathfrak I}\Big(f_*\mathcal F_U/f_*\mathcal F_{(i,j)}\Big)
\to \text{\raisebox{0pt}[0pt][0pt]{$\bigoplus_{\substack{(i,j),(i',j')\in\mathfrak I\\(i,j)<(i',j')}} $}}
\Big(f_*\mathcal F_U/f_*\mathcal F_{(i,j)}\Big)
\right)\right/f_*\mathcal F_U.
\vphantom{\bigoplus_{\substack{(i,j),(i',j')\in\mathfrak I\\(i,j)<(i',j')}}}
$$
These identifications enable us to write the 
maps $H^1 (Y, f_*\mathcal F)\to H^1(X,\mathcal F)$ and 
$H^1(X,\mathcal F)\to H^0 (Y, R^1f_*\mathcal F)$.

The map $H^1 (Y, f_*\mathcal F)\to H^1(X,\mathcal F)$
is induced by the following map 
$\bigoplus_{i=1}^{\numberofabstractopensets}\Gamma(V,f_*\mathcal F)\to 
\bigoplus_{(i,j)\in\mathfrak I}\Gamma(U,\mathcal F)$.
For each $i$ ($1\le i\le \numberofabstractopensets$), 
a section from the $i$th direct summand $\Gamma(V,f_*\mathcal F)=\Gamma(f^{-1}(V),\mathcal F)$
is restricted to $U$ and then
mapped diagonally to 
$\bigoplus_{j:(i,j)\in\mathfrak I}\Gamma(U,\mathcal F)$.
Note that 
$U_{i,j}\cap U_{i',j'}\subseteq f^{-1}(V)$ if $i\ne i'$, 
therefore, the image of this map indeed belongs to 
the correct subobject of 
$\bigoplus_{(i,j)\in \mathfrak I}\Gamma(U,f_*\mathcal F)$.

To get the map $H^1(X,\mathcal F)\to H^0 (Y, R^1f_*\mathcal F)$, note 
that each global section of 
$\bigoplus_{(i,j)\in\mathfrak I}f_*\mathcal F_U$
induces a global section of 
$(\bigoplus_{(i,j)\in\mathfrak I}
(f_*\mathcal F_U/f_*\mathcal F_{(i,j)}))/\mathcal F_U$.
On the other hand, by the definition of $\mathcal F_U$, 
$\Gamma(Y,\bigoplus_{(i,j)\in\mathfrak I}f_*\mathcal F_U)=
\bigoplus_{(i,j)\in\mathfrak I}\Gamma(U,\mathcal F)$,
and the map $H^1(X,\mathcal F)\to H^0 (Y, R^1f_*\mathcal F)$
is induced by this equality.

\begin{remark}
Suppose that a torus $T$ acts on $X$, the morphism $f$ is $T$-invariant, 
and each set $U_{i,j}$ is preserved by the action of $T$.
Then one can introduce grading on 
$H^1 (Y, f_*\mathcal F)$, on $H^1(X,\mathcal F)$, and on $H^0 (Y, R^qf_*\mathcal F)$
in the obvious way.

It follows from the above descriptions of the maps between these cohomology groups that 
this grading is preserved.
\end{remark}

\section{Stably dominant morphisms}
\begin{definition}
(This is definition 10.80.1, tag 058I in Stacks project \cite{stacks})
Let $A$ be an algebra.
A map of $A$-modules $f\colon K_1\to K_2$ is called \textit{universally injective}
if it is injective and for every $A$-module $K_3$, the map $f\otimes \id_{K_3}\colon K_1\otimes K_3\to K_2\otimes K_3$
is injective.
\end{definition}

\begin{remark}\label{directsummandinj}
A direct summand embedding is always universally injective.
\end{remark}

Consider the following situation. 
Let $X$ and $Y$ be two affine schemes with base $Z$, in other words, let $\stdflatmorphism_1\colon X\to Z$ and 
$\stdflatmorphism_2\colon Y\to Z$ be two morphisms of affine schemes.
Let $f\colon X\to Y$ be a relative morphism, i. e. a morphism such that $\stdflatmorphism_2\circ f=\stdflatmorphism_1$.
In other words, we have the following commutative diagram:
$$
\xymatrix{
X\ar[r]^{f}\ar[rd]_<<<<<{\stdflatmorphism_1} & Y\ar[d]^{\stdflatmorphism_2} \\
& Z
}
$$
Algebraically this means that $\CC[X]$ and $\CC[Y]$ are $\CC[Z]$-modules, and that 
$f^*\colon \CC[Y]\to \CC[X]$ is a morphism of $\CC[Z]$-algebras.
\begin{definition}
We call $f$ a \textit{stably dominant morphism} if $f^*$ is a universally injective morphism of $\CC[Z]$-modules.
\end{definition}

\begin{lemma}\label{basechangestabdom}
The functor of base change preserves relatively dominant morphisms. In other words, suppose that we have 
a morphism of schemes $Z_1\to Z$, 
and $f\colon X\to Y$ is a stably dominant morphism of $Z$-schemes.
Then $f\times_Z Z_1\colon X\times_Z Z_1 \to Y\times_Z Z_1$
is a relatively dominant morphism of $Z_1$-schemes.
\end{lemma}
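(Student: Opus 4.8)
The plan is to translate the statement into commutative algebra and then reduce it to the fact that universal injectivity of a module map is stable under base change. Write $A=\CC[Z]$, $B=\CC[X]$, and $C=\CC[Y]$, so that by hypothesis $f^*\colon C\to B$ is a universally injective morphism of $A$-modules. The conclusion only has its stated meaning when the terminology ``stably dominant morphism of $Z_1$-schemes'' is defined, i.e. when the relevant schemes are affine, so I may assume $Z_1=\Spec A_1$ is affine (the general case then follows by restricting to an affine open cover of $Z_1$). Under this assumption $X\times_Z Z_1=\Spec(B\otimes_A A_1)$ and $Y\times_Z Z_1=\Spec(C\otimes_A A_1)$, and the pullback morphism $(f\times_Z Z_1)^*$ is precisely $f^*\otimes_A \id_{A_1}\colon C\otimes_A A_1\to B\otimes_A A_1$. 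Thus the lemma reduces to the purely algebraic claim: if $\phi\colon C\to B$ is universally injective as a map of $A$-modules, then $\phi\otimes_A \id_{A_1}$ is universally injective as a map of $A_1$-modules.

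To prove this claim I would take an arbitrary $A_1$-module $K$ and show that $(\phi\otimes_A \id_{A_1})\otimes_{A_1}\id_K$ is injective. The key step is the canonical identification of iterated tensor products: regarding $K$ as an $A$-module by restriction of scalars along $A\to A_1$, associativity of the tensor product together with $A_1\otimes_{A_1}K\cong K$ gives natural isomorphisms $(C\otimes_A A_1)\otimes_{A_1}K\cong C\otimes_A K$ and $(B\otimes_A A_1)\otimes_{A_1}K\cong B\otimes_A K$, under which $(\phi\otimes_A\id_{A_1})\otimes_{A_1}\id_K$ becomes $\phi\otimes_A\id_K$. Since $K$ is in particular an $A$-module, the universal injectivity of $\phi$ over $A$ shows that $\phi\otimes_A\id_K$ is injective; hence so is the original map. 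Taking $K=A_1$ shows that $\phi\otimes_A\id_{A_1}$ is itself injective, and letting $K$ range over all $A_1$-modules shows it is universally injective, which is exactly what is needed.

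The only step requiring care — and the one place where an error could creep in — is the associativity isomorphism together with the tracking of the map under it: one must check that restriction of scalars is the correct way to view $K$ as an $A$-module and that the natural isomorphism genuinely intertwines $(\phi\otimes_A\id_{A_1})\otimes_{A_1}\id_K$ with $\phi\otimes_A\id_K$. Everything else is formal. This amounts to the standard ``extend then restrict scalars'' identity $(-\otimes_A A_1)\otimes_{A_1}K\cong -\otimes_A K$, and once it is in place, preservation of universal injectivity under base change is immediate. Alternatively, one may simply invoke the fact, recorded in \cite{stacks}, that universally injective (pure) morphisms are stable under arbitrary base change, of which the lemma is a special case.
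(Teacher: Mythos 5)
Your proposal is correct and follows essentially the same route as the paper: translate to algebra, identify $(f\times_Z Z_1)^*$ with $f^*\otimes_{\CC[Z]}\id_{\CC[Z_1]}$, and observe that this is universally injective over $\CC[Z_1]$. The only difference is that the paper simply asserts this last preservation statement, whereas you supply the standard justification via the extend-then-restrict-scalars isomorphism $(-\otimes_A A_1)\otimes_{A_1}K\cong -\otimes_A K$, which is exactly the right way to fill that gap.
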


\begin{proof}
In algebraic terms, we know that $f^*\colon \CC[X]\to \CC[Y]$ is universally injective.
Then $f^*\otimes_{\CC[Z]} \id_{\CC[Z_1]}\colon \CC[X]\otimes_{\CC[Z]}\CC[Z_1]\to \CC[Y]\otimes_{\CC[Z]}\CC[Z_1]$
is a universally injective morphism of $\CC[Z_1]$-modules.
Finally, $\CC[X\times_Z Z_1]=\CC[X]\otimes_{\CC[Z]}\CC[Z_1]$, 
$\CC[Y\times_Z Z_1]=\CC[Y]\otimes_{\CC[Z]}\CC[Z_1]$, 
and $(f\times_Z Z_1)^*=f^*\otimes_{\CC[Z]} \id_{\CC[Z_1]}$.
\end{proof}

Let $g\colon Z_1\to Z$ be a morphism of affine schemes.
If $X$ is an affine $Z$-scheme, $g$ induces a morphism 
$X\times_Z Z_1\to X$, which we will denote by $g_X$. 
Algebraically, if $x$ is a regular function on $X$, 
then $g_X^*(x)=x\otimes 1_{Z_1}$, where $1_{Z_1}$ is the unit of 
the algebra $\CC[Z_1]$.
This is illustrated by the following commutative diagram
$$
\xymatrix{
X\times_Z Z_1\ar[r]^-{g_X}\ar[d] & X\ar[d] \\
Z_1\ar[r]_g & Z
}
$$

For example, if $I\subset \CC[Z]$ is an ideal, and $Z_1$ is the vanishing locus of $I$, 
then $\CC[Z_1]=\CC[Z]/I$ as a $\CC[Z]$-module, 
$g$ is the embedding of $Z_1$ into $Z$, $g^*$ is the canonical projection
$\CC[Z]\to \CC[Z]/I$, and $g_X^*$ is the canonical projection 
$\CC[X]\to \CC[X]/(I\CC[X])$.

\begin{lemma}\label{computefiber}
Let $Z$ be an affine scheme,
let $f\colon X\to Y$ be a stably dominant morphism of affine $Z$-schemes, and let 
$g\colon Z_1\to Z$ be a closed embedding. Then 
$(f\times_Z Z_1)^*(\CC[Y\times_Z Z_1])=g_X^*(f^*(\CC[Y]))$ as a subalgebra of 
$\CC[X\times_Z Z_1]$.
\end{lemma}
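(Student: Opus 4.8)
The plan is to translate the statement into commutative algebra and reduce it to the functoriality of base change together with the surjectivity of the pullback along the closed embedding. First I would set $R=\CC[Z]$, $A=\CC[X]$, $B=\CC[Y]$, and write the closed embedding $g$ as the canonical projection $R\to R/I=\CC[Z_1]$ for the ideal $I\subset R$ cutting out $Z_1$, exactly as in the discussion following Lemma \ref{basechangestabdom}. Using the identifications recorded there, namely $\CC[X\times_Z Z_1]=A\otimes_R (R/I)$, $\CC[Y\times_Z Z_1]=B\otimes_R (R/I)$, and $(f\times_Z Z_1)^*=f^*\otimes_R \id_{R/I}$, the whole statement becomes an assertion about images of module maps sitting inside $A\otimes_R (R/I)$.

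The key step is the commutative square expressing functoriality of base change: $g_X^*\circ f^*=(f^*\otimes\id_{R/I})\circ g_Y^*$, where $g_Y^*\colon B\to B\otimes_R (R/I)$ is the map $b\mapsto b\otimes 1$ and $g_X^*$ is defined analogously on $A$. Both composites send $b$ to $f^*(b)\otimes 1$, so the square commutes on the nose. Next I would observe that $g_Y^*$ is \emph{surjective}: since $R/I$ is generated by the image of $1$ as an $R$-algebra (this is precisely where the closed-embedding hypothesis enters), every simple tensor satisfies $b\otimes\overline r=(rb)\otimes 1$, so each element of $B\otimes_R (R/I)$ already lies in $g_Y^*(B)$. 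Hence $g_Y^*(B)=\CC[Y\times_Z Z_1]$.

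Applying $(f\times_Z Z_1)^*=f^*\otimes\id_{R/I}$ to this equality and invoking the commutative square then yields $(f\times_Z Z_1)^*(\CC[Y\times_Z Z_1])=(f^*\otimes\id)(g_Y^*(B))=g_X^*(f^*(B))=g_X^*(f^*(\CC[Y]))$, which is exactly the claimed equality of subalgebras of $\CC[X\times_Z Z_1]$.

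The only genuinely delicate point, and the place where the hypothesis is really used, is the reading of the conclusion \emph{as a subalgebra}. By Lemma \ref{basechangestabdom} the base change $f\times_Z Z_1$ is again stably dominant, so $(f\times_Z Z_1)^*=f^*\otimes\id_{R/I}$ is injective (universal injectivity applied to the module $R/I$). This injectivity is what permits us to regard $\CC[Y\times_Z Z_1]$ as a faithful copy inside $\CC[X\times_Z Z_1]$ and to identify it with the explicitly described subalgebra $g_X^*(f^*(\CC[Y]))$; without stable dominance the equality of images would still hold, but the left-hand side could no longer be interpreted as a copy of $\CC[Y\times_Z Z_1]$. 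I expect the main thing to get right here to be the careful bookkeeping of these identifications, rather than any substantial computation.
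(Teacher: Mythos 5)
Your argument is correct and follows essentially the same route as the paper: surjectivity of $g_Y^*$ (from the closed embedding) plus the commutativity $(f\times_Z Z_1)^*\circ g_Y^*=g_X^*\circ f^*$ immediately give the equality of images. The extra remark on injectivity of $(f\times_Z Z_1)^*$ via stable dominance is a sound supplementary observation but not needed for the equality itself.
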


Before we give a proof, let us provide a commutative diagram with all involved morphisms.
$$
\xymatrix{
X\times_Z Z_1\ar[rr]^{f\times_Z Z_1}\ar[rd]\ar[dd]^{g_X} && Y\times_Z Z_1\ar[ld]\ar[dd]^{g_Y} \\
& Z_1\ar[dd]^<<<<<g\\
X\ar[rr]^<<<<<<<<<f\ar[rd] && Y\ar[ld]\\
& Z
}
$$

\begin{proof}
Since $g$ is a closed embedding, $g^*$ is surjective, 
and then $(g_Y)^*$ is also surjective.
So, $(f\times_Z Z_1)^*(\CC[Y\times_Z Z_1])=(f\times_Z Z_1)^*((g_Y)^*(\CC[Y]))$. 
And the commutativity 
$(f\times_Z Z_1)^*\circ (g_Y)^*=(g_X)^*\circ f^*$ follows directly from the definitions 
of $g_X$, $g_Y$, and $f\times_Z Z_1$.
\end{proof}

In particular, we can use this lemma to compute fibers of the morphism $Y\to Z$ 
if we already know fibers of the morphism $X\to Z$ (such a fiber is a particular 
case of base change applied to $Y$, namely, we change the base of $Y$ from $Z$ to a point in $Z$).

%

\section{Notation and terminology}

First, we need some notation for lattice polyhedra.
Let $\stdpolyhedronletter$ be a polyhedron with tail cone $\6$ and with all vertices in $N$, 
where $\dim \6=\dim N=2$, and $\6$ is pointed.
We denote the number of 
vertices of $\stdpolyhedronletter$ by 
$\numberofvertices{\stdpolyhedronletter}$
and we denote the vertices of $\stdpolyhedronletter$ by 
$\indexedvertex{\stdpolyhedronletter}{1},\ldots,\indexedvertex{\stdpolyhedronletter}{\numberofvertices{\stdpolyhedronletter}}$
so that pairs of consecutive vertices in this enumeration form the finite edges of $\stdpolyhedronletter$.
We denote the finite edge between $\indexedvertex{\stdpolyhedronletter}{j}$ and $\indexedvertex{\stdpolyhedronletter}{j+1}$
by $\indexededge{\stdpolyhedronletter}{j}$. We denote the infinite edge with the endpoint $\indexedvertex{\stdpolyhedronletter}{1}$
by $\indexededge{\stdpolyhedronletter}{0}$ and the infinite edge with the endpoint $\indexedvertex{\stdpolyhedronletter}{\numberofvertices{\stdpolyhedronletter}}$
by $\indexededge{\stdpolyhedronletter}{\numberofvertices{\stdpolyhedronletter}}$.
For each vertex $\indexedvertex{\stdpolyhedronletter}{i}$ denote by $\normalvertexcone{\indexedvertex{\stdpolyhedronletter}{i}}{\stdpolyhedronletter}$
the subcone of $\sck$ consisting of all $\chi\in\sck$ such that $\chi(\indexedvertex{\stdpolyhedronletter}{i})=
\min_{a\in\stdpolyhedronletter}\chi (a)$. We call $\normalvertexcone{\indexedvertex{\stdpolyhedronletter}{i}}{\stdpolyhedronletter}$
the \textit{normal subcone of the vertex $\indexedvertex{\stdpolyhedronletter}{i}$}. 
One checks easily that this is really a subcone, 
that $\sck=\bigcup \normalvertexcone{\indexedvertex{\stdpolyhedronletter}{i}}{\stdpolyhedronletter}$, 
that 
the intersection of two such cones is either a ray or 
the origin, and it is a ray if and only if the two corresponding vertices form an edge $\indexededge{\stdpolyhedronletter}{j}$.
In the latter case this ray is exactly the set of all $\chi\in\sck$ whose minimum on $\stdpolyhedronletter$
is attained on $\indexededge{\stdpolyhedronletter}{j}$. We denote this ray by 
$\normalvertexcone{\indexededge{\stdpolyhedronletter}{i}}{\stdpolyhedronletter}$ and call it the \textit{normal ray 
of the edge $\indexededge{\stdpolyhedronletter}{i}$}. Finally, we extend this notation for 
infinite edges of $\stdpolyhedronletter$: we denote by $\normalvertexcone{\indexededge{\stdpolyhedronletter}{0}}{\stdpolyhedronletter}$
(resp. $\normalvertexcone{\indexededge{\stdpolyhedronletter}{\numberofvertices{\stdpolyhedronletter}}}{\stdpolyhedronletter}$)
the ray in $M$ consisting of all $\chi\in\sck$ whose minimum on $\stdpolyhedronletter$ is attained on 
$\indexededge{\stdpolyhedronletter}{0}$ (resp. $\indexededge{\stdpolyhedronletter}{\numberofvertices{\stdpolyhedronletter}}$).
These two rays are in fact the two rays forming $\partial(\sck)$, and they are also called the normal rays of the 
corresponding edges. The normal subcones of all vertices and the normal rays of all (finite and infinite)
edges form a fan, which is called the \textit{normal fan} of $\stdpolyhedronletter$. 

We always choose the order on vertices of $\stdpolyhedronletter$ so that 
$\indexededge{\stdpolyhedronletter}{0}$ is always the same one of the two rays forming $\partial(\6)$
(it must not depend on $\stdpolyhedronletter$). This ray is denoted by $\indexededge{\6}{0}$, and the 
other ray of $\partial(\6)$ is denoted by $\indexededge{\6}{1}$. 

The previous notation applies to polyhedra with tail cone $\6$, let us extend it to 
the vertex and edges of $\sck$. Namely, the boundary of $\sck$ consists of two infinite edges and one vertex
at the origin. Denote the vertex at the origin by $\indexedvertex{\sck}{1}$.
If $\stdpolyhedronletter$ is a polyhedron with tail cone $\6$, 
then $\normalvertexcone{\indexededge{\stdpolyhedronletter}{0}}{\stdpolyhedronletter}$
is always the same edge of $\sck$ (independently of $\stdpolyhedronletter$), 
and $\normalvertexcone{\indexededge{\stdpolyhedronletter}{\numberofvertices{\stdpolyhedronletter}}}{\stdpolyhedronletter}$
is always the other edge of $\sck$.
Denote $\indexededge{\sck}{0}=\normalvertexcone{\indexededge{\stdpolyhedronletter}{0}}{\stdpolyhedronletter}$
and $\indexededge{\sck}{1}=\normalvertexcone{\indexededge{\stdpolyhedronletter}{\numberofvertices{\stdpolyhedronletter}}}{\stdpolyhedronletter}$.
In particular, this is true for $\stdpolyhedronletter=\6$, i.~e. 
$\indexededge{\sck}{0}=\normalvertexcone{\indexededge{\6}{0}}{\6}$
and $\indexededge{\sck}{1}=\normalvertexcone{\indexededge{\6}{1}}{\6}$.
Denote the primitive lattice 
vectors on $\indexededge{\sck}{j}$ by $\sckboundarybasis{j}$.

Fig. \ref{polyhedronfig} shows an example of this notation for a polyhedron $\stdpolyhedronletter$, 
its tail cone $\6$, and its normal fan.

\begin{figure}[!h]
\begin{center}
\begin{tabular}{ccc}
\includegraphics{t1_3fold_figures-3.mps}\hspace*{1em}&\hspace*{1em}\includegraphics{t1_3fold_figures-4.mps}\hspace*{1em}&
\includegraphics{t1_3fold_figures-5.mps}\\[\medskipamount]
(a)&(b)&(c)
\end{tabular}
\end{center}
\caption{An example of notation for: (a) a polyhedron, (b) its tail cone, and (c) its normal fan.
The figure (c) also shows notation for the dual cone $\sck$.}
\label{polyhedronfig}
\end{figure}

If $\rho$ is a ray in $M_\QQ$, we denote the primitive lattice vector on $\rho$ by $\primitivelattice{\rho}$.
If $a$ is a vector or a segment in $N$, denote by $\latticelength{a}$ the lattice length of $a$, i.~e. the number of 
lattice points in $a$ including exactly one of the endpoints.

\subsection{List of notation introduced further}
The notation listed below will be properly introduced later, we list it now to ease reading and navigation only, without going into details of 
the underlying notions. 
\begin{enumerate}
\item We fix a two-dimensional pointed cone $\6\subset N_{\QQ}$ (recall that $\dim N_{\QQ}=2$).
\item We fix points $p_1,\ldots, p_{\numberofdivisorpoints}\in \PP^1$ and polyhedra 
$\stdpolyhedronletter_{p_1},\ldots, \stdpolyhedronletter_{p_\numberofdivisorpoints}\subset N_{\QQ}$
with tail cone $\6$.
\item We denote by $\mathcal D=\sum_{i=1}^{\numberofdivisorpoints} p_i\otimes \stdpolyhedronletter_{p_i}$ a divisor on $\PP^1$.
It will be used to construct a T-variety.
\item The T-variety will be denoted by $X$.
\item We shortly write $\numberofverticespt{p}$ instead of $\numberofvertices{\stdpolyhedronletter_{p}}$ and 
$\indexedvertexpt{p}{i}$ instead of $\indexedvertex{\stdpolyhedronletter_p}{i}$ for $p\in \PP^1$.
\item We also need a notion of an essential special point, which is a point $p_i$ such that $\stdpolyhedronletter_{p_i}$ is 
not a translation of $\6$. We denote the number of essential special points by $\numberofessentialdivisorpoints$, 
and we will assume that the points $p_1,\ldots, p_{\numberofessentialdivisorpoints}$
are essential.
\item We will introduce a set of degrees containing 
the union of Hilbert bases of several subcones of $\sck$, and we will denote 
the degrees in this set by $\lambda_1,\ldots,\lambda_{\numberoflatticegenerators}$.
\item Since 
$X$ is a $T$-variety, $\CC [X]$ is an $M$-graded algebra.
As usual, we will denote the degree of a homogeneous element $x\in \CC[X]$
with respect to this grading by $\deg(x)$.
\item We will choose homogeneous generators of this algebra, and denote them by 
\begin{multline*}
\dependentgeneratorsdegree{\lambda_1, 1}, \ldots, \dependentgeneratorsdegree{\lambda_1, \dim\Gamma(\PP^1,\OO(\mathcal D(\lambda_1)))}, \\
\dependentgeneratorsdegree{\lambda_2, 1}, \ldots, \dependentgeneratorsdegree{\lambda_2, \dim\Gamma(\PP^1,\OO(\mathcal D(\lambda_2)))}, \\ \ldots, \\
\dependentgeneratorsdegree{\lambda_{\numberoflatticegenerators}, 1}, \ldots, 
\dependentgeneratorsdegree{\lambda_{\numberoflatticegenerators}, \dim\Gamma(\PP^1,\OO(\mathcal D(\lambda_{\numberoflatticegenerators})))}.
\end{multline*}
Here $\deg(\dependentgeneratorsdegree{\lambda_i, j})=\lambda_i$. Also note that 
the $\lambda_i$th graded component of $\CC[X]$ is by construction identified with 
$\Gamma(\PP^1,\OO_{\PP^1}(\mathcal D(\lambda_i)))$. The generators 
$\dependentgeneratorsdegree{\lambda_i, 1}, \ldots, \dependentgeneratorsdegree{\lambda_i, \dim\Gamma(\PP^1,\OO(\mathcal D(\lambda_i)))}$
will span $\Gamma(\PP^1,\OO_{\PP^1}(\mathcal D(\lambda_i)))$.
\item We denote the 
total number of these generators by $\numberoffixedgenerators$.
\item We will fix a smooth open subset $U\subseteq X$ such that $\codim_X (X\setminus U)\ge 2$.
\item We will fix an affine open covering of $U$, which we will denote by 
$U_1,\ldots,U_{\numberoffixedopensets}$.
\end{enumerate}

\chapter{Formula for the graded component of $T^1$ of degree 0 in terms of sheaf cohomology}\label{sectcohomformula}

\section{Regularity locus and fiber structure of the map $\pi$}

Let $\6\subset N_\QQ$ be a pointed full-dimensional cone, $p_1,\ldots, p_{\numberofdivisorpoints}$ be points on $\PP^1$, 
$\stdpolyhedronletter_{p_i}\subseteq N_\QQ$ be polyhedra 
whose vertices are lattice points and whose tail cones are all $\6$. Unlike what is assumed sometimes, we do not allow $\varnothing$ 
to appear among these polyhedra.\footnote{In terms of the notation where $\varnothing$ is allowed among the coefficients, 
this means that the locus of the polyhedral divisor will be the whole $\PP^1$.} These data define a polyhedral divisor 
$\mathcal D=\sum_{i=1}^\numberofdivisorpoints \stdpolyhedronletter_{p_i}\otimes p_i$ and a graded algebra 
$A=\bigoplus_{\chi\in\sck\cap M}\Gamma(\PP^1,\mathcal O(\mathcal D(\chi)))$.
If $p\in \PP^1$ does not coincide with any of the points $p_i$, we denote $\stdpolyhedronletter_p=\6$.
Suppose in the sequel that $\mathcal D$ is proper, then $A$ defines a 3-dimensional variety $X=\Spec A$ with an 
action of a 2-dimensional torus. We use the notation $\pi$ for the rational map from $X$ to $\PP^1$ introduced 
in Proposition \ref{quotmorph}. It is known that all such varieties are normal.

In the sequel we will always keep in mind that very ample divisors on $\PP^1$ are exactly the divisors of positive degree 
and principal divisors are exactly the divisors of degree zero.
We call a point $p\in \PP^1$ \textit{ordinary} if it is not one of the points $p_i$, otherwise 
we call it \textit{special}. We require that the sum $\sum \stdpolyhedronletter_{p_i}\otimes p_i$ is finite, but we 
do not require that all summands are nontrivial, i.~e. we allow summands of the form 
$\6\otimes p_i$, which are zeros in the polyhedral divisor group. We call such points $p_i$ special anyway, according 
to the definition above. So in fact the notions 
of a special point and an ordinary point depend on the choice of exact presentation
$\mathcal D=\sum \stdpolyhedronletter_{p_i}\otimes p_i$, and we suppose that it is also fixed. If $\stdpolyhedronletter_{p_i}=\6+a$ for some $a\in N$, 
(including $a=0$), we call such $p_i$ a \textit{removable} special point, otherwise we call $p_i$ an
\textit{essential} special point.
If $\stdpolyhedronletter_{p_i}=\6$, we call $p_i$ a \textit{trivial} special point.

Fix a coordinate $t$ on $\PP^1$, i.~e. fix a rational function $t$ on $\PP^1$ that has one pole of order 1 and 
one zero of order 1. 
\begin{lemma}\label{sl2like}
Given two nonzero rational functions $f$ and $g$ on $\PP^1$ such that $f/g$ has one zero and one pole,
and both of them are of order one, 
there exist $a_1, b_1, a_2, b_2\in\CC$ such that $(a_1 f+b_1 g)/(a_2 f+b_2 g)=t$.
\end{lemma}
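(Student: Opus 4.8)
The plan is to recognize both $f/g$ and the coordinate $t$ as automorphisms of $\PP^1$, and then to write $t$ as a M\"obius transformation of $f/g$; clearing the inner fraction produces exactly the required expression.

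First I would set $h = f/g$. By hypothesis $\div(h) = P - Q$ for two distinct points $P, Q \in \PP^1$, with the zero $P$ and the pole $Q$ both simple. Regarding $h$ as a morphism $\PP^1 \to \PP^1$, its degree is the number of poles counted with multiplicity, namely $1$, and a degree-one morphism of smooth projective curves is an isomorphism; hence $h$ lies in $\mathrm{Aut}(\PP^1) = PGL_2(\CC)$. The function $t$ has, by the choice made just before the lemma, one simple zero and one simple pole as well, so the same argument shows that $t \in \mathrm{Aut}(\PP^1)$.

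Next, since $h$ and $t$ are both automorphisms, so is $t \circ h^{-1}$, and therefore it is a M\"obius transformation $w \mapsto (a w + b)/(c w + d)$ with $a d - b c \ne 0$. Applying it to $h$ gives $t = (a h + b)/(c h + d)$. Substituting $h = f/g$ and multiplying numerator and denominator by $g$ yields
$$
t = \frac{a (f/g) + b}{c (f/g) + d} = \frac{a f + b g}{c f + d g},
$$
so the claim holds with $a_1 = a$, $b_1 = b$, $a_2 = c$, $b_2 = d$.

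The only step requiring any care is the identification of a rational function with a single simple zero and a single simple pole as an automorphism of $\PP^1$. Equivalently, and perhaps more cleanly for a purely algebraic write-up, one may argue field-theoretically: such an $h$ has degree one over $\CC$, so $\CC(h) = \CC(t)$, whence $t$ is a rational expression in $h$ which, having degree one, is forced to be a M\"obius transformation of $h$. I do not anticipate a genuine obstacle here; the content of the lemma is essentially the fact that any two coordinates on $\PP^1$ differ by an element of $PGL_2(\CC)$.
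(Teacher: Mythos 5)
Your proof is correct, but it takes a genuinely different route from the paper's. You identify $h=f/g$ and $t$ as degree-one morphisms $\PP^1\to\PP^1$, hence as elements of $\mathrm{Aut}(\PP^1)=PGL_2(\CC)$, and conclude that $t=(ah+b)/(ch+d)$ for some M\"obius transformation; clearing $g$ from the inner fraction then gives the statement immediately. The paper instead argues constructively and entirely at the level of zeros and poles: it first produces a combination $(a'_1f+b'_1g)/(a'_2f+b'_2g)$ whose only pole is a simple one at $t=\infty$ (by subtracting from $g/f$ its value at $\infty$ if necessary), then subtracts a constant to create a zero at $t=0$, divides by $t$, and observes that the resulting rational function has no poles on $\PP^1$ and is therefore constant, so a final rescaling finishes the proof. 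Your argument is shorter and makes the conceptual content transparent (the lemma is exactly the statement that any two coordinates on $\PP^1$ differ by an element of $PGL_2(\CC)$), at the cost of invoking the classification of automorphisms of $\PP^1$ (or, in your field-theoretic variant, the degree formula for function field extensions); the paper's version is longer but self-contained and produces the coefficients explicitly. One small point worth making explicit in your write-up: the denominator $a_2f+b_2g=g(ch+d)$ is not identically zero because $(c,d)\neq(0,0)$ and $h$ is nonconstant, so the final expression is a legitimate rational function.
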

\begin{proof}
First, let us find $a'_1,b'_1,a'_2,b'_2\in\CC$ such that $(a'_1 f+b'_1 g)/(a'_2 f+b'_2 g)$ 
is regular at all points where $t$ is finite and has pole of order one at $t=\infty$. If $f/g=0$ at $t=\infty$, 
then this zero is of order one, and $a'_1=0,b'_1=1,a'_2=1,b'_2=0$ yield the function $g/f$, which has pole of degree one 
at $\infty$. It has no other poles since they would be other zeros of $f/g$, so this function has the desired properties.
Otherwise denote the value of $g/f$ at $t=\infty$ by $w_1$. Consider the following function: $g/f-w_1=(g-w_1f)/f$.
Clearly, it has a zero at $t=\infty$. 
Observe that $g/f$ has exactly one pole of order one, namely, at the point where $f/g$ has zero of order one. 
Hence, $g/f+w_1$ also has exactly one pole of order one. The sum of minus orders of all poles and of (plus) orders of all zeros 
of a rational function on $\PP^1$ is zero. Thus, $g/f+w_1$ has exactly one zero, and this zero is of order one. But we already know one zero of
$g/f+w_1$, namely, $t=\infty$. Therefore, this zero is of order one, and $f/(g-w_1f)$ has exactly one pole, this pole is of order one
and is at $t=\infty$.

Now we have a function $(a'_1 f+b'_1 g)/(a'_2 f+b'_2 g)$, which is regular at all points where $t$ takes finite value 
and has a pole of order one at $t=\infty$. Denote the value of this function at $t=0$ by $w_2$. Consider the following function:
$(a'_1 f+b'_1 g)/(a'_2 f+b'_2 g)-w_2=((a'_1 -w_2a'_2) f+(b'_1-w_2b'_2) g)/(a'_2 f+b'_2 g)$.
It has exactly one pole, this pole is at $t=\infty$ and of order one, and it has a zero at $t=0$. If we divide this function by $t$,
the resulting function $((a'_1 -w_2a'_2) f+(b'_1-w_2b'_2) g)/(t(a'_2 f+b'_2 g))$ has no poles on $\PP^1$, so
it is a constant. Therefore, if we multiply $((a'_1 -w_2a'_2) f+(b'_1-w_2b'_2) g)/(t(a'_2 f+b'_2 g))$ 
by the appropriate constant, it will be equal to $t$.
\end{proof}
\begin{corollary}\label{ratfuncexists}
For every divisor $D$ on $\PP^1$ of positive degree and for every non-zero rational function 
$f\in\Gamma(\PP^1,\mathcal O(D))$ there exist $g\in\Gamma(\PP^1,\mathcal O(D))$ and $a_1, b_1, a_2, b_2\in\CC$ such 
that $(a_1 f+b_1 g)/(a_2 f+b_2 g)=t$.
\end{corollary}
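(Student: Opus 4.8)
The plan is to reduce the corollary directly to Lemma \ref{sl2like}. That lemma already produces the constants $a_1,b_1,a_2,b_2$ with $(a_1f+b_1g)/(a_2f+b_2g)=t$ as soon as one has a nonzero $g$ for which $f/g$ has exactly one zero and one pole, both of order one. So the entire task is to manufacture such a $g$ \emph{inside the fixed space} $\Gamma(\PP^1,\OO(D))$; once it is found, the corollary follows by a single invocation of Lemma \ref{sl2like} applied to the pair $(f,g)$.

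To construct $g$ I would work with divisors. Write $d=\deg D>0$ and interpret $f$ as a rational function with $\div(f)\ge -D$, so that $E_f=\div(f)+D$ is an effective divisor of degree $d$. Pick a point $P$ in the support of $E_f$ (so that $E_f-[P]$ is still effective) and any point $Q\ne P$ on $\PP^1$, and set $E_g=E_f-[P]+[Q]$. This is again an effective divisor of degree $d$. Because $\mathrm{Pic}(\PP^1)\cong\ZZ$ is detected by degree, $E_g$ is linearly equivalent to $D$, so there is a nonzero rational function $g$ with $\div(g)+D=E_g$; that is, $g\in\Gamma(\PP^1,\OO(D))$. By construction $\div(f/g)=\div(f)-\div(g)=E_f-E_g=[P]-[Q]$, which shows that $f/g$ has a single zero of order one (at $P$) and a single pole of order one (at $Q$), with $P\ne Q$. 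This is precisely the hypothesis of Lemma \ref{sl2like}, and applying it to $(f,g)$ yields the desired $a_1,b_1,a_2,b_2$.

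The only delicate point, and hence the step deserving the most care, is guaranteeing that $g$ lies in the prescribed space $\Gamma(\PP^1,\OO(D))$ rather than being merely \emph{some} rational function with the correct zeros and poles: one must check that the modified effective divisor $E_g$ is linearly equivalent to $D$. On $\PP^1$ this is automatic, since any two effective divisors of the same degree are linearly equivalent; this is exactly where the positivity of $\deg D$ (equivalently, the very ampleness of $\OO(D)$ noted earlier) is used, through the availability of the extra section. Beyond this, the proof is a routine divisor computation followed by citing the preceding lemma.
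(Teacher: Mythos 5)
Your proposal is correct and follows essentially the same route as the paper: the paper also takes the effective divisor $\div(f)+D$, moves one point of its support to another point via a rational function $y$ with $\div(y)=-p'_i+p'_j$ (which is exactly your linear-equivalence step made explicit, with $g=fy$), and then applies Lemma \ref{sl2like}. No gaps.
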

\begin{proof}
Since $f\in\Gamma(\PP^1,\OO(D))$, $\div(f)+D$ is an effective divisor. Write $\div(f)+D=\sum a'_i p'_i$, where 
$a'_i\in\ZZ_{\ge 0}$, $p'_i\in \PP^1$. Since $f$ is a rational function on $\PP^1$, $\deg\div(f)=0$, 
and $\sum a'_i=\deg\div(f)+\deg D=\deg D>0$. There exists a point $p'_i$ such that $a'_i>0$. Choose another point 
$p'_j$, and consider the following divisor: $D_1=\sum a'_i p'_i-p'_i+p'_j$. This is an effective divisor since $a'_i>0$.
Let $y$ be a rational function on $\PP^1$ such that $\div(y)=-p'_i+p'_j$. Then $D+div(fy)=D_1\ge 0$.
Hence, $g=fy\in\Gamma(\PP^1,\OO(D))$, and we can apply Lemma \ref{sl2like} to $f$ and $g$ since $\div(f/g)=\div(1/y)=p'_i-p'_j$.
\end{proof}
\begin{corollary}\label{quotmorphdefined}
Let $x\in X$. If there exists a degree $\chi\in\sck\cap M$ such that $\dim \Gamma(\PP^1,\mathcal O(\mathcal D(\chi)))\ge 2$ and 
$f\in\Gamma(\PP^1,\mathcal O(\mathcal D(\chi)))$ such that $\widetilde f(x)\ne 0$, then $\pi$ is defined at $x$.
\end{corollary}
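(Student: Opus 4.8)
The plan is to realize $\pi$ near $x$ as an explicit morphism to $\PP^1$ built from two global functions, and then to transport the property ``defined at $x$'' from that morphism to $\pi$ by means of Proposition \ref{quotmorph}. First I would observe that on $\PP^1$ one has $\dim\Gamma(\PP^1,\mathcal O(D))=\max(0,\deg D+1)$, so the hypothesis $\dim\Gamma(\PP^1,\mathcal O(\mathcal D(\chi)))\ge 2$ forces $\deg\mathcal D(\chi)>0$. Since $\widetilde f(x)\ne 0$, the rational function $f$ is nonzero, so Corollary \ref{ratfuncexists} applies and produces $g\in\Gamma(\PP^1,\mathcal O(\mathcal D(\chi)))$ together with constants $a_1,b_1,a_2,b_2\in\CC$ such that, writing $f_1=a_1 f+b_1 g$ and $f_2=a_2 f+b_2 g$, one has $\overline{f_1}/\overline{f_2}=t$ as rational functions on $\PP^1$. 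Both $f_1$ and $f_2$ again lie in $\Gamma(\PP^1,\mathcal O(\mathcal D(\chi)))$, and the coefficient matrix $\left(\begin{smallmatrix}a_1&b_1\\a_2&b_2\end{smallmatrix}\right)$ is invertible, since otherwise $f_1$ and $f_2$ would be proportional and their ratio $t$ would be constant.

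Next I would consider the two global regular functions $\widetilde{f_1},\widetilde{f_2}\in\CC[X]$, which are homogeneous of the same degree $\chi$, and the rational map $\Phi\colon X\dashrightarrow\PP^1$ given on the open set $\{\widetilde{f_1}\ne 0\}\cup\{\widetilde{f_2}\ne 0\}$ by $y\mapsto[\widetilde{f_1}(y):\widetilde{f_2}(y)]$, where $\PP^1$ is identified with the target of $t$ via $[u:v]\mapsto u/v$. The key step, and the one I expect to be the main obstacle, is to check that $\Phi$ and $\pi$ coincide as rational maps. On the (dense open) domain of $\pi$, Proposition \ref{quotmorph} gives that wherever $\overline{f_1}/\overline{f_2}=t$ is finite at $\pi(y)$ the ratio $\widetilde{f_1}/\widetilde{f_2}$ is defined at $y$ with value $t(\pi(y))$; restricting further to the dense open locus where $\widetilde{f_2}\ne 0$ and $\pi(y)$ avoids $\{t=\infty\}$, this says precisely that $\pi(y)$ is the point of $\PP^1$ with $t$-coordinate $\widetilde{f_1}(y)/\widetilde{f_2}(y)$, i.e. $\pi(y)=\Phi(y)$. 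Thus $\pi$ and $\Phi$ agree on a dense open subset, and since $X$ is a variety and $\PP^1$ is separated they are equal as rational maps; in particular they share the same domain of definition.

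Finally I would check that $\Phi$ is defined at $x$. We have $(\widetilde{f_1}(x),\widetilde{f_2}(x))=(a_1\widetilde f(x)+b_1\widetilde g(x),\,a_2\widetilde f(x)+b_2\widetilde g(x))$, which is the image of $(\widetilde f(x),\widetilde g(x))$ under the invertible matrix above; since $\widetilde f(x)\ne 0$ the vector $(\widetilde f(x),\widetilde g(x))$ is nonzero, hence so is $(\widetilde{f_1}(x),\widetilde{f_2}(x))$. Therefore $x$ lies in the domain of $\Phi$, and by the previous paragraph $x$ lies in the domain of $\pi$, which is exactly the desired conclusion.
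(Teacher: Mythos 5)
Your proposal is correct and follows essentially the same route as the paper's proof: apply Corollary \ref{ratfuncexists} to produce $g$ and the constants, identify the rational map $[\widetilde{f_1}:\widetilde{f_2}]$ with $\pi$ via Proposition \ref{quotmorph}, and conclude from the invertibility of the coefficient matrix and $\widetilde f(x)\ne 0$ that the two linear combinations cannot vanish simultaneously at $x$. The only additions are your explicit remark that the dimension hypothesis forces $\deg\mathcal D(\chi)>0$ (which the paper uses tacitly when invoking Corollary \ref{ratfuncexists}) and a slightly more careful statement of why the two rational maps agree.
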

\begin{proof}
Apply Corollary \ref{ratfuncexists} to $\mathcal D(\chi)$ and $\overline f$. 
There exists $\overline g\in\Gamma(\PP^1,\OO(\mathcal D(\chi)))$ and $a_1,a_2,b_1,b_2\in \CC$ 
such that $(a_1 \overline f+b_1 \overline g)/(a_2 \overline f+b_2 \overline g)=t$ 
on $\PP^1$.
The functions $\overline f$ and $\overline g$ cannot be proportional, otherwise 
$(a_1 \overline f+b_1 \overline g)/(a_2 \overline f+b_2 \overline g)$ would be a constant on $\PP^1$.
Then $\widetilde f$ and $\widetilde g$ cannot be proportional either, 
and $(a_1 \widetilde f+b_1 \widetilde g)/(a_2 \widetilde f+b_2 \widetilde g)$ is a rational function on $X$.
The rational function $(a_1 \widetilde f+b_1 \widetilde g)/(a_2 \widetilde f+b_2 \widetilde g)$, 
considered as a rational map from $X$ to $\PP^1$ (we suppose that it computes the coordinate $t$ of 
a point on $\PP^1$), coincides with $\pi$ by Proposition \ref{quotmorph}.
The pairs $(a_1,a_2)$ and $(b_1,b_2)$ cannot be proportional, and $\widetilde f(x)\ne0$, so the functions 
$(a_1 \widetilde f+b_1 \widetilde g)$ and 
$(a_2 \widetilde f+b_2 \widetilde g)$ cannot vanish simultaneously.
Therefore, the rational map from $X$ to $P^1$ 
defined by $t=(a_1 \widetilde f+b_1 \widetilde g)/(a_2 \widetilde f+b_2 \widetilde g)$
is defined at $x$. This rational map coincides with $\pi$, so we are done.
\end{proof}

So we define an open subset $U_0\subseteq X$ as follows: it consists of all points $x\in X$ such that 
there exists a degree $\chi\in\sck\cap M$ such that $\dim \Gamma(\PP^1,\mathcal O(\mathcal D(\chi)))\ge 2$ and there exists 
$f\in\Gamma(\PP^1,\mathcal O(\mathcal D(\chi)))$ such that $\widetilde f(x)\ne 0$. Corollary \ref{quotmorphdefined} shows 
that $\pi$ is defined on $U_0$. In fact, $\pi$ is not defined outside $U_0$, but we will not need this.

Our next goal is to understand fibers of $\pi$. First, consider an \textbf{ordinary} point $p\in \PP^1$. For 
every degree $\chi\in\sck\cap M$, the sections of $\Gamma(\PP^1,\mathcal O(\mathcal D(\chi)))$ do not have poles at $p$. 
For each $\chi\in\sck\cap M$, choose a basis 
$$
\rawdivisorbasis{p,\chi,1},\ldots,\rawdivisorbasis{p,\chi,\dim \Gamma(\PP^1,\mathcal O(\mathcal D(\chi)))}
$$
of $\Gamma(\PP^1,\mathcal O(\mathcal D(\chi)))$ such that 
$$
\olrawdivisorbasis{p,\chi,1}(p)=1, \quad \olrawdivisorbasis{p,\chi,2}(p)=
\ldots=\olrawdivisorbasis{p,\chi,\dim \Gamma(\PP^1,\mathcal O(\mathcal D(\chi)))}(p)=0.
$$
In particular, observe that for $\chi=0$ we have $\OO(\mathcal D(0))=\OO_{\PP^1}$, and 
the only global functions of degree $0$ are constants. The condition $\olrawdivisorbasis{p,0,1}(p)=1$
guarantees in this case that $\olrawdivisorbasis{p,0,1}=1$ and $\wtrawdivisorbasis{p,0,1}=1$ everywhere.
By Proposition \ref{quotmorph}, if $\pi(x)=p$ and $2\le i\le\dim\Gamma(\PP^1,\mathcal O(\mathcal D(\chi)))$, then 
$(\wtrawdivisorbasis{p,\chi,i}/\wtrawdivisorbasis{p,\chi,1}) (x)=(\olrawdivisorbasis{p,\chi,i}/\olrawdivisorbasis{p,\chi,1}) (p)=0$, so 
$\wtrawdivisorbasis{p,\chi,i} (x)=0$ since $\wtrawdivisorbasis{p,\chi,1}$ is a global function.


For every $\chi,\chi'\in\sck\cap M$, $a,a'\in \ZZ_{\ge 0}$, 
$(\rawdivisorbasis{p,\chi,1})^a (\rawdivisorbasis{p,\chi',1})^{a'}$ is an element of $\Gamma(\PP^1,\mathcal O(\mathcal D(a \chi+a' \chi')))$, so it can be written 
as 
$$
(\rawdivisorbasis{p,\chi,1})^a (\rawdivisorbasis{p,\chi',1})^{a'}=
\sum_i \rawdivisortransition{i,\chi,\chi',a,a'}\rawdivisorbasis{p,a\chi+a'\chi',i},\text{ where }\rawdivisortransition{i,\chi,\chi',a,a'}\in\CC.
$$
This equality holds for rational functions on $\PP^1$, and evaluation at $p$ shows that $\rawdivisortransition{1,\chi,\chi',a,a'}=1$.
The equality also holds for the corresponding global functions on $X$.

These computations prove the following lemma:
\begin{lemma}
For every $\chi,\chi'\in\sck\cap M$, $a,a'\in \ZZ_{\ge 0}$ and for every $x\in\pi^{-1}(p)$, 
$$
(\wtrawdivisorbasis{p,\chi,1}(x))^a (\wtrawdivisorbasis{p,\chi',1}(x))^{a'}=\wtrawdivisorbasis{p,a \chi+a' \chi',1}(x).
$$\qed
\end{lemma}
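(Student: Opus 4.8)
The plan is to reduce the claim to two facts already in hand: the multiplicative expansion of the ``leading'' basis sections with a normalized leading coefficient, and the vanishing of all higher-index basis functions on the fiber over $p$.

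First I would recall the product expansion established just above. Since $p$ is ordinary, no section of any $\Gamma(\PP^1,\mathcal O(\mathcal D(\chi)))$ has a pole at $p$, so the product $(\rawdivisorbasis{p,\chi,1})^a(\rawdivisorbasis{p,\chi',1})^{a'}$ is a genuine section of $\Gamma(\PP^1,\mathcal O(\mathcal D(a\chi+a'\chi')))$ and therefore expands in the chosen basis as $\sum_i \rawdivisortransition{i,\chi,\chi',a,a'}\rawdivisorbasis{p,a\chi+a'\chi',i}$. Evaluating both sides at $p$ and using the normalization $\olrawdivisorbasis{p,\cdot,1}(p)=1$ together with $\olrawdivisorbasis{p,\cdot,i}(p)=0$ for $i\ge 2$ then pins down the leading coefficient $\rawdivisortransition{1,\chi,\chi',a,a'}=1$.

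Next I would transport this linear relation from the rational functions $\overline e$ on $\PP^1$ to the corresponding global functions $\widetilde e$ on $X$. This is legitimate because the identification of the degree-$\chi$ component of $\CC[X]$ with $\Gamma(\PP^1,\mathcal O(\mathcal D(\chi)))$ is compatible with both sums and products, so the structure constants $\rawdivisortransition{i,\chi,\chi',a,a'}$ are unchanged. Thus the same identity $(\wtrawdivisorbasis{p,\chi,1})^a(\wtrawdivisorbasis{p,\chi',1})^{a'}=\sum_i \rawdivisortransition{i,\chi,\chi',a,a'}\wtrawdivisorbasis{p,a\chi+a'\chi',i}$ holds among functions on $X$.

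Finally I would evaluate this identity at a point $x\in\pi^{-1}(p)$. By the consequence of Proposition \ref{quotmorph} noted above, $\wtrawdivisorbasis{p,\psi,i}(x)=0$ for every degree $\psi\in\sck\cap M$ and every $i\ge 2$, so every summand with $i\ge 2$ drops out and only the $i=1$ term survives; since its coefficient is $1$, the right-hand side equals $\wtrawdivisorbasis{p,a\chi+a'\chi',1}(x)$, which is exactly the claimed equality. I do not expect any genuine obstacle here: the lemma is pure bookkeeping once the normalization $\rawdivisortransition{1,\chi,\chi',a,a'}=1$ and the fiberwise vanishing of the higher basis functions are available. The only point demanding care is verifying that the linear relation really passes unchanged from the $\overline e$'s to the $\widetilde e$'s, i.e.\ that the structure constants coincide in both interpretations.
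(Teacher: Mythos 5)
Your proposal is correct and follows essentially the same route as the paper: expand $(\rawdivisorbasis{p,\chi,1})^a(\rawdivisorbasis{p,\chi',1})^{a'}$ in the chosen basis of $\Gamma(\PP^1,\mathcal O(\mathcal D(a\chi+a'\chi')))$, evaluate at $p$ to get $\rawdivisortransition{1,\chi,\chi',a,a'}=1$, transfer the identity to the global functions on $X$, and kill the $i\ge 2$ terms using their vanishing on $\pi^{-1}(p)$. No gaps.
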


Recall that we have denoted the two rays on the boundary of $\sck$ by $\indexededge{\sck}{0}$
and $\indexededge{\sck}{1}$, and the primitive lattice vectors on these edges were denoted by
$\sckboundarybasis{0}$ and $\sckboundarybasis{1}$, respectively.
\begin{lemma}\label{genfiberstruct}
For a point $x\in X$, $x\in\pi^{-1} (p)\cap U_0$, there are at most three possibilities:
\begin{enumerate}
\item\label{gencase} For every $\chi\in\sck\cap M$, $\wtrawdivisorbasis{p,\chi,1}(x)\ne 0$.
\item\label{boundarya} For every 
$\chi\in\indexededge{\sck}{0}\cap M$,
$\wtrawdivisorbasis{p,\chi,1}(x)\ne 0$, 
and $\wtrawdivisorbasis{p,\chi,1}(x)=0$ for all other $\chi\in\sck\cap M$. This is possible if 
and only if 
$\deg\mathcal D(\sckboundarybasis{0})>0$
\item\label{boundaryb} For every 
$\chi\in\indexededge{\sck}{1}\cap M$,
$\wtrawdivisorbasis{p,\chi,1}(x)\ne 0$, 
and $\wtrawdivisorbasis{p,\chi,1}(x)=0$ for all other $\chi\in\sck\cap M$. This is possible if 
and only if $\deg \mathcal D(\sckboundarybasis{1})>0$.
\end{enumerate}
\end{lemma}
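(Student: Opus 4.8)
The plan is to read off everything from the single function $\chi\mapsto\wtrawdivisorbasis{p,\chi,1}(x)$ on $\sck\cap M$. By the multiplicativity lemma just proved, $\wtrawdivisorbasis{p,\chi,1}(x)\wtrawdivisorbasis{p,\chi',1}(x)=\wtrawdivisorbasis{p,\chi+\chi',1}(x)$ for all $\chi,\chi'\in\sck\cap M$, and $\wtrawdivisorbasis{p,0,1}(x)=1$; that is, this function is a homomorphism of semigroups $(\sck\cap M,+)\to(\CC,\cdot)$. First I would introduce its support $S=\{\chi\in\sck\cap M:\wtrawdivisorbasis{p,\chi,1}(x)\ne 0\}$. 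From multiplicativity, $S$ is a subsemigroup containing $0$ whose complement is a semigroup ideal (if one factor of a product vanishes, so does the product); equivalently $S$ is \emph{downward closed}, i.e.\ $\chi+\chi'\in S$ forces $\chi,\chi'\in S$. The three cases in the statement are precisely the three nonzero possibilities for such a support.

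Next I would classify $S$ by a short convexity argument. If $S$ contains a point $\chi_0$ in the interior of $\sck$, then for any $\psi\in\sck\cap M$ there is $N$ with $N\chi_0-\psi\in\sck\cap M$; since $N\chi_0\in S$ and $N\chi_0=\psi+(N\chi_0-\psi)$, downward closedness gives $\psi\in S$, so $S=\sck\cap M$ (case 1). Otherwise $S$ lies in $\partial\sck=\indexededge{\sck}{0}\cup\indexededge{\sck}{1}$, and it cannot meet both rays in a nonzero point, since the sum of two nonzero points on the two distinct extreme rays of the pointed two-dimensional cone $\sck$ is interior, which we just excluded. Hence $S$ is contained in a single ray $\indexededge{\sck}{j}\cap M\cong\NN$ (identified via $\sckboundarybasis{j}$), where a downward-closed subsemigroup is either $\{0\}$ or the whole ray (if it contains some $n\ge 1$, then $2n\in S$, and writing $2n=1+(2n-1)$ forces $1\in S$). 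Thus $S\in\{\{0\},\ \indexededge{\sck}{0}\cap M,\ \indexededge{\sck}{1}\cap M,\ \sck\cap M\}$. Finally, $x\in\pi^{-1}(p)\subseteq U_0$ excludes $S=\{0\}$: membership in $U_0$ furnishes a degree $\chi$ with $\dim\Gamma(\PP^1,\mathcal O(\mathcal D(\chi)))\ge 2$ and some $f$ with $\widetilde f(x)\ne 0$; expanding $f$ in the chosen basis and using $\wtrawdivisorbasis{p,\chi,i}(x)=0$ for $i\ge 2$ (the computation preceding the multiplicativity lemma) gives $\wtrawdivisorbasis{p,\chi,1}(x)\ne 0$, so $\chi\in S$ with $\chi\ne 0$ (as $\dim\Gamma(\PP^1,\mathcal O(\mathcal D(0)))=1$). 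This leaves exactly the three listed cases.

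It remains to pin down when cases 2 and 3 occur. For necessity, if $S=\indexededge{\sck}{0}\cap M$ then the degree $\chi$ produced above lies on this ray, so $\chi=n\sckboundarybasis{0}$; by linearity of $\mathcal D$ along a ray $\deg\mathcal D(n\sckboundarybasis{0})=n\deg\mathcal D(\sckboundarybasis{0})$, and since $\dim\Gamma(\PP^1,\mathcal O(D))=\deg D+1$ for $\deg D\ge 0$, the bound $\dim\Gamma(\mathcal O(\mathcal D(\chi)))\ge 2$ forces $\deg\mathcal D(\sckboundarybasis{0})>0$ (an integer, since all vertices are lattice points); case 3 is symmetric. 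For the converse I would exhibit a point realizing the case: when $\deg\mathcal D(\sckboundarybasis{0})>0$ one has $\dim\Gamma(\mathcal O(\mathcal D(\sckboundarybasis{0})))\ge 2$, so one constructs the point of $\pi^{-1}(p)$ whose coordinates form the semigroup homomorphism supported on $\indexededge{\sck}{0}\cap M$ (with the basis functions of index $i\ge 2$ sent to $0$), i.e.\ the point of the orbit of the toric fibre corresponding to $\indexededge{\sck}{0}$, and then Corollary \ref{quotmorphdefined} applied to $\chi=\sckboundarybasis{0}$ shows $\pi$ is defined there, so this point lies in $\pi^{-1}(p)$ and realizes case 2. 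I expect the real work to be in this last, existence direction --- producing an honest point of $X$ in the boundary stratum, equivalently checking that the proposed character respects all multiplicative relations in $A$ --- whereas the classification of $S$ and the necessity of the degree conditions are the clean, essentially formal core of the argument.
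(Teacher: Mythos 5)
Your argument is correct and rests on the same underlying input as the paper's proof, namely the multiplicativity $\wtrawdivisorbasis{p,\chi,1}(x)\,\wtrawdivisorbasis{p,\chi',1}(x)=\wtrawdivisorbasis{p,\chi+\chi',1}(x)$ together with $\wtrawdivisorbasis{p,0,1}=1$, but it organizes the classification differently. The paper passes to the finite-index sublattice $M'$ generated by $\sckboundarybasis{0}$ and $\sckboundarybasis{1}$, reduces everything to the two values $\wtrawdivisorbasis{p,\sckboundarybasis{0},1}(x)$ and $\wtrawdivisorbasis{p,\sckboundarybasis{1},1}(x)$, enumerates the four resulting cases, and then transfers the result from $M'$ to $M$ via $\wtrawdivisorbasis{p,a_0\chi,1}(x)=(\wtrawdivisorbasis{p,\chi,1}(x))^{a_0}$. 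You instead classify the support $S$ directly as a downward-closed subsemigroup of $\sck\cap M$: an interior point forces $S=\sck\cap M$, two nonzero points on distinct extreme rays sum to an interior point, and a downward-closed subsemigroup of $\NN$ is $\{0\}$ or all of $\NN$. This is clean, avoids the sublattice bookkeeping, and would adapt more readily to higher-dimensional pointed cones, where the possible supports are exactly the faces. Your exclusion of $S=\{0\}$ via the definition of $U_0$, and the necessity of $\deg\mathcal D(\sckboundarybasis{j})>0$ (using $\dim\Gamma(\PP^1,\OO(D))=\deg D+1$ and linearity of $\mathcal D$ along a ray), match the paper's reasoning. For the converse direction of "this is possible if and only if" you only sketch the construction of a realizing point and correctly flag it as the nontrivial step; note that the paper also asserts this existence without carrying out the construction (the realizing points are the boundary torus orbits, whose existence it later attributes to the general theory of \cite{ahausen} and \cite{ilten}), so your proposal is not weaker than the paper's proof on this point.
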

\begin{proof}
Until the end of the proof, denote the sublattice in $M$ generated by $\sckboundarybasis{0}$ and $\sckboundarybasis{1}$ by $M'$.
First, consider a degree $\chi'\in M'$. We know that if $\chi'=a_0 \sckboundarybasis{0}+a_1 \sckboundarybasis{1}$, 
then $\wtrawdivisorbasis{p,\chi',1}(x)=(\wtrawdivisorbasis{p,\sckboundarybasis{0},1}(x))^{a_0} (\wtrawdivisorbasis{p,\sckboundarybasis{1},1}(x))^{a_1}$.
So there can be four possibilities:
\begin{enumerate}
\item $\wtrawdivisorbasis{p,\sckboundarybasis{0},1}(x)\ne 0$ and $\wtrawdivisorbasis{p,\sckboundarybasis{1},1}(x)\ne 0$. Then $\wtrawdivisorbasis{p,\chi',1}(x)\ne 0$ 
for all $\chi'\in \sck\cap M'$.
\item $\wtrawdivisorbasis{p,\sckboundarybasis{0},1}(x)\ne 0$, but $\wtrawdivisorbasis{p,\sckboundarybasis{1},1}(x)=0$. Then for all $\chi'\in \sck\cap M'$ 
we have $\wtrawdivisorbasis{p,\chi',1}(x)\ne 0$ if and only if 
$\chi'\in\indexededge{\sck}{0}$.
\item $\wtrawdivisorbasis{p,\sckboundarybasis{0},1}(x)=0$, $\wtrawdivisorbasis{p,\sckboundarybasis{1},1}(x)\ne 0$. Similarly,
$\wtrawdivisorbasis{p,\chi',1}(x)\ne 0$ if and only if 
$\chi'\in\indexededge{\sck}{1}$.
\item $\wtrawdivisorbasis{p,\sckboundarybasis{0},1}(x)=\wtrawdivisorbasis{p,\sckboundarybasis{1},1}(x)=0$. Then $\wtrawdivisorbasis{p,\chi',1}(x)=0$ 
for all $\chi'\in \sck\cap M'$ except $\chi'=0$.
\end{enumerate}

Since $M'$ is a sublattice of finite index in $M$ (recall that $\dim M=2$), for every $\chi\in M$ there is $\chi'=a_0 \chi\in M'$, $a_0\in\NN$.
We have $\wtrawdivisorbasis{p,\chi',1}(x)=(\wtrawdivisorbasis{p,\chi,1}(x))^{a_0}$, so $\wtrawdivisorbasis{p,\chi,1}(x)=0$
if and only if $\wtrawdivisorbasis{p,\chi',1}(x)=0$. Therefore, the classification above also 
works for $\chi\in M$:
\begin{enumerate}
\item $\wtrawdivisorbasis{p,\sckboundarybasis{0},1}(x)\ne 0$ and $\wtrawdivisorbasis{p,\sckboundarybasis{1},1}(x)\ne 0$. Then $\wtrawdivisorbasis{p,\chi,1}(x)\ne 0$ 
for all $\chi\in \sck\cap M$.
\item\label{onlya} $\wtrawdivisorbasis{p,\sckboundarybasis{0},1}(x)\ne 0$, but $\wtrawdivisorbasis{p,\sckboundarybasis{1},1}(x)=0$. Then for all $\chi\in \sck\cap M$ 
we have $\wtrawdivisorbasis{p,\chi,1}(x)\ne 0$ if and only if 
$\chi\in\indexededge{\sck}{0}$.
\item\label{onlyb} $\wtrawdivisorbasis{p,\sckboundarybasis{0},1}(x)=0$, $\wtrawdivisorbasis{p,\sckboundarybasis{1},1}(x)\ne 0$. Similarly,
$\wtrawdivisorbasis{p,\chi,1}(x)\ne 0$ if and only if 
$\chi\in\indexededge{\sck}{1}$.
\item\label{allzeros} $\wtrawdivisorbasis{p,\sckboundarybasis{0},1}(x)=\wtrawdivisorbasis{p,\sckboundarybasis{1},1}(x)=0$. Then $\wtrawdivisorbasis{p,\chi,1}(x)=0$ 
for all $\chi\in \sck\cap M$ except $\chi=0$.
\end{enumerate}
Notice that case \ref{allzeros} is impossible in $U_0$, and case \ref{onlya} (resp. \ref{onlyb}) is 
possible if and only if there is a 
degree $\chi\in\indexededge{\sck}{0}\cap M$ (resp. $\chi\in\indexededge{\sck}{1}\cap M$) 
such that 
$\deg \mathcal D(\chi)>0$. Now recall that $\mathcal D(\chi)$ becomes a linear function
after a restriction to a line in $M$, so existence of such $\chi$ is equivalent to 
$\deg \mathcal D(\sckboundarybasis{0})>0$ (resp. $\deg \mathcal D(\sckboundarybasis{1})>0$).
\end{proof}

This lemma can be reformulated without mentioning bases of 
$\Gamma(\PP^1,\mathcal O(\mathcal D (\chi)))$ explicitly as follows:
\begin{proposition}\label{genfiberstructinvar}
For each $x\in\pi^{-1} (p)\cap U_0$, 
there exists a subcone $\tau\subseteq\sck$
such that if $\chi\in\sck\cap M$ 
and $f\in\Gamma(\PP^1,\mathcal O(\mathcal D(\chi)))$, then 
$$
\widetilde f(x)\ne 0\Leftrightarrow \chi\in \tau\text{ and }\ord_p (\overline f)=0.
$$
For the 
cone $\tau$
(which depends on $x$) there are at most three possibilities:
\begin{enumerate}
\item\label{gencaseinv} 
$\tau=\sck$.
\item\label{boundaryainv} 
$\tau=\indexededge{\sck}{0}$.
This is possible if and only if 
$\deg \mathcal D(\sckboundarybasis{0})>0$.
\item\label{boundarybinv} 
$\tau=\indexededge{\sck}{1}$.
This is possible if and only if 
$\deg \mathcal D(\sckboundarybasis{1})>0$.
\end{enumerate}
\end{proposition}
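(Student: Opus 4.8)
The plan is to derive the invariant statement directly from Lemma \ref{genfiberstruct} by expressing an arbitrary section $f\in\Gamma(\PP^1,\mathcal O(\mathcal D(\chi)))$ in terms of the distinguished basis $\rawdivisorbasis{p,\chi,1},\ldots,\rawdivisorbasis{p,\chi,\dim\Gamma(\PP^1,\mathcal O(\mathcal D(\chi)))}$ chosen just before that lemma, and exploiting the already-established vanishing $\wtrawdivisorbasis{p,\chi,i}(x)=0$ for all $i\ge 2$ and all $x\in\pi^{-1}(p)\cap U_0$ (here $p$ is ordinary, as throughout this subsection).

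First I would fix $x$ and $\chi\in\sck\cap M$ and write $f=\sum_i c_i\rawdivisorbasis{p,\chi,i}$ with $c_i\in\CC$. Evaluating rational functions on $\PP^1$ at the ordinary point $p$, where $\olrawdivisorbasis{p,\chi,1}(p)=1$ and $\olrawdivisorbasis{p,\chi,i}(p)=0$ for $i\ge 2$, gives $\overline{f}(p)=c_1$. Since $p$ is ordinary, the coefficient of $p$ in $\mathcal D(\chi)$ vanishes, so $\overline{f}$ is regular at $p$ and $\ord_p(\overline{f})=0$ is equivalent to $c_1\neq 0$. On the side of global functions on $X$, the relation $\widetilde{f}=\sum_i c_i\wtrawdivisorbasis{p,\chi,i}$ together with $\wtrawdivisorbasis{p,\chi,i}(x)=0$ for $i\ge 2$ collapses to $\widetilde{f}(x)=c_1\wtrawdivisorbasis{p,\chi,1}(x)$. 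Combining the two computations yields the key equivalence $\widetilde{f}(x)\neq 0\Leftrightarrow c_1\neq 0\text{ and }\wtrawdivisorbasis{p,\chi,1}(x)\neq 0\Leftrightarrow \ord_p(\overline{f})=0\text{ and }\wtrawdivisorbasis{p,\chi,1}(x)\neq 0$.

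Next I would let $\tau$ be the cone whose lattice points are exactly $\{\chi\in\sck\cap M:\wtrawdivisorbasis{p,\chi,1}(x)\neq 0\}$. Lemma \ref{genfiberstruct} asserts that this set is one of $\sck\cap M$, $\indexededge{\sck}{0}\cap M$, or $\indexededge{\sck}{1}\cap M$; since each of these consists precisely of the lattice points of $\sck$, $\indexededge{\sck}{0}$, or $\indexededge{\sck}{1}$ respectively, the cone $\tau$ is well-defined and equals one of these three cones, with the stated existence conditions $\deg\mathcal D(\sckboundarybasis{0})>0$, resp. $\deg\mathcal D(\sckboundarybasis{1})>0$, carried over verbatim from the lemma. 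Substituting $\chi\in\tau\Leftrightarrow\wtrawdivisorbasis{p,\chi,1}(x)\neq 0$ into the key equivalence gives exactly the asserted characterization $\widetilde{f}(x)\neq 0\Leftrightarrow\chi\in\tau\text{ and }\ord_p(\overline{f})=0$.

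I do not expect a genuine obstacle here, since the argument is a faithful translation of the basis-dependent Lemma \ref{genfiberstruct} into basis-free language. The two points needing care are, first, confirming that the three lattice-point sets produced by the lemma are indeed the lattice points of honest subcones of $\sck$, so that calling $\tau$ a cone is legitimate; and second, noting that the resulting $\tau$ is independent of the chosen basis — this is automatic because the key equivalence characterizes $\tau$ intrinsically as $\{\chi\in\sck\cap M:\exists\,f\in\Gamma(\PP^1,\mathcal O(\mathcal D(\chi))),\ \widetilde{f}(x)\neq 0,\ \ord_p(\overline{f})=0\}$.
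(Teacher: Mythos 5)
Your proof is correct and follows essentially the same route as the paper's: decompose $f$ in the distinguished basis, note that $\ord_p(\overline f)=0$ is equivalent to the coefficient of $\rawdivisorbasis{p,\chi,1}$ being nonzero, use the vanishing of $\wtrawdivisorbasis{p,\chi,i}(x)$ for $i>1$, and read off the three possible cones from Lemma \ref{genfiberstruct}.
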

\begin{proof}
First, fix a degree $\chi\in\sck\cap M$. Notice that if $f\in\Gamma(\PP^1,\mathcal O(\mathcal D(\chi)))$, 
then $\ord_p (\overline f)=0$ if and only if the decomposition of $f$ into a linear combination 
of functions $\rawdivisorbasis{p,\chi,i}$ 
contains $\rawdivisorbasis{p,\chi,1}$ with a nonzero coefficient.
Now fix a point $x\in\pi^{-1}(p)\cap U_0$.
Recall that all functions $\wtrawdivisorbasis{p,\chi,i}$ for $i>1$ vanish on $\pi^{-1}(p)\cap U_0$.
We see that $\wtrawdivisorbasis{p,\chi,1}(x)\ne 0$
if and only if $\widetilde f(x)\ne 0$ for all $f\in\Gamma(\PP^1,\mathcal O(\mathcal D (\chi)))$ such that $\ord_p (\overline f)=0$.
We also see that, independently of the value of $\wtrawdivisorbasis{p,\chi,1}(x)$,
$\widetilde f(x)=0$ for all $f\in\Gamma(\PP^1,\mathcal O(\mathcal D (\chi)))$ such that $\ord_p (\overline f)>0$.
\end{proof}
Following \cite[Section 6.2]{ilten}, denote the set of all points $x\in \pi^{-1}(p)\cap U_0$ such that case \ref{gencaseinv} (resp. case \ref{boundaryainv}, \ref{boundarybinv}) holds
by $\orb(p, \indexedvertex{\6}{1})$ (resp. by $\orb(p, \indexededge{\6}{0})$, $\orb(p, \indexededge{\6}{1})$). 
In fact (see \cite[Section 6.2]{ilten}, \cite[Corollary 7.11, Theorem 10.1]{ahausen}), 
these sets are orbits of the torus, and their closures are affine toric varieties constructed by the standard toric construction from the cone $\sck$, 
but we will not need these facts.
Sometimes we can simply write $\orb(p, 0)$ instead of $\orb(p, \indexedvertex{\6}{1})$.

Now we are going to understand the structure of a fiber $\pi^{-1}(p)$ over a \textbf{special}
point $p=p_i$. The function $\chi\mapsto \min_{a\in \stdpolyhedronletter_p}\chi(a)$ (which defines the coefficient for 
$p$ in $\mathcal D(\chi)$, denote it shortly by $\mathcal D_p(\chi)$) is piecewise linear.
One checks easily that the maximal subcones of $\sck$ where $\mathcal D(\chi)$ is linear 
are exactly the cones $\normalvertexcone{\indexedvertex{\stdpolyhedronletter_p}{j}}{\stdpolyhedronletter_p}$ ($1\le j\le \numberofvertices{\stdpolyhedronletter_p}$), 
In what follows, we write $\numberofverticespt{p}$ instead of $\numberofvertices{\stdpolyhedronletter_p}$
and $\indexedvertexpt{p}{j}$ instead of $\indexedvertex{\stdpolyhedronletter_p}{j}$ for brevity.
Observe that $\numberofverticespt{p}=1$ if and only if $p$ is a removable special point.

This time we choose bases of $\Gamma(\PP^1,\mathcal O (\mathcal D(\chi)))$ as follows: let 
$$
\rawdivisorbasis{p,\chi,1},\ldots,\rawdivisorbasis{p,\chi,\dim \Gamma(\PP^1,\mathcal O(\mathcal D(\chi)))}
$$
be a basis of $\Gamma(\PP^1,\mathcal O (\mathcal D(\chi)))$
such that $\ord_p (\olrawdivisorbasis{p,\chi,1})=-\mathcal D_p(\chi)$ and 
$\ord_p (\olrawdivisorbasis{p,\chi,i})>-\mathcal D_p(\chi)$ for $i>1$. Then 
functions $\olrawdivisorbasis{p,\chi,i}/\olrawdivisorbasis{p,\chi,1}$ for $i>1$ are defined at $p$ and evaluate to $0$ there, 
so if $x\in\pi^{-1}(p)$, then by Proposition \ref{quotmorph}
$(\wtrawdivisorbasis{p,\chi,i}/\wtrawdivisorbasis{p,\chi,1}) (x)=0$, and 
$\wtrawdivisorbasis{p,\chi,i} (x)=0$ for $i>1$.
In this case we demand explicitly for $\chi=0$ that
$\olrawdivisorbasis{p,0,1}=1$ and $\wtrawdivisorbasis{p,0,1}=1$ everywhere.

Now let $\chi,\chi'\in\sck\cap M$, $a,a'\in \ZZ_{\ge 0}$, then 
$(\rawdivisorbasis{p,\chi,1})^a (\rawdivisorbasis{p,\chi',1})^{a'}$ is an element of $\Gamma(\PP^1,\mathcal O(\mathcal D(a \chi+a' \chi')))$, so it can be written 
as 
$$
(\rawdivisorbasis{p,\chi,1})^a (\rawdivisorbasis{p,\chi',1})^{a'}=
\sum_i\rawdivisortransition{i,\chi,\chi',a,a'}\rawdivisorbasis{p,a \chi+a' \chi',i},\text{ where }\rawdivisortransition{i,\chi,\chi',a,a'}\in\CC.
$$
We have $\ord_p (\olrawdivisorbasis{p,\chi,1})^a (\olrawdivisorbasis{p,\chi',1})^{a'}=-a \mathcal D_p(\chi)-a' \mathcal D_p(\chi')$, 
$\ord_p(\olrawdivisorbasis{p,a \chi+a' \chi',1})=-\mathcal D_p(a \chi+a' \chi')$ and 
$\ord_p(\olrawdivisorbasis{p,a \chi+a' \chi',i})>-\mathcal D_p(a \chi+a' \chi')$ for $i>1$. Therefore, 
$\rawdivisortransition{1,\chi,\chi',a,a'}\ne 0$ if and only if 
$a \mathcal D_p(\chi)+a' \mathcal D_p(\chi')=\mathcal D_p(a \chi+a' \chi')$
if and only if $a=0$ or $a'=0$ or $\chi$ and $\chi'$ are in the same subcone of $\sck$ where $\mathcal D_p(\cdot)$ is linear, 
i.~e. $\chi, \chi'\in \normalvertexcone{\indexedvertexpt{p}{j}}{\stdpolyhedronletter_p}$ for some $j$.

These computations prove the following lemma:
\begin{lemma}\label{productzero}
For every $\chi,\chi'\in\sck\cap M$, $a,a'\in \ZZ_{\ge 0}$ and for every $x\in\pi^{-1}(p)$, 
$(\wtrawdivisorbasis{p,\chi,1}(x))^a (\wtrawdivisorbasis{p,\chi',1}(x))^{a'}=\rawdivisortransition{1,\chi,\chi',a,a'}\wtrawdivisorbasis{p,a \chi+a' \chi',1}(x)$,
where $\rawdivisortransition{1,\chi,\chi',a,a'}$ depends on $p$ and on the choice of $\rawdivisorbasis{p,\chi,i}$, but not on $x$.
$\rawdivisortransition{1,\chi,\chi',a,a'}\ne 0$ if and only if $a=0$ or $a'=0$ or there exists 
a vertex $\indexedvertexpt{p}{j}$ of $\stdpolyhedronletter_p$
such that 
$\chi, \chi'\in \normalvertexcone{\indexedvertexpt{p}{j}}{\stdpolyhedronletter_p}$ 
(in other words, $\chi$ and $\chi'$ belong to the same cone of the normal fan of $\stdpolyhedronletter_p$).\qed
\end{lemma}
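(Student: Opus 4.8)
The plan is to read the lemma off from the decomposition identity for the leading basis sections already recorded above, first transferring it from rational functions on $\PP^1$ to global functions on $X$ and then evaluating at a point of the fiber. I would start from
$$
(\rawdivisorbasis{p,\chi,1})^a (\rawdivisorbasis{p,\chi',1})^{a'}=
\sum_i\rawdivisortransition{i,\chi,\chi',a,a'}\rawdivisorbasis{p,a \chi+a' \chi',i},
$$
which is valid because the left-hand side is a section of $\mathcal O(\mathcal D(a\chi+a'\chi'))$ and the $\rawdivisorbasis{p,a\chi+a'\chi',i}$ form a basis of $\Gamma(\PP^1,\mathcal O(\mathcal D(a\chi+a'\chi')))$. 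Since multiplication in $A$ agrees with multiplication of the representing rational functions, the very same scalars $\rawdivisortransition{i,\chi,\chi',a,a'}$ yield the corresponding identity for the associated global functions $\wtrawdivisorbasis{\cdot}$ on $X$; in particular $\rawdivisortransition{1,\chi,\chi',a,a'}$ depends only on $p$ and the chosen bases, not on $x$.

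Next I would evaluate this global identity at a point $x\in\pi^{-1}(p)$. As observed just before the lemma (from Proposition \ref{quotmorph} together with the normalization $\ord_p(\olrawdivisorbasis{p,\chi,i})>-\mathcal D_p(\chi)$ for $i>1$), every global function $\wtrawdivisorbasis{p,\chi,i}$ with $i>1$ vanishes on $\pi^{-1}(p)$. Applying this in degree $a\chi+a'\chi'$, all summands with $i>1$ drop out and only the $i=1$ term survives, giving
$$
(\wtrawdivisorbasis{p,\chi,1}(x))^a (\wtrawdivisorbasis{p,\chi',1}(x))^{a'}=\rawdivisortransition{1,\chi,\chi',a,a'}\,\wtrawdivisorbasis{p,a \chi+a' \chi',1}(x),
$$
which is the first assertion.

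It remains to decide when $\rawdivisortransition{1,\chi,\chi',a,a'}\ne 0$, and for this I would compare orders of vanishing at $p$. The product on the left has $\ord_p=-a\mathcal D_p(\chi)-a'\mathcal D_p(\chi')$, while among the degree-$(a\chi+a'\chi')$ basis sections only $\olrawdivisorbasis{p,a\chi+a'\chi',1}$ attains the minimal order $-\mathcal D_p(a\chi+a'\chi')$, the others having strictly larger order. Hence the coefficient $\rawdivisortransition{1,\chi,\chi',a,a'}$ of that unique minimal-order section is nonzero exactly when the product itself attains the minimal order, i.e. when $a\mathcal D_p(\chi)+a'\mathcal D_p(\chi')=\mathcal D_p(a\chi+a'\chi')$. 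Since $\mathcal D_p(\chi)=\min_{v\in\stdpolyhedronletter_p}\chi(v)$ is a minimum of linear functionals, it is positively homogeneous and superadditive, so this equality fails in general and holds precisely when $a=0$, or $a'=0$, or $\chi$ and $\chi'$ are minimized at a common vertex of $\stdpolyhedronletter_p$, that is, when $\chi,\chi'\in\normalvertexcone{\indexedvertexpt{p}{j}}{\stdpolyhedronletter_p}$ for some common $j$.

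The only genuinely delicate point, though a mild one, is the transfer from the $\PP^1$-identity to the $X$-identity: one must use that the grading of $A$ is multiplicative and that the identification of the degree-$(a\chi+a'\chi')$ component with $\Gamma(\PP^1,\mathcal O(\mathcal D(a\chi+a'\chi')))$ respects products, so that the scalars computed on $\PP^1$ remain valid for the global functions $\wtrawdivisorbasis{\cdot}$. Everything else — the vanishing of the higher basis functions on the fiber and the superadditivity computation identifying the equality case — is routine.
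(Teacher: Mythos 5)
Your proof is correct and follows essentially the same route as the paper: decompose the product $(\rawdivisorbasis{p,\chi,1})^a(\rawdivisorbasis{p,\chi',1})^{a'}$ in the chosen basis of $\Gamma(\PP^1,\mathcal O(\mathcal D(a\chi+a'\chi')))$, kill the $i>1$ terms on the fiber, and detect $\rawdivisortransition{1,\chi,\chi',a,a'}\ne 0$ by comparing $\ord_p$ of the product with the minimal order $-\mathcal D_p(a\chi+a'\chi')$, reducing to the equality case of superadditivity of $\mathcal D_p(\cdot)$. This matches the computation the paper carries out immediately before stating the lemma.
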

\begin{corollary}
Let $\chi,\chi'\in\sck\cap M$, $a,a'\in \NN$, $x\in\pi^{-1}(p)$. 
Suppose that there exist 
no vertex $\indexedvertexpt{p}{j}$
such that $\chi, \chi'\in \normalvertexcone{\indexedvertexpt{p}{j}}{\stdpolyhedronletter_p}$.
Then for every $f\in\Gamma(\PP^1,\mathcal O(\mathcal D(\chi)))$, $g\in\Gamma(\PP^1,\mathcal O(\mathcal D(\chi')))$
we have $\widetilde f(x)\widetilde g(x)=0$.\qed
\end{corollary}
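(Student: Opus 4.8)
The plan is to reduce the arbitrary product $\widetilde f(x)\widetilde g(x)$ to a product of the two ``leading'' basis functions $\wtrawdivisorbasis{p,\chi,1}$ and $\wtrawdivisorbasis{p,\chi',1}$ and then to invoke Lemma \ref{productzero}. First I would expand $f$ and $g$ in the bases fixed above for this special point $p$, writing $f=\sum_i\lambda_i\rawdivisorbasis{p,\chi,i}$ and $g=\sum_i\mu_i\rawdivisorbasis{p,\chi',i}$ with $\lambda_i,\mu_i\in\CC$. Because the passage $f\mapsto\widetilde f$ from a section to its associated global function on $X$ is linear, the same expansions hold after tilde: $\widetilde f=\sum_i\lambda_i\wtrawdivisorbasis{p,\chi,i}$ and $\widetilde g=\sum_i\mu_i\wtrawdivisorbasis{p,\chi',i}$.

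The key observation, made just before Lemma \ref{productzero}, is that for $x\in\pi^{-1}(p)$ one has $\wtrawdivisorbasis{p,\chi,i}(x)=0$ for every $i>1$, and likewise $\wtrawdivisorbasis{p,\chi',i}(x)=0$ for $i>1$. Evaluating the two expansions at $x$ therefore collapses each sum to its first term, so that $\widetilde f(x)=\lambda_1\wtrawdivisorbasis{p,\chi,1}(x)$ and $\widetilde g(x)=\mu_1\wtrawdivisorbasis{p,\chi',1}(x)$, whence
$$
\widetilde f(x)\,\widetilde g(x)=\lambda_1\mu_1\,\wtrawdivisorbasis{p,\chi,1}(x)\,\wtrawdivisorbasis{p,\chi',1}(x).
$$

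It remains to show that the product $\wtrawdivisorbasis{p,\chi,1}(x)\,\wtrawdivisorbasis{p,\chi',1}(x)$ vanishes, and this is precisely Lemma \ref{productzero} with exponents $a=a'=1$: it yields $\wtrawdivisorbasis{p,\chi,1}(x)\,\wtrawdivisorbasis{p,\chi',1}(x)=\rawdivisortransition{1,\chi,\chi',1,1}\,\wtrawdivisorbasis{p,\chi+\chi',1}(x)$, and since both exponents are nonzero while, by hypothesis, no vertex $\indexedvertexpt{p}{j}$ of $\stdpolyhedronletter_p$ satisfies $\chi,\chi'\in\normalvertexcone{\indexedvertexpt{p}{j}}{\stdpolyhedronletter_p}$, the coefficient $\rawdivisortransition{1,\chi,\chi',1,1}$ equals $0$. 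Thus the product vanishes, and so does $\widetilde f(x)\widetilde g(x)$. I do not expect a real obstacle here: the argument is a bilinear reduction to the single case already settled by Lemma \ref{productzero}, and the only point to keep track of is that the vanishing of the non-leading basis functions on the fiber is exactly what lets the expansions of $\widetilde f$ and $\widetilde g$ collapse to their leading terms.
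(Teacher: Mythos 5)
Your proof is correct and is exactly the argument the paper leaves implicit (the corollary is stated with \qed): expand $f$ and $g$ in the bases chosen at $p$, note the non-leading functions $\wtrawdivisorbasis{p,\chi,i}$, $i>1$, vanish on $\pi^{-1}(p)$, and apply Lemma \ref{productzero} with $a=a'=1$ to see the leading product vanishes because $\rawdivisortransition{1,\chi,\chi',1,1}=0$. No issues.
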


\begin{lemma}
Let $x\in X$ be a point, $x\in\pi^{-1} (p)\cap U_0$. The set of degrees $\chi$ such that 
$\wtrawdivisorbasis{p,\chi,1} (x)\ne 0$ can be 
the set of all lattice points in one of the following cones: 
\begin{enumerate}
\item 
$\normalvertexcone{\indexedvertexpt{p}{j}}{\stdpolyhedronletter_p}$ for some $j$, $1\le j\le \numberofverticespt{p}$.
\item 
$\normalvertexcone{\indexededgept{p}{j}}{\stdpolyhedronletter_p}$ for some $j$, $0<j<\numberofverticespt{p}$.
\item 
$\normalvertexcone{\indexededgept{p}{j}}{\stdpolyhedronletter_p}$ for $j=0$ or $j=\numberofverticespt{p}$.
This is possible if and only if $\deg \mathcal D(\chi_j)>0$.
\end{enumerate}
\end{lemma}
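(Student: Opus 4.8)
The plan is to study the set $S=\{\chi\in\sck\cap M:\wtrawdivisorbasis{p,\chi,1}(x)\ne0\}$ using only the multiplicative relations of Lemma \ref{productzero}, and to show that it must be the set of lattice points of a single cone of the normal fan of $\stdpolyhedronletter_p$. First I would extract two consequences of that lemma. Taking $a=a'=1$, if $\chi,\chi'\in S$ then $\wtrawdivisorbasis{p,\chi,1}(x)\,\wtrawdivisorbasis{p,\chi',1}(x)\ne0$, forcing $\rawdivisortransition{1,\chi,\chi',1,1}\ne0$, which means $\chi,\chi'$ lie in a common maximal normal cone $\normalvertexcone{\indexedvertexpt{p}{j}}{\stdpolyhedronletter_p}$; hence no two elements of $S$ can lie in the interiors of distinct maximal cones. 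Conversely, whenever $\chi,\chi'\in S$ share such a cone, the relation with general $a,a'$ gives $a\chi+a'\chi'\in S$, and the case $a'=0$ yields the homogeneity $\chi\in S\Leftrightarrow a\chi\in S$ for $a\in\NN$. Finally $0\in S$ because $\wtrawdivisorbasis{p,0,1}=1$, and $x\in U_0$ rules out $S=\{0\}$: the degree $\chi$ witnessing $x\in U_0$ satisfies $\dim\Gamma(\PP^1,\OO(\mathcal D(\chi)))\ge2$ and $\widetilde f(x)\ne0$ for some $f$, but on $\pi^{-1}(p)$ the value $\widetilde f(x)$ is a scalar multiple of $\wtrawdivisorbasis{p,\chi,1}(x)$ since the higher functions $\wtrawdivisorbasis{p,\chi,i}$, $i>1$, vanish there, so $\chi\ne0$ belongs to $S$.

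Next I would classify $S$ in two cases. If $S$ contains a point $\chi_0$ interior to a maximal cone $C_j=\normalvertexcone{\indexedvertexpt{p}{j}}{\stdpolyhedronletter_p}$, then the pairwise constraint forces $S\subseteq C_j$, as $C_j$ is the only maximal cone containing $\chi_0$. Choosing the primitive generators $\alpha,\alpha'$ of the two edges of $C_j$ and writing $a_0\chi_0=b\alpha+b'\alpha'$ with $a_0,b,b'\in\NN$ positive (possible since $\chi_0$ is interior), the product relation shows $\wtrawdivisorbasis{p,\alpha,1}(x)\ne0$ and $\wtrawdivisorbasis{p,\alpha',1}(x)\ne0$; then every lattice point of $C_j$ is, up to a positive multiple, a nonnegative integer combination of $\alpha,\alpha'$, so it lies in $S$ by the product relation and homogeneity, giving $S=C_j\cap M$ (case 1). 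If $S$ contains no such interior point it meets only the rays of the fan; here the pairwise constraint excludes rays bounding no common maximal cone, and the sum relation excludes two rays bounding a common $C_j$ (their sum would be interior to $C_j$), so $S$ lies on a single ray $\rho$. Running the homogeneity argument on the primitive generator of $\rho$ then gives $S=\rho\cap M$, which is either an interior edge ray (case 2) or a boundary ray of $\sck$ (case 3).

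It remains to see why the boundary-ray case carries the extra hypothesis. For an interior edge ray $\normalvertexcone{\indexededgept{p}{j}}{\stdpolyhedronletter_p}$ with $0<j<\numberofverticespt{p}$, its generator lies in the interior of $\sck$, where properness makes $\mathcal D(\cdot)$ big and hence of positive degree on $\PP^1$, so no condition is needed. For a boundary ray $\rho=\indexededge{\sck}{0}$ or $\indexededge{\sck}{1}$ with primitive generator $\chi_j$, necessity of $\deg\mathcal D(\chi_j)>0$ follows as in Lemma \ref{genfiberstruct}: the $U_0$-witness $\chi$ lies in $S=\rho\cap M$ and has $\dim\Gamma(\PP^1,\OO(\mathcal D(\chi)))\ge2$, i.e. $\deg\mathcal D(\chi)>0$, and linearity of $\mathcal D$ along $\rho$ makes this equivalent to $\deg\mathcal D(\chi_j)>0$; the converse realizability is obtained exactly as in the ordinary-point lemma.

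I expect the main obstacle to be the combinatorial step of pinning $S$ down to a single cone and then to all of its lattice points: one must simultaneously use pairwise compatibility (to prevent $S$ from spreading across several maximal cones), additivity (to prevent it from collapsing onto the boundary rays of an occupied maximal cone), and homogeneity (to fill the chosen cone completely), while keeping track of the degenerate shapes of $\stdpolyhedronletter_p$, such as removable points where $\numberofverticespt{p}=1$ and no interior edge rays exist.
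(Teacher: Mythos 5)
Your proposal is correct and follows essentially the same route as the paper's proof: both arguments rest entirely on the multiplicative relations of Lemma \ref{productzero} (pairwise compatibility, additivity within a normal cone, and homogeneity under positive multiples), split into the case where the set contains an interior point of a maximal normal cone (filled in via the finite-index sublattice generated by the two bounding ray generators) versus the case where it lies on a single ray, and handle the boundary rays via the condition $\deg\mathcal D(\chi_j)>0$. The only difference is cosmetic packaging — you analyze the set $S$ axiomatically and explicitly rule out $S=\{0\}$ using $x\in U_0$, a degenerate case the paper leaves implicit — so nothing further is needed.
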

\begin{proof}
Denote $\chi_j=\primitivelattice{\indexededgept{p}{j}}$ for $0\le j\le \numberofverticespt{p}$. (In particular, 
we have $\chi_0=\sckboundarybasis{0}$ and $\chi_{\numberofverticespt{p}}=\sckboundarybasis{1}$.
Consider all indices $j$ such that $\wtrawdivisorbasis{p,\chi_j,i} (x)\ne 0$. 
Since $\chi_j$ is in $\normalvertexcone{\indexedvertexpt{p}{j'}}{\stdpolyhedronletter_p}$ only for $j'=j$ or $j'=j-1$, there can be 
at most two such indices $j$, and if there are two of them, they should be two consecutive natural numbers. 

Suppose first that $\wtrawdivisorbasis{p,\chi_{j-1},i} (x)\ne 0$ and 
$\wtrawdivisorbasis{p,\chi_j,i} (x)\ne 0$ for some $j$. The argument is similar 
to the proof of Lemma \ref{genfiberstruct}. Namely, consider the sublattice 
in $M$ generated by $\chi_{j-1}$ and $\chi_j$. It is a sublattice of finite index, denote it 
by $M'$. For every $\chi'\in M'$, $\chi'=a \chi_{j-1}+a' \chi_j$ we have 
$$
\rawdivisortransition{1,\chi_{j-1},\chi_j,a,a'}\wtrawdivisorbasis{p,\chi',1}(x)=
(\wtrawdivisorbasis{p,\chi_{j-1},i}(x))^a (\wtrawdivisorbasis{p,\chi_j,i}(x))^{a'}\ne 0,
$$
so $\wtrawdivisorbasis{p,\chi',1}(x)\ne 0$. For every $\chi\in \normalvertexcone{\indexedvertexpt{p}{j}}{\stdpolyhedronletter_p}\cap M$ 
there exists $a''\in \NN$ such that $a'' \chi\in M'$, so 
$\wtrawdivisorbasis{p,a'' \chi,1}(x)\ne 0$. By lemma \ref{productzero}, 
$$
(\wtrawdivisorbasis{p,\chi,1}(x))^{a''}=\rawdivisortransition{1,\chi,0,a'',0}\wtrawdivisorbasis{p,a'' \chi,1}(x),
$$
and $\rawdivisortransition{1,\chi,0,a'',0}\ne 0$, so $\wtrawdivisorbasis{p,\chi,1}(x)\ne 0$.
Finally, for a degree $\chi\notin \normalvertexcone{\indexedvertexpt{p}{j}}{\stdpolyhedronletter_p}$ choose an arbitrary degree $\chi'$ in the 
interior of $\chi\in \normalvertexcone{\indexedvertexpt{p}{j}}{\stdpolyhedronletter_p}\cap M$. 
Then by Lemma \ref{productzero}, $\wtrawdivisorbasis{p,\chi,1}(x)\wtrawdivisorbasis{p,\chi',1}(x)=0$, 
we already know that $\wtrawdivisorbasis{p,\chi',1}(x)\ne 0$, so $\wtrawdivisorbasis{p,\chi,1}(x)=0$.

Now suppose that there exists a degree $\chi$ such that $\wtrawdivisorbasis{p,\chi,1} (x)\ne 0$
and $\chi$ is in the interior of a cone $\normalvertexcone{\indexedvertexpt{p}{j}}{\stdpolyhedronletter_p}$. Again denote the lattice generated 
by $\chi_{j-1}$ and $\chi_j$ by $M'$. There exists $a''\in \NN$ such that $\chi'=a'' \chi\in M'$.
We have 
$$
\rawdivisortransition{1,\chi,\chi,a'',0}\wtrawdivisorbasis{p,\chi',1}(x)=(\wtrawdivisorbasis{p,\chi,1}(x))^{a''},
$$
so $\wtrawdivisorbasis{p,\chi',1}(x)\ne0$. $\chi'$ is also in the interior of $\normalvertexcone{\indexedvertexpt{p}{j}}{\stdpolyhedronletter_p}$, so 
there exist $a,a'\in\NN$ such that $a \chi_{j-1}+a' \chi_j=\chi'$.
Again we have 
$$
(\wtrawdivisorbasis{p,\chi_{j-1},i}(x))^a (\wtrawdivisorbasis{p,\chi_j,i}(x))^{a'}=
\rawdivisortransition{1,\chi_{j-1},\chi_j,a,a'}\wtrawdivisorbasis{p,\chi',1}(x),
$$
where $\rawdivisortransition{1,\chi_{j-1},\chi_j,a,a'}\ne 0$, 
so $\wtrawdivisorbasis{p,\chi_{j-1},i}(x)\ne 0$ and $\wtrawdivisorbasis{p,\chi_j,i}(x)\ne 0$.
Therefore, if there exists a degree $\chi$ in the interior of a cone $\normalvertexcone{\indexedvertexpt{p}{j}}{\stdpolyhedronletter_p}$
such that $\wtrawdivisorbasis{p,\chi,1} (x)\ne 0$, then there are two indices $j'$ 
such that $\wtrawdivisorbasis{p,\chi_{j'},i} (x)\ne 0$.

Now consider the case when there is only one $j$ such that $\wtrawdivisorbasis{p,\chi_j,i} (x)\ne 0$. 
We already know that in this case for all degrees $\chi$ from the interiors of the cones $\normalvertexcone{\indexedvertexpt{p}{j}}{\stdpolyhedronletter_p}$,
we have $\wtrawdivisorbasis{p,\chi,1} (x)=0$. So the only possible degrees $\chi$ such that $\wtrawdivisorbasis{p,\chi,1} (x)\ne 0$
are multiples of $\chi_j=\primitivelattice{\indexededgept{p}{j}}$. And for these degrees we have 
$$
\rawdivisortransition{1,\chi_j,0,a,0}\wtrawdivisorbasis{p,a \chi_j,1}(x)=(\wtrawdivisorbasis{p,\chi_j,i}(x))^a,
$$
so $\wtrawdivisorbasis{p,a \chi_j,1}(x)\ne0$. Such $x$ can be in $U_0$ only if 
$\deg \mathcal D(\chi_j)>0$. Properness guarantees this for $0<j<\numberofverticespt{p}$, and 
for $j=0$ or $j=\numberofverticespt{p}$ we have to check this explicitly.
\end{proof}
And again this lemma can be reformulated without referring to bases of 
$\Gamma(\PP^1,\mathcal O(\mathcal D(\chi)))$.
\begin{proposition}\label{specfiberstructinvar}
For each $x\in\pi^{-1} (p)\cap U_0$, 
there exists 
there exists a subcone $\tau\subseteq\sck$
such that if $\chi\in\sck\cap M$ 
and $f\in\Gamma(\PP^1,\mathcal O(\mathcal D(\chi)))$, then 
$$
\widetilde f(x)\ne 0\Leftrightarrow \chi\in \tau\text{ and }\ord_p (\overline f)=-\mathcal D_p(\chi).
$$
$\tau$ can be one of the following cones:
\begin{enumerate}
\item\label{specfibergencaseinv} 
The normal subcone $\normalvertexcone{\indexedvertexpt{p}{j}}{\stdpolyhedronletter_p}$ of a vertex $\indexedvertexpt{p}{j}$
of $\stdpolyhedronletter_p$.
\item\label{specfiberinneredgeinv} 
The normal subcone $\normalvertexcone{\indexededgept{p}{j}}{\stdpolyhedronletter_p}$ of a finite edge $\indexededgept{p}{j}$
($0<j<\numberofverticespt{p}$).
\item\label{specfiberboundaryinv} 
The normal subcone $\normalvertexcone{\indexededgept{p}{j}}{\stdpolyhedronletter_p}$ of an infinite edge $\indexededgept{p}{j}$
($j=0$ or $j=\numberofverticespt{p}$, respectively).
This is possible if and only if $\deg \mathcal D(\sckboundarybasis{0})>0$ or $\deg \mathcal D(\sckboundarybasis{1})>0$, respectively.
\end{enumerate}
\end{proposition}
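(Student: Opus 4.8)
The plan is to follow the proof of Proposition \ref{genfiberstructinvar} almost verbatim, upgrading the basis-dependent statement of the preceding lemma to an intrinsic one. The bridge is the defining property of the chosen basis: $\ord_p(\olrawdivisorbasis{p,\chi,1})=-\mathcal D_p(\chi)$ while $\ord_p(\olrawdivisorbasis{p,\chi,i})>-\mathcal D_p(\chi)$ for $i>1$. Since $\olrawdivisorbasis{p,\chi,1}$ is the unique basis vector attaining the minimal order $-\mathcal D_p(\chi)$, writing $f=\sum_i c_i\rawdivisorbasis{p,\chi,i}$ we obtain $\ord_p(\overline f)=-\mathcal D_p(\chi)$ exactly when $c_1\ne 0$, and $\ord_p(\overline f)>-\mathcal D_p(\chi)$ otherwise (the minimal-order term cannot be cancelled by the strictly higher-order ones).

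First I would fix $\chi\in\sck\cap M$ and $x\in\pi^{-1}(p)\cap U_0$, and recall from the computation made just before Lemma \ref{productzero} (via Proposition \ref{quotmorph}) that $\wtrawdivisorbasis{p,\chi,i}(x)=0$ for every $i>1$. Since the decomposition $f=\sum_i c_i\rawdivisorbasis{p,\chi,i}$ of global functions on $X$ is linear, this gives $\widetilde f(x)=c_1\wtrawdivisorbasis{p,\chi,1}(x)$, so $\widetilde f(x)\ne 0$ if and only if $c_1\ne 0$ and $\wtrawdivisorbasis{p,\chi,1}(x)\ne 0$; by the bridge, the condition $c_1\ne 0$ is the same as $\ord_p(\overline f)=-\mathcal D_p(\chi)$.

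Next I would let $\tau$ be the cone whose lattice points are precisely the degrees $\chi$ with $\wtrawdivisorbasis{p,\chi,1}(x)\ne 0$. The preceding lemma identifies this set as the lattice points of one of the three cones listed in the statement, namely $\normalvertexcone{\indexedvertexpt{p}{j}}{\stdpolyhedronletter_p}$, or $\normalvertexcone{\indexededgept{p}{j}}{\stdpolyhedronletter_p}$ for a finite edge $0<j<\numberofverticespt{p}$, or $\normalvertexcone{\indexededgept{p}{j}}{\stdpolyhedronletter_p}$ for an infinite edge (the last only when $\deg\mathcal D(\sckboundarybasis{0})>0$ or $\deg\mathcal D(\sckboundarybasis{1})>0$). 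With this $\tau$, combining the two observations of the previous paragraph yields $\widetilde f(x)\ne 0\Leftrightarrow\chi\in\tau$ and $\ord_p(\overline f)=-\mathcal D_p(\chi)$, and the trichotomy for $\tau$ is inherited directly from the lemma.

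Because every ingredient is already supplied by the earlier lemma and by Proposition \ref{quotmorph}, I expect no substantive obstacle; the proof is essentially a dictionary translation. The only points I would check carefully are the degenerate degree $\chi=0$, where $\mathcal D_p(0)=0$ and $\wtrawdivisorbasis{p,0,1}\equiv 1$ force $0\in\tau$ consistently with $\tau$ being a cone, and the two boundary rays $\indexededge{\sck}{0}$, $\indexededge{\sck}{1}$, where the properness-based positivity available in the interior of $\sck$ must be replaced by the explicit hypotheses $\deg\mathcal D(\sckboundarybasis{0})>0$ and $\deg\mathcal D(\sckboundarybasis{1})>0$.
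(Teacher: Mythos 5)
Your proposal matches the paper's own proof: the paper likewise fixes $\chi$, observes that $\ord_p(\overline f)=-\mathcal D_p(\chi)$ exactly when the coefficient of $\rawdivisorbasis{p,\chi,1}$ in the decomposition of $f$ is nonzero, uses $\wtrawdivisorbasis{p,\chi,i}(x)=0$ for $i>1$, and reads off the trichotomy for $\tau$ from the preceding lemma. This is essentially the same argument, so no further comment is needed.
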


\begin{proof}
The proof is very similar to the proof of Proposition \ref{genfiberstructinvar}.
Again, we fix a degree $\chi\in\sck\cap M$ and notice that if $f\in\Gamma(\PP^1,\mathcal O(\mathcal D(\chi)))$, 
then $\ord_p (\overline f)=-\mathcal D_p(\chi)$ if and only if the decomposition of $f$ into a linear combination 
of functions $\rawdivisorbasis{p,\chi,i}$ 
contains $\rawdivisorbasis{p,\chi,1}$ with a nonzero coefficient.
Fix a point $x\in\pi^{-1}(p)\cap U_0$.
Again for all functions $\wtrawdivisorbasis{p,\chi,i}$, where $i>1$, we have $\wtrawdivisorbasis{p,\chi,i}(x)=0$.
Therefore, $\wtrawdivisorbasis{p,\chi,1}(x)\ne 0$
if and only if $\widetilde f(x)\ne 0$ for all $f\in\Gamma(\PP^1,\mathcal O(\mathcal D (\chi)))$ such that $\ord_p (\overline f)=-\mathcal D_p(\chi)$.
And, independently of the value of $\wtrawdivisorbasis{p,\chi,1}(x)$,
$\widetilde f(x)=0$ for all $f\in\Gamma(\PP^1,\mathcal O(\mathcal D (\chi)))$ such that $\ord_p (\overline f)>-\mathcal D_p(\chi)$.
\end{proof}

And again, following \cite[Section 6.2]{ilten}, we denote the set of all points $x\in \pi^{-1}(p)\cap U_0$ such that case \ref{specfibergencaseinv} 
(resp. case \ref{specfiberinneredgeinv} or \ref{specfiberboundaryinv}) holds
by $\orb(p, \indexedvertexpt{p}{j})$ (resp. by $\orb(p, \indexededgept{p}{j})$). 
In fact (see \cite[Section 6.2]{ilten}, \cite[Corollary 7.11, Theorem 10.1]{ahausen}), 
these sets are orbits of the torus.

It follows easily from Proposition \ref{specfiberstructinvar} that for each vertex $\indexedvertexpt{p}{j}$ ($1\le j\le \numberofverticespt{p}$),
$$
\overline{\orb(p,\indexedvertexpt{p}{j})}=\orb(p,\indexededgept{p}{j-1})\cup \orb(p,\indexedvertexpt{p}{j})\cup \orb(p,\indexedvertexpt{p}{j}).
$$
Moreover, all sets $\orb(p,\indexedvertexpt{p}{j})$ are two-dimensional, and all sets 
$\orb(p,\indexedvertexpt{p}{j})$ ($0\le j\le \numberofverticespt{p}$) are one-dimensional.
This is illustrated by Fig.~\ref{figfiberstruct}.

\begin{figure}[h!]
\begin{center}
\includegraphics{t1_3fold_figures-6.mps}
\end{center}
\caption{Structure of a fiber of $\pi$ over a special point $p$: lines show two-dimensional components, points show one-dimensional curves inside.}
\label{figfiberstruct}
\end{figure}


\section{Sufficient systems of open subsets of $X$}

We are going to use Theorem \ref{schlessgen}, Leray spectral sequence for the map 
$\pi$ and Proposition 
\ref{computederived} 
to compute $T^1(X)$.
To do this, we need an open subset $U\subseteq X$ suitable for Theorem 
\ref{schlessgen} (i.~e. smooth and such that $\codim_X(X\setminus U)\ge 2$) 
and an affine covering of $U$. We first choose several affine subsets of $X$. The amount of these 
sets will be denoted by $\numberoffixedopensets$, the sets themselves will be denoted by
$U_i$ ($1\le i\le \numberoffixedopensets$). Then we will set $U=\bigcup U_i$.
As we will see later, the intersection of a set $U_i$ and a fiber of $\pi$ 
will be either an empty set, or a two-dimensional torus orbit,
or the union of a two-dimensional and a one-dimensional torus orbit. In the last base 
the one-dimensional orbit belongs to the closure of the two-dimensional orbit, 
and the entire intersection is isomorphic to $(\CC^*)\times \CC$.
Very roughly and informally speaking, each set $U_i$ will correspond to a choice 
of several special points and of two-dimensional orbits
in the fibers above these points, one orbit above each special point.

To define a set $U_i$, we fix the following data:
\begin{enumerate}
\item a pair of degrees $(\uidegree{i,1}, \uidegree{i,2})\in\sck\cap M$ generating $M$ as a lattice and such that
$\deg\mathcal D(\uidegree{i,1})>0$, $\deg\mathcal D(\uidegree{i,2})>0$, and $\uidegree{i,2}$ is in the interior of $\sck$,
\item two sections $\uithreadfunction{i,1}\in\Gamma(\PP^1,\OO (\mathcal D (\uidegree{i,1})))$, $\uithreadfunction{i,2}\in\Gamma(\PP^1,\OO (\mathcal D (\uidegree{i,2})))$.
\item Let $V_i\subseteq\PP^1$ be an arbitrary open subset of the set of all points $p\in\PP^1$ such that:
\begin{enumerate}
\item $\ord_p (\oluithreadfunction{i,1})=-\mathcal D_p(\uidegree{i,1})$, $\ord_p (\oluithreadfunction{i,2})=-\mathcal D_p(\uidegree{i,2})$ 
(in particular, if $p$ is an ordinary point, $\ord_p (\oluithreadfunction{i,1})=\ord_p (\oluithreadfunction{i,2})=0$).
\item If $p$ is a special point and $\uidegree{i,1}$ is in the interior of $\sck$, 
then $\uidegree{i,1}$ and $\uidegree{i,2}$ are in the interior of the same 
normal subcone $\normalvertexcone{\indexedvertexpt{p}{j}}{\stdpolyhedronletter_p}$
of the same vertex $\indexedvertexpt{p}{j}$.
\item If $p$ is a special point and $\uidegree{i,1}\in\indexededge{\sck}{0}$, then $\uidegree{i,2}$ is in the 
interior of 
$\normalvertexcone{\indexedvertexpt{p}{0}}{\stdpolyhedronletter_p}$.
\item If $p$ is a special point and $\uidegree{i,1}\in\indexededge{\sck}{1}$, then $\uidegree{i,2}$ is in the 
interior of 
$\normalvertexcone{\indexedvertexpt{p}{\numberofverticespt{p}}}{\stdpolyhedronletter_p}$.
\end{enumerate}
\end{enumerate}
After these data are fixed, we will denote the basis of $N$ dual to the basis $\uidegree{i,1},\uidegree{i,2}$ of $M$ 
by $\uidegree{i,1}^*,\uidegree{i,2}^*$. In other words, for each $\chi\in M$ we have $\chi=\uidegree{i,1}^*(\chi)\uidegree{i,1}+
\uidegree{i,2}^*(\chi)\uidegree{i,2}$.

$U_i$ is defined to be the set of points $x\in U_0\subseteq X$ such that:
\begin{enumerate}
\item $\pi(x)\in V_i$,
\item $\wtuithreadfunction{i,1}(x)\ne 0$,
\item if $\uidegree{i,1}$ is in the interior of $\sck$, then $\wtuithreadfunction{i,2}(x)\ne 0$.
\end{enumerate}

\begin{lemma}\label{uifiberstruct}
If $p\in V_i$ is an ordinary point, then:
\begin{enumerate}
\item If $\uidegree{i,1}\in\indexededge{\sck}{0}$, then $\pi^{-1}(p)\cap U_i=\orb(p,\indexededge{\6}{0})\cup \orb(p,0)$.
\item If $\uidegree{i,1}\in\indexededge{\sck}{1}$, then $\pi^{-1}(p)\cap U_i=\orb(p,\indexededge{\6}{1})\cup \orb(p,0)$.
\item If $\uidegree{i,1}$ is a degree in the interior of $\sck$, then $\pi^{-1}(p)\cap U_i=\orb(p,\orb(p,0)$.
\end{enumerate}
If $p\in V_i$ is a special point, then:
\begin{enumerate}
\item If $\uidegree{i,1}\in\indexededge{\sck}{0}$, then $\pi^{-1}(p)\cap U_i=\orb(p,\indexededgept{p}{0})\cup \orb(p,\indexedvertexpt{p}{1})$.
\item If $\uidegree{i,1}\in\indexededge{\sck}{1}$, then $\pi^{-1}(p)\cap U_i=\orb(p,\indexededgept{p}{\numberofverticespt p})\cup \orb(p,\indexedvertexpt{p}{\numberofverticespt p})$.
\item If $\uidegree{i,1}$ is a degree in the interior of $\sck$, and 
$\uidegree{i,1},\uidegree{i,2}\in\normalvertexcone{\stdpolyhedronletter_p,\indexedvertexpt{p}{j}}$, then 
$\pi^{-1}(p)\cap U_i=\orb(p,\orb(p,\indexedvertexpt{p}{j})$.
\end{enumerate}
\end{lemma}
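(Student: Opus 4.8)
The plan is to reduce this geometric statement to a combinatorial bookkeeping over the cones of the normal fan of $\stdpolyhedronletter_p$. By Proposition \ref{genfiberstructinvar} (for an ordinary $p$) or Proposition \ref{specfiberstructinvar} (for a special $p$), every point $x\in\pi^{-1}(p)\cap U_0$ carries an associated subcone $\tau=\tau(x)\subseteq\sck$, and the orbit containing $x$ is precisely the one labelled by $\tau$ in the correspondence set up in those propositions. So to identify $\pi^{-1}(p)\cap U_i$ as a union of orbits, it is enough to determine, in each of the six cases, exactly which cones $\tau$ occur among the points of $U_i$.

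First I would translate the defining conditions of $U_i$ into conditions on $\tau$. A point $x\in\pi^{-1}(p)\cap U_0$ lies in $U_i$ iff $\wtuithreadfunction{i,1}(x)\ne0$, together with $\wtuithreadfunction{i,2}(x)\ne0$ when $\uidegree{i,1}$ is interior to $\sck$. Because $p\in V_i$, condition (a) in the definition of $V_i$ ensures $\ord_p(\oluithreadfunction{i,1})=-\mathcal D_p(\uidegree{i,1})$ (which is $0$ when $p$ is ordinary), and likewise for $\oluithreadfunction{i,2}$. Feeding $f=\uithreadfunction{i,1}$ into the equivalence of Proposition \ref{genfiberstructinvar}/\ref{specfiberstructinvar}, the order condition there is automatically satisfied, so $\wtuithreadfunction{i,1}(x)\ne0$ reduces to $\uidegree{i,1}\in\tau$; the same reasoning turns $\wtuithreadfunction{i,2}(x)\ne0$ into $\uidegree{i,2}\in\tau$. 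Hence membership of $x$ in $U_i$ amounts exactly to $\uidegree{i,1}\in\tau$, plus $\uidegree{i,2}\in\tau$ in the interior case.

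The case analysis is then routine. Suppose $\uidegree{i,1}$ lies on the boundary ray $\indexededge{\sck}{0}$. The cones of the normal fan that contain this ray are exactly the ray itself and the unique full-dimensional cone adjacent to it; in the ordinary case these are $\indexededge{\sck}{0}$ and $\sck$, giving $\orb(p,\indexededge{\6}{0})\cup\orb(p,0)$, while in the special case they are the normal ray of the infinite edge $\indexededgept{p}{0}$ and the normal subcone $\normalvertexcone{\indexedvertexpt{p}{1}}{\stdpolyhedronletter_p}$, giving $\orb(p,\indexededgept{p}{0})\cup\orb(p,\indexedvertexpt{p}{1})$. The ray $\indexededge{\sck}{1}$ is treated symmetrically. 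If instead $\uidegree{i,1}$ is interior to $\sck$, it meets no boundary ray, so in the ordinary case $\tau=\sck$ is forced; in the special case condition (b) in the definition of $V_i$ places $\uidegree{i,1}$ in the interior of the vertex cone $\normalvertexcone{\indexedvertexpt{p}{j}}{\stdpolyhedronletter_p}$, and an interior point of a maximal cone of a fan lies in no other cone, forcing $\tau=\normalvertexcone{\indexedvertexpt{p}{j}}{\stdpolyhedronletter_p}$. In both interior cases one obtains a single orbit, namely $\orb(p,0)$ or $\orb(p,\indexedvertexpt{p}{j})$; I note that the statement's last two right-hand sides contain a duplicated $\orb$ and should read exactly these.

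The one point needing genuine care --- the closest thing to an obstacle --- is verifying that the boundary orbits really occur, since Propositions \ref{genfiberstructinvar} and \ref{specfiberstructinvar} allow $\tau=\indexededge{\sck}{0}$ only when $\deg\mathcal D(\sckboundarybasis{0})>0$ (and symmetrically for $\indexededge{\sck}{1}$). This is precisely guaranteed by the data defining $U_i$, which require $\deg\mathcal D(\uidegree{i,1})>0$: since $\mathcal D(\chi)$ is linear along each ray of $\sck$, positivity at $\uidegree{i,1}\in\indexededge{\sck}{0}$ forces positivity at the primitive generator $\sckboundarybasis{0}$ of that ray. With this last check the six cases reproduce the claimed unions of orbits.
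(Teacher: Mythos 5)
Your argument is correct and is exactly the route the paper takes — its proof is the one-line remark that the lemma ``follows directly from the definitions of the $\orb(p,\cdot)$ sets and of $U_i$,'' and your write-up simply makes that explicit via Propositions \ref{genfiberstructinvar} and \ref{specfiberstructinvar}, translating $\wtuithreadfunction{i,1}(x)\ne 0$ and $\wtuithreadfunction{i,2}(x)\ne 0$ into $\uidegree{i,1},\uidegree{i,2}\in\tau$ and enumerating the cones of the normal fan containing them. Your observations that the order conditions are automatic because $p\in V_i$, and that the duplicated $\orb$ in the last two cases is a typo, are both accurate.
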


\begin{proof}
This follows directly from the definitions of the $\orb(p,\cdot)$ sets and of $U_i$.
\end{proof}

Fig. \ref{figmanyfiberstuct} shows how a set $U_i$ can intersect the fibers of $\pi$ in $U_0$.

\begin{figure}[!h]
\begin{center}
\includegraphics{t1_3fold_figures-7.mps}
\end{center}
\caption{An example of the intersections of a set $U_i$ with the fibers of $\pi$ in $U_0$. Here $p$ is the only special 
point, $\numberofverticespt p=3$, $\deg \mathcal D(\sckboundarybasis{0})>0$, $\deg \mathcal D(\sckboundarybasis{1})>0$, 
and $\uidegree{i,1}=\sckboundarybasis{0}$. The gray point in $\PP^1$ is outside $V_i$.
The intersections of individual fibers with $U_i$ are shown in black, and their complements are shown in gray.}\label{figmanyfiberstuct}
\end{figure}

We say that sets $U_i$ defined this way \textit{form a sufficient system} if 
\begin{enumerate}
\item for every ordinary point $p\in \PP^1$ there exists $i$ such that $p\in V_i$,
\item for every special point $p\in\PP^1$ and for every 
normal subcone
$\normalvertexcone{\indexedvertexpt{p}{j}}{\stdpolyhedronletter_p}$
there exists an index $i$ such that $p\in V_i$ and $\uidegree{i,1}, \uidegree{i,2}\in\normalvertexcone{\indexedvertexpt{p}{j}}{\stdpolyhedronletter_p}$,
\item for every primitive degree $\chi\in\partial\sck$ such that $\deg\mathcal D(\chi)>0$
and for every 
point $p\in\PP^1$ there exists an index $i$ such that $\uidegree{i,1}=\chi$ and $p\in V_i$.
\end{enumerate}
Clearly, sufficient systems exist. 
An example of a sufficient system is constructed in Section \ref{sectsuffsystemconstruction}.
Fix a sufficient system and set $U=\bigcup U_i$. Denote the number of sets 
$U_i$ in the sufficient system we chose by $\numberoffixedopensets$.

We are going to prove that $\codim_X(X\setminus U)\ge 2$, i.~e. that 
$\dim (X\setminus U)\le 1$. 
\begin{lemma}
$\dim (X\setminus U_0)\le 1$.
\end{lemma}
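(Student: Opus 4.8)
The plan is to realize $X\setminus U_0$ as the vanishing locus of an explicit homogeneous ideal and to bound the Krull dimension of the corresponding quotient algebra purely from the $M$-grading.

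First I would observe that, directly from the definition of $U_0$, a point $x\in X$ lies \emph{outside} $U_0$ if and only if $\widetilde f(x)=0$ for every degree $\chi\in\sck\cap M$ with $\dim\Gamma(\PP^1,\OO(\mathcal D(\chi)))\ge 2$ and every $f\in\Gamma(\PP^1,\OO(\mathcal D(\chi)))$. Hence $X\setminus U_0=V(\mathfrak a)$, where $\mathfrak a\subseteq A$ is the homogeneous ideal generated by all homogeneous elements whose degree $\chi$ satisfies $\dim\Gamma(\PP^1,\OO(\mathcal D(\chi)))\ge 2$. On $\PP^1$ one has $\dim\Gamma(\PP^1,\OO(D))=\deg D+1$ whenever $\deg D\ge 0$, so this condition is equivalent to $\deg\mathcal D(\chi)\ge 1$. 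Since $\mathcal D$ is proper, $\mathcal D(\chi)$ is semiample for $\chi\in\sck$ and big for $\chi$ in the interior; as recalled in the text, on $\PP^1$ this means $\deg\mathcal D(\chi)\ge 0$ on $\sck\cap M$ with strict inequality on the interior. Thus every degree with $\deg\mathcal D(\chi)=0$ lies on one of the two boundary rays $\indexededge{\sck}{0}$, $\indexededge{\sck}{1}$, on each of which $\deg\mathcal D$ is linear.

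Next I would analyze the graded quotient $A/\mathfrak a$. Its support is contained in $S=\{\chi\in\sck\cap M:\deg\mathcal D(\chi)=0\}$, which consists of $0$ together with whichever of the rays $\indexededge{\sck}{0}$, $\indexededge{\sck}{1}$ carries only zero-degree divisors. On such a ray every graded piece $A_{a\sckboundarybasis{j}}$ is one-dimensional, and the whole ray subalgebra $\bigoplus_{a\ge 0}A_{a\sckboundarybasis{j}}$ is generated by a single element of degree $\sckboundarybasis{j}$; denote by $\bar s$ and $\bar u$ the images in $A/\mathfrak a$ of such generators for the two rays. The key point is that $\sckboundarybasis{0}+\sckboundarybasis{1}$ lies in the \emph{interior} of $\sck$, so $\deg\mathcal D(\sckboundarybasis{0}+\sckboundarybasis{1})>0$; hence any product of a ray-$0$ element with a ray-$1$ element lands in $\mathfrak a$ and vanishes in $A/\mathfrak a$, i.e.\ $\bar s\,\bar u=0$. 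Therefore $A/\mathfrak a$ is generated as a $\CC$-algebra by $\bar s,\bar u$ subject to $\bar s\,\bar u=0$, so it is a quotient of $\CC[\bar s,\bar u]/(\bar s\,\bar u)$; depending on how many rays have positive degree it is $\CC$, $\CC[\bar s]$, or $\CC[\bar s,\bar u]/(\bar s\,\bar u)$, in each case of Krull dimension at most $1$. Consequently $\dim(X\setminus U_0)=\dim\Spec(A/\mathfrak a)\le 1$.

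The only genuinely delicate step is verifying that $A/\mathfrak a$ really is supported only on $S$, i.e.\ that no graded piece $A_\chi$ with $\chi$ a nonzero point of a zero-degree ray is accidentally absorbed into $\mathfrak a$. This I would settle by noting that any decomposition $\chi=\chi_1+\chi_2$ with $\chi_1,\chi_2\in\sck$ of a point on an extremal ray of $\sck$ forces both $\chi_1,\chi_2$ to lie on that same ray, hence both have zero degree and neither contributes a generator of $\mathfrak a$; so $\mathfrak a_\chi=0$ there. Everything else is a routine graded-algebra computation, so I expect no serious obstacle beyond this bookkeeping.
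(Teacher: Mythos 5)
Your proof is correct and rests on exactly the same combinatorial inputs as the paper's: functions of every degree $\chi$ with $\deg\mathcal D(\chi)>0$ vanish on $X\setminus U_0$, zero-degree divisors occur only on the two boundary rays of $\sck$, the sum $\sckboundarybasis{0}+\sckboundarybasis{1}$ lies in the interior, and each boundary-ray subalgebra is generated by one element. The paper packages this pointwise (the value of a single boundary-ray function determines any point of $X\setminus U_0$, so the locus injects into a line), while you package it as a Krull-dimension bound on $A/\mathfrak a$; the arguments are interchangeable, and your version makes the final dimension count a bit more transparent.
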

\begin{proof}
Let $x\in X\setminus U_0$. For every degree $\chi\in\sck\cap M$ such that 
$\deg \mathcal D(\chi)>0$, for every $f\in\Gamma(\PP^1,\OO(\mathcal D (\chi)))$ we have $\widetilde f(x)=0$.
$\deg \mathcal D(\chi)$ can be zero only if $\chi\in\partial\sck$. 
If there are functions $f\in\Gamma(\PP^1,\OO(\mathcal D (\sckboundarybasis{0})))$, $g\in\Gamma(\PP^1,\OO(\mathcal D (\sckboundarybasis{1})))$ that do not vanish 
at $x$, then $fg\in\Gamma(\PP^1,\OO(\mathcal D(\sckboundarybasis{0}+\sckboundarybasis{1})))$, $\widetilde f(x)\widetilde g(x)\ne 0$, but 
$\sckboundarybasis{0}+\sckboundarybasis{1}\notin \partial\sck$. So for at most one of the degrees $\sckboundarybasis{0}$ and $\sckboundarybasis{1}$ there
are functions of this degree that vanish at $x$. Without loss of generality suppose that 
if $f\in\Gamma(\PP^1,\OO(\mathcal D (\sckboundarybasis{0})))$, then $\widetilde f(x)=0$. If $\deg \mathcal D(\sckboundarybasis{1})>0$, then 
$\deg \mathcal D(\chi)>0$ for all multiples $\chi$ of $\sckboundarybasis{1}$, so for every such $\chi$ all functions 
of degree $\chi$ vanish at $x$. Otherwise $\dim\Gamma(\PP^1,\OO(\mathcal D(\chi)))=1$ 
for every multiple $\chi$ of $\sckboundarybasis{0}$, and if $f\in\Gamma(\PP^1,\OO(\mathcal D(\sckboundarybasis{0})))$, $f\ne 0$, then 
$f^a$ generate $\Gamma(\PP^1,\OO(\mathcal D(a \sckboundarybasis{0})))$ as a vector space, so
all functions of degree $a \sckboundarybasis{0}$ vanish at $x$.
Summarizing, we conclude that if $\chi\in\indexededge{\sck}{0}\cap M$, then all functions of degree $\chi$ vanish at $x$.
Consequently, if $\deg \mathcal D(\sckboundarybasis{1})>0$, then all functions of nonzero degree, i.~e. all 
nonconstant functions on $X$ vanish at $x$. There exists only one such point $x$.
Otherwise, if $f$ forms a basis of $\Gamma(\PP^1,\OO(\mathcal D(\sckboundarybasis{1})))$, then $f^a$ forms a basis of 
$\Gamma(\PP^1,\OO(\mathcal D(a \sckboundarybasis{1})))$, so values of all functions of all degrees at $x$ are 
determined by $\widetilde f(x)$. Therefore, such points $x$ form a 1-dimensional subset.
\end{proof}

Now we are going to consider points from $U_0$.
\begin{lemma}\label{ordinaryfiberfull}
For every ordinary point $p\in\PP^1$ we have $\pi^{-1}(p)\cap U_0=\pi^{-1}(p)\cap U$.
\end{lemma}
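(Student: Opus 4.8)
The inclusion $\pi^{-1}(p)\cap U\subseteq\pi^{-1}(p)\cap U_0$ is immediate, since every $U_i$ is by definition a subset of $U_0$, so that $U=\bigcup U_i\subseteq U_0$. The plan is therefore to prove only the reverse inclusion: every point of $\pi^{-1}(p)\cap U_0$ lies in some $U_i$. First I would fix $x\in\pi^{-1}(p)\cap U_0$ and invoke Proposition \ref{genfiberstructinvar}, which for an ordinary point $p$ sorts $x$ into exactly one of the three orbits $\orb(p,0)$, $\orb(p,\indexededge{\6}{0})$ (possible only when $\deg\mathcal D(\sckboundarybasis{0})>0$), or $\orb(p,\indexededge{\6}{1})$ (possible only when $\deg\mathcal D(\sckboundarybasis{1})>0$). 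It then suffices to produce, in each of these three cases, an index $i$ with $x\in U_i$.

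For the generic case $x\in\orb(p,0)$, I would use the first defining property of a sufficient system to pick an index $i$ with $p\in V_i$. By Lemma \ref{uifiberstruct}, for an ordinary point $p\in V_i$ the intersection $\pi^{-1}(p)\cap U_i$ contains $\orb(p,0)$ in all three subcases there (whatever the position of $\uidegree{i,1}$ in $\sck$). Hence $x\in U_i\subseteq U$.

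For the boundary case $x\in\orb(p,\indexededge{\6}{0})$, note that this case can occur only when $\deg\mathcal D(\sckboundarybasis{0})>0$, where $\sckboundarybasis{0}$ is the primitive lattice vector on $\indexededge{\sck}{0}$. I would then apply the third defining property of a sufficient system with $\chi=\sckboundarybasis{0}$ and the given point $p$, obtaining an index $i$ with $\uidegree{i,1}=\sckboundarybasis{0}\in\indexededge{\sck}{0}$ and $p\in V_i$. By the corresponding subcase of Lemma \ref{uifiberstruct} we get $\pi^{-1}(p)\cap U_i=\orb(p,\indexededge{\6}{0})\cup\orb(p,0)$, so again $x\in U_i\subseteq U$. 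The remaining case $x\in\orb(p,\indexededge{\6}{1})$ is handled symmetrically, using $\chi=\sckboundarybasis{1}$ together with the hypothesis $\deg\mathcal D(\sckboundarybasis{1})>0$.

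There is no substantial obstacle here: the statement follows by matching the classification of fiber points in Proposition \ref{genfiberstructinvar} against the three defining properties of a sufficient system and the explicit fiber descriptions in Lemma \ref{uifiberstruct}. The only point requiring a little care is to observe that the degree hypothesis ($\deg\mathcal D(\sckboundarybasis{0})>0$, resp. $\deg\mathcal D(\sckboundarybasis{1})>0$) under which a boundary orbit is nonempty is exactly the hypothesis that makes the relevant sufficient-system index available; since that is precisely how property (3) of a sufficient system is phrased, the cases line up and the inclusion is complete.
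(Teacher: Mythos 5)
Your proposal is correct and follows essentially the same route as the paper: classify $x$ via Proposition \ref{genfiberstructinvar} into $\orb(p,0)$ or one of the two boundary orbits, then use properties (1) and (3) of a sufficient system to produce an index $i$ with $x\in U_i$. The only cosmetic difference is that you route the final step through Lemma \ref{uifiberstruct}, while the paper argues directly from Proposition \ref{genfiberstructinvar} and the definition of $U_i$; these are the same argument.
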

\begin{proof}
Clearly, $\pi^{-1}(p)\cap U_0\subseteq\pi^{-1}(p)\cap U$. To prove the other inclusion, 
we use the description of $\pi^{-1}(p)\cap U_0$ from Proposition \ref{genfiberstructinvar}.
Recall that if $p\in V_i$ for some index $i$, then $\ord_p (\oluithreadfunction{i,1})=
\ord_p (\oluithreadfunction{i,2})=0$. 
If $x\in\orb(p,0)$,
then
it is sufficient to take any index $i$ such that $p\in V_i$ (it exists by the definition of 
a sufficient system). Then by Proposition \ref{genfiberstructinvar}, 
$\wtuithreadfunction{i,1}(x)\ne 0$, $\wtuithreadfunction{i,2}(x)\ne 0$, and $x\in U_i$.
If $x\in\orb (p,\indexededge{\6}{0})$,
then $\deg\mathcal D (\sckboundarybasis{0})>0$, and there 
exists an index $i$ such that $\sckboundarybasis{0}=\uidegree{i,1}$ and $p\in V_i$. Then $f_i$ is a function of degree $\sckboundarybasis{0}$, 
so Proposition \ref{genfiberstructinvar} says that $\wtuithreadfunction{i,1}(x)\ne 0$, and, since 
$\deg\mathcal D (\sckboundarybasis{0})>0$, this is enough for $x$ to be in $U_i$.
The case $x\in\orb (p,\indexededge{\6}{1})$
can be considered similarly.
\end{proof}

Now we are going to consider the fiber of $\pi$ over a special point $p\in\PP^1$. 
\begin{lemma}\label{specialfiberdense}
Let $p\in\PP^1$ be a special point. Then $\dim(\pi^{-1}(p)\cap(U_0\setminus U))\le 1$.
\end{lemma}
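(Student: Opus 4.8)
The plan is to reduce the statement to the orbit decomposition of the fiber $\pi^{-1}(p)\cap U_0$ supplied by Proposition \ref{specfiberstructinvar}, and then to show that the only pieces of this fiber that can fail to lie in $U$ are the one-dimensional orbits. By Proposition \ref{specfiberstructinvar} the set $\pi^{-1}(p)\cap U_0$ is the union of the two-dimensional orbits $\orb(p,\indexedvertexpt{p}{j})$ for $1\le j\le\numberofverticespt{p}$ and the one-dimensional orbits $\orb(p,\indexededgept{p}{j})$ for $0\le j\le\numberofverticespt{p}$ (the two boundary ones occurring only when the corresponding degree has positive $\mathcal D$-degree). Since there are finitely many of the latter and each is one-dimensional, their union has dimension at most $1$. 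Hence it suffices to prove that every two-dimensional orbit $\orb(p,\indexedvertexpt{p}{j})$ is entirely contained in $U$.

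To this end I would fix a vertex $\indexedvertexpt{p}{j}$ and invoke the second defining property of a sufficient system: there exists an index $i$ with $p\in V_i$ and $\uidegree{i,1},\uidegree{i,2}\in\normalvertexcone{\indexedvertexpt{p}{j}}{\stdpolyhedronletter_p}$. I claim that $\orb(p,\indexedvertexpt{p}{j})\subseteq U_i$. Indeed, let $x\in\orb(p,\indexedvertexpt{p}{j})$, so that by Proposition \ref{specfiberstructinvar} the cone $\tau$ associated with $x$ equals $\normalvertexcone{\indexedvertexpt{p}{j}}{\stdpolyhedronletter_p}$. Both $\uidegree{i,1}$ and $\uidegree{i,2}$ lie in this cone, and, since $p\in V_i$, condition (a) in the definition of $V_i$ gives $\ord_p(\oluithreadfunction{i,1})=-\mathcal D_p(\uidegree{i,1})$ and $\ord_p(\oluithreadfunction{i,2})=-\mathcal D_p(\uidegree{i,2})$. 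Applying the criterion of Proposition \ref{specfiberstructinvar} to the sections $\uithreadfunction{i,1}\in\Gamma(\PP^1,\OO(\mathcal D(\uidegree{i,1})))$ and $\uithreadfunction{i,2}\in\Gamma(\PP^1,\OO(\mathcal D(\uidegree{i,2})))$ then yields $\wtuithreadfunction{i,1}(x)\ne 0$ and $\wtuithreadfunction{i,2}(x)\ne 0$.

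It then remains to check that these nonvanishing conditions are exactly what membership in $U_i$ demands. Recall that $x\in U_i$ requires $\pi(x)\in V_i$ (which holds as $\pi(x)=p\in V_i$), $\wtuithreadfunction{i,1}(x)\ne 0$, and, in the case that $\uidegree{i,1}$ lies in the interior of $\sck$, also $\wtuithreadfunction{i,2}(x)\ne 0$. All of these are satisfied, so $x\in U_i\subseteq U$; since $x$ was arbitrary, $\orb(p,\indexedvertexpt{p}{j})\subseteq U$. (Alternatively one can read this off directly from Lemma \ref{uifiberstruct}, whose three special-point cases all exhibit $\orb(p,\indexedvertexpt{p}{j})$ as a subset of $\pi^{-1}(p)\cap U_i$.) Consequently $\pi^{-1}(p)\cap(U_0\setminus U)$ is contained in the union of the one-dimensional orbits $\orb(p,\indexededgept{p}{j})$ and therefore has dimension at most $1$.

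I do not anticipate a serious obstacle: the whole argument is bookkeeping with the orbit types. The one point that deserves attention is the matching step, namely associating to an arbitrary point $x$ of a given two-dimensional orbit an index $i$ of the sufficient system whose data witness $x\in U_i$; the slightly delicate observation there is that the extra interior-of-$\sck$ clause in the definition of $U_i$ is harmless, because $\wtuithreadfunction{i,2}(x)\ne 0$ holds in any case. Beyond that, one only needs to confirm that Proposition \ref{specfiberstructinvar} leaves no further possibilities for $\tau$, so that the one-dimensional orbits really account for all of $\pi^{-1}(p)\cap U_0$ outside the two-dimensional ones, which is immediate from the statement of that proposition.
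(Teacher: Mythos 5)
Your reduction and its first half are exactly the paper's: for a point $x\in\orb(p,\indexedvertexpt{p}{j})$ you pick, via sufficiency, an index $i$ with $p\in V_i$ and $\uidegree{i,1},\uidegree{i,2}\in\normalvertexcone{\indexedvertexpt{p}{j}}{\stdpolyhedronletter_p}$, use $\ord_p(\oluithreadfunction{i,l})=-\mathcal D_p(\uidegree{i,l})$ together with Proposition \ref{specfiberstructinvar} to get $\wtuithreadfunction{i,1}(x)\ne 0$ and $\wtuithreadfunction{i,2}(x)\ne 0$, and conclude $x\in U_i$. That part is correct and is precisely how the paper argues.

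The gap is in the sentence ``each is one-dimensional, their union has dimension at most $1$.'' Proposition \ref{specfiberstructinvar} says nothing about dimensions; it only classifies the possible cones $\tau$. The assertion that the sets $\orb(p,\indexededgept{p}{j})$ are one-dimensional appears in the running text after that proposition, but it is stated there without proof, and the paper's proof of \emph{this very lemma} is where the bound $\dim\orb(p,\indexededgept{p}{j})\le 1$ is actually established. So as written your argument rests on an unproved claim. The missing step is the following: for $x\in\orb(p,\indexededgept{p}{j})$, the only degrees $\chi'$ with a nonvanishing function at $x$ lie on the ray $\normalvertexcone{\indexededgept{p}{j}}{\stdpolyhedronletter_p}$; writing $\chi=\primitivelattice{\normalvertexcone{\indexededgept{p}{j}}{\stdpolyhedronletter_p}}$ and choosing, for each $\chi'=a\chi$, a basis of $\Gamma(\PP^1,\OO(\mathcal D(\chi')))$ whose first element is $(\rawdivisorbasis{p,\chi,1})^a$ and whose remaining elements have strictly larger order of vanishing at $p$, one sees that all basis functions except the first vanish at $x$ and that $\wtrawdivisorbasis{p,a\chi,1}(x)=(\wtrawdivisorbasis{p,\chi,1}(x))^a$. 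Hence every regular function on $X$ evaluated at $x$ is determined by the single number $\wtrawdivisorbasis{p,\chi,1}(x)$, so $\orb(p,\indexededgept{p}{j})$ is at most one-dimensional. With this inserted, your proof is complete and coincides with the paper's.
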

\begin{proof}
We use the description of $\pi^{-1}(p)\cap U_0$ from Proposition \ref{specfiberstructinvar}.
First, pick a vertex $\indexedvertexpt{p}{j}$ ($1\le j\le\numberofverticespt{p}$)
and consider a point $x\in\orb(p,\indexedvertexpt{p}{j})$.
Since the system $\{U_i\}$ is sufficient, there exists $i$ such that 
$\uidegree{i,1}, \uidegree{i,2}\in\normalvertexcone{\indexedvertexpt{p}{j}}{\stdpolyhedronletter_p}$ and $p\in V_i$. By the definition of $V_i$, 
$\ord_p (\oluithreadfunction{i,1})=\mathcal D_p(\uidegree{i,1})$ and $\ord_p (\oluithreadfunction{i,2})=\mathcal D_p(\uidegree{i,2})$, 
and by the definition 
of $\orb(p,\indexedvertexpt{p}{j})$,
$\wtuithreadfunction{i,1}(x)\ne 0$ and $\wtuithreadfunction{i,2}(x)\ne 0$. Hence, 
$x\in U_i$. Therefore, if $x\in\pi^{-1}(p)\cap U_0$, but $x\notin\pi^{-1}(p)\cap U$, 
then 
$x\in\orb(p,\indexededgept{p}{j})$ for some (finite or infinite) edge $\indexededgept{p}{j}$.

It is sufficient to prove that 
for each (finite or infinite) edge $\indexededgept{p}{j}$, 
we have $\dim \orb(p,\indexededgept{p}{j})\le 1$.
Denote $\chi=\primitivelattice{\normalvertexcone{\indexededgept{p}{j}}{\stdpolyhedronletter_p}}$ and 
choose a basis 
$$
\rawdivisorbasis{p,\chi,1}, \ldots, \rawdivisorbasis{p,\chi,\dim\Gamma(\PP^1,\OO(\mathcal D(\chi)))}
$$
of $\Gamma(\PP^1,\OO(\mathcal D (\chi)))$ 
as previously, i.~e. 
so that $\ord_p (\olrawdivisorbasis{p,\chi,1})=-\mathcal D_p(\chi)$, and
$\ord_p (\olrawdivisorbasis{p,\chi,l})>-\mathcal D_p(\chi)$ for $1<l\le \dim\Gamma(\PP^1,\OO(\mathcal D(\chi)))$.
Consider a degree $\chi'=a \chi$, $a\in\NN$.
Choose a basis of $\Gamma(\PP^1,\OO(\mathcal D(\chi')))$ as follows. Its first element is $\rawdivisorbasis{p,\chi',1}=(\rawdivisorbasis{p,\chi,1})^a$,
so we have $\ord_p (\olrawdivisorbasis{p,\chi',1})=-a\mathcal D_p(\chi)=-\mathcal D (\chi')$.
All other elements of the basis,
denoted by 
$$
\rawdivisorbasis{p,\chi',2},\ldots, \rawdivisorbasis{p,\chi',\dim\Gamma(\PP^1,\OO(\mathcal D(\chi')))},
$$
satisfy 
$\ord_p (\olrawdivisorbasis{p,\chi',l})>-\mathcal D_p(\chi')$. 
We have already seen for such a basis that 
$\wtrawdivisorbasis{p,\chi',l}(x)=0$ for all $x\in \pi^{-1}(p)\cap U_0$, $l>1$. So again values of 
all functions of all degrees at $x\in \orb(p,\indexededgept{p}{j})$ are determined by $\wtrawdivisorbasis{p,\chi,1}(x)$, 
and $W_j$ is at most one-dimensional.
\end{proof}

We are going to use $\{U_i\}$ to compute cohomology groups, so we are going to prove that 
all $U_i$ are affine. Fix an index $i$.
\begin{lemma}\label{convexity}
Let $\chi\in\sck\cap M$ be a degree. 
Let $p\in V_i$.
Then, independently of the signs of 
$\uidegree{i,1}^*(\chi)$ and $\uidegree{i,2}^*(\chi)$,
$\mathcal D_p(\chi)\le \uidegree{i,1}^*(\chi)\mathcal D_p(\uidegree{i,1})+\uidegree{i,2}^*(\chi)\mathcal D_p(\uidegree{i,2})$.
\end{lemma}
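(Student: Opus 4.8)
The plan is to reduce the inequality to evaluating the linear functional $\chi$ at a single well-chosen vertex of $\stdpolyhedronletter_p$. The crucial observation is that, by the very definition of $V_i$, the two degrees $\uidegree{i,1}$ and $\uidegree{i,2}$ always lie in one common normal subcone $\normalvertexcone{\indexedvertexpt{p}{j}}{\stdpolyhedronletter_p}$ of some vertex $\indexedvertexpt{p}{j}$ of $\stdpolyhedronletter_p$; equivalently, there is a single vertex at which the minima defining $\mathcal D_p(\uidegree{i,1})$ and $\mathcal D_p(\uidegree{i,2})$ are simultaneously attained. Once such a vertex is found, the inequality follows from the trivial bound $\mathcal D_p(\chi)=\min_{a\in\stdpolyhedronletter_p}\chi(a)\le\chi(\indexedvertexpt{p}{j})$ together with the linearity of the map $\chi\mapsto\chi(\indexedvertexpt{p}{j})$.

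First I would locate the common vertex by going through the cases in the definition of $V_i$. If $p$ is ordinary, then $\stdpolyhedronletter_p=\6$ has a single vertex, the origin $\indexedvertex{\6}{1}$, and $\mathcal D_p\equiv 0$ on $\sck$, so the statement is the trivial inequality $0\le 0$. If $p$ is special and $\uidegree{i,1}$ lies in the interior of $\sck$, the definition of $V_i$ places both $\uidegree{i,1}$ and $\uidegree{i,2}$ in the interior of the same $\normalvertexcone{\indexedvertexpt{p}{j}}{\stdpolyhedronletter_p}$, so $\indexedvertexpt{p}{j}$ is the desired vertex. If $p$ is special and $\uidegree{i,1}\in\indexededge{\sck}{0}$ (resp.\ $\indexededge{\sck}{1}$), then $\uidegree{i,1}$ lies on the boundary ray of the normal subcone of $\indexedvertexpt{p}{1}$ (resp.\ $\indexedvertexpt{p}{\numberofverticespt{p}}$), while the definition of $V_i$ puts $\uidegree{i,2}$ in the interior of that same subcone; hence both degrees lie in $\normalvertexcone{\indexedvertexpt{p}{1}}{\stdpolyhedronletter_p}$ (resp.\ $\normalvertexcone{\indexedvertexpt{p}{\numberofverticespt{p}}}{\stdpolyhedronletter_p}$), and once more we obtain a common vertex.

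Then, writing $\indexedvertexpt{p}{j}$ for this common vertex, I would use that by the definition of the normal subcone one has $\uidegree{i,1}(\indexedvertexpt{p}{j})=\mathcal D_p(\uidegree{i,1})$ and $\uidegree{i,2}(\indexedvertexpt{p}{j})=\mathcal D_p(\uidegree{i,2})$. Since $\chi=\uidegree{i,1}^*(\chi)\,\uidegree{i,1}+\uidegree{i,2}^*(\chi)\,\uidegree{i,2}$, evaluating the genuine linear functional at $\indexedvertexpt{p}{j}$ gives
$$
\chi(\indexedvertexpt{p}{j})=\uidegree{i,1}^*(\chi)\,\mathcal D_p(\uidegree{i,1})+\uidegree{i,2}^*(\chi)\,\mathcal D_p(\uidegree{i,2}),
$$
and combining this with $\mathcal D_p(\chi)\le\chi(\indexedvertexpt{p}{j})$ yields the claim at once.

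The point worth emphasizing --- and the reason the statement insists on independence of the signs of $\uidegree{i,1}^*(\chi)$ and $\uidegree{i,2}^*(\chi)$ --- is that $\mathcal D_p$ itself is only piecewise linear (indeed concave and positively homogeneous), so it is neither additive nor monotone in $\chi$, and for $\chi$ outside the cone spanned by $\uidegree{i,1}$ and $\uidegree{i,2}$ the two sides genuinely differ. The inequality survives nonetheless because the upper bound $\chi(\indexedvertexpt{p}{j})$ is a bona fide linear function of $\chi$: linearity holds for arbitrary (in particular negative) coefficients, while the only estimate used, $\min_{a\in\stdpolyhedronletter_p}\chi(a)\le\chi(\indexedvertexpt{p}{j})$, holds unconditionally since $\indexedvertexpt{p}{j}\in\stdpolyhedronletter_p$. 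I expect no analytic obstacle here; the only care needed is the bookkeeping of the case distinction in the definition of $V_i$ to guarantee the existence of the common vertex.
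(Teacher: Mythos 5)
Your proof is correct. The paper argues differently: it first notes that $\mathcal D_p(\cdot)$ is linear on the cone spanned by $\uidegree{i,1}$ and $\uidegree{i,2}$ when $p\in V_i$, which gives equality in the case $\uidegree{i,1}^*(\chi)\ge 0$, $\uidegree{i,2}^*(\chi)\ge 0$, and then disposes of the mixed-sign case by appealing to the convexity (really, concavity together with positive homogeneity, i.e.\ superadditivity) of the piecewise-linear function $\mathcal D_p$. You instead handle all sign patterns uniformly by locating a single vertex $\indexedvertexpt{p}{j}$ at which both $\uidegree{i,1}$ and $\uidegree{i,2}$ attain their minima --- which is exactly the content of the defining conditions on $V_i$, as your case check confirms --- and then using only the trivial bound $\min_{a\in\stdpolyhedronletter_p}\chi(a)\le\chi(\indexedvertexpt{p}{j})$ together with genuine linearity of evaluation at a point. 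The two proofs rest on the same structural fact (a common normal subcone for $\uidegree{i,1}$ and $\uidegree{i,2}$), but yours is the more elementary and self-contained route: it avoids invoking the convexity/concavity claim about individual evaluation functions (which the paper states only in passing, and with the opposite sign convention), and it needs no case split on the signs of the coefficients. Your closing remark about why the inequality survives negative coefficients is exactly the right point to emphasize.
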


\begin{proof}
Recall that the function $\mathcal D_p(\cdot)$ is always linear on the cone spanned by $\uidegree{i,1}$ and $\uidegree{i,2}$ if $p\in V_i$. 
Hence, if $\uidegree{i,1}^*(\chi)\ge 0$ and $\uidegree{i,2}^*(\chi)\ge 0$, then 
$\mathcal D_p(\chi)=
\mathcal D_p(\uidegree{i,1}^*(\chi)\uidegree{i,1}+\uidegree{i,2}^*(\chi)\uidegree{i,2})=
\uidegree{i,1}^*(\chi)\mathcal D_p(\uidegree{i,1})+\uidegree{i,2}^*(\chi)\mathcal D_p(\uidegree{i,2})$. 
If $\uidegree{i,1}^*(\chi) <0$ or $\uidegree{i,2}^*(\chi) <0$, in other words, 
if $\chi$ is not in the cone generated by $\uidegree{i,1}$ and $\uidegree{i,2}$, 
then, since $\mathcal D_p(\cdot)$ is a convex function, $\mathcal D_p(\chi)\le 
\uidegree{i,1}^*(\chi)\mathcal D_p(\uidegree{i,1})+\uidegree{i,2}^*(\chi)\mathcal D_p(\uidegree{i,2})$.
\end{proof}

\begin{lemma}\label{uistructure}
$U_i$ is isomorphic to $V_i\times (\CC\setminus 0)\times L$, where $L$ is 
isomorphic to $\CC$ or $\CC\setminus 0$. More exactly, $L=\CC$ if 
and only if $\uidegree{i,1}\in\partial\sck$, otherwise $L=\CC\setminus 0$. 
$V_i$ is isomorphic to an open set in an affine line. The isomorphism
is given by $(\pi, \wtuithreadfunction{i,1}, \wtuithreadfunction{i,2})$.
(Note that despite $\pi$ is rational on $X$, it is defined everywhere on $U_i$ since $U_i\subseteq U_0$ by definition.)
\end{lemma}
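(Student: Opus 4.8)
The plan is to exhibit an explicit inverse morphism $\Psi$ and check that $\Phi=(\pi,\wtuithreadfunction{i,1},\wtuithreadfunction{i,2})$ and $\Psi$ are mutually inverse, so that no appeal to Zariski's main theorem is needed. First I would record that $\Phi$ is a morphism landing in $W:=V_i\times(\CC\setminus0)\times L$: the map $\pi$ is regular on $U_0\supseteq U_i$, the functions $\wtuithreadfunction{i,1},\wtuithreadfunction{i,2}$ are global on $X$, and by the very definition of $U_i$ one has $\pi(x)\in V_i$, $\wtuithreadfunction{i,1}(x)\ne0$, and $\wtuithreadfunction{i,2}(x)\ne0$ exactly when $\uidegree{i,1}$ is interior (this is why $L=\CC\setminus0$ in the interior case and $L=\CC$ in the boundary case). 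I would also note that $V_i\subsetneq\PP^1$: since $\deg\mathcal D(\uidegree{i,1})>0$, the effective divisor $\div(\oluithreadfunction{i,1})+\mathcal D(\uidegree{i,1})$ is nonzero, so $\oluithreadfunction{i,1}$ has order strictly above $-\mathcal D_p(\uidegree{i,1})$ at some point, which is excluded from $V_i$; hence $V_i$ is a proper, thus affine, open subset of $\PP^1$ and embeds into an affine line.

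The key computation will be the following. For $\chi\in\sck\cap M$ put $a=\uidegree{i,1}^*(\chi)$, $b=\uidegree{i,2}^*(\chi)$, so $\chi=a\uidegree{i,1}+b\uidegree{i,2}$, and for $f\in\Gamma(\PP^1,\OO(\mathcal D(\chi)))$ set $g_f=\overline f/(\oluithreadfunction{i,1}^{\,a}\oluithreadfunction{i,2}^{\,b})$, a degree-zero rational function on $\PP^1$. I would show $g_f$ is regular on $V_i$: for $p\in V_i$ one has $\ord_p(\oluithreadfunction{i,1})=-\mathcal D_p(\uidegree{i,1})$, $\ord_p(\oluithreadfunction{i,2})=-\mathcal D_p(\uidegree{i,2})$ and $\ord_p(\overline f)\ge-\mathcal D_p(\chi)$, whence
\[
\ord_p(g_f)=\ord_p(\overline f)-a\,\ord_p(\oluithreadfunction{i,1})-b\,\ord_p(\oluithreadfunction{i,2})\ge -\mathcal D_p(\chi)+a\mathcal D_p(\uidegree{i,1})+b\mathcal D_p(\uidegree{i,2})\ge 0
\]
by Lemma \ref{convexity}. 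Moreover, if $\uidegree{i,1}\in\indexededge{\sck}{0}$ (the case $\indexededge{\sck}{1}$ being symmetric), then $\uidegree{i,2}^*$ vanishes on the boundary ray through $\uidegree{i,1}$ and is positive at $\uidegree{i,2}$, hence nonnegative on all of $\sck$, so $b\ge0$ for every $\chi\in\sck$; thus in the boundary case only nonnegative powers of $\oluithreadfunction{i,2}$ occur. Since $\overline f=g_f\cdot\oluithreadfunction{i,1}^{\,a}\oluithreadfunction{i,2}^{\,b}$ on $\PP^1$, Proposition \ref{quotmorph} gives
\[
\widetilde f=(\pi^* g_f)\,\wtuithreadfunction{i,1}^{\,a}\,\wtuithreadfunction{i,2}^{\,b}\quad\text{on }U_i,
\]
where every factor is regular on $U_i$: $\pi^*g_f$ because $g_f$ is regular on $V_i\supseteq\pi(U_i)$, $\wtuithreadfunction{i,1}$ is a unit, and $\wtuithreadfunction{i,2}$ is a unit in the interior case while $b\ge0$ in the boundary case.

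Next I would build $\Psi\colon W\to X$. Writing $s_1,s_2$ for the coordinates on the factors $(\CC\setminus0)$ and $L$ and $\pi_{V_i}\colon W\to V_i$ for the projection, I would define an algebra map $\Psi^*\colon A\to\CC[W]$ on homogeneous elements by $\Psi^*(\widetilde f)=(\pi_{V_i}^* g_f)\,s_1^{\,a}s_2^{\,b}$. This lands in $\CC[W]$ (using $g_f\in\CC[V_i]$, invertibility of $s_1$, and invertibility of $s_2$ in the interior case versus $b\ge0$ in the boundary case), it is $\CC$-linear on each graded piece, and it is multiplicative because $g_{ff'}=g_fg_{f'}$ and the monomial exponents add; hence it is a well-defined algebra homomorphism giving a morphism $\Psi\colon W\to X$. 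For $w=(p,s_1,s_2)$ I would check $\pi(\Psi(w))=p$ (producing, via Corollary \ref{ratfuncexists}, sections $F,G$ of a common degree with $\overline F/\overline G$ a coordinate, so that the ratio of their $\Psi^*$-values recovers the coordinate of $p$), $\wtuithreadfunction{i,1}(\Psi(w))=s_1$ and $\wtuithreadfunction{i,2}(\Psi(w))=s_2$ (as $g_{\uithreadfunction{i,1}}=g_{\uithreadfunction{i,2}}=1$), and $\Psi(w)\in U_0$ (using $\uithreadfunction{i,2}$ in the interior case, or a high power $\uithreadfunction{i,1}^{\,m}$ in the boundary case); together these give $\Psi(w)\in U_i$.

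Finally I would verify the two compositions. The coordinate computation just made is exactly $\Phi\circ\Psi=\id_W$. For $\Psi\circ\Phi=\id_{U_i}$, given $x\in U_i$ and any homogeneous $f$, the displayed identity for $\widetilde f$ yields $\widetilde f(\Psi(\Phi(x)))=g_f(\pi(x))\,\wtuithreadfunction{i,1}(x)^{a}\wtuithreadfunction{i,2}(x)^{b}=\widetilde f(x)$; since the homogeneous $\widetilde f$ span $A$ and $A$ separates points of the affine $X$, this forces $\Psi(\Phi(x))=x$, and $\Phi$ is an isomorphism onto $W$. The hard part will be the boundary case $\uidegree{i,1}\in\partial\sck$, where $\wtuithreadfunction{i,2}$ may vanish on $U_i$ (so $L=\CC$): the whole argument survives only because there $b=\uidegree{i,2}^*(\chi)\ge0$ for all relevant $\chi$, so no negative power of the possibly-vanishing $\wtuithreadfunction{i,2}$ is ever required. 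Keeping track of the signs of $a,b$ and of which of $\wtuithreadfunction{i,1},\wtuithreadfunction{i,2}$ is a unit is the main bookkeeping obstacle.
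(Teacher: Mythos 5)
Your proposal is correct and follows essentially the same route as the paper's proof: both construct the inverse by sending a homogeneous section $f$ of degree $\chi$ to $(\overline f/(\oluithreadfunction{i,1}^{\uidegree{i,1}^*(\chi)}\oluithreadfunction{i,2}^{\uidegree{i,2}^*(\chi)}))\cdot s_1^{\uidegree{i,1}^*(\chi)}s_2^{\uidegree{i,2}^*(\chi)}$, use Lemma \ref{convexity} for regularity on $V_i$, use Corollary \ref{ratfuncexists} together with Proposition \ref{quotmorph} to see that $\pi\circ\Psi$ is the projection, and verify the two compositions on homogeneous functions. The only cosmetic difference is that you package the inverse as a single algebra map $A\to\CC[W]$ rather than as a family of evaluation homomorphisms $\CC[X]\to\CC$, and you spell out the sign observation $\uidegree{i,2}^*(\chi)\ge 0$ on $\sck$ in the boundary case in slightly more detail than the paper does.
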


\begin{proof}
We know that $V_i\subseteq \PP^1$, and to prove that $V_i$ is isomorphic to an open 
subset in an affine line, it is sufficient to prove that $V_i$ cannot be equal to $\PP^1$.
Indeed, if $p\in V_i$, then, in particular, $\ord_p(\oluithreadfunction{i,1})=\mathcal D_p(\uidegree{i,1})$. 
If $V_i=\PP^1$, this would mean that $\div(\oluithreadfunction{i,1})=\mathcal D(\uidegree{i,1})$. But 
$\deg \mathcal D(\uidegree{i,1})>0$, and $\deg\div(\oluithreadfunction{i,1})=0$.

Consider the map $U_i\to V_i\times (\CC\setminus 0)\times L$
given by $(\pi, \wtuithreadfunction{i,1}, \wtuithreadfunction{i,2})$ (recall that 
$\wtuithreadfunction{i,2}=0$ is possible in $U_i$ if and only if $\uidegree{i,1}\in\partial\sck$). 
To define 
its inverse, we need for every triple $(p,t_1, t_2)$, where 
$p\in V_i$, $t_1\in\CC\setminus 0$, $t_2\in L$, define a point $x\in U_i$. 
To do this, we define a homomorphism $\CC[X]\to \CC$. 
We define it on each graded component of $\CC[X]$.

Let $\chi\in\sck\cap M$ be a degree. 
%
By Lemma \ref{convexity},
$$
\mathcal D_p(\chi)\le \uidegree{i,1}^*(\chi)\mathcal D_p(\uidegree{i,1})+\uidegree{i,2}^*(\chi)\mathcal D_p(\uidegree{i,2})=
-\ord_p(\oluithreadfunction{i,1}^{\uidegree{i,1}^*(\chi)}\oluithreadfunction{i,2}^{\uidegree{i,2}^*(\chi)}).
$$
Therefore, if $f\in\Gamma(\PP^1,\OO(\mathcal D(\chi)))$, then 
$$
\ord_p(\overline f)\ge -\mathcal D_p(\chi)\ge 
\ord_p(\oluithreadfunction{i,1}^{\uidegree{i,1}^*(\chi)}\oluithreadfunction{i,2}^{\uidegree{i,2}^*(\chi)}),
$$
and the rational
function $\overline f/(\oluithreadfunction{i,1}^{\uidegree{i,1}^*(\chi)}\oluithreadfunction{i,2}^{\uidegree{i,2}^*(\chi)})$ is defined at $p$.

Now we define a map $\CC[X]\to\CC$ as follows: if $f\in\Gamma(\PP^1,\OO(\mathcal D(\chi)))$, then 
$$
f\mapsto t_1^{\uidegree{i,1}^*(\chi)} t_2^{\uidegree{i,2}^*(\chi)}
(\overline f/(\oluithreadfunction{i,1}^{\uidegree{i,1}^*(\chi)}\oluithreadfunction{i,2}^{\uidegree{i,2}^*(\chi)})) (p).
$$
Note that $\uidegree{i,2}^*(\chi)<0$ is possible if and only if $\uidegree{i,1}\notin\partial \sck$, i.~e. exactly 
if and only if $L=\CC\setminus 0$. 
It is clear from the construction that this map is an algebra homomorphism, so it 
defines a point $x\in X$.
If we choose a set of homogeneous generators of $\CC[X]$, we see that the values of these generators 
at $x$ depend algebraically on $p$, $t_1$, and $t_2$, so we have defined an algebraic morphism 
$\varphi\colon V_i\times (\CC\setminus 0)\times L\to X$. 

Now we are going to prove that two morphisms we have 
defined are mutually inverse. Fix points $p\in V_i$, $t_1\in \CC\setminus 0$, and $t_2\in L$, 
denote $x=\varphi(p,t_1,t_2)$. First, $x\in U_0$ 
since $\deg \mathcal D(\uidegree{i,1})>0$ and $\wtuithreadfunction{i,1}(x)=
t_1^1 t_2^0(\oluithreadfunction{i,1}/(\oluithreadfunction{i,1}^1\oluithreadfunction{i,2}^0)) (p)=t_1\ne 0$.
Now denote $\pi(x)=p'$. 
For every degree 
$\chi\in\sck\cap M$
and for every pair of functions $f_1,f_2\in\Gamma(\PP^1,\OO(\mathcal D(\chi)))$
we have the following equalities of rational functions ($p''\in V_i$, $t_1'\in\CC\setminus 0$, $t_2'\in L$ are 
arbitrary points): 
\begin{multline*}
(\widetilde {f_1}/\widetilde {f_2}) (\varphi (p'', t_1', t_2'))=\\
(t_1'^{\uidegree{i,1}^*(\chi)} t_2'^{\uidegree{i,2}^*(\chi)}
(\overline {f_1}/(\oluithreadfunction{i,1}^{\uidegree{i,1}^*(\chi)}\oluithreadfunction{i,2}^{\uidegree{i,2}^*(\chi)})) (p''))/
(t_1'^{\uidegree{i,1}^*(\chi)} t_2'^{\uidegree{i,2}^*(\chi)}
(\overline {f_2}/(\oluithreadfunction{i,1}^{\uidegree{i,1}^*(\chi)}\oluithreadfunction{i,2}^{\uidegree{i,2}^*(\chi)})) (p''))=\\
(\overline{f_1}/\overline{f_2})(p'').
\end{multline*}
Choose a degree $\chi$ such that $\deg\mathcal D(\chi)>0$. By Corollary
\ref{ratfuncexists}, there exist functions $f_1, f_2\in \Gamma(\PP^1,\OO(\mathcal D(\chi)))$ such that 
$\overline{f_1}/\overline{f_2}$ is defined at $p'$, and if $(\overline{f_1}/\overline{f_2})(p')=
(\overline{f_1}/\overline{f_2})(p'')$ for some $p''\in\PP^1$, then $p'=p''$. By Proposition \ref{quotmorph}, 
$\widetilde{f_1}/\widetilde{f_2}$ is defined at $x$, and 
$(\widetilde{f_1}/\widetilde{f_2})(x)=(\overline{f_1}/\overline{f_2})(p')$.
On the other hand, it follows from the computation above that 
$(\widetilde{f_1}/\widetilde{f_2})(x)=(\widetilde {f_1}/\widetilde {f_2}) (\varphi (p, t_1, t_2))=
(\overline{f_1}/\overline{f_2})(p)$, so $p=p'$, and $\pi (x)=p$. We have already checked that 
$\wtuithreadfunction{i,1}(x)=t_1$, a similar computation shows that $\wtuithreadfunction{i,2}(x)=t_2$. 
The conditions from the definition of $U_i$ are therefore satisfied, and $x\in U_i$.

Finally, check that the other composition of morphisms $X\to V_i\times (\CC\setminus 0)\times L\to X$
is also the identity morphism. 
To do this, fix a point $x\in U_i$, a degree 
$\chi\in\sck\cap M$
and 
a function $f\in\Gamma(\PP^1,\OO(\mathcal D(\chi)))$. We have the following equality of rational 
functions: 
$$
\widetilde f(x)=\wtuithreadfunction{i,1}(x)^{\uidegree{i,1}^*(\chi)}\wtuithreadfunction{i,2}(x)^{\uidegree{i,2}^*(\chi)}
(\widetilde f/(\wtuithreadfunction{i,1}^{\uidegree{i,1}^*(\chi)}\wtuithreadfunction{i,2}^{\uidegree{i,2}^*(\chi)}))(x),$$ 
and
$$
(\widetilde f/(\wtuithreadfunction{i,1}^{\uidegree{i,1}^*(\chi)}\wtuithreadfunction{i,2}^{\uidegree{i,2}^*(\chi)}))(x)=
(\overline f/(\oluithreadfunction{i,1}^{\uidegree{i,1}^*(\chi)}\oluithreadfunction{i,2}^{\uidegree{i,2}^*(\chi)}))(\pi(x))
$$
since $f$ and ${\uithreadfunction{i,1}}^{\uidegree{i,1}^*(\chi)} {\uithreadfunction{i,2}}^{\uidegree{i,2}^*(\chi)}$ are functions of the same degree.
\end{proof}

Since each set $U_i$ is affine and $X$ is separated, all intersections of sets $U_i$ are also affine, 
and we can use them to compute Cech cohomology on $U=\bigcup U_i$.
However, we will also need 
to understand the structure of intersections of $U_i$.
Fix several indices 
$a_1,\ldots, a_k$.
\begin{lemma}\label{uijstructure}
$U'=U_{a_1}\cap\ldots\cap U_{a_k}$ is isomorphic 
to 
$V'\times (\CC\setminus 0)\times L'$, where $V'$ is an open subset of 
$V_{a_1}$, and $L'$ is isomorphic to $\CC$ or $\CC\setminus 0$. The isomorphism
is given by $(\pi, \wtuithreadfunction{a_1,1}, \wtuithreadfunction{a_1,2})$ (this is exactly the restriction of the 
isomorphism from Lemma \ref{uistructure} to the subset $U'\subseteq U_{a_1}$).

In this case, $L'=\CC$ if and only if $\uidegree{a_1,1}=\ldots=\uidegree{a_k,1}\in\partial\sck$.

Here the set of ordinary points in $V'$ is the set of ordinary points in $V_{a_1}\cap\ldots\cap V_{a_k}$.
If $p\in\PP^1$ is a special point, then $p\in V'$ if and only if $p\in V_{a_1}\cap\ldots\cap V_{a_k}$ and
all degrees $\uidegree{a_1,1}, \ldots, \uidegree{a_k,1}, \uidegree{a_1,2},\ldots, \uidegree{a_k,2}$ belong to the normal subcone 
of the same vertex of $\stdpolyhedronletter_p$.
\end{lemma}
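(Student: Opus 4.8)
The plan is to obtain the whole statement by restricting the isomorphism of Lemma \ref{uistructure} for $U_{a_1}$ to the open subset $U' = U_{a_1}\cap\ldots\cap U_{a_k}$ and then reading off, in the coordinates $(p,t_1,t_2) = (\pi,\wtuithreadfunction{a_1,1},\wtuithreadfunction{a_1,2})$, exactly which points also satisfy the defining conditions of $U_{a_2},\ldots,U_{a_k}$. Since each $U_{a_l}$ is open in $X$, $U'$ is open in $U_{a_1}$, so under $U_{a_1}\cong V_{a_1}\times(\CC\setminus 0)\times L_{a_1}$ it corresponds to an open subset; the task is to show this subset is again a product of the asserted form.

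First I would rewrite the three conditions defining $U_{a_l}$ in these coordinates. Applying the evaluation formula from the proof of Lemma \ref{uistructure} to $f=\uithreadfunction{a_l,j}$ (of degree $\uidegree{a_l,j}$) gives
$$\wtuithreadfunction{a_l,j}(x) = t_1^{\uidegree{a_1,1}^*(\uidegree{a_l,j})}\,t_2^{\uidegree{a_1,2}^*(\uidegree{a_l,j})}\,\bigl(\oluithreadfunction{a_l,j}/(\oluithreadfunction{a_1,1}^{\uidegree{a_1,1}^*(\uidegree{a_l,j})}\oluithreadfunction{a_1,2}^{\uidegree{a_1,2}^*(\uidegree{a_l,j})})\bigr)(p).$$
Because $t_1\ne 0$ throughout $U_{a_1}$, the $t_1$-power never vanishes, so whether $x$ lies in $U'$ depends only on $p$ and on whether $t_2$ vanishes, never on $t_1$ — this already yields the product shape with free factor $\CC\setminus 0$. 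The value of $t_2$ is controlled by the exponent $\uidegree{a_1,2}^*(\uidegree{a_l,1})$: a short computation in the basis of the two boundary rays shows that when $\uidegree{a_1,1}\in\partial\sck$ and $\uidegree{a_1,2}$ is interior one has $\uidegree{a_1,2}^*(\chi)\ge 0$ for all $\chi\in\sck$, with equality exactly on the ray $\QQ_{\ge 0}\uidegree{a_1,1}$. Hence $t_2=0$ is compatible with all conditions (2) precisely when $\uidegree{a_l,1}=\uidegree{a_1,1}\in\partial\sck$ for every $l$ (and then every condition (3) is vacuous); this is the criterion $L'=\CC$, and otherwise some condition forces $t_2\ne 0$, giving $L'=\CC\setminus 0$.

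It remains to describe $V'=\pi(U')$, which by condition (1) is contained in $V_{a_1}\cap\ldots\cap V_{a_k}$. For an ordinary $p$ in this intersection each $\oluithreadfunction{a_l,j}$ has order $0$ at $p$, so every function factor above is nonzero and a suitable $(t_1,t_2)$ exists; thus the ordinary points of $V'$ are exactly those of $V_{a_1}\cap\ldots\cap V_{a_k}$. For a special $p$ I would pass to the invariant description of Proposition \ref{specfiberstructinvar}: a point $x$ over $p$ carries a cone $\tau_x$ (a vertex normal subcone or an edge normal ray), and since $p\in V_{a_l}$ forces $\ord_p(\oluithreadfunction{a_l,j})=-\mathcal D_p(\uidegree{a_l,j})$, one has $\wtuithreadfunction{a_l,j}(x)\ne 0$ iff $\uidegree{a_l,j}\in\tau_x$. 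In the coordinates above $\tau_x$ is the $2$-dimensional cone spanned by $\uidegree{a_1,1},\uidegree{a_1,2}$ when $t_2\ne 0$, and the boundary ray through $\uidegree{a_1,1}$ when $t_2=0$; so $p\in V'$ exactly when some admissible $\tau_x$ contains all the degrees $\uidegree{a_l,1}$ (and the $\uidegree{a_l,2}$ with $\uidegree{a_l,1}$ interior).

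The main obstacle is this last point: showing such a $\tau_x$ exists iff all degrees $\uidegree{a_l,1},\uidegree{a_l,2}$ lie in the normal subcone of a single vertex. The key input is that, for each fixed $l$, membership $p\in V_{a_l}$ already places both $\uidegree{a_l,1}$ and $\uidegree{a_l,2}$ into the normal subcone $\normalvertexcone{\indexedvertexpt{p}{j_l}}{\stdpolyhedronletter_p}$ of one vertex $\indexedvertexpt{p}{j_l}$ — this is precisely conditions (b)--(d) in the definition of $V_{a_l}$. If all the indices $j_l$ coincide, taking $\tau_x$ to be that $2$-dimensional cone works; conversely I would invoke normal-fan geometry, namely that an interior degree lies in a unique maximal cone and that the primitive boundary degrees $\sckboundarybasis{0},\sckboundarybasis{1}$ lie only in the vertex cones of $\indexedvertexpt{p}{1}$ and $\indexedvertexpt{p}{\numberofverticespt{p}}$ respectively, together with the list of admissible orbits from Lemma \ref{uifiberstruct} (which rules out the degenerate case where $\tau_x$ is only an edge ray), to conclude that any single $\tau_x$ containing all the degrees forces all the $j_l$ to be equal. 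This identifies the special points of $V'$ as claimed and completes the product decomposition $U'\cong V'\times(\CC\setminus 0)\times L'$.
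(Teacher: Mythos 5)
Your proposal is correct and follows essentially the same route as the paper's proof: both restrict the isomorphism of Lemma \ref{uistructure} for $U_{a_1}$, analyze each fiber $\pi^{-1}(p)\cap U'$ using Propositions \ref{genfiberstructinvar} and \ref{specfiberstructinvar} to see that membership depends only on $p$ and on whether the last coordinate vanishes (never on $t_1$), and then read off the criteria for $L'=\CC$ and for a special point to lie in $V'$ from the uniqueness of the vertex normal cone containing the degrees $\uidegree{a_l,1},\uidegree{a_l,2}$. The only cosmetic difference is that you make the coordinate computation explicit via the evaluation formula, where the paper invokes the two fiber-structure propositions directly.
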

\begin{proof}
Consider a fiber $\pi^{-1}(p)\cap U'$, where $p\in V_{a_1}$. It is a subset of $\pi^{-1}(p)\cap U_{a_1}$, which
is isomorphic to $(\CC\setminus 0)\times L$ by Lemma \ref{uistructure}.
It is sufficient to prove that for each $p\in V_{a_1}$, in terms of this isomorphism, $\pi^{-1}(p)\cap U'$ either is 
the empty set, or equals $(\CC\setminus 0)\times L'\subseteq (\CC\setminus 0)\times L$.

First, let $p\in V_{a_1}$ be an ordinary point. 
If there exists an index $i$ such that $p\notin V_{a_i}$, then $\pi^{-1}(p)\cap U'=\varnothing$.
Otherwise,
consider a point $x\in \pi^{-1}(p)\cap U_{a_1}$. There are two possibilities: either $\wtuithreadfunction{a_1,2}(x)\ne 0$ 
(in other words, the last coordinate of $x$ in terms of the isomorphism $U_i\cong V_i\times (\CC\setminus 0)\times L$
from Lemma \ref{uistructure} is nonzero), or $\uidegree{a_1,1}\in\partial\sck$ and $\wtuithreadfunction{a_1,2}(x)=0$ (in other 
words, the last coordinate of $x$ in terms of the isomorphism from Lemma \ref{uistructure} is zero).
If the first possibility takes place, then, by Proposition \ref{genfiberstructinvar}, $x\in U_{a_i}$ for all $i$.
If the second possibility takes place, then it follows from Proposition \ref{genfiberstructinvar} that 
$x\in U_{a_i}$ if and only if $\uidegree{a_i,1}\in\partial\sck$ (i.~e. we have no condition for $\wtuithreadfunction{a_i,2}(x)$, 
which is in fact zero since $\uidegree{a_i,2}$ is in the interior of $\sck$) and $\uidegree{a_i,1}=\uidegree{a_1,1}$ (otherwise 
$\wtuithreadfunction{a_i,1}(x)=0$). This finishes the proof for an ordinary point.

Now let $p\in V_{a_i}$ be a special point. 
Again, if there exists an index $i$ such that $p\notin V_{a_i}$, 
then $\pi^{-1}(p)\cap U'=\varnothing$. Moreover, by Proposition \ref{specfiberstructinvar}, 
if there exist no vertex $\indexedvertexpt{p}{j}$ such that $\uidegree{a_i,1}\in \normalvertexcone{\indexedvertexpt{p}{j}}{\stdpolyhedronletter_p}$ for all $i$, then 
$\pi^{-1}(p)\cap U'=\varnothing$ (recall that we require that $\uidegree{a_i,1}$ is in the interior 
of the normal cone 
of a vertex of $\stdpolyhedronletter_p$,
unless $\uidegree{a_i,1}\in\partial\sck$, in the definition of $V_{a_i}$, 
so $\uidegree{a_i,1}$ cannot be in 
the normal cones of two different vertices
simultaneously).
And again, if $p\in V_{a_i}$ for all $i$ and there exists 
a vertex $\indexedvertexpt{p}{j}$
such that 
$\uidegree{a_i,1}\in\normalvertexcone{\indexedvertexpt{p}{j}}{\stdpolyhedronletter_p}$ for all $i$ (by the definition of $V_{a_i}$, this implies that 
$\uidegree{a_i,2}$ is in the interior of $\normalvertexcone{\indexedvertexpt{p}{j}}{\stdpolyhedronletter_p}$ for all $i$), then there are two possibilities.
Either $\wtuithreadfunction{a_1,2}(x)\ne 0$, (i.~e. the last coordinate of $x$ is nonzero), 
or $\uidegree{a_1,1}\in\partial\sck$ and $\wtuithreadfunction{a_1,2}(x)=0$, (i.~e. the last 
coordinate of $x$ is zero). The rest of the proof repeats the proof for an ordinary point. 
Namely, if the first possibility holds, it follows from Proposition \ref{specfiberstructinvar} 
that $x\in U_{a_i}$ for all $i$. If the second possibility holds, then, by 
Proposition \ref{specfiberstructinvar}, $x\in U_{a_i}$ if and only if $\uidegree{a_i,1}\in\partial\sck$ 
(i.~e. we have no condition for $\wtuithreadfunction{a_i,2}(x)$, while $\wtuithreadfunction{a_i,2}(x)=0$
since $\uidegree{a_i,2}$ is in the interior of $\normalvertexcone{\indexedvertexpt{p}{j}}{\stdpolyhedronletter_p}$) 
and $\uidegree{a_i,1}=\uidegree{a_1,1}$ (this is a criterion 
for $\wtuithreadfunction{a_i,1}(x)\ne 0$, nevertheless, this condition can only be violated if 
$\sck=\normalvertexcone{\indexedvertexpt{p}{j}}{\stdpolyhedronletter_p}$, i.~e. 
$p$ is a removable special point).
\end{proof}

\section{Computation of $T^1(X)_0$ in terms of cohomology of sheaves on $\PP^1$}

We know that $\codim_X(X\setminus U)\ge 2$, so Theorem \ref{schlessgen} can be applied.
To apply it, we need a set of generators of $\CC[X]$. We choose it as follows. 
For each special point $p$, the cone $\sck$ can be split into the union of 
normal cones of all vertices of $\stdpolyhedronletter_p$.
All intersections of these cones (for different special points)
split $\sck$ into a fan, which we call the \textit{total normal fan} of $\mathcal D$.
(It equals the normal fan of the Minkowski sum of all polyhedra $\stdpolyhedronletter_p$.)
For each cone 
$\tau$ in this fan, 
the function $\mathcal D(\cdot)|_{(\tau}\colon\tau\to \Div(\PP^1)$ is linear.
Choose a set of degrees $\lambda_1,\ldots,\lambda_{\numberoffixedgenerators}\in\sck\cap M$
satisfying the following conditions:
\begin{enumerate}
\item It contains the Hilbert bases of all cones of the total normal fan of $\mathcal D$.
\item For each special point $p$:
\begin{enumerate}
\item 
For each (finite or infinite) edge $\indexededgept{p}{j}$, 
$\primitivelattice{\normalvertexcone{\indexededgept{p}{j}}{\stdpolyhedronletter_p}}\in\{\lambda_1,\ldots,\lambda_{\numberoflatticegenerators}\}$.
\item 
For each vertex $\indexedvertexpt{p}{j}$ there exists a degree 
$\chi\in\{\lambda_1,\ldots,\lambda_{\numberoflatticegenerators}\}\cap\normalvertexcone{\indexedvertexpt{p}{j}}{\stdpolyhedronletter_p}$
such that $\chi$ and 
$\primitivelattice{\normalvertexcone{\indexededgept{p}{j-1}}{\stdpolyhedronletter_p}}$
form a lattice basis of $M$ 
\item 
For each vertex $\indexedvertexpt{p}{j}$ there exists a degree 
$\chi\in\{\lambda_1,\ldots,\lambda_{\numberoflatticegenerators}\}\cap\normalvertexcone{\indexedvertexpt{p}{j}}{\stdpolyhedronletter_p}$
such that $\chi$ and 
$\primitivelattice{\normalvertexcone{\indexededgept{p}{j}}{\stdpolyhedronletter_p}}$
form a lattice basis of $M$.
\end{enumerate}
\end{enumerate}
In fact, 
the first condition implies all three parts of the second one,
but we don't need this fact and we will not prove it.
For each $i$, $1\le i\le \numberoflatticegenerators$ 
let $\dependentgeneratorsdegree{\lambda_i, 1}, \ldots, \dependentgeneratorsdegree{\lambda_i, \dim \Gamma(\PP^1,\OO(\mathcal D(\lambda_i)))}$ be a basis of 
$\Gamma(\PP^1,\OO(\mathcal D(\lambda_i)))$.

\begin{lemma}\label{generateccx}
All $\wtdependentgeneratorsdegree{\lambda_i,j}$ (for $1\le i\le \numberoflatticegenerators$, 
$1\le j\le \dim\Gamma(\PP^1,\OO(\mathcal D(\lambda_i)))$) together generate $\CC[X]$.
\end{lemma}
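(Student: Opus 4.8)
The plan is to exploit the $\sck\cap M$-grading on $\CC[X]=A=\bigoplus_{\chi\in\sck\cap M}\Gamma(\PP^1,\OO(\mathcal D(\chi)))$. All the proposed generators are homogeneous, $\wtdependentgeneratorsdegree{\lambda_i,j}$ having degree $\lambda_i$, so it suffices to show that for every $\chi\in\sck\cap M$ the whole graded component $\Gamma(\PP^1,\OO(\mathcal D(\chi)))$ lies in the subalgebra $A'\subseteq A$ generated by the $\wtdependentgeneratorsdegree{\lambda_i,j}$. I would first recall how multiplication of homogeneous elements of $A$ works: for $f\in\Gamma(\PP^1,\OO(\mathcal D(\chi)))$ and $g\in\Gamma(\PP^1,\OO(\mathcal D(\chi')))$ the product $\widetilde f\,\widetilde g$ is the element of $\Gamma(\PP^1,\OO(\mathcal D(\chi+\chi')))$ given by the product of rational functions $\overline f\,\overline g$, via the inclusion $\Gamma(\PP^1,\OO(\mathcal D(\chi)+\mathcal D(\chi')))\subseteq\Gamma(\PP^1,\OO(\mathcal D(\chi+\chi')))$ coming from $\mathcal D(\chi)+\mathcal D(\chi')\ge\mathcal D(\chi+\chi')$.

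Next I would locate $\chi$ in the total normal fan of $\mathcal D$: choose a maximal cone $\tau$ of this fan with $\chi\in\tau$. On $\tau$ the map $\mathcal D(\cdot)$ is linear by construction, and by the first of the conditions imposed on the degrees $\lambda_i$ the Hilbert basis of the monoid $\tau\cap M$ is contained in $\{\lambda_1,\ldots,\lambda_{\numberoflatticegenerators}\}$. (Only this first condition is needed here; the three lattice-basis conditions are used later.) Writing $\chi=\sum_k m_k\mu_k$ with $m_k\in\ZZ_{\ge0}$ and each $\mu_k$ a Hilbert-basis element of $\tau$, linearity of $\mathcal D$ on $\tau$ yields the genuine equality of divisors $\mathcal D(\chi)=\sum_k m_k\mathcal D(\mu_k)$, so no sections are lost when one passes from products of sections to the target component.

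The analytic core is the classical surjectivity of multiplication of global sections on $\PP^1$: for divisors $D_1,D_2$ on $\PP^1$ with $\deg D_1\ge0$ and $\deg D_2\ge0$ one has the equality of subspaces of the function field $\Gamma(\PP^1,\OO(D_1))\cdot\Gamma(\PP^1,\OO(D_2))=\Gamma(\PP^1,\OO(D_1+D_2))$ (equivalently, the section ring $\bigoplus_{n\ge0}\Gamma(\PP^1,\OO(n))$ is generated in degree one). Properness of $\mathcal D$ makes every $\mathcal D(\mu_k)$ semiample, hence $\deg\mathcal D(\mu_k)\ge0$, and every nonnegative combination $\sum_k c_k\mathcal D(\mu_k)$ again has nonnegative degree. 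Thus, inducting on $\sum_k m_k$ and multiplying in one Hilbert-basis factor at a time, I would obtain that
$$
\Gamma(\PP^1,\OO(\mathcal D(\chi)))=\Gamma\Big(\PP^1,\OO\big(\textstyle\sum_k m_k\mathcal D(\mu_k)\big)\Big)
$$
is spanned by products of $\sum_k m_k$ sections, each drawn from some $\Gamma(\PP^1,\OO(\mathcal D(\mu_k)))$ (with $m_k$ factors from the $k$-th space).

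Since each space $\Gamma(\PP^1,\OO(\mathcal D(\mu_k)))$ is spanned by generators $\wtdependentgeneratorsdegree{\lambda_i,j}$ (the $\mu_k$ being among the $\lambda_i$), this shows $\Gamma(\PP^1,\OO(\mathcal D(\chi)))\subseteq A'$; letting $\chi$ range over $\sck\cap M$ finishes the proof. The only genuinely nontrivial input is the $\PP^1$ multiplication-surjectivity statement; everything else is bookkeeping with the normal fan and the grading. The one point deserving care is that Hilbert-basis elements on $\partial\sck$, namely $\sckboundarybasis{0}$ and $\sckboundarybasis{1}$, may satisfy $\deg\mathcal D(\sckboundarybasis{0})=0$ or $\deg\mathcal D(\sckboundarybasis{1})=0$; but these still meet the $\deg\ge0$ hypothesis, so the argument goes through uniformly.
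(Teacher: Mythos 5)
Your proposal is correct, and its skeleton matches the paper's: fix $\chi$, place it in a maximal cone $\tau$ of the total normal fan, use linearity of $\mathcal D(\cdot)$ on $\tau$ so that $\mathcal D(\chi)$ is the honest sum of the $\mathcal D$'s of the Hilbert-basis summands, and then reduce to a multiplication statement for section spaces on $\PP^1$. The difference is in how that multiplication statement is established. You cite the classical surjectivity of $\Gamma(\PP^1,\OO(D_1))\otimes\Gamma(\PP^1,\OO(D_2))\to\Gamma(\PP^1,\OO(D_1+D_2))$ for divisors of nonnegative degree, which gives that each graded component is \emph{spanned} by products of sections of the generating degrees. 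The paper instead proves something a bit stronger by hand: given a single section $f$ of degree $\chi=\chi'+\chi''$ with $\chi',\chi''\in\tau\cap M$ nonzero, it splits the effective divisor $\div(\overline f)+\mathcal D(\chi)$ into two effective pieces of degrees $\deg\mathcal D(\chi')$ and $\deg\mathcal D(\chi'')$, uses that degree-zero divisors on $\PP^1$ are principal to realize each piece as $\div(\overline{f'})+\mathcal D(\chi')$ resp.\ $\div(\overline{f''})+\mathcal D(\chi'')$, and concludes $f$ is a scalar multiple of $f'f''$; iterating shows every homogeneous element is a single monomial in sections of degrees $\lambda_1,\ldots,\lambda_{\numberoflatticegenerators}$. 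Both arguments ultimately rest on genus-zero phenomena; yours is shorter and offloads the work to a standard fact, while the paper's is self-contained and yields the (unused here) extra information that generation is by monomials, not just by linear combinations of them. Your closing remark about the boundary degrees $\sckboundarybasis{0},\sckboundarybasis{1}$ possibly having $\deg\mathcal D=0$ is exactly the right point to check, and you handle it correctly: nonnegativity of the degree is all the multiplication statement needs.
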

\begin{proof}
It is sufficient to prove that every homogeneous element of $\CC[X]$ can be generated by $\dependentgeneratorsdegree{\lambda_i,j}$. 
So, fix a degree $\chi\in\sck\cap M$, and let $f\in\Gamma(\PP^1,\OO(\mathcal D(\chi)))$. If $\chi\in\{\lambda_1,\ldots,\lambda_{\numberoflatticegenerators}\}$, 
the claim is clear. Otherwise, choose a cone 
$\tau$
from the 
total normal fan
so that 
$\chi\in\tau$.
$\chi$ is not an element of the Hilbert basis of 
$\tau$,
so there exist 
$\chi',\chi''\in\tau\cap M$, 
$\chi'\ne 0$, $\chi''\ne 0$, such that $\chi'+\chi''=\chi$. Since $\mathcal D(\cdot)\colon \sck\to\Div(\PP^1)$
becomes a linear function after being restricted to 
$\tau$,
$\mathcal D(\chi)=\mathcal D(\chi')+\mathcal D(\chi'')$.


Let $r_1$ be the number of points $p\in \PP^1$ that are either special or are zeros of $\overline f$. Denote zeros of 
$\overline f$ that are ordinary points by $p_{\numberofdivisorpoints+1},\ldots, p_{r_1}$ (recall that we have $\numberofdivisorpoints$ special points $p_1,\ldots,p_{\numberofdivisorpoints}$).
Consider the following $r_1$ integers:
$a_i=\mathcal D_{p_i}(\chi)+\ord_{p_i}(\overline f)$. By the definition of $\Gamma(\PP^1,\OO(\mathcal D(\chi)))$, all these numbers are nonnegative integers.
Also, $a_1+\ldots+a_{r_1}=\mathcal D_{p_1}(\chi)+\ldots+\mathcal D_{p_{r_1}}(\chi)+\ord_{p_1}(\overline f)+\ldots+\ord_{p_{r_1}}(\overline f)=
\deg \mathcal D(\chi)+\deg\div(\overline f)=\deg\mathcal D(\chi)$. Then it is possible to split each of these numbers into 
a sum $a_i=a'_i+a''_i$ of two nonnegative integers so that $a'_1+\ldots+a'_{r_1}=\deg\mathcal D(\chi')$
and $a''_1+\ldots+a''_{r_1}=\deg\mathcal D(\chi'')$ (recall that $\mathcal D(\chi)=\mathcal D(\chi')+\mathcal D(\chi'')$).
Then $D_1=(a'_1-\mathcal D_{p_1}(\chi'))p_1+\ldots+(a'_{r_1}-\mathcal D_{p_{r_1}}(\chi'))p_{r_1}$ and 
$D_2=(a''_1-\mathcal D_{p_1}(\chi''))p_1+\ldots+(a''_{r_1}-\mathcal D_{p_{r_1}}(\chi''))p_{r_1}$ are 
divisors of degree 0, and $D_1\ge -\mathcal D(\chi')$, $D_2\ge -\mathcal D(\chi'')$. Therefore, there exist functions 
$f'\in\Gamma(\PP^1,\OO(\mathcal D(\chi')))$ and $f''\in\Gamma(\PP^1,\OO(\mathcal D(\chi'')))$ such that $\div(\overline{f'}=D_1$ and 
$\div(\overline{f''}=D_2$. Now, for every point $p_i$ we have the following:
$\ord_{p_i}(\overline{f'}\overline{f''})=a'_i-\mathcal D_{p_i}(\chi')+a''_i-\mathcal D_{p_i}(\chi'')=
a_i-\mathcal D_{p_i}(\chi)=\ord_{p_i}(\overline f)$. Hence, $\overline{f'}\overline{f''}/\overline f$ is a 
rational function on $\PP^1$ that does not have zeros or poles, so it is a constant, and $f$ is a multiple of $f'f''$.

Repeating this procedure by induction on 
$\chi\in\tau$,
we can write $f$ as a product of functions whose 
degrees are in the set $\{\lambda_1,\ldots,\lambda_{\numberoflatticegenerators}\}$.
\end{proof}

Now we construct 
a map $\psi\colon\Theta_X\to\OO_X^{\oplus \numberoflatticegenerators}$ required for Theorem \ref{schlessgen} using these generators. 
Recall that $\psi$ maps a vector field to the sequence of the derivatives of all generators $\wtdependentgeneratorsdegree{\lambda_i,j}$ 
along this vector field.
Denote the 
total number of these generators by $\numberoffixedgenerators$. By Theorem \ref{schlessgen}, we have the following isomorphism
of $\CC[X]$-modules:
$$
T^1(X)=\ker (H^1(U,\Theta_X)\stackrel{H^1(\psi|_U)}\longrightarrow H^1(U,\OO_X^{\oplus \numberoffixedgenerators})).
$$
By Lemma \ref{uistructure}, $\{U_i\}$ form an affine covering of $U$, so
it can be used to compute homology groups in this formula as Cech homology.
Moreover, all conditions defining $U_i$ as subsets of $X$ are formulated in terms of 
fibers of $\pi$ and inequalities of the form $f\ne 0$, where $f$ is a homogeneous 
function. Since $\pi$ is 
$T$-invariant
and the inequalities of 
form $f\ne 0$ are also invariant if $f$ is homogeneous, the sets $U_i$ are 
$T$-invariant.
The sheaves involved in the formula above are the tangent 
bundle and the trivial bundle, so 
$T$ acts on the modules of their sections on $U_i$.
Hence, these modules are $M$-graded. This enables us to introduce an $M$-grading 
on $H^1(U,\Theta_X)$ and on $H^1(U,\OO_X^{\oplus \numberoffixedgenerators})$. The map $\psi$ is defined by 
$\numberoffixedgenerators$ maps $\Theta_X\to\OO_X$, each of 
them corresponds to a generator $\wtdependentgeneratorsdegree{\lambda_j,k}$ of degree $\lambda_j$.
It maps 
the graded component of $\Gamma (U_i,\Theta_X)$ of degree  $\chi\in M$
to the graded component of $\Gamma (U_i, \OO_X)$ of degree $\chi+\lambda_j$.
Hence, $H^1(\psi|_U)$ maps different graded components of 
$H^1(U, \Theta_X)$ to different graded components of $H^1(U,\OO_X^{\oplus \numberoffixedgenerators})=H^1(U,\OO_X)^{\oplus \numberoffixedgenerators}$, 
and $\ker H^1(\psi|_U)$ is a graded submodule in $H^1(U, \Theta_X)$.
It follows from the proof of Theorem \ref{schlessgen} that 
the isomorphism $T^1(X)=\ker H^1(\psi|_U)$
is an isomorphism of \textbf{graded} $\CC[X]$-modules.
We are going to study the zeroth graded component of $T^1(X)$.

Now, we apply Leray spectral sequence for the map $\pi\colon U\to \PP^1$
and get the following short exact sequences of $\CC[X]$-modules (note that 
Lemmas \ref{ordinaryfiberfull} and \ref{specialfiberdense} guarantee that $\pi(U)=\PP^1$):
$$
0\to H^1(\PP^1,(\pi|_U)_*(\Theta_X|_U))\to H^1(U,\Theta_X)\to H^0(\PP^1,R^1(\pi|_U)_*(\Theta_X|_U))\to 0
$$
and
$$
0\to H^1(\PP^1,(\pi|_U)_*(\OO_X^{\oplus \numberoffixedgenerators}|_U))\to H^1(U,\OO_X^{\oplus \numberoffixedgenerators})\to H^0(\PP^1,R^1(\pi|_U)_*(\OO_X^{\oplus \numberoffixedgenerators}|_U))\to 0.
$$
The Snake lemma yields the following exact sequence:
\begin{multline*}
0\to\ker\left(H^1(\PP^1,(\pi|_U)_*(\Theta_X|_U))\stackrel{H^1((\pi|_U)_*\psi)}\longrightarrow H^1(\PP^1,(\pi|_U)_*(\OO_X^{\oplus \numberoffixedgenerators}|_U))\right)
\to T^1(X)\to\\
\ker\left(H^0(\PP^1,R^1(\pi|_U)_*(\Theta_X|_U))\stackrel{H^0(R^1(\pi|_U)_*\psi)}\longrightarrow H^0(\PP^1,R^1(\pi|_U)_*(\OO_X^{\oplus \numberoffixedgenerators}|_U))\right)\to\\
\coker\left(H^1(\PP^1,(\pi|_U)_*(\Theta_X|_U))\stackrel{H^1((\pi|_U)_*\psi)}\longrightarrow H^1(\PP^1,(\pi|_U)_*(\OO_X^{\oplus \numberoffixedgenerators}|_U))\right).
\end{multline*}
This is an isomorphism of $\CC[X]$-modules, and it is possible to introduce an $M$-grading 
on these modules. 
Indeed, in fact the sheaves $(\pi|_U)_*(\Theta_X|_U)$ and $(\pi|_U)_*(\OO_X^{\oplus \numberoffixedgenerators}|_U)$
are graded themselves, i.~e. they are direct sums of their graded components in the category of 
sheaves of $\OO_{\PP^1}$-modules, since their sections on any open subset $V\subseteq \PP^1$ are sections of 
the tangent bundle and of rank $\numberoffixedgenerators$ trivial bundle on a 
$T$-invariant
subset $\pi^{-1}(V)$, 
and multiplication by functions from $\Gamma (V, \OO_{\PP^1})$ does not change the grading 
of a section.
This is also true for the sheaves $R^1(\pi|_U)_*(\Theta_X|_U))$ and $R^1(\pi|_U)_*(\OO_X^{\oplus \numberoffixedgenerators}|_U)$
if we compute them using Proposition \ref{computederived} with $\{U_i\}$ being the 
required affine covering of $U$ since in this case the module of sections of any sheaf in the complex 
on any open subset $V\subseteq \PP^1$ is also a direct sum of modules of sections of 
the tangent bundle or of the trivial bundle on a 
$T$-invariant
subset of $X$, 
and the differentials in the complex preserve this grading.
So, again there is an $M$-grading on cohomology groups: on $H^1(\PP^1,(\pi|_U)_*(\Theta_X|_U))$, 
on $H^0(\PP^1,R^1(\pi|_U)_*(\Theta_X|_U)))$, on $H^1(\PP^1,(\pi|_U)_*(\OO_X^{\oplus \numberoffixedgenerators}|_U))$, 
and on $H^0(\PP^1,R^1(\pi|_U)_*(\OO_X^{\oplus \numberoffixedgenerators}|_U))$.
And again, the map $(\pi|_U)_*\psi\colon (\pi|_U)_*(\Theta_X|_U)\to((\pi|_U)_*(\OO_X|_U))^{\oplus \numberoffixedgenerators}$ 
is defined by $\numberoffixedgenerators$ maps $(\pi|_U)_*(\Theta_X|_U)\to((\pi|_U)_*(\OO_X|_U))$, each of 
them corresponds to a generator $\wtdependentgeneratorsdegree{\lambda_i,j}$.
It maps 
the graded component of $(\pi|_U)_*(\Theta_X|_U)$
of degree $\chi\in M$ to graded components of $(\pi|_U)_*(\OO_X^{\oplus \numberoffixedgenerators}|_U)$ 
of degree $\chi+\lambda_i$. 
So, 
$\ker H^1((\pi|_U)_*\psi)\oplus H^0(R^1(\pi|_U)_*\psi)$ is an $M$-graded $\CC[X]$-module.
This grading coincides (in terms of the isomorphisms mentioned above) with gradings on
$T^1(X)$ and on $\ker H^1(\psi|_U)$.


Now we are going to obtain a formula for the graded component of $T^1(X)$ of degree 0.
Denote it by $T^1(X)_0$.
Denote also the graded component of $(\pi|_U)_*\Theta_X$ of degree 0 by 
$\giinv$, the graded component of $R^1(\pi|_U)_*\Theta_X$ of degree 0 by 
$\givinv$. The superscript "$\inv$" here indicates that these sheaves by definition
are just pushforwards of sheaves on $X$, they are defined "invariantly"
in contrast with the sheaves we will define later using trivializations and transition matrices.

We need graded components of $(\pi|_U)_*\OO_X$ and of 
$R^1(\pi|_U)_*\OO_X$ of different degrees, so for a degree $\chi$ 
denote by $\gvinvl{\chi}$ the graded component of $(\pi|_U)_*\OO_X$
of degree $\chi$, and denote by $\gviiiinvl{\chi}$ the graded component of 
$R^1(\pi|_U)_*\OO_X$ of degree $\chi$. The morphism $H^1((\pi|_U)_*\psi)$
maps $H^1(\PP^1, \giinv)$ to $H^1(\PP^1, \gvinv)$, where
$$
\gvinv=\bigoplus_{i=1}^\numberoflatticegenerators\bigoplus_{j=1}^{\dim \Gamma(\PP^1,\OO(\mathcal D(\lambda_i)))}\gvinvl{\lambda_i}.
$$
The morphism $H^0(R^1(\pi|_U)_*\psi)$ maps $H^0(\PP^1, \givinv)$ to
$H^0(\PP^1,\gviiiinv)$, where
$$
\gviiiinv=\bigoplus_{i=1}^\numberoflatticegenerators\bigoplus_{j=1}^{\dim \Gamma(\PP^1,\OO(\mathcal D(\lambda_i)))}\gviiiinvl{\lambda_i}.
$$
So, the above exact sequence for $T^1(X)$ can be written in the graded form as 
follows:
\begin{proposition}\label{exactseqweak}
The following sequence is exact:
%
\begin{multline*}
0\to\ker\left(H^1(\PP^1,\giinv)
\stackrel{H^1(((\pi|_U)_*\psi)|_{\giinv})}\longrightarrow 
H^1(\PP^1,\gvinv)\right)\to T^1(X)_0\to\\
\ker\left(H^0(\PP^1,\givinv)
\stackrel{H^0((R^1(\pi|_U)_*\psi)|_{\givinv})}\longrightarrow 
H^0(\PP^1,\gviiiinv)\right)\to\\
\coker\left(H^1(\PP^1,\giinv)
\stackrel{H^1(((\pi|_U)_*\psi)|_{\giinv})}\longrightarrow 
H^1(\PP^1,\gvinv)\right).
\end{multline*}\qed
\end{proposition}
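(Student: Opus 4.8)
The plan is to read off the degree-$0$ part of a Snake-lemma exact sequence that has, in effect, already been assembled in the paragraphs above, and then to check that exactness survives the passage to this graded piece.

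First I would record the inputs. By Theorem \ref{schlessgen}, $T^1(X)=\ker\bigl(H^1(U,\Theta_X)\stackrel{H^1(\psi|_U)}{\longrightarrow}H^1(U,\OO_X^{\oplus\numberoffixedgenerators})\bigr)$. The two Leray short exact sequences for $\pi|_U\colon U\to\PP^1$ (which hold because $\dim\PP^1=1$) form a commutative ladder with vertical arrows induced by $\psi$; commutativity is functoriality of the Leray sequence in the sheaf argument. Abbreviating $f=H^1((\pi|_U)_*\psi)$, $g=H^1(\psi|_U)$, $h=H^0(R^1(\pi|_U)_*\psi)$, the Snake lemma gives
$$
0\to\ker f\to\ker g\to\ker h\to\coker f\to\coker g\to\coker h\to 0,
$$
with $\ker g=T^1(X)$, and the proposition is precisely the initial segment $0\to\ker f\to\ker g\to\ker h\to\coker f$ of this sequence, read in degree $0$.

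Next I would handle the grading. Using that each of the sheaves in play---$(\pi|_U)_*\Theta_X$, $(\pi|_U)_*\OO_X^{\oplus\numberoffixedgenerators}$, and their first derived direct images, computed through Proposition \ref{computederived} on the $T$-invariant cover $\{U_i\}$---is a direct sum of its $M$-graded pieces as a sheaf of $\OO_{\PP^1}$-modules, every cohomology group above is $M$-graded. The point to watch is that $\psi$ is homogeneous only block-by-block: the block attached to $\wtdependentgeneratorsdegree{\lambda_i,j}$ raises degree by $\lambda_i$. Consequently $f,g,h$ each send the degree-$\chi$ part of their $\Theta$-type source into the degree-$(\chi+\lambda_i)$ part of the $(i,j)$-block of their $\OO_X$-type target, and distinct source degrees meet disjoint families of target pieces; so $f,g,h$ split as direct sums indexed by the source degree $\chi$, and the whole Snake sequence splits accordingly. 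Extracting the $\chi=0$ summand is therefore an exact operation, kernels, images and cokernels of graded maps being computed degreewise.

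Finally I would identify the $\chi=0$ terms. On the source side the degree-$0$ components of $(\pi|_U)_*\Theta_X$ and $R^1(\pi|_U)_*\Theta_X$ are by definition $\giinv$ and $\givinv$, and $(\ker g)_0=T^1(X)_0$; on the target side the shift by $\lambda_i$ forces the relevant pieces to be $\gvinv=\bigoplus_{i}\bigoplus_{j}\gvinvl{\lambda_i}$ and $\gviiiinv=\bigoplus_{i}\bigoplus_{j}\gviiiinvl{\lambda_i}$, so that the restricted maps become $H^1(((\pi|_U)_*\psi)|_{\giinv})$ and $H^0((R^1(\pi|_U)_*\psi)|_{\givinv})$ as stated. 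Substituting these into the degree-$0$ segment yields the claimed sequence. I expect the main obstacle to be precisely this grading bookkeeping---keeping the block-by-block shift $\lambda_i$ straight, so that ``degree $0$ on the $\Theta$-side'' is matched against ``degree $\lambda_i$ on the $\OO_X$-side''---rather than anything in the homological algebra, which is formal once the grading is set up.
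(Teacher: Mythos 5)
Your argument is correct and follows the same route as the paper: the Snake lemma applied to the ladder of Leray short exact sequences for $\pi|_U$ (with $T^1(X)=\ker H^1(\psi|_U)$ from Theorem \ref{schlessgen}), followed by extraction of the degree-$0$ graded piece, using that each block of $\psi$ shifts degrees by $\lambda_i$ so that kernels and cokernels split degreewise. The identification of the degree-$0$ source components with $\giinv$, $\givinv$ and of the corresponding target blocks with $\gvinv$, $\gviiiinv$ matches the paper's definitions exactly.
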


Our next goal is to find expressions for the sheaves 
$\giinv$, $\givinv$, $\gvinv$, and $\gviiiinv$ including 
only functions on $\PP^1$ and the combinatorics of $\mathcal D$.
%
Given an index $i$ and a point $p\in V_i$, Proposition \ref{uistructure}
provides an isomorphism between $\pi^{-1}(p)\cap U_i$ and $(\CC\setminus 0)\times L$, 
where $L$ is $\CC\setminus 0$ or $\CC$. Call the point identified by this isomorphism 
with $(1,1)\in(\CC\setminus 0)\times L$ the \textit{canonical point in the fiber $\pi^{-1}(p)$
with respect to $U_i$}. In other words, the canonical point in $\pi^{-1}(p)$ with respect 
to $U_i$ is the (unique) point $x\in\pi^{-1}(p)\cap U_i$ such that $\wtuithreadfunction{i,1}(x)=
\wtuithreadfunction{i,2}(x)=1$. 

\subsection{Computation of $\giinv$}
For each $i$ ($1\le i\le \numberoffixedopensets$)
fix an embedding $V_i\hookrightarrow \CC$. As long as such an embedding is fixed, we identify 
each point of $p\in V_i$ with its coordinate $t_0\in\CC$. Denote the coordinates of 
a point $x\in U_i$ provided by the isomorphism $U_i\cong V_i\times (\CC\setminus 0)\times L$
by $t_0\in V_i$, $t_1\in \CC\setminus 0$, $t_2\in L$.  

We are going to study homogeneous vector fields of degree 0 (i.~e. $T$-invariant vector fields) 
on open sets 
$U_i'\subset X$ of the form $V_i'\times(\CC\setminus 0)\times L' \subseteq U_i$, 
where $V_i'\subseteq V_i$ is an open subset, $L'\subseteq L$ is $\CC$ or $(\CC\setminus 0)$, $L$ is defined in Lemma \ref{uistructure}, 
and $U_i'$ is embedded in $U_i$ as a subset of $V_i\times (\CC\setminus 0)\times L$ via isomorphism 
from Lemma \ref{uistructure}.
\begin{lemma}\label{uidescvfieldnoninv}
Let $V_i'\subseteq V_i$ be an open subset, $L'\subseteq L$ be an open subset 
that can be equal $\CC$ or $(\CC\setminus 0)$, $U_i'=V_i'\times(\CC\setminus 0)\times L'\subseteq U_i$.
A homogeneous vector field of degree 0 on $U_i'$ is uniquely determined by 
its values at canonical points in all fibers $\pi^{-1}(t_0)$ (for $t_0\in V_i'$) with 
respect to $U_i$. 
These values can be arbitrary vectors depending algebraically 
on $t_0\in V_i'$.
\end{lemma}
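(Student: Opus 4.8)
The plan is to pull everything back through the isomorphism $U_i\cong V_i\times(\CC\setminus 0)\times L$ of Lemma \ref{uistructure} and reduce the statement to a bookkeeping of $M$-degrees. Write $t_0=\pi$, $t_1=\wtuithreadfunction{i,1}$, $t_2=\wtuithreadfunction{i,2}$ for the three coordinate functions and restrict to $U_i'=V_i'\times(\CC\setminus 0)\times L'$. First I would record the $T$-grading of the coordinate ring: since $\pi$ is $T$-invariant, $t_0$ (and every regular function on $V_i'$) has degree $0$, while $\wtuithreadfunction{i,1}$ and $\wtuithreadfunction{i,2}$ are homogeneous of degrees $\uidegree{i,1}$ and $\uidegree{i,2}$ by construction. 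Hence a monomial $f(t_0)\,t_1^m t_2^n$ (with $f$ a regular function on $V_i'$, $m\in\ZZ$, and $n\ge 0$ or $n\in\ZZ$ according to whether $L'=\CC$ or $L'=\CC\setminus 0$) has degree $m\uidegree{i,1}+n\uidegree{i,2}$. Because $\uidegree{i,1},\uidegree{i,2}$ generate $M$ as a lattice (part of the data defining $U_i$), this degree determines the pair $(m,n)$ uniquely.

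Next I would use that $U_i'$ is a product of $V_i'$ (an open subvariety of an affine line, with coordinate $t_0$) and two one-dimensional factors, so its module of global vector fields is free over $\Gamma(U_i',\OO)$ with basis $\vfield{t_0},\vfield{t_1},\vfield{t_2}$. Writing an arbitrary vector field as $w=a\,\vfield{t_0}+b\,\vfield{t_1}+c\,\vfield{t_2}$, the three basis fields are homogeneous of degrees $0$, $-\uidegree{i,1}$, $-\uidegree{i,2}$ respectively (differentiation lowers degree by that of the coordinate). Thus $w$ is homogeneous of degree $0$ exactly when $a$ has degree $0$, $b$ has degree $\uidegree{i,1}$, and $c$ has degree $\uidegree{i,2}$. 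By the uniqueness of $(m,n)$ noted above, the degree-$0$ condition gives $a=a(t_0)$, the degree-$\uidegree{i,1}$ condition forces $b=b(t_0)\,t_1$, and the degree-$\uidegree{i,2}$ condition forces $c=c(t_0)\,t_2$, where $a,b,c$ are regular functions on $V_i'$. (Note $t_1$ is invertible and $t_2^1$ is regular whether $L'=\CC$ or $L'=\CC\setminus 0$, so these expressions are legitimate regular vector fields.)

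Finally I would evaluate at the canonical points. The canonical point of the fiber over $t_0$ is $(t_0,1,1)$, so the value of $w$ there is $a(t_0)\,\vfield{t_0}+b(t_0)\,\vfield{t_1}+c(t_0)\,\vfield{t_2}$, i.e. the vector $(a(t_0),b(t_0),c(t_0))$ in the chosen coordinate frame. Since $a,b,c$ are recovered from these values, the degree-$0$ field is uniquely determined by its values at the canonical points; conversely, for any triple of regular functions $a,b,c$ on $V_i'$ the formula $a\,\vfield{t_0}+b\,t_1\,\vfield{t_1}+c\,t_2\,\vfield{t_2}$ produces a degree-$0$ field realizing an arbitrary algebraic family of values. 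The only point requiring care is the degree bookkeeping: everything hinges on $\uidegree{i,1},\uidegree{i,2}$ being a lattice basis of $M$, which is what makes the monomial of a prescribed degree unique and hence pins down $b$ and $c$; the freeness of the vector-field module over the product $U_i'$ is the other ingredient to verify explicitly.
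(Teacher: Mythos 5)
Your proof is correct, and it reaches the same explicit formula $f_0(t_0)\,\vfield{t_0}+t_1f_1(t_0)\,\vfield{t_1}+t_2f_2(t_0)\,\vfield{t_2}$ as the paper, but by a genuinely different route. The paper argues geometrically and pointwise: for $(t_1,t_2)\in(\CC\setminus 0)^2$ it takes the unique $\tau\in T$ with $\uidegree{i,1}(\tau)=t_1$, $\uidegree{i,2}(\tau)=t_2$, uses the equivariance identity $\stdvectorfiledxletter(\tau x)=d\tau\,\stdvectorfiledxletter(x)$ to propagate the values at the canonical section to the dense subset $V_i'\times(\CC\setminus 0)\times(\CC\setminus 0)$, and then checks extension to $U_i'$ and homogeneity by hand. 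You instead work entirely on the algebraic side: you decompose $\Gamma(U_i',\OO)$ into its $M$-graded pieces, observe that the module of derivations is free on $\vfield{t_0},\vfield{t_1},\vfield{t_2}$ with those generators homogeneous of degrees $0$, $-\uidegree{i,1}$, $-\uidegree{i,2}$, and let the degree bookkeeping force the coefficients to be $a(t_0)$, $b(t_0)t_1$, $c(t_0)t_2$. Both arguments hinge on the same hypothesis — that $\uidegree{i,1},\uidegree{i,2}$ form a lattice basis of $M$ — the paper using it to identify $(\CC\setminus 0)^2$ with $T$, you using it to make the monomial of a prescribed degree unique. Your version has the advantage that uniqueness and the regularity/extension question (the only place where $L'=\CC$ versus $L'=\CC\setminus 0$ could matter) are settled in one stroke by the graded freeness of the module, whereas the paper must separately verify that the pointwise-defined field extends across $t_2=0$ and is equivariant; the paper's version has the advantage of making the role of the torus action and of the canonical section geometrically transparent, which is the form in which the lemma is actually used later (as ``$U_i$-descriptions'' of fields). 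The two ingredients you flag as needing explicit verification — the freeness of the derivation module over the product and the uniqueness of the exponent pair $(m,n)$ — are exactly the right ones, and both hold here.
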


\begin{proof}
Let $\stdvectorfiledxletter$ be a vector field of degree 0 on $U_i'$, and suppose that 
$\stdvectorfiledxletter(t_0,1,1)=f_0(t_0)\vfield{t_0}+f_1(t_0)\vfield{t_1}+f_2(t_0)\vfield{t_2}$, where $f_j\colon V_i'\to\CC$ 
are algebraic functions. Since $M$ is the character lattice of $T$, and $\uidegree{i,1}$ and $\uidegree{i,2}$ form a basis 
of $M$, every pair $(t_1,t_2)\in(\CC\setminus 0)\times (\CC\setminus 0)$ uniquely and algebraically 
determines an element $\tau\in T$ such that $\uidegree{i,1}(\tau)=t_1$, $\uidegree{i,2}(\tau)=t_2$. This element acts on $U_i'$, 
i.~e. it defines an automorphism of $U_i'$, which we also denote by $\tau$. Recall that $t_j=\wtuithreadfunction{i,j}|_{U_i}$, 
$j=1,2$, and $\wtuithreadfunction{i,1}$ (resp. $\wtuithreadfunction{i,2}$) is a function of degree $\uidegree{i,1}$ (resp. $\uidegree{i,2}$), 
so $\tau (t_0,1,1)=(t_0,t_1,t_2)$ for every $t_0\in V_i'$. By the definition of a $T$-invariant vector field, 
$\stdvectorfiledxletter$ is a field of degree 0 if and only if $\stdvectorfiledxletter(\tau' x)=d\tau' \stdvectorfiledxletter(x)$ for every $x\in U_i'$, $\tau'\in T$.
In particular, this holds for $x=(t_0,1,1)$, $\tau'=\tau$, 
so $\stdvectorfiledxletter$ is uniquely determined on 
$V_i'\times (\CC\setminus 0)\times (\CC\setminus 0)$, which is at least an open subset in $U_i'$, 
so it is determined uniquely on $U_i'$.

We still have to check that if we start with arbitrary functions $f_0, f_1, f_2\colon V_i'\to \CC$,  
the vector field on $V_i'\times (\CC\setminus 0)\times (\CC\setminus 0)$
constructed this way can be extended to the whole $U_i'$
if and only if $f_0, f_1, f_2$ satisfy the statement of the Lemma and that the resulting 
vector field on $U_i'$ is $T$-invariant. To do this, let us first write 
the vector field we have constructed in terms of $f_j$ and $\vfield{t_j}$.
Take a point $x=(t_0,t_1,t_2)\in V_i'\times(\CC\setminus 0)\times (\CC\setminus 0)$, $t_0=\pi(x)$.
We have $\stdvectorfiledxletter(t_0,t_1,t_2)=d\tau \stdvectorfiledxletter(t_0,1,1)=d\tau (f_0(t_0)\vfield{t_0}+f_1(t_0)\vfield{t_1}
+f_2(t_0)\vfield{t_2})=(f_0(t_0)\vfield{t_0}+t_1f_1(t_0)\vfield{t_1}+t_2f_2(t_0)\vfield{t_2})$.
Clearly, functions of the form $f_j(t_0) t_1^{a_1}t_2^{a_2}$ with $a_1\ge 0$, $a_2\ge 0$ 
can be extended to the whole $U_i'$.

Observe that to check homogeneity, 
we have to check an equality of two vector fields for each $\tau\in T$. This equality holds 
if it holds on an open subset of $U_i'$, in particular, it is sufficient to check homogeneity 
of the resulting vector field on $V_i'\times (\CC\setminus 0)\times (\CC\setminus 0)$.
Take a point $x=(t_0,t_1,t_2)\in V_i'\times (\CC\setminus 0)\times (\CC\setminus 0)$ and an element 
$\tau'\in T$. Denote by $\tau\in T$ the element of $T$ such that $\uidegree{i,1}(\tau)=t_1$, $\uidegree{i,2}(\tau)=t_2$. 
We have $\stdvectorfiledxletter(\tau' x)=\stdvectorfiledxletter(\tau' \tau (t_0,1,1))=
d(\tau'\tau) \stdvectorfiledxletter(t_0,1,1)=d\tau'd\tau \stdvectorfiledxletter(t_0,1,1)=d\tau'\stdvectorfiledxletter(t_0,t_1,t_2)$, 
and the vector field is $T$-invariant. 
\end{proof}

\begin{corollary}\label{uidescvfield}
A homogeneous vector field $w$ of degree 0 on $U_i'$ is also uniquely determined by the following 
data:
\begin{enumerate}
\item The derivatives of $\wtuithreadfunction{i,j}$ ($j=1,2$) along $\stdvectorfiledxletter$ at canonical points, considered as two 
algebraic functions $V_i\to\CC$.
\item The vector field on $V_i'$ obtained by applying $d\pi$ to the values of $\stdvectorfiledxletter$ at canonical points, 
$d_{(t_0,1,1)}\pi \stdvectorfiledxletter(t_0,1,1)$.
\end{enumerate}
The vector field and two functions 
can be arbitrary algebraic.
\end{corollary}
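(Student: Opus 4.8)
The plan is to deduce the corollary directly from Lemma \ref{uidescvfieldnoninv}, which already guarantees that a homogeneous vector field $w$ of degree $0$ on $U_i'$ is uniquely determined by its values at the canonical points of the fibers $\pi^{-1}(t_0)$, and that these values may be prescribed as arbitrary algebraic vector-valued functions of $t_0\in V_i'$. It therefore suffices to exhibit an algebraic bijection between the datum "value at each canonical point" used in that lemma and the two items listed in the corollary.

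First I would work in the coordinates $(t_0,t_1,t_2)$ on $U_i\cong V_i\times(\CC\setminus 0)\times L$ furnished by Lemma \ref{uistructure}, and write the value at a canonical point exactly as in the proof of Lemma \ref{uidescvfieldnoninv}, namely $w(t_0,1,1)=f_0(t_0)\vfield{t_0}+f_1(t_0)\vfield{t_1}+f_2(t_0)\vfield{t_2}$ for algebraic functions $f_0,f_1,f_2\colon V_i'\to\CC$. Under this isomorphism the functions $\wtuithreadfunction{i,1}$ and $\wtuithreadfunction{i,2}$ are precisely the coordinate functions $t_1$ and $t_2$, while $\pi$ is the coordinate $t_0$.

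Next I would read off the three quantities. Since $\wtuithreadfunction{i,1}=t_1$ and $\wtuithreadfunction{i,2}=t_2$, evaluating the differentials on $w(t_0,1,1)$ gives $d\wtuithreadfunction{i,1}(w)=f_1(t_0)$ and $d\wtuithreadfunction{i,2}(w)=f_2(t_0)$; and since $\pi$ is the first coordinate, $d_{(t_0,1,1)}\pi\, w(t_0,1,1)=f_0(t_0)\vfield{t_0}$, viewed as a vector field on $V_i'$. Hence the assignment $(f_0,f_1,f_2)\mapsto\bigl(d_{(t_0,1,1)}\pi\, w,\ d\wtuithreadfunction{i,1}(w),\ d\wtuithreadfunction{i,2}(w)\bigr)$ is, after identifying $f_0\vfield{t_0}$ with the vector field it defines on $V_i'$, just the identity on triples of algebraic functions, and in particular an algebraic bijection.

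Combining this bijection with Lemma \ref{uidescvfieldnoninv} yields both the uniqueness and the "arbitrary algebraic" part of the statement: the lemma lets $(f_0,f_1,f_2)$ range over all algebraic triples, and the bijection transports this freedom to the two items of the corollary. I do not expect a genuine obstacle here; the only point requiring care is the bookkeeping that matches the coordinate functions $t_1,t_2$ with $\wtuithreadfunction{i,1},\wtuithreadfunction{i,2}$ and confirms that the induced map on data is invertible, which is immediate once the coordinates of Lemma \ref{uistructure} are fixed.
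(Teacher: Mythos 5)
Your proposal is correct and is essentially identical to the paper's own proof: both write $\stdvectorfiledxletter(t_0,1,1)=f_0(t_0)\vfield{t_0}+f_1(t_0)\vfield{t_1}+f_2(t_0)\vfield{t_2}$ in the coordinates of Lemma \ref{uistructure} and observe that $d_{(t_0,1,1)}\pi\,\stdvectorfiledxletter(t_0,1,1)=f_0(t_0)\vfield{t_0}$ and $d\wtuithreadfunction{i,j}\stdvectorfiledxletter(t_0,1,1)=f_j(t_0)$, so the data of the corollary are in algebraic bijection with the data of Lemma \ref{uidescvfieldnoninv}. The extra bookkeeping you include about invertibility of this correspondence is harmless but not needed beyond what the paper states.
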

\begin{proof}
Write $\stdvectorfiledxletter(t_0,1,1)=f_0(t_0)\vfield{t_0}+f_1(t_0)\vfield{t_1}+f_2(t_0)\vfield{t_2}$. Then 
$d_{(t_0,1,1)}\pi \stdvectorfiledxletter(t_0,1,1)=f_0(t_0)\vfield{t_0}$, $d\wtuithreadfunction{i,j}\stdvectorfiledxletter(t_0,1,1)=f_j(t_0)$ ($j=1,2$).
\end{proof}

Note that these data (the image of a vector at a canonical point under $d\pi$, the derivatives of functions 
along $\stdvectorfiledxletter$) do not depend on the choice of an embedding $V_i\to\CC$.
Given a vector field $\stdvectorfiledxletter$ of degree 0 on $U_i'$, we call the data 
from Corollary \ref{uidescvfield} the \textit{$U_i$-description of $\stdvectorfiledxletter$}.
Also, the $U_i$-description only depends on the data we used to define the set $U_i$ 
(the degrees $\uidegree{i,1}$ and $\uidegree{i,2}$ and the sections $\uithreadfunction{i,1}$ and $\uithreadfunction{i,2}$), 
not on the whole sufficient system $U_1, \ldots, U_{\numberoffixedopensets}$.

Observe also that the operation of taking the $U_i$-description is compatible
with replacing $U_i'$ by a smaller subset $U_i''$ of the same form, or, more precisely, 
we can say the following:

\begin{remark}\label{uidescvfieldrestriction}
Let $U_i''\subset U_i'$ be a subset of $U_i'$ of the same form, i.~e. 
let $V_i''\subseteq V_i'$ be an open subset, let $L''\subseteq L'$ be an open subset
that can be equal $\CC$ or $\CC\setminus0$, and let
$U_i''=V_i''\times (\CC\setminus0)\times L''$
be embedded into $\subseteq V_i'\times (\CC\setminus0)\times L'=U_i'$ via 
the embeddings $V_i''\subseteq V_i'$ and $L''\subseteq L'$ above. Let $\stdvectorfiledxletter'$ be the restriction of 
$\stdvectorfiledxletter$ to $U_i''$. Then the $U_i''$-description of $\stdvectorfiledxletter'$ consists of the restrictions from 
$V_i'$ to $V_i''$ of the vector field and two functions forming the $U_i'$-description of $\stdvectorfiledxletter$.
\end{remark}


Note that there are many possible descriptions for a given vector field 
on $X$, 
each one corresponds to one of the chosen open subsets $U_i$. Sometimes we will need many descriptions of a given 
vector field 
on $X$ simultaneously. And sometimes we will simultaneously deal with 
descriptions of many different 
vector fields. To distinguish between these situations clearly, 
we will usually use "standard" subscripts to enumerate different descriptions of the same 
vector field,
for example:
$$
(\uidescriptionfunction{1,1},\uidescriptionfunction{1,2},\stdvectorfiledpletter_1,\ldots,
\uidescriptionfunction{i,1},\uidescriptionfunction{i,2},\stdvectorfiledpletter_i,\ldots,
\uidescriptionfunction{\numberoffixedopensets,1},\uidescriptionfunction{\numberoffixedopensets,2},\stdvectorfiledpletter_{\numberoffixedopensets}).
$$
Here $(\uidescriptionfunction{i,1},\uidescriptionfunction{i,2},\stdvectorfiledpletter_i)$ is the $U_i$-description 
of a vector field that does not depend on $i$.
If we have several different vector fields and one description of each of them, we enumerate them using indices 
in brackets, for example:
$$
(\uidescriptionfunctiondiff 11,\uidescriptionfunctiondiff 12,\stdvectorfiledpletter[1],\ldots,
\uidescriptionfunctiondiff i1,\uidescriptionfunctiondiff i2,\stdvectorfiledpletter[i],\ldots,
\uidescriptionfunctiondiff{\numberofdivisorpoints}1,\uidescriptionfunctiondiff{\numberofdivisorpoints}2,\stdvectorfiledpletter[\numberofdivisorpoints]).
$$
Here $(\uidescriptionfunctiondiff i1,\uidescriptionfunctiondiff i2,\stdvectorfiledpletter[i])$ can be, for example, 
the $U_1$-description of a vector field $\stdvectorfiledxletter[i]$ on $X$, and these 
vector fields may vary independently. These are only generic rules, they are stated here to 
demonstrate what kind of notation will be used later. Every time, when we consider a description of a vector field,
we say explicitly which set $U_i$ we use, which vector field or function on $X$ we describe, 
and how we denote the description.

Later we will introduce $U_i$-descriptions of homogeneous functions on $X$ in a similar way, the only difference will be that 
the $U_i$-description of a homogeneous function consists of only one function on $V_i$, not of two functions and a vector field. 
When we have several $U_i$-descriptions of functions, we will use the same generic rules to write their indices.

Choose two indices $i$ and $j$ ($1\le i,j\le\numberoffixedopensets$).
The following lemma relates 
the $U_i$-description with the $U_j$-description of a vector field $\stdvectorfiledxletter$
of degree 0. We need some more notation to formulate it.
Denote by $\uismalltransition{i,j}$ the following $2\times 2$-matrix:
$$
\uismalltransition{i,j}=
\left(
\begin{array}{cc}
\uidegree{i,1}^*(\uidegree{j,1}) & \uidegree{i,2}^*(\uidegree{j,1})\\
\uidegree{i,1}^*(\uidegree{j,2}) & \uidegree{i,2}^*(\uidegree{j,2})
\end{array}
\right),
$$
Denote
\newlength{\cijlength}
\settoheight{\cijlength}{\ensuremath{\displaystyle{\uismalltransition{i,j}}}}
$$
\renewcommand{\arraystretch}{0}
\uilargetransition{i,j}(p)=
\left(
\begin{array}{cc|c}
&&\frac{\oluithreadfunction{i,1}(p)^{\uidegree{i,1}^*(\uidegree{j,1})}\oluithreadfunction{i,2}(p)^{\uidegree{i,2}^*(\uidegree{j,1})}}{\oluithreadfunction{j,1}(p)} 
d\left(\frac{\oluithreadfunction{j,1}(p)}{\oluithreadfunction{i,1}(p)^{\uidegree{i,1}^*(\uidegree{j,1})}\oluithreadfunction{i,2}(p)^{\uidegree{i,2}^*(\uidegree{j,1})}}\right) \\
\multicolumn{2}{c|}{\raisebox{-0.5\cijlength}[0.5pt][0.5pt]{\ensuremath{
\uismalltransition{i,j}
}}}&\\
&&\frac{\oluithreadfunction{i,1}(p)^{\uidegree{i,1}^*(\uidegree{j,2})}\oluithreadfunction{i,2}(p)^{\uidegree{i,2}^*(\uidegree{j,2})}}{\oluithreadfunction{j,2}(p)} 
d\left(\frac{\oluithreadfunction{j,2}(p)}{\oluithreadfunction{i,1}(p)^{\uidegree{i,1}^*(\uidegree{j,2})}\oluithreadfunction{i,2}(p)^{\uidegree{i,2}^*(\uidegree{j,2})}}\right) \\
\vrule width 0pt height 1pt&
\\
\hline
\mathstrut \strut 0 & 0 & 1
\end{array}
\right),
$$
where $p\in\PP^1$ is an arbitrary point, and the first and the second entry in the third column are understood as 
\textit{rational} covector fields on $\PP^1$. In particular, if $i=j$, $\uismalltransition{i,i}$ and $\uilargetransition{i,i}$ are unit matrices.
By Lemma \ref{uijstructure}, $U_i\cap U_j$ is isomorphic to $V'\times (\CC\setminus 0)\times L'$, 
where $V'$ is an open subset of $V_i\cap V_j$, and $L'$ is $\CC$ or $(\CC\setminus0)$. This product is 
embedded into $U_i$ via the isomorphism from Lemma \ref{uistructure}.
\begin{lemma}\label{vfieldtransition}
Let $V''$ be an open subset of $V'$, $L''$ be an open subset of $L'$, $L''=\CC$ or $L''=\CC\setminus 0$,
and let $U''=V''\times (\CC\setminus 0)\times L''$ be embedded into $U_i\cap U_j$ via the map from 
Lemma \ref{uijstructure}. Let $\stdvectorfiledxletter$ be a vector field on $U''$ of degree 0, and let $\uidescriptionfunction{i,1}$, $\uidescriptionfunction{i,2}$, $\stdvectorfiledpletter_i$ be the
$U_i$-description of $\stdvectorfiledxletter$, and $\uidescriptionfunction{j,1}$, $\uidescriptionfunction{j,2}$, $\stdvectorfiledpletter_j$ be the $U_j$-description of $\stdvectorfiledxletter$. Then 
for every $p\in V''$
$$
\left(
\begin{array}{c}
\uidescriptionfunction{j,1}(p)\\
\uidescriptionfunction{j,2}(p)\\
\stdvectorfiledpletter_j(p)
\end{array}
\right)
=
\uilargetransition{i,j}(p)
\left(
\begin{array}{c}
\uidescriptionfunction{i,1}(p)\\
\uidescriptionfunction{i,2}(p)\\
\stdvectorfiledpletter_i(p)
\end{array}
\right).
$$
In particular, $\stdvectorfiledpletter_i(p)=\stdvectorfiledpletter_j(p)$.
\end{lemma}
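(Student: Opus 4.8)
The plan is to carry out the comparison directly in the coordinate chart on $U_i$ supplied by Lemma \ref{uistructure}, evaluating both descriptions at canonical points lying over the same $p\in V''$. Write $(t_0,t_1,t_2)$ for the $U_i$-coordinates, so that $\pi=t_0$, $\wtuithreadfunction{i,1}=t_1$, $\wtuithreadfunction{i,2}=t_2$. By the proof of Lemma \ref{uidescvfieldnoninv}, on $U''$ the field reads $\stdvectorfiledxletter=f_0(t_0)\vfield{t_0}+\uidescriptionfunction{i,1}(t_0)\,t_1\vfield{t_1}+\uidescriptionfunction{i,2}(t_0)\,t_2\vfield{t_2}$, where $\stdvectorfiledpletter_i=f_0\vfield{t_0}$ is the vector-field part of the $U_i$-description. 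Applying the formula of Lemma \ref{uistructure} to $\uithreadfunction{j,k}\in\Gamma(\PP^1,\OO(\mathcal D(\uidegree{j,k})))$ gives, in the same coordinates, $\wtuithreadfunction{j,k}=t_1^{\uidegree{i,1}^*(\uidegree{j,k})}t_2^{\uidegree{i,2}^*(\uidegree{j,k})}\phi_k(t_0)$, where $\phi_k=\oluithreadfunction{j,k}/(\oluithreadfunction{i,1}^{\uidegree{i,1}^*(\uidegree{j,k})}\oluithreadfunction{i,2}^{\uidegree{i,2}^*(\uidegree{j,k})})$ is a rational function on $\PP^1$. A preliminary fact to record is that for $p\in V''$ the description of $V''$ in Lemma \ref{uijstructure} forces $\uidegree{i,1},\uidegree{i,2},\uidegree{j,k}$ into the normal cone of a single vertex of $\stdpolyhedronletter_p$, on which $\mathcal D_p(\cdot)$ is linear; hence $\ord_p(\phi_k)=0$ and $\phi_k(p)\neq 0$, so that $1/\phi_k(p)$ and the logarithmic differential $d\phi_k/\phi_k$ are defined.

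Next I would locate the $U_j$-canonical point $x$ over $p$ in these coordinates: it is the point $(p,t_1^*,t_2^*)$ determined by $\wtuithreadfunction{j,1}(x)=\wtuithreadfunction{j,2}(x)=1$, i.e. $(t_1^*)^{\uidegree{i,1}^*(\uidegree{j,k})}(t_2^*)^{\uidegree{i,2}^*(\uidegree{j,k})}\phi_k(p)=1$ for $k=1,2$. The bottom row is then immediate: since $\pi=t_0$ is $T$-invariant, $\stdvectorfiledpletter_j(p)=d\pi(\stdvectorfiledxletter(x))=f_0(p)\vfield{t_0}=\stdvectorfiledpletter_i(p)$, which is the $(0,0,1)$ row and gives $\stdvectorfiledpletter_i(p)=\stdvectorfiledpletter_j(p)$.

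The two remaining entries come from the chain rule applied at $x$. I would compute $\uidescriptionfunction{j,k}(p)=d\wtuithreadfunction{j,k}(\stdvectorfiledxletter(x))$ by expanding it as $(\partial\wtuithreadfunction{j,k}/\partial t_0)\,f_0+(\partial\wtuithreadfunction{j,k}/\partial t_1)\,\uidescriptionfunction{i,1}(p)t_1^*+(\partial\wtuithreadfunction{j,k}/\partial t_2)\,\uidescriptionfunction{i,2}(p)t_2^*$ evaluated at $x$. The point of evaluating at the $U_j$-canonical point is that $\wtuithreadfunction{j,k}(x)=1$ makes the monomial $(t_1^*)^{\uidegree{i,1}^*(\uidegree{j,k})}(t_2^*)^{\uidegree{i,2}^*(\uidegree{j,k})}$ equal to $1/\phi_k(p)$; the two $t$-derivative terms then collapse to $\uidegree{i,1}^*(\uidegree{j,k})\uidescriptionfunction{i,1}(p)+\uidegree{i,2}^*(\uidegree{j,k})\uidescriptionfunction{i,2}(p)$ (the factors $t_1^*,t_2^*$ cancelling against the $1/t_1^*,1/t_2^*$ produced by differentiation), while the $t_0$-derivative term becomes $(\phi_k'(p)/\phi_k(p))f_0(p)$. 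This is exactly the $k$th row of the claimed identity: the first two coefficients are the entries of $\uismalltransition{i,j}$, and the third is the pairing of the covector $d\phi_k/\phi_k$ with $\stdvectorfiledpletter_i=f_0\vfield{t_0}$, which agrees with the stated third-column entry since $\phi_k=\oluithreadfunction{j,k}/(\oluithreadfunction{i,1}^{\uidegree{i,1}^*(\uidegree{j,k})}\oluithreadfunction{i,2}^{\uidegree{i,2}^*(\uidegree{j,k})})$ and the displayed prefactor is precisely $1/\phi_k$. The case $k=2$ is identical, and assembling the three rows yields the matrix equation $\uilargetransition{i,j}(p)$.

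The computation is essentially bookkeeping; the only genuine content, and the step I would treat most carefully, is the preliminary claim that $\phi_k(p)\neq 0$ for $p\in V''$ — that the exponents $\uidegree{i,1}^*(\uidegree{j,k}),\uidegree{i,2}^*(\uidegree{j,k})$ are matched to the order of $\oluithreadfunction{j,k}$ at $p$ by the linearity of $\mathcal D_p$ on the common normal cone guaranteed by Lemma \ref{uijstructure}. Without this the logarithmic differentials appearing in $\uilargetransition{i,j}$ would be ill-defined, so this is where the overlap hypotheses are really used. I would also note in passing that both canonical points over $p$ lie in $U''$, so that the single $U_i$-coordinate expression of $\stdvectorfiledxletter$ legitimately computes both descriptions.
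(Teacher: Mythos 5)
Your proof is correct and follows essentially the same route as the paper's: both evaluate $d\wtuithreadfunction{j,k}(\stdvectorfiledxletter)$ at the $U_j$-canonical point over $p$, factor $\wtuithreadfunction{j,k}$ as $\wtuithreadfunction{i,1}^{a_{k,1}}\wtuithreadfunction{i,2}^{a_{k,2}}$ times a degree-zero function pulled back from $\PP^1$ (Proposition \ref{quotmorph}), and apply the Leibniz rule, the normalization $\wtuithreadfunction{j,k}(x')=1$ collapsing the terms into the rows of $\uilargetransition{i,j}$. The only cosmetic difference is that you phrase the equivariance of $\stdvectorfiledxletter$ through the explicit coordinate formula of Lemma \ref{uidescvfieldnoninv} and check the identity at every $p\in V''$ via the vanishing of $\ord_p(\phi_k)$, whereas the paper transports the value by the torus element $\tau$ and verifies the identity only at ordinary points, which suffices by density.
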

\begin{proof}
It is sufficient to check this equality on an arbitrary open subset of $V''$, so let $p\in V''$ be 
an ordinary point. Let $x$ be the canonical point in $\pi^{-1}(p)$ with respect to $U_i$. It follows
from the definition of the canonical point that $x\in U''$. Let $x'$ be the canonical point 
in $\pi^{-1}(p)$ with respect to $U_j$. By Proposition \ref{genfiberstructinvar}, $\wtuithreadfunction{i,1}(x')\ne 0$, 
$\wtuithreadfunction{i,2}(x')\ne 0$, so $x'\in U''$.

Let $\tau\in T$ be the element of $T$ such that $\uidegree{i,1}(\tau)=\wtuithreadfunction{i,1}(x')$, 
$\uidegree{i,2}(\tau)=\wtuithreadfunction{i,2}(x')$. It defines an automorphism of $U''$, and we 
also denote this automorphism by $\tau$. Then $\wtuithreadfunction{i,1}(\tau x)=\wtuithreadfunction{i,1}(x')$, 
$\wtuithreadfunction{i,2}(\tau x)=\wtuithreadfunction{i,2}(x')$, $\pi(\tau x)=p=\pi(x')$, so $\tau x=x'$.

Since $\stdvectorfiledxletter$ is a vector field of degree 0, $\stdvectorfiledxletter(x')=d_x\tau \stdvectorfiledxletter(x)$.
Since $\pi=\pi\tau$, we have $d_x\pi=d_{\tau x}\pi d_x\tau=d_{x'}\pi d_x\tau$, 
and $\stdvectorfiledpletter_j(p)=d_{x'}\pi \stdvectorfiledxletter(x')=(d_{x'}\pi)(d_x\tau \stdvectorfiledxletter(x))=d_x\pi \stdvectorfiledxletter(x)=\stdvectorfiledpletter_i(p)$.

Now we are going to compute $\uidescriptionfunction{j,1}(p)=d_{x'}\wtuithreadfunction{j,1}\stdvectorfiledxletter(x')$. 
Until the end of the proof, denote 
$a_{1,1}=\uidegree{i,1}^*(\uidegree{j,1})$,
$a_{1,2}=\uidegree{i,2}^*(\uidegree{j,1})$, 
$a_{2,1}=\uidegree{i,1}^*(\uidegree{j,2})$, and
$a_{2,2}=\uidegree{i,2}^*(\uidegree{j,2})$.
We have 
$$
\wtuithreadfunction{j,1}=\wtuithreadfunction{i,1}^{a_{1,1}}\wtuithreadfunction{i,2}^{a_{1,2}}
\frac{\wtuithreadfunction{j,1}}{\wtuithreadfunction{i,1}^{a_{1,1}}\wtuithreadfunction{i,2}^{a_{1,2}}},
$$
and
\begin{multline*}
d_{x'}\wtuithreadfunction{j,1}=
a_{1,1}(d_{x'}\wtuithreadfunction{i,1})\wtuithreadfunction{i,2}(x')^{a_{1,2}}
\frac{\wtuithreadfunction{j,1}(x')}{\wtuithreadfunction{i,1}(x')^{a_{1,1}}\wtuithreadfunction{i,2}(x')^{a_{1,2}}}+\\
a_{1,2}\wtuithreadfunction{i,1}(x')^{a_{1,1}}(d_{x'}\wtuithreadfunction{i,2})
\frac{\wtuithreadfunction{j,1}(x')}{\wtuithreadfunction{i,1}(x')^{a_{1,1}}\wtuithreadfunction{i,2}(x')^{a_{1,2}}}+
\wtuithreadfunction{i,1}(x')^{a_{1,1}}\wtuithreadfunction{i,2}(x')^{a_{1,2}}
d_{x'}\left(\frac{\wtuithreadfunction{j,1}}{\wtuithreadfunction{i,1}^{a_{1,1}}\wtuithreadfunction{i,2}^{a_{1,2}}}\right).
\end{multline*}
Taking into account that $\wtuithreadfunction{j,1}(x')=1$, we get
$$
d_{x'}\wtuithreadfunction{j,1}=
\frac{a_{1,1}d_{x'}\wtuithreadfunction{i,1}}{\wtuithreadfunction{i,1}(x')^{a_{1,1}}}+
\frac{a_{1,2}d_{x'}\wtuithreadfunction{i,2}}{\wtuithreadfunction{i,2}(x')^{a_{1,2}}}+
\frac{\wtuithreadfunction{i,1}(x')^{a_{1,1}}\wtuithreadfunction{i,2}(x')^{a_{1,2}}}{\wtuithreadfunction{j,1}(x')}
d_{x'}\left(\frac{\wtuithreadfunction{j,1}}{\wtuithreadfunction{i,1}^{a_{1,1}}\wtuithreadfunction{i,2}^{a_{1,2}}}\right).
$$

We are computing $d_{x'}\wtuithreadfunction{i,1}\stdvectorfiledxletter(x')$. We have $d_{x'}\wtuithreadfunction{i,1}\stdvectorfiledxletter(x')=
d_{x'}\wtuithreadfunction{i,1}d_x\tau \stdvectorfiledxletter(x)$. Since $\wtuithreadfunction{i,1}$ is a homogeneous 
function of degree $\uidegree{i,1}$, we have the following equality of maps $X\to\CC$: $\wtuithreadfunction{i,1}\circ\tau=
\uidegree{i,1}(\tau)\wtuithreadfunction{i,1}=\wtuithreadfunction{i,1}(x')\wtuithreadfunction{i,1}$. So, 
$d_{x'}\wtuithreadfunction{i,1}d_x\tau \stdvectorfiledxletter(x)=\wtuithreadfunction{i,1}(x')d_x\wtuithreadfunction{i,1}\stdvectorfiledxletter(x)=
\wtuithreadfunction{i,1}(x')\uidescriptionfunction{i,1}(p)$. 
Similarly, $d_{x'}\wtuithreadfunction{i,2}\stdvectorfiledxletter(x')=\wtuithreadfunction{i,2}(x')\uidescriptionfunction{i,2}(p)$.

Now we are going to deal with the last summand in the formula for $d_{x'}\wtuithreadfunction{j,1}$ above.
Since $\wtuithreadfunction{j,1}$ and $\wtuithreadfunction{i,1}^{a_{1,1}}\wtuithreadfunction{i,2}^{a_{1,2}}$
are functions of the same degree $\uidegree{j,1}$, by Proposition \ref{quotmorph} we have the following 
equalities of maps from the open subset where they are defined as regular functions, not only as rational functions, 
to $\CC$:
$$
\frac{\wtuithreadfunction{j,1}}{\wtuithreadfunction{i,1}^{a_{1,1}}\wtuithreadfunction{i,2}^{a_{1,2}}}=
\frac{\oluithreadfunction{j,1}}{\oluithreadfunction{i,1}^{a_{1,1}}\oluithreadfunction{i,2}^{a_{1,2}}}\circ\pi
\text{ and }
\frac{\wtuithreadfunction{i,1}^{a_{1,1}}\wtuithreadfunction{i,2}^{a_{1,2}}}{\wtuithreadfunction{j,1}}=
\frac{\oluithreadfunction{i,1}^{a_{1,1}}\oluithreadfunction{i,2}^{a_{1,2}}}{\oluithreadfunction{j,1}}\circ\pi.
$$
As we already know, $\wtuithreadfunction{i,1}(x')\ne 0$, $\wtuithreadfunction{i,2}(x')\ne 0$. Also, 
$\wtuithreadfunction{j,1}(x')=1$ by the definition of $x'$, so these maps are defined at $x'$,
and we get
\begin{multline*}
\frac{\wtuithreadfunction{i,1}(x')^{a_{1,1}}\wtuithreadfunction{i,2}(x')^{a_{1,2}}}{\wtuithreadfunction{j,1}(x')}
d_{x'}\left(\frac{\wtuithreadfunction{j,1}}{\wtuithreadfunction{i,1}^{a_{1,1}}\wtuithreadfunction{i,2}^{a_{1,2}}}\right)\stdvectorfiledxletter(x')=\\
\frac{\oluithreadfunction{i,1}(p)^{a_{1,1}}\oluithreadfunction{i,2}(p)^{a_{1,2}}}{\oluithreadfunction{j,1}(p)}
d_p\left(\frac{\oluithreadfunction{j,1}}{\oluithreadfunction{i,1}^{a_{1,1}}\oluithreadfunction{i,2}^{a_{1,2}}}\right)d_{x'}\pi \stdvectorfiledxletter(x')=
\frac{\oluithreadfunction{i,1}(p)^{a_{1,1}}\oluithreadfunction{i,2}(p)^{a_{1,2}}}{\oluithreadfunction{j,1}(p)}
d_p\left(\frac{\oluithreadfunction{j,1}}{\oluithreadfunction{i,1}^{a_{1,1}}\oluithreadfunction{i,2}^{a_{1,2}}}\right)\stdvectorfiledpletter_j(p)=\\
\frac{\oluithreadfunction{i,1}(p)^{a_{1,1}}\oluithreadfunction{i,2}(p)^{a_{1,2}}}{\oluithreadfunction{j,1}(p)}
d_p\left(\frac{\oluithreadfunction{j,1}}{\oluithreadfunction{i,1}^{a_{1,1}}\oluithreadfunction{i,2}^{a_{1,2}}}\right)\stdvectorfiledpletter_i(p).
\end{multline*}

Finally, we get the following formula for $\uidescriptionfunction{j,1}(p)$:
\begin{multline*}
\uidescriptionfunction{j,1}(p)=d_{x'}\wtuithreadfunction{j,1}\stdvectorfiledxletter(x')=\\
\frac{a_{1,1}d_{x'}\wtuithreadfunction{i,1}\stdvectorfiledxletter(x')}{\wtuithreadfunction{i,1}(x')^{a_{1,1}}}+
\frac{a_{1,2}d_{x'}\wtuithreadfunction{i,2}\stdvectorfiledxletter(x')}{\wtuithreadfunction{i,2}(x')^{a_{1,2}}}+
\frac{\wtuithreadfunction{i,1}(x')^{a_{1,1}}\wtuithreadfunction{i,2}(x')^{a_{1,2}}}{\wtuithreadfunction{j,1}(x')}
d_{x'}\left(\frac{\wtuithreadfunction{j,1}}{\wtuithreadfunction{i,1}^{a_{1,1}}\wtuithreadfunction{i,2}^{a_{1,2}}}\right)\stdvectorfiledxletter(x')=\\
\frac{a_{1,1}\wtuithreadfunction{i,1}(x')\uidescriptionfunction{i,1}(p)}{\wtuithreadfunction{i,1}(x')^{a_{1,1}}}+
\frac{a_{1,2}\wtuithreadfunction{i,2}(x')\uidescriptionfunction{i,2}(p)}{\wtuithreadfunction{i,2}(x')^{a_{1,2}}}+
\frac{\oluithreadfunction{i,1}(p)^{a_{1,1}}\oluithreadfunction{i,2}(p)^{a_{1,2}}}{\oluithreadfunction{j,1}(p)}
d_p\left(\frac{\oluithreadfunction{j,1}}{\oluithreadfunction{i,1}^{a_{1,1}}\oluithreadfunction{i,2}^{a_{1,2}}}\right)\stdvectorfiledpletter_i(p)=\\
a_{1,1}\uidescriptionfunction{i,1}(p)+a_{1,2}\uidescriptionfunction{i,2}(p)+
\frac{\oluithreadfunction{i,1}(p)^{a_{1,1}}\oluithreadfunction{i,2}(p)^{a_{1,2}}}{\oluithreadfunction{j,1}(p)}
d_p\left(\frac{\oluithreadfunction{j,1}}{\oluithreadfunction{i,1}^{a_{1,1}}\oluithreadfunction{i,2}^{a_{1,2}}}\right)\stdvectorfiledpletter_i(p).
\end{multline*}
Similarly,
$$
\uidescriptionfunction{j,2}(p)=
a_{2,1}\uidescriptionfunction{i,1}(p)+a_{2,2}\uidescriptionfunction{i,2}(p)+
\frac{\oluithreadfunction{i,1}(p)^{a_{2,1}}\oluithreadfunction{i,2}(p)^{a_{2,2}}}{\oluithreadfunction{j,2}(p)}
d_p\left(\frac{\oluithreadfunction{j,2}}{\oluithreadfunction{i,1}^{a_{2,1}}\oluithreadfunction{i,2}^{a_{2,2}}}\right)\stdvectorfiledpletter_i(p).
$$
\end{proof}

Now we are ready to describe the sheaf $\giinv$ only using functions on $\PP^1$ and the notion of 
a sufficient system of $U_i$ (which uses only combinatorics of $\mathcal D$ and functions on $\PP^1$).
We will prove that it is isomorphic to another sheaf (denoted by $\gi$), which will be defined
using functions and vector fields on $\PP^1$ satisfying certain conditions. This is similar to the approach using transition matrices, 
but the sheaf we will define does not have to be locally free.

Namely, consider the following sheaf $\gi$. \label{g1intropage}
Let $V\subseteq \PP^1$ be an open subset. The space of sections $\Gamma (V,\gi)$ is the space of 
sequences of length $2 \numberoffixedopensets+1$
$$
(\uidescriptionfunction{1,1}, \uidescriptionfunction{1,2}, \ldots, \uidescriptionfunction{i,1}, \uidescriptionfunction{i,2}, 
\ldots, \uidescriptionfunction{\numberoffixedopensets,1},\uidescriptionfunction{\numberoffixedopensets,2},\stdvectorfiledpletter),
$$
where $\uidescriptionfunction{i,j}\in\Gamma (V_i\cap V,\OO_{\PP^1})$, $\stdvectorfiledpletter\in\Gamma (V,\Theta_{\PP^1})$
satisfy the following condition:
For every indices $i, i'$:
$$
\left(
\begin{array}{c}
\uidescriptionfunction{i',1}(p)\\
\uidescriptionfunction{i',2}(p)\\
\stdvectorfiledpletter(p)
\end{array}
\right)
=
\uilargetransition{i,i'}(p)
\left(
\begin{array}{c}
\uidescriptionfunction{i,1}(p)\\
\uidescriptionfunction{i,2}(p)\\
\stdvectorfiledpletter(p)
\end{array}
\right)
$$
\begin{proposition}\label{vfieldpushforward}
$\giinv$ is isomorphic to $\gi$. For an open set $V\subseteq \PP^1$, the isomorphism maps a vector field $\stdvectorfiledxletter$ defined on 
$\pi^{-1}(V)\cap U$ to the sequence 
$$
(\uidescriptionfunction{1,1}, \uidescriptionfunction{1,2}, \ldots, \uidescriptionfunction{i,1}, \uidescriptionfunction{i,2}, 
\ldots, \uidescriptionfunction{\numberoffixedopensets,1},\uidescriptionfunction{\numberoffixedopensets,2},\stdvectorfiledpletter),
$$
such that $(\uidescriptionfunction{i,1}, \uidescriptionfunction{i,2}, \stdvectorfiledpletter)$ is the $U_i$-description of $\stdvectorfiledxletter$.
\end{proposition}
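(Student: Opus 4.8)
The plan is to exhibit the stated assignment as a bijection on sections over every open $V\subseteq\PP^1$, compatible with restriction, hence as an isomorphism of sheaves of $\OO_{\PP^1}$-modules. First I would check the map is well defined. Given a homogeneous degree-$0$ vector field $\stdvectorfiledxletter$ on $\pi^{-1}(V)\cap U$, its restriction to each $\pi^{-1}(V)\cap U_i$ has the product shape required by Lemma \ref{uidescvfieldnoninv} (by Lemma \ref{uistructure}, with $V_i'=V_i\cap V$), so Corollary \ref{uidescvfield} produces functions $\uidescriptionfunction{i,1},\uidescriptionfunction{i,2}\in\Gamma(V_i\cap V,\OO_{\PP^1})$ together with a local vector field $\stdvectorfiledpletter_i\in\Gamma(V_i\cap V,\Theta_{\PP^1})$. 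By Lemma \ref{vfieldtransition} the fields $\stdvectorfiledpletter_i$ agree on every overlap $V_i\cap V_j\cap V$, and since $\{V_i\}$ covers $\PP^1$ (each ordinary and each special point lies in some $V_i$ by the definition of a sufficient system) they glue to a single $\stdvectorfiledpletter\in\Gamma(V,\Theta_{\PP^1})$. The same lemma shows that the resulting tuple satisfies the transition identity relating $(\uidescriptionfunction{i,1},\uidescriptionfunction{i,2},\stdvectorfiledpletter)$ and $(\uidescriptionfunction{i',1},\uidescriptionfunction{i',2},\stdvectorfiledpletter)$ through the matrix $\uilargetransition{i,i'}$, which is exactly the condition defining $\Gamma(V,\gi)$; thus the image lies in $\gi$, and compatibility with restriction is Remark \ref{uidescvfieldrestriction}.

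Injectivity is then immediate. By Lemma \ref{uidescvfieldnoninv} a degree-$0$ vector field on $\pi^{-1}(V)\cap U_i$ is uniquely determined by its $U_i$-description, and since the sets $U_i$ cover $U$, a field all of whose $U_i$-descriptions vanish must itself vanish.

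The substance of the argument is surjectivity. Given a tuple in $\Gamma(V,\gi)$, I would use the \emph{arbitrariness} clause of Corollary \ref{uidescvfield} to produce, for each $i$, a degree-$0$ vector field $\stdvectorfiledxletter_i$ on $\pi^{-1}(V)\cap U_i$ whose $U_i$-description is $(\uidescriptionfunction{i,1},\uidescriptionfunction{i,2},\stdvectorfiledpletter|_{V_i\cap V})$. These fields must be shown to agree on $\pi^{-1}(V)\cap U_i\cap U_j$, which by Lemma \ref{uijstructure} again carries the product shape to which the description lemmas apply. On this overlap the $U_j$-description of $\stdvectorfiledxletter_i$ equals, by Lemma \ref{vfieldtransition}, the result of applying $\uilargetransition{i,j}$ to its $U_i$-description $(\uidescriptionfunction{i,1},\uidescriptionfunction{i,2},\stdvectorfiledpletter)$, and the transition condition built into $\gi$ identifies this with $(\uidescriptionfunction{j,1},\uidescriptionfunction{j,2},\stdvectorfiledpletter)$, which is precisely the $U_j$-description of $\stdvectorfiledxletter_j$. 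Uniqueness (Lemma \ref{uidescvfieldnoninv}) then forces $\stdvectorfiledxletter_i=\stdvectorfiledxletter_j$ on the overlap, so the $\stdvectorfiledxletter_i$ glue to a global degree-$0$ field $\stdvectorfiledxletter$ on $\pi^{-1}(V)\cap U$ realizing the prescribed tuple.

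Finally I would observe that the bijection is $\OO_{\PP^1}$-linear, since multiplying $\stdvectorfiledxletter$ by a pulled-back function scales every $\uidescriptionfunction{i,j}$ and $\stdvectorfiledpletter$ by the same factor, matching the module structure on $\gi$; being a restriction-compatible $\OO_{\PP^1}$-linear bijection on all sections, the map is an isomorphism of sheaves. The main obstacle I anticipate is the gluing in the surjectivity step: one must verify that each pairwise intersection $\pi^{-1}(V)\cap U_i\cap U_j$ is of the exact product form demanded by Lemmas \ref{uidescvfieldnoninv} and \ref{uijstructure}, and that the cocycle-shaped matrices $\uilargetransition{i,j}$ translate the transition condition of $\gi$ literally into an equality of $U_j$-descriptions — it is here that Lemma \ref{vfieldtransition} carries essentially the whole weight of the proof.
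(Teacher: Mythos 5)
Your argument is correct and is essentially the paper's proof written out in full: the paper simply declares the proposition "a direct consequence of Lemma \ref{vfieldtransition}, Lemma \ref{uistructure}, and the definition of a pushforward of a sheaf," and your well-definedness/injectivity/surjectivity steps are exactly the content hiding behind that one line, resting on the same lemmas (plus Corollary \ref{uidescvfield} and Lemma \ref{uijstructure}, which the paper uses implicitly through the notion of a $U_i$-description). No gaps.
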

\begin{proof}
This is a direct consequence of Lemma \ref{vfieldtransition}, Lemma \ref{uistructure}, and the definition of a pushforward of a sheaf.
\end{proof}

The following three lemmas make it easier to construct sections of $\gi$ explicitly.
\begin{lemma}\label{rijregular}
All entries of $\uilargetransition{i,j}$ are regular at ordinary points $p$ such that $p\in V_i\cap V_j$.
\end{lemma}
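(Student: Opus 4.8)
The plan is to notice that almost every entry of $\uilargetransition{i,j}(p)$ is constant and hence regular for trivial reasons, so that only two entries require genuine attention. The bottom row is $(0,0,1)$, and the top-left $2\times 2$ block is the constant integer matrix $\uismalltransition{i,j}$, whose entries are the lattice pairings $\uidegree{i,1}^*(\uidegree{j,1})$, $\uidegree{i,2}^*(\uidegree{j,1})$, $\uidegree{i,1}^*(\uidegree{j,2})$, $\uidegree{i,2}^*(\uidegree{j,2})$; none of these depends on $p$, so they are regular at every point of $\PP^1$. Thus the lemma reduces to checking regularity of the two entries in the third column of the first two rows.

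First I would rewrite each of these two entries as a logarithmic differential. Setting $a_{1,1}=\uidegree{i,1}^*(\uidegree{j,1})$, $a_{1,2}=\uidegree{i,2}^*(\uidegree{j,1})$, and
$$
h=\frac{\oluithreadfunction{j,1}}{\oluithreadfunction{i,1}^{a_{1,1}}\oluithreadfunction{i,2}^{a_{1,2}}},
$$
the $(1,3)$ entry is exactly $\frac{1}{h}\,dh=\frac{dh}{h}$, since the scalar prefactor is precisely $1/h(p)$. Note that $\oluithreadfunction{i,1}^{a_{1,1}}\oluithreadfunction{i,2}^{a_{1,2}}$ has $M$-degree $a_{1,1}\uidegree{i,1}+a_{1,2}\uidegree{i,2}=\uidegree{j,1}$, the same as $\oluithreadfunction{j,1}$, so $h$ is an honest nonzero rational function on $\PP^1$ and $dh/h$ is a rational covector field. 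Similarly, with $a_{2,1}=\uidegree{i,1}^*(\uidegree{j,2})$ and $a_{2,2}=\uidegree{i,2}^*(\uidegree{j,2})$, the $(2,3)$ entry is the logarithmic differential $dh'/h'$ for $h'=\oluithreadfunction{j,2}/(\oluithreadfunction{i,1}^{a_{2,1}}\oluithreadfunction{i,2}^{a_{2,2}})$.

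The key step is the standard local fact that a logarithmic differential $dh/h$ is regular at $p$ if and only if $\ord_p(h)=0$: writing $h=u\,t^{m}$ with $t$ a local uniformizer, $u$ a local unit, and $m=\ord_p(h)$, one obtains $dh/h=du/u+m\,dt/t$, whose second summand has a pole at $p$ exactly when $m\ne 0$. So it suffices to show $\ord_p(h)=0$ and $\ord_p(h')=0$ whenever $p\in V_i\cap V_j$ is an ordinary point. Here I would combine two inputs: since $p\in V_i$ and $p\in V_j$, the defining conditions of $V_i$ and $V_j$ give $\ord_p(\oluithreadfunction{i,1})=-\mathcal D_p(\uidegree{i,1})$, $\ord_p(\oluithreadfunction{i,2})=-\mathcal D_p(\uidegree{i,2})$, and $\ord_p(\oluithreadfunction{j,1})=-\mathcal D_p(\uidegree{j,1})$; and since $p$ is ordinary, $\stdpolyhedronletter_p=\6$, so $\mathcal D_p(\chi)=\min_{a\in\6}\chi(a)=0$ for every $\chi\in\sck$. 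Hence all three orders vanish, and $\ord_p(h)=\ord_p(\oluithreadfunction{j,1})-a_{1,1}\ord_p(\oluithreadfunction{i,1})-a_{1,2}\ord_p(\oluithreadfunction{i,2})=0$; the identical argument yields $\ord_p(h')=0$. This makes both third-column entries regular at $p$, completing the proof.

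I do not expect a serious obstacle. The only mild subtleties are bookkeeping the degree identity $a_{1,1}\uidegree{i,1}+a_{1,2}\uidegree{i,2}=\uidegree{j,1}$, which guarantees $h$ is a genuine rational function (so that the logarithmic differential makes sense and $\ord_p$ is additive), and recalling that ordinariness of $p$ forces $\mathcal D_p\equiv 0$. Everything else is the routine valuation-theoretic behaviour of $dh/h$.
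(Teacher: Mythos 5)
Your proof is correct and follows essentially the same route as the paper's: the constant entries are dismissed immediately, the two non-constant entries are recognized as logarithmic derivatives, and regularity is deduced from the fact that the defining conditions of $V_i$ and $V_j$ force $\ord_p(\oluithreadfunction{i,1})=\ord_p(\oluithreadfunction{i,2})=\ord_p(\oluithreadfunction{j,1})=\ord_p(\oluithreadfunction{j,2})=0$ at an ordinary point, so the functions whose logarithmic derivatives appear are defined and nonvanishing at $p$. Your explicit local computation with a uniformizer and the remark that $\mathcal D_p\equiv 0$ at ordinary points merely spell out steps the paper leaves implicit.
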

\begin{proof}
For constant entries the claim is clear, and non-constant entries are logarithmic derivatives of functions
$$
\frac{\oluithreadfunction{j,1}}{\oluithreadfunction{i,1}^{\uidegree{i,1}^*(\uidegree{j,1})}\oluithreadfunction{i,2}^{\uidegree{i,2}^*(\uidegree{j,1})}}
\quad\text{and}\quad
\frac{\oluithreadfunction{j,2}}{\oluithreadfunction{i,1}^{\uidegree{i,1}^*(\uidegree{j,2})}\oluithreadfunction{i,2}^{\uidegree{i,2}^*(\uidegree{j,1})}}.
$$
If $p$ is an ordinary point and $p\in V_i\cap V_j$, then, by the definition of $V_i$ and of $V_j$,
$\ord_p\oluithreadfunction{i,1}=\ord_p\oluithreadfunction{i,2}=\ord_p\oluithreadfunction{j,1}=\ord_p\oluithreadfunction{j,2}=0$.
Hence, both functions
$$
\frac{\oluithreadfunction{j,1}}{\oluithreadfunction{i,1}^{\uidegree{i,1}^*(\uidegree{j,1})}\oluithreadfunction{i,2}^{\uidegree{i,2}^*(\uidegree{j,1})}}
\quad\text{and}\quad
\frac{\oluithreadfunction{j,2}}{\oluithreadfunction{i,1}^{\uidegree{i,1}^*(\uidegree{j,2})}\oluithreadfunction{i,2}^{\uidegree{i,2}^*(\uidegree{j,1})}}
$$
are defined at $p$ and do not vanish at $p$, so their logarithmic derivatives are regular at $p$.
\end{proof}

\begin{lemma}\label{logderpole}
Let $p$ be a special point, and let $i$ and $j$ be two indices such that 
$p\in V_i\cap V_j$, and $\uidegree{i,1}$ and $\uidegree{j,1}$ belong to 
the normal vertex cones of two different vertices of $\stdpolyhedronletter_p$.
Then each non-constant entry of $\uilargetransition{i,j}$ has pole of degree exactly 1 at $p$.
\end{lemma}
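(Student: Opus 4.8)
The plan is to reduce the statement to a computation of orders of vanishing at $p$. First I observe that the two non-constant entries of $\uilargetransition{i,j}$ (the $(1,3)$ and $(2,3)$ entries) are exactly the logarithmic derivatives $dF_1/F_1$ and $dF_2/F_2$ of the rational functions
$$
F_1=\frac{\oluithreadfunction{j,1}}{\oluithreadfunction{i,1}^{\uidegree{i,1}^*(\uidegree{j,1})}\oluithreadfunction{i,2}^{\uidegree{i,2}^*(\uidegree{j,1})}},
\qquad
F_2=\frac{\oluithreadfunction{j,2}}{\oluithreadfunction{i,1}^{\uidegree{i,1}^*(\uidegree{j,2})}\oluithreadfunction{i,2}^{\uidegree{i,2}^*(\uidegree{j,2})}}.
$$
For any nonzero rational function $F$ on $\PP^1$, choosing a local coordinate $u$ at $p$ and writing $F=u^k v$ with $v$ a unit and $k=\ord_p(F)$, one has $dF/F=k\,du/u+dv/v$ with $dv/v$ regular at $p$; hence $dF/F$ has a pole of degree exactly $1$ at $p$ if and only if $\ord_p(F)\ne 0$. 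So the whole statement reduces to showing $\ord_p(F_1)\ne 0$ and $\ord_p(F_2)\ne 0$, and I will in fact prove both orders are strictly positive.

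Next I would compute these orders. Since $p\in V_i\cap V_j$, the defining conditions of $V_i$ and $V_j$ give $\ord_p(\oluithreadfunction{i,1})=-\mathcal D_p(\uidegree{i,1})$, $\ord_p(\oluithreadfunction{i,2})=-\mathcal D_p(\uidegree{i,2})$, $\ord_p(\oluithreadfunction{j,1})=-\mathcal D_p(\uidegree{j,1})$, and $\ord_p(\oluithreadfunction{j,2})=-\mathcal D_p(\uidegree{j,2})$. Putting $a_{1,1}=\uidegree{i,1}^*(\uidegree{j,1})$ and $a_{1,2}=\uidegree{i,2}^*(\uidegree{j,1})$, duality of bases gives $\uidegree{j,1}=a_{1,1}\uidegree{i,1}+a_{1,2}\uidegree{i,2}$, and therefore
$$
\ord_p(F_1)=a_{1,1}\mathcal D_p(\uidegree{i,1})+a_{1,2}\mathcal D_p(\uidegree{i,2})-\mathcal D_p(\uidegree{j,1}).
$$
By the definition of $V_i$ there is a single vertex $\indexedvertexpt{p}{s}$ with $\uidegree{i,1},\uidegree{i,2}\in\normalvertexcone{\indexedvertexpt{p}{s}}{\stdpolyhedronletter_p}$, and on this cone $\mathcal D_p$ is the linear function $\chi\mapsto\chi(\indexedvertexpt{p}{s})$ (the linearity used in Lemma \ref{convexity}); hence $a_{1,1}\mathcal D_p(\uidegree{i,1})+a_{1,2}\mathcal D_p(\uidegree{i,2})=\uidegree{j,1}(\indexedvertexpt{p}{s})$. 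Denoting by $\indexedvertexpt{p}{s'}$ the vertex with $\uidegree{j,1}\in\normalvertexcone{\indexedvertexpt{p}{s'}}{\stdpolyhedronletter_p}$, so that $\mathcal D_p(\uidegree{j,1})=\uidegree{j,1}(\indexedvertexpt{p}{s'})$, I obtain $\ord_p(F_1)=\uidegree{j,1}(\indexedvertexpt{p}{s}-\indexedvertexpt{p}{s'})$, and the identical computation gives $\ord_p(F_2)=\uidegree{j,2}(\indexedvertexpt{p}{s}-\indexedvertexpt{p}{s'})$.

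The main obstacle is the final step, proving these differences are strictly positive — this is exactly where the hypothesis and the combinatorics of the normal fan must be handled carefully. Since $\indexedvertexpt{p}{s'}$ attains the minimum of $\uidegree{j,1}$ on $\stdpolyhedronletter_p$, we get $\ord_p(F_1)\ge 0$, with equality precisely when $\indexedvertexpt{p}{s}$ also attains that minimum, i.e. $\uidegree{j,1}\in\normalvertexcone{\indexedvertexpt{p}{s}}{\stdpolyhedronletter_p}$. To rule this out I use that the conditions defining $V_j$ forbid $\uidegree{j,1}$ from lying on the normal ray of a finite edge: for $p$ special either $\uidegree{j,1}$ lies in the interior of $\sck$, hence in the interior of a single normal vertex cone, or $\uidegree{j,1}\in\partial\sck$, hence only on the outer ray bounding the cone of the first (resp. last) vertex. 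In both cases $\uidegree{j,1}$ belongs to exactly one normal vertex cone, namely $\normalvertexcone{\indexedvertexpt{p}{s'}}{\stdpolyhedronletter_p}$, and likewise $\uidegree{i,1}$ belongs only to $\normalvertexcone{\indexedvertexpt{p}{s}}{\stdpolyhedronletter_p}$; the hypothesis that these two lie in cones of \emph{different} vertices thus forces $s\ne s'$, so $\uidegree{j,1}\notin\normalvertexcone{\indexedvertexpt{p}{s}}{\stdpolyhedronletter_p}$ and $\ord_p(F_1)>0$. For $F_2$ the argument is cleaner, since $\uidegree{j,2}$ lies in the \emph{interior} of $\normalvertexcone{\indexedvertexpt{p}{s'}}{\stdpolyhedronletter_p}$ and hence attains its minimum on $\stdpolyhedronletter_p$ only at $\indexedvertexpt{p}{s'}$; as $s\ne s'$ this yields $\uidegree{j,2}(\indexedvertexpt{p}{s})>\uidegree{j,2}(\indexedvertexpt{p}{s'})$ and $\ord_p(F_2)>0$. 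With both orders nonzero, both non-constant entries have a pole of degree exactly $1$ at $p$. The delicate bookkeeping — making sure that "different vertices'' really produces strict inequalities rather than a degenerate coincidence of minima on a shared edge — is the heart of the argument.
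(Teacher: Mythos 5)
Your proof is correct and follows essentially the same route as the paper's: both reduce the claim to showing that $\ord_p$ of the function inside each logarithmic derivative is nonzero (in fact strictly positive), using $\ord_p(\oluithreadfunction{i,l})=-\mathcal D_p(\uidegree{i,l})$ and the linearity of $\mathcal D_p$ on the normal cone containing $\uidegree{i,1},\uidegree{i,2}$. The only cosmetic difference is that the paper deduces the strict inequality $\mathcal D_p(\uidegree{j,1})<\uidegree{i,1}^*(\uidegree{j,1})\mathcal D_p(\uidegree{i,1})+\uidegree{i,2}^*(\uidegree{j,1})\mathcal D_p(\uidegree{i,2})$ from the failure of linearity of the convex function $\mathcal D_p$ on the union of the two normal subcones, whereas you unfold the same inequality by evaluating $\uidegree{j,1}$ and $\uidegree{j,2}$ at the two vertices and using uniqueness of the minimizing vertex.
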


\begin{proof}
We know that each of the degrees $\uidegree{i,1}$ and $\uidegree{i,2}$ belongs to 
the normal subcone of exactly one vertex of $\stdpolyhedronletter_p$, and this vertex is the same one
for $\uidegree{i,1}$ and for $\uidegree{i,2}$. 
$\uidegree{j,1}$ belong to 
the normal subcone of a different vertex of $\stdpolyhedronletter_p$,
which is also unique.
Since $\mathcal D_p(\cdot)$ is a convex function, it cannot be linear on the union of these two 
subcones, and $\mathcal D_p(\uidegree{j,1})<\uidegree{i,1}^*(\uidegree{j,1})\mathcal D_p(\uidegree{i,1})+\uidegree{i,2}^*(\uidegree{j,1})\mathcal D_p(\uidegree{i,2})$. Therefore, 
$$
\ord_p\left(\frac{\oluithreadfunction{j,1}}{\oluithreadfunction{i,1}^{\uidegree{i,1}^*(\uidegree{j,1})}\oluithreadfunction{i,2}^{\uidegree{i,2}^*(\uidegree{j,1})}}\right)=
-\mathcal D_p(\uidegree{j,1})+\uidegree{i,1}^*(\uidegree{j,1})\mathcal D_p(\uidegree{i,1})+\uidegree{i,2}^*(\uidegree{j,1})\mathcal D_p(\uidegree{i,2})>0,
$$
and, by a property of logarithmic derivative,
$$
\ord_p\left(\frac{\oluithreadfunction{i,1}^{\uidegree{i,1}^*(\uidegree{j,1})}\oluithreadfunction{i,2}^{\uidegree{i,2}^*(\uidegree{j,1})}}{\oluithreadfunction{j,1}}
d\left(\frac{\oluithreadfunction{j,1}}{\oluithreadfunction{i,1}^{\uidegree{i,1}^*(\uidegree{j,1})}\oluithreadfunction{i,2}^{\uidegree{i,2}^*(\uidegree{j,1})}}\right)
\right)=-1.
$$
The argument for the second non-constant entry of $\uilargetransition{i,j}$ is similar.
\end{proof}

\begin{lemma}\label{rijcocycle}
For the matrices $\uismalltransition{i,j}$ and $\uilargetransition{i,j}$ defined above, one has 
$\uismalltransition{i,k}=\uismalltransition{j,k}\uismalltransition{i,j}$ and $\uilargetransition{i,k}=\uilargetransition{j,k}\uilargetransition{i,j}$ for every triple of indices $(i,j,k)$.
\end{lemma}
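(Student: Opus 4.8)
The plan is to exploit the block-triangular shape of $\uilargetransition{i,j}$. Writing it as
$$
\uilargetransition{i,j}=\left(\begin{array}{cc} \uismalltransition{i,j} & \gamma_{i,j}\\ 0 & 1\end{array}\right),
$$
where $\gamma_{i,j}$ denotes the column of rational covector fields occupying the third column, the product $\uilargetransition{j,k}\uilargetransition{i,j}$ again has this shape, with upper-left block $\uismalltransition{j,k}\uismalltransition{i,j}$ and top-right column $\uismalltransition{j,k}\gamma_{i,j}+\gamma_{j,k}$ (the scalar matrix $\uismalltransition{j,k}$ acting on the column of $1$-forms by $\CC$-linear combination). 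Thus the lemma splits into the scalar identity $\uismalltransition{i,k}=\uismalltransition{j,k}\uismalltransition{i,j}$ and the column identity $\gamma_{i,k}=\uismalltransition{j,k}\gamma_{i,j}+\gamma_{j,k}$, which I would treat separately.

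For the scalar block, recall that $(\uismalltransition{i,j})_{a,b}=\uidegree{i,b}^*(\uidegree{j,a})$, so the $a$-th row of $\uismalltransition{i,j}$ records the coordinates of $\uidegree{j,a}$ in the basis $\uidegree{i,1},\uidegree{i,2}$. Expanding $\uidegree{k,a}=\sum_b\uidegree{j,b}^*(\uidegree{k,a})\,\uidegree{j,b}$ in the $j$-basis, applying the functional $\uidegree{i,c}^*$, and using its linearity gives $\uidegree{i,c}^*(\uidegree{k,a})=\sum_b\uidegree{j,b}^*(\uidegree{k,a})\,\uidegree{i,c}^*(\uidegree{j,b})$, which is precisely $(\uismalltransition{j,k}\uismalltransition{i,j})_{a,c}=(\uismalltransition{i,k})_{a,c}$. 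Hence the scalar cocycle is just transitivity of change of basis in $M$.

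For the column I would first recognize the nonconstant entries as logarithmic derivatives. Since $\tfrac{w}{u}\,d(u/w)=\tfrac{du}{u}-\tfrac{dw}{w}$, the top entry of $\gamma_{i,j}$ equals $\tfrac{d\oluithreadfunction{j,1}}{\oluithreadfunction{j,1}}-\uidegree{i,1}^*(\uidegree{j,1})\tfrac{d\oluithreadfunction{i,1}}{\oluithreadfunction{i,1}}-\uidegree{i,2}^*(\uidegree{j,1})\tfrac{d\oluithreadfunction{i,2}}{\oluithreadfunction{i,2}}$, and likewise for the second entry with $\uidegree{j,2}$. Writing $\omega_{i,s}=d\oluithreadfunction{i,s}/\oluithreadfunction{i,s}$ and introducing the linear map $L_i\colon M\to\{\text{rational covector fields on }\PP^1\}$, $L_i(\chi)=\uidegree{i,1}^*(\chi)\omega_{i,1}+\uidegree{i,2}^*(\chi)\omega_{i,2}$, additivity of the logarithmic derivative shows $L_i(\chi)$ is the logarithmic derivative of the monomial $\oluithreadfunction{i,1}^{\uidegree{i,1}^*(\chi)}\oluithreadfunction{i,2}^{\uidegree{i,2}^*(\chi)}$ of degree $\chi$; in particular $\omega_{j,a}=L_j(\uidegree{j,a})$. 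Therefore the $a$-th entry of $\gamma_{i,j}$ is $(L_j-L_i)(\uidegree{j,a})$, i.e. $\gamma_{i,j}$ is the coordinate vector, in the $j$-basis, of the functional $L_j-L_i$.

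With this reformulation the column cocycle is immediate. From the telescoping identity $L_k-L_i=(L_k-L_j)+(L_j-L_i)$ and linearity one gets $(\gamma_{i,k})_a=(L_k-L_i)(\uidegree{k,a})=(\gamma_{j,k})_a+(L_j-L_i)(\uidegree{k,a})$; expanding $\uidegree{k,a}=\sum_b\uidegree{j,b}^*(\uidegree{k,a})\,\uidegree{j,b}$ and using linearity of $L_j-L_i$ identifies $(L_j-L_i)(\uidegree{k,a})$ with $\sum_b(\uismalltransition{j,k})_{a,b}(\gamma_{i,j})_b=(\uismalltransition{j,k}\gamma_{i,j})_a$, which is exactly the required identity. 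I do not expect a genuine obstacle here: both blocks are governed by the single linear structure on $M$ together with the additivity of $d\log$, which converts the multiplicative monomials into the linear combinations above. The only point demanding care is bookkeeping — the rows of $\uismalltransition{i,j}$ encode $j$-vectors in $i$-coordinates, so one must consistently evaluate each functional on the correct basis and respect the "expand in $j$, then in $i$" order.
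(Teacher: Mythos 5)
Your proof is correct and follows essentially the same route as the paper: both reduce the matrix identity to the scalar cocycle for $\uismalltransition{i,j}$ plus a column identity handled by the additivity of logarithmic derivatives, with your maps $L_i$ being a clean repackaging of the paper's direct manipulation of the columns $\bigl(d\oluithreadfunction{\cdot,1}/\oluithreadfunction{\cdot,1},\, d\oluithreadfunction{\cdot,2}/\oluithreadfunction{\cdot,2}\bigr)$. The only difference of substance is that you actually write out the change-of-basis argument for $\uismalltransition{i,k}=\uismalltransition{j,k}\uismalltransition{i,j}$, which the paper omits.
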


\begin{proof}
The equality $\uismalltransition{i,k}=\uismalltransition{j,k}\uismalltransition{i,j}$
can be proved by a direct computation using linear algebra. We omit this computation.

Now, to prove that $\uilargetransition{i,k}=\uilargetransition{j,k}\uilargetransition{i,j}$, it is sufficient to check that 
\begin{multline*}
\left(\begin{array}{c}
\frac{\oluithreadfunction{i,1}^{\uidegree{i,1}^*(\uidegree{k,1})}\oluithreadfunction{i,2}^{\uidegree{i,2}^*(\uidegree{k,1})}}{\oluithreadfunction{k,1}} 
d\left(\frac{\oluithreadfunction{k,1}}{\oluithreadfunction{i,1}^{\uidegree{i,1}^*(\uidegree{k,1})}\oluithreadfunction{i,2}^{\uidegree{i,2}^*(\uidegree{k,1})}}\right) \\
\frac{\oluithreadfunction{i,1}^{\uidegree{i,1}^*(\uidegree{k,2})}\oluithreadfunction{i,2}^{\uidegree{i,2}^*(\uidegree{k,2})}}{\oluithreadfunction{k,2}} 
d\left(\frac{\oluithreadfunction{k,2}}{\oluithreadfunction{i,1}^{\uidegree{i,1}^*(\uidegree{k,2})}\oluithreadfunction{i,2}^{\uidegree{i,2}^*(\uidegree{k,2})}}\right)
\end{array}\right)=\\
\uismalltransition{j,k}
\left(\begin{array}{c}
\frac{\oluithreadfunction{i,1}^{\uidegree{i,1}^*(\uidegree{j,1})}\oluithreadfunction{i,2}^{\uidegree{i,2}^*(\uidegree{j,1})}}{\oluithreadfunction{j,1}} 
d\left(\frac{\oluithreadfunction{j,1}}{\oluithreadfunction{i,1}^{\uidegree{i,1}^*(\uidegree{j,1})}\oluithreadfunction{i,2}^{\uidegree{i,2}^*(\uidegree{j,1})}}\right) \\
\frac{\oluithreadfunction{i,1}^{\uidegree{i,1}^*(\uidegree{j,2})}\oluithreadfunction{i,2}^{\uidegree{i,2}^*(\uidegree{j,2})}}{\oluithreadfunction{j,2}} 
d\left(\frac{\oluithreadfunction{j,2}}{\oluithreadfunction{i,1}^{\uidegree{i,1}^*(\uidegree{j,2})}\oluithreadfunction{i,2}^{\uidegree{i,2}^*(\uidegree{j,2})}}\right)
\end{array}\right)\\
+
\left(\begin{array}{c}
\frac{\oluithreadfunction{j,1}^{\uidegree{j,1}^*(\uidegree{k,1})}\oluithreadfunction{j,2}^{\uidegree{j,2}^*(\uidegree{k,1})}}{\oluithreadfunction{k,1}} 
d\left(\frac{\oluithreadfunction{k,1}}{\oluithreadfunction{j,1}^{\uidegree{j,1}^*(\uidegree{k,1})}\oluithreadfunction{j,2}^{\uidegree{j,2}^*(\uidegree{k,1})}}\right) \\
\frac{\oluithreadfunction{j,1}^{\uidegree{j,1}^*(\uidegree{k,2})}\oluithreadfunction{j,2}^{\uidegree{j,2}^*(\uidegree{k,2})}}{\oluithreadfunction{k,2}} 
d\left(\frac{\oluithreadfunction{k,2}}{\oluithreadfunction{j,1}^{\uidegree{j,1}^*(\uidegree{k,2})}\oluithreadfunction{j,2}^{\uidegree{j,2}^*(\uidegree{k,2})}}\right)
\end{array}\right).
\end{multline*}
By a property of logarithmic derivatives, if $f_1,f_2$ are (rational) functions, 
$$
\frac{d(f_1^{a_1}f_2^{a_2})}{f_1^{a_1}f_2^{a_2}}=
a_1\frac{df_1}{f_1}+a_2\frac{df_2}{f_2}.
$$
Hence, the left-hand side of the equality we are proving can be written as
$$
\left(\begin{array}{c}
\frac{d\oluithreadfunction{k,1}}{\oluithreadfunction{k,1}}
-\uidegree{i,1}^*(\uidegree{k,1})\frac{d\oluithreadfunction{i,1}}{\oluithreadfunction{i,1}}
-\uidegree{i,2}^*(\uidegree{k,1})\frac{d\oluithreadfunction{i,2}}{\oluithreadfunction{i,2}} \\
\frac{d\oluithreadfunction{k,2}}{\oluithreadfunction{k,2}}
-\uidegree{i,1}^*(\uidegree{k,2})\frac{d\oluithreadfunction{i,1}}{\oluithreadfunction{i,1}}
-\uidegree{i,2}^*(\uidegree{k,2})\frac{d\oluithreadfunction{i,2}}{\oluithreadfunction{i,2}}
\end{array}\right)=
\left(\begin{array}{c}
\frac{d\oluithreadfunction{k,1}}{\oluithreadfunction{k,1}}\\
\frac{d\oluithreadfunction{k,2}}{\oluithreadfunction{k,2}}
\end{array}\right)
-\uismalltransition{i,k}
\left(\begin{array}{c}
\frac{d\oluithreadfunction{i,1}}{\oluithreadfunction{i,1}}\\
\frac{d\oluithreadfunction{i,2}}{\oluithreadfunction{i,2}}
\end{array}\right).
$$
Similarly, the right-hand side can be written as
\begin{multline*}
\uismalltransition{j,k}\Big(\left(\begin{array}{c}
\frac{d\oluithreadfunction{j,1}}{\oluithreadfunction{j,1}}\\
\frac{d\oluithreadfunction{j,2}}{\oluithreadfunction{j,2}}
\end{array}\right)
-\uismalltransition{i,j}
\left(\begin{array}{c}
\frac{d\oluithreadfunction{i,1}}{\oluithreadfunction{i,1}}\\
\frac{d\oluithreadfunction{i,2}}{\oluithreadfunction{i,2}}
\end{array}\right)
\Big)+
\left(\begin{array}{c}
\frac{d\oluithreadfunction{k,1}}{\oluithreadfunction{k,1}}\\
\frac{d\oluithreadfunction{k,2}}{\oluithreadfunction{k,2}}
\end{array}\right)
-\uismalltransition{j,k}
\left(\begin{array}{c}
\frac{d\oluithreadfunction{j,1}}{\oluithreadfunction{j,1}}\\
\frac{d\oluithreadfunction{j,2}}{\oluithreadfunction{j,2}}
\end{array}\right)=\\
\left(\begin{array}{c}
\frac{d\oluithreadfunction{k,1}}{\oluithreadfunction{k,1}}\\
\frac{d\oluithreadfunction{k,2}}{\oluithreadfunction{k,2}}
\end{array}\right)
-\uismalltransition{j,k}\uismalltransition{i,j}
\left(\begin{array}{c}
\frac{d\oluithreadfunction{i,1}}{\oluithreadfunction{i,1}}\\
\frac{d\oluithreadfunction{i,2}}{\oluithreadfunction{i,2}}
\end{array}\right).
\end{multline*}
By taking into account that $\uismalltransition{i,k}=\uismalltransition{j,k}\uismalltransition{i,j}$, we obtain the desired equality.
\end{proof}

\subsection{Computation of $\givinv$}
Recall that we have denoted the graded component of $R^1(\pi|_U)_*\Theta_X$ of degree 0 by 
$\givinv$.
Now we are going to compute $\givinv$ using Proposition \ref{computederived}. We can use $\{U_i\}$ as an affine 
covering of $U$. We have to consider a complex of sheaves on $U$ that we temporarily denote by $\mathcal F_\bullet$.
For an open subset $U'\subseteq U$, $\Gamma (U',\mathcal F_0)$ consists of sequences $(\stdvectorfiledxletter_1,\ldots, \stdvectorfiledxletter_{\numberoffixedopensets})$, where $\stdvectorfiledxletter_i$ is a 
vector field on $U_i\cap U'$, $\Gamma (U',\mathcal F_1)$ consists of sequences $(\stdvectorfiledxletter_{i,j})_{1\le i< j\le \numberoffixedopensets}$, 
where $\stdvectorfiledxletter_{i,j}$ is a vector field on $U_i\cap U_j\cap U'$, and $\Gamma (U',\mathcal F_2)$
consists of sequences $(\stdvectorfiledxletter_{i,j,k})_{1\le i<j<k\le \numberoffixedopensets}$, where $\stdvectorfiledxletter_{i,j,k}$ is a vector field on $U_i\cap U_j\cap U_k\cap U'$.
Denote the graded components of degree 0 of 
the pushforwards of these sheaves by $\giiinv$, $\giipinv$, $\giippinv$, respectively.
Using 
Corollary \ref{uidescvfield}
we get the following description of these sheaves:

Consider the following sheaves $\gii$, $\giip$, and $\giipp$. For an open subset $V\subseteq \PP^1$, $\Gamma (V,\gii)$ consists of sequences 
$$
(\uidescriptionfunctiondiff 11, \uidescriptionfunctiondiff 12,\stdvectorfiledpletter[1], \ldots, 
\uidescriptionfunctiondiff i1, \uidescriptionfunctiondiff i2, \stdvectorfiledpletter[i], \ldots, 
\uidescriptionfunctiondiff{\numberoffixedopensets}{1},\uidescriptionfunctiondiff{\numberoffixedopensets}{2},\stdvectorfiledpletter[\numberoffixedopensets]),
$$
where $\uidescriptionfunctiondiff{i}{j}\in\Gamma (V_i\cap V,\OO_{\PP^1})$, $\stdvectorfiledpletter[i]\in\Gamma (V_i\cap V,\Theta_{\PP^1})$.
Then $\gii$ is isomorphic to $\giiinv$, and the isomorphism maps a sequence of $\numberoffixedopensets$ vector fields 
$(\stdvectorfiledxletter[1],\ldots, \stdvectorfiledxletter[\numberoffixedopensets])$
to the sequence
$$
(\uidescriptionfunctiondiff 11, \uidescriptionfunctiondiff 12,\stdvectorfiledpletter[1], \ldots, 
\uidescriptionfunctiondiff i1, \uidescriptionfunctiondiff i2, \stdvectorfiledpletter[i], \ldots, 
\uidescriptionfunctiondiff{\numberoffixedopensets}{1},\uidescriptionfunctiondiff{\numberoffixedopensets}{2},\stdvectorfiledpletter[\numberoffixedopensets]),
$$
where $\uidescriptionfunctiondiff i1, \uidescriptionfunctiondiff i2, \stdvectorfiledpletter[i]$ form the $U_i$-description of $\stdvectorfiledxletter[i]$.

$\Gamma (V,\giip)$ consists of sequences $(\uidescriptionfunctiondiff{i,j}{1}, \uidescriptionfunctiondiff{i,j}{2}, \stdvectorfiledpletter[i,j])_{1\le i< j\le \numberoffixedopensets}$,
where $\uidescriptionfunctiondiff{i,j}{1}, \uidescriptionfunctiondiff{i,j}{2} \in\Gamma (V_i\cap V_j\cap V,\OO_{\PP^1})$, $\stdvectorfiledpletter[i,j]\in\Gamma (V_i\cap V_j\cap V,\Theta_{\PP^1})$.
Similarly, $\giip$ is isomorphic to $\giipinv$, and the isomorphism maps a sequence 
$(\stdvectorfiledxletter[i,j])_{1\le i< j\le \numberoffixedopensets}$
of vector fields on open subsets of $U\cap \pi^{-1}(V)$ to the sequence 
$(\uidescriptionfunctiondiff{i,j}{1}, \uidescriptionfunctiondiff{i,j}{2}, \stdvectorfiledpletter[i,j])_{1\le i< j\le \numberoffixedopensets}$,
where
$\uidescriptionfunctiondiff{i,j}{1}$, $\uidescriptionfunctiondiff{i,j}{2}$ and 
$\stdvectorfiledpletter[i,j]$ form the $U_i$-description of a vector field defined on $U_i\cap U_j\cap \pi^{-1}(V)$.
(In fact, at this point we can choose arbitrarily whether this is the $U_i$-description or the $U_j$-description of 
$\stdvectorfiledxletter[i,j]$, and we choose that this is the $U_i$-description, and \textbf{not} the $U_j$-description.)

Finally, $\Gamma (V,\giipp)$ consists of sequences 
$(\uidescriptionfunctiondiff{i,j,k}{1}, \uidescriptionfunctiondiff{i,j,k}{2}, \stdvectorfiledpletter[i,j,k])_{1\le i< j\le \numberoffixedopensets}$,
where 
$$
\uidescriptionfunctiondiff{i,j,k}{1},\uidescriptionfunctiondiff{i,j,k}{2}\in\Gamma (V_i\cap V_j\cap V_k\cap V,\OO_{\PP^1}),\quad
\stdvectorfiledpletter[i,j,k]\in\Gamma (V_i\cap V_j\cap V_k\cap V,\Theta_{\PP^1}).
$$
The isomorphism between $\giippinv$ and $\giipp$ is constructed similarly, and here we again say (we choose) that 
$\uidescriptionfunctiondiff{i,j,k}{1},\uidescriptionfunctiondiff{i,j,k}{2}, \stdvectorfiledpletter[i,j,k]$ is the $U_i$-description of a vector field on 
$U_i\cap U_j\cap U_k\cap \pi^{-1}(V)$, not its $U_j$- or $U_k$-description.

Let us compute the kernel $\ker(\giip\to\giipp)$. Denote it by $\giii$. A kernel 
of a sheaf map can be computed on each open subset independently, and the map here comes from the standard 
Cech map $\mathcal F_1\to \mathcal F_2$
via the pushforward and the isomorphisms $\giipinv\cong\giip$ and $\giippinv\cong \giipp$ 
defined above. Summarizing these definitions (and choices between $U_i$-descriptions made there), we get the following 
formula for the map $\giip\to\giipp$,
where we have to calculate a $U_i$-description from a $U_j$-description once:
$$
\left(
\begin{array}{c}
\uidescriptionfunctiondiff{i,j,k}{1}(p)\\
\uidescriptionfunctiondiff{i,j,k}{2}(p)\\
\stdvectorfiledpletter[i,j,k](p)
\end{array}
\right)
=
\left(
\begin{array}{c}
\uidescriptionfunctiondiff{i,j}{1}(p)\\
\uidescriptionfunctiondiff{i,j}{2}(p)\\
\stdvectorfiledpletter[i,j](p)
\end{array}
\right)
+\uilargetransition{j,i}(p)
\left(
\begin{array}{c}
\uidescriptionfunctiondiff{j,k}{1}(p)\\
\uidescriptionfunctiondiff{j,k}{2}(p)\\
\stdvectorfiledpletter[j,k](p)
\end{array}
\right)
-\left(
\begin{array}{c}
\uidescriptionfunctiondiff{i,k}{1}(p)\\
\uidescriptionfunctiondiff{i,k}{2}(p)\\
\stdvectorfiledpletter[i,k](p)
\end{array}
\right).
$$
So we get the following description for $\giii$. The space of sections of $\giii$ over an 
open subset $V\subseteq \PP^1$ is the space of sequences of length $3 \numberoffixedopensets(\numberoffixedopensets-1)/2$ of the form
$(\uidescriptionfunctiondiff{i,j}{1}, \uidescriptionfunctiondiff{i,j}{2}, \stdvectorfiledpletter[i,j])_{1\le i<j\le \numberoffixedopensets}$, where 
$\uidescriptionfunctiondiff{i,j}{k}\in\Gamma (V\cap V_i\cap V_j,\OO_{\PP^1})$ and 
$\stdvectorfiledpletter[i,j]\in\Gamma (V\cap V_i\cap V_j,\Theta_{\PP^1})$
satisfy the following condition:
For every indices $i<j<k$:
$$
\left(
\begin{array}{c}
\uidescriptionfunctiondiff{i,j}{1}(p)\\
\uidescriptionfunctiondiff{i,j}{2}(p)\\
\stdvectorfiledpletter[i,j](p)
\end{array}
\right)
+\uilargetransition{j,i}(p)
\left(
\begin{array}{c}
\uidescriptionfunctiondiff{j,k}{1}(p)\\
\uidescriptionfunctiondiff{j,k}{2}(p)\\
\stdvectorfiledpletter[j,k](p)
\end{array}
\right)
-\left(
\begin{array}{c}
\uidescriptionfunctiondiff{i,k}{1}(p)\\
\uidescriptionfunctiondiff{i,k}{2}(p)\\
\stdvectorfiledpletter[i,k](p)
\end{array}
\right)=0.
$$
Finally, by Proposition \ref{computederived}, $\givinv$ is isomorphic to\label{g4intropage} 
$\giv=\coker(\gii\to\giii)$, where the map $\gii\to\giii$ can be written as follows:
$$
\left(
\begin{array}{c}
\uidescriptionfunctiondiff{i,j}{1}(p)\\
\uidescriptionfunctiondiff{i,j}{2}(p)\\
\stdvectorfiledpletter[i,j](p)
\end{array}
\right)
=
\left(
\begin{array}{c}
\uidescriptionfunctiondiff{i}{1}(p)\\
\uidescriptionfunctiondiff{i}{2}(p)\\
\stdvectorfiledpletter[i](p)
\end{array}
\right)
-
\uilargetransition{j,i}(p)
\left(
\begin{array}{c}
\uidescriptionfunctiondiff{j}{1}(p)\\
\uidescriptionfunctiondiff{j}{2}(p)\\
\stdvectorfiledpletter[j](p)
\end{array}
\right).
$$

\subsection{Computation of $\gvinv$}
The sheaves $\gvinvl{\chi}$ can be computed similarly to $\giinv$. 
We start with the following Lemma.
\begin{lemma}\label{uidescfunct}
Let $V_i'\subseteq V_i$ be an open subset, $L'\subseteq L$ be an open subset 
that can be equal $\CC$ or $(\CC\setminus 0)$, $U_i'=V_i'\times(\CC\setminus 0)\times L'\subseteq U_i$.
A homogeneous function of degree $\chi\in M$
on $U_i'$ 
is uniquely determined by 
its values at canonical points in all fibers $\pi^{-1}(t_0)$ (for $t_0\in V_i'$) with 
respect to $U_i$. 
\begin{enumerate}
\item If $L'=\CC\setminus 0$ or $\uidegree{i,2}^*(\chi) \ge 0$, these values can form an arbitrary function depending algebraically 
on $p\in V_i'$.
\item If $L'=\CC$ and $\uidegree{i,2}^*(\chi) < 0$, these values must vanish. This is only possible if $\chi\notin\sck$.
\end{enumerate}
\end{lemma}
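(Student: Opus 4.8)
The plan is to mirror the proof of Lemma~\ref{uidescvfieldnoninv}, replacing the transformation rule for vector fields by the simpler homogeneity rule $f(\tau x)=\chi(\tau)f(x)$ for a function. First I would use the coordinates $(t_0,t_1,t_2)$ on $U_i$ provided by Lemma~\ref{uistructure}, where $t_0=\pi$, $t_1=\wtuithreadfunction{i,1}$, $t_2=\wtuithreadfunction{i,2}$, recalling that $\wtuithreadfunction{i,1}$ (resp.\ $\wtuithreadfunction{i,2}$) is homogeneous of degree $\uidegree{i,1}$ (resp.\ $\uidegree{i,2}$). For $(s_1,s_2)\in(\CC\setminus0)^2$ let $\tau\in T$ be the unique element with $\uidegree{i,1}(\tau)=s_1$, $\uidegree{i,2}(\tau)=s_2$; then $\tau(t_0,1,1)=(t_0,s_1,s_2)$. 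Since $\chi=\uidegree{i,1}^*(\chi)\,\uidegree{i,1}+\uidegree{i,2}^*(\chi)\,\uidegree{i,2}$, we have $\chi(\tau)=s_1^{\uidegree{i,1}^*(\chi)}s_2^{\uidegree{i,2}^*(\chi)}$, so homogeneity of a degree-$\chi$ function $f$ yields, on the dense open subset $V_i'\times(\CC\setminus0)\times(\CC\setminus0)\subseteq U_i'$,
$$
f(t_0,t_1,t_2)=t_1^{\uidegree{i,1}^*(\chi)}\,t_2^{\uidegree{i,2}^*(\chi)}\,f_0(t_0),\qquad\text{where }f_0(t_0):=f(t_0,1,1).
$$

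Since $U_i'=V_i'\times(\CC\setminus0)\times L'$ is irreducible, a regular function on it is determined by its restriction to any nonempty open subset, which immediately gives uniqueness: $f$ is recovered from the values $f_0$ at the canonical points. For existence and the two cases I would analyze when the monomial expression above extends to a regular function on all of $U_i'$. The factor $t_1^{\uidegree{i,1}^*(\chi)}$ is harmless because $t_1$ is invertible on $U_i'$, so only the $t_2$-factor matters. If $L'=\CC\setminus0$, then $t_2$ is invertible and the product is regular for an \emph{arbitrary} algebraic $f_0$; the same holds if $L'=\CC$ but $\uidegree{i,2}^*(\chi)\ge0$, since then $t_2^{\uidegree{i,2}^*(\chi)}$ is a polynomial. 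In either case the homogeneity identity $f(\tau x)=\chi(\tau)f(x)$ holds on the dense open subset and hence on all of $U_i'$ by the same irreducibility argument, establishing case~1. If instead $L'=\CC$ and $\uidegree{i,2}^*(\chi)<0$, the monomial has a pole along $\{t_2=0\}$, so regularity on $U_i'$ forces $f_0\equiv0$, giving case~2.

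It remains to prove the last assertion, that $L'=\CC$ together with $\uidegree{i,2}^*(\chi)<0$ forces $\chi\notin\sck$. By Lemma~\ref{uistructure} the equality $L=\CC$ (necessary for $L'=\CC$) holds only when $\uidegree{i,1}\in\partial\sck$, so $\uidegree{i,1}$ is a positive multiple of one edge generator, say $\uidegree{i,1}=c\,\sckboundarybasis{0}$ with $c>0$, while $\uidegree{i,2}=d_0\,\sckboundarybasis{0}+d_1\,\sckboundarybasis{1}$ with $d_0,d_1>0$ since $\uidegree{i,2}$ lies in the interior of $\sck$. Expanding $\chi=\uidegree{i,1}^*(\chi)\,\uidegree{i,1}+\uidegree{i,2}^*(\chi)\,\uidegree{i,2}$ in the basis $\sckboundarybasis{0},\sckboundarybasis{1}$, the coefficient of $\sckboundarybasis{1}$ equals $\uidegree{i,2}^*(\chi)\,d_1<0$; as every element of $\sck$ has nonnegative coordinates in $\sckboundarybasis{0},\sckboundarybasis{1}$, this shows $\chi\notin\sck$. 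The argument is largely routine; the only points requiring care are the extension analysis distinguishing $L'=\CC$ from $L'=\CC\setminus0$ and the sign of $\uidegree{i,2}^*(\chi)$, and the final convex-geometry computation, neither of which presents a genuine obstacle once the monomial formula for $f$ is in hand.
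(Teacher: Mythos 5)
Your proof is correct and follows essentially the same route as the paper's: both mirror the argument of Lemma \ref{uidescvfieldnoninv}, derive the monomial formula $f=t_1^{\uidegree{i,1}^*(\chi)}t_2^{\uidegree{i,2}^*(\chi)}f_0(t_0)$ from homogeneity, and decide extendability by the sign of $\uidegree{i,2}^*(\chi)$ and whether $L'=\CC$. Your explicit convex-geometry computation for the final claim $\chi\notin\sck$ fills in a detail the paper merely asserts, and it is correct.
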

\begin{proof}
The proof is similar to the proof of Lemma \ref{uidescvfieldnoninv}.
Denote the coordinates of 
a point $x\in U_i$ provided by the isomorphism $U_i\cong V_i\times (\CC\setminus 0)\times L$
by $t_0\in V_i$, $t_1\in \CC\setminus 0$, $t_2\in L$.
Let $f$ be a function of degree $\chi$ on $U_i'$, and suppose that $f(t_0,1,1)=f_0(t_0)$, 
where $f_0\colon V_i\to\CC$ is an algebraic function. Fix a pair $(t_1,t_2)\in(\CC\setminus 0)\times (\CC\setminus 0)$
and let $\tau\in T$ be the element of $T$ such that $\uidegree{i,1}(\tau)=t_1$, $\uidegree{i,2}(\tau)=t_2$. 
Denote by $\tau$ the automorphism of $U_i'$ provided by $\tau$ as well. By the definition 
of a homogeneous function of degree $\chi$, $f(t_0,t_1,t_2)=f(\tau\cdot(t_0,1,1))=\chi(\tau)f(t_0,1,1)=
\chi(\tau)f_0(t_0)$, so $f_0$ determines $f$ uniquely on $V_i'\times (\CC\setminus 0)\times (\CC\setminus 0)$, 
which is at least an open subset in $U_i'$.

We still have to check that if we start with an arbitrary functions $f_0\colon V_i'\to \CC$,  
the resulting function on $V_i'\times (\CC\setminus 0)\times (\CC\setminus 0)$
can be extended to the whole $U_i'$
if and only if $\uidegree{i,2}^*(\chi) <0$ or $L'=\CC\setminus0$ (in the last case there is nothing to extend) and that the resulting 
function on $U_i'$ is homogeneous of degree $\chi$. The function we have constructed 
can be written as follows: $f(t_0,t_1,t_2)=\chi(\tau)f_0(t_0)=\uidegree{i,1}(\tau)^{\uidegree{i,1}^*(\chi)} \uidegree{i,2}(\tau)^{\uidegree{i,2}^*(\chi)}=
t_1^{\uidegree{i,1}^*(\chi)} t_2^{\uidegree{i,2}^*(\chi)} f_0(t_0)$. Recall that $t_1$ (resp. $t_2$) is a function on $X$ of degree 
$\uidegree{i,1}$ (resp. $\uidegree{i,2}$), so this function is clearly homogeneous of degree $\uidegree{i,1}^*(\chi) \uidegree{i,1}+\uidegree{i,2}^*(\chi) \uidegree{i,2}=\chi$ 
on $V_i'\times (\CC\setminus 0)\times (\CC\setminus 0)$. If the function can be extended to the whole 
$U_i'$, it remains homogeneous there since homogeneity means an equality of two functions for 
each element of $T$, and this equality holds if it holds on an open subset.

If $L'=\CC\setminus 0$, there is nothing to extend. If $L'=\CC$, $f$ can be extended to $U_i'$ if
and only if $\uidegree{i,2}^*(\chi) \ge 0$.

Finally, $L'=\CC$, then $\uidegree{i,1}\in\partial\sck$, and if $\uidegree{i,2}^*(\chi) <0$ in this case, then $\chi\notin\sck$.
\end{proof}

Given a homogeneous function $f$ of degree $\chi\in M$ defined on a set $U_i'$ as described in Lemma \ref{uidescfunct},
we call the function $f_0\colon V_i'\to\CC$ such that $f_0(p)=f(x)$, where $x$ is the canonical point in 
$\pi^{-1}(p)$ with respect to $U_i$ the \textit{$U_i$-description of $f$}. 
Again, the $U_i$-description of a function only depends on the data we used to define the set $U_i$ 
(the degrees $\uidegree{i,1}$ and $\uidegree{i,2}$ and the sections $\uithreadfunction{i,1}$ and $\uithreadfunction{i,2}$), 
not on the whole sufficient system $U_1, \ldots, U_{\numberoffixedopensets}$.
And again we can make a remark similar to 
Remark \ref{uidescvfieldrestriction}:

\begin{remark}\label{uidescfunctrestriction}
Let $V_i''\subseteq V_i'$ and $L''\subseteq L'$ be open subset, and $L''=\CC$ or $L''=\CC\setminus0$.
These embeddings give rise to an embedding of $U_i''=V_i''\times (\CC\setminus0)\times L''$ into 
$V_i'\times (\CC\setminus0)\times L'=U_i'$. Let $f'$ be the restriction of $\chi$ to $U_i''$.
Then the $U_i$-description of $f'$ is the restriction of the $U_i$-description of $f$ to $V_i''$.
\end{remark}

Now we are going to relate the $U_i$-description of a homogeneous function of degree $\chi$ defined on 
an open subset of $U_i\cap U_j$ with its $U_j$-description. To formulate this relation, 
we need to introduce some notation. 
Denote the following rational function of $p\in\PP^1$:
$$
\mu_{i,j,\chi}(p)=
\frac{\oluithreadfunction{i,1}(p)^{\uidegree{i,1}^*(\chi)}\oluithreadfunction{i,2}(p)^{\uidegree{i,2}^*(\chi)}}
{\oluithreadfunction{j,1}(p)^{\uidegree{i,1}^*(\chi)}\oluithreadfunction{j,2}(p)^{\uidegree{i,2}^*(\chi)}}.
$$
In particular, if $i=j$, then $\mu_{i,j,\chi}=1$.
This time it is a trivial observation that these functions satisfy conditions similar to 
Lemma \ref{rijcocycle} for matrices $\uismalltransition{i,j}$ and $\uilargetransition{i,j}$:
\begin{remark}\label{mijcocycle}
For every three indices $i,j,k$ one has $\mu_{i,k,\chi}=\mu_{i,j,\chi}\mu_{j,k,\chi}$.
\end{remark}

By Lemma \ref{uijstructure}, $U_i\cap U_j$ can be written as $V'\times (\CC\setminus 0)\times L'$, 
where $V'\subseteq V_i\cap V_j$ is an open subset, and $L'$ equals $\CC$ or $(\CC\setminus0)$. This product is 
embedded into $U_i$ via the isomorphism from Lemma \ref{uistructure}.
\begin{lemma}\label{functtransition}
Let $V''$ be an open subset of $V'$, $L''$ be an open subset of $L'$, $L''=\CC$ or $L''=\CC\setminus 0$,
and let $U''=V''\times (\CC\setminus 0)\times L''$ be embedded into $U_i\cap U_j$ via the map from 
Lemma \ref{uijstructure}.

Let $f$ be a homogeneous function on $V''$ of degree $\chi$, and let $\uidescriptionfunction i$ (resp. $\uidescriptionfunction j$) be the 
$U_i$-description (resp. $U_j$-description) of $f$. Then for every $p\in V''$:
$$
\uidescriptionfunction j(p)=\mu_{i,j,\chi}\uidescriptionfunction i(p).
$$
\end{lemma}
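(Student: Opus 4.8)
The plan is to follow the proof of Lemma~\ref{vfieldtransition} almost verbatim, but in the simpler setting where only the value of $f$ must be transported, not the derivatives of a vector field along it. Since $\uidescriptionfunction{i}$ and $\uidescriptionfunction{j}$ are regular functions of $p$ and $\mu_{i,j,\chi}$ is a fixed rational function on $\PP^1$, it suffices to prove the identity on the dense open set of ordinary points of $V''$. So I would fix an ordinary point $p\in V''$, let $x$ be the canonical point in $\pi^{-1}(p)$ with respect to $U_i$ (so that $\wtuithreadfunction{i,1}(x)=\wtuithreadfunction{i,2}(x)=1$ and $\uidescriptionfunction{i}(p)=f(x)$), and let $x'$ be the canonical point with respect to $U_j$ (so that $\wtuithreadfunction{j,1}(x')=\wtuithreadfunction{j,2}(x')=1$ and $\uidescriptionfunction{j}(p)=f(x')$). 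By Proposition~\ref{genfiberstructinvar}, at an ordinary point of $U_0$ none of the functions $\wtuithreadfunction{i,1},\wtuithreadfunction{i,2}$ vanishes, so, exactly as in the proof of Lemma~\ref{vfieldtransition}, both $x$ and $x'$ lie in $U''$.

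Next I would introduce the torus element $\tau\in T$ determined by $\uidegree{i,1}(\tau)=\wtuithreadfunction{i,1}(x')$ and $\uidegree{i,2}(\tau)=\wtuithreadfunction{i,2}(x')$; it exists and is unique because these two values are nonzero and $\uidegree{i,1},\uidegree{i,2}$ form a basis of $M$. Since $\wtuithreadfunction{i,1}$ and $\wtuithreadfunction{i,2}$ are homogeneous of degrees $\uidegree{i,1}$ and $\uidegree{i,2}$, the point $\tau x$ has the same image under $(\pi,\wtuithreadfunction{i,1},\wtuithreadfunction{i,2})$ as $x'$, whence $\tau x=x'$ by the isomorphism of Lemma~\ref{uistructure}. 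Homogeneity of $f$ then gives
$$
\uidescriptionfunction{j}(p)=f(x')=f(\tau x)=\chi(\tau)\,f(x)=\chi(\tau)\,\uidescriptionfunction{i}(p),
$$
and, expanding $\chi=\uidegree{i,1}^*(\chi)\uidegree{i,1}+\uidegree{i,2}^*(\chi)\uidegree{i,2}$,
$$
\chi(\tau)=\uidegree{i,1}(\tau)^{\uidegree{i,1}^*(\chi)}\uidegree{i,2}(\tau)^{\uidegree{i,2}^*(\chi)}=\wtuithreadfunction{i,1}(x')^{\uidegree{i,1}^*(\chi)}\wtuithreadfunction{i,2}(x')^{\uidegree{i,2}^*(\chi)}.
$$

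The last and only delicate step is to express $\wtuithreadfunction{i,1}(x')$ and $\wtuithreadfunction{i,2}(x')$ through functions on $\PP^1$: Proposition~\ref{quotmorph} cannot be applied to $\wtuithreadfunction{i,1}/\wtuithreadfunction{j,1}$ directly, because these two functions have different degrees. The fix is to match degrees first—the product $\wtuithreadfunction{j,1}^{\uidegree{j,1}^*(\uidegree{i,1})}\wtuithreadfunction{j,2}^{\uidegree{j,2}^*(\uidegree{i,1})}$ is again homogeneous of degree $\uidegree{i,1}$, so its quotient with $\wtuithreadfunction{i,1}$ is $T$-invariant and Proposition~\ref{quotmorph} applies. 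Evaluating at $x'$ and using $\wtuithreadfunction{j,1}(x')=\wtuithreadfunction{j,2}(x')=1$ gives
$$
\wtuithreadfunction{i,1}(x')=\frac{\oluithreadfunction{i,1}(p)}{\oluithreadfunction{j,1}(p)^{\uidegree{j,1}^*(\uidegree{i,1})}\oluithreadfunction{j,2}(p)^{\uidegree{j,2}^*(\uidegree{i,1})}},
$$
and the analogous formula for $\wtuithreadfunction{i,2}(x')$ with $\uidegree{i,1}$ replaced by $\uidegree{i,2}$. Substituting both into the expression for $\chi(\tau)$ and collecting the exponents of $\oluithreadfunction{j,1}$ and $\oluithreadfunction{j,2}$ by means of the linear identity $\uidegree{j,k}^*(\uidegree{i,1})\uidegree{i,1}^*(\chi)+\uidegree{j,k}^*(\uidegree{i,2})\uidegree{i,2}^*(\chi)=\uidegree{j,k}^*(\chi)$ (just $\uidegree{j,k}^*$ applied to $\chi=\uidegree{i,1}^*(\chi)\uidegree{i,1}+\uidegree{i,2}^*(\chi)\uidegree{i,2}$, for $k=1,2$) collapses the product to $\chi(\tau)=\mu_{i,j,\chi}(p)$ in its telescoping form, the one compatible with the cocycle relation of Remark~\ref{mijcocycle}. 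This establishes the identity at ordinary points, hence on all of $V''$.
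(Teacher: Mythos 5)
Your proof is correct and follows essentially the same route as the paper: reduce to ordinary points, compare the two canonical points via a torus element, use homogeneity of $f$ to get the factor $\chi(\tau)=\wtuithreadfunction{i,1}(x')^{\uidegree{i,1}^*(\chi)}\wtuithreadfunction{i,2}(x')^{\uidegree{i,2}^*(\chi)}$, and convert to rational functions on $\PP^1$ via Proposition \ref{quotmorph}. The only (cosmetic) difference is that the paper applies Proposition \ref{quotmorph} once to the single ratio of the two degree-$\chi$ monomials $\wtuithreadfunction{i,1}^{\uidegree{i,1}^*(\chi)}\wtuithreadfunction{i,2}^{\uidegree{i,2}^*(\chi)}$ and $\wtuithreadfunction{j,1}^{\uidegree{j,1}^*(\chi)}\wtuithreadfunction{j,2}^{\uidegree{j,2}^*(\chi)}$, whereas you apply it twice to degree-matched ratios for $\wtuithreadfunction{i,1}$ and $\wtuithreadfunction{i,2}$ separately and then recombine exponents; both land on the same expression for $\mu_{i,j,\chi}(p)$.
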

\begin{proof}
As in the proof of Lemma \ref{vfieldtransition}, it is sufficient to prove the equality for all ordinary points $p\in V''$.
So let $p\in V''$ be an ordinary point and let $x$ (resp. $x'$) be the canonical point in $\pi^{-1}(p)$ with respect 
to $U_i$ (resp. to $U_j$). It follows from Proposition \ref{genfiberstructinvar} that $\wtuithreadfunction{i,1}(x')\ne 0$, 
$\wtuithreadfunction{i,2}(x')\ne 0$, hence $x'\in U''$.

Let $\tau$ be the element of $T$ such that $\uidegree{i,1}(\tau)=\wtuithreadfunction{i,1}(x')$, 
$\uidegree{i,2}(\tau)=\wtuithreadfunction{i,2}(x')$. As usual, denote the corresponding automorphism of $U''$ by $\tau$ as well.
Since $\wtuithreadfunction{i,1}$ (resp. $\wtuithreadfunction{i,2}$) is a homogeneous function of degree $\uidegree{i,1}$ (resp. $\uidegree{i,2}$), 
$\wtuithreadfunction{i,1}(\tau x)=\wtuithreadfunction{i,1}(x')$, $\wtuithreadfunction{i,2}(\tau x)=\wtuithreadfunction{i,2}(x')$, 
so $\tau x=x'$.

Since $f$ is a homogeneous function of degree $\chi$,
$$
f(x')=f(\tau x)=\chi(\tau) f(x)=
\uidegree{i,1}(\tau)^{\uidegree{i,1}^*(\chi)} \uidegree{i,2}(\tau)^{\uidegree{i,2}^*(\chi)} f(x)=
\wtuithreadfunction{i,1}(x')^{\uidegree{i,1}^*(\chi)} \wtuithreadfunction{i,2}(x')^{\uidegree{i,2}^*(\chi)} f(x).
$$
Recall that $\wtuithreadfunction{j,1}(x')=\wtuithreadfunction{j,2}(x')=1$. We have
$$
f(x')=\frac{\wtuithreadfunction{i,1}(x')^{\uidegree{i,1}^*(\chi)} \wtuithreadfunction{i,2}(x')^{\uidegree{i,2}^*(\chi)}}
{\wtuithreadfunction{j,1}(x')^{\uidegree{j,1}^*(\chi)} \wtuithreadfunction{j,2}(x')^{\uidegree{i,2}^*(\chi)}} f(x).
$$
Since the numerator and the denominator of this fraction are homogeneous functions of degree 
$\uidegree{i,1}^*(\chi)\uidegree{i,1}+\uidegree{i,2}^*(\chi)\uidegree{i,2}=
\uidegree{j,1}^*(\chi)\uidegree{j,1}+\uidegree{j,2}^*(\chi)\uidegree{j,2}=\chi$, by Proposition \ref{quotmorph},
$$
f(x')=\frac{\oluithreadfunction{i,1}(\pi(x'))^{\uidegree{i,1}^*(\chi)} \oluithreadfunction{i,2}(\pi(x'))^{\uidegree{i,2}^*(\chi)}}
{\oluithreadfunction{j,1}(\pi(x'))^{\uidegree{j,1}^*(\chi)} \oluithreadfunction{j,2}(\pi(x'))^{\uidegree{j,2}^*(\chi)}} f(x)=
\mu_{i,j,\chi}(p)\uidescriptionfunction j(p).
$$
\end{proof}

Recall that for a degree $\chi\in M$ 
we have denoted by $\gvinvl{\chi}$ the graded component of $(\pi|_U)_*\OO_X$
of degree $\chi$.
Lemma \ref{functtransition} enables us to formulate a description of $\gvinvl{\chi}$ similar to the description of 
$\giinv$ above. Namely, define a sheaf $\gvl{\chi}$ as follows:
Let $V\subseteq \PP^1$ be an open subset. The space of sections $\Gamma(V,\gvl{\chi})$ is the space of 
sequences $(\uidescriptionfunction1,\ldots, \uidescriptionfunction{\numberoffixedopensets})$
of functions on $V$ satisfying the following conditions:
\begin{enumerate}
\item $\uidescriptionfunction{i'}=\mu_{i,i',\chi}\uidescriptionfunction i$ for all indices $i, i'$.
\item \label{functpushforwardmust0} If $\uidegree{i,1}\in\partial\sck$ and 
$\uidegree{i,2}^*(\chi)<0$,
then $\uidescriptionfunction i=0$.
\end{enumerate}

\begin{lemma}\label{functpushforward}
$\gvinvl{\chi}$ is isomorphic to $\gvl{\chi}$. If $f$ is a function on $\pi^{-1}(V)\cap U$ of degree 
$\chi$, then the isomorphism maps it to $(\uidescriptionfunction1,\ldots, \uidescriptionfunction{\numberoffixedopensets})$, 
where $\uidescriptionfunction i$ is the $U_i$-description of $f$.\qed
\end{lemma}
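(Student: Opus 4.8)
The plan is to mirror the proof of Proposition \ref{vfieldpushforward}, treating this as the function-theoretic analogue: the role played there by Lemma \ref{vfieldtransition} is now played by Lemma \ref{functtransition}, and the only genuinely new feature is the vanishing condition (\ref{functpushforwardmust0}) in the definition of $\gvl{\chi}$, which has no counterpart for vector fields.

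First I would unwind the definition of the pushforward: for an open $V\subseteq\PP^1$, a section of $\gvinvl{\chi}$ over $V$ is exactly a homogeneous function $f$ of degree $\chi$ on $\pi^{-1}(V)\cap U$, and the sets $U_i\cap\pi^{-1}(V)$ (each isomorphic to a product as in Lemma \ref{uistructure}) form an affine cover of $\pi^{-1}(V)\cap U$. I would define the forward map by $f\mapsto(\uidescriptionfunction 1,\ldots,\uidescriptionfunction{\numberoffixedopensets})$, where $\uidescriptionfunction i$ is the $U_i$-description of $f|_{U_i\cap\pi^{-1}(V)}$. That this tuple satisfies condition (1) is immediate from Lemma \ref{functtransition} applied on each overlap $U_i\cap U_j\cap\pi^{-1}(V)$; that it satisfies condition (\ref{functpushforwardmust0}) is precisely the content of the second case of Lemma \ref{uidescfunct}, since $\uidegree{i,1}\in\partial\sck$ forces $L=\CC$ on that chart.

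Next I would construct the inverse. Given a tuple $(\uidescriptionfunction 1,\ldots,\uidescriptionfunction{\numberoffixedopensets})\in\Gamma(V,\gvl{\chi})$, Lemma \ref{uidescfunct} lets me reconstruct from each $\uidescriptionfunction i$ a homogeneous degree-$\chi$ function $f_i$ on $U_i\cap\pi^{-1}(V)$ — here condition (\ref{functpushforwardmust0}) is exactly what guarantees that the reconstructed function extends across the locus $t_2=0$ in the case $L=\CC$. The key step is gluing: on an overlap $U_i\cap U_j\cap\pi^{-1}(V)$ the functions $f_i$ and $f_j$ both restrict to homogeneous functions of degree $\chi$, and by Remark \ref{uidescfunctrestriction} together with Lemma \ref{functtransition} their $U_j$-descriptions there are $\mu_{i,j,\chi}\uidescriptionfunction i$ and $\uidescriptionfunction j$ respectively, which agree by condition (1). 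Since Lemma \ref{uidescfunct} says a homogeneous function is determined by its $U_j$-description, $f_i=f_j$ on the overlap, so the $f_i$ glue to a single degree-$\chi$ function $f$ on $\pi^{-1}(V)\cap U$.

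Finally I would check that the two maps are mutually inverse (immediate, since taking the $U_i$-description and reconstructing from it are inverse operations by Lemma \ref{uidescfunct}) and that both commute with restriction to smaller open subsets of $\PP^1$, using Remark \ref{uidescfunctrestriction}; this upgrades the bijection on sections to an isomorphism of sheaves. I expect the only delicate point to be the interplay between the extension/vanishing dichotomy of Lemma \ref{uidescfunct} and the gluing — specifically, making sure condition (\ref{functpushforwardmust0}) is invoked consistently on every chart where $L=\CC$, so that no spurious pole along $t_2=0$ survives; everything else is a formal transcription of the vector-field argument.
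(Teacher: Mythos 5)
Your proposal is correct and is exactly the argument the paper intends: the lemma is stated with no written proof (just a \qed), on the grounds that it follows from Lemma \ref{uidescfunct}, Lemma \ref{functtransition}, and the definition of the pushforward in the same way that Proposition \ref{vfieldpushforward} followed from Lemma \ref{vfieldtransition}. Your writeup simply makes that one-line justification explicit, including the correct observation that condition (\ref{functpushforwardmust0}) is what the second case of Lemma \ref{uidescfunct} contributes and what makes the reconstruction extend across $t_2=0$ when $L=\CC$.
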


The following lemma gives an alternative description of 
$\gvl{\chi}$ if $\chi\in\sck\cap M$.
\begin{lemma}\label{pushforwardisdlambda}
If $\chi\in\sck\cap M$, then $\gvl{\chi}\cong\OO(\mathcal D(\chi))$. 
The isomorphism $\OO(\mathcal D(\chi))\leftrightarrow\gvl{\chi}$ is given by
$$
f\leftrightarrow\left(\frac f{\oluithreadfunction{1,1}^{\uidegree{1,1}^*(\chi)}\oluithreadfunction{1,2}^{\uidegree{1,2}^*(\chi)}},\ldots, 
\frac f{\oluithreadfunction{i,1}^{\uidegree{i,1}^*(\chi)}\oluithreadfunction{i,2}^{\uidegree{i,1}^*(\chi)}},\ldots,
\frac f
{\oluithreadfunction{\numberoffixedopensets,1}^{\uidegree{\numberoffixedopensets,1}^*(\chi)}
\oluithreadfunction{\numberoffixedopensets,2}^{\uidegree{\numberoffixedopensets,2}^*(\chi)}}\right),
$$
where
$f\in\Gamma(V,\OO(\mathcal D(\chi)))$, $V\subseteq\PP^1$ is an open subset.
\end{lemma}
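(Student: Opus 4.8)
The plan is to exhibit the stated correspondence as a pair of mutually inverse sheaf morphisms and to read off the two conditions defining $\gvl{\chi}$ directly. Fix $\chi\in\sck\cap M$ and, for each index $i$, abbreviate the denominator as $D_i=\oluithreadfunction{i,1}^{\uidegree{i,1}^*(\chi)}\oluithreadfunction{i,2}^{\uidegree{i,2}^*(\chi)}$, a nonzero rational function on $\PP^1$. The whole argument rests on one observation: the transition factor in Lemma \ref{functtransition} satisfies $\mu_{i,i',\chi}=D_i/D_{i'}$ (this is exactly the ratio computed in the proof of Lemma \ref{functtransition}), so the transition condition $\uidescriptionfunction{i'}=\mu_{i,i',\chi}\uidescriptionfunction i$ becomes transparent. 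I would define the forward map $\OO(\mathcal D(\chi))\to\gvl{\chi}$ by $f\mapsto(f/D_1,\ldots,f/D_{\numberoffixedopensets})$ and the candidate inverse $\gvl{\chi}\to\OO(\mathcal D(\chi))$ by $(\uidescriptionfunction 1,\ldots,\uidescriptionfunction{\numberoffixedopensets})\mapsto \uidescriptionfunction i D_i$ for a suitable index $i$, then check each preserves the relevant modules of sections, that they are inverse, and that they commute with restriction.

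First I would verify that the forward map lands in $\gvl{\chi}$. Fix $f\in\Gamma(V,\OO(\mathcal D(\chi)))$ and an index $i$. For $p\in V_i$ the definition of $V_i$ gives $\ord_p(\oluithreadfunction{i,1})=-\mathcal D_p(\uidegree{i,1})$ and $\ord_p(\oluithreadfunction{i,2})=-\mathcal D_p(\uidegree{i,2})$, hence $\ord_p(D_i)=-\uidegree{i,1}^*(\chi)\mathcal D_p(\uidegree{i,1})-\uidegree{i,2}^*(\chi)\mathcal D_p(\uidegree{i,2})$. Since $\ord_p(f)\ge-\mathcal D_p(\chi)$, Lemma \ref{convexity} yields $\ord_p(f/D_i)\ge-\mathcal D_p(\chi)+\uidegree{i,1}^*(\chi)\mathcal D_p(\uidegree{i,1})+\uidegree{i,2}^*(\chi)\mathcal D_p(\uidegree{i,2})\ge 0$, so $f/D_i\in\Gamma(V_i\cap V,\OO_{\PP^1})$. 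The transition condition is precisely $f/D_{i'}=(D_i/D_{i'})(f/D_i)$, and the second defining condition (item \ref{functpushforwardmust0}) is vacuous here: as already noted in Lemma \ref{uidescfunct}, the situation $\uidegree{i,1}\in\partial\sck$ with $\uidegree{i,2}^*(\chi)<0$ forces $\chi\notin\sck$, since $\uidegree{i,2}$ lies in the interior of $\sck$ while $\uidegree{i,1}$ spans a boundary ray, so $\uidegree{i,2}^*(\chi)\ge0$ for all $\chi\in\sck$.

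The hard part is the well-definedness of the inverse, i.e. that $f:=\uidescriptionfunction i D_i$ is a section of $\OO(\mathcal D(\chi))$ over $V$. Independence of the index is immediate from the transition condition and $\mu_{i,i',\chi}=D_i/D_{i'}$, so $f$ is a single rational function on $V$ (the $V_i$ cover $\PP^1$ and $\PP^1$ is irreducible, so the expressions agree on nonempty overlaps). The real content is the estimate $\ord_p(f)\ge-\mathcal D_p(\chi)$ for each $p\in V$, where I cannot take an arbitrary index: the inequality of Lemma \ref{convexity} must be an \emph{equality}, which holds exactly when $\mathcal D_p(\cdot)$ is linear on a cone containing $\chi,\uidegree{i,1},\uidegree{i,2}$. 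For an ordinary point this is automatic, since then $\stdpolyhedronletter_p=\6$ and $\mathcal D_p$ is linear on all of $\sck$, so any $i$ with $p\in V_i$ works. For a special point $p$ I would invoke the definition of a sufficient system to pick $i$ with $p\in V_i$ and $\uidegree{i,1},\uidegree{i,2}\in\normalvertexcone{\indexedvertexpt{p}{j}}{\stdpolyhedronletter_p}$ for a vertex whose normal cone has $\chi$ in its closure; on that cone $\mathcal D_p$ is linear, giving $\mathcal D_p(\chi)=\uidegree{i,1}^*(\chi)\mathcal D_p(\uidegree{i,1})+\uidegree{i,2}^*(\chi)\mathcal D_p(\uidegree{i,2})$, whence $\ord_p(f)=\ord_p(\uidescriptionfunction i)+\ord_p(D_i)\ge-\mathcal D_p(\chi)$ because $\uidescriptionfunction i$ is regular at $p$. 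Boundary degrees $\chi\in\partial\sck$ and degrees lying on the normal ray of an edge are treated identically by choosing an adjacent vertex cone whose closure contains $\chi$.

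Finally, the two maps are visibly inverse: $f\mapsto(f/D_i)_i\mapsto(f/D_i)D_i=f$, and the other composite is the identity by the same computation; and both commute with restriction to smaller opens, being given by multiplication by the fixed rational functions $D_i^{\pm1}$. This establishes $\OO(\mathcal D(\chi))\cong\gvl{\chi}$, which together with Lemma \ref{functpushforward} identifies $\gvinvl{\chi}$ with $\OO(\mathcal D(\chi))$.
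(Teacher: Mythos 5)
Your proof is correct and follows essentially the same route as the paper's: Lemma \ref{convexity} gives regularity of $f/D_i$ on $V_i$ in the forward direction, and for the inverse the sufficiency of the system together with the linearity of $\mathcal D_p(\cdot)$ on the normal cone of a vertex containing $\chi$ yields the required bound $\ord_p(f)\ge-\mathcal D_p(\chi)$ at special points. Your only additions are cosmetic — the explicit identification $\mu_{i,i',\chi}=D_i/D_{i'}$ and the observation that condition \ref{functpushforwardmust0} is vacuous because $\uidegree{i,2}^*\ge 0$ on $\sck$ — both of which the paper leaves implicit.
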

\begin{proof}
First, let $f\in\Gamma(V,\OO(\mathcal D(\chi)))$ be a function. Then it is clear that 
$\uidescriptionfunction i=f/(\oluithreadfunction{i,1}^{\uidegree{i,1}^*(\chi)}\oluithreadfunction{i,2}^{\uidegree{i,}^*(\chi)})$ satisfy the conditions 
$\uidescriptionfunction j=\mu_{i,j,\chi}\uidescriptionfunction i$ from Lemma \ref{functpushforward} by construction. The condition 
\ref{functpushforwardmust0} from 
the definition of $\gvl{\chi}$
is void since $\chi\in\sck$.
We have to check that $\uidescriptionfunction i$ are well-defined at points $p\in V\cap V_i$. If $p\in V\cap V_i$,
then by Lemma \ref{convexity},
$\mathcal D_p(\chi)\le \uidegree{i,1}^*(\chi)\mathcal D_p(\uidegree{i,1})+\uidegree{i,2}^*(\chi) \mathcal D_p(\uidegree{i,2})$.
By the definition of $V_i$, $\ord_p(\oluithreadfunction{i,1})=-\mathcal D_p(\uidegree{i,1})$, 
$\ord_p(\oluithreadfunction{i,2})=-\mathcal D_p(\uidegree{i,2})$. Since $f\in\Gamma(V,\OO(\mathcal D(\chi)))$, 
$\ord_p(f)\ge -\mathcal D_p(\chi)$, so $\ord_p (f)\ge \ord_p(\oluithreadfunction{i,1}^{\uidegree{i,1}^*(\chi)}\oluithreadfunction{i,2}^{\uidegree{i,2}^*(\chi)})$,
and $\uidescriptionfunction i$ is well-defined on $V\cap V_i$. Therefore, $(\uidescriptionfunction1,\ldots, \uidescriptionfunction{\numberoffixedopensets})$ defines an element 
of $\gvl{\chi}$.

Now, let $(\uidescriptionfunction1,\ldots, \uidescriptionfunction{\numberoffixedopensets})\in\Gamma(V,\gvl{\chi})$.
The condition 
$\uidescriptionfunction j=\mu_{i,j,\chi}\uidescriptionfunction i$ guarantees that 
$f=\uidescriptionfunction i\oluithreadfunction{i,1}^{\uidegree{i,1}^*(\chi)}\oluithreadfunction{i,2}^{\uidegree{i,2}^*(\chi)}$
does not depend on $i$ as a rational function. We have to check that 
$f\in\Gamma(V,\OO(\mathcal D(\chi)))$. Let $p\in V$ be an ordinary point. By the definition
of a sufficient system, there exists an index $i$ such that $p\in V_i$. Then $\uidescriptionfunction i$ is 
well-defined as $p$, and $\oluithreadfunction{i,1}$ and $\oluithreadfunction{i,2}$ are defined at $p$ 
since $p$ is an ordinary point.

Now suppose that $p\in V$ is a special point. 
Let $\indexedvertexpt{p}{j}$ be a vertex such that 
$\chi\in\normalvertexcone{\indexedvertexpt{p}{j}}{\stdpolyhedronletter_p}$
By the definition of 
a sufficient system, there exists an index $i$ such that $p\in V_i$ and 
$\uidegree{i,1},\uidegree{i,2}\in\normalvertexcone{\indexedvertexpt{p}{j}}{\stdpolyhedronletter_p}$. The function $\mathcal D_p(\cdot)$ is linear on 
$\normalvertexcone{\indexedvertexpt{p}{j}}{\stdpolyhedronletter_p}$
so $\mathcal D_p(\chi)=\uidegree{i,1}^*(\chi)\mathcal D_p(\uidegree{i,1})+\uidegree{i,2}^*(\chi)\mathcal D_p(\uidegree{i,2})$.
Then $\ord_p(f)=\ord_p(\uidescriptionfunction i)+\uidegree{i,1}^*(\chi)\ord_p(\oluithreadfunction{i,1})+\uidegree{i,2}^*(\chi)\ord_p(\oluithreadfunction{i,2})
\ge \uidegree{i,1}^*(\chi)\ord_p(\oluithreadfunction{i,1})+\uidegree{i,2}^*(\chi)\ord_p(\oluithreadfunction{i,2})=
-\uidegree{i,1}^*(\chi)\mathcal D_p(\uidegree{i,1})-\uidegree{i,2}^*(\chi)\mathcal D_p(\uidegree{i,2})=-\mathcal D_p(\chi)$. 
Therefore, $f\in\Gamma(V,\OO(\mathcal D(\chi)))$.
\end{proof}

\begin{corollary}\label{h1g5zero}
$H^1(\PP^1, \gv)=0$.
\end{corollary}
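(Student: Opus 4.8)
The plan is to reduce this immediately to the standard vanishing $H^1(\PP^1,\OO_{\PP^1})=0$ via Lemma \ref{pushforwardisdlambda}. First I would note that $\gv=\gvl{0}$ by definition, i.e.\ we are looking at the degree-$0$ graded component. Since $\sck$ is a cone it contains the origin, so $0\in\sck\cap M$ and Lemma \ref{pushforwardisdlambda} applies with $\chi=0$, yielding an isomorphism of sheaves
$$
\gv=\gvl{0}\cong\OO(\mathcal D(0)).
$$

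Next I would identify $\mathcal D(0)$ with the zero divisor. By the definition of $\mathcal D(\chi)$ given in Section \ref{tvarsreal}, writing $\mathcal D=\sum a_i Z_i\otimes\stdpolyhedronletter_i$ we have $\mathcal D(0)=\sum a_i\,\eval(\stdpolyhedronletter_i,0)\,Z_i$, and $\eval_\stdpolyhedronletter(0)=\min_{v\in\stdpolyhedronletter}0(v)=0$ for every polyhedron $\stdpolyhedronletter$. Hence $\mathcal D(0)=0$ and $\OO(\mathcal D(0))=\OO_{\PP^1}$, so $\gv\cong\OO_{\PP^1}$.

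Finally, the classical fact $H^1(\PP^1,\OO_{\PP^1})=0$ (for instance from the Serre computation of the cohomology of line bundles on $\PP^1$, or because $\PP^1$ has genus $0$) gives $H^1(\PP^1,\gv)=0$, as claimed. There is no real obstacle here: the content is entirely packaged into Lemma \ref{pushforwardisdlambda}, and the only thing worth spelling out is the trivial evaluation $\mathcal D(0)=0$ that pins down the structure sheaf rather than an arbitrary line bundle.
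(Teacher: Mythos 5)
There is a genuine error at the very first step: $\gv$ is \emph{not} the degree-zero graded component $\gvl{0}$. By its definition (see page \pageref{g5intropage} and the subsection on the computation of $\gvinv$),
$$
\gv=\bigoplus_{i=1}^\numberoflatticegenerators\bigoplus_{j=1}^{\dim \Gamma(\PP^1,\OO(\mathcal D(\lambda_i)))}\gvl{\lambda_i},
$$
i.e.\ it is a direct sum of the graded components of degrees $\lambda_1,\ldots,\lambda_{\numberoflatticegenerators}$, each repeated $\dim\Gamma(\PP^1,\OO(\mathcal D(\lambda_i)))$ times. The reason the nonzero degrees $\lambda_i$ appear is that the map $\psi$ is assembled from the generators $\wtdependentgeneratorsdegree{\lambda_i,j}$, which have degree $\lambda_i$; hence the degree-zero part of $\Theta_X$ is sent into the degree-$\lambda_i$ parts of the corresponding copies of $\OO_X$, and it is those components whose pushforward constitutes $\gv$. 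Your identification $\gv\cong\OO_{\PP^1}$ is therefore false in general, and the reduction to $H^1(\PP^1,\OO_{\PP^1})=0$ does not go through as stated.

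The repair follows your own outline but with the correct sheaf. Lemma \ref{pushforwardisdlambda} applies to each summand, since every $\lambda_i$ lies in $\sck\cap M$, and gives
$$
\gv\cong\bigoplus_{i=1}^\numberoflatticegenerators\bigoplus_{j=1}^{\dim \Gamma(\PP^1,\OO(\mathcal D(\lambda_i)))}\OO(\mathcal D(\lambda_i)).
$$
What you then need is not $H^1(\PP^1,\OO_{\PP^1})=0$ but the (equally standard) vanishing $H^1(\PP^1,\OO(D))=0$ for any divisor $D$ on $\PP^1$ of non-negative degree; the divisors $\mathcal D(\lambda_i)$ have non-negative degree by the properness (semiampleness) of $\mathcal D$. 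This is exactly the paper's argument. Your computation $\mathcal D(0)=0$ is correct but irrelevant here, since the degree $0$ is generally not among the $\lambda_i$.
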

\begin{proof}
Recall that 
$$
\gv=\bigoplus_{i=1}^\numberoflatticegenerators\bigoplus_{j=1}^{\dim \Gamma(\PP^1,\OO(\mathcal D(\lambda_i)))}\gvl{\lambda_i},
$$
where $\lambda_i$ form the Hilbert basis of $\sck\cap M$, in particular, $\lambda_i\in\sck\cap M$. 
Therefore, 
$$
\gv=\bigoplus_{i=1}^\numberoflatticegenerators\bigoplus_{j=1}^{\dim \Gamma(\PP^1,\OO(\mathcal D(\lambda_i)))}\OO(\mathcal D(\lambda_i)).
$$
In particular, $\mathcal D(\lambda_i)$ are divisors of non-negative degree on $\PP^1$, and 
$H^1(\PP^1, \gv)=0$. 
\end{proof}

\subsection{Computation of $\gviiiinv$}
We can compute $\gviiiinvl{\chi}$ using Proposition \ref{computederived} with $\{U_i\}$ being the required 
affine covering of $U$.
Recall that for each $\chi\in M$, $\gviiiinvl{\chi}$ is the graded component of 
$R^1(\pi|_U)_*\OO_X$ of degree $\chi$.
Again denote temporarily the complex of sheaves on $U$ we have to consider in Proposition \ref{computederived}
by $\mathcal F_\bullet$. Let $U'$ be an open subset of $U$. Then $\Gamma(U',\mathcal F_0)$ consists of sequences $(f_1,\ldots, f_{\numberoffixedopensets})$,
where $f_i\in\Gamma(U_i\cap U',\OO_X)$, $\Gamma(U',\mathcal F_1)$ consists of sequences $(f_{i,j})_{1\le i<j\le \numberoffixedopensets}$, 
where $f_{i,j}\in\Gamma(U_i\cap U_j\cap U',\OO_X)$, and $\Gamma(U',\mathcal F_2)$ consists of sequences $(f_{i,j,k})_{1\le i<j<k\le \numberoffixedopensets}$, 
where $f_{i,j,k}\in\Gamma(U_i\cap U_j\cap U_k\cap U',\OO_X)$.
Denote the graded components of degree $\chi$ of 
the pushforwards of these sheaves by $\gviinvl{\chi}$, $\gvipinvl{\chi}$, $\gvippinvl{\chi}$, respectively.
Denote also 
$$
\gviinv=\bigoplus_{i=1}^\numberoflatticegenerators\bigoplus_{j=1}^{\dim \Gamma(\PP^1,\OO(\mathcal D(\lambda_i)))}\gviinvl{\lambda_i},
$$
$$
\newlength{\andlength}\settowidth{\andlength}{\text{\qquad and}}\hspace{\andlength}
\gvipinv=\bigoplus_{i=1}^\numberoflatticegenerators\bigoplus_{j=1}^{\dim \Gamma(\PP^1,\OO(\mathcal D(\lambda_i)))}\gvipinvl{\lambda_i},
\text{\qquad and}
$$
$$
\gvippinv=\bigoplus_{i=1}^\numberoflatticegenerators\bigoplus_{j=1}^{\dim \Gamma(\PP^1,\OO(\mathcal D(\lambda_i)))}\gvippinvl{\lambda_i}.
$$
We get the following descriptions of these sheaves from 
Lemma \ref{uidescfunct}:

Define sheaves $\gvil{\chi}$, $\gvipl{\chi}$, $\gvippl{\chi}$ as follows. 
Fix an an open subset $V\subseteq \PP^1$. Let $\Gamma(V,\gvil{\chi})$ be the space of sequences of the form 
$(\uidescriptionfunctiondiff 1{},\ldots, \uidescriptionfunctiondiff{\numberoffixedopensets}{})$, 
where $\uidescriptionfunctiondiff i{}\in\Gamma(V\cap V_i,\OO_{\PP^1})$ and $\uidescriptionfunctiondiff i{}=0$ if $\uidegree{i,1}\in\partial\sck$ and 
$\uidegree{i,2}^*(\chi)<0$. 
Then $\gviinvl{\chi}\cong\gvil{\chi}$, and the isomorphism maps a sequence 
$(f[1],\ldots, f[\numberoffixedopensets])$ of functions of degree $\chi$ 
defined on open subsets of $\pi^{-1}(V)\cap U$
to 
$(\uidescriptionfunctiondiff1{},\ldots, \uidescriptionfunctiondiff{\numberoffixedopensets}{})$, 
where $\uidescriptionfunctiondiff i{}$ is the $U_i$-description of $f[i]$.

Let $\Gamma(V,\gvipl{\chi})$ be the space of 
sequences $(\uidescriptionfunctiondiff{i,j}{})_{1\le i<j\le \numberoffixedopensets}$, where $\uidescriptionfunctiondiff{i,j}{}\in\Gamma(\OO_{\PP^1}, V\cap V_i\cap V_j)$.
These functions should be zero in some cases if $\uidegree{i,1}=\uidegree{j,1}\in\partial\sck$ (see Lemma \ref{uijstructure}).
To define these cases, note first that 
if $\uidegree{i,1}=\uidegree{j,1}$, then 
$\uidegree{i,2}^*=\uidegree{j,2}^*$.
So, the condition is: 
If
$\uidegree{i,2}^*(\chi)<0$,
then $\uidescriptionfunctiondiff{i,j}{}=0$.
Again, $\gvipinvl{\chi}\cong\gvipl{\chi}$, and 
the isomorphism maps a sequence $(f[i,j])_{1\le i<j\le \numberoffixedopensets}$ of functions of 
degree $\chi$
defined on open subsets of $\pi^{-1}(V)\cap U$ to the sequence 
$(\uidescriptionfunctiondiff{i,j}{})_{1\le i<j\le \numberoffixedopensets}$
of functions on $V$ 
such that $\uidescriptionfunctiondiff{i,j}{}$ is the $U_i$-description 
of 
$f[i,j]$.
(Again, we could choose the $U_j$-description here, as well, but we choose the $U_i$-description.)

Finally, let $\Gamma(V,\gvippl{\chi})$ be the space of sequences $(\uidescriptionfunctiondiff{i,j,k}{})_{1\le i<j<k\le \numberoffixedopensets}$, where 
$\uidescriptionfunctiondiff{i,j,k}{}\in\Gamma(V\cap V_i\cap V_j\cap V_k,\OO_{\PP^1})$ and, as in the previous case, $\uidescriptionfunctiondiff{i,j,k}{}=0$ 
if $\uidegree{i,1}=\uidegree{j,1}=\uidegree{k,1}\in\partial\sck$ and 
$\uidegree{i,2}^*(\chi)<0$.
Then $\gvippinvl{\chi}\cong\gvippl{\chi}$, the isomorphism is constructed similarly, and again we say that
$\uidescriptionfunctiondiff{i,j,k}{}$ is the $U_i$-description 
of a function defined on $U_i\cap U_j\cap U_k\cap \pi^{-1}(V)$, not its 
$U_j$- or $U_k$-description.

Denote $\gviil{\chi}=\ker (\gvipl{\chi}\to\gvippl{\chi})$, where the map $\gvipl{\chi}\to\gvippl{\chi}$
comes from the standard Cech map $\mathcal F_1\to \mathcal F_2$ via the pushforward, then the restriction to the degree $\chi$, and 
then the isomorphisms $\gvipinvl{\chi}\cong\gvipl{\chi}$ and $\gvippinvl{\chi}\cong\gvippl{\chi}$
defined above.
To compute a kernel of 
a map between sheaves, it is sufficient to compute the kernels of the corresponding maps between modules on each 
open subset. So let $V\subseteq\PP^1$ be an open subset. Taking into account the choice of $U_i$-description in 
the definition of the isomorphisms $\gvipinvl{\chi}\cong\gvipl{\chi}$
and $\gvippinvl{\chi}\cong\gvippl{\chi}$, we see that the corresponding map $\Gamma(V,\gvipl{\chi})\to
\Gamma(V,\gvippl{\chi})$ can be written as follows:
$$
\uidescriptionfunctiondiff{i,j,k}{}=\uidescriptionfunctiondiff{i,j}{}+\mu_{j,i,\chi}\uidescriptionfunctiondiff{j,k}{}-\uidescriptionfunctiondiff{i,k}{},
$$
and $\Gamma(V,\gviil{\chi})$ is the space of sequences of the form $(\uidescriptionfunctiondiff{i,j}{})_{1\le i<j\le \numberoffixedopensets}$, 
where $\uidescriptionfunctiondiff{i,j}{}\in\Gamma(V\cap V_i\cap V_j,\OO_{\PP^1})$ satisfy the following conditions:
\begin{enumerate}
\item $\uidescriptionfunctiondiff{i,j}{}+\mu_{j,i,\chi}\uidescriptionfunctiondiff{j,k}{}-\uidescriptionfunctiondiff{i,k}{}=0$
for all indices $i<j<k$.
\item If $\uidegree{i,1}=\uidegree{j,1}\in\partial\sck$ and 
$\uidegree{i,2}^*(\chi)<0$
then $\uidescriptionfunctiondiff{i,j}{}=0$.
\end{enumerate}
Now, by Proposition \ref{computederived}, $\gviiiinvl{\chi}$ 
is isomorphic to
$\gviiil{\chi}=\coker(\gvil{\chi}\to\gviil{\chi})$, 
where the map $\gvil{\chi}\to\gviil{\chi}$ can be written as follows:
$\uidescriptionfunctiondiff{i,j}{}(p)=\uidescriptionfunctiondiff i{}(p)-\mu_{j,i,\chi}\uidescriptionfunctiondiff j{}(p)$.
After we have defined the sheafs $\gv{\chi}$ and $\gviii{\chi}$ isomorphic to 
$\gvinv{\chi}$ and $\gviiiinv{\chi}$ (respectively) for each degree $\chi$, 
we define\label{g8intropage}\label{g5intropage}
$$
\gv=\bigoplus_{i=1}^\numberoflatticegenerators\bigoplus_{j=1}^{\dim \Gamma(\PP^1,\OO(\mathcal D(\lambda_i)))}\gvl{\lambda_i}
\text{\qquad and\qquad}
\gviii=\bigoplus_{i=1}^\numberoflatticegenerators\bigoplus_{j=1}^{\dim \Gamma(\PP^1,\OO(\mathcal D(\lambda_i)))}\gviiil{\lambda_i}.
$$

We can also shortly write
$$
\gvi=\bigoplus_{i=1}^\numberoflatticegenerators\bigoplus_{j=1}^{\dim \Gamma(\PP^1,\OO(\mathcal D(\lambda_i)))}\gvil{\lambda_i},
$$
$$
\settowidth{\andlength}{\text{\qquad and}}\hspace{\andlength}
\gvip=\bigoplus_{i=1}^\numberoflatticegenerators\bigoplus_{j=1}^{\dim \Gamma(\PP^1,\OO(\mathcal D(\lambda_i)))}\gvipl{\lambda_i},
\text{\qquad and}
$$
$$
\gvipp=\bigoplus_{i=1}^\numberoflatticegenerators\bigoplus_{j=1}^{\dim \Gamma(\PP^1,\OO(\mathcal D(\lambda_i)))}\gvippl{\lambda_i}.
$$
Then $\gviii$ is the cohomology in the middle of the complex $\gvi\to\gvip\to \gvipp$.

\subsection{Final remarks for the computation of $T^1(X)_0$}
Proposition \ref{exactseqweak} involves (in particular) the map
$H^0((R^1(\pi|_U)_*\psi)|_{\givinv})\colon H^0(\PP^1,\givinv)\to H^0(\PP^1,\gviiiinv)$.
The isomorphisms 
$\givinv\cong\giv$
and $\gviiiinv\cong\gviii$
constructed above enable us to consider 
a map
$H^0(\PP^1,\giv)\to H^0(\PP^1,\gviii)$
instead.
Denote 
it by $H^0((R^1(\pi|_U)_*\psi)|_{\givinv})^\circ$, 
The following lemma establishes relations between $U_i$-descriptions of sections of $\Theta_X$ and their images under $\psi$, so it
will help us to understand 
this map.
\begin{lemma}\label{uidescpsi}
Let $V'$ be an open subset of $V_i$, $L'$ be an open subset of $L$, $L'=\CC$ or $L'=\CC\setminus 0$,
and let $U'=V'\times (\CC\setminus 0)\times L'$ be embedded into $U_i$ via the map from 
Lemma \ref{uistructure}.

Let $(\uidescriptionfunction{i,1},\uidescriptionfunction{i,2},\stdvectorfiledpletter_i)$ be 
the $U_i$-description of a vector field $\stdvectorfiledxletter$ defined on $V'$, 
$\chi\in\sck\cap M$ be a degree, 
$f\in\Gamma(\PP^1,\OO(\mathcal D(\chi)))$.
Then the $U_i$-description of $(d\widetilde f)\stdvectorfiledxletter$ is 
$$
\frac{\overline f}{\oluithreadfunction{i,1}^{\uidegree{i,1}^*(\chi)}\oluithreadfunction{i,2}^{\uidegree{i,2}^*(\chi)}}
(\uidegree{i,1}^*(\chi)\uidescriptionfunction{i,1}+\uidegree{i,2}^*(\chi)\uidescriptionfunction{i,2})
+d_p\frac{\overline f}{\oluithreadfunction{i,1}^{\uidegree{i,1}^*(\chi)}\oluithreadfunction{i,2}^{\uidegree{i,1}^*(\chi)}} \stdvectorfiledpletter_i.
$$
\end{lemma}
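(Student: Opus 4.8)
The plan is to evaluate the function $(d\widetilde f)\stdvectorfiledxletter$ directly at the canonical points of the fibers, because by Lemma \ref{uidescfunct} the $U_i$-description of a homogeneous function is precisely its value at those points. First I would observe that $(d\widetilde f)\stdvectorfiledxletter$ really is homogeneous of degree $\chi$: the field $\stdvectorfiledxletter$ has degree $0$, i.e.\ is $T$-invariant, and $\widetilde f$ has degree $\chi$, so the derivative of a degree-$\chi$ function along a $T$-invariant field is again homogeneous of degree $\chi$; hence Lemma \ref{uidescfunct} indeed produces a single function on $V'$ as its $U_i$-description, and computing that function is the whole task.

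The main step is to write $\widetilde f$ in the coordinates $(t_0, t_1, t_2)$ furnished by the isomorphism $U_i \cong V_i \times (\CC\setminus 0) \times L$ of Lemma \ref{uistructure}, where $t_0 = \pi$, $t_1 = \wtuithreadfunction{i,1}$, $t_2 = \wtuithreadfunction{i,2}$. Reading off the formula that defines the inverse morphism in the proof of Lemma \ref{uistructure}, one obtains $\widetilde f = t_1^{\uidegree{i,1}^*(\chi)} t_2^{\uidegree{i,2}^*(\chi)}\, g$, where $g = \overline f /(\oluithreadfunction{i,1}^{\uidegree{i,1}^*(\chi)}\oluithreadfunction{i,2}^{\uidegree{i,2}^*(\chi)})$ is regarded as a function on $V_i$ pulled back along $\pi$ (the splitting $\chi = \uidegree{i,1}^*(\chi)\uidegree{i,1} + \uidegree{i,2}^*(\chi)\uidegree{i,2}$ makes the right-hand side homogeneous of degree $\chi$, as it must be). This identity is valid on $V'\times(\CC\setminus 0)\times(\CC\setminus 0)$, which contains all the canonical points, so it may be differentiated there.

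It then remains to differentiate and pair with $\stdvectorfiledxletter$ at a canonical point $x=(t_0,1,1)$. By Lemma \ref{uidescvfieldnoninv} and Corollary \ref{uidescvfield}, at such a point $\stdvectorfiledxletter(x)$ decomposes as $\stdvectorfiledpletter_i + \uidescriptionfunction{i,1}\vfield{t_1} + \uidescriptionfunction{i,2}\vfield{t_2}$, where $\stdvectorfiledpletter_i$ is the $\vfield{t_0}$-component; since $t_1 = t_2 = 1$ there, the $\vfield{t_1}$- and $\vfield{t_2}$-derivatives of $t_1^{\uidegree{i,1}^*(\chi)}t_2^{\uidegree{i,2}^*(\chi)}g$ contribute $g\cdot(\uidegree{i,1}^*(\chi)\uidescriptionfunction{i,1} + \uidegree{i,2}^*(\chi)\uidescriptionfunction{i,2})$ and the $t_0$-direction contributes $(d_p g)\stdvectorfiledpletter_i$. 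Substituting back the expression for $g$ gives exactly the asserted formula.

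I do not expect a deep obstacle: the statement is a chain-rule computation once the coordinate form of $\widetilde f$ is in hand. The two places demanding care are verifying that this coordinate form of $\widetilde f$ is legitimately defined and differentiable on the locus of canonical points (handled above by restricting to $V'\times(\CC\setminus 0)\times(\CC\setminus 0)$), and correctly matching the three components $\stdvectorfiledpletter_i$, $\uidescriptionfunction{i,1}$, $\uidescriptionfunction{i,2}$ of the $U_i$-description to the coordinate vector fields $\vfield{t_0}$, $\vfield{t_1}$, $\vfield{t_2}$ — the most likely source of an index or sign slip.
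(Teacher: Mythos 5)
Your proposal is correct and follows essentially the same route as the paper: both evaluate at the canonical point $x$ of a fiber, factor $\widetilde f$ as $\wtuithreadfunction{i,1}^{\uidegree{i,1}^*(\chi)}\wtuithreadfunction{i,2}^{\uidegree{i,2}^*(\chi)}$ times the pullback of $\overline f/(\oluithreadfunction{i,1}^{\uidegree{i,1}^*(\chi)}\oluithreadfunction{i,2}^{\uidegree{i,2}^*(\chi)})$ along $\pi$ (the paper justifies this identification via Proposition \ref{quotmorph}), and then apply the Leibniz rule together with $\wtuithreadfunction{i,1}(x)=\wtuithreadfunction{i,2}(x)=1$ and the defining relations $d_x\wtuithreadfunction{i,j}\stdvectorfiledxletter(x)=\uidescriptionfunction{i,j}(p)$, $d_x\pi\,\stdvectorfiledxletter(x)=\stdvectorfiledpletter_i(p)$.
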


\begin{proof}
The proof is similar to the proof of Lemma \ref{vfieldtransition}. 
It is sufficient to prove the equality 
for an arbitrary open subset of $V'$, so let $p\in V'$ be an arbitrary point, and let $x$ be the canonical
point in $\pi^{-1}(p)$ with respect to $U_i$. Denote 
$a_1=\uidegree{i,1}^*(\chi)$, $a_2=\uidegree{i,2}^*(\chi)$,
and denote by $h$ the $U_i$-description of the function 
$(df)\stdvectorfiledxletter$. Then $h(p)=(d_x\widetilde f) \stdvectorfiledxletter(x)$. We have
\begin{multline*}
d_x\widetilde f=d_x\left(\wtuithreadfunction{i,1}^{a_1}\wtuithreadfunction{i,2}^{a_2}
\frac{\widetilde f}{\wtuithreadfunction{i,1}^{a_1}\wtuithreadfunction{i,2}^{a_2}}\right)=\\
a_1 (d_x\wtuithreadfunction{i,1})\wtuithreadfunction{i,2}^{a_2}(x)\frac{\widetilde f(x)}{\wtuithreadfunction{i,1}^{a_1}(x)\wtuithreadfunction{i,2}^{a_2}(x)}
+a_2\wtuithreadfunction{i,1}^{a_1}(x)(d_x\wtuithreadfunction{i,2})\frac{\widetilde f(x)}{\wtuithreadfunction{i,1}^{a_1}(x)\wtuithreadfunction{i,2}^{a_2}(x)}\\
+\wtuithreadfunction{i,1}^{a_1}(x)\wtuithreadfunction{i,2}^{a_2}(x)d_x\left(\frac{\widetilde f}{\wtuithreadfunction{i,1}^{a_1}\wtuithreadfunction{i,2}^{a_2}}\right).
\end{multline*}
Since $\wtuithreadfunction{i,1}(x)=\wtuithreadfunction{i,2}(x)=1$,
$$
d_x\widetilde f=\frac{\widetilde f(x)}{\wtuithreadfunction{i,1}^{a_1}(x)\wtuithreadfunction{i,2}^{a_2}(x)}
(a_1 d_x\wtuithreadfunction{i,1}+a_2 d_x\wtuithreadfunction{i,2})
+d_x\left(\frac{\widetilde f}{\wtuithreadfunction{i,1}^{a_1}\wtuithreadfunction{i,2}^{a_2}}\right).
$$
$\widetilde f$ and $\wtuithreadfunction{i,1}^{a_1}\wtuithreadfunction{i,2}^{a_2}$ are homogeneous 
functions of degree $\chi$, so by Proposition 1 we have the following equality of rational maps 
from $X$ to $\CC$:
$$
\frac{\widetilde f}{\wtuithreadfunction{i,1}^{a_1}\wtuithreadfunction{i,2}^{a_2}}=
\frac{\overline f}{\oluithreadfunction{i,1}^{a_1}\oluithreadfunction{i,2}^{a_2}}\circ \pi.
$$
Therefore,
$$
d_x\widetilde f=\frac{\overline f(p)}{\oluithreadfunction{i,1}^{a_1}(p)\oluithreadfunction{i,2}^{a_2}(p)}
(a_1 d_x\wtuithreadfunction{i,1}+a_2 d_x\wtuithreadfunction{i,2})+
d_p\frac{\overline f}{\oluithreadfunction{i,1}^{a_1}\oluithreadfunction{i,2}^{a_2}}d_x\pi.
$$
Finally, we get 
\begin{multline*}
h(p)=(d_x\widetilde f) \stdvectorfiledxletter(x)=\frac{\overline f(p)}{\oluithreadfunction{i,1}^{a_1}(p)\oluithreadfunction{i,2}^{a_2}(p)}
(a_1 d_x\wtuithreadfunction{i,1}\stdvectorfiledxletter(x)+a_2 d_x\wtuithreadfunction{i,2}\stdvectorfiledxletter(x))
+d_p\frac{\overline f}{\oluithreadfunction{i,1}^{a_1}\oluithreadfunction{i,2}^{a_2}}d_x\pi \stdvectorfiledxletter(x)=\\
\frac{\overline f(p)}{\oluithreadfunction{i,1}^{a_1}(p)\oluithreadfunction{i,2}^{a_2}(p)}
(a_1 \uidescriptionfunction{i,1}(p)+a_2 \uidescriptionfunction{i,2}(p))
+d_p\frac{\overline f}{\oluithreadfunction{i,1}^{a_1}\oluithreadfunction{i,2}^{a_2}} \stdvectorfiledpletter_i (p).
\end{multline*}
\end{proof}

Summarizing, we have found an explicit description for the sheaves $\gi$, $\gii$, $\giii$, 
$\gvl{\chi}$, $\gvil{\chi}$, and $\gviil{\chi}$ and for the map $\psi$, 
and all sheaves involved in 
Proposition \ref{exactseqweak} can be obtained from these sheaves by taking a cokernel of a map we have explicitly 
described and 
forming a direct sum. 

By Corollary \ref{h1g5zero},
$$
\coker(H^1(\PP^1,\gi)
\stackrel{H^1(((\pi|_U)_*\psi)|_{\gi})^\circ}\longrightarrow 
H^1(\PP^1,\gv))=0,
$$
$$
\ker(H^1(\PP^1,\gi)
\stackrel{H^1(((\pi|_U)_*\psi)|_{\gi})^\circ}\longrightarrow 
H^1(\PP^1,\gv))=H^1(\PP^1,\gi),
$$
and the exact sequence from Proposition \ref{exactseqweak} can be written in the following form:
\begin{theorem}\label{thmsheavesdownstairs}
Let $\gi$, $\giv$, and $\gviii$ be the sheaves on $\PP^1$ introduced above, on pages \pageref{g1intropage}, \pageref{g4intropage}, 
and \pageref{g8intropage} (respectively).
Then the following sequence is exact:
$$
0\to H^1(\PP^1,\gi)
\to T^1(X)_0\to
H^0(\PP^1,\giv)
\stackrel{H^0((R^1(\pi|_U)_*\psi)|_{\giv})^\circ}\longrightarrow 
H^0(\PP^1,\gviii).
$$\qed
\end{theorem}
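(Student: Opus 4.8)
The plan is to rewrite the four-term exact sequence of Proposition~\ref{exactseqweak} by replacing each of the ``invariantly defined'' sheaves $\giinv$, $\givinv$, $\gvinv$, $\gviiiinv$ with the combinatorially described sheaves $\gi$, $\giv$, $\gv$, $\gviii$ to which they are isomorphic, and then to exploit the vanishing $H^1(\PP^1,\gv)=0$ to collapse the sequence into the asserted form. Concretely, I would first invoke Proposition~\ref{vfieldpushforward} for $\giinv\cong\gi$, Lemma~\ref{functpushforward} together with the direct-sum definition of $\gv$ for $\gvinv\cong\gv$, and the two computations carried out via Proposition~\ref{computederived} for $\givinv\cong\giv$ and $\gviiiinv\cong\gviii$. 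Each of these is an isomorphism of sheaves of $\OO_{\PP^1}$-modules, hence induces isomorphisms on $H^0$ and on $H^1$, and the resulting identifications transport the entire exact sequence of Proposition~\ref{exactseqweak} onto the sheaves $\gi$, $\giv$, $\gv$, $\gviii$.

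Next I would apply Corollary~\ref{h1g5zero}, which gives $H^1(\PP^1,\gv)=0$. Since the morphism $H^1(((\pi|_U)_*\psi)|_{\gi})^\circ$ has target $H^1(\PP^1,\gv)=0$, its kernel is all of $H^1(\PP^1,\gi)$ and its cokernel is zero. Substituting these two facts into the transported sequence, the leftmost kernel term becomes $H^1(\PP^1,\gi)$ and the rightmost cokernel term becomes $0$. Exactness of the original sequence at $T^1(X)_0$ and at the middle kernel term then yields
$$
0\to H^1(\PP^1,\gi)\to T^1(X)_0\to \ker\!\left(H^0(\PP^1,\giv)\to H^0(\PP^1,\gviii)\right)\to 0,
$$
and unwinding the definition of the kernel on the right recovers the claimed four-term exact sequence, in which exactness at $H^0(\PP^1,\giv)$ expresses precisely that the image of $T^1(X)_0$ equals $\ker(H^0((R^1(\pi|_U)_*\psi)|_{\giv})^\circ)$.

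The step I expect to require the most care, rather than genuine difficulty, is checking that the four sheaf isomorphisms are compatible with the morphisms induced by $\psi$, so that kernels and cokernels genuinely correspond under the transport. This compatibility is exactly what the transition computations supply: Lemma~\ref{vfieldtransition} and Lemma~\ref{functtransition} guarantee that the $U_i$-descriptions glue to the sheaves $\gi$, $\giv$, $\gv$, $\gviii$, while Lemma~\ref{uidescpsi} computes the effect of $\psi$ on $U_i$-descriptions, so that the transported map $H^0((R^1(\pi|_U)_*\psi)|_{\givinv})$ is by construction the map $H^0((R^1(\pi|_U)_*\psi)|_{\giv})^\circ$ introduced above. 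Once this identification is in place, the remaining argument is purely formal homological algebra and no further computation is needed.
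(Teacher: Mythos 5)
Your proposal is correct and follows essentially the same route as the paper: transport the exact sequence of Proposition~\ref{exactseqweak} through the isomorphisms with the combinatorially defined sheaves, then use Corollary~\ref{h1g5zero} to identify the left-hand kernel with all of $H^1(\PP^1,\gi)$ and kill the right-hand cokernel. The compatibility check you flag at the end is exactly what the paper relies on implicitly when it says the sheaves in Proposition~\ref{exactseqweak} are obtained from the explicitly described ones.
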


Let us prove one more lemma about functions defined on $U_i$. We will need it later.

\begin{lemma}\label{uiregularity}
Let $\chi\in\sck\cap M$ be a degree, and let $f_1\in\Gamma(\PP^1,\OO(\mathcal D(\chi)))$.
Let $f_2$ be a rational function on $\PP^1$. The rational function $f_3(x)=f_2(\pi(x))\widetilde{f_1}(x)$
on $X$ is regular on $U_i$ if and only if:
\begin{enumerate}
\item A rational function $f_2 \overline{f_1}$ is defined at all regular points of $V_i$.
\item For each special point $p\in V_i$, $\ord_p(f_2\overline{f_1})\ge -\uidegree{i,1}^*(\chi)\mathcal D_p(\uidegree{i,1})
-\uidegree{i,2}^*(\chi)\mathcal D_p(\uidegree{i,2})$
\end{enumerate}
\end{lemma}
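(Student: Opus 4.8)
The plan is to transport the entire question through the explicit isomorphism $U_i\cong V_i\times(\CC\setminus 0)\times L$ of Lemma \ref{uistructure}, under which $\pi$ becomes the projection onto $V_i$ and $\wtuithreadfunction{i,1},\wtuithreadfunction{i,2}$ become the coordinates $t_1,t_2$ on the factors $\CC\setminus 0$ and $L$. Abbreviate $a_1=\uidegree{i,1}^*(\chi)$ and $a_2=\uidegree{i,2}^*(\chi)$. First I would record the coordinate form of $f_3$: by Lemma \ref{uidescfunct} (its proof) the homogeneous function $\widetilde{f_1}$ of degree $\chi$, restricted to $U_i$, equals $t_1^{a_1}t_2^{a_2}$ times the pullback along $\pi$ of its $U_i$-description, and by Lemmas \ref{functpushforward} and \ref{pushforwardisdlambda} that description is $\overline{f_1}/(\oluithreadfunction{i,1}^{a_1}\oluithreadfunction{i,2}^{a_2})$. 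Since $f_2(\pi(x))$ depends only on $t_0=\pi(x)$, this yields the single-monomial expression
$$
f_3(t_0,t_1,t_2)=t_1^{a_1}t_2^{a_2}\,g(t_0),\qquad
g=\frac{f_2\,\overline{f_1}}{\oluithreadfunction{i,1}^{a_1}\oluithreadfunction{i,2}^{a_2}},
$$
where $g$ is a rational function on $V_i$.

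Next I would read off regularity from this expression. The coordinate ring of $V_i\times(\CC\setminus 0)\times L$ is $\CC[V_i]\otimes\CC[t_1,t_1^{-1}]\otimes\CC[L]$, where $\CC[L]=\CC[t_2,t_2^{-1}]$ if $L=\CC\setminus 0$ and $\CC[L]=\CC[t_2]$ if $L=\CC$. As $f_3$ is a single monomial in $t_1,t_2$ with coefficient $g(t_0)$, it is regular on $U_i$ if and only if $g$ is regular on all of $V_i$ and, in addition, $a_2\ge 0$ in case $L=\CC$ (the exponent $a_1$ is irrelevant since $t_1$ is invertible). I would then dispose of the boundary condition: $L=\CC$ occurs exactly when $\uidegree{i,1}\in\partial\sck$ by Lemma \ref{uistructure}, so $\QQ_{\ge 0}\uidegree{i,1}$ is one of the two rays of $\sck$; the functional $a_2=\uidegree{i,2}^*$ vanishes on this ray and equals $1$ at the interior point $\uidegree{i,2}$, hence is nonnegative on all of $\sck$. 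Since $\chi\in\sck$, this forces $a_2\ge 0$ automatically, so the boundary condition is vacuous and regularity of $f_3$ on $U_i$ is equivalent to regularity of $g$ on $V_i$.

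It remains to translate the regularity of $g$ on $V_i$ into the two stated conditions, point by point. At an ordinary point $p\in V_i$ (these are the points the statement calls \emph{regular}) the definition of $V_i$ gives $\ord_p\oluithreadfunction{i,1}=\ord_p\oluithreadfunction{i,2}=0$, so the denominator of $g$ is a unit at $p$ and $g$ is defined at $p$ precisely when $f_2\overline{f_1}$ is; this is condition 1. At a special point $p\in V_i$ the definition of $V_i$ gives $\ord_p\oluithreadfunction{i,1}=-\mathcal D_p(\uidegree{i,1})$ and $\ord_p\oluithreadfunction{i,2}=-\mathcal D_p(\uidegree{i,2})$, whence $g$ is regular at $p$ if and only if $\ord_p(f_2\overline{f_1})\ge -a_1\mathcal D_p(\uidegree{i,1})-a_2\mathcal D_p(\uidegree{i,2})$, which is exactly condition 2. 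Combining the two cases gives the claimed equivalence.

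The only points needing care are the legitimacy of the single-monomial form of $f_3$ --- which I obtain directly from the already-established $U_i$-descriptions, so that no fresh manipulation of quotients of equal-degree functions is required --- and the verification that the boundary case $L=\CC$ imposes no extra constraint. Both are handled above, so I do not anticipate a serious obstacle: the argument is essentially a bookkeeping of orders of vanishing against the cone geometry encoded in the definition of $V_i$.
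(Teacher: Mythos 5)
Your proof is correct and follows essentially the same route as the paper: both reduce the question to the regularity on $V_i$ of $f_2\overline{f_1}/(\oluithreadfunction{i,1}^{\uidegree{i,1}^*(\chi)}\oluithreadfunction{i,2}^{\uidegree{i,2}^*(\chi)})$ and then translate via $\ord_p(\oluithreadfunction{i,j})=-\mathcal D_p(\uidegree{i,j})$ at special points and $\ord_p(\oluithreadfunction{i,j})=0$ at ordinary points. The only presentational difference is that you read off the equivalence directly from the monomial form of $f_3$ in the product coordinates of Lemma \ref{uistructure} (noting that the case $L=\CC$ imposes no extra constraint since $\uidegree{i,2}^*\ge 0$ on $\sck$), whereas the paper compares $f_3$ with an auxiliary regular function $f_4$ at canonical points using Proposition \ref{quotmorph}.
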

\begin{proof}
Let us suppose that conditions 1 and 2 are satisfied and prove that 
$f_3$
is regular on $U_i$.

First, note that
$$
\uidescriptionfunction{}=\frac{f_2\overline{f_1}}{\oluithreadfunction{i,1}^{\uidegree{i,1}^*(\chi)}\oluithreadfunction{i,2}^{\uidegree{i,2}^*(\chi)}}
$$
is a regular function on $V_i$. Indeed, if $p\in V_i$ is an ordinary point, 
then $f_2\overline{f_1}$ has no pole at $p$, and the functions 
$\uithreadfunction{i,1}$ $\uithreadfunction{i,2}$ do not have poles or zeros at $p$.
And if $p$ is a special point, then 
$\ord_p(\oluithreadfunction{i,j})=-\mathcal D_p(\uidegree{i,j})$
for $j=1,2$.
So, $\uidescriptionfunction{}$ is the $U_i$-description of a regular homogeneous 
function of degree $\chi$
on $U_i$, which we will denote by $f_4$.

Let $p\in V_i$ be an ordinary point. Suppose that $f_2$ is defined at $p$. 
Then $f_3$ is defined at each point of $\pi^{-1}(p)\cap U_i$.
Let $x_0$ be the canonical point in $\pi^{-1}(p)\cap U_i$.
By the definition of an $U_i$-description, $f_4(x_0)=\uidescriptionfunction{}(p)$.
On the other hand,
by Proposition \ref{quotmorph}, 
$$
\uidescriptionfunction{}(p)=
f_2(\pi(x_0))\frac{\widetilde f_1(x_0)}
{\wtuithreadfunction{i,1}^{\uidegree{i,1}^*(\chi)}(x_0)\wtuithreadfunction{i,2}^{\uidegree{i,2}^*(\chi)}}(x_0).
$$
Since $x_0$ is the canonical point in $\pi^{-1}(p)\cap U_i$, 
$\wtuithreadfunction{i,1}(x_0)=\wtuithreadfunction{i,2}(x_0)=1$, and 
$\uidescriptionfunction{}(p)=f_3(x_0)$. So, $f_4(x_0)=f_3(x_0)$.
Moreover, both $f_3$ and $f_4$ are homogeneous functions of degree $\chi$ with respect to the torus action, 
so, $f_3$ and $f_4$ coincide on the whole fiber $U_i\cap \pi^{-1}(p)$.

All ordinary points $p\in V_i$ such that $f_2$ is defined at $p$ form a non-empty open subset of $V_i$. 
Therefore, $f_3$ coincides with $f_4$ on an open subset of $U_i$. But $f_4$ is regular on $U_i$, 
so $f_3$ is regular on $U_i$ as well.

Now suppose that $f_3$ is regular on $U_i$. 
Again consider the following rational function $\uidescriptionfunction{}$ on $\PP^1$:
$$
\uidescriptionfunction{}=\frac{f_2\overline{f_1}}{\oluithreadfunction{i,1}^{\uidegree{i,1}^*(\chi)}\oluithreadfunction{i,2}^{\uidegree{i,2}^*(\chi)}}
$$
If $p\in V_i$ is an ordinary point, $f_2$ is defined at $p$, and $x_0$ is the canonical point 
in $\pi^{-1}(p)\cap U_i$, 
then by Proposition \ref{quotmorph},
$$
\uidescriptionfunction{}(p)=
f_2(\pi(x_0))\frac{\widetilde f_1(x_0)}
{\wtuithreadfunction{i,1}^{\uidegree{i,1}^*(\chi)}(x_0)\wtuithreadfunction{i,2}^{\uidegree{i,2}^*(\chi)}}(x_0).
$$
Since $x_0$ is the canonical point, $\uidescriptionfunction{}(p)=f_2(\pi(x_0))\widetilde f_1(x_0)=f_3(x_0)$.
Therefore, $\uidescriptionfunction{}$ coincides with the $U_i$-description of $f_3$ on a non-empty open 
subset of $V_i$.
But then $\uidescriptionfunction{}$ is the $U_i$-description of $f_3$, and $\uidescriptionfunction{}$ is defined everywhere on $V_i$.
Again, recall that 
$\uithreadfunction{i,1}$ $\uithreadfunction{i,2}$ do not have poles or zeros at ordinary points of $V_i$, 
and if $p\in V_i$ is a special point, 
then 
$\ord_p(\oluithreadfunction{i,j})=-\mathcal D_p(\uidegree{i,j})$
for $j=1,2$.
\end{proof}

\chapter{Combinatorial formula for the dimension of the graded component of $T^1$ of degree zero}\label{combformula}

\section{Construction of a particular sufficient system}\label{sectsuffsystemconstruction}

Without loss of generality, in this section we will assume that there are at least two special points
(we always can add trivial special points).
Recall that we have a coordinate function $t$ on $\PP^1$.
Now we will need more coordinate functions on $\PP^1$
(i.~e. rational functions with one pole and one zero, 
both are of order 1).
Namely, for each special point $p\in\PP^1$, we will need a coordinate function on $\PP^1$ that vanishes at $p$. 
Choose such coordinate functions and denote them by $t_p$.
We are also going to construct a sufficient system of sets $U_i$ more explicitly.

\begin{lemma}\label{giexist}
Let $p\in \PP^1$ be a special point and let $\chi\in\sck\cap M$ be a degree. 
There exists a rational function $f\in\Gamma(\PP^1,\OO(\mathcal D(\chi)))$ such that 
$\ord_p(f)=-\mathcal D_p(\chi)$, and $f$ does not have zeros or poles at 
ordinary points.
\end{lemma}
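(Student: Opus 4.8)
The plan is to translate everything into the language of divisors on $\PP^1$ and then exploit the fact, recalled earlier in the text, that on $\PP^1$ a divisor is principal exactly when it has degree zero. Recall that membership $f\in\Gamma(\PP^1,\OO(\mathcal D(\chi)))$ is equivalent to $\div(f)+\mathcal D(\chi)\ge 0$, i.e. $\ord_q(f)\ge -\mathcal D_q(\chi)$ for every $q\in\PP^1$. First I would record that for an ordinary point $q$ we have $\stdpolyhedronletter_q=\6$, so $\mathcal D_q(\chi)=\min_{a\in\6}\chi(a)=0$, because $\chi\in\sck$ is nonnegative on $\6$ and vanishes at the apex. Hence $\mathcal D(\chi)$ is supported on the special points, and the three requirements on $f$ become requirements on the effective divisor $E:=\div(f)+\mathcal D(\chi)$: the membership condition is exactly $E\ge0$; the requirement $\ord_p(f)=-\mathcal D_p(\chi)$ says that the coefficient of $p$ in $E$ is zero; and the absence of zeros and poles at ordinary points says that $E$ has coefficient zero at every ordinary point. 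In other words, producing $f$ amounts to producing an effective divisor $E$ that is supported on the special points other than $p$.

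Next I would construct such an $E$ directly and reconstruct $f$ from it. Since $\mathcal D$ is proper, $\mathcal D(\chi)$ is semiample on $\PP^1$, so $\deg\mathcal D(\chi)\ge0$. Using the standing assumption of this section that there are at least two special points, choose a special point $q\ne p$ and set $E:=(\deg\mathcal D(\chi))\,q$, an effective divisor supported at the single point $q\ne p$ whose degree equals $\deg\mathcal D(\chi)$. Then $E-\mathcal D(\chi)$ is a divisor of degree $0$ on $\PP^1$, hence principal, so there is a rational function $f$ with $\div(f)=E-\mathcal D(\chi)$.

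Finally I would verify the three conditions for this $f$. We have $\div(f)+\mathcal D(\chi)=E\ge0$, so $f\in\Gamma(\PP^1,\OO(\mathcal D(\chi)))$; the coefficient of $p$ in $E$ is zero, giving $\ord_p(f)=-\mathcal D_p(\chi)$; and at every ordinary point $q'$ the coefficients of both $E$ and $\mathcal D(\chi)$ vanish, giving $\ord_{q'}(f)=0$, so $f$ has neither zero nor pole there. The only points needing care are the two hypotheses that make the construction possible: the existence of a special point $q\ne p$, supplied by the assumption of at least two special points, and the nonnegativity of $\deg\mathcal D(\chi)$, supplied by semiampleness. This is in fact the genuine (and essentially the only) obstacle, and it is exactly what the section's standing assumption removes: if $p$ were the only special point while $\deg\mathcal D(\chi)>0$, then no such $f$ could exist, since the conditions would force $\div(f)=-\mathcal D_p(\chi)\,p$, a divisor of nonzero degree.
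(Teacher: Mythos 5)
Your proof is correct and rests on the same mechanism as the paper's: since $\mathcal D(\chi)$ is supported on the special points and has nonnegative degree, one realizes a suitable effective divisor supported away from $p$ and away from the ordinary points as $\div(f)+\mathcal D(\chi)$, using that degree-zero divisors on $\PP^1$ are principal. The only cosmetic difference is that you concentrate the compensating degree at a single auxiliary special point $q\ne p$, whereas the paper distributes it over all special points $p_j\ne p$ (with exponents $a_j\le\mathcal D_{p_j}(\chi)$ summing to $-\mathcal D_p(\chi)$) and writes $f$ down explicitly as a product of powers of translates of a coordinate function; both constructions invoke the section's standing assumption of at least two special points in exactly the same way.
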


\begin{proof}
Choose a rational function $f$ on $\PP^1$ that has one simple zero and one simple pole, and 
that takes finite values at all special points. (For example, if $t=\infty$ is an ordinary point, we can take 
$f=t$, otherwise we can take $f=1/(t-a)$, where $a\in\CC$ and $t=a$ is an ordinary point.) 
Then each function $f-a$, where $a\in\CC$ again has one simple zero and one simple pole.

Recall that we have denoted all special points by $p_1,\ldots,p_{\numberofdivisorpoints}$.
Let $p=p_i$.
Denote $a_i=\mathcal D_p(\chi)$.
Since $\deg \mathcal D(\chi)\ge 0$, there exist $a_1, \ldots, a_{i-1}, a_{i+1},\ldots, a_{\numberofdivisorpoints}\in\ZZ$ 
such that $a_1+\ldots+a_{\numberofdivisorpoints}=0$ and $a_j\le \mathcal D_{p_j}(\chi)$ for $1\le j\le \numberofdivisorpoints$.
Consider the following function: 
$f_1=(f-f(p_1))^{-a_1}\ldots(f-f(p_{\numberofdivisorpoints}))^{-a_{\numberofdivisorpoints}}$
Since the sum of the exponents is zero, $f_1$ is defined and takes value 1 
at the (ordinary) point of $\PP^1$ where $f=\infty$.
Clearly, $f_1$ has no zeros or poles at other ordinary points. At $p$, we have 
$\ord_p (f_1)=-a_i=-\mathcal D_p(\chi)$, and at $p_j$ ($j\ne i$), we have $\ord_{p_j}(f_1)=-a_j\ge-\mathcal D_{p_j}(\chi)$.
\end{proof}

We are going to use a sufficient system 
$U_1,\ldots, U_{\numberoffixedopensets}$
constructed as follows.
We have several (in fact, up to two) sets $U_i$ for every pair $(p,j)$, where $p\in\PP^1$ is a special point, and 
$j$ corresponds to a vertex $\indexedvertexpt{p}{j}$ of $\stdpolyhedronletter_p$ ($1\le j\le \numberofverticespt{p}$, we 
write $(p,j)$ instead of $(p,\indexedvertexpt{p}{j})$ to simplify notation).
Each of 
these sets $U_i$ chosen for $(p,j)$ 
corresponds to a 
face of $\normalvertexcone{\indexedvertexpt{p}{j}}{\stdpolyhedronletter_p}$ (which can be 
$\normalvertexcone{\indexededgept{p}{j-1}}{\stdpolyhedronletter_p}$, $\normalvertexcone{\indexededgept{p}{j-1}}{\stdpolyhedronletter_p}$, 
or the interior of $\normalvertexcone{\indexedvertexpt{p}{j}}{\stdpolyhedronletter_p}$).
These sets together are $U_1,\ldots, U_{\numberoffixedopensets-1}$
Additionally, 
we will use one more set, which is $U_\numberoffixedopensets$, and which does not correspond to any special point.

More precisely, for every special point $p$,
for every 
vertex $\indexedvertexpt{p}{j}$ of $\stdpolyhedronletter_p$
and for each of the two rays 
$\normalvertexcone{\indexededgept{p}{j-1}}{\stdpolyhedronletter_p}$
and $\normalvertexcone{\indexededgept{p}{j}}{\stdpolyhedronletter_p}$
forming 
$\partial\normalvertexcone{\indexedvertexpt{p}{j}}{\stdpolyhedronletter_p}$
we choose a basis of $M$ 
as follows. First, let $\chi\in M$ be the lattice basis of the chosen ray. If $\chi\notin\partial\sck$, we do not choose a basis 
for this pair $(p,j)$ and for this ray. If $\deg\mathcal D(\chi)=0$, we do not choose a basis 
for this pair $(p,j)$ and for this ray. Otherwise, we choose a basis $\uidegree{i,1},\uidegree{i,2}$ of $M$, where $\uidegree{i,1}=\chi$ and $\uidegree{i,2}$ is a lattice point 
in the interior of $\normalvertexcone{\indexedvertexpt{p}{j}}{\stdpolyhedronletter_p}$.

We choose a basis of $M$ corresponding to a pair $(p,j)$ and to the interior of 
$\normalvertexcone{\indexedvertexpt{p}{j}}{\stdpolyhedronletter_p}$
only if at the previous 
step we finally did not choose any basis corresponding to the pair $(p,j)$ and 
to one of the two rays 
$\normalvertexcone{\indexededgept{p}{j-1}}{\stdpolyhedronletter_p}$
and $\normalvertexcone{\indexededgept{p}{j}}{\stdpolyhedronletter_p}$
(for example, this can happen if 
$\partial\sck\cap\partial\normalvertexcone{\indexedvertexpt{p}{j}}{\stdpolyhedronletter_p}=0$). 
In this case, we choose a basis 
$\uidegree{i,1},\uidegree{i,2}$ of $M$ such that $\uidegree{i,1}$ and $\uidegree{i,2}$ are lattice points in the interior of 
$\normalvertexcone{\indexedvertexpt{p}{j}}{\stdpolyhedronletter_p}$.
We continue using the notation $\uidegree{i,1}^*,\uidegree{i,2}^*$ for the dual bases.

Observe that we chose exactly one or two bases for each pair $(p,j)$. We chose two bases if and only if 
$p$ is a removable special point and
$\deg\mathcal D(\sckboundarybasis{0})>0$ and $\deg\mathcal D(\sckboundarybasis{1})>0$.

Now for every chosen basis, we choose functions $\uithreadfunction{i,1}\in\Gamma(\PP^1,\OO(\mathcal D(\uidegree{i,1})))$ 
and $\uithreadfunction{i,2}\in\Gamma(\PP^1,\OO(\mathcal D(\uidegree{i,2})))$
satisfying the conditions of Lemma \ref{giexist} for the corresponding special point $p$ and the degree 
$\uidegree{i,1}$ or $\uidegree{i,2}$, respectively. Then we may set $V_i$ to consist of all ordinary points and $p$.

We enumerate the sets $U_i$ so that the sets corresponding to $p_1\in \PP^1$ go first, then the sets corresponding to $p_2$, etc.
Among the sets corresponding to a single essential special point $p$, we have exactly one set $U_i$ for each vertex of 
$\stdpolyhedronletter_p$. We enumerate them along with the enumeration of vertices, i.~e. first 
we take the set corresponding to $(p,1)$, then the set corresponding to $(p,2)$, etc, then the set corresponding 
to $(p,\numberofverticespt{p})$.
If we have two sets $U_i$ corresponding to the same removable special point, we enumerate them arbitrarily.

These were the sets $U_1,\ldots,U_{\numberoffixedopensets-1}$.
To define the set for the sufficient system, i.~e. $U_\numberoffixedopensets$, 
choose an arbitrary basis $\uidegree{\numberoffixedopensets,1},\uidegree{\numberoffixedopensets,2}$ 
of $M$ such that $\uidegree{\numberoffixedopensets,1},\uidegree{\numberoffixedopensets,2}$
are in the interior of $\sck$, and 
choose functions 
$\uithreadfunction{\numberoffixedopensets,1}\in\Gamma(\PP^1,\OO(\mathcal D(\uidegree{\numberoffixedopensets,1})))$ 
and $\uithreadfunction{\numberoffixedopensets,2}\in\Gamma(\PP^1,\OO(\mathcal D(\uidegree{\numberoffixedopensets,2})))$ that 
do not have zeros or poles at ordinary points (such functions exist by Lemma \ref{giexist}). 
In this case, let $V_\numberoffixedopensets$ be the set of all ordinary points. 


Note that if we remove 
$U_{\numberoffixedopensets}$,
we will 
still get a sufficient system.
We will use whole system $U_1,\ldots,U_{\numberoffixedopensets}$
to compute $H^1(\PP^1,\gi)$, 
and the smaller sufficient system $U_1,\ldots, U_{\numberoffixedopensets-1}$
to compute $H^0(\PP^1,\giv)$ and $H^0(\PP^1,\gviii)$, 
During the computation of 
$H^0(\PP^1,\giv)$ and $H^0(\PP^1,\gviii)$,
the set $U_{\numberoffixedopensets}$
will only be used sometimes
to define $U_{\numberoffixedopensets}$-descriptions sometimes.

\section{Computation of the dimension of $H^1(\PP^1,\gi)$}

We start with $H^1(\PP^1,\gi)$. To compute this space, 
we need an affine covering of $\PP^1$. So, for each special point $p\in\PP^1$, we denote 
by $W_p$ the set consisting of all ordinary points of $\PP^1$ and $p$. 
These sets $W_p$ really form an affine covering 
since we have at least two special points. 
Denote also the set of 
all ordinary points by $W$. 
It follows directly from the definition of $\gi$
that the restriction maps to nonempty open sets are injective.
Note also that if $p\ne p'$ are special points, then $W_p\cap W_{p'}=W$.
So we can use Corollary \ref{h1compute} for Cech cohomology. By Corollary \ref{h1compute},
$$
H^1(\PP^1,\gi)=\left.\!\left(\bigoplus_{p\text{ special point}}\Big(\Gamma(W,\gi)/\Gamma(W_p,\gi)\Big)\right)\right/\Gamma(W,\gi).
$$

For an essential special point $p$, denote by $\gsi{p,\numberoffixedopensets}$ the space 
of triples $(\uidescriptionfunction{\numberoffixedopensets,1}, \uidescriptionfunction{\numberoffixedopensets,2}, \stdvectorfiledpletter)$, 
where $\uidescriptionfunction{\numberoffixedopensets,1},\uidescriptionfunction{\numberoffixedopensets,2}\in
\Gamma(W_p,\OO_{\PP^1})$, $\stdvectorfiledpletter\in\Gamma(W_p,\Theta_{\PP^1})$, and 
$\stdvectorfiledpletter(p)=0$.
The last index $\numberoffixedopensets$ indicates that these triples will be considered as $U_{\numberoffixedopensets}$-descriptions 
of vector fields on $\pi^{-1}(W_p)\cap U$.

\begin{lemma}\label{wpvfieldpushforward}
Let $p\in\PP^1$ be an essential special point.
Then $\Gamma(W_p, \gi)$ can be identified with $\gsi{p,\numberoffixedopensets}$.

The isomorphism here maps 
$(\uidescriptionfunction{\numberoffixedopensets,1},\uidescriptionfunction{\numberoffixedopensets,2},\stdvectorfiledpletter)\in \gsi{p,\numberoffixedopensets}$
to 
$(\uidescriptionfunction{1,1},\uidescriptionfunction{1,2}, 
\ldots,\uidescriptionfunction{\numberoffixedopensets,1},\uidescriptionfunction{\numberoffixedopensets,2},\stdvectorfiledpletter)$, 
where 
$$
\left(
\begin{array}{c}
\uidescriptionfunction{i,1}\\
\uidescriptionfunction{i,2}\\
\stdvectorfiledpletter
\end{array}
\right)
=
\uilargetransition{\numberoffixedopensets,i}
\left(
\begin{array}{c}
\uidescriptionfunction{\numberoffixedopensets,1}\\
\uidescriptionfunction{\numberoffixedopensets,2}\\
\stdvectorfiledpletter
\end{array}
\right).
$$
\end{lemma}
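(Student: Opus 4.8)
The plan is to treat the displayed formula as the definition of a map $\Phi\colon\gsi{p,\numberoffixedopensets}\to\Gamma(W_p,\gi)$ and to verify that it is well defined and bijective; the essentialness of $p$ will enter only through the behaviour of the transition matrices near $p$. The guiding identities are the cocycle relation $\uilargetransition{i,k}=\uilargetransition{j,k}\uilargetransition{i,j}$ of Lemma \ref{rijcocycle} and the pole count of Lemma \ref{logderpole}.

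First I would verify that $\Phi$ lands in $\Gamma(W_p,\gi)$. Given a triple $(\uidescriptionfunction{\numberoffixedopensets,1},\uidescriptionfunction{\numberoffixedopensets,2},\stdvectorfiledpletter)$ with $\stdvectorfiledpletter(p)=0$, define $\uidescriptionfunction{i,1},\uidescriptionfunction{i,2}$ by applying $\uilargetransition{\numberoffixedopensets,i}$. At an ordinary point $q\in V_i$ all entries of $\uilargetransition{\numberoffixedopensets,i}$ are regular by Lemma \ref{rijregular}, so $\uidescriptionfunction{i,k}$ is regular at $q$. At $p$ (when $p\in V_i$) the only entries that can fail to be regular are the logarithmic-derivative entries of the last column, and in the matrix product these are paired with the third component $\stdvectorfiledpletter$. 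In a local coordinate $t_p$ at $p$ such an entry is a rational $1$-form $\omega=(c/t_p+\cdots)\,dt_p$ with at worst a simple pole, while $\stdvectorfiledpletter=g\,\vfield{t_p}$ with $g$ regular; hence $\omega(\stdvectorfiledpletter)=(c/t_p+\cdots)g$ is regular at $p$ precisely because $g(p)=\stdvectorfiledpletter(p)=0$. Thus every $\uidescriptionfunction{i,k}$ is regular on $V_i\cap W_p$. The full transition condition then holds for all pairs $i,i'$: applying $\uilargetransition{i,i'}$ to the $i$-th column and using $\uilargetransition{i,i'}\uilargetransition{\numberoffixedopensets,i}=\uilargetransition{\numberoffixedopensets,i'}$ (Lemma \ref{rijcocycle}) yields the $i'$-th column, while the bottom row $(0,0,1)$ keeps the third entry equal to $\stdvectorfiledpletter$ throughout.

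The crux is surjectivity. Starting from an arbitrary section of $\gi$ over $W_p$, its $U_{\numberoffixedopensets}$-component $(\uidescriptionfunction{\numberoffixedopensets,1},\uidescriptionfunction{\numberoffixedopensets,2},\stdvectorfiledpletter)$ is the obvious candidate preimage, and I must show $\stdvectorfiledpletter(p)=0$ and that $\uidescriptionfunction{\numberoffixedopensets,k}$ extends regularly across $p$. Because $p$ is essential, $\stdpolyhedronletter_p$ has at least two vertices, so by the construction of a sufficient system there are indices $i,j$ with $p\in V_i\cap V_j$ for which $\uidegree{i,1}$ and $\uidegree{j,1}$ lie in the normal cones of two distinct vertices of $\stdpolyhedronletter_p$. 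Lemma \ref{logderpole} then gives that the non-constant entries of $\uilargetransition{i,j}$ have poles of order exactly $1$ at $p$. Since $\uidescriptionfunction{i,k}$ and $\uidescriptionfunction{j,k}$ are regular at $p$ for a genuine section, the transition formula of Lemma \ref{vfieldtransition} forces the term $\omega(\stdvectorfiledpletter)$, with $\omega$ carrying a genuine simple pole, to be regular at $p$, which by the local computation above is possible only when $\stdvectorfiledpletter(p)=0$. With this in hand, regularity of $\uidescriptionfunction{\numberoffixedopensets,k}$ at $p$ follows from the same argument applied to $\uilargetransition{i,\numberoffixedopensets}$, so the candidate triple indeed lies in $\gsi{p,\numberoffixedopensets}$ and $\Phi$ returns the original section.

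Injectivity is then immediate, since $\uilargetransition{\numberoffixedopensets,\numberoffixedopensets}$ is the identity and $\Phi$ recovers the original $U_{\numberoffixedopensets}$-component; two triples inducing the same section agree on the dense set of ordinary points and hence, being regular on $W_p$, agree at $p$ as well. I expect the single genuine obstacle to be the deduction $\stdvectorfiledpletter(p)=0$ in the surjectivity step, which is exactly the point at which the hypothesis that $p$ is an \emph{essential} special point (two distinct vertex normal cones, hence a real pole in Lemma \ref{logderpole}) is used; the remaining steps are routine bookkeeping with the cocycle relation and the regularity lemmas.
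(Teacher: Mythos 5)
Your proposal is correct and follows essentially the same route as the paper: well-definedness via the cocycle relation, Lemma \ref{rijregular}, and the fact that the logarithmic-derivative entries have at worst a simple pole killed by $\stdvectorfiledpletter(p)=0$; surjectivity by comparing two charts attached to distinct vertices of $\stdpolyhedronletter_p$ and invoking the exact simple pole of Lemma \ref{logderpole} to force $\stdvectorfiledpletter(p)=0$. The identification of where essentialness enters is exactly the paper's.
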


\begin{proof}
First, we have to check that the $(2 \numberoffixedopensets+1)$-tuple obtained this way 
from an element of $\gsi{p,\numberoffixedopensets}$ really defines an element of $\Gamma(W_p,\gi)$. 
The equalities
$$
\left(
\begin{array}{c}
\uidescriptionfunction{i,1}\\
\uidescriptionfunction{i,2}\\
\stdvectorfiledpletter
\end{array}
\right)
=
\uilargetransition{j,i}
\left(
\begin{array}{c}
\uidescriptionfunction{j,1}\\
\uidescriptionfunction{j,2}\\
\stdvectorfiledpletter
\end{array}
\right)
$$
for arbitrary indices $i,j$ follow from Lemma \ref{rijcocycle}. All functions $\uidescriptionfunction{i,1}$ and $\uidescriptionfunction{i,2}$ 
are regular at ordinary points by Lemma \ref{rijregular}. Let $i$ be an index such that 
$U_i$ corresponds to the special point $p$ in the above construction. We have
$$
\uidescriptionfunction{i,1}=\uidegree{\numberoffixedopensets,1}^*(\uidegree{i,1})\uidescriptionfunction{\numberoffixedopensets,1}+
\uidegree{\numberoffixedopensets,2}^*(\uidegree{i,1})\uidescriptionfunction{\numberoffixedopensets,2}+
\frac{\oluithreadfunction{\numberoffixedopensets,1}^{\uidegree{\numberoffixedopensets,1}^*(\uidegree{i,1})}
\oluithreadfunction{\numberoffixedopensets,2}^{\uidegree{\numberoffixedopensets,2}^*(\uidegree{i,1})}}{\oluithreadfunction{i,1}} 
d\left(\frac{\oluithreadfunction{i,1}}{\oluithreadfunction{\numberoffixedopensets,1}^{\uidegree{\numberoffixedopensets,1}^*(\uidegree{i,1})}
\oluithreadfunction{\numberoffixedopensets,2}^{\uidegree{\numberoffixedopensets,2}^*(\uidegree{i,1})}}\right)\stdvectorfiledpletter.
$$
The covector field in the last summand 
is a logarithmic derivative of a rational function on $\PP^1$, so it cannot have 
a pole of order more than 1. 
Since $\stdvectorfiledpletter(p)=0$, $\uidescriptionfunction{i,1}$ is defined at $p$. The argument for $\uidescriptionfunction{i,2}$ is similar.

Clearly, this map from the space of triples to $\Gamma(W_p,\gi)$ is injective. To prove surjectivity, 
we have to check that if $(\uidescriptionfunction{1,1},\uidescriptionfunction{1,2}, \ldots,
\uidescriptionfunction{\numberoffixedopensets,1},\uidescriptionfunction{\numberoffixedopensets,2},\stdvectorfiledpletter)\in\Gamma(W_p,\gi)$, 
then $\stdvectorfiledpletter(p)=0$ and $\uidescriptionfunction{\numberoffixedopensets,1}$ and 
$\uidescriptionfunction{\numberoffixedopensets,2}$ have no poles at $p$. Let $U_i$ and $U_j$ be two open subsets 
corresponding to the special point $p$ and two 
normal subcones of two different vertices of $\stdpolyhedronletter_p$.
If $\stdvectorfiledpletter(p)\ne 0$, then by Lemma \ref{logderpole}, 
$$
\ord_p\left(\frac{\oluithreadfunction{i,1}^{\uidegree{i,1}^*(\uidegree{j,1})}
\oluithreadfunction{i,2}^{\uidegree{i,2}^*(\uidegree{j,1})}}
{\oluithreadfunction{j,1}}
d\left(\frac{\oluithreadfunction{j,1}}{\oluithreadfunction{i,1}^{\uidegree{i,1}^*(\uidegree{j,1})}
\oluithreadfunction{i,2}^{\uidegree{i,2}^*(\uidegree{j,1})}}\right)\stdvectorfiledpletter
\right)=-1,
$$
and $\uidescriptionfunction{j,1}$, $\uidescriptionfunction{i,1}$ and $\uidescriptionfunction{i,2}$ cannot be defined at $p$ simultaneously. Therefore, $\stdvectorfiledpletter(p)=0$. Finally, 
$$
\uidescriptionfunction{\numberoffixedopensets,1}=
\uidegree{i,1}^*(\uidegree{\numberoffixedopensets,1})\uidescriptionfunction{i,1}+
\uidegree{i,2}^*(\uidegree{\numberoffixedopensets,1})\uidescriptionfunction{i,2}+
\frac{\oluithreadfunction{i,1}^{\uidegree{i,1}^*(\uidegree{\numberoffixedopensets,1})}
\oluithreadfunction{i,2}^{\uidegree{i,2}^*(\uidegree{\numberoffixedopensets,1})}}{\oluithreadfunction{\numberoffixedopensets,1}} 
d\left(\frac{\oluithreadfunction{\numberoffixedopensets,1}}{\oluithreadfunction{i,1}^{\uidegree{i,1}^*(\uidegree{\numberoffixedopensets,1})}
\oluithreadfunction{i,2}^{\uidegree{i,2}^*(\uidegree{\numberoffixedopensets,1})}}\right)\stdvectorfiledpletter.
$$
Again, covector field in the last summand here is a logarithmic derivative of a rational function on $\PP^1$, so it cannot have 
a pole of order more than 1. Since $\stdvectorfiledpletter(p)=0$, $\uidescriptionfunction{\numberoffixedopensets,1}$ has no pole at $p$. 
Similarly, $\uidescriptionfunction{\numberoffixedopensets,2}$ has no pole at $p$.
\end{proof}

Now let $p$ be an essential special point. Recall that $t_p$ is a coordinate function on $\PP^1$
that has a (simple) zero at $p$.
Denote by $\abstractvectorspace_{0,0,p}$
the space of triples of Laurent polynomials of the form 
$(a_{1,-1}t_p^{-1}+\ldots+a_{1,-n_1}t_p^{-n_1},
a_{2,-1}t_p^{-1}+\ldots+a_{2,-n_2}t_p^{-n_2}, 
(b_0+b_{-1}t_p^{-1}+\ldots+b_{-n_3}t_p^{-n_3})\partial/\partial t_p)$.

\begin{lemma}\label{wwpquotient}
If $p\in\PP^1$ is an essential special point,
then 
$\abstractvectorspace_{0,0,p}$ is isomorphic to $\Gamma(W,\gi)/\Gamma(W_p,\gi)$. The isomorphism here 
is the composition of the map 
$$
(\uidescriptionfunction{\numberoffixedopensets,1},
\uidescriptionfunction{\numberoffixedopensets,2}, 
\stdvectorfiledpletter)\mapsto(\uidescriptionfunction{1,1},\uidescriptionfunction{1,2}, 
\ldots,\uidescriptionfunction{\numberoffixedopensets,1},\uidescriptionfunction{\numberoffixedopensets,2},\stdvectorfiledpletter)
\in\Gamma(W,\gi),
$$ 
where
$$
\left(
\begin{array}{c}
\uidescriptionfunction{i,1}\\
\uidescriptionfunction{i,2}\\
\stdvectorfiledpletter
\end{array}
\right)
=
\uilargetransition{\numberoffixedopensets,i}
\left(
\begin{array}{c}
\uidescriptionfunction{\numberoffixedopensets,1}\\
\uidescriptionfunction{\numberoffixedopensets,2}\\
\stdvectorfiledpletter
\end{array}
\right),
$$
and the canonical projection 
$\Gamma(W,\gi)\to\Gamma(W,\gi)/\Gamma(W_p,\gi)$.

If $(\uidescriptionfunction{1,1}',\uidescriptionfunction{1,2}',\ldots, 
\uidescriptionfunction{\numberoffixedopensets,1}', \uidescriptionfunction{\numberoffixedopensets,2}',\stdvectorfiledpletter')\in 
\Gamma(W,\gi)$ is a section that belongs to the same
coset in 
$\Gamma(W,\gi)/\Gamma(W_p,\gi)$ 
as 
the image of 
$(\uidescriptionfunction{\numberoffixedopensets,1},
\uidescriptionfunction{\numberoffixedopensets,2}, 
\stdvectorfiledpletter)
\in \abstractvectorspace_{0,0,p}$ 
under the isomorphism above,
then
$\uidescriptionfunction{\numberoffixedopensets,1}'-\uidescriptionfunction{\numberoffixedopensets,1}$ and 
$\uidescriptionfunction{\numberoffixedopensets,2}'-\uidescriptionfunction{\numberoffixedopensets,1}$ are functions regular at $p$, and 
$\stdvectorfiledpletter'-\stdvectorfiledpletter$ is a vector field that 
vanishes at $p$.
\end{lemma}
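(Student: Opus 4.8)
The plan is to reduce everything to $U_{\numberoffixedopensets}$-descriptions and then read off the quotient as the space of principal parts at $p$. First I would identify the two section spaces concretely. Since $W$ consists of the ordinary points only and $V_i\cap W$ equals the set of ordinary points for every $i$ (each $V_i$ contains all ordinary points and at most one special point), exactly the argument of Lemma \ref{wpvfieldpushforward}, but now without any regularity requirement at $p$, shows that a section of $\gi$ over $W$ is determined by its $U_{\numberoffixedopensets}$-description $(\uidescriptionfunction{\numberoffixedopensets,1}, \uidescriptionfunction{\numberoffixedopensets,2}, \stdvectorfiledpletter)$, which may be an arbitrary triple of a vector field and two functions regular at all ordinary points; the remaining entries are recovered through the matrices $\uilargetransition{\numberoffixedopensets,i}$ and are regular at ordinary points by Lemmas \ref{rijregular} and \ref{rijcocycle}. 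On the other hand, Lemma \ref{wpvfieldpushforward} identifies $\Gamma(W_p,\gi)$ with $\gsi{p,\numberoffixedopensets}$, i.e.\ with those triples whose $U_{\numberoffixedopensets}$-description is moreover regular at $p$ and whose vector-field component vanishes at $p$.

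With these identifications, the map in the statement sends a triple from $\abstractvectorspace_{0,0,p}$ to the section of $\gi$ over $W$ having that triple as its $U_{\numberoffixedopensets}$-description, followed by the projection to $\Gamma(W,\gi)/\Gamma(W_p,\gi)$. To see that this is legitimate I would observe that each Laurent polynomial in $t_p^{-1}$ is a rational function whose only pole lies at $p$ (as $t_p$ vanishes only at $p$), and that $\vfield{t_p}$ is a global regular vector field on $\PP^1$; hence every component of a triple in $\abstractvectorspace_{0,0,p}$ is regular at all ordinary points, so the triple really is the $U_{\numberoffixedopensets}$-description of a section over $W$.

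For surjectivity, given $\sigma\in\Gamma(W,\gi)$ with $U_{\numberoffixedopensets}$-description $(\uidescriptionfunction{\numberoffixedopensets,1}', \uidescriptionfunction{\numberoffixedopensets,2}', \stdvectorfiledpletter')$ I would subtract the principal parts at $p$: take $g_1,g_2$ to be the strictly polar parts of $\uidescriptionfunction{\numberoffixedopensets,1}', \uidescriptionfunction{\numberoffixedopensets,2}'$ and take $v$ to be the polar part of $\stdvectorfiledpletter'$ together with its constant term at $p$ (written in the frame $\vfield{t_p}$). Then the difference has a $U_{\numberoffixedopensets}$-description regular at $p$ whose vector-field part vanishes at $p$, so by Lemma \ref{wpvfieldpushforward} it lies in $\Gamma(W_p,\gi)$, i.e.\ $\sigma$ and the image of $(g_1,g_2,v)$ agree in the quotient. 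For injectivity, a triple $(g_1,g_2,v)$ mapping to zero lies in $\Gamma(W_p,\gi)$, hence its components are regular at $p$ with $v(p)=0$; but $g_1,g_2$ have only negative powers of $t_p$, so regularity forces $g_1=g_2=0$, while $v=(b_0+\ldots)\vfield{t_p}$ is regular at $p$ only if all negative powers vanish and satisfies $v(p)=0$ only if $b_0=0$, so $v=0$. The final assertion is then immediate: two sections lie in the same coset exactly when their difference belongs to $\Gamma(W_p,\gi)$, and Lemma \ref{wpvfieldpushforward} says precisely that such a difference has a $U_{\numberoffixedopensets}$-description regular at $p$ with vanishing vector-field part, which is the claimed regularity of $\uidescriptionfunction{\numberoffixedopensets,1}'-\uidescriptionfunction{\numberoffixedopensets,1}$ and $\uidescriptionfunction{\numberoffixedopensets,2}'-\uidescriptionfunction{\numberoffixedopensets,2}$ and the vanishing of $\stdvectorfiledpletter'-\stdvectorfiledpletter$ at $p$.

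The step I expect to be most delicate is the bookkeeping of the vector-field component: the asymmetry that the first two slots of $\abstractvectorspace_{0,0,p}$ carry no constant term whereas the third carries $b_0$ is forced by the condition $\stdvectorfiledpletter(p)=0$ built into $\gsi{p,\numberoffixedopensets}$ (and not by mere regularity), so I must make sure the quotient in the third slot records both the polar part and the value at $p$, which is exactly what the combination ``polar part plus constant term'' captures.
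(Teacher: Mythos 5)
Your proposal is correct and follows essentially the same route as the paper: identify both $\Gamma(W,\gi)$ and $\Gamma(W_p,\gi)$ via $U_{\numberoffixedopensets}$\-/descriptions, check well-definedness using Lemmas \ref{rijregular} and \ref{rijcocycle}, prove surjectivity by subtracting the principal part at $p$ (negative powers of $t_p$ for the functions, negative powers plus the constant term for the vector field) and invoking Lemma \ref{wpvfieldpushforward}, and prove injectivity by noting that a Laurent polynomial of the prescribed shape that is regular at $p$ (with the vector field also vanishing there) must be zero. Your closing remark about the asymmetry between the function slots and the vector-field slot correctly pinpoints the role of the condition $\stdvectorfiledpletter(p)=0$ in the definition of $\gsi{p,\numberoffixedopensets}$.
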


\begin{proof}
The proof is similar to the proof of the previous lemma. 
Let $(\uidescriptionfunction{\numberoffixedopensets,1},
\uidescriptionfunction{\numberoffixedopensets,2}, 
\stdvectorfiledpletter)\in\abstractvectorspace_{0,0,p}$. 
Denote its image in 
$\Gamma(W,\gi)$
by
$(\uidescriptionfunction{1,1},\uidescriptionfunction{1,2}, 
\ldots,\uidescriptionfunction{\numberoffixedopensets,1},\uidescriptionfunction{\numberoffixedopensets,2},\stdvectorfiledpletter)$.
The functions $\uidescriptionfunction{\numberoffixedopensets,1}$ and $\uidescriptionfunction{\numberoffixedopensets,2}$ 
and the vector field $\stdvectorfiledpletter$ have no poles except $p$, the entries of $\uilargetransition{\numberoffixedopensets,i}$
have no poles at ordinary points by Lemma \ref{rijregular}, so $\uidescriptionfunction{i,1}, \uidescriptionfunction{i,2}\in \Gamma(W,\OO_{\PP^1})$. Therefore,
$(\uidescriptionfunction{1,1},\uidescriptionfunction{1,2},\ldots, \uidescriptionfunction{\numberoffixedopensets,1}, 
\uidescriptionfunction{\numberoffixedopensets,2},\stdvectorfiledpletter)$ really defines an element of $\Gamma(W,\gi)$ since all 
necessary equations are satisfied by Lemma \ref{rijcocycle}.

Now let $(\uidescriptionfunction{1,1}',\uidescriptionfunction{1,2}',\ldots, \uidescriptionfunction{\numberoffixedopensets,1}', 
\uidescriptionfunction{\numberoffixedopensets,2}',\stdvectorfiledpletter')\in 
\Gamma(W,\gi)$ be a section.
Let
$$
\uidescriptionfunction{\numberoffixedopensets,1}'=\sum_{k=-n_1}^\infty a_{1,k}t_p^k,\quad 
\uidescriptionfunction{\numberoffixedopensets,2}'=\sum_{k=-n_2}^\infty a_{2,k}t_p^k,\quad 
\stdvectorfiledpletter'=\left(\sum_{k=-n_3}^\infty b_kt_p^k\right)\frac\partial{\partial t_p}
$$
be the Laurent series for $\uidescriptionfunction{\numberoffixedopensets,1}$, $\uidescriptionfunction{\numberoffixedopensets,2}$, and 
$\stdvectorfiledpletter$, respectively (in the sense of complex analysis). Denote
$$
\uidescriptionfunction{\numberoffixedopensets,1}=\sum_{k=-n_1}^{-1}a_{1,k}t_p^k,\quad 
\uidescriptionfunction{\numberoffixedopensets,2}=\sum_{k=-n_2}^{-1}a_{2,k}t_p^k,\quad 
\stdvectorfiledpletter=\left(\sum_{k=-n_3}^0b_kt_p^k\right)\frac\partial{\partial t_p}.
$$
These sums are finite, so they define algebraic rational functions and an algebraic rational vector field. Hence, 
$\uidescriptionfunction{\numberoffixedopensets,1}-\uidescriptionfunction{\numberoffixedopensets,1}'$, 
$\uidescriptionfunction{\numberoffixedopensets,2}-\uidescriptionfunction{\numberoffixedopensets,2}'$, and 
$\stdvectorfiledpletter-\stdvectorfiledpletter'$ are also algebraic rational. They are defined at $p$ in complex-analytic sense, hence they have no poles at $p$ in 
algebraic sense. Note also that $(\stdvectorfiledpletter-\stdvectorfiledpletter')(p)=0$.
By Lemma \ref{wpvfieldpushforward}, the triple 
$(\uidescriptionfunction{\numberoffixedopensets,1}-\uidescriptionfunction{\numberoffixedopensets,1}', 
\uidescriptionfunction{\numberoffixedopensets,2}-\uidescriptionfunction{\numberoffixedopensets,2}', 
\stdvectorfiledpletter-\stdvectorfiledpletter')$ defines an element of $\Gamma(W_p,\gi)$, so 
$(\uidescriptionfunction{\numberoffixedopensets,1}, \uidescriptionfunction{\numberoffixedopensets,2}, \stdvectorfiledpletter)$ is equivalent to 
$(\uidescriptionfunction{\numberoffixedopensets,1}', \uidescriptionfunction{\numberoffixedopensets,2}', \stdvectorfiledpletter')$ in $\Gamma(W,\gi)/\Gamma(W_p,\gi)$. 
But 
$(\uidescriptionfunction{\numberoffixedopensets,1}, \uidescriptionfunction{\numberoffixedopensets,2}, \stdvectorfiledpletter)\in\abstractvectorspace_{0,0,p}$,
so 
the map from 
$\abstractvectorspace_{0,0,p}$
to $\Gamma(W,\gi)/\Gamma(W_p,\gi)$
is surjective. The injectivity of the map 
$\abstractvectorspace_{0,0,p}\to\Gamma(W,\gi)$ 
is clear, 
and it follows from Lemma \ref{wpvfieldpushforward} that the only triple that maps to $\Gamma(W_p,\gi)$ is $(0,0,0)$.
\end{proof}

Let $p\in\PP^1$ be a removable special point,
and
let $U_i$ be a subset of $X$ corresponding to $p$.
Denote by $\gsi{p,i}$
the space of triples $(\uidescriptionfunction{i,1}, \uidescriptionfunction{i,2}, \stdvectorfiledpletter)$, 
where $\uidescriptionfunction{i,1},\uidescriptionfunction{i,2}\in\Gamma(W_p,\OO_{\PP^1})$, $\stdvectorfiledpletter\in\Gamma(W_p,\Theta_{\PP^1})$, but 
this time it is not necessarily true that $\stdvectorfiledpletter(p)=0$.
The last index $i$ in the notation $\gsi{p,i}$ indicates that these triples will be considered as $U_i$-descriptions 
of vector fields on $\pi^{-1}(W_p)\cap U$.

\begin{lemma}\label{wplinearvfieldpushforward}
Let $p\in\PP^1$ be a removable special point,
and
let $U_i$ be a subset of $X$ corresponding to $p$.
Then $\Gamma(W_p, \gi)$ can be identified with $\gsi{p,i}$.

The isomorphism here maps 
$(\uidescriptionfunction{i,1}, \uidescriptionfunction{i,2}, \stdvectorfiledpletter)$
to
$(\uidescriptionfunction{1,1},\uidescriptionfunction{1,2}, 
\ldots,\uidescriptionfunction{\numberoffixedopensets,1},\uidescriptionfunction{\numberoffixedopensets,2},\stdvectorfiledpletter)$, 
where 
$$
\left(
\begin{array}{c}
\uidescriptionfunction{j,1}\\
\uidescriptionfunction{j,2}\\
\stdvectorfiledpletter
\end{array}
\right)
=
\uilargetransition{i,j}
\left(
\begin{array}{c}
\uidescriptionfunction{i,1}\\
\uidescriptionfunction{i,2}\\
\stdvectorfiledpletter
\end{array}
\right).
$$
\end{lemma}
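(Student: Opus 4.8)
The plan is to mirror the proof of Lemma \ref{wpvfieldpushforward} almost verbatim, the one essential change being that for a \emph{removable} special point the transition matrices stay regular at $p$, so no vanishing condition on the vector field is forced. Concretely, I would define the candidate isomorphism $\gsi{p,i}\to\Gamma(W_p,\gi)$ by sending a triple $(\uidescriptionfunction{i,1},\uidescriptionfunction{i,2},\stdvectorfiledpletter)$ to the sequence $(\uidescriptionfunction{1,1},\uidescriptionfunction{1,2},\ldots,\uidescriptionfunction{\numberoffixedopensets,1},\uidescriptionfunction{\numberoffixedopensets,2},\stdvectorfiledpletter)$ whose $j$-th triple is $(\uidescriptionfunction{j,1},\uidescriptionfunction{j,2},\stdvectorfiledpletter)^{\mathsf T}=\uilargetransition{i,j}(\uidescriptionfunction{i,1},\uidescriptionfunction{i,2},\stdvectorfiledpletter)^{\mathsf T}$, and then check that this map is well-defined, injective, and surjective.

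For well-definedness I first note that the transition relations required by the definition of $\gi$ hold automatically: since the bottom row of each $\uilargetransition{i,j}$ is $(0,0,1)$ the vector-field component is preserved, and the cocycle identity $\uilargetransition{i,i'}=\uilargetransition{j,i'}\uilargetransition{i,j}$ of Lemma \ref{rijcocycle} carries the relation with base index $i$ into the required pairwise relations. Regularity of $\uidescriptionfunction{j,1},\uidescriptionfunction{j,2}$ at ordinary points is Lemma \ref{rijregular}. The crux is regularity at $p$. Here I would use that $p$ is removable, so $\stdpolyhedronletter_p$ has its single vertex ($\numberofverticespt{p}=1$) and $\mathcal D_p(\cdot)$ is linear on all of $\sck$. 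Repeating the order computation from the proof of Lemma \ref{logderpole}, but now with linearity giving equality rather than strict inequality, the function $\oluithreadfunction{j,1}/(\oluithreadfunction{i,1}^{\uidegree{i,1}^*(\uidegree{j,1})}\oluithreadfunction{i,2}^{\uidegree{i,2}^*(\uidegree{j,1})})$ has order $0$ at $p$ whenever $p\in V_j$ (using $\ord_p\oluithreadfunction{i,1}=-\mathcal D_p(\uidegree{i,1})$, etc., from the definition of the $V$'s). Being regular and nonzero at $p$, its logarithmic derivative --- that is, each nonconstant entry of $\uilargetransition{i,j}$ --- is regular at $p$, so $\uidescriptionfunction{j,1},\uidescriptionfunction{j,2}$ are regular at $p$ for \emph{any} value of $\stdvectorfiledpletter(p)$. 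This is exactly the point where the removable case diverges from the essential case of Lemma \ref{wpvfieldpushforward}: there the analogous entries had a genuine order-$1$ pole (Lemma \ref{logderpole}), and the regularity of the triple forced $\stdvectorfiledpletter(p)=0$.

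Injectivity is immediate, since the $i$-th triple is read off directly from the image. For surjectivity I would take an arbitrary section $(\uidescriptionfunction{1,1},\ldots,\uidescriptionfunction{\numberoffixedopensets,2},\stdvectorfiledpletter)\in\Gamma(W_p,\gi)$ and observe that, because $U_i$ corresponds to $p$ in the chosen sufficient system, we have $p\in V_i$; hence $\uidescriptionfunction{i,1},\uidescriptionfunction{i,2}$ lie in $\Gamma(V_i\cap W_p,\OO_{\PP^1})$ with $p\in V_i\cap W_p$ and are therefore regular at $p$, while $\stdvectorfiledpletter\in\Gamma(W_p,\Theta_{\PP^1})$. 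Since $\gsi{p,i}$ imposes no condition at $p$, this triple already lies in $\gsi{p,i}$, and applying the transition matrices recovers the whole section. I expect the only genuinely delicate point to be the order-$0$ computation at $p$ together with the care needed to run it only for those indices $j$ with $p\in V_j$; everything else is bookkeeping identical to Lemma \ref{wpvfieldpushforward}.
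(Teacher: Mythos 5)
Your proposal matches the paper's own proof essentially verbatim: the same transition-matrix construction, the same appeal to Lemmas \ref{rijcocycle} and \ref{rijregular} for the relations and for regularity at ordinary points, and the same key computation that linearity of $\mathcal D_p(\cdot)$ at a removable special point forces the relevant function to have order $0$ at $p$, so its logarithmic derivative is regular there and no condition on $\stdvectorfiledpletter(p)$ is imposed. The injectivity and surjectivity arguments also coincide with the paper's, so the proposal is correct.
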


\begin{proof}
The proof is similar to the proofs of two previous lemmas. 
All necessary linear equations in the definition of $\gi$ 
are satisfied by Lemma \ref{rijcocycle}. We only have to check that 
if $U_j$ is another subset of $X$ corresponding to $p$, then $\uidescriptionfunction{j,1}$ and $\uidescriptionfunction{j,2}$ do not have poles at $p$.
The only entries of $\uilargetransition{i,j}$ that could have poles at $p$ are
$$
\frac{\oluithreadfunction{i,1}^{\uidegree{i,1}^*(\uidegree{j,1})}\oluithreadfunction{i,2}^{\uidegree{i,2}^*(\uidegree{j,1})}}
{\oluithreadfunction{j,1}} 
d\left(\frac{\oluithreadfunction{j,1}}
{\oluithreadfunction{i,1}^{\uidegree{i,1}^*(\uidegree{j,1})}\oluithreadfunction{i,2}^{\uidegree{i,2}^*(\uidegree{j,1})}}\right)
$$
and
$$
\frac{\oluithreadfunction{i,1}^{\uidegree{i,1}^*(\uidegree{j,2})}\oluithreadfunction{i,2}^{\uidegree{i,2}^*(\uidegree{j,2})}}
{\oluithreadfunction{j,2}} 
d\left(\frac{\oluithreadfunction{j,2}}
{\oluithreadfunction{i,1}^{\uidegree{i,1}^*(\uidegree{j,2})}\oluithreadfunction{i,2}^{\uidegree{i,2}^*(\uidegree{j,2})}}\right).
$$
Consider the first one of them, the second one is considered similarly. We have 
$\ord_p(\oluithreadfunction{i,1})=-\mathcal D_p(\uidegree{i,1})$, 
$\ord_p(\oluithreadfunction{i,2})=-\mathcal D_p(\uidegree{i,2})$, 
$\ord_p(\oluithreadfunction{j,1})=-\mathcal D_p(\uidegree{j,1})$.
Since $p$ is a removable special point
and $\uidegree{j,1}=\uidegree{i,1}^*(\uidegree{j,1})\uidegree{i,1}+\uidegree{i,2}^*(\uidegree{j,1})\uidegree{i,2}$, 
$\mathcal D_p(\uidegree{j,1})=\uidegree{i,1}^*(\uidegree{j,1})\mathcal D_p(\uidegree{i,1})+\uidegree{i,2}^*(\uidegree{j,1})\mathcal D_p(\uidegree{i,2})$, and
$$
\ord_p\left(\frac{\oluithreadfunction{j,1}}
{\oluithreadfunction{i,1}^{\uidegree{i,1}^*(\uidegree{j,1})}\oluithreadfunction{i,2}^{\uidegree{i,2}^*(\uidegree{j,1})}}\right)=0.
$$
Therefore, the logarithmic derivative of this function does not have a zero or a pole at $p$, and $\uidescriptionfunction{j,1}$ is well-defined at $p$.
The argument for $\uidescriptionfunction{j,2}$ is similar.

The injectivity of the map from 
$\gsi{p,i}$
to 
$\Gamma(W_p,\gi)$
is again clear since 
a $(2 \numberoffixedopensets+1)$-tuple 
defines the zero section only if all entries are zeros, and the surjectivity is also clear this time since in every section from 
$\Gamma(W_p,\gi)$, $\uidescriptionfunction{i,1}$, $\uidescriptionfunction{i,2}$, and $\stdvectorfiledpletter$ should be well-defined at $p$.
\end{proof}

Note that in this lemma, 
we use an affine open set $U_i$, 
which depends on $p$, and in fact used the $U_i$-description of a vector field, while in Lemma \ref{wpvfieldpushforward}
we used 
$U_\numberoffixedopensets$,
which did not depend on $p$, and used the $U_\numberoffixedopensets$-description. 
However, in the next lemma, we are going to use 
$U_\numberoffixedopensets$
again.

For a removable special point $p$, denote by $\abstractvectorspace_{0,0,p}$
the space 
of triples 
of Laurent polynomials of the form 
$(a_{1,-1}t_p^{-1}+\ldots+a_{1,-n_1}t_p^{-n_1},
a_{2,-1}t_p^{-1}+\ldots+a_{2,-n_2}t_p^{-n_2}, 
(b_{-1}t_p^{-1}+\ldots+b_{-n_3}t_p^{-n_3})\partial/\partial t_p)$.
 
\begin{lemma}\label{wwplinearquotient}
If $p\in\PP^1$ is a removable special point,
the space $\Gamma(W,\gi)/\Gamma(W_p,\gi)$ can be identified with $\abstractvectorspace_{0,0,p}$.

More exactly, these three Laurent polynomials are three \textbf{last} entries in a $(2 \numberoffixedopensets+1)$-tuple 
defining an element of $\Gamma(W,\gi)$, which in turn defines a coset in $\Gamma(W,\gi)/\Gamma(W_p,\gi)$.
In other words, the isomorphism is the composition of the map
$$
(\uidescriptionfunction{\numberoffixedopensets,1},
\uidescriptionfunction{\numberoffixedopensets,2}, 
\stdvectorfiledpletter)\mapsto(\uidescriptionfunction{1,1},\uidescriptionfunction{1,2}, 
\ldots,\uidescriptionfunction{\numberoffixedopensets,1},\uidescriptionfunction{\numberoffixedopensets,2},\stdvectorfiledpletter)
\in\Gamma(W,\gi),
$$
where
$$
\left(
\begin{array}{c}
\uidescriptionfunction{i,1}\\
\uidescriptionfunction{i,2}\\
\stdvectorfiledpletter
\end{array}
\right)
=
\uilargetransition{\numberoffixedopensets,i}
\left(
\begin{array}{c}
\uidescriptionfunction{\numberoffixedopensets,1}\\
\uidescriptionfunction{\numberoffixedopensets,2}\\
\stdvectorfiledpletter
\end{array}
\right),
$$
and the canonical projection 
$\Gamma(W,\gi)\to\Gamma(W,\gi)/\Gamma(W_p,\gi)$.

The vector field here always differs from the last entry of \textbf{any} element of $\Gamma(W,\gi)$
from the same coset in $\Gamma(W,\gi)/\Gamma(W_p,\gi)$ by a vector field that has no pole at $p$.
This is true for the two functions if the vector field is zero in both representatives of the coset.
\end{lemma}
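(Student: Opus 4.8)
The plan is to mirror the proof of Lemma~\ref{wwpquotient}, the only structural change being that $\Gamma(W_p,\gi)$ is now described via Lemma~\ref{wplinearvfieldpushforward} instead of Lemma~\ref{wpvfieldpushforward}, so that the vector field is no longer required to vanish at $p$. First I would verify that the map sending a triple $(\uidescriptionfunction{\numberoffixedopensets,1},\uidescriptionfunction{\numberoffixedopensets,2},\stdvectorfiledpletter)\in\abstractvectorspace_{0,0,p}$ to the $(2\numberoffixedopensets+1)$-tuple obtained through $\uilargetransition{\numberoffixedopensets,i}$ does land in $\Gamma(W,\gi)$: the three Laurent polynomials have poles only at $p$, hence are regular on $W$; the remaining entries are regular at ordinary points by Lemma~\ref{rijregular}; and the defining linear relations of $\gi$ hold by Lemma~\ref{rijcocycle}. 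Composing with the projection $\Gamma(W,\gi)\to\Gamma(W,\gi)/\Gamma(W_p,\gi)$ gives the candidate isomorphism.

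For surjectivity, given an arbitrary section $(\uidescriptionfunction{1,1}',\ldots,\stdvectorfiledpletter')\in\Gamma(W,\gi)$, I would expand its last three $U_\numberoffixedopensets$-description entries as Laurent series in $t_p$ at $p$ and keep only their strictly negative principal parts. The one structural difference from the essential case is that now the constant term $b_0$ of the vector field is \emph{also dropped}, which is exactly why $\abstractvectorspace_{0,0,p}$ carries no $b_0\,\partial/\partial t_p$ summand here. The resulting difference has all three entries regular at $p$, with vector field possibly nonzero at $p$; by Lemma~\ref{wplinearvfieldpushforward} this difference lies in $\Gamma(W_p,\gi)$, so the two sections agree in the quotient. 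This reflects that, for a removable $p$, $\Gamma(W_p,\gi)$ already absorbs a nonzero vector-field value at $p$, so that component vanishes in the quotient.

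For injectivity, a nonzero element of $\abstractvectorspace_{0,0,p}$ has $U_\numberoffixedopensets$-description equal to itself, hence a genuine pole at $p$. Since $p$ is removable, $\mathcal D_p(\cdot)$ is linear on all of $\sck$, so the relevant logarithmic-derivative entries of $\uilargetransition{\numberoffixedopensets,i}$ have order $0$ at $p$ (this is the computation already performed inside the proof of Lemma~\ref{wplinearvfieldpushforward}); consequently regularity at $p$ can be tested in any single description, and membership in $\Gamma(W_p,\gi)$ would force the pole away, a contradiction. For the two concluding assertions: any two representatives of one coset differ by an element of $\Gamma(W_p,\gi)$, whose vector field is regular at $p$ by Lemma~\ref{wplinearvfieldpushforward}, so the last entries differ by a field with no pole at $p$; and when both representatives have vanishing vector field, the transition reduces to the constant block $\uismalltransition{i,\numberoffixedopensets}$, so the two function entries likewise differ only by functions regular at $p$.

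The main obstacle is purely bookkeeping of the constant term of the vector field: one must confirm that, because the vanishing condition $\stdvectorfiledpletter(p)=0$ is dropped in the removable case, the constant summand migrates from $\abstractvectorspace_{0,0,p}$ into $\Gamma(W_p,\gi)$. The only point requiring care beyond the essential case is that, for a removable $p$, regularity of a section at $p$ is description-independent, which rests on the linearity of $\mathcal D_p$ forcing the transition covector entries to have order $0$ at $p$; since this was established in Lemma~\ref{wplinearvfieldpushforward}, I expect no essentially new difficulty.
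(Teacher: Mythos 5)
Your overall architecture (well-definedness via Lemmas \ref{rijregular} and \ref{rijcocycle}, injectivity via the constant block $\uismalltransition{i,\numberoffixedopensets}$ after forcing $\stdvectorfiledpletter=0$, and the two concluding regularity assertions) matches the paper. But your surjectivity step has a genuine gap. You subtract the principal parts of the \emph{$U_{\numberoffixedopensets}$-description} in one pass and claim the difference, whose three $U_{\numberoffixedopensets}$-entries are regular at $p$ but whose vector field may be nonzero at $p$, lies in $\Gamma(W_p,\gi)$ by Lemma \ref{wplinearvfieldpushforward}. That lemma, however, characterizes $\Gamma(W_p,\gi)$ by regularity at $p$ of the \emph{$U_i$-description for an index $i$ corresponding to $p$}, and the passage from the $U_{\numberoffixedopensets}$-description to that one goes through $\uilargetransition{\numberoffixedopensets,i}$, whose logarithmic-derivative entries can have simple poles at $p$. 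Your justification --- that linearity of $\mathcal D_p$ forces these entries to have order $0$ at $p$, ``as established in Lemma \ref{wplinearvfieldpushforward}'' --- only applies to transitions between two sets that \emph{both} correspond to $p$, where $\ord_p(\oluithreadfunction{i,j})=-\mathcal D_p(\uidegree{i,j})$ holds exactly because $p\in V_i$. For $U_{\numberoffixedopensets}$ one has $p\notin V_{\numberoffixedopensets}$, and the functions $\uithreadfunction{\numberoffixedopensets,1},\uithreadfunction{\numberoffixedopensets,2}$ only satisfy $\ord_{p'}\ge -\mathcal D_{p'}(\uidegree{\numberoffixedopensets,j})$ at special points; since $\deg\mathcal D(\uidegree{\numberoffixedopensets,j})>0$ while $\deg\div(\oluithreadfunction{\numberoffixedopensets,j})=0$, strict inequality must occur at some special point, which can be $p$. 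Then the ratio whose logarithmic derivative appears in $\uilargetransition{\numberoffixedopensets,i}$ has nonzero order at $p$, the corresponding entry has a simple pole there, and multiplying it by a vector field that is regular but nonvanishing at $p$ produces a pole in $\uidescriptionfunction{i,1},\uidescriptionfunction{i,2}$. So your ``difference'' need not lie in $\Gamma(W_p,\gi)$, and surjectivity is not proved.

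The paper circumvents exactly this by a two-step reduction. First it truncates the \emph{$U_i$-description} (with $U_i$ corresponding to $p$), where Lemma \ref{wplinearvfieldpushforward} applies verbatim; after this step the vector field is already a pure principal part and will not be modified again. Only then does it pass to the $U_{\numberoffixedopensets}$-description and truncate the two function entries; at that second stage the difference being subtracted has \emph{zero} vector-field component, so the transition collapses to the constant matrix $\uismalltransition{\numberoffixedopensets,i}$ and regularity at $p$ transfers between descriptions with no pole issue. To repair your proof you need to insert this intermediate reduction (or otherwise prove that the relevant entries of $\uilargetransition{\numberoffixedopensets,i}$ are regular at $p$, which is false in general). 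Your injectivity sketch leans on the same ``description-independence'' claim but is easily salvaged: the vector field entry is common to all descriptions, so membership in $\Gamma(W_p,\gi)$ forces it to be regular, hence zero, and only then does the constant-matrix argument take over.
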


\begin{proof}
First, if 
$(\uidescriptionfunction{\numberoffixedopensets,1},
\uidescriptionfunction{\numberoffixedopensets,2}, 
\stdvectorfiledpletter)\mapsto(\uidescriptionfunction{1,1},\uidescriptionfunction{1,2}, 
\ldots,\uidescriptionfunction{\numberoffixedopensets,1},\uidescriptionfunction{\numberoffixedopensets,2},\stdvectorfiledpletter)$, then 
$\uidescriptionfunction{i,1}$, $\uidescriptionfunction{i,2}$, and $\stdvectorfiledpletter$ 
have no poles outside $p$, entries of $\uilargetransition{i,j}$ have no poles at ordinary points, 
and all necessary equations are satisfied by Lemma \ref{rijcocycle}, 
so these functions and this vector field really define an element of $\Gamma(W,\gi)$, and hence an 
element of $\Gamma(W,\gi)/\Gamma(W_p,\gi)$.

The proof of injectivity is quite easy. If 
$(\uidescriptionfunction{\numberoffixedopensets,1},
\uidescriptionfunction{\numberoffixedopensets,2}, 
\stdvectorfiledpletter)\mapsto(\uidescriptionfunction{1,1},\uidescriptionfunction{1,2}, 
\ldots,\uidescriptionfunction{\numberoffixedopensets,1},\uidescriptionfunction{\numberoffixedopensets,2},\stdvectorfiledpletter)\in \Gamma(W_p,\gi)$,
then $\stdvectorfiledpletter=0$ since otherwise it has pole at $p$. But then we can choose an open set $U_i$ corresponding to $p$ 
and write
$$
\left(
\begin{array}{c}
\uidescriptionfunction{\numberoffixedopensets,1}\\
\uidescriptionfunction{\numberoffixedopensets,2}\\
\end{array}
\right)=
\uismalltransition{i,\numberoffixedopensets}
\left(
\begin{array}{c}
\uidescriptionfunction{i,1}\\
\uidescriptionfunction{i,2}\\
\end{array}
\right).
$$
The matrix $\uismalltransition{i,\numberoffixedopensets}$ has only constant entries, so 
if $\uidescriptionfunction{i,1}$ and $\uidescriptionfunction{i,2}$ are regular at $p$, then 
$\uidescriptionfunction{\numberoffixedopensets,1}$ and $\uidescriptionfunction{\numberoffixedopensets,2}$ are 
regular at $p$ as well. 
But then $\uidescriptionfunction{\numberoffixedopensets,1}=\uidescriptionfunction{\numberoffixedopensets,2}=0$.

Now we prove surjectivity.
Let
$(\uidescriptionfunction{1,1}',\uidescriptionfunction{1,2}',\ldots, 
\uidescriptionfunction{\numberoffixedopensets,1}',\uidescriptionfunction{\numberoffixedopensets,2}', \stdvectorfiledpletter')\in\Gamma(W,\gi)$
be a section.
Choose an index $i$ such that $U_i$ corresponds to $p$ and 
write complex-analytic Laurent series:
$$
\uidescriptionfunction{i,1}'=\sum_{k=-n_1}^\infty a'_{1,k}t_p^k,\quad 
\uidescriptionfunction{i,2}'=\sum_{k=-n_2}^\infty a'_{2,k}t_p^k,\quad 
\stdvectorfiledpletter'=\left(\sum_{k=-n_3}^\infty b'_kt_p^k\right)\frac\partial{\partial t_p}.
$$
Set
$$
\uidescriptionfunction{i,1}''=\sum_{k=-n_1}^{-1}a'_{1,k}t_p^k,\quad 
\uidescriptionfunction{i,2}''=\sum_{k=-n_2}^{-1}a'_{2,k}t_p^k,\quad 
\stdvectorfiledpletter''=\left(\sum_{k=-n_3}^{-1}b'_kt_p^k\right)\frac\partial{\partial t_p},
$$
and
$$
\left(
\begin{array}{c}
\uidescriptionfunction{j,1}''\\
\uidescriptionfunction{j,2}''\\
\stdvectorfiledpletter''
\end{array}
\right)
=
\uilargetransition{i,j}
\left(
\begin{array}{c}
\uidescriptionfunction{i,1}''\\
\uidescriptionfunction{i,2}''\\
\stdvectorfiledpletter''
\end{array}
\right)
$$
for all $j$ ($1\le j\le \numberoffixedopensets$).
Observe that
$\uidescriptionfunction{i,1}''-\uidescriptionfunction{i,1}'$, 
$\uidescriptionfunction{i,2}''-\uidescriptionfunction{i,2}'$ and 
$\stdvectorfiledpletter''-\stdvectorfiledpletter'$
are well-defined at $p$, 
so 
$(\uidescriptionfunction{i,1}''-\uidescriptionfunction{i,1}',\uidescriptionfunction{i,2}''-\uidescriptionfunction{i,2}',
\stdvectorfiledpletter''-\stdvectorfiledpletter')\in \gsi{p,i}$.
The image of this element of $\gsi{p,i}$
under the isomorphism from Lemma \ref{wplinearvfieldpushforward}
equals
$(\uidescriptionfunction{1,1}''-\uidescriptionfunction{1,1}',\uidescriptionfunction{1,2}''-\uidescriptionfunction{1,2}', 
\ldots,\uidescriptionfunction{\numberoffixedopensets,1}''-\uidescriptionfunction{\numberoffixedopensets,1}',
\uidescriptionfunction{\numberoffixedopensets,2}''-\uidescriptionfunction{\numberoffixedopensets,2}',
\stdvectorfiledpletter''-\stdvectorfiledpletter')\in\Gamma(W_p,\gi)$, hence,
by Remark \ref{uidescvfieldrestriction}, 
$(\uidescriptionfunction{1,1}''-\uidescriptionfunction{1,1}',\uidescriptionfunction{1,2}''-\uidescriptionfunction{1,2}', 
\ldots,\uidescriptionfunction{\numberoffixedopensets,1}''-\uidescriptionfunction{\numberoffixedopensets,1}',
\uidescriptionfunction{\numberoffixedopensets,2}''-\uidescriptionfunction{\numberoffixedopensets,2}',
\stdvectorfiledpletter''-\stdvectorfiledpletter')\in\Gamma(W,\gi)$
defines the zero coset in 
$\Gamma(W,\gi)/\Gamma(W_p,\gi)$.
It is sufficient to prove that 
$(\uidescriptionfunction{1,1}'',\uidescriptionfunction{1,2}'',\ldots,
\uidescriptionfunction{\numberoffixedopensets,1}'',\uidescriptionfunction{\numberoffixedopensets,2}'',\stdvectorfiledpletter'')$
is in the image of the morphism $\abstractvectorspace_{0,0,p}\to \Gamma(W,\gi)/\Gamma(W_p,\gi)$.

Now write 
$$
\uidescriptionfunction{\numberoffixedopensets,1}''=\sum_{k=-n_1}^{\infty}a''_{1,k}t_p^k,\quad 
\uidescriptionfunction{\numberoffixedopensets,2}''=\sum_{k=-n_2}^{\infty}a''_{2,k}t_p^k
$$
(without loss of generality, we may suppose that $n_1$ and $n_2$ did not change, we may add more zeros in the negative part of Laurent series)
and recall that
$$
\stdvectorfiledpletter''=\left(\sum_{k=-n_3}^{-1}b'_kt_p^k\right)\frac\partial{\partial t_p}.
$$
Set
$$
\uidescriptionfunction{\numberoffixedopensets,1}=\sum_{k=-n_1}^{-1}a''_{1,k}t_p^k,\quad 
\uidescriptionfunction{\numberoffixedopensets,2}=\sum_{k=-n_2}^{-1}a''_{2,k}t_p^k,\quad 
\stdvectorfiledpletter=\stdvectorfiledpletter'',
$$
and
$$
\left(
\begin{array}{c}
\uidescriptionfunction{j,1}\\
\uidescriptionfunction{j,2}\\
\stdvectorfiledpletter
\end{array}
\right)
=
\uilargetransition{\numberoffixedopensets,j}
\left(
\begin{array}{c}
\uidescriptionfunction{\numberoffixedopensets,1}\\
\uidescriptionfunction{\numberoffixedopensets,2}\\
\stdvectorfiledpletter
\end{array}
\right)
$$
for all $j$ ($1\le j\le \numberoffixedopensets$).
Then 
$(\uidescriptionfunction{\numberoffixedopensets,1},\uidescriptionfunction{\numberoffixedopensets,2},\stdvectorfiledpletter)
\in\abstractvectorspace_{0,0,p}$.
Since $v=v''$, we have
$$
\left(
\begin{array}{c}
\uidescriptionfunction{i,1}\\
\uidescriptionfunction{i,2}\\
\stdvectorfiledpletter
\end{array}
\right)
-
\left(
\begin{array}{c}
\uidescriptionfunction{i,1}''\\
\uidescriptionfunction{i,2}''\\
\stdvectorfiledpletter''
\end{array}
\right)=
\uilargetransition{\numberoffixedopensets,i}
\left(
\begin{array}{c}
\uidescriptionfunction{\numberoffixedopensets,1}-\uidescriptionfunction{\numberoffixedopensets,1}''\\
\uidescriptionfunction{\numberoffixedopensets,2}-\uidescriptionfunction{\numberoffixedopensets,2}''\\
0
\end{array}
\right),
$$
and
$$
\left(
\begin{array}{c}
\uidescriptionfunction{i,1}\\
\uidescriptionfunction{i,2}\\
\end{array}
\right)
-
\left(
\begin{array}{c}
\uidescriptionfunction{i,1}''\\
\uidescriptionfunction{i,2}''\\
\end{array}
\right)=
\uismalltransition{\numberoffixedopensets,i}
\left(
\begin{array}{c}
\uidescriptionfunction{\numberoffixedopensets,1}-\uidescriptionfunction{\numberoffixedopensets,1}''\\
\uidescriptionfunction{\numberoffixedopensets,2}-\uidescriptionfunction{\numberoffixedopensets,2}''\\
\end{array}
\right).
$$
Hence, 
$\uidescriptionfunction{i,1}''-\uidescriptionfunction{i,1}$ and $\uidescriptionfunction{i,2}''-\uidescriptionfunction{i,2}$
are regular at $p$,
$(\uidescriptionfunction{i,1}''-\uidescriptionfunction{i,1},\uidescriptionfunction{i,2}''-\uidescriptionfunction{i,2},
\stdvectorfiledpletter''-\stdvectorfiledpletter)\in \gsi{p,i}$, 
and the isomorphism from Lemma \ref{wplinearvfieldpushforward}
maps this triple to
$(\uidescriptionfunction{1,1}''-\uidescriptionfunction{1,1},\uidescriptionfunction{1,2}''-\uidescriptionfunction{1,2},\ldots,
\uidescriptionfunction{\numberoffixedopensets,1}''-\uidescriptionfunction{\numberoffixedopensets,1},
\uidescriptionfunction{\numberoffixedopensets,2}''-\uidescriptionfunction{\numberoffixedopensets,2},
\stdvectorfiledpletter''-\stdvectorfiledpletter)\in\Gamma(W_p,\gi)$.
Therefore, 
$(\uidescriptionfunction{1,1}'',\uidescriptionfunction{1,2}'',\ldots,
\uidescriptionfunction{\numberoffixedopensets,1}'',\uidescriptionfunction{\numberoffixedopensets,2}'',\stdvectorfiledpletter'')$
defines the same coset in $\Gamma(W,\gi)/\Gamma(W_p,\gi)$
as
$(\uidescriptionfunction{1,1},\uidescriptionfunction{1,2},\ldots,
\uidescriptionfunction{\numberoffixedopensets,1},\uidescriptionfunction{\numberoffixedopensets,2},\stdvectorfiledpletter)$, 
which is the image of 
$(\uidescriptionfunction{\numberoffixedopensets,1},\uidescriptionfunction{\numberoffixedopensets,2},\stdvectorfiledpletter)
\in\abstractvectorspace_{0,0,p}$.

During the proof of surjectivity, we have changed the vector field from $\stdvectorfiledpletter'$ to 
$\stdvectorfiledpletter''=\stdvectorfiledpletter$, and we chose $\stdvectorfiledpletter''$ so that
$\stdvectorfiledpletter'-\stdvectorfiledpletter''$ is regular at $p$. If we started with 
$\stdvectorfiledpletter'=0$, then $\stdvectorfiledpletter''=0$ as well. In this case
$$
\left(
\begin{array}{c}
\uidescriptionfunction{\numberoffixedopensets,1}''\\
\uidescriptionfunction{\numberoffixedopensets,2}''\\
\end{array}
\right)
-
\left(
\begin{array}{c}
\uidescriptionfunction{\numberoffixedopensets,1}'\\
\uidescriptionfunction{\numberoffixedopensets,2}'\\
\end{array}
\right)=
\uismalltransition{i,\numberoffixedopensets}
\left(
\begin{array}{c}
\uidescriptionfunction{i,1}''-\uidescriptionfunction{i,1}'\\
\uidescriptionfunction{i,2}''-\uidescriptionfunction{i,2}'\\
\end{array}
\right).
$$
$\uidescriptionfunction{i,1}''-\uidescriptionfunction{i,1}'$ and 
$\uidescriptionfunction{i,2}''-\uidescriptionfunction{i,2}'$ are regular at $p$ by construction, 
all entries in $\uismalltransition{i,\numberoffixedopensets}$ are constants, so 
$\uidescriptionfunction{\numberoffixedopensets,1}''-\uidescriptionfunction{\numberoffixedopensets,1}'$ and 
$\uidescriptionfunction{\numberoffixedopensets,2}''-\uidescriptionfunction{\numberoffixedopensets,2}'$ are regular at $p$.
Recall also that 
$\uidescriptionfunction{\numberoffixedopensets,1}-\uidescriptionfunction{\numberoffixedopensets,1}''$ and 
$\uidescriptionfunction{\numberoffixedopensets,2}-\uidescriptionfunction{\numberoffixedopensets,2}''$
are regular at $p$ by construction, so finally we see that 
$\uidescriptionfunction{\numberoffixedopensets,1}-\uidescriptionfunction{\numberoffixedopensets,1}'$ and 
$\uidescriptionfunction{\numberoffixedopensets,2}-\uidescriptionfunction{\numberoffixedopensets,2}'$
are regular at $p$ if $v'=0$.
\end{proof}

Note that in this lemma, we do not claim (and this is not true in general) that 
if $(\uidescriptionfunction{1,1}', \uidescriptionfunction{1,2}',\ldots,
\uidescriptionfunction{\numberoffixedopensets,1}',\uidescriptionfunction{\numberoffixedopensets,2}',\stdvectorfiledpletter')$ is any representative of 
the coset in $\Gamma(W,\gi)/\Gamma(W_p,\gi)$ defined by three 
Laurent polynomials in lemma, then, for example, the difference between the first 
of these Laurent polynomials and $\uidescriptionfunction{\numberoffixedopensets,1}'$ is regular at $p$. We only claim that 
this is true if $\stdvectorfiledpletter'=0$ and the third Laurent polynomial is also $0$, and we 
also claim that independently of $\stdvectorfiledpletter'$, there always exists such a representative in the coset.

Using Lemmas \ref{wwpquotient} and \ref{wwplinearquotient}, we identify 
the direct sum
$$
\bigoplus_{i=1}^\numberofdivisorpoints\Gamma(W,\gi)/\Gamma(W_{p_i},\gi)
$$
with the space 
$$
\bigoplus_{i=1}^\numberofdivisorpoints\abstractvectorspace_{0,0,p_i}
$$
of $3\numberofdivisorpoints$-tuples of Laurent polynomials of a certain form, 
where the first three polynomials correspond to $p_1$, the second three polynomials correspond
to $p_2$, etc.
\begin{lemma}\label{vfieldreduction}
Let 
$$
(\uidescriptionfunctiondiff{1}{1},\uidescriptionfunctiondiff{1}{2},\stdvectorfiledpletter[1],\ldots,
\uidescriptionfunctiondiff{\numberofdivisorpoints}{1},\uidescriptionfunctiondiff{\numberofdivisorpoints}{2},\stdvectorfiledpletter[\numberofdivisorpoints])
$$
be an element of 
$\bigoplus_{i=1}^\numberofdivisorpoints\abstractvectorspace_{0,0,p_i}$.
Then there exists another element 
$$
(\uidescriptionfunctiondiff{1}{1}',\uidescriptionfunctiondiff{1}{2}',\stdvectorfiledpletter'[1],\ldots,
\uidescriptionfunctiondiff{\numberofdivisorpoints}{1}',\uidescriptionfunctiondiff{\numberofdivisorpoints}{2}',\stdvectorfiledpletter[\numberofdivisorpoints]')
\in\bigoplus_{i=1}^\numberofdivisorpoints\abstractvectorspace_{0,0,p_i}
$$
such that 
these two elements 
represent the same class in 
$$
\left.\!\left(\bigoplus_{i=1}^\numberofdivisorpoints\Big(\Gamma(W,\gi)/\Gamma(W_{p_i},\gi)\Big)\right)\right/\Gamma(W,\gi),
$$
and $\stdvectorfiledpletter[i]'$ is a vector field regular at $p_i$ for all $i$.
\end{lemma}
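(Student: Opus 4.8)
The plan is to remove the poles of the vector fields $\stdvectorfiledpletter[i]$ at the points $p_i$ by subtracting from the given tuple the diagonal image of a single, suitably chosen global section of $\gi$ over $W$. The crux of the matter is that the polar parts of a rational vector field on $\PP^1$ may be prescribed arbitrarily at the special points, and this is exactly the vanishing $H^1(\PP^1,\Theta_{\PP^1})=H^1(\PP^1,\OO_{\PP^1}(2))=0$.

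First, for each $i$ I would split $\stdvectorfiledpletter[i]$ into its polar part $\stdvectorfiledpletter[i]^-$ (the part involving negative powers of $t_{p_i}$) and a remainder regular at $p_i$; this remainder is $0$ when $p_i$ is removable and at most a constant multiple of $\partial/\partial t_{p_i}$ when $p_i$ is essential. I then seek a rational vector field $\stdvectorfiledpletter\in\Gamma(W,\Theta_{\PP^1})$, i.e.\ one regular at every ordinary point, whose polar part at each $p_i$ equals $\stdvectorfiledpletter[i]^-$. Writing $D$ for a divisor supported on the special points and of high enough order to accommodate all the prescribed poles, the principal-parts exact sequence
$$
0\to\Theta_{\PP^1}\to\Theta_{\PP^1}(D)\to\mathcal Q\to 0
$$
gives the sequence $H^0(\PP^1,\Theta_{\PP^1}(D))\to H^0(\PP^1,\mathcal Q)\to H^1(\PP^1,\Theta_{\PP^1})$; since $\Theta_{\PP^1}\cong\OO_{\PP^1}(2)$ has $H^1(\PP^1,\Theta_{\PP^1})=0$, the first map is surjective and the prescribed polar data is realized by a global $\stdvectorfiledpletter$.

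Next I would turn $\stdvectorfiledpletter$ into an honest section $\mathbf s\in\Gamma(W,\gi)$. I take its $U_\numberoffixedopensets$-description to be $(0,0,\stdvectorfiledpletter)$ and define the remaining entries by applying $\uilargetransition{\numberoffixedopensets,i}$ to the column $(0,0,\stdvectorfiledpletter)$, so that $(\uidescriptionfunction{i,1},\uidescriptionfunction{i,2},\stdvectorfiledpletter)=\uilargetransition{\numberoffixedopensets,i}(0,0,\stdvectorfiledpletter)$. By Lemma \ref{rijregular} all entries of $\uilargetransition{\numberoffixedopensets,i}$ are regular at ordinary points, so $\uidescriptionfunction{i,1},\uidescriptionfunction{i,2}\in\Gamma(W,\OO_{\PP^1})$; by Lemma \ref{rijcocycle} the resulting tuple satisfies the compatibility relations defining $\gi$; and the vector field entry is description-independent by Lemma \ref{vfieldtransition}. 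Hence $\mathbf s\in\Gamma(W,\gi)$ and its vector field component is $\stdvectorfiledpletter$.

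Finally I would set the new element equal to the given tuple minus the diagonal image of $\mathbf s$; by construction the two represent the same class modulo $\Gamma(W,\gi)$. By Lemmas \ref{wwpquotient} and \ref{wwplinearquotient}, the image of $\mathbf s$ in the $i$-th summand $\abstractvectorspace_{0,0,p_i}\cong\Gamma(W,\gi)/\Gamma(W_{p_i},\gi)$ has vector field part equal to the Laurent tail of $\stdvectorfiledpletter$ at $p_i$ of order $\le 0$ (essential $p_i$) or $\le -1$ (removable $p_i$), whose polar part is $\stdvectorfiledpletter[i]^-$ by the choice of $\stdvectorfiledpletter$. Thus each new vector field $\stdvectorfiledpletter[i]'$ has vanishing polar part: it is $0$ for a removable point and at most a constant multiple of $\partial/\partial t_{p_i}$ for an essential point, so it is regular at $p_i$ in every case. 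The only genuine obstruction lies in the realizability of the polar parts in the second step, which is precisely why $H^1(\PP^1,\Theta_{\PP^1})=0$ is the heart of the argument; everything else is bookkeeping through the transition matrices $\uilargetransition{\numberoffixedopensets,i}$ already in place.
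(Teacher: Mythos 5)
Your proposal is correct and follows the same overall skeleton as the paper's proof: build a section $\mathbf s\in\Gamma(W,\gi)$ whose $U_{\numberoffixedopensets}$-description is $(0,0,\stdvectorfiledpletter)$ for a vector field $\stdvectorfiledpletter$ regular on $W$ with the prescribed polar parts at the special points, propagate it through the matrices $\uilargetransition{\numberoffixedopensets,i}$ using Lemmas \ref{rijregular} and \ref{rijcocycle}, and subtract its diagonal image, reading off the effect on each summand via Lemmas \ref{wwpquotient} and \ref{wwplinearquotient}. The one place you diverge is the existence of $\stdvectorfiledpletter$: you obtain it from the principal-parts sequence and the vanishing $H^1(\PP^1,\Theta_{\PP^1})=H^1(\PP^1,\OO_{\PP^1}(2))=0$, whereas the paper simply sets $\stdvectorfiledpletter''=\stdvectorfiledpletter[1]''+\ldots+\stdvectorfiledpletter[\numberofdivisorpoints]''$, observing that each Laurent tail $\stdvectorfiledpletter[i]''=(b_{i,-1}t_{p_i}^{-1}+\ldots+b_{i,-k}t_{p_i}^{-k})\partial/\partial t_{p_i}$ is already a rational vector field on $\PP^1$ regular everywhere except $p_i$ (with a zero at $t_{p_i}=\infty$), so the sum lies in $\Gamma(W,\Theta_{\PP^1})$ and has the right polar part at each $p_i$. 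Your cohomological argument is valid and correctly identifies why the interpolation cannot fail, but it is strictly more machinery than needed here: the explicit sum of tails gives the same vector field with no exact sequence at all. The remaining bookkeeping in your write-up (the difference of the vector field entries being regular at $p_i$, hence zero for removable points and a constant multiple of $\partial/\partial t_{p_i}$ for essential ones) matches the paper and is sound.
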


\begin{proof}
All $\stdvectorfiledpletter[i]$'s can be written using Laurent polynomials as follows: 
$\stdvectorfiledpletter[i]=(b_{i,-1}t_{p_i}^{-1}+\ldots+b_{i,-k}t_{p_i}^{-k})\partial/\partial t_{p_i}$ or 
$\stdvectorfiledpletter[i]=(b_{i,0}+b_{i,-1}t_{p_i}^{-1}+\ldots+b_{i,-k}t_{p_i}^{-k})\partial/\partial t_{p_i}$, 
the exact form depends on whether 
$p_i$ is a removable special point or an essential special point.
Denote $\stdvectorfiledpletter[i]''=(b_{i,-1}t_{p_i}^{-1}+\ldots+b_{i,-k}t_{p_i}^{-k})\partial/\partial t_{p_i}$ 
(if $p_i$ is removable,
then $\stdvectorfiledpletter[i]=\stdvectorfiledpletter[i]''$).
This 
vector field is regular at all points of $\PP^1$ except $p_i$ (including the point $t_{p_i}=\infty$, 
where it has a zero of of order 3). Then $\stdvectorfiledpletter''=\stdvectorfiledpletter[1]''+\ldots+\stdvectorfiledpletter[\numberofdivisorpoints]''\in\Gamma(W,\Theta_{\PP^1})$, 
and we can construct an element of $\Gamma(W,\gi)$ similarly to what we did in 
previous proofs: we set $\uidescriptionfunction{\numberoffixedopensets,1}''=\uidescriptionfunction{\numberoffixedopensets,2}''=0$, and 
$$
\left(
\begin{array}{c}
\uidescriptionfunction{i,1}''\\
\uidescriptionfunction{i,2}''\\
\stdvectorfiledpletter''
\end{array}
\right)
=
\uilargetransition{\numberoffixedopensets,i}
\left(
\begin{array}{c}
\uidescriptionfunction{\numberoffixedopensets,1}''\\
\uidescriptionfunction{\numberoffixedopensets,2}''\\
\stdvectorfiledpletter''
\end{array}
\right).
$$
By Lemma \ref{rijregular},
all entries in $\uilargetransition{\numberoffixedopensets,i}$ are regular at ordinary points, and 
$$
(\uidescriptionfunction{1,1}'',\uidescriptionfunction{1,2}'', \ldots,
\uidescriptionfunction{\numberoffixedopensets,1}'',\uidescriptionfunction{\numberoffixedopensets,2}'',\stdvectorfiledpletter'')\in\Gamma(W,\gi).
$$
Now, by Lemmas \ref{wwpquotient} and \ref{wwplinearquotient}, this section defines elements of 
$\abstractvectorspace_{0,0,p_i}$
of the form $(\uidescriptionfunctiondiff{i}{1}''', \uidescriptionfunctiondiff{i}{2}''', \stdvectorfiledpletter[i]''')$, 
where $\stdvectorfiledpletter[i]'''-\stdvectorfiledpletter''$ is regular at $p_i$. Recall that $\stdvectorfiledpletter[j]''$ is regular at $p_i$ if $i\ne j$, so 
$\stdvectorfiledpletter[i]'''-\stdvectorfiledpletter[i]''$ is regular at $p_i$ as well. 
Also, $\stdvectorfiledpletter[i]''-\stdvectorfiledpletter[i]$ is regular at $p_i$, so $\stdvectorfiledpletter[i]'''-\stdvectorfiledpletter[i]$ is 
regular at $p$. Finally, we set 
$$
\uidescriptionfunctiondiff{i}{1}'=\uidescriptionfunctiondiff{i}{1}-\uidescriptionfunctiondiff{i}{1}''',\quad
\uidescriptionfunctiondiff{i}{2}'=\uidescriptionfunctiondiff{i}{2}-\uidescriptionfunctiondiff{i}{2}''',\quad\text{and}\quad
\stdvectorfiledpletter[i]'=\stdvectorfiledpletter[i]-\stdvectorfiledpletter[i]'''.
$$
The sequence $(\uidescriptionfunctiondiff{1}{1}''',\uidescriptionfunctiondiff{1}{2}''', \stdvectorfiledpletter[1]''',
\ldots,\uidescriptionfunctiondiff{\numberofdivisorpoints}{1}''', \uidescriptionfunctiondiff{\numberofdivisorpoints}{2}''', \stdvectorfiledpletter[\numberofdivisorpoints]''')$
defines
an element of the zero coset in 
$$
\left.\!\left(\bigoplus_{i=1}^\numberofdivisorpoints\Big(\Gamma(W,\gi)/\Gamma(W_{p_i},\gi)\Big)\right)\right/\Gamma(W,\gi)
$$
by construction. Therefore, $(\uidescriptionfunctiondiff{1}{1}',\uidescriptionfunctiondiff{1}{2}', \stdvectorfiledpletter[1],\ldots,
\uidescriptionfunctiondiff{\numberofdivisorpoints}{1}', \uidescriptionfunctiondiff{\numberofdivisorpoints}{1}', \stdvectorfiledpletter[\numberofdivisorpoints]')$ 
defines the same coset as
$(\uidescriptionfunctiondiff{1}{1},\uidescriptionfunctiondiff{1}{2},\stdvectorfiledpletter[1],\ldots,
\uidescriptionfunctiondiff{\numberofdivisorpoints}{1},\uidescriptionfunctiondiff{\numberofdivisorpoints}{2},\stdvectorfiledpletter[\numberofdivisorpoints])$ in 
$$
\left.\!\left(\bigoplus_{i=1}^\numberofdivisorpoints\Big(\Gamma(W,\gi)/\Gamma(W_{p_i},\gi)\Big)\right)\right/\Gamma(W,\gi).
$$
\end{proof}

\begin{lemma}\label{vfieldfinitereduction}
Suppose that
$$
(\uidescriptionfunctiondiff{1}{1},\uidescriptionfunctiondiff{1}{2},\stdvectorfiledpletter[1],\ldots,
\uidescriptionfunctiondiff{\numberofdivisorpoints}{1},\uidescriptionfunctiondiff{\numberofdivisorpoints}{2},\stdvectorfiledpletter[\numberofdivisorpoints])
\in\bigoplus_{i=1}^\numberofdivisorpoints
\abstractvectorspace_{0,0,p_i}
$$
and
$$
(\uidescriptionfunctiondiff{1}{1}',\uidescriptionfunctiondiff{1}{2}',\stdvectorfiledpletter[1]',\ldots,
\uidescriptionfunctiondiff{\numberofdivisorpoints}{1}',\uidescriptionfunctiondiff{\numberofdivisorpoints}{2}',\stdvectorfiledpletter[\numberofdivisorpoints]')
\in\bigoplus_{i=1}^\numberofdivisorpoints
\abstractvectorspace_{0,0,p_i}
$$
define the same class in
$(\bigoplus_{i=1}^\numberofdivisorpoints\Gamma(W,\gi)/\Gamma(W_{p_i},\gi))/\Gamma(W,\gi)$, 
and for every $i$, $\stdvectorfiledpletter[i]$ and $\stdvectorfiledpletter[i]'$ are regular at $p_i$.
Then there exists a globally defined vector field $\stdvectorfiledpletter\in\Gamma(\PP^1,\Theta_{\PP^1})$ such that 
$\stdvectorfiledpletter(p_i)=\stdvectorfiledpletter[i](p_i)-\stdvectorfiledpletter[i]'(p_i)$ if 
$p_i$ is an essential special point.

And vice versa, if 
$$
(\uidescriptionfunctiondiff{1}{1},\uidescriptionfunctiondiff{1}{2},\stdvectorfiledpletter[1],\ldots,
\uidescriptionfunctiondiff{\numberofdivisorpoints}{1},\uidescriptionfunctiondiff{\numberofdivisorpoints}{2},\stdvectorfiledpletter[\numberofdivisorpoints])
\in\bigoplus_{i=1}^\numberofdivisorpoints
\abstractvectorspace_{0,0,p_i}
$$
is such that every $\stdvectorfiledpletter[i]$ is regular at $p_i$, and $\stdvectorfiledpletter\in\Gamma(\PP^1,\Theta_{\PP^1})$ is a 
globally defined vector field, then there exists
$$
(\uidescriptionfunctiondiff{1}{1}',\uidescriptionfunctiondiff{1}{2}',\stdvectorfiledpletter[1]',\ldots,
\uidescriptionfunctiondiff{\numberofdivisorpoints}{1}',\uidescriptionfunctiondiff{\numberofdivisorpoints}{2}',\stdvectorfiledpletter[\numberofdivisorpoints]')
\in\bigoplus_{i=1}^\numberofdivisorpoints
\abstractvectorspace_{0,0,p_i}
$$
equivalent to $(\uidescriptionfunctiondiff{1}{1},\uidescriptionfunctiondiff{1}{2},\stdvectorfiledpletter[1],\ldots,
\uidescriptionfunctiondiff{\numberofdivisorpoints}{1},\uidescriptionfunctiondiff{\numberofdivisorpoints}{2},\stdvectorfiledpletter[\numberofdivisorpoints])$ in 
$$
\left.\!\left(\bigoplus_{i=1}^\numberofdivisorpoints\Big(\Gamma(W,\gi)/\Gamma(W_{p_i},\gi)\Big)\right)\right/\Gamma(W,\gi)
$$ and such that 
$\stdvectorfiledpletter[i]'$ is regular at $p_i$ for every $i$. 
Here $\stdvectorfiledpletter[i](p_i)-\stdvectorfiledpletter[i]'(p_i)=\stdvectorfiledpletter(p_i)$ for all $i$ 
such that 
$p_i$ is an essential special point.
\end{lemma}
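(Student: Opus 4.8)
The plan is to read off everything from the vector-field component of a single section of $\gi$ over $W$, exploiting that by Corollary \ref{h1compute} the quotient $(\bigoplus_{i=1}^{\numberofdivisorpoints}\Gamma(W,\gi)/\Gamma(W_{p_i},\gi))/\Gamma(W,\gi)$ is precisely $H^1(\PP^1,\gi)$, so that two tuples of $\bigoplus_i\abstractvectorspace_{0,0,p_i}$ ``define the same class'' exactly when their difference is the diagonal image of some $s\in\Gamma(W,\gi)$. Two facts make this workable: first, by Lemma \ref{vfieldtransition} the vector-field entry of any section of $\gi$ is one honest rational vector field on $\PP^1$, the same in every $U_i$-description; second, by Lemmas \ref{wwpquotient} and \ref{wwplinearquotient} the $\abstractvectorspace_{0,0,p_i}$-representative of the coset of a section is obtained simply by truncating its $U_{\numberoffixedopensets}$-description at $p_i$ to its principal part, retaining in addition the constant term of the vector field exactly when $p_i$ is essential.

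For the forward direction I would take a section $s\in\Gamma(W,\gi)$ witnessing the equality of classes and let $\stdvectorfiledpletter$ be its vector-field entry, which is regular on all ordinary points. Matching $\abstractvectorspace_{0,0,p_i}$-representatives, the truncation of $\stdvectorfiledpletter$ at $p_i$ equals $\stdvectorfiledpletter[i]-\stdvectorfiledpletter[i]'$. Since by hypothesis both $\stdvectorfiledpletter[i]$ and $\stdvectorfiledpletter[i]'$ are regular at $p_i$, their difference has no polar part: for an essential $p_i$ this forces the principal part of $\stdvectorfiledpletter$ to vanish while its constant term equals $\stdvectorfiledpletter[i](p_i)-\stdvectorfiledpletter[i]'(p_i)$, and for a removable $p_i$ the truncation is $0$, again forcing $\stdvectorfiledpletter$ to be polar-free at $p_i$. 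Hence $\stdvectorfiledpletter$ is regular at every special point and on $W$, so it is a global section of $\Theta_{\PP^1}$, and $\stdvectorfiledpletter(p_i)=\stdvectorfiledpletter[i](p_i)-\stdvectorfiledpletter[i]'(p_i)$ at the essential points by construction.

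For the converse I would run this backwards: given the global $\stdvectorfiledpletter$, build $s\in\Gamma(W,\gi)$ whose $U_{\numberoffixedopensets}$-description is $(0,0,\stdvectorfiledpletter)$ and whose remaining entries are obtained by applying $\uilargetransition{\numberoffixedopensets,i}$; these are regular at ordinary points by Lemma \ref{rijregular}, so $s$ really lies in $\Gamma(W,\gi)$. Subtracting the diagonal image of $s$ from the given tuple leaves the class in the final quotient unchanged, and since the $U_{\numberoffixedopensets}$-description of $s$ has zero function entries, the function entries of the tuple are unaltered and only the vector fields change, namely $\stdvectorfiledpletter[i]'=\stdvectorfiledpletter[i]-\mathrm{trunc}_{p_i}\stdvectorfiledpletter$. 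Because $\stdvectorfiledpletter$ is globally regular, its truncation at $p_i$ is the constant $\stdvectorfiledpletter(p_i)$ for essential $p_i$ and $0$ for removable $p_i$; in either case $\stdvectorfiledpletter[i]'$ stays regular at $p_i$, the tuple remains in $\bigoplus_i\abstractvectorspace_{0,0,p_i}$, and $\stdvectorfiledpletter[i](p_i)-\stdvectorfiledpletter[i]'(p_i)=\stdvectorfiledpletter(p_i)$ at the essential points.

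The only delicate point, and the step I would write out most carefully, is the bookkeeping of truncations together with the essential/removable dichotomy: one must keep straight that the constant term of the vector field is recorded in $\abstractvectorspace_{0,0,p_i}$ only at essential points (so that at removable points regularity of $\stdvectorfiledpletter[i]$ already means $\stdvectorfiledpletter[i]=0$), and one must verify both that the section constructed in the converse genuinely satisfies the defining regularity of $\Gamma(W,\gi)$ at ordinary points and that its vector-field entry is unambiguous across all $U_i$-descriptions. No genuine estimate is involved; the content is entirely the translation through Lemmas \ref{wwpquotient}, \ref{wwplinearquotient} and \ref{rijregular}.
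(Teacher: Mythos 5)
Your argument follows the paper's proof essentially verbatim: both directions extract or construct a witnessing section $s\in\Gamma(W,\gi)$ with vector-field entry $\stdvectorfiledpletter$ and then translate through Lemmas \ref{wwpquotient} and \ref{wwplinearquotient}, with the essential/removable dichotomy handled exactly as in the paper. The only imprecision is your claim in the converse that the function entries are "unaltered" — the $\abstractvectorspace_{0,0,p_i}$-representative of the coset of $s$ need not have zero function entries when its vector field is nonzero (see the remark after Lemma \ref{wwplinearquotient}), which is why the paper adds correction terms $\uidescriptionfunctiondiff{i}{j}'''$ to the functions — but since the conclusion only concerns the vector fields, this does not affect validity.
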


\begin{proof}
The first statement follows easily from Lemmas \ref{wwpquotient} and \ref{wwplinearquotient}.
Namely, all triples 
$(\uidescriptionfunctiondiff{i}{1}-\uidescriptionfunctiondiff{i}{1}',
\uidescriptionfunctiondiff{i}{2}-\uidescriptionfunctiondiff{i}{2}',
\stdvectorfiledpletter[i]-\stdvectorfiledpletter[i]')$
define
the same section from $\Gamma(W,\gi)$ in the sense of Lemmas \ref{wwpquotient} and 
\ref{wwplinearquotient} applied at $p_i$. This element of $\Gamma(W,\gi)$ can be written as
$(\uidescriptionfunction{1,1}'', \uidescriptionfunction{1,2}'',\ldots, 
\uidescriptionfunction{\numberoffixedopensets,1}'',\uidescriptionfunction{\numberoffixedopensets,2}'',\stdvectorfiledpletter)$. 
Let us prove that $\stdvectorfiledpletter$ is the desired vector field.
We know that $\stdvectorfiledpletter$ is defined at all ordinary points. If $p_i$ is a removable special point,
then by Lemma \ref{wwplinearquotient}, 
$\stdvectorfiledpletter[i]-\stdvectorfiledpletter[i]'-\stdvectorfiledpletter$ is regular at $p_i$, 
but we already know that $\stdvectorfiledpletter[i]-\stdvectorfiledpletter[i]'$ is regular at $p_i$, 
so $\stdvectorfiledpletter$ is regular at $p_i$. If $p_i$ is an essential special point,
then by Lemma \ref{wwpquotient}, $\stdvectorfiledpletter[i]-\stdvectorfiledpletter[i]'-\stdvectorfiledpletter$ 
is defined at $p_i$ and equals 0 there.
Hence, $\stdvectorfiledpletter$ is defined at $p_i$, and $\stdvectorfiledpletter[i](p)-\stdvectorfiledpletter[i]'(p)=\stdvectorfiledpletter(p)$.

The proof of the second statement is similar to the proof of the previous lemma. Namely, 
we start with $\uidescriptionfunction{\numberoffixedopensets,1}''=\uidescriptionfunction{\numberoffixedopensets,2}''=0$ and construct a section
$(\uidescriptionfunction{1,1}'',\uidescriptionfunction{1,2}'', \ldots,\uidescriptionfunction{\numberoffixedopensets,1}'',
\uidescriptionfunction{\numberoffixedopensets,2}'',\stdvectorfiledpletter)\in\Gamma(W,\gi)$ via
$$
\left(
\begin{array}{c}
\uidescriptionfunction{i,1}''\\
\uidescriptionfunction{i,2}''\\
\stdvectorfiledpletter
\end{array}
\right)
=
\uilargetransition{\numberoffixedopensets,i}
\left(
\begin{array}{c}
\uidescriptionfunction{\numberoffixedopensets,1}''\\
\uidescriptionfunction{\numberoffixedopensets,2}''\\
\stdvectorfiledpletter
\end{array}
\right).
$$
This section defines elements of $\Gamma(W,\gi)/\Gamma(W_{p_i},\gi)$, 
and the isomorphisms from Lemmas \ref{wwpquotient} and \ref{wwplinearquotient}
map them to
$(\uidescriptionfunctiondiff{i}{1}''',\uidescriptionfunctiondiff{i}{2}''',\stdvectorfiledpletter[i]''')$ .
Both Lemmas say that $\stdvectorfiledpletter[i]'''-\stdvectorfiledpletter$ is defined at $p_i$, and, since $\stdvectorfiledpletter$ is defined globally, 
$\stdvectorfiledpletter[i]'''$ is defined at $p_i$. So we can set
$\uidescriptionfunction{i}{1}'=\uidescriptionfunction{i}{1}+\uidescriptionfunction{i}{1}'''$,
$\uidescriptionfunction{i}{2}'=\uidescriptionfunction{i}{2}+\uidescriptionfunction{i}{2}'''$, and 
$\stdvectorfiledpletter[i]'=\stdvectorfiledpletter[i]+\stdvectorfiledpletter[i]'''$.
If $p_i$ is an essential special point,
Lemma \ref{wwpquotient} says that $\stdvectorfiledpletter[i]'''(p_i)=\stdvectorfiledpletter(p_i)$, 
so $\stdvectorfiledpletter[i]'(p_i)-\stdvectorfiledpletter[i](p_i)=\stdvectorfiledpletter(p_i)$.
\end{proof}

\begin{lemma}\label{functreduction}
Let 
$$
(\uidescriptionfunctiondiff{1}{1},\uidescriptionfunctiondiff{1}{2},\stdvectorfiledpletter[1],\ldots,
\uidescriptionfunctiondiff{\numberofdivisorpoints}{1},\uidescriptionfunctiondiff{\numberofdivisorpoints}{2},\stdvectorfiledpletter[\numberofdivisorpoints])
\in\bigoplus_{i=1}^\numberofdivisorpoints
\abstractvectorspace_{0,0,p_i}
$$
Then this element of 
$\bigoplus_{i=1}^\numberofdivisorpoints
\abstractvectorspace_{0,0,p_i}$
and
$$
(0,0,\stdvectorfiledpletter[1],\ldots,0,0,\stdvectorfiledpletter[\numberofdivisorpoints])
$$
define the same class in 
$$
\left.!\left(\bigoplus_{i=1}^\numberofdivisorpoints\Big(\Gamma(W,\gi)/\Gamma(W_{p_i},\gi)\Big)\right)\right/\Gamma(W,\gi).
$$
\end{lemma}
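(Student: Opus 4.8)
The plan is to show that the difference of the two tuples, namely $(\uidescriptionfunctiondiff{1}{1},\uidescriptionfunctiondiff{1}{2},0,\ldots,\uidescriptionfunctiondiff{\numberofdivisorpoints}{1},\uidescriptionfunctiondiff{\numberofdivisorpoints}{2},0)$, lies in the image of the diagonal map $\Gamma(W,\gi)\to\bigoplus_{i=1}^{\numberofdivisorpoints}\Gamma(W,\gi)/\Gamma(W_{p_i},\gi)$, so that the two tuples represent the same class after quotienting by $\Gamma(W,\gi)$. Since the vector-field components of this difference all vanish, I will build a section of $\gi$ over $W$ whose $U_{\numberoffixedopensets}$-description has zero vector field and whose two function components have exactly the prescribed principal parts at the special points.

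Concretely, for $j=1,2$ I would set $\uidescriptionfunction{\numberoffixedopensets,j}=\sum_{i=1}^{\numberofdivisorpoints}\uidescriptionfunctiondiff{i}{j}$, where each Laurent polynomial $\uidescriptionfunctiondiff{i}{j}$ in $t_{p_i}^{-1}$ is read as a genuine rational function on $\PP^1$ via the coordinate function $t_{p_i}$. Since $t_{p_i}$ has a simple zero at $p_i$ and its only other pole is a simple one away from $p_i$, each summand $\uidescriptionfunctiondiff{i}{j}$ is regular on $\PP^1\setminus\{p_i\}$, hence on $W$; moreover $t_{p_i}$ is a local parameter at $p_i$, so the principal part of $\uidescriptionfunctiondiff{i}{j}$ at $p_i$ equals $\uidescriptionfunctiondiff{i}{j}$ itself while it stays regular at every other special point. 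Thus $\uidescriptionfunction{\numberoffixedopensets,j}$ is regular on $W$ and its principal part at each $p_i$ is $\uidescriptionfunctiondiff{i}{j}$.

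Next I would assemble the tuple $(\uidescriptionfunction{1,1},\uidescriptionfunction{1,2},\ldots,\uidescriptionfunction{\numberoffixedopensets,1},\uidescriptionfunction{\numberoffixedopensets,2},0)$ by extending $(\uidescriptionfunction{\numberoffixedopensets,1},\uidescriptionfunction{\numberoffixedopensets,2},0)$ through the matrices $\uilargetransition{\numberoffixedopensets,i}$. Because the vector field is zero, only the constant block $\uismalltransition{\numberoffixedopensets,i}$ contributes and the logarithmic-derivative entries drop out, so each $\uidescriptionfunction{i,1},\uidescriptionfunction{i,2}$ is a constant-coefficient combination of $\uidescriptionfunction{\numberoffixedopensets,1},\uidescriptionfunction{\numberoffixedopensets,2}$ and is therefore regular on $W$ by Lemma \ref{rijregular}; the cocycle relations in the definition of $\gi$ hold by Lemma \ref{rijcocycle}. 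Hence this tuple is a genuine element of $\Gamma(W,\gi)$. Computing its class through the isomorphisms of Lemmas \ref{wwpquotient} and \ref{wwplinearquotient}, which take the negative (and, at essential points, the $b_0$) parts of the $U_{\numberoffixedopensets}$-description at $p_i$, the vector-field part comes out $0$ because the description's vector field is globally zero, and the two function parts are the principal parts computed above, namely $\uidescriptionfunctiondiff{i}{1},\uidescriptionfunctiondiff{i}{2}$. Therefore the diagonal image of this section is exactly the difference tuple, which is consequently zero in the final quotient, and the two original tuples coincide there.

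The argument is mostly bookkeeping once this realization is in hand; the only point requiring care—the main, if mild, obstacle—is checking that the explicit functions $\uidescriptionfunction{\numberoffixedopensets,j}$ acquire no spurious poles at ordinary points or at the remaining special points when the principal parts are summed. This is precisely where the choice of $t_{p_i}$ as a coordinate function vanishing only at $p_i$ is used, ensuring each summand is regular off $p_i$ and contributes its Laurent tail solely at $p_i$.
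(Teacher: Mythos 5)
Your proposal is correct and follows essentially the same route as the paper's own proof: sum the Laurent tails to get $\uidescriptionfunction{\numberoffixedopensets,j}=\sum_i\uidescriptionfunctiondiff{i}{j}$, extend by the transition matrices (which reduce to the constant blocks $\uismalltransition{\numberoffixedopensets,i}$ because the vector field is zero) to get a genuine element of $\Gamma(W,\gi)$, and observe via Lemmas \ref{wwpquotient} and \ref{wwplinearquotient} that its diagonal image is exactly the difference tuple. The only cosmetic difference is that the paper does not even need Lemma \ref{rijregular} at this step, since constancy of the entries of $\uismalltransition{\numberoffixedopensets,i}$ already gives regularity on $W$.
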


\begin{proof}
The proof is similar to the proof of Lemma \ref{vfieldreduction}.
Since all $\uidescriptionfunctiondiff{i}{j}$ here are Laurent polynomials of the form
$a_{i,j,-1}t_{p_i}^{-1}+\ldots+a_{i,j,-n}t_{p_i}^{-n}$ (we do not mean here that $a_{i,j,-n}\ne 0$, so 
we can use the same $n$ for all polynomials),
they have no poles except $p_i$, and functions
$\uidescriptionfunction{\numberoffixedopensets,1}'=\uidescriptionfunctiondiff{1}{1}+\ldots+\uidescriptionfunctiondiff{\numberofdivisorpoints}{1}$ and 
$\uidescriptionfunction{\numberoffixedopensets,2}'=\uidescriptionfunctiondiff{1}{2}+\ldots+\uidescriptionfunctiondiff{\numberofdivisorpoints}{2}$ have no poles at 
ordinary points. Using these functions, we can construct a section 
$(\uidescriptionfunction{1,1}', \uidescriptionfunction{1,2}', \ldots, 
\uidescriptionfunction{\numberoffixedopensets,1}', \uidescriptionfunction{\numberoffixedopensets,2}', 0)
\in\Gamma(W,\gi)$ as in proofs of previous lemmas, 
namely
$$
\left(
\begin{array}{c}
\uidescriptionfunction{i,1}'\\
\uidescriptionfunction{i,2}'\\
0
\end{array}
\right)
=
\uilargetransition{\numberoffixedopensets,i}
\left(
\begin{array}{c}
\uidescriptionfunction{\numberoffixedopensets,1}'\\
\uidescriptionfunction{\numberoffixedopensets,2}'\\
0
\end{array}
\right),
$$
or, in other words,
$$
\left(
\begin{array}{c}
\uidescriptionfunction{i,1}'\\
\uidescriptionfunction{i,2}'\\
\end{array}
\right)=
\uismalltransition{\numberoffixedopensets,i}
\left(
\begin{array}{c}
\uidescriptionfunction{\numberoffixedopensets,1}'\\
\uidescriptionfunction{\numberoffixedopensets,2}'\\
\end{array}
\right).
$$
Since all entries in $\uismalltransition{\numberoffixedopensets,i}$ are constants, all functions $\uidescriptionfunction{i,j}'$ are defined on $W$, 
and they define an element of $\Gamma(W,\gi)$.

A function $\uidescriptionfunctiondiff{i}{1}$ or $\uidescriptionfunctiondiff{i}{2}$ that has pole at $p_j$ only if $i=j$. Hence, 
the class of $(\uidescriptionfunction{1,1}', \uidescriptionfunction{1,2}', \ldots, 
\uidescriptionfunction{\numberoffixedopensets,1}', \uidescriptionfunction{\numberoffixedopensets,2}', 
0)$ in $\Gamma(W,\gi)/\Gamma(W_{p_i},\gi)$
is mapped by the isomorphism from Lemma \ref{wwpquotient} or \ref{wwplinearquotient} to
$(\uidescriptionfunctiondiff{i}{1},\uidescriptionfunctiondiff{i}{2},0)$. 
Therefore, $(\uidescriptionfunctiondiff{1}{1},\uidescriptionfunctiondiff{1}{2},0,\ldots, 
\uidescriptionfunctiondiff{\numberofdivisorpoints}{1},\uidescriptionfunctiondiff{\numberofdivisorpoints}{2},0)$ 
defines
the zero coset in 
$$
\left.\!\left(\bigoplus_{i=1}^\numberofdivisorpoints\Big(\Gamma(W,\gi)/\Gamma(W_{p_i},\gi)\Big)\right)\right/\Gamma(W,\gi),
$$
and $(\uidescriptionfunctiondiff{1}{1},\uidescriptionfunctiondiff{1}{2},\stdvectorfiledpletter[1],\ldots, 
\uidescriptionfunctiondiff{\numberofdivisorpoints}{1},\uidescriptionfunctiondiff{\numberofdivisorpoints}{2},\stdvectorfiledpletter[\numberofdivisorpoints])$ 
and $(0,0,\stdvectorfiledpletter[1],\ldots,0,0,\stdvectorfiledpletter[\numberofdivisorpoints])$ define the same coset in 
$$
\left.\!\left(\bigoplus_{i=1}^\numberofdivisorpoints\Big(\Gamma(W,\gi)/\Gamma(W_{p_i},\gi)\Big)\right)\right/\Gamma(W,\gi).
$$
\end{proof}

Denote now by $\numberofessentialdivisorpoints$ the number of essential special points.
Denote these special points by $p'_1, \ldots, p'_{\numberofessentialdivisorpoints}$.
\begin{lemma}\label{p1vfields}
If $\numberofessentialdivisorpoints\ge 3$, then every globally defined vector field on $\PP^1$ is uniquely determined by 
its values at $p'_1,\ldots,p'_{\numberofessentialdivisorpoints}$. If $\numberofessentialdivisorpoints\le 3$, then for every set of tangent vectors at 
$p'_1, \ldots, p'_{\numberofessentialdivisorpoints}$ there exists a globally defined vector field that takes these values 
at these points.
\end{lemma}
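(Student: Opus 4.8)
The plan is to reduce both statements to a single linear map and analyze it with a short exact sequence of sheaves on $\PP^1$. Recall that $\Theta_{\PP^1}\cong\OO_{\PP^1}(2)$, so the space $\Gamma(\PP^1,\Theta_{\PP^1})$ of global vector fields is three-dimensional; in the coordinate $t$ it consists of the fields $(a_0+a_1t+a_2t^2)\partial/\partial t$ with $a_0,a_1,a_2\in\CC$, and one checks (passing to the chart $s=1/t$, where $\partial/\partial t=-s^2\partial/\partial s$) that these are exactly the fields regular also at $t=\infty$. Both parts of the lemma are statements about the evaluation map
$$
\mathrm{ev}\colon\Gamma(\PP^1,\Theta_{\PP^1})\to\bigoplus_{i=1}^{\numberofessentialdivisorpoints}T_{p'_i}\PP^1
$$
into the direct sum of the (one-dimensional) tangent spaces at the points $p'_i$: the case $\numberofessentialdivisorpoints\ge 3$ asserts that $\mathrm{ev}$ is injective, and the case $\numberofessentialdivisorpoints\le 3$ that it is surjective.

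First I would put $D=p'_1+\ldots+p'_{\numberofessentialdivisorpoints}$ and write the short exact sequence of sheaves
$$
0\to\Theta_{\PP^1}(-D)\to\Theta_{\PP^1}\to\bigoplus_{i=1}^{\numberofessentialdivisorpoints}T_{p'_i}\PP^1\to 0,
$$
whose right-hand term is the skyscraper sheaf with stalk $T_{p'_i}\PP^1$ at $p'_i$. Its long exact cohomology sequence identifies $\mathrm{ev}$ with the induced map $H^0(\PP^1,\Theta_{\PP^1})\to\bigoplus_i T_{p'_i}\PP^1$, whose kernel is $H^0(\PP^1,\Theta_{\PP^1}(-D))$ and, since $H^1(\PP^1,\Theta_{\PP^1})=H^1(\PP^1,\OO(2))=0$, whose cokernel is isomorphic to $H^1(\PP^1,\Theta_{\PP^1}(-D))$. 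Using $\Theta_{\PP^1}(-D)\cong\OO(2-\numberofessentialdivisorpoints)$, one has $H^0(\OO(2-\numberofessentialdivisorpoints))=0$ exactly when $2-\numberofessentialdivisorpoints<0$, that is $\numberofessentialdivisorpoints\ge 3$, which gives injectivity; and $H^1(\OO(2-\numberofessentialdivisorpoints))=0$ exactly when $2-\numberofessentialdivisorpoints\ge -1$, that is $\numberofessentialdivisorpoints\le 3$, which gives surjectivity. This settles both claims at once.

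For a proof more in keeping with the explicit computations of the previous lemmas, one can argue without cohomology. A nonzero global field, being a section of $\OO(2)$, has at most two zeros on $\PP^1$, so it cannot vanish at $\numberofessentialdivisorpoints\ge 3$ distinct points, which yields injectivity of $\mathrm{ev}$. For surjectivity with $\numberofessentialdivisorpoints=3$, an injective linear map between three-dimensional spaces is an isomorphism; for $\numberofessentialdivisorpoints<3$ one adjoins extra distinct points to reach a total of three and composes the resulting surjection with the projection onto the first $\numberofessentialdivisorpoints$ coordinates. I do not expect a genuine obstacle here; the only step requiring care in either route is the bookkeeping at $t=\infty$, which is what makes the zero-count ``at most two'' honest and which is handled once via the chart $s=1/t$ above.
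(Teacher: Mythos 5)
Your proposal is correct. Your second, elementary route is essentially the paper's own argument in a cleaner dress: the paper writes a global field as $(a_0+a_1t+a_2t^2)\partial/\partial t$ and invokes Lagrange-type interpolation for degree-$2$ polynomials (determined by values at $\ge 3$ points, arbitrary values prescribable at $\le 3$ points), which is the same content as your observation that a nonzero section of $\Theta_{\PP^1}\cong\OO(2)$ has at most two zeros, combined with the dimension count $\dim\Gamma(\PP^1,\Theta_{\PP^1})=3$. Your first route, via the exact sequence $0\to\Theta_{\PP^1}(-D)\to\Theta_{\PP^1}\to\bigoplus_i T_{p'_i}\PP^1\to 0$ and the vanishing of $H^0(\OO(2-\numberofessentialdivisorpoints))$ resp.\ $H^1(\OO(2-\numberofessentialdivisorpoints))$, is genuinely different from what the paper does: it is heavier machinery for a three-dimensional linear-algebra fact, but it identifies kernel and cokernel of the evaluation map simultaneously and would generalize immediately to prescribing jets or to other line bundles. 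One small point in your favour: you handle the point $t=\infty$ explicitly via the chart $s=1/t$, whereas the paper's phrasing ("values at three points") tacitly assumes the $p'_i$ lie in the finite chart; your bookkeeping makes the statement honest when some $p'_i$ has $t(p'_i)=\infty$.
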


\begin{proof}
Every globally defined vector field on $\PP^1$ can be written as $(a_0+a_1 t+a_2 t^2)\partial/\partial t$.
(If the polynomial here is of higher degree, the vector field has a pole at infinity.) A polynomial of 
degree 2 is completely determined by its values at at least three points (if there are more than three points, 
these values cannot be arbitrary, but a polynomial of degree two is still unique if it exists). A polynomial
of degree 2 can take arbitrary prescribed values at at most three points.
\end{proof}

\begin{proposition}\label{honepushforwardnumber}
If $\numberofessentialdivisorpoints\le 3$, then $H^1(\PP^1,\gi)=0$.

If $\numberofessentialdivisorpoints\ge 3$, then there exists a vector space $\abstractvectorspace_{0,1}$ of dimension $\numberofessentialdivisorpoints$ and 
an embedding $\Gamma(\PP^1, \Theta_{\PP^1})\hookrightarrow \abstractvectorspace_{0,1}$ such 
that $H^1(\PP^1,\gi)\cong \abstractvectorspace_{0,1}/\Gamma(\PP^1, \Theta_{\PP^1})$. Therefore, 
$\dim H^1(\PP^1,\gi)=\numberofessentialdivisorpoints-3$ in this case.
\end{proposition}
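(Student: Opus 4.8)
The plan is to feed the reduction lemmas of this section into the Čech formula
$$
H^1(\PP^1,\gi)=\left.\!\left(\bigoplus_{p\text{ special}}\Big(\Gamma(W,\gi)/\Gamma(W_p,\gi)\Big)\right)\right/\Gamma(W,\gi)
$$
provided by Corollary \ref{h1compute}. First I would use Lemma \ref{wwpquotient} (for essential special points) together with Lemma \ref{wwplinearquotient} (for removable special points) to identify the inner direct sum with $\bigoplus_{i=1}^\numberofdivisorpoints\abstractvectorspace_{0,0,p_i}$, so that every cohomology class is represented by a $3\numberofdivisorpoints$-tuple of Laurent polynomials, three attached to each special point, with $\Gamma(W,\gi)$ mapped into this sum diagonally.

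Next I would simplify the representatives. Applying Lemma \ref{vfieldreduction} I may assume that each vector-field component $\stdvectorfiledpletter[i]$ is regular at $p_i$, and then Lemma \ref{functreduction} kills the two function components without changing the vector fields, so every class is represented by a tuple $(0,0,\stdvectorfiledpletter[1],\ldots,0,0,\stdvectorfiledpletter[\numberofdivisorpoints])$ with each $\stdvectorfiledpletter[i]$ regular at $p_i$. Here the distinction between the two types of points becomes decisive: for a removable point the vector-field component of $\abstractvectorspace_{0,0,p_i}$ has only strictly negative powers of $t_{p_i}$, so regularity at $p_i$ forces $\stdvectorfiledpletter[i]=0$; for an essential point it has the form $(b_0+b_{-1}t_{p_i}^{-1}+\ldots)\partial/\partial t_{p_i}$, so regularity forces $\stdvectorfiledpletter[i]=b_0\,\partial/\partial t_{p_i}$, a vector field completely determined by its value at $p_i$. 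Thus, after these reductions, a class is encoded by one tangent vector at each essential special point, i.e. by an element of $\abstractvectorspace_{0,1}:=\bigoplus_{j=1}^{\numberofessentialdivisorpoints}T_{p'_j}\PP^1$ (where $T_{p'_j}\PP^1$ is the one-dimensional tangent space to $\PP^1$ at $p'_j$), a vector space of dimension $\numberofessentialdivisorpoints$.

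It then remains to describe the relations imposed by $\Gamma(W,\gi)$, and this is exactly the content of Lemma \ref{vfieldfinitereduction}: two such reduced tuples define the same class if and only if the collections of tangent vectors differ, at the essential points, by $(\stdvectorfiledpletter(p'_1),\ldots,\stdvectorfiledpletter(p'_{\numberofessentialdivisorpoints}))$ for some globally defined vector field $\stdvectorfiledpletter\in\Gamma(\PP^1,\Theta_{\PP^1})$, and conversely every such global vector field realizes a relation. Therefore the evaluation map $\Gamma(\PP^1,\Theta_{\PP^1})\to\abstractvectorspace_{0,1}$, $\stdvectorfiledpletter\mapsto(\stdvectorfiledpletter(p'_1),\ldots,\stdvectorfiledpletter(p'_{\numberofessentialdivisorpoints}))$, is well defined and
$$
H^1(\PP^1,\gi)\cong\abstractvectorspace_{0,1}/\im\big(\Gamma(\PP^1,\Theta_{\PP^1})\to\abstractvectorspace_{0,1}\big).
$$

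Finally I would invoke Lemma \ref{p1vfields}. If $\numberofessentialdivisorpoints\le 3$ the evaluation map is surjective, so the quotient vanishes and $H^1(\PP^1,\gi)=0$. If $\numberofessentialdivisorpoints\ge 3$ the evaluation map is injective, hence an embedding $\Gamma(\PP^1,\Theta_{\PP^1})\hookrightarrow\abstractvectorspace_{0,1}$; since $\dim\Gamma(\PP^1,\Theta_{\PP^1})=3$, this yields $\dim H^1(\PP^1,\gi)=\numberofessentialdivisorpoints-3$ (the boundary case $\numberofessentialdivisorpoints=3$ agrees with both statements). The main obstacle is not any single computation but the careful bookkeeping needed to verify that, after the successive reductions, the image of $\Gamma(W,\gi)$ really coincides with the image of the global-vector-field evaluation map, keeping constant track of which special points are essential and which are removable; this is precisely what Lemmas \ref{vfieldreduction}, \ref{functreduction}, and \ref{vfieldfinitereduction} were designed to handle.
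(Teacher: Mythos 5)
Your proposal is correct and follows essentially the same route as the paper's own proof: reduce representatives via Lemmas \ref{vfieldreduction} and \ref{functreduction}, observe that regularity of the vector-field components forces them to vanish at removable points and to be constant multiples of $\partial/\partial t_{p_i}$ at essential ones, identify the relations with the image of the evaluation map via Lemma \ref{vfieldfinitereduction}, and conclude with Lemma \ref{p1vfields}. No gaps.
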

\begin{proof}
By applying first Lemma \ref{vfieldreduction}, and then Lemma \ref{functreduction} to an element 
of $\bigoplus_{i=1}^\numberofdivisorpoints\abstractvectorspace_{0,0,p_i}$, we can get 
another element of $\bigoplus_{i=1}^\numberofdivisorpoints\abstractvectorspace_{0,0,p_i}$ of 
the form
$(0,0,\stdvectorfiledpletter[1],\ldots,0,0,\stdvectorfiledpletter[\numberofdivisorpoints])$ 
equivalent to the original element of $\bigoplus_{i=1}^\numberofdivisorpoints\abstractvectorspace_{0,0,p_i}$
in $(\bigoplus_{i=1}^\numberofdivisorpoints\Gamma(W,\gi)/\Gamma(W_{p_i},\gi))/\Gamma(W,\gi)$.
Here, $\stdvectorfiledpletter[i]$ are Laurent polynomials regular at $p_i$, i.~e. they don't have non-zero coefficients 
at negative degrees. But Lemmas \ref{wwpquotient} and \ref{wwplinearquotient}
describe exact form of these polynomials, and the highest possible degree of a non-zero term is 
0 if $p_i$ is an essential special point, and $-1$ if it is removable.
We conclude that 
if $p_i$ is a removable special point,
then 
$\stdvectorfiledpletter[i]=0$. Otherwise, $\stdvectorfiledpletter[i]$ is a vector field of the form $a\partial/\partial t_{p_i}$ ($a\in\CC$), 
which is completely determined by its value at $p_i$.

Therefore, we have constructed a surjective linear map from
$$
\abstractvectorspace_{0,1}=\bigoplus_{i=1}^{\numberofessentialdivisorpoints}\Theta_{\PP^1,p'_i}
$$
to $H^1(\PP^1,\gi)$. Denote this map by $\zeta_1$. 
$\Gamma(\PP^1, \Theta_{\PP^1})$ can be mapped to
$\abstractvectorspace_{0,1}$ via evaluation of a vector field at   points $p'_1,\ldots, p'_{\numberofessentialdivisorpoints}$.
Denote this map by $\zeta_2$.
Let us prove that $\ker\zeta_1=\zeta_2(\Gamma(\PP^1, \Theta_{\PP^1}))$.
First, if $\stdvectorfiledpletter$ is a globally defined vector field, by the second part of Lemma \ref{vfieldfinitereduction},
there exists
$(\uidescriptionfunctiondiff{1}{1}',\uidescriptionfunctiondiff{1}{2}',\stdvectorfiledpletter[1]',\ldots, 
\uidescriptionfunctiondiff{\numberofdivisorpoints}{1}',\uidescriptionfunctiondiff{\numberofdivisorpoints}{2}',\stdvectorfiledpletter[\numberofdivisorpoints]')
\in\bigoplus_{i=1}^\numberofdivisorpoints\abstractvectorspace_{0,0,p_i}$
equivalent to 0 
in $(\bigoplus_{i=1}^\numberofdivisorpoints\Gamma(W,\gi)/\Gamma(W_{p_i},\gi))/\Gamma(W,\gi)$
and such that $\stdvectorfiledpletter[i]'$ is defined at $p_i$ and $\stdvectorfiledpletter[i]'(p_i)=\stdvectorfiledpletter(p_i)$ for all essential special points $p_i$.
As we have already seen, $\stdvectorfiledpletter[i]'=0$ if $p_i$ is removable.
By Lemma \ref{functreduction}, 
$(\uidescriptionfunctiondiff{1}{1}',\uidescriptionfunctiondiff{1}{2}',\stdvectorfiledpletter[1],\ldots, 
\uidescriptionfunctiondiff{\numberofdivisorpoints}{1}',\uidescriptionfunctiondiff{\numberofdivisorpoints}{2}',\stdvectorfiledpletter[\numberofdivisorpoints]')$ is equivalent to 
$(0,0,\stdvectorfiledpletter[1]',\ldots, 0,0,\stdvectorfiledpletter[\numberofdivisorpoints]')$, so $\zeta_2(\Gamma(\PP^1, \Theta_{\PP^1}))\subseteq\ker\zeta_1$.
On the other hand, if $(0,0,\stdvectorfiledpletter[1],\ldots,0,0,\stdvectorfiledpletter[\numberofdivisorpoints])\in\ker\zeta_1$, 
then by the first part of Lemma \ref{vfieldfinitereduction},
there exists a vector field $\stdvectorfiledpletter\in\Gamma(\PP^1, \Theta_{\PP^1})$ 
such that $\stdvectorfiledpletter[i](p_i)=\stdvectorfiledpletter(p_i)$ for all essential special points $p_i$.
This means that $\ker\zeta_1\subseteq\zeta_2(\Gamma(\PP^1, \Theta_{\PP^1}))$, and we finally conclude that 
$\ker\zeta_1=\zeta_2(\Gamma(\PP^1, \Theta_{\PP^1}))$.

Now, by Lemma \ref{p1vfields}, $\zeta_2$ is surjective if $\numberofessentialdivisorpoints\le 3$, and $\zeta_2$ is injective if $\numberofessentialdivisorpoints\ge 3$, 
and the claim follows.
\end{proof}

\section{Computation of the dimension of $\ker H^0(\PP^1,\giv)\to H^0(\PP^1,\gviii)$}

Now we continue with $\ker H^0(\PP^1,\giv)\to H^0(\PP^1,\gviii)$. Recall that 
we use the sufficient system $U_1,\ldots, U_{\numberoffixedopensets-1}$ to compute $\giv$ and $\gviii$ and that 
$\giv$ (resp. $\gviii$) is the first cohomology of the complex $\gii\to \giip\to\giipp$ 
(resp. $\gvi\to \gvip\to\gvipp$). The map between $\giv$ and $\gviii$
can also be written as the cohomology in the middle of a map between these two complexes.
Here $\gii$ can be written as a direct sum of 
sheaves, each of them corresponds to an 
open subset $U_i$,
namely, its sections over an open set $V\subseteq \PP^1$ are 
the $U_i$-descriptions of 
homogeneous vector fields of degree 0 defined on $\pi^{-1}(V)\cap U_i$. 
Denote 
this direct summand by
$\giisi{i}$. 
The sheaves $\giip$ and 
$\giipp$ can be decomposed into direct sums similarly, and each direct summand corresponds to two or three 
of the sets $U_i$, respectively.
Denote these direct summands by $\giipsij{i,j}$ 
and by
$\giippsijk{i,j,k}$, respectively.
Similarly, we can define decompositions $\giiinv=\bigoplus \giisiinv{i}$, 
$\giipinv=\bigoplus \giipsijinv{i,j}$, $\giippinv=\bigoplus \giippsijkinv{i,j,k}$.

$\gvi$, $\gvip$, and $\gvipp$ can also be decomposed into 
sums of direct summands corresponding to one, two, or three sets $U_i$, respectively. The sections of 
each of these summands over an open subset $V\subseteq \PP^1$ are sequences of length 
$\numberoffixedgenerators=\sum_j \dim \Gamma(\PP^1,\OO(\mathcal D(\lambda_j)))$, where each entry is the 
$U_i$-description of a function of degree $\lambda_j$ defined on 
the intersection of $\pi^{-1}(V)$ and one, two, or three of the sets $U_i$, respectively.
Denote these direct summands by $\gvisi{i}$, 
$\gvipsij{i,j}$, $\gvippsijk{i,j,k}$, respectively.
Again, we can also decompose $\gviinv=\bigoplus \gvisiinv{i}$, 
$\gvipinv=\bigoplus \gvipsijinv{i,j}$, $\gvippinv=\bigoplus \gvippsijkinv{i,j,k}$.

The maps $\gii\to\gvi$, $\giip\to\gvip$, $\giipp\to\gvipp$
map each of these direct summands in $\gii$, $\giip$, $\giipp$ to the 
corresponding direct summand in $\gvi$, $\gvip$, $\gvipp$, respectively.

Our next goal is to simplify the expressions for $\giv$ and $\gviii$ we have now.
For this goal, it will be more convenient to deal with the "invariant" versions of the sheaves, i.~e. with 
$\giiinv$, $\giisiinv{i}$, $\giipsijinv{i,j}$, $\giippsijkinv{i,j,k}$, 
$\gviinv$, $\gvisiinv{i}$, $\gvipsijinv{i,j}$, and $\gvippsijkinv{i,j,k}$, 
which do not involve any $U_i$-descriptions explicitly.
By Lemma \ref{uijstructure}, $U_{\numberoffixedopensets}\subseteq U_i$ is a dense open subset for all $i$.
Hence, each of the 
sheaves $\giisiinv{i}$, $\giipsijinv{i,j}$, and $\giippsijkinv{i,j,k}$
can be embedded into
the following sheaf that we denote by $\giicircinv$: $\Gamma (V, \giicircinv)$
is the space of $T$-invariant vector fields on $\pi^{-1}(V)\cap U_{\numberoffixedopensets}$.
Similarly, each of 
sheaves $\gvisiinv{i}$, $\gvipsijinv{i,j}$, and $\gvippsijkinv{i,j,k}$
can be embedded into the following sheaf $\gvicircinv$: $\Gamma (V, \gvicircinv)$
is the space of sequences of length $\sum_j \dim \Gamma(\PP^1,\OO(\mathcal D(\lambda_j)))$ of functions of degree $\lambda_j$ 
defined on $\pi^{-1}(V)\cap U_{\numberoffixedopensets}$. 
Then by 
Corollary \ref{h1computegen}
we have the following formulas for $\givinv$ and $\gviiiinv$:
$$
\givinv=\left.\!\left(\ker\Big(\bigoplus_{i=1}^{\numberoffixedopensets-1}(\giicircinv/\giisiinv{i})\to
\bigoplus_{1\le i<j\le \numberoffixedopensets-1}(\giicircinv/\giipsijinv{i,j})\Big)\right)\right/\giicircinv,
$$
$$
\gviiiinv=\left.\!\left(\ker\Big(\bigoplus_{i=1}^{\numberoffixedopensets-1}(\gvicircinv/\gvisiinv{i})\to
\bigoplus_{1\le i<j\le \numberoffixedopensets-1}(\gvicircinv/\gvipsijinv{i,j})\Big)\right)\right/\gvicircinv.
$$
And again, the map $\givinv\to \gviiiinv$ maps each direct summand of $\givinv$ in 
this formula to the corresponding direct summand of $\gviiiinv$.
Note that Corollary \ref{h1compute} cannot be applied here directly because it is not always true that 
$\gvicircinv=\gvipsijinv{i,j}$. However, we can prove the following two lemmas.
Recall that by Lemma \ref{uistructure}, 
$U_i$ is isomorphic to $V_i\times (\CC\setminus 0)\times L$, where $L$ is isomorphic to $\CC$ or $\CC\setminus 0$.
\begin{lemma}\label{restrictisomorphism}
Let $V'_i\subseteq V_i$ be an open subset. The space of
$T$-invariant vector fields defined on 
$V'_i\times (\CC\setminus 0)\times L$ and on $V'_i\times (\CC\setminus 0)\times (\CC\setminus 0)$
coincide, in other words, the restriction homomorphism from the space of $T$-invariant vector fields
on $V'_i\times (\CC\setminus 0)\times L$ to the space of $T$-invariant vector fields 
on $V'_i\times (\CC\setminus 0)\times (\CC\setminus 0)$ is in fact an isomorphism.
This is also true for functions of degree $\chi$ instead of vector fields of degree 0, 
if $\chi\in\sck\cap M$.
\end{lemma}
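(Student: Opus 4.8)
The plan is to reduce everything to the $U_i$-descriptions introduced in Lemma \ref{uidescvfieldnoninv} and Lemma \ref{uidescfunct}, which already encode the fact that a degree-$0$ vector field (resp.\ a degree-$\chi$ homogeneous function) is completely recorded by its behaviour at the canonical points lying over $V_i'$. First I would dispose of the trivial case: if $L=\CC\setminus 0$, then $V_i'\times(\CC\setminus 0)\times L$ is literally equal to $V_i'\times(\CC\setminus 0)\times(\CC\setminus 0)$ and the restriction homomorphism is the identity. So from now on I assume $L=\CC$, which by Lemma \ref{uistructure} means $\uidegree{i,1}\in\partial\sck$.

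For the vector field statement I would invoke Lemma \ref{uidescvfieldnoninv} twice, once with $L'=L=\CC$ and once with $L'=\CC\setminus 0$ (both are admissible choices of $L'$ in that lemma). In either case the lemma, together with Corollary \ref{uidescvfield}, gives a bijection between $T$-invariant vector fields on $V_i'\times(\CC\setminus 0)\times L'$ and the data of the $U_i$-description, namely two algebraic functions on $V_i'$ together with a vector field on $V_i'$, with these data arbitrary and independent of the choice of $L'$. The crucial point is that the canonical point of the fiber $\pi^{-1}(t_0)$ has coordinates $(t_0,1,1)$ and hence lies in the smaller set $V_i'\times(\CC\setminus 0)\times(\CC\setminus 0)$ regardless of $L$, so the same description computes both sides. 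By Remark \ref{uidescvfieldrestriction} the operation of passing to the $U_i$-description commutes with restriction, so the restriction homomorphism is intertwined with the identity map on the space of descriptions; this forces it to be an isomorphism.

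For the function statement with $\chi\in\sck\cap M$ the argument is identical, using Lemma \ref{uidescfunct} in place of Lemma \ref{uidescvfieldnoninv} and Remark \ref{uidescfunctrestriction} in place of Remark \ref{uidescvfieldrestriction}. The one thing to check is that I land in case (1) of Lemma \ref{uidescfunct} (arbitrary description) rather than case (2) (forced vanishing): case (2) requires both $L'=\CC$ and $\uidegree{i,2}^*(\chi)<0$, and the lemma records that this combination can occur only when $\chi\notin\sck$. Since we assume $\chi\in\sck$, we must have $\uidegree{i,2}^*(\chi)\ge 0$, so for both choices $L'=\CC$ and $L'=\CC\setminus 0$ the $U_i$-description ranges over all algebraic functions on $V_i'$ and again agrees on the two sides. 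Hence the restriction map is once more intertwined with the identity on descriptions and is an isomorphism.

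I do not expect a serious obstacle, since all of the analytic content is already packaged in Lemmas \ref{uidescvfieldnoninv} and \ref{uidescfunct}; the only mildly delicate point is the bookkeeping that the forced-vanishing case of Lemma \ref{uidescfunct} is excluded precisely by the hypothesis $\chi\in\sck$, which is exactly the assumption made in the statement for the function version.
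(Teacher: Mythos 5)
Your proposal is correct and follows essentially the same route as the paper: both arguments reduce to the observation that the $U_i$-descriptions from Corollary \ref{uidescvfield} and Lemma \ref{uidescfunct} do not depend on whether $L'=\CC$ or $L'=\CC\setminus 0$, with the forced-vanishing case of Lemma \ref{uidescfunct} ruled out precisely because $\chi\in\sck$. The extra bookkeeping you add via Remarks \ref{uidescvfieldrestriction} and \ref{uidescfunctrestriction} is harmless and only makes explicit what the paper leaves implicit.
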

\begin{proof}
The claim for vector fields follows directly from Corollary \ref{uidescvfield}, namely, the 
description of the space of vector fields there does not depend on whether $L'=\CC$ or $L'=\CC\setminus0$
(in terms of the notation used in Corollary \ref{uidescvfield}). For functions 
of degree $\chi$, Lemma \ref{uidescfunct} gives the same description for 
$L'=\CC$ and for $L'=\CC\setminus0$, if $\chi\in\sck\cap M$
\end{proof}

\begin{lemma}\label{removeexcessive}
The embeddings $\giipsijinv{i,j}\to\giicircinv$ and $\gvipsijinv{i,j}\to\gvicircinv$
are isomorphisms for $1\le i<j\le \numberoffixedopensets-1$, except for the following case: both indices $i$ and 
$j$ correspond to the same \textbf{removable} special point $p$.
In this case, the embeddings $\giisiinv{i}\to \giipsijinv{i,j}$, $\giisiinv{j}\to \giipsijinv{i,j}$,
$\gvisiinv{i}\to \gvipsijinv{i,j}$, $\gvisiinv{j}\to \gvipsijinv{i,j}$
are isomorphisms.
\end{lemma}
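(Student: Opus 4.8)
The plan is to reduce the whole statement to Lemma \ref{restrictisomorphism} by identifying, for each pair $(i,j)$ with $1\le i<j\le\numberoffixedopensets-1$, the exact shape of the intersection $U_i\cap U_j$ supplied by Lemma \ref{uijstructure}, and then comparing its base with the base of $U_{\numberoffixedopensets}$ (all ordinary points) and with the base of $U_i$. Since $U_{\numberoffixedopensets}\subseteq U_i\cap U_j\subseteq U_i$ (we have $U_{\numberoffixedopensets}\subseteq U_i$ for every $i$ by Lemma \ref{uijstructure}, hence $U_{\numberoffixedopensets}\subseteq U_i\cap U_j$), and restriction of sections of a vector bundle to a dense open subset is injective, all the maps in the statement are the restriction maps along these inclusions and are automatically injective. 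The entire content is therefore surjectivity, which can be tested on sections over an arbitrary open $V\subseteq\PP^1$.

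First I would record the set-theoretic picture. By Lemma \ref{uijstructure}, $U_i\cap U_j$ is isomorphic, via the $U_i$-trivialization $(\pi,\wtuithreadfunction{i,1},\wtuithreadfunction{i,2})$, to $V'\times(\CC\setminus0)\times L'$, where the ordinary points of $V'$ are exactly the ordinary points of $V_i\cap V_j$, and a special point $p$ lies in $V'$ if and only if $p\in V_i\cap V_j$ and all of $\uidegree{i,1},\uidegree{i,2},\uidegree{j,1},\uidegree{j,2}$ lie in the normal cone of a single vertex of $\stdpolyhedronletter_p$. I would then observe, using Lemma \ref{uifiberstruct} (over an ordinary point $U_{\numberoffixedopensets}$ is precisely the two-dimensional orbit, i.e.\ the locus where $\wtuithreadfunction{i,2}\ne0$ since $\uidegree{i,2}$ is interior), that $U_{\numberoffixedopensets}$ sits inside $U_i\cap U_j$ exactly as the stratum $\{\wtuithreadfunction{i,2}\ne0\}=V''\times(\CC\setminus0)\times(\CC\setminus0)$, where $V''$ is the set of ordinary points of $V'$. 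Thus the map $\giipsijinv{i,j}\to\giicircinv$ is literally the fiber-restriction appearing in Lemma \ref{restrictisomorphism}, provided $V'$ has no special point (so $V''=V'$); and the analogous description holds for $\giisiinv{i}\to\giipsijinv{i,j}$ inside $U_i$, with base $V_i$.

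The decisive combinatorial step is the case analysis deciding whether $V'$ contains a special point, and here I would invoke the explicit construction of the sufficient system in Section \ref{sectsuffsystemconstruction}. If $U_i,U_j$ correspond to two different special points, then $V_i\cap V_j$ contains no special point. If they correspond to the same \emph{essential} special point $p$, they correspond to two distinct vertices of $\stdpolyhedronletter_p$, so $\uidegree{i,1}$ and $\uidegree{j,1}$ lie in the normal cones of different vertices and the criterion of Lemma \ref{uijstructure} fails; in both situations $V'$ consists of ordinary points only, so $V'=V''$ equals the base of $U_{\numberoffixedopensets}$, and $\giipsijinv{i,j}\to\giicircinv$ (and likewise $\gvipsijinv{i,j}\to\gvicircinv$) is an isomorphism by Lemma \ref{restrictisomorphism} — for the function sheaves one uses that each generating degree $\lambda_j\in\sck\cap M$, which is exactly the hypothesis under which Lemma \ref{restrictisomorphism} is asserted for functions. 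The remaining case is when $U_i,U_j$ correspond to the same \emph{removable} special point $p$: then $\stdpolyhedronletter_p$ has a single vertex whose normal cone is all of $\sck$, so the vertex criterion is automatically satisfied and $p\in V'$, whence $V'=V_i$ (both equal all ordinary points together with $p$). Here $\giipsijinv{i,j}\to\giicircinv$ need not be surjective, but $U_i\cap U_j=\{\wtuithreadfunction{i,2}\ne0\}$ as a stratum of $U_i$ with the \emph{same} base $V_i$, so $\giisiinv{i}\to\giipsijinv{i,j}$ is again the fiber-restriction of Lemma \ref{restrictisomorphism}, hence an isomorphism; the statements for $\giisiinv{j}$ and $\gvisiinv{j}$ follow by the symmetric argument using the $U_j$-trivialization, and those for the function sheaves follow verbatim.

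I expect the main obstacle to be bookkeeping rather than any deep difficulty. The two points requiring care are: verifying cleanly that $U_{\numberoffixedopensets}$ meets $U_i\cap U_j$ (and $U_i$) precisely as the ``$\wtuithreadfunction{i,2}\ne0$'' stratum, so that Lemma \ref{restrictisomorphism} applies on the nose with matching base; and matching the vertex-cone criterion of Lemma \ref{uijstructure} correctly against the enumeration of the sufficient system (one set per vertex for essential points, the single cone $\sck$ for removable points, and two sets on the two boundary rays for a removable point). The only genuinely geometric input — invariance of the spaces of homogeneous vector fields and of homogeneous functions of degree $\chi\in\sck\cap M$ under enlarging the last fiber factor from $\CC\setminus0$ to $\CC$ — is already isolated in Lemma \ref{restrictisomorphism}, so no new computation is needed.
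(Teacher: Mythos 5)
Your proposal is correct and follows essentially the same route as the paper's proof: a case analysis (different special points / same essential point / same removable point) combined with the explicit description of $U_i\cap U_j$ from Lemma \ref{uijstructure} and the identification of $U_{\numberoffixedopensets}$ inside it, reducing everything to the fiber-restriction statement of Lemma \ref{restrictisomorphism}. The only cosmetic difference is that you phrase the comparison with $U_{\numberoffixedopensets}$ as the stratum $\{\wtuithreadfunction{i,2}\ne 0\}$, whereas the paper writes $U_i\cap U_j\cap U_{\numberoffixedopensets}=W\times(\CC\setminus 0)\times(\CC\setminus 0)$ directly via the $U_i$-trivialization; the substance is identical.
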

\begin{proof}
If $U_i$ and $U_j$ correspond to different special points, then $V_i\cap V_j=W$, and by Lemma \ref{uijstructure}, 
$U_i\cap U_j=W\times (\CC\setminus 0)\times L$, where $L$ is isomorphic to $\CC$ or $\CC\setminus 0$.
If $U_i$ and $U_j$ correspond to the same essential special point $p$,
then they must correspond to 
the normal subcones of different vertices of $\stdpolyhedronletter_p$,
so 
Lemma \ref{uijstructure} says that $U_i\cap U_j$ is isomorphic to $W\times (\CC\setminus 0)\times L$
again. If $L=\CC\setminus0$, then $U_i\cap U_j\cap U_{\numberoffixedopensets}=W\times (\CC\setminus 0)\times L$ as well, 
and the isomorphism here, as well as in the equality $U_i\cap U_j=W\times (\CC\setminus 0)\times L$, 
is given by the isomorphism defined by Lemma \ref{uistructure} for $U_i$, so $U_i\cap U_j\cap U_{\numberoffixedopensets}=U_i\cap U_j$. 
We already know that $U_{\numberoffixedopensets}\subseteq U_i$, $U_{\numberoffixedopensets}\subseteq U_j$, so $U_{\numberoffixedopensets}=U_i\cap U_j$ if $L=\CC\setminus 0$.
If $L=\CC$, then $U_i\cap U_j=W\times (\CC\setminus 0)\times \CC$ and 
$U_i\cap U_j\cap U_{\numberoffixedopensets}=W\times (\CC\setminus 0)\times (\CC\setminus 0)$, where the isomorphism in 
both equalities is 
given by the isomorphism defined by Lemma \ref{uistructure} for $U_i$.
Let $V\subseteq \PP^1$ be an open subset. 
Now it follows from Lemma \ref{restrictisomorphism} that we 
always have $\Gamma (V,\giipsijinv{i,j})=\Gamma (V,\giicircinv)$ and 
$\Gamma (V,\gvipsijinv{i,j})=\Gamma (V,\gvicircinv)$
if $U_i$ and $U_j$ correspond to different special points or 
$U_i$ and $U_j$ correspond to the same essential special point $p$.

Suppose now that both $U_i$ and $U_j$ correspond to the same removable special point $p$.
Let us prove that the embeddings $\giisiinv{i}\to \giipsijinv{i,j}$ and 
$\gvisiinv{i}\to \gvipsijinv{i,j}$ are isomorphisms, the situation 
for $\giisiinv{j}\to \giipsijinv{i,j}$ and $\gvisiinv{j}\to \gvipsijinv{i,j}$ 
is completely symmetric.
We have
$\uidegree{i,1}, \uidegree{j,1}\in\partial\sck$, but $\uidegree{i,1}\ne \uidegree{j,1}$, so by Lemmas \ref{uistructure} and \ref{uijstructure},
$U_i=V_i\times (\CC\setminus 0)\times \CC$,
$U_i\cap U_j=V_i\times (\CC\setminus 0)\times (\CC\setminus 0)$,
and the isomorphism in the second equality is 
a restriction of the isomorphism in the first equality. 
The claim again follows from Lemma \ref{restrictisomorphism}.
\end{proof}


Since kernels of sheaf maps can be computed on each open subset independently, Lemma \ref{removeexcessive}
implies that 
$$
\ker\left(\bigoplus_{i=1}^{\numberoffixedopensets-1}(\giicircinv/\giisiinv{i})\to
\bigoplus_{1\le i<j\le \numberoffixedopensets-1}(\giicircinv/\giipsijinv{i,j})\right)
$$
can be computed as follows. Its sections over an open subset $V\subseteq \PP^1$ 
are sequences of the form
$(\stdvectorfiledxletter_1,\ldots,\stdvectorfiledxletter_{\numberoffixedopensets-1})
\in\bigoplus_{i=1}^{\numberoffixedopensets-1}\Gamma(V, \giicircinv/\giisiinv{i})$
satisfying the following conditions: if indices $i$ and $j$ correspond to 
the same removable special point $p$,
then 
$\stdvectorfiledxletter_i=\stdvectorfiledxletter_j$. So we can do the following. For each removable special point $p$,
if there are two indices $i$ and $j$ corresponding 
to $p$, choose one of them and call it \textit{excessive}. Then the kernel 
is isomorphic to the following sheaf:
$$
\bigoplus_{\substack{1\le i\le \numberoffixedopensets-1\\ i\text{ is not excessive}}}(\giicircinv/\giisiinv{i}).
$$
Similarly, 
$$
\ker\left(\bigoplus_{i=1}^{\numberoffixedopensets-1}(\gvicircinv/\gvisiinv{i})\to
\bigoplus_{1\le i<j\le \numberoffixedopensets-1}(\gvicircinv/\gvipsijinv{i,j})\right)
\cong\bigoplus_{\substack{1\le i\le \numberoffixedopensets-1\\ i\text{ is not excessive}}}(\gvicircinv/\gvisiinv{i}).
$$
So we get the following formulas for $\givinv$ and $\gviiiinv$:
$$
\givinv\cong
\left.\!\left(\bigoplus_{\substack{1\le i\le \numberoffixedopensets-1\\ i\text{ is not excessive}}}(\giicircinv/\giisiinv{i})\right)\right/\giicircinv,
$$
$$
\gviiiinv\cong
\left.\!\left(\bigoplus_{\substack{1\le i\le \numberoffixedopensets-1\\ i\text{ is not excessive}}}(\gvicircinv/\gvisiinv{i})\right)\right/\gvicircinv.
$$
Sections of quotients of sheaves can only be computed directly on affine subsets. 
To compute the space of global sections on $\PP^1$, we should first compute sections 
for an affine covering of $\PP^1$, then global sections are tuples of local sections 
that coincide on the intersections of the sets from the affine covering. We already have 
an affine covering of $\PP^1$, namely, we have sets $W_p$.
Recall that $W_p\cap W_{p'}=W$ for every pair of special points $p\ne p'$.

\begin{lemma}\label{g4g8vanish}
Let $V\subseteq \PP^1$ be an open subset and $p\in\PP^1$ be a special point such that 
$V\cap W_p=W$. Let an index $i$ correspond to $p$. Then 
$\Gamma (V, \giicircinv)=\Gamma (V, \giisiinv{i})$ and 
$\Gamma (V, \gvicircinv)=\Gamma (V, \gvisiinv{i})$.
\end{lemma}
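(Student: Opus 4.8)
The statement I want to prove is Lemma \ref{g4g8vanish}: given an open $V \subseteq \PP^1$ and a special point $p$ with $V \cap W_p = W$, and an index $i$ corresponding to $p$, the embeddings $\giisiinv{i} \to \giicircinv$ and $\gvisiinv{i} \to \gvicircinv$ become isomorphisms on sections over $V$. The essential point is that the condition $V \cap W_p = W$ forces $V$ to contain no special point other than possibly $p$ itself, and that $p \notin V$ (because $W$ is exactly the set of ordinary points, so $p \in V \cap W_p$ would force $p \in W$, a contradiction). So $V$ consists only of ordinary points, i.e. $V \subseteq W$. This is the geometric heart of the matter, and I would establish it first.

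Once $V \subseteq W$, I would translate the claim into the language of the explicit models from Lemma \ref{uistructure} and Lemma \ref{uijstructure}. The sheaf $\giicircinv$ has sections over $V$ given by $T$-invariant vector fields on $\pi^{-1}(V) \cap U_{\numberoffixedopensets}$, while $\giisiinv{i}$ has sections given by $T$-invariant vector fields on $\pi^{-1}(V) \cap U_i$. Since $V$ contains only ordinary points, both $U_i$ and $U_{\numberoffixedopensets}$ restrict over $V$ to products of the same shape: over an ordinary point $p'$, Proposition \ref{genfiberstructinvar} shows the fiber structure in $U_0$ does not depend on the choice of $U_i$, so that $\pi^{-1}(V) \cap U_i$ and $\pi^{-1}(V) \cap U_{\numberoffixedopensets}$ agree as subsets of $X$ up to the product $(\CC\setminus 0)\times L$ versus $(\CC\setminus 0)\times(\CC\setminus 0)$ discrepancy in the last factor. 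More precisely, by Lemma \ref{uijstructure}, $U_i \cap U_{\numberoffixedopensets}$ over the ordinary locus is $V' \times (\CC\setminus 0)\times L'$, and restricting to $V \subseteq W$ one has $\pi^{-1}(V)\cap U_i$ and $\pi^{-1}(V)\cap U_{\numberoffixedopensets}$ differing only in whether the third factor is $\CC$ or $\CC\setminus 0$.

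The final step is then precisely an application of Lemma \ref{restrictisomorphism}, which says that the restriction map on $T$-invariant vector fields (resp. on homogeneous functions of degree $\chi \in \sck \cap M$) between $V_i' \times (\CC\setminus 0)\times L$ and $V_i' \times (\CC\setminus 0) \times (\CC\setminus 0)$ is an isomorphism. Concretely, I would argue that an invariant vector field on $\pi^{-1}(V)\cap U_{\numberoffixedopensets}$ extends uniquely to one on $\pi^{-1}(V)\cap U_i$, because over ordinary points the $U_i$-description data of Corollary \ref{uidescvfield} (the two functions $\uidescriptionfunction{i,1}, \uidescriptionfunction{i,2}$ on $V$ and the vector field $\stdvectorfiledpletter$ on $V$) are defined irrespective of which of $U_i, U_{\numberoffixedopensets}$ one uses, and these data determine the field uniquely. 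The same argument applied to functions of degree $\lambda_j \in \sck \cap M$, summed over $j$, gives $\Gamma(V,\gvicircinv) = \Gamma(V,\gvisiinv{i})$; here I use that all the degrees $\lambda_j$ lie in $\sck \cap M$, which is exactly the hypothesis under which Lemma \ref{restrictisomorphism} applies to functions.

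The main obstacle, I expect, is not any single hard computation but rather carefully pinning down that $V \cap W_p = W$ genuinely forces $V$ to be contained in the ordinary locus and hence that the only subtlety is the $\CC$-versus-$(\CC\setminus 0)$ difference in the last coordinate — precisely the difference that Lemma \ref{restrictisomorphism} is designed to absorb. One must also be mildly careful that the restriction isomorphism of Lemma \ref{restrictisomorphism} is stated for the model $V_i' \times (\CC\setminus 0)\times L$, so I would need to identify $\pi^{-1}(V)\cap U_i$ with such a product (with $V_i' = V$) via Lemma \ref{uistructure}, and check that the embedding $\giisiinv{i}\to\giicircinv$ corresponds under these identifications exactly to the restriction map to the $L' = \CC\setminus 0$ locus, which it does because $U_{\numberoffixedopensets} \subseteq U_i$ with $\uidegree{\numberoffixedopensets,1}$ in the interior of $\sck$ forcing $L' = \CC\setminus 0$. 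Granting these identifications, the lemma follows immediately from Lemma \ref{restrictisomorphism}.
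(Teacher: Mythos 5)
Your overall route is the same as the paper's: identify $\pi^{-1}(V)\cap U_i$ with a product $W\times(\CC\setminus 0)\times L$ via Lemma \ref{uistructure}, identify $\pi^{-1}(V)\cap U_{\numberoffixedopensets}=U_{\numberoffixedopensets}=U_i\cap U_{\numberoffixedopensets}$ with $W\times(\CC\setminus 0)\times(\CC\setminus 0)$ via Lemma \ref{uijstructure}, and absorb the $\CC$-versus-$\CC\setminus 0$ discrepancy with Lemma \ref{restrictisomorphism}. That part is fine, including your observation that $L'=\CC\setminus 0$ because $\uidegree{\numberoffixedopensets,1}$ lies in the interior of $\sck$.

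However, the step you single out as ``the geometric heart of the matter'' is false. The hypothesis $V\cap W_p=W$ does \emph{not} force $V\subseteq W$: since $W_p=W\cup\{p\}$, the condition only says $W\subseteq V$ and $p\notin V$, and it says nothing about special points other than $p$, because those are not in $W_p$ to begin with. This is not a hypothetical corner case: the lemma is applied in the paper precisely with $V=W_{p'}$ for a special point $p'\neq p$ (in the simplification of the formula for $\Gamma(\PP^1,\givinv)$), and there $V$ contains the special point $p'$. As written, your proof only covers $V\subseteq W$, i.e.\ essentially only Corollary \ref{g4g8directsum}, and misses the main use of the lemma. The repair is short and is what the paper actually does: you never need $V$ itself to avoid special points, only that $\pi^{-1}(V)\cap U_i$ sits over $V\cap V_i$, and $V_i=W_p$ for an index $i$ corresponding to $p$, so $\pi^{-1}(V)\cap U_i\cong(V\cap W_p)\times(\CC\setminus 0)\times L=W\times(\CC\setminus 0)\times L$ directly from the hypothesis. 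With that substitution the rest of your argument goes through unchanged.
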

\begin{proof}
$\Gamma (V, \giisiinv{i})$ (resp. $\Gamma (V, \gvisiinv{i})$) is the space of $T$-invariant 
vector fields (resp. sequences of functions of certain degrees) defined on $\pi^{-1}(V)\cap U_i=
\pi^{-1}(V)\cap(W_p\times (\CC\setminus0)\times L)=(V\cap W_p)\times (\CC\setminus0)\times L=W\times(\CC\setminus0)\times L$, 
where $L=\CC$ or $L=\CC\setminus0$.
By Lemma \ref{restrictisomorphism}, these spaces are isomorphic 
to the spaces of (respectively) vector fields and sequences of functions of certain degrees
defined on $W\times (\CC\setminus0)\times (\CC\setminus0)\subseteq U_i$, where the embedding 
is given by the isomorphism for $U_i$ in Lemma \ref{uistructure}.
On the other hand, 
by Lemma \ref{uijstructure}, $U_i\cap U_{\numberoffixedopensets}$ is also isomorphic to $W\times (\CC\setminus0)\times (\CC\setminus0)$,
and the isomorphism here is also the restriction of the isomorphism in Lemma \ref{uistructure} for 
$U_i$ to $U_i\cap U_{\numberoffixedopensets}$. Therefore, in fact we have proved that the restriction of spaces 
of $T$-invariant vector fields and of functions of the required degrees 
from $\pi^{-1}(V)\cap U_i$ to $U_i\cap U_{\numberoffixedopensets}$ are isomorphisms. 
But $U_i\cap U_{\numberoffixedopensets}=U_{\numberoffixedopensets}=\pi^{-1}(V)\cap U_{\numberoffixedopensets}$, 
and $\Gamma (V, \giicircinv)$ (resp. $\Gamma (V, \gvicircinv)$) is the space of 
$T$-invariant vector fields (resp. of sequences functions of the required degrees) defined 
on $\pi^{-1}(V)\cap U_{\numberoffixedopensets}$.
\end{proof}

\begin{corollary}\label{g4g8directsum}
$\Gamma (W, \givinv)=0$, $\Gamma (W, \gviiiinv)=0$.
\end{corollary}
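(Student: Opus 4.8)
The plan is to exploit that $W$ is affine together with the vanishing supplied by Lemma~\ref{g4g8vanish}. First I would recall that, under the standing assumption of Section~\ref{sectsuffsystemconstruction} that there are at least two special points, the set $W$ of all ordinary points is the complement in $\PP^1$ of a nonempty finite set, hence affine. Consequently the global-sections functor $\Gamma(W,-)$ is exact on quasicoherent sheaves, so that sections over $W$ of a quotient sheaf are computed as the honest quotient of the spaces of sections. All the sheaves entering the formulas for $\givinv$ and $\gviiiinv$ established just above are quasicoherent (being built from pushforwards, kernels, direct sums and quotients of coherent sheaves), so this exactness is available throughout.

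The core computation is to show that each direct summand appearing in those formulas has no nonzero sections over $W$. I would fix an index $i$ with $1\le i\le\numberoffixedopensets-1$; by the explicit construction of the sufficient system in Section~\ref{sectsuffsystemconstruction}, the set $U_i$ corresponds to some special point $p$ (only the extra set $U_{\numberoffixedopensets}$, which is omitted here, fails to correspond to a special point). Since $W\subseteq W_p$ we have $W\cap W_p=W$, so Lemma~\ref{g4g8vanish} applied with $V=W$ yields $\Gamma(W,\giicircinv)=\Gamma(W,\giisiinv{i})$ and $\Gamma(W,\gvicircinv)=\Gamma(W,\gvisiinv{i})$. By exactness of $\Gamma(W,-)$ this means precisely that $\Gamma(W,\giicircinv/\giisiinv{i})=0$ and $\Gamma(W,\gvicircinv/\gvisiinv{i})=0$ for every such $i$, in particular for every non-excessive $i$.

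Finally I would assemble the conclusion. Since each non-excessive summand has vanishing sections over $W$, the direct sums $\bigoplus_i(\giicircinv/\giisiinv{i})$ and $\bigoplus_i(\gvicircinv/\gvisiinv{i})$ have zero space of sections over $W$. By the formulas proved immediately above, $\givinv$ and $\gviiiinv$ are quotients of these direct sums (by $\giicircinv$ and $\gvicircinv$ respectively). A quotient $\mathcal A/\mathcal B$ with $\Gamma(W,\mathcal A)=0$ has $\Gamma(W,\mathcal B)=0$ as well, whence $\Gamma(W,\mathcal A/\mathcal B)=\Gamma(W,\mathcal A)/\Gamma(W,\mathcal B)=0$, again using exactness of $\Gamma(W,-)$ on $W$. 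Therefore $\Gamma(W,\givinv)=0$ and $\Gamma(W,\gviiiinv)=0$, as desired.

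The only genuinely delicate point is the bookkeeping that legitimizes treating sections of these iterated quotient sheaves over $W$ as plain quotients of vector spaces; this rests entirely on $W$ being affine and on all sheaves involved being quasicoherent, so that $\Gamma(W,-)$ is exact. Beyond this there is no computational obstacle: everything reduces to the single observation that $W\cap W_p=W$ for each special point $p$, which makes Lemma~\ref{g4g8vanish} applicable uniformly to all summands.
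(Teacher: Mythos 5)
Your proof is correct and follows essentially the same route as the paper: the whole content is the observation that $W\cap W_p=W$ for every special point $p$, so Lemma~\ref{g4g8vanish} kills every summand $\giicircinv/\giisiinv{i}$ and $\gvicircinv/\gvisiinv{i}$ over $W$, and hence the outer quotients as well. The extra care you take in justifying that sections of quotient sheaves over the affine set $W$ are honest quotients of section spaces is exactly the (implicit) bookkeeping behind the paper's one-line argument.
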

\begin{proof}
$W\cap W_p=W$ for all special points $p$, so all direct summands of the from $\giicircinv/\giisiinv{i}$
and $\gvicircinv/\gvisiinv{i}$ in the formulas above vanish.
\end{proof}

This corollary enables us to omit the condition that sections of $\givinv$ (or of $\gviiiinv$) over 
different sets $W_p$ should coincide on intersections to form a global section.
Therefore,
$$
\Gamma(\PP^1,\givinv)\cong\bigoplus_{\substack{p\text{ special}\\\text{point}}}\left(\left.\!\Bigg(
\bigoplus_{\substack{1\le i\le \numberoffixedopensets-1\\ i\text{ is not excessive}}}\left(\Gamma(W_p,\giicircinv)/
\Gamma(W_p,\giisiinv{i\vphantom{\opensetforvertex{p,j}}})\right)\Bigg)\right/\Gamma(W_p,\giicircinv)\right),
$$
$$
\Gamma(\PP^1,\gviiiinv)\cong\bigoplus_{\substack{p\text{ special}\\\text{point}}}\left(\left.\!\Bigg(
\bigoplus_{\substack{1\le i\le \numberoffixedopensets-1\\ i\text{ is not excessive}}}\left(\Gamma(W_p,\gvicircinv)/
\Gamma(W_p,\gvisiinv{i\vphantom{\opensetforvertex{p,j}}})\right)\Bigg)\right/\Gamma(W_p,\gvicircinv)\right).
$$

This formulas can be simplified more. Namely, recall that every set $V_i$ equals $W_p$ or $W$.
If $p$ is a special point, and $V_i=W$ or $V_i=W_{p'}$, where $p'\ne p$, then $V_i\cap W_p=W$, 
and by Lemma \ref{g4g8vanish}, $\Gamma(W_p,\giicircinv)/\Gamma(W_p,\giisiinv{i})=0$ and 
$\Gamma(W_p,\gvicircinv)/\Gamma(W_p,\gvisiinv{i})=0$. So, we can write global sections 
of $\givinv$ and of $\gviiiinv$ as follows:
$$
\Gamma(\PP^1,\givinv)\cong\bigoplus_{\substack{p\text{ special}\\\text{point}}}\left(\left.\!\Bigg(
\bigoplus_{\substack{1\le i\le \numberoffixedopensets-1\\ i\text{ is not excessive}\\ V_i=W_p}}\left(\Gamma(W_p,\giicircinv)/
\Gamma(W_p,\giisiinv{i\vphantom{\opensetforvertex{p,j}}})\right)\Bigg)\right/\Gamma(W_p,\giicircinv)\right),
$$
$$
\Gamma(\PP^1,\gviiiinv)\cong\bigoplus_{\substack{p\text{ special}\\\text{point}}}\left(\left.\!\Bigg(
\bigoplus_{\substack{1\le i\le \numberoffixedopensets-1\\ i\text{ is not excessive}\\ V_i=W_p}}\left(\Gamma(W_p,\gvicircinv)/
\Gamma(W_p,\gvisiinv{i\vphantom{\opensetforvertex{p,j}}})\right)\Bigg)\right/\Gamma(W_p,\gvicircinv)\right).
$$
Now each sheaf $\giisiinv{i}$ and $\gvisiinv{i}$ occurs only once in these summations.
Each direct summand in the first direct sum in the formula for $\Gamma(\PP^1,\givinv)$ is mapped to the 
corresponding direct summand of $\Gamma(\PP^1,\gviiiinv)$, so we have proved the following lemma:
\begin{lemma}\label{consideronespecialpoint}
The kernel $\ker(\Gamma(\PP^1,\givinv)\to\Gamma(\PP^1,\gviiiinv))$ is isomorphic to the following direct sum:
\begin{multline*}
\bigoplus_{p\text{ special point}}\ker\left(\left.\!\Bigg(
\bigoplus_{\substack{1\le i\le \numberoffixedopensets-1\\ i\text{ is not excessive}\\ V_i=W_p}}
\left(\Gamma(W_p,\giicircinv)/
\Gamma(W_p,\giisiinv{i\vphantom{\opensetforvertex{p,j}}})\right)\Bigg)\right/\Gamma(W_p,\giicircinv)\right.\!\\
\longrightarrow
\left.\!\left.\!\Bigg(
\bigoplus_{\substack{1\le i\le \numberoffixedopensets-1\\ i\text{ is not excessive}\\ V_i=W_p}}
\left(\Gamma(W_p,\gvicircinv)/
\Gamma(W_p,\gvisiinv{i\vphantom{\opensetforvertex{p,j}}})\right)\Bigg)\right/\Gamma(W_p,\gvicircinv)\right).
\end{multline*}\qed
\end{lemma}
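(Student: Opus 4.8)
The plan is to deduce the lemma directly from the two decompositions of $\Gamma(\PP^1,\givinv)$ and $\Gamma(\PP^1,\gviiiinv)$ established in the display immediately preceding the statement, together with the observation that the map $\Gamma(\PP^1,\givinv)\to\Gamma(\PP^1,\gviiiinv)$ is compatible with those decompositions. The only abstract input is the elementary fact that kernels commute with direct sums for a block-diagonal morphism: if $f=\bigoplus_p f_p\colon \bigoplus_p A_p\to \bigoplus_p B_p$ carries each summand $A_p$ into the corresponding $B_p$, then $\ker f=\bigoplus_p \ker f_p$. Thus the entire content of the lemma reduces to checking that the map sends the summand of $\Gamma(\PP^1,\givinv)$ indexed by a special point $p$ into the summand of $\Gamma(\PP^1,\gviiiinv)$ indexed by the \emph{same} $p$.

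To verify this compatibility I would trace $\psi$ through the construction. First recall that the sheaf map underlying $\psi$ respects the decompositions $\gii=\bigoplus\giisi{i}$ and $\gvi=\bigoplus\gvisi{i}$ into summands indexed by the open sets $U_i$, carrying $\giisi{i}$ into $\gvisi{i}$ (and likewise for the higher \v{C}ech terms), as noted above. Passing to the invariant sheaves and to the quotients $\giicircinv/\giisiinv{i}$ and $\gvicircinv/\gvisiinv{i}$, the induced map still sends the $i$-th summand to the $i$-th summand. The grouping over special points in the displayed formulas is nothing but the collection of those nonexcessive indices $i$ with $V_i=W_p$ attached to a fixed $p$; since the map preserves the finer indexing by $i$, it preserves the coarser grouping by $p$, both at the level of the inner direct sums and, because the outer quotients $\Gamma(W_p,\giicircinv)$ and $\Gamma(W_p,\gvicircinv)$ are themselves attached to the single point $p$, after descending to the quotients appearing in the formulas.

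With the block-diagonal form in hand, the general fact about kernels of direct sums yields exactly the stated decomposition of $\ker(\Gamma(\PP^1,\givinv)\to\Gamma(\PP^1,\gviiiinv))$ as a direct sum over special points of the per-point kernels. The main (and essentially only) point requiring care is the bookkeeping of the previous paragraph: one must confirm that no index $i$ with $V_i=W_p$ leaks a contribution into the summand attached to a different special point $p'\neq p$. This is where Lemma \ref{g4g8vanish} enters, since for such $i$ one has $V_i\cap W_{p'}=W$ and the corresponding quotient vanishes, so the grouping by $p$ is genuinely a direct-sum decomposition with no cross terms. Once this is confirmed the isomorphism is immediate and no further computation is needed.
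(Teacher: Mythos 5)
Your proposal is correct and follows essentially the same route as the paper: the displayed decompositions of $\Gamma(\PP^1,\givinv)$ and $\Gamma(\PP^1,\gviiiinv)$ over special points, the observation that the map induced by $\psi$ carries each summand indexed by $i$ (hence each group indexed by $p$) into the corresponding one, and the elementary fact that kernels of block-diagonal maps split as direct sums. The paper likewise treats the lemma as immediate from this bookkeeping, with Lemma \ref{g4g8vanish} doing exactly the cross-term-vanishing work you assign to it.
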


Fix a special point 
$p$.
Recall that we have a coordinate function $t_p$ on $\PP^1$ with the only zero at $p$.
Our next goal is
to compute the kernel
\begin{multline*}
\ker\left(\left.\!\Bigg(
\bigoplus_{\substack{1\le i\le \numberoffixedopensets-1\\ i\text{ is not excessive}\\ V_i=W_p}}
\left(\Gamma(W_p,\giicircinv)/
\Gamma(W_p,\giisiinv{i\vphantom{\opensetforvertex{p,j}}})\right)\Bigg)\right/\Gamma(W_p,\giicircinv)\right.\!\\
\longrightarrow
\left.\!\left.\!\Bigg(
\bigoplus_{\substack{1\le i\le \numberoffixedopensets-1\\ i\text{ is not excessive}\\ V_i=W_p}}\left(\Gamma(W_p,\gvicircinv)/
\Gamma(W_p,\gvisiinv{i\vphantom{\opensetforvertex{p,j}}})\right)\Bigg)\right/\Gamma(W_p,\gvicircinv)\right).
\end{multline*}
Recall first that if $p$ is a removable special point, then there exists only one non-excessive index $i$ corresponding
to $p$.
But then each direct sum in the formula above contains only one summand, and 
$
(\Gamma(W_p,\giicircinv)/
\Gamma(W_p,
\giisiinv{i}
))/\Gamma(W_p,\giicircinv)=0$.
So in the sequel we suppose that $p$ is an \textbf{essential} special point.
Then there are no excessive indices corresponding to $p$.
Moreover, 
in this case 
we chose exactly one set $U_i$ for each pair $(p,j)$, where $1\le j\le \numberofverticespt{p}$.
In other words, these sets $U_i$ (and the summands in each of the direct sums in the formula above)
are in bijection with 
the vertices $\indexedvertexpt{p}{j}$ of $\stdpolyhedronletter_p$.
For each pair $(p,j)$, denote the index $i$ such that $U_i$ corresponds to $(p,j)$ 
by $\opensetforvertex{p,j}$.
So, now we are computing the kernel
\begin{multline*}
\ker\left(\left.\!\Bigg(
\bigoplus_{j=1}^{\numberofverticespt p}
\left(\Gamma(W_p,\giicircinv)/
\Gamma(W_p,\giisiinv{\opensetforvertex{p,j}})\right)
\Bigg)\right/\Gamma(W_p,\giicircinv)
\right.\!\\
\left.\!
\longrightarrow
\left.\!\Bigg(
\bigoplus_{j=1}^{\numberofverticespt p}
\left(\Gamma(W_p,\gvicircinv)/
\Gamma(W_p,\gvisiinv{\opensetforvertex{p,j}})\right)
\Bigg)\right/\Gamma(W_p,\gvicircinv)\right).
\end{multline*}

Fix an index $j$, $1\le j\le \numberofverticespt p$.
Now we come back to using $U_i$-descriptions, namely,
We are going to use 
$U_{\opensetforvertex{p,j}}$-descriptions 
to compute
$$
\Gamma(W_p,\giicircinv)/\Gamma(W_p,\giisiinv{\opensetforvertex{p,j}})
$$
and
$$\Gamma(W_p,\gvicircinv)/
\Gamma(W_p,\gvisiinv{\opensetforvertex{p,j}}).
$$
$\Gamma(W_p,\giisiinv{\opensetforvertex{p,j}})$ is the space of $T$-invariant vector fields defined on $U_{\opensetforvertex{p,j}}$, and, 
by Corollary \ref{uidescvfield}, they are determined by triples of a vector field and two functions 
defined on $W_p$, which form $\Gamma(W_p,\giisi{\opensetforvertex{p,j}})$. We shortly write 
$\gsii{p,j}=\Gamma(W_p,\giisi{\opensetforvertex{p,j}})$.
$\Gamma(W_p,\giicircinv)$ is the space of $T$-invariant vector fields 
defined on 
$U_{\numberoffixedopensets}=\pi^{-1}(W_p)\cap U_{\numberoffixedopensets}$, 
and by Lemma \ref{uijstructure} and by Corollary \ref{uidescvfield},
they are determined by triples of a vector field and two functions defined on $W$. 
Denote this space of 
triples of a vector field and two functions defined on $W$ by $\gsiicirc{p,j}$.
Observe that 
the space itself does not depend 
on $p$ and $j$, but the isomorphism between $\gsiicirc{p,j}$ and $\Gamma(W_p,\giicircinv)$
we use depends on $p$ and $j$.
Denote this isomorphism 
by 
$$
\kappa_{\Theta,p,j}\colon \gsiicirc{p,j}\to \Gamma(W_p,\giicircinv).
$$ 
By Remark \ref{uidescvfieldrestriction},
the embedding $\Gamma(W_p,\giisiinv{\opensetforvertex{p,j}})\to \Gamma(W_p,\giicircinv)$ 
after applying these isomorphisms becomes
the restriction of vector fields and functions from $W_p$ to $W$.

Similarly, $\Gamma(W_p,\gvisi{\opensetforvertex{p,j}})$ is the space of sequences of functions of certain 
degrees from $\sck\cap M$ defined on $U_{\opensetforvertex{p,j}}$. Lemma \ref{uidescfunct} for $i=\opensetforvertex{p,j}$ identifies this space 
with the space of sequences of functions defined on $W_p$ (each function is identified with its $U_{\opensetforvertex{p,j}}$-description), 
denote this space of sequences of functions by $\gsvi{p,j}$.
$\Gamma(W_p,\gvicircinv)$ is the space of sequences of functions of the same degrees, but 
they are defined on 
$U_{\numberoffixedopensets}=\pi^{-1}(W_p)\cap U_{\numberoffixedopensets}$. 
Again, Lemma \ref{uidescfunct} for $i=\opensetforvertex{p,j}$ identifies this space
with the space of sequences of functions defined on $W$ 
(again, each function is identified with its $U_{\opensetforvertex{p,j}}$-description, not with its $U_{\numberoffixedopensets}$-description).
Denote this space of sequences of functions by $\gsvicirc{p,j}$, and
denote this isomorphism between 
$\gsvicirc{p,j}$
and 
$\Gamma(W_p,\gvicircinv)$ by $\kappa_{\OO,p,j}$. And again, despite the spaces
themselves do not depend on $p$ and $j$, the isomorphism is based on $U_{\opensetforvertex{p,j}}$-descriptions
and depends on $p$ and $j$.
By Remark \ref{uidescfunctrestriction},
the embedding $\Gamma(W_p,\gvisi{\opensetforvertex{p,j}})\to\Gamma(W_p,\gvicircinv)$
after these identifications becomes the restriction of functions from $W_p$ to $W$.

Finally, the formula in Lemma \ref{uidescpsi} (for different indices $\opensetforvertex{p,j}$) 
defines morphisms 
$$
\Gamma(W_p,\giicircinv)\to\Gamma(W_p,\gvicircinv).
$$
Denote the corresponding morphisms between 
$\gsiicirc{p,j}$
and 
$\gsvicirc{p,j}$
by $\psi_{p,j}$.
Denote also the map 
$$
\bigoplus_{j=1}^{\numberofverticespt{p}}\gsiicirc{p,j} \to
\bigoplus_{j=1}^{\numberofverticespt{p}}\gsvicirc{p,j}
$$
formed by maps $\psi_{p,j}$ for all $j$ 
($1\le j\le \numberofverticespt p$) by $\psi_p$.
It follows from functoriality of the isomorphism in Proposition \ref{hnasquotient} that
$\psi_p$
induces the morphism in 
question between 
$$
\left.\!\left(
\bigoplus_{j=1}^{\numberofverticespt p}
\left(\Gamma(W_p,\giicircinv)/
\Gamma(W_p,\giisiinv{\opensetforvertex{p,j}})\right)\right)\right/\Gamma(W_p,\giicircinv)
$$
and
$$
\left.\!\left(
\bigoplus_{j=1}^{\numberofverticespt p}\left(\Gamma(W_p,\gvicircinv)/
\Gamma(W_p,\gvisiinv{\opensetforvertex{p,j}})\right)\right)\right/\Gamma(W_p,\gvicircinv).
$$

\begin{lemma}\label{wwiquotient}
Let 
$p$
be a special point, $j$ be an index, $1\le j\le \numberofverticespt p$.

The composition of the restriction of $\kappa_{\Theta,p,j}$ to the space of triples of the form 
$$
(a_{1,-1}t_p^{-1}+\ldots+a_{1,-n_1}t_p^{-n_1},
a_{2,-1}t_p^{-1}+\ldots+a_{2,-n_2}t_p^{-n_2}, 
(b_{-1}t_p^{-1}+\ldots+b_{-n_3}t_p^{-n_3})\partial/\partial t_p)
$$
and the natural projection 
$\Gamma(W_p,\giicircinv)\to\Gamma(W_p,\giicircinv)/\Gamma(W_p,\giisiinv{\opensetforvertex{p,j}})$
is an isomorphism.
\end{lemma}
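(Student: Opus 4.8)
The plan is to reduce the statement to the elementary fact that a rational function (or rational vector field) on $\PP^1$ that is regular at all ordinary points is, modulo those that are moreover regular at $p$, represented uniquely by its principal part at $p$. First I would record the identifications already set up before the lemma. By definition $\kappa_{\Theta,p,j}$ is an isomorphism from $\gsiicirc{p,j}$, the space of triples consisting of two functions and a vector field on $W$, onto $\Gamma(W_p,\giicircinv)$; similarly $\Gamma(W_p,\giisiinv{\opensetforvertex{p,j}})$ is identified, via $U_{\opensetforvertex{p,j}}$-descriptions, with $\gsii{p,j}$, the space of such triples on $W_p$. By Remark \ref{uidescvfieldrestriction}, under these identifications the inclusion $\Gamma(W_p,\giisiinv{\opensetforvertex{p,j}})\hookrightarrow\Gamma(W_p,\giicircinv)$ is just the restriction of triples from $W_p$ to $W$. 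Since by Corollary \ref{uidescvfield} the two functions and the vector field of a $U_{\opensetforvertex{p,j}}$-description may be chosen arbitrarily, this identifies the quotient $\Gamma(W_p,\giicircinv)/\Gamma(W_p,\giisiinv{\opensetforvertex{p,j}})$ with the direct sum
$$
\big(\Gamma(W,\OO_{\PP^1})/\Gamma(W_p,\OO_{\PP^1})\big)^{\oplus 2}\oplus\big(\Gamma(W,\Theta_{\PP^1})/\Gamma(W_p,\Theta_{\PP^1})\big),
$$
the inclusion being restriction in each factor.

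Next I would compute each factor by a principal-part argument, exactly in the style of the proofs of Lemmas \ref{wwpquotient} and \ref{wwplinearquotient}. I would expand a section at $p$ in the coordinate $t_p$; its principal part is a finite Laurent tail $a_{-1}t_p^{-1}+\ldots+a_{-n}t_p^{-n}$ for a function, or $(b_{-1}t_p^{-1}+\ldots+b_{-n}t_p^{-n})\vfield{t_p}$ for a vector field, with no constant term. I would then check that these tails are algebraic rational sections regular on all of $\PP^1\setminus\{p\}$: indeed $t_p^{-k}$ has its only pole at $p$, while $t_p^{-k}\vfield{t_p}$ is likewise regular away from $p$, since $\vfield{t_p}$ extends to a global vector field on $\PP^1$ vanishing to order $2$ at the pole of $t_p$. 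Hence each tail lies in $\Gamma(W,-)$, and subtracting it from a section leaves one that is regular at $p$ in the analytic, hence algebraic, sense, i.e. lies in $\Gamma(W_p,-)$. This shows every coset contains such a tail; conversely a nonzero tail has a genuine pole at $p$, so it cannot lie in $\Gamma(W_p,-)$, which gives uniqueness. Thus each factor is represented bijectively by its principal parts.

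Putting the three factors together, the cosets in the quotient are represented bijectively by triples of the stated form, which is precisely the subspace to which $\kappa_{\Theta,p,j}$ is restricted in the statement; composing with the projection therefore yields an isomorphism. The one point requiring care is that a section of $\Gamma(W,-)$ may have poles at the other special points as well, so the principal-part representative must be localized at $p$: the verification above that $t_p^{-k}$ and $t_p^{-k}\vfield{t_p}$ introduce no poles away from $p$ is exactly what guarantees that the representative stays in $\Gamma(W,-)$ and that the subtraction does not disturb regularity at the remaining special points. I expect this localization to be the only genuine obstacle; everything else is bookkeeping with the already-established identifications $\kappa_{\Theta,p,j}$ and the restriction description of the inclusion.
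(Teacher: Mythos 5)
Your proposal is correct and follows essentially the same route as the paper: identify the inclusion with restriction of triples from $W_p$ to $W$ via the $U_{\opensetforvertex{p,j}}$-description, then show surjectivity by subtracting the principal Laurent tail at $p$ (checking the tail stays in $\Gamma(W,-)$ because $t_p^{-k}$ and $t_p^{-k}\vfield{t_p}$ have no poles away from $p$) and injectivity because a nonzero tail has a genuine pole at $p$. The paper merely runs the three components of the triple in parallel rather than splitting the quotient into an explicit direct sum, which is a cosmetic difference.
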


\begin{proof}
The proof is similar to the proof of Lemma \ref{wwpquotient}. Namely, let $\uidescriptionfunction1, \uidescriptionfunction2\in\Gamma (W, \OO_{\PP^1})$, 
$\stdvectorfiledpletter\in\Gamma (W,\Theta_{\PP^1})$
Consider complex-analytic Laurent series: 
$$
\uidescriptionfunction l=\sum_{k=-n_l}^\infty a_{l,k}t_p^{k},\quad(l=1,2),\quad \stdvectorfiledpletter=\left(\sum_{k=-n_3}^\infty b_{k}t_p^{k}\right)\frac\partial{\partial t_p}.
$$
Set
$$
\uidescriptionfunction l'=\sum_{k=-n_l}^{-1} a_{l,k}t_p^{k},\quad(l=1,2),\quad \stdvectorfiledpletter'=\left(\sum_{k=-n_3}^{-1} b_{k}t_p^{k}\right)\frac\partial{\partial t_p}.
$$
These functions and this vector field are algebraic since the sums are finite. The functions have zero of degree at least 1 at $\infty$, 
the vector field has zero of degree at least 3 at $\infty$, so $\uidescriptionfunction1',\uidescriptionfunction2'\in\Gamma (W, \OO_{\PP^1})$ and $\stdvectorfiledpletter'\in\Gamma (W, \Theta_{\PP^1})$.
Hence, $\uidescriptionfunction1'-\uidescriptionfunction1$, $\uidescriptionfunction2'-\uidescriptionfunction2\in\Gamma (W, \OO_{\PP^1})$ and 
$\stdvectorfiledpletter'-\stdvectorfiledpletter\in\Gamma (W, \Theta_{\PP^1})$, 
but $\uidescriptionfunction1'-\uidescriptionfunction1$, $\uidescriptionfunction2'-\uidescriptionfunction2$, 
and $\stdvectorfiledpletter'-\stdvectorfiledpletter$
have no poles at $p$, so they define an element of 
$\gsii{p,j}$
Hence,
$\kappa_{\Theta,p,j}(\uidescriptionfunction1,\uidescriptionfunction2,\stdvectorfiledpletter)$ and 
$\kappa_{\Theta,p,j}(\uidescriptionfunction1',\uidescriptionfunction2',\stdvectorfiledpletter')$
define the same element of $\Gamma(W_p,\giicircinv)/\Gamma(W_p,\giisiinv{\opensetforvertex{p,j}})$, and 
the composition of the restriction of $\kappa_{\Theta,p,j}$ and the natural projection under consideration 
is surjective. Injectivity is also clear since if a Laurent polynomial of the considered form has 
no pole at $p$, then it is zero.
\end{proof}

Note that despite the proof is similar to the proof of Lemma \ref{wwpquotient}, Laurent polynomials here and in Lemma \ref{wwpquotient}
have different meanings: here they from the $U_{\opensetforvertex{p,j}}$-description of a vector field on 
$U_{\numberoffixedopensets}=U_{\opensetforvertex{p,j}}\cap U_{\numberoffixedopensets}$, while in 
Lemma \ref{wwpquotient} they formed the $U_{\numberoffixedopensets}$-description of a vector field on $U_{\numberoffixedopensets}$.

Denote the direct sum of maps $\kappa_{\Theta,p,j}$, which maps 
$\bigoplus_{j=1}^{\numberofverticespt p}\gsiicirc{p,j}$
to $\bigoplus_{j=1}^{\numberofverticespt p}\Gamma(W_p,\giicircinv)$, by $\kappa_{\Theta,p}$.
Denote by $\abstractvectorspace_{1,0,p}$ the subspace of $\bigoplus_{j=1}^{\numberofverticespt p}\gsiicirc{p,j}$
formed by the $3\numberofverticespt p$-tuples of the 
form
$$
(0,0,0,\uidescriptionfunctiondiff{2}{1},\uidescriptionfunctiondiff{2}{2}, \stdvectorfiledpletter[2],\ldots,
\uidescriptionfunctiondiff{\numberofverticespt p}{1},\uidescriptionfunctiondiff{\numberofverticespt p}{1}, \stdvectorfiledpletter[\numberofverticespt p]),
$$
where
$$
\uidescriptionfunctiondiff{j}{l}=\sum_{k=-n_{j,l}}^{-1}a_{j,l,k}t_p^{k}, \quad 
\stdvectorfiledpletter[j]=\left(\sum_{k=-n_{j,3}}^{-1}b_{j,k}t_p^{k}\right)\frac\partial{\partial t_p}.
$$

\begin{lemma}\label{delta0suffices}
The restriction of the composition of $\kappa_{\Theta,p}$ and the natural projection
$$
\bigoplus_{j=1}^{\numberofverticespt p}\Gamma(W_p,\giicircinv)\to
\left.\!\left(
\bigoplus_{j=1}^{\numberofverticespt p}\left(\Gamma(W_p,\giicircinv)/\Gamma(W_p,\giisiinv{\opensetforvertex{p,j}})\right)
\right)\right/\Gamma(W_p,\giicircinv)
$$
to $\abstractvectorspace_{1,0,p}$ is surjective. 
Its kernel is one-dimensional.
\end{lemma}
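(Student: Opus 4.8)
The plan is to reduce everything to the concrete Laurent-polynomial descriptions furnished by Lemma \ref{wwiquotient}. For each vertex index $j$ ($1\le j\le\numberofverticespt p$), that lemma identifies the quotient $\Gamma(W_p,\giicircinv)/\Gamma(W_p,\giisiinv{\opensetforvertex{p,j}})$ with the space of Laurent-polynomial triples of the prescribed shape (two functions involving only negative powers of $t_p$, and a vector field $\stdvectorfiledpletter$ involving only negative powers), via the $U_{\opensetforvertex{p,j}}$-description. Writing $\mathrm{PP}_j(\delta)$ for the principal part at $p$ of a section $\delta\in\Gamma(W_p,\giicircinv)$ in the $U_{\opensetforvertex{p,j}}$-description, the target quotient in the statement becomes $(\bigoplus_j\mathrm{Laurent}_j)/\Gamma(W_p,\giicircinv)$, where $\Gamma(W_p,\giicircinv)$ is embedded diagonally by $\delta\mapsto(\mathrm{PP}_1(\delta),\ldots,\mathrm{PP}_{\numberofverticespt p}(\delta))$. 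Under these identifications $\abstractvectorspace_{1,0,p}$ is exactly the subspace of tuples whose first entry vanishes, and the map we must analyse is the restriction to this subspace of the canonical projection.

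For surjectivity I would argue as follows. Since $\kappa_{\Theta,p,1}$ is an isomorphism and every Laurent triple lifts via $\kappa_{\Theta,p,1}$ to an element of $\Gamma(W_p,\giicircinv)$, the map $\mathrm{PP}_1$ is surjective onto $\mathrm{Laurent}_1$. Hence, given an arbitrary class in the target represented by $(\omega_1,\ldots,\omega_{\numberofverticespt p})$, I set $\delta:=\kappa_{\Theta,p,1}(\omega_1)$ and subtract its diagonal image: this does not change the class but replaces the first entry by $0$, and by Lemma \ref{wwiquotient} the remaining entries are again Laurent triples of the required form. The resulting tuple lies in $\abstractvectorspace_{1,0,p}$, which proves surjectivity.

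For the kernel, a tuple $(0,\omega_2,\ldots,\omega_{\numberofverticespt p})\in\abstractvectorspace_{1,0,p}$ dies in the target precisely when it equals $(\mathrm{PP}_1(\delta),\ldots,\mathrm{PP}_{\numberofverticespt p}(\delta))$ for some $\delta\in\Gamma(W_p,\giicircinv)$; the condition $\mathrm{PP}_1(\delta)=0$ says exactly that $\delta\in\Gamma(W_p,\giisiinv{\opensetforvertex{p,1}})$, and then $\omega_j=\mathrm{PP}_j(\delta)$ for $j\ge 2$. Thus the kernel is the image of $\Gamma(W_p,\giisiinv{\opensetforvertex{p,1}})$ under $\delta\mapsto(\mathrm{PP}_j(\delta))_{j\ge 2}$, so it is isomorphic to $\Gamma(W_p,\giisiinv{\opensetforvertex{p,1}})\big/\bigcap_{j=1}^{\numberofverticespt p}\Gamma(W_p,\giisiinv{\opensetforvertex{p,j}})$, the denominator consisting of those $\delta$ that are regular at $p$ in every description simultaneously.

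The crux, and the step I expect to be the main obstacle, is showing this last quotient is one-dimensional. Writing a section of $\Gamma(W_p,\giisiinv{\opensetforvertex{p,1}})$ as a triple $(g_1,g_2,\stdvectorfiledpletter)$ on $W_p$ regular at $p$ (Corollary \ref{uidescvfield}), I pass to the $U_{\opensetforvertex{p,j}}$-description for some $j\ne 1$ using the transition rule of Lemma \ref{vfieldtransition}. Since $p$ is essential and $\uidegree{\opensetforvertex{p,1},1}$, $\uidegree{\opensetforvertex{p,j},1}$ lie in the normal cones of distinct vertices, Lemma \ref{logderpole} guarantees that the off-diagonal (third-column) entries of $\uilargetransition{\opensetforvertex{p,1},\opensetforvertex{p,j}}$ have a pole of order exactly $1$ at $p$; the constant $2\times 2$ block preserves the regularity of $g_1,g_2$, so the only obstruction to regularity in description $j$ is the term (pole of order $1$) $\cdot\,\stdvectorfiledpletter$, which is regular at $p$ if and only if $\stdvectorfiledpletter(p)=0$. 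Consequently the intersection $\bigcap_j\Gamma(W_p,\giisiinv{\opensetforvertex{p,j}})$ consists precisely of the regular triples with $\stdvectorfiledpletter(p)=0$, and the quotient is identified, via $\stdvectorfiledpletter\mapsto\stdvectorfiledpletter(p)$, with $\CC$. Exhibiting a triple $(0,0,\stdvectorfiledpletter)$ with $\stdvectorfiledpletter(p)\ne 0$, which plainly exists, shows the quotient is nonzero, hence exactly one-dimensional.
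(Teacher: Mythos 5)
Your proof is correct and follows essentially the same route as the paper: surjectivity by lifting the first triple via $\kappa_{\Theta,p,1}$, subtracting its diagonal image, and renormalizing the remaining entries with Lemma \ref{wwiquotient}; and the one-dimensionality of the kernel resting on the constant $2\times 2$ block of the transition matrices together with the order-exactly-one pole of their third-column entries from Lemma \ref{logderpole}. Your identification of the kernel with $\Gamma(W_p,\giisiinv{\opensetforvertex{p,1}})\big/\bigcap_{j}\Gamma(W_p,\giisiinv{\opensetforvertex{p,j}})$ and the evaluation map $\stdvectorfiledpletter\mapsto\stdvectorfiledpletter(p)$ is just a tidier packaging of the computation the paper carries out by hand.
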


\begin{proof}
To prove surjectivity, consider a 
$3\numberofverticespt p$-tuple $(\uidescriptionfunctiondiff{1}{1}',\uidescriptionfunctiondiff{1}{2}', \stdvectorfiledpletter[1]',\ldots, 
\uidescriptionfunctiondiff{\numberofverticespt p}{1}', \uidescriptionfunctiondiff{\numberofverticespt p}{2}',\stdvectorfiledpletter[\numberofverticespt p])\in 
\bigoplus_{j=1}^{\numberofverticespt p}\gsiicirc{p,j}$.
For every $j$, $1\le j\le \numberofverticespt p$, set
$$
\left(
\begin{array}{c}
\uidescriptionfunction{j,1}''\\
\uidescriptionfunction{j,1}''\\
\stdvectorfiledpletter''_j
\end{array}
\right)=
\uilargetransition{\opensetforvertex{p,j},\opensetforvertex{p,1}}
\left(
\begin{array}{c}
\uidescriptionfunctiondiff{1}{1}'\\
\uidescriptionfunctiondiff{1}{2}'\\
\stdvectorfiledpletter[1]'
\end{array}
\right).
$$
By Lemma \ref{rijregular}, these functions and vector fields are regular on $W$.
By Lemma \ref{vfieldtransition}, 
$\kappa_{\Theta,p,j}(\uidescriptionfunction{j,1}'',\uidescriptionfunction{j,2}'',\stdvectorfiledpletter''_j)=
\kappa_{\Theta,p,1}(\uidescriptionfunctiondiff{1}{1}',\uidescriptionfunctiondiff{1}{2}',\stdvectorfiledpletter[1]')$.
Hence, 
$$
\kappa_{\Theta,p}(\uidescriptionfunctiondiff{1}{1}',\uidescriptionfunctiondiff{2}{1}', \stdvectorfiledpletter[1],\ldots, 
\uidescriptionfunctiondiff{\numberofverticespt p}{1}', \uidescriptionfunctiondiff{\numberofverticespt p}{2}',\stdvectorfiledpletter[\numberofverticespt p]')
$$ 
and 
$$
\kappa_{\Theta,p}(\uidescriptionfunctiondiff{1}{1}'-\uidescriptionfunction{1,1}'',
\uidescriptionfunctiondiff{1}{2}'-\uidescriptionfunction{1,2}'', 
\stdvectorfiledpletter[1]'-\stdvectorfiledpletter''_1,\ldots, 
\uidescriptionfunctiondiff{\numberofverticespt p}{1}'-\uidescriptionfunction{\numberofverticespt p,1}'', 
\uidescriptionfunctiondiff{\numberofverticespt p}{2}'-\uidescriptionfunction{\numberofverticespt p,2}'',
\stdvectorfiledpletter[\numberofverticespt p]'-\stdvectorfiledpletter''_{\numberofverticespt p})
$$
define the same coset in 
$$
\left.\!\left(\bigoplus_{j=1}^{\numberofverticespt p}\left(\Gamma(W_p,\giicircinv)/\Gamma(W_p,\giisiinv{\opensetforvertex{p,j}})\right)\right)\right/\Gamma(W_p,\giicircinv).
$$
Observe that 
$\uidescriptionfunctiondiff{1}{1}'=\uidescriptionfunction{1,1}''$, 
$\uidescriptionfunctiondiff{1}{2}'=\uidescriptionfunction{1,2}''$, 
and $\stdvectorfiledpletter[1]'=\stdvectorfiledpletter''_1$. 
Now, by Lemma \ref{wwiquotient}, every triple 
$(\uidescriptionfunctiondiff{j}{1}'-\uidescriptionfunction{j,1}'',
\uidescriptionfunctiondiff{j}{2}'-\uidescriptionfunction{j,2}'', 
\stdvectorfiledpletter[j]'-\stdvectorfiledpletter''_j)\in \gsiicirc{p,j}$ can be replaced with 
$(\uidescriptionfunctiondiff{j}{1}, \uidescriptionfunctiondiff{j}{2}, \stdvectorfiledpletter[j])\in \gsiicirc{p,j}$, where
$$
\uidescriptionfunctiondiff{j}{l}=\sum_{k=-n_{j,l}}^{-1}a_{j,l,k}t_p^{k}, \quad 
\stdvectorfiledpletter[j]=\left(\sum_{k=-n_{j,3}}^{-1}b_{j,k}t_p^{k}\right)\frac\partial{\partial t_p},
$$
so that 
$\kappa_{\Theta,p,j}(\uidescriptionfunctiondiff{j}{1}'-\uidescriptionfunction{j,1}'',
\uidescriptionfunctiondiff{j}{2}'-\uidescriptionfunction{j,2}'', 
\stdvectorfiledpletter[j]'-\stdvectorfiledpletter''_j)$ 
and 
$\kappa_{\Theta,p,j}(\uidescriptionfunctiondiff{j}{1}, \uidescriptionfunctiondiff{j}{2}, \stdvectorfiledpletter[j])$
define the same coset in $\Gamma(W_p,\giicircinv)/\Gamma(W_p,\giisi{\opensetforvertex{p,j}})$. 
Hence, 
$$
\kappa_{\Theta,p}(0,0,0,\uidescriptionfunctiondiff{2}{1},\uidescriptionfunctiondiff{2}{2}, \stdvectorfiledpletter[2],\ldots,
\uidescriptionfunctiondiff{\numberofverticespt p}{1},\uidescriptionfunctiondiff{\numberofverticespt p}{2}, \stdvectorfiledpletter[\numberofverticespt p])
$$ 
and 
$$
\kappa_{\Theta,p}(\uidescriptionfunctiondiff{1}{1}',\uidescriptionfunctiondiff{1}{2}', \stdvectorfiledpletter[1]',\ldots, 
\uidescriptionfunctiondiff{\numberofverticespt p}{1}', \uidescriptionfunctiondiff{\numberofverticespt p}{2}',\stdvectorfiledpletter[\numberofverticespt p]')
$$
define the same element of 
$$
\left.\!\left(\bigoplus_{j=1}^{\numberofverticespt p}\left(\Gamma(W_p,\giicircinv)/\Gamma(W_p,\giisiinv{\opensetforvertex{p,j}})\right)\right)\right/\Gamma(W_p,\giicircinv),
$$ 
and 
$$
(0,0,0,\uidescriptionfunctiondiff{2}{1},\uidescriptionfunctiondiff{2}{2}, \stdvectorfiledpletter[2],\ldots,
\uidescriptionfunctiondiff{\numberofverticespt p}{1},\uidescriptionfunctiondiff{\numberofverticespt p}{2}, \stdvectorfiledpletter[\numberofverticespt p])\in\abstractvectorspace_{1,0,p},
$$ 
therefore, the restriction of the composition to $\abstractvectorspace_{1,0,p}$ is surjective.

Now suppose that 
$$
(0,0,0,\uidescriptionfunctiondiff{2}{1},\uidescriptionfunctiondiff{2}{2}, \stdvectorfiledpletter[2],\ldots,
\uidescriptionfunctiondiff{\numberofverticespt p}{1},\uidescriptionfunctiondiff{\numberofverticespt p}{2}, \stdvectorfiledpletter[\numberofverticespt p])
\in\abstractvectorspace_{1,0,p}
$$ 
and 
$$
\kappa_{\Theta,p}(0,0,0,\uidescriptionfunctiondiff{2}{1},\uidescriptionfunctiondiff{2}{2}, \stdvectorfiledpletter[2],\ldots,
\uidescriptionfunctiondiff{\numberofverticespt p}{1},\uidescriptionfunctiondiff{\numberofverticespt p}{2}, 
\stdvectorfiledpletter[\numberofverticespt p])
$$
defines the zero
coset in 
$$
\left.\!\left(\bigoplus_{j=1}^{\numberofverticespt p}\left(\Gamma(W_p,\giicircinv)/\Gamma(W_p,\giisiinv{\opensetforvertex{p,j}})\right)\right)\right/
\Gamma(W_p,\giicircinv).
$$
For simplicity of notation, denote $\uidescriptionfunctiondiff{1}{1}=\uidescriptionfunctiondiff{1}{2}=0$, $\stdvectorfiledpletter[1]=0$.
Then there exist 
$(\uidescriptionfunctiondiff{j}{1}',\uidescriptionfunctiondiff{j}{2}',\stdvectorfiledpletter[j]')\in \gsii{p,j}$
and 
$(\uidescriptionfunction{j,1}'',\uidescriptionfunction{j,2}'',\stdvectorfiledpletter_j'')\in \gsiicirc{p,j}$
($1\le j\le \numberofverticespt{p}$)
such that 
$\uidescriptionfunctiondiff{j}{l}=\uidescriptionfunctiondiff{j}{l}'+\uidescriptionfunction{j,l}''$, 
$\stdvectorfiledpletter[j]=\stdvectorfiledpletter[j]'+\stdvectorfiledpletter''_j$ and 
$$
\kappa_{\Theta,p,1}(\uidescriptionfunction{1,1}'',\uidescriptionfunction{2,1}'',\stdvectorfiledpletter''_1)=
\kappa_{\Theta,p,2}(\uidescriptionfunction{1,2}'',\uidescriptionfunction{2,2}'',\stdvectorfiledpletter''_2)=
\ldots=
\kappa_{\Theta,p,\numberofverticespt p}(\uidescriptionfunction{1,\numberofverticespt p}'',\uidescriptionfunction{2,\numberofverticespt p}'',
\stdvectorfiledpletter''_{\numberofverticespt p}).
$$
By Lemma \ref{vfieldtransition}
this means that 
$$
\left(
\begin{array}{c}
\uidescriptionfunction{j,1}''\\
\uidescriptionfunction{j,2}''\\
\stdvectorfiledpletter''_j
\end{array}
\right)=
\uilargetransition{\opensetforvertex{p,j},\opensetforvertex{p,1}}
\left(
\begin{array}{c}
\uidescriptionfunction{1,1}''\\
\uidescriptionfunction{1,2}''\\
\stdvectorfiledpletter''_1
\end{array}
\right).
$$
In particular, $\stdvectorfiledpletter''_j=\stdvectorfiledpletter''_1$ and all functions 
$\uidescriptionfunction{j,l}''$ and all vector fields $\stdvectorfiledpletter''_j$ are determined by 
$(\uidescriptionfunction{1,1}'', \uidescriptionfunction{1,2}'', \stdvectorfiledpletter''_1)$.
On the other hand, the conditions 
$\uidescriptionfunctiondiff{j}{l}=\uidescriptionfunctiondiff{j}{l}'+\uidescriptionfunction{j,l}''$, 
$\stdvectorfiledpletter[j]=\stdvectorfiledpletter[j]+\stdvectorfiledpletter''_j$ 
for $j=1$ mean that 
$\uidescriptionfunctiondiff{1}{l}'=-\uidescriptionfunction{1,l}''$, 
$\stdvectorfiledpletter[1]'=-\stdvectorfiledpletter''_1$. 
Therefore, $\uidescriptionfunction{1,1}''$, $\uidescriptionfunction{1,2}''$, and $\stdvectorfiledpletter''_1$
are regular at $p$.
By Lemma \ref{logderpole},
$\ord_p(\uidescriptionfunction{l,j}'')\ge-1$ for $l=1,2$, $1\le j\le \numberofverticespt p$. Now it follows from the definition of 
$\abstractvectorspace_{1,0,p}$ that 
$\uidescriptionfunctiondiff{j}{l}=a_{-1,j,l}t_p^{-1}$ for some $a_{-1,j,l}\in\CC$, and $\stdvectorfiledpletter[j]=0$.
Moreover, it follows from a consideration of Laurent series of $\stdvectorfiledpletter''_1$, of entries of 
$\uilargetransition{\opensetforvertex{p,j},\opensetforvertex{p,1}}$, and of $\uidescriptionfunction{j,l}''$
that all numbers $a_{-1,j,l}$ are uniquely determined by the value of $\stdvectorfiledpletter''_1$ at $p$, which is 
an element of a one-dimensional space (the tangent space of $\PP^1$ at $p$). Therefore, the kernel 
of the composition of $\kappa_{\Theta,p}$ and the projection is at most one-dimensional.

Now let us prove that the kernel contains a nonzero element. Set 
$\uidescriptionfunction{1,1}''=\uidescriptionfunction{1,2}''=0$, $\stdvectorfiledpletter''_1=\partial/\partial t_p$, and
$$
\left(
\begin{array}{c}
\uidescriptionfunction{j,1}''\\
\uidescriptionfunction{j,2}''\\
\stdvectorfiledpletter''_j
\end{array}
\right)=
\uilargetransition{\opensetforvertex{p,j},\opensetforvertex{p,1}}
\left(
\begin{array}{c}
\uidescriptionfunction{1,1}''\\
\uidescriptionfunction{1,2}''\\
\stdvectorfiledpletter''_1
\end{array}
\right).
$$
By Lemma \ref{rijregular}, all functions $\uidescriptionfunction{j,l}''$ are regular on $W$.
By Lemma \ref{vfieldtransition}, 
$$
\kappa_{\Theta,p,1}(\uidescriptionfunction{1,1}'',\uidescriptionfunction{1,2}'',\stdvectorfiledpletter''_1)=
\kappa_{\Theta,p,j}(\uidescriptionfunction{j,1}'',\uidescriptionfunction{j,2}'',\stdvectorfiledpletter''_j)
\text{ for }1\le j\le \numberofverticespt p,
$$
and 
$$
\kappa_{\Theta,p}(\uidescriptionfunction{1,1}'',\uidescriptionfunction{1,2}'',\stdvectorfiledpletter''_1, \ldots, 
\uidescriptionfunction{\numberofverticespt p,1}'',\uidescriptionfunction{\numberofverticespt p,2}'',\stdvectorfiledpletter''_{\numberofverticespt p})
$$
defines the zero coset in 
$$
\left.\!\left(\bigoplus_{j=1}^{\numberofverticespt p}\Big(\Gamma(W_p,\giicircinv)/\Gamma(W_p,\giisiinv{\opensetforvertex{p,j}})\Big)\right)\right/\Gamma(W_p,\giicircinv).
$$
By Lemma \ref{logderpole}, 
$$\ord_p(\uidescriptionfunction{j,l}'')=-1\text{ for }2\le j\le\numberofverticespt p\text{ and }l=1,2.$$ 
So we can write 
$\uidescriptionfunction{j,l}''=\uidescriptionfunctiondiff{j}{l}+\uidescriptionfunctiondiff{j}{l}'$, where $\uidescriptionfunctiondiff{j}{l}=a_{-1,j,l}t_p^{-1}$ 
(here $a_{-1,j,l}\in\CC$, and for $j\ge 2$ we also have
$a_{l,-1,j}\ne 0$) and 
$\uidescriptionfunctiondiff{j}{l}'$ is regular at $p$ (and hence on $W_p$).
By the definition of matrices 
$\uilargetransition{\opensetforvertex{p,j},\opensetforvertex{p,1}}$, 
$\stdvectorfiledpletter''_j=\stdvectorfiledpletter''_1$, and we can set $\stdvectorfiledpletter[j]'=\stdvectorfiledpletter''_j$, 
$\stdvectorfiledpletter[j]=0$. Then $\stdvectorfiledpletter[j]'\in\Gamma(W_p,\Theta_{\PP^1})$,
and 
$$
\kappa_{\Theta,p,j}(\uidescriptionfunctiondiff{j}{1}',\uidescriptionfunctiondiff{j}{2}',\stdvectorfiledpletter[j]')
$$ 
defines the zero coset in 
$$
\Gamma(W_p,\giicircinv)/\Gamma(W_p,\giisiinv{\opensetforvertex{p,j}}).
$$ By construction, 
$$
\uidescriptionfunction{}=
(\uidescriptionfunctiondiff{1}{1},\uidescriptionfunctiondiff{1}{2},\stdvectorfiledpletter[1],\ldots, 
\uidescriptionfunctiondiff{\numberofverticespt p}{1},\uidescriptionfunctiondiff{\numberofverticespt p}{2},\stdvectorfiledpletter[\numberofverticespt p])
\in\abstractvectorspace_{1,0,p}.
$$
Since 
$$
a_{-1,j,l}\ne0\text{ for all }2\le j\le \numberofverticespt p\text{ and }l=1,2,
$$
we have $\uidescriptionfunction{}\ne 0$. Since $\uidescriptionfunctiondiff{j}{l}=\uidescriptionfunction{j,l}''-\uidescriptionfunctiondiff{j}{l}'$ and 
$\stdvectorfiledpletter[j]=\stdvectorfiledpletter[j]'-\stdvectorfiledpletter''_j$, 
$\uidescriptionfunction{}$ is an element of the kernel of the composition of $\kappa_{\Theta,p}$ and the 
projection from $\bigoplus_{j=1}^{\numberofverticespt p}\Gamma(W_p,\giicircinv)$
to 
$$
\left.\!\left(\bigoplus_{j=1}^{\numberofverticespt p}\Big(\Gamma(W_p,\giicircinv)/\Gamma(W_p,\giisiinv{\opensetforvertex{p,j}})\Big)\right)\right/\Gamma(W_p,\giicircinv).
$$
\end{proof}

Now we are going to use the map
$$
\psi_{p,j}\colon \gsiicirc{p,j}\to \gsvicirc{p,j}
$$
we have introduced before using Lemma \ref{uidescpsi}.
Each of the functions it computes is the $U_{\opensetforvertex{p,j}}$-description of a function of a degree 
$\chi$ on $U_{\numberoffixedopensets}$ ($\chi=\lambda_1,\ldots,\lambda_{\numberoflatticegenerators}$), and exactly
$\dim \Gamma(\PP^1,\OO(\mathcal D(\chi)))$ of these functions are of this degree. Denote 
the morphism 
$\gsiicirc{p,j}\to\Gamma(W,\OO_{\PP^1})$ 
computing the $k$th of the functions of degree $\chi$ by
$\psi_{p,j,\chi,k}$. Denote also by $\kappa_{\OO,p}$ the direct sum of morphisms $\kappa_{\OO,p,j}$, which
maps 
$\bigoplus_{j=1}^{\numberofverticespt p} \gsvicirc{p,j}$
to $\bigoplus_{j=1}^{\numberofverticespt p}\Gamma(W_p,\gvicircinv)$.
We are computing the kernel of the map 
\begin{multline*}
\left.\!\left(
\bigoplus_{j=1}^{\numberofverticespt p}
\left(\Gamma(W_p,\giicircinv)/
\Gamma(W_p,\giisiinv{\opensetforvertex{p,j}})\right)\right)\right/\Gamma(W_p,\giicircinv)\\
\longrightarrow
\left.\!\left(
\bigoplus_{j=1}^{\numberofverticespt p}\left(\Gamma(W_p,\gvicircinv)/
\Gamma(W_p,\gvisiinv{\opensetforvertex{p,j}})\right)\right)\right/\Gamma(W_p,\gvicircinv)
\end{multline*}
induced by $\psi_p$, so 
by Lemma \ref{delta0suffices}, it is sufficient to consider the restriction of $\psi_p$
to $\abstractvectorspace_{1,0,p}$. Then $\kappa_{\Theta,p}$ maps the kernel of the composition of $(\kappa_{\OO,p}\circ\psi_p|_{\abstractvectorspace_{1,0,p}})$ 
and the natural projection 
$$
\bigoplus_{j=1}^{\numberofverticespt p}(\Gamma(W_p,\gvicircinv)
\to\left.\!\left(
\bigoplus_{j=1}^{\numberofverticespt p}\left(\Gamma(W_p,\gvicircinv)/
\Gamma(W_p,\gvisiinv{\opensetforvertex{p,j}})\right)\right)\right/\Gamma(W_p,\gvicircinv)
$$
to 
\begin{multline*}
\ker\left(\left.\!\Bigg(
\bigoplus_{j=1}^{\numberofverticespt p}
\left(\Gamma(W_p,\giicircinv)/
\Gamma(W_p,\giisiinv{\opensetforvertex{p,j}})\right)\Bigg)\right/\Gamma(W_p,\giicircinv)
\right.\!\\
\longrightarrow
\left.\!
\left.\!\Bigg(
\bigoplus_{j=1}^{\numberofverticespt p}\left(\Gamma(W_p,\gvicircinv)/
\Gamma(W_p,\gvisiinv{\opensetforvertex{p,j}})\right)\Bigg)\right/\Gamma(W_p,\gvicircinv)\right)
\end{multline*}
surjectively, and the kernel of this composition contains the one-dimensional kernel $\ker \kappa_{\Theta,p}|_{\abstractvectorspace_{1,0,p}}$
since $\psi_p$ induces the map
\begin{multline*}
\left.\!\left(
\bigoplus_{j=1}^{\numberofverticespt p}
\left(\Gamma(W_p,\giicircinv)/
\Gamma(W_p,\giisiinv{\opensetforvertex{p,j}})\right)\right)\right/\Gamma(W_p,\giicircinv)\\
\longrightarrow
\left.\!\left(
\bigoplus_{j=1}^{\numberofverticespt p}\left(\Gamma(W_p,\gvicircinv)/
\Gamma(W_p,\gvisiinv{\opensetforvertex{p,j}})\right)\right)\right/\Gamma(W_p,\gvicircinv)
\end{multline*}
via $\kappa_{\Theta,p}$ and $\kappa_{\OO,p}$.
So we have to find the preimage 
$$\textstyle
\abstractvectorspace_{1,0,p}\cap\psi_p^{-1}\left(\kappa_{\OO,p}^{-1}\left((\bigoplus_{j=1}^{\numberofverticespt p}
\Gamma(W_p,\gvisiinv{\opensetforvertex{p,j}}))+\Gamma(W_p,\gvicircinv)\right)\right).$$

\begin{remark}\label{commdiagremark}
This is illustrated by the following commutative diagram:
$$
\xymatrix@C=0pt
{
\scriptstyle
\left.\!\left(
\bigoplus_{j=1}^{\numberofverticespt p}
\left(\Gamma(W_p,\giicircinv)/
\Gamma(W_p,\giisiinv{\opensetforvertex{p,j}})\right)\right)\right/\Gamma(W_p,\giicircinv)\ar[dr] \\
& 
\scriptstyle
\left.\!\left(
\bigoplus_{j=1}^{\numberofverticespt p}\left(\Gamma(W_p,\gvicircinv)/
\Gamma(W_p,\gvisiinv{\opensetforvertex{p,j}})\right)\right)\right/\Gamma(W_p,\gvicircinv)
\\
\scriptstyle
\bigoplus_{j=1}^{\numberofverticespt p}
\Gamma(W_p,\giicircinv)
\ar[r] 
\ar@{->>}[uu]_{\substack{\text{canonical}\\\text{projection}}}
& 
\scriptstyle
\bigoplus_{j=1}^{\numberofverticespt p}
\Gamma(W_p,\gvicircinv)
\ar@{->>}[u]_{\substack{\text{canonical}\\\text{projection}}}
\\
\scriptstyle
\bigoplus_{j=1}^{\numberofverticespt p}
\gsiicirc{p,j}
\ar[r]^{\psi_p}
\ar@{^(->>}[u]_{\kappa_{\Theta,p}}
&
\scriptstyle
\bigoplus_{j=1}^{\numberofverticespt p}
\gsvicirc{p,j}
\ar@{^(->>}[u]_{\kappa_{\OO,p}}
\\
{\vphantom{\big(}\abstractvectorspace_{1,0,p}}
\ar@{^(->}[u]
\ar@/^5em/@{->>}[uuuu]^{\dim\ker=1}
}
$$
\end{remark}

\begin{lemma}\label{diffregular}
Let 
$$
\uidescriptionfunction{}=(0,0,0,\uidescriptionfunctiondiff{2}{1},\uidescriptionfunctiondiff{2}{2},\stdvectorfiledpletter[2],\ldots,
\uidescriptionfunctiondiff{\numberofverticespt p}{1},\uidescriptionfunctiondiff{\numberofverticespt p}{2},\stdvectorfiledpletter[\numberofverticespt p])
\in \abstractvectorspace_{1,0,p}.
$$
Suppose that 
$$
\kappa_{\OO,p}(\psi_p(\uidescriptionfunction{}))
\in(\bigoplus_{j=1}^{\numberofverticespt p}\Gamma(W_p,\gvisiinv{\opensetforvertex{p,j}}))+\Gamma(W_p,\gvicircinv)
\subseteq\bigoplus_{j=1}^{\numberofverticespt p}\Gamma(W_p,\gvicircinv),
$$
where 
the last summand is embedded into $\bigoplus_{j=1}^{\numberofverticespt p}\Gamma(W_p,\gvicircinv)$ diagonally.
Pick two vertices $\indexedvertexpt{p}{j_1}$ and $\indexedvertexpt{p}{j_2}$ of $\stdpolyhedronletter_p$ and denote 
$i_1=\opensetforvertex{p,j_1}$, $i_2=\opensetforvertex{p,j_2}$. Also choose 
$\chi\in\{\lambda_1,\ldots,\lambda_{\numberoflatticegenerators}\}$
and an 
index $k$ ($1\le k\le \dim \Gamma(\PP^1,\OO(\mathcal D(\chi)))$).

Then 
it is possible to write
$$
\psi_{p,j_1,\chi,k}(\uidescriptionfunction{})-
\mu_{i_2,i_1,\chi}\psi_{p,j_2,\chi,k}(\uidescriptionfunction{})
$$
as 
$$
f[j_1]_{\chi,k}-\mu_{i_2,i_1,\chi}f[j_2]_{\chi,k},
$$
where 
$f_{j,\chi,k}\in\Gamma(W_p,\OO_{\PP^1})$ for $j=j_1,j_2$.
\end{lemma}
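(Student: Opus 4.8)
The plan is to recognise both terms in the claimed identity as $U_{i_1}$-descriptions of honest degree-$\chi$ functions on $U_{\numberoffixedopensets}$, and then to extract a cancellation forced by the hypothesis. Writing $\uidescriptionfunction{}$ as the tuple of $U_{\opensetforvertex{p,j}}$-descriptions of vector fields $\stdvectorfiledxletter_j$ on $U_{\numberoffixedopensets}$ (so that the triple indexed by $j$ is the $U_{\opensetforvertex{p,j}}$-description of $\stdvectorfiledxletter_j$, with $\stdvectorfiledxletter_1=0$), let $\Phi_j$ denote the degree-$\chi$, index-$k$ entry of $\kappa_{\OO,p,j}(\psi_{p,j}(\uidescriptionfunction{}))$. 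By Lemma \ref{uidescpsi} this $\Phi_j$ is the function $(d\widetilde{\dependentgeneratorsdegree{\chi,k}})\stdvectorfiledxletter_j$ on $U_{\numberoffixedopensets}$, and $\psi_{p,j,\chi,k}(\uidescriptionfunction{})$ is by construction its $U_{\opensetforvertex{p,j}}$-description. Since $i_1=\opensetforvertex{p,j_1}$ and $i_2=\opensetforvertex{p,j_2}$, Lemma \ref{functtransition} — in which $\mu_{i_2,i_1,\chi}$ converts a $U_{i_2}$-description into the corresponding $U_{i_1}$-description — identifies $\psi_{p,j_1,\chi,k}(\uidescriptionfunction{})$ with the $U_{i_1}$-description of $\Phi_{j_1}$ and $\mu_{i_2,i_1,\chi}\psi_{p,j_2,\chi,k}(\uidescriptionfunction{})$ with the $U_{i_1}$-description of $\Phi_{j_2}$. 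By linearity of the description operation, the quantity under study is therefore the $U_{i_1}$-description of $\Phi_{j_1}-\Phi_{j_2}$.

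Next I would unwind the hypothesis. The condition that $\kappa_{\OO,p}(\psi_p(\uidescriptionfunction{}))$ lie in $(\bigoplus_{j}\Gamma(W_p,\gvisiinv{\opensetforvertex{p,j}}))+\Gamma(W_p,\gvicircinv)$, with the last summand placed diagonally, provides degree-$\chi$ functions $s_j$ regular on $\pi^{-1}(W_p)\cap U_{\opensetforvertex{p,j}}$ and one common function $s$ on $U_{\numberoffixedopensets}$ such that, after passing to the $(\chi,k)$-component, $\Phi_j=s_j|_{U_{\numberoffixedopensets}}+s$ for every $j$. The key point is that, on forming the difference for the two chosen vertices, the common diagonal term $s$ drops out, leaving
$$\Phi_{j_1}-\Phi_{j_2}=s_{j_1}|_{U_{\numberoffixedopensets}}-s_{j_2}|_{U_{\numberoffixedopensets}},$$
so that only the two pieces $s_{j_1}$, $s_{j_2}$ survive, each regular on $U_{i_1}$, $U_{i_2}$ respectively.

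Finally I would take $U_{i_1}$-descriptions of the surviving terms. Put $f[j_1]_{\chi,k}$ equal to the $U_{i_1}$-description of $s_{j_1}$ and $f[j_2]_{\chi,k}$ equal to the $U_{i_2}$-description of $s_{j_2}$. Because $V_{i_1}=V_{i_2}=W_p$ and $s_{j_1}$ (resp. $s_{j_2}$) is regular on all of $U_{i_1}$ (resp. $U_{i_2}$), Lemma \ref{uidescfunct} and Remark \ref{uidescfunctrestriction} show that both $f[j_1]_{\chi,k}$ and $f[j_2]_{\chi,k}$ belong to $\Gamma(W_p,\OO_{\PP^1})$. Applying Lemma \ref{functtransition} a second time, the $U_{i_1}$-description of $s_{j_2}$ equals $\mu_{i_2,i_1,\chi}f[j_2]_{\chi,k}$, whence
$$\psi_{p,j_1,\chi,k}(\uidescriptionfunction{})-\mu_{i_2,i_1,\chi}\psi_{p,j_2,\chi,k}(\uidescriptionfunction{})=f[j_1]_{\chi,k}-\mu_{i_2,i_1,\chi}f[j_2]_{\chi,k},$$
as required.

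The actual computation is light; the real effort is bookkeeping — tracking which $U_i$-description each object carries, applying the factor $\mu_{i_2,i_1,\chi}$ in the correct direction, and invoking the regularity clauses of Lemmas \ref{uidescfunct} and \ref{functtransition} so that both summands are regular on the whole of $W_p$. The one genuinely load-bearing idea, and hence the main obstacle to isolate, is the cancellation of the diagonal common term $s$ in the pairwise difference; once that is seen, the rest is pure translation between the invariant sheaves $\gvisiinv{\opensetforvertex{p,j}}$, $\gvicircinv$ and their description-based counterparts $\gsvicirc{p,j}$.
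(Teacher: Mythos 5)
Your proposal is correct and follows essentially the same route as the paper's proof: both use the hypothesis to split $\psi_p(\uidescriptionfunction{})$ into a part regular on each $U_{\opensetforvertex{p,j}}$ plus a diagonal part, observe that the diagonal part cancels in the $\mu_{i_2,i_1,\chi}$-twisted difference (the paper phrases this as $f'_{j_1,\chi,k}=\mu_{i_2,i_1,\chi}f'_{j_2,\chi,k}$ via Lemma \ref{functtransition}, you phrase it as the common summand $s$ dropping out of $\Phi_{j_1}-\Phi_{j_2}$), and read off regularity on $W_p$ from the definition of $\gvisiinv{\opensetforvertex{p,j}}$. The only cosmetic difference is that you work with the invariant functions on $U_{\numberoffixedopensets}$ and then take descriptions, while the paper manipulates the description tuples directly.
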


\begin{proof}
Since 
$$
\kappa_{\OO,p}(\psi_p(\uidescriptionfunction{}))\in(\bigoplus_{j=1}^{\numberofverticespt p}\Gamma(W_p,\gvisi{\opensetforvertex{p,j}}))+\Gamma(W,\gvicircinv),
$$
$\psi_p(\uidescriptionfunction{})$ can be written as $f+f'$, where 
\begin{multline*}
f=(f[j]_{\chi',k'})_{1\le j\le \numberofverticespt p, \chi'\in\{\lambda_1,\ldots,\lambda_{\numberoflatticegenerators}\}, 1\le k'\le \dim\Gamma(\PP^1,\OO(\mathcal D(\chi)))}\in
\bigoplus_{j=1}^{\numberofverticespt{p}}\gsvi{p,j},\\
f'=(f'_{j,\chi',k'})_{1\le j\le \numberofverticespt p, \chi'\in\{\lambda_1,\ldots,\lambda_{\numberoflatticegenerators}\}, 1\le k'\le \dim\Gamma(\PP^1,\OO(\mathcal D(\chi)))}\in
\bigoplus_{j=1}^{\numberofverticespt{p}}\gsvicirc{p,j},
\end{multline*}
and, in addition,
$$
\kappa_{\OO,p,j}((f'_{j,\chi',k'})_{\chi'\in\{\lambda_1,\ldots,\lambda_{\numberoflatticegenerators}\}, 1\le k'\le \dim\Gamma(\PP^1,\OO(\mathcal D(\chi)))})
$$ 
does not depend on $j$.
By the definition of $\gsvi{p,j}$,
$f[j]_{\chi,k}\in\Gamma(W_p,\OO_{\PP^1})$ for all $j$.
It also follows from Lemma \ref{functtransition}
that $f'_{j_1,\chi,k}=\mu_{i_2,i_1,\chi}f'_{j_2,\chi,k}$.
Thus, 
\begin{multline*}
\psi_{p,j_1,\chi,k}(\uidescriptionfunction{})-\mu_{i_2,i_1,\chi}\psi_{p,j_2,\chi,k}(\uidescriptionfunction{})
=\\
(f[j_1]_{\chi,k}+f'_{j_1,\chi,k})-\mu_{i_2,i_1,\chi}(f[j_2]_{\chi,k}+f'_{j_2,\chi,k})=
f[j_1]_{\chi,k}-\mu_{i_2,i_1,\chi}f[j_2]_{\chi,k}.
\end{multline*}
\end{proof}

\begin{corollary}\label{diffregularcor}
If the hypothesis of Lemma \ref{diffregular} holds, then
$$
\ord_p(\psi_{p,j_1,\chi,k}(\uidescriptionfunction{})-
\mu_{i_2,i_1,\chi}\psi_{p,j_2,\chi,k}(\uidescriptionfunction{}))\ge 
\min (0,\ord_p(\mu_{i_2,i_1,\chi})).
$$
\qed
\end{corollary}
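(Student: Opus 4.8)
The plan is to derive this estimate directly from Lemma \ref{diffregular} together with two elementary facts about the order function $\ord_p$ at the point $p$ of the smooth curve $\PP^1$: multiplicativity, $\ord_p(g_1 g_2)=\ord_p(g_1)+\ord_p(g_2)$, and the ultrametric inequality for differences, $\ord_p(g_1-g_2)\ge\min(\ord_p(g_1),\ord_p(g_2))$.

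First I would apply Lemma \ref{diffregular}, whose hypothesis is by assumption satisfied. It rewrites the quantity $\psi_{p,j_1,\chi,k}(\uidescriptionfunction{})-\mu_{i_2,i_1,\chi}\psi_{p,j_2,\chi,k}(\uidescriptionfunction{})$ in the form $f[j_1]_{\chi,k}-\mu_{i_2,i_1,\chi}f[j_2]_{\chi,k}$, where both $f[j_1]_{\chi,k}$ and $f[j_2]_{\chi,k}$ belong to $\Gamma(W_p,\OO_{\PP^1})$. Next I would estimate the order at $p$ of each of the two summands. Since $p\in W_p$ and both $f[j_1]_{\chi,k}$ and $f[j_2]_{\chi,k}$ are regular sections over $W_p$, neither has a pole at $p$, so $\ord_p(f[j_1]_{\chi,k})\ge 0$ and $\ord_p(f[j_2]_{\chi,k})\ge 0$. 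Using multiplicativity of the order then gives $\ord_p(\mu_{i_2,i_1,\chi}f[j_2]_{\chi,k})=\ord_p(\mu_{i_2,i_1,\chi})+\ord_p(f[j_2]_{\chi,k})\ge\ord_p(\mu_{i_2,i_1,\chi})$, while $\ord_p(f[j_1]_{\chi,k})\ge 0$. The ultrametric inequality for the difference yields $\ord_p(f[j_1]_{\chi,k}-\mu_{i_2,i_1,\chi}f[j_2]_{\chi,k})\ge\min(0,\ord_p(\mu_{i_2,i_1,\chi}))$, which is exactly the asserted bound.

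There is essentially no obstacle here: all the substantive work—producing a decomposition whose two pieces are individually regular at $p$—was already carried out in Lemma \ref{diffregular}, and this corollary merely translates that regularity into a valuation estimate. The only point worth a moment's care is that $\mu_{i_2,i_1,\chi}$ may itself have a zero or a pole at $p$; this is precisely why the lower bound reads $\min(0,\ord_p(\mu_{i_2,i_1,\chi}))$ rather than simply $0$, so I would keep the $\ord_p(\mu_{i_2,i_1,\chi})$ contribution explicit throughout rather than discarding it.
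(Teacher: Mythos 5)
Your argument is correct and is exactly the reasoning the paper intends: the corollary is stated with a \qed because it follows immediately from the decomposition $f[j_1]_{\chi,k}-\mu_{i_2,i_1,\chi}f[j_2]_{\chi,k}$ of Lemma \ref{diffregular}, the regularity of $f[j_1]_{\chi,k}$ and $f[j_2]_{\chi,k}$ at $p\in W_p$, multiplicativity of $\ord_p$, and the ultrametric inequality. Nothing is missing.
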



\begin{corollary}\label{diffregularcorii}
Suppose that the hypothesis of Lemma \ref{diffregular} holds.
Let $f\in\Gamma(\PP^1,\OO(\mathcal D(\chi)))$.
Denote $a_{1,1}=\uidegree{i_1,1}^*(\chi)$, $a_{1,2}=\uidegree{i_1,2}^*(\chi)$, 
$a_{2,1}=\uidegree{i_2,1}^*(\chi)$, and $a_{2,2}=\uidegree{i_2,2}^*(\chi)$.
Then
\begin{multline*}
\ord_p\left(\frac{\overline f}{\oluithreadfunction{i_1,1}^{a_{1,1}}\oluithreadfunction{i_1,2}^{a_{1,2}}}
(a_{1,1}\uidescriptionfunctiondiff{j_1}{1}+a_{1,2}\uidescriptionfunctiondiff{j_1}{1}-a_{2,1}\uidescriptionfunctiondiff{j_2}{1}-a_{2,2}\uidescriptionfunctiondiff{j_2}{1})
\right.\!\\
\left.\!
+d\left(\frac{\overline f}{\oluithreadfunction{i_1,1}^{a_{1,1}}\oluithreadfunction{i_1,2}^{a_{1,2}}}\right)\stdvectorfiledpletter_{j_1}
-\mu_{i_2,i_1,\chi}d\left(\frac{\overline f}{\oluithreadfunction{i_2,1}^{a_{2,1}}\oluithreadfunction{i_2,2}^{a_{2,2}}}\right)\stdvectorfiledpletter_{j_2}
\right)\ge \min (0,\ord_p(\mu_{i_2,i_1,\chi})).
\end{multline*}
\end{corollary}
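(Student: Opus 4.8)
The plan is to obtain Corollary \ref{diffregularcorii} as a direct unwinding of Corollary \ref{diffregularcor}, the point being that the displayed quantity is nothing but the explicit form of $\psi_{p,j_1,\chi,k}(\uidescriptionfunction{})-\mu_{i_2,i_1,\chi}\psi_{p,j_2,\chi,k}(\uidescriptionfunction{})$, whose order at $p$ has already been estimated. First I would apply Lemma \ref{uidescpsi} twice: with $i=i_1$ to the $j_1$-component $(\uidescriptionfunctiondiff{j_1}{1},\uidescriptionfunctiondiff{j_1}{2},\stdvectorfiledpletter[j_1])$ of the section $\uidescriptionfunction{}$, and with $i=i_2$ to its $j_2$-component. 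With the abbreviations $a_{1,1}=\uidegree{i_1,1}^*(\chi)$, $a_{1,2}=\uidegree{i_1,2}^*(\chi)$, $a_{2,1}=\uidegree{i_2,1}^*(\chi)$, $a_{2,2}=\uidegree{i_2,2}^*(\chi)$ (these are the coordinates of the \emph{same} $\chi$ in the two different lattice bases), each $\psi_{p,j,\chi,k}(\uidescriptionfunction{})$ splits into a coefficient term, namely $\overline f$ divided by the appropriate monomial in the $\oluithreadfunction{i,\cdot}$ times the integer combination of the $\uidescriptionfunctiondiff{j}{\cdot}$'s, plus a logarithmic-derivative term of that same quotient contracted with $\stdvectorfiledpletter[j]$.

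The key algebraic step is to absorb the factor $\mu_{i_2,i_1,\chi}$ into the $j_2$-terms using the very definition of $\mu_{i,j,\chi}$, which yields
\[
\mu_{i_2,i_1,\chi}\cdot\frac{\overline f}{\oluithreadfunction{i_2,1}^{a_{2,1}}\oluithreadfunction{i_2,2}^{a_{2,2}}}
=\frac{\overline f}{\oluithreadfunction{i_1,1}^{a_{2,1}}\oluithreadfunction{i_1,2}^{a_{2,2}}}.
\]
This identity converts the chart-$U_{i_2}$ quotient into a chart-$U_{i_1}$ quotient in the coefficient term, and, together with the additivity of logarithmic derivatives (the rule $d(f_1^{a_1}f_2^{a_2})/(f_1^{a_1}f_2^{a_2})=a_1\,df_1/f_1+a_2\,df_2/f_2$ already used in Lemma \ref{rijcocycle}), it rewrites the derivative part consistently. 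After this substitution the combination $\psi_{p,j_1,\chi,k}(\uidescriptionfunction{})-\mu_{i_2,i_1,\chi}\psi_{p,j_2,\chi,k}(\uidescriptionfunction{})$ is exactly the expression displayed in the statement: the coefficient term collecting $a_{1,1}\uidescriptionfunctiondiff{j_1}{1}+a_{1,2}\uidescriptionfunctiondiff{j_1}{2}-a_{2,1}\uidescriptionfunctiondiff{j_2}{1}-a_{2,2}\uidescriptionfunctiondiff{j_2}{2}$, together with the two derivative terms carrying the factors $1$ and $-\mu_{i_2,i_1,\chi}$.

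Finally I would invoke Corollary \ref{diffregularcor}: under the hypothesis of Lemma \ref{diffregular} it already gives $\ord_p\!\big(\psi_{p,j_1,\chi,k}(\uidescriptionfunction{})-\mu_{i_2,i_1,\chi}\psi_{p,j_2,\chi,k}(\uidescriptionfunction{})\big)\ge\min(0,\ord_p(\mu_{i_2,i_1,\chi}))$, and the identification above transfers this bound verbatim to the displayed quantity. So no new order estimate is needed. I expect the only real difficulty to be the bookkeeping in the middle step: one must keep the two sets of exponents $a_{1,\cdot}$ and $a_{2,\cdot}$ apart, be careful that the denominator attached to the $j_2$-coefficient term becomes $\oluithreadfunction{i_1,1}^{a_{2,1}}\oluithreadfunction{i_1,2}^{a_{2,2}}$ (not $\oluithreadfunction{i_1,1}^{a_{1,1}}\oluithreadfunction{i_1,2}^{a_{1,2}}$) after $\mu$ is absorbed, and confirm the $\mu$-factor acts identically on the algebraic and the differentiated parts. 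Everything else is formal, and the substantive content of the order bound lives entirely in Corollary \ref{diffregularcor}.
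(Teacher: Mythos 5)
Your overall route is the paper's own: unwind the displayed quantity via Lemma \ref{uidescpsi}, absorb $\mu_{i_2,i_1,\chi}$, and quote Corollary \ref{diffregularcor}. But there is a genuine gap at the very first step. The identification you rely on holds for only one particular $f$: by definition $\psi_{p,j,\chi,k}$ computes the derivative of the \emph{fixed} generator $\dependentgeneratorsdegree{\chi,k}$, so the displayed expression equals $\psi_{p,j_1,\chi,k}(\uidescriptionfunction{})-\mu_{i_2,i_1,\chi}\psi_{p,j_2,\chi,k}(\uidescriptionfunction{})$ only when $f=\dependentgeneratorsdegree{\chi,k}$, whereas the corollary asserts the bound for every $f\in\Gamma(\PP^1,\OO(\mathcal D(\chi)))$ --- and it is later applied (e.g.\ in Lemma \ref{vfieldzero} and Lemma \ref{linfunczero}) to functions that are not among the chosen generators, so the generality is really needed. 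To bridge this you must observe that the expression under $\ord_p$ is linear in $f$ while the right-hand side does not depend on $f$, so it suffices to verify the bound on a basis of $\Gamma(\PP^1,\OO(\mathcal D(\chi)))$, and that the generators $\dependentgeneratorsdegree{\chi,1},\ldots,\dependentgeneratorsdegree{\chi,\dim\Gamma(\PP^1,\OO(\mathcal D(\chi)))}$ form such a basis. This is exactly the opening paragraph of the paper's proof and cannot be skipped.

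Separately, your key absorption identity is stated incorrectly, and you emphasize the wrong version twice. Since
$\mu_{i_2,i_1,\chi}=\oluithreadfunction{i_2,1}^{a_{2,1}}\oluithreadfunction{i_2,2}^{a_{2,2}}\big/\big(\oluithreadfunction{i_1,1}^{a_{1,1}}\oluithreadfunction{i_1,2}^{a_{1,2}}\big)$, one gets
\[
\mu_{i_2,i_1,\chi}\cdot\frac{\overline f}{\oluithreadfunction{i_2,1}^{a_{2,1}}\oluithreadfunction{i_2,2}^{a_{2,2}}}
=\frac{\overline f}{\oluithreadfunction{i_1,1}^{a_{1,1}}\oluithreadfunction{i_1,2}^{a_{1,2}}},
\]
with exponents $a_{1,1},a_{1,2}$, not $a_{2,1},a_{2,2}$ as you claim. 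Your version $\overline f/\big(\oluithreadfunction{i_1,1}^{a_{2,1}}\oluithreadfunction{i_1,2}^{a_{2,2}}\big)$ is in general not even a well-defined rational function on $\PP^1$, because $a_{2,1}\uidegree{i_1,1}+a_{2,2}\uidegree{i_1,2}\ne\chi$; and with it the two coefficient terms would not combine under the single prefactor $\overline f/\big(\oluithreadfunction{i_1,1}^{a_{1,1}}\oluithreadfunction{i_1,2}^{a_{1,2}}\big)$ that appears in the statement, so the claimed identification with the displayed formula would fail. With the exponents corrected, the computation closes exactly as in the paper.
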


\begin{proof}
Observe that the function under the $\ord$ sign in the left-hand side of the inequality is 
linear in $f$, and the right-hand side does not depend on $f$, so 
it is sufficient to prove the inequality for all functions $f$ forming 
a basis of $\Gamma(\PP^1,\OO(\mathcal D(\chi)))$. For example, we can use the functions of degree $\chi$ 
among the generators of $\CC[X]$ we have chosen to define the map $\psi$ for 
Theorem \ref{schlessgen}. Recall that we have denoted these generators by 
$\dependentgeneratorsdegree{\chi,1},\ldots, \dependentgeneratorsdegree{\chi,\dim\Gamma(\PP^1,\OO(\mathcal D(\chi)))}$ 
and that they form a basis of $\Gamma(\PP^1,\OO(\mathcal D(\chi)))$.
So, set $f=\dependentgeneratorsdegree{\chi,k}$.
By Lemma \ref{uidescpsi},
\begin{multline*}
\psi_{p,j_1,\chi,k}(\uidescriptionfunction{})-\mu_{i_2,i_1,\chi}\psi_{p,j_2,\chi,k}(\uidescriptionfunction{})=\\
\frac{\oldependentgeneratorsdegree{\chi,k}}{\oluithreadfunction{i_1,1}^{a_{1,1}}\oluithreadfunction{i_1,2}^{a_{1,2}}}
(a_{1,1} \uidescriptionfunctiondiff{j_1}{1}+a_{1,2} \uidescriptionfunctiondiff{j_1}{2})
+d\left(\frac{\oldependentgeneratorsdegree{\chi,k}}{\oluithreadfunction{i_1,1}^{a_{1,1}}\oluithreadfunction{i_1,2}^{a_{1,2}}}\right) \stdvectorfiledpletter_{j_1}\\
-\mu_{i_2,i_1,\chi}\left(
\frac{\oldependentgeneratorsdegree{\chi,k}}{\oluithreadfunction{i_2,1}^{a_{2,1}}\oluithreadfunction{i_2,2}^{a_{1,2}}}
(a_{2,1} \uidescriptionfunctiondiff{j_2}{1}+a_{1,2} \uidescriptionfunctiondiff{j_2}{2})
+d\left(\frac{\oldependentgeneratorsdegree{\chi,k}}{\oluithreadfunction{i_2,1}^{a_{2,1}}\oluithreadfunction{i_2,2}^{a_{1,2}}}\right) \stdvectorfiledpletter_{j_2}
\right)=\\
\frac{\oldependentgeneratorsdegree{\chi,k}}{\oluithreadfunction{i_1,1}^{a_{1,1}}\oluithreadfunction{i_1,2}^{a_{1,2}}}
(a_{1,1} \uidescriptionfunctiondiff{j_1}{1}+a_{1,2} \uidescriptionfunctiondiff{j_1}{2})
-\frac{\oluithreadfunction{i_2,1}^{a_{2,1}}\oluithreadfunction{i_2,2}^{a_{1,2}}}{\oluithreadfunction{i_1,1}^{a_{1,1}}\oluithreadfunction{i_1,2}^{a_{1,2}}}
\frac{\oldependentgeneratorsdegree{\chi,k}}{\oluithreadfunction{i_2,1}^{a_{2,1}}\oluithreadfunction{i_2,2}^{a_{1,2}}}
(a_{2,1} \uidescriptionfunctiondiff{j_2}{1}+a_{1,2} \uidescriptionfunctiondiff{j_2}{2})\\
+d\left(\frac{\oldependentgeneratorsdegree{\chi,k}}{\oluithreadfunction{i_1,1}^{a_{1,1}}\oluithreadfunction{i_1,2}^{a_{1,2}}}\right) \stdvectorfiledpletter_{j_1}
-\mu_{i_2,i_1,\chi}
d\left(\frac{\oldependentgeneratorsdegree{\chi,k}}{\oluithreadfunction{i_2,1}^{a_{2,1}}\oluithreadfunction{i_2,2}^{a_{1,2}}}\right) \stdvectorfiledpletter_{j_2}=\\
\frac{\oldependentgeneratorsdegree{\chi,k}}{\oluithreadfunction{i_1,1}^{a_{1,1}}\oluithreadfunction{i_1,2}^{a_{1,2}}}
(a_{1,1} \uidescriptionfunctiondiff{j_1}{1}+a_{1,2} \uidescriptionfunctiondiff{j_1}{2}-a_{2,1} \uidescriptionfunctiondiff{j_2}{1}-a_{1,2} \uidescriptionfunctiondiff{j_2}{2})\\
+d\left(\frac{\oldependentgeneratorsdegree{\chi,k}}{\oluithreadfunction{i_1,1}^{a_{1,1}}\oluithreadfunction{i_1,2}^{a_{1,2}}}\right) \stdvectorfiledpletter_{j_1}
-\mu_{i_2,i_1,\chi}
d\left(\frac{\oldependentgeneratorsdegree{\chi,k}}{\oluithreadfunction{i_2,1}^{a_{2,1}}\oluithreadfunction{i_2,2}^{a_{1,2}}}\right) \stdvectorfiledpletter_{j_2},
\end{multline*}
and the claim follows from Corollary \ref{diffregularcor}.
\end{proof}


Now we need more information about the behavior of $\ord_p(\mu_{\opensetforvertex{p,j_2},\opensetforvertex{p,j_1},\chi})$
depending on $j_1,j_2,\chi$.
Here we perform arithmetic actions on vertices of $\stdpolyhedronletter_p$, they are understood as arithmetic actions in $N$.

\begin{lemma}\label{linfuncdecomp}
For each degree $\chi$ and for 
any two vertices $\indexedvertexpt{p}{j_1}$, $\indexedvertexpt{p}{j_2}$
($1\le j_1, j_2\le \numberofverticespt p$) one has
$\ord_p(\mu_{\opensetforvertex{p,j_2},\opensetforvertex{p,j_1},\chi})=\chi(\indexedvertexpt{p}{j_1}-\indexedvertexpt{p}{j_2})$.
\end{lemma}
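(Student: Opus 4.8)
The plan is to compute $\ord_p$ of $\mu_{\opensetforvertex{p,j_2},\opensetforvertex{p,j_1},\chi}$ directly from its definition, peeling it off in three layers: first additivity of $\ord_p$, then the orders of the threading functions supplied by the sufficient system, and finally the dual-basis identity that reassembles $\chi$ out of its coordinates. Throughout I abbreviate $i_1=\opensetforvertex{p,j_1}$ and $i_2=\opensetforvertex{p,j_2}$, and I recall that, as $p$ is an essential special point, $U_{i_1}$ and $U_{i_2}$ are the sets attached to the pairs $(p,j_1)$ and $(p,j_2)$. Since $\mu_{i_2,i_1,\chi}$ is the quotient of the degree-$\chi$ monomial $\oluithreadfunction{i_2,1}^{\uidegree{i_2,1}^*(\chi)}\oluithreadfunction{i_2,2}^{\uidegree{i_2,2}^*(\chi)}$ by the degree-$\chi$ monomial $\oluithreadfunction{i_1,1}^{\uidegree{i_1,1}^*(\chi)}\oluithreadfunction{i_1,2}^{\uidegree{i_1,2}^*(\chi)}$ (so that it is a genuine degree-$0$ rational function on $\PP^1$, as in Lemma \ref{functtransition}), additivity of $\ord_p$ over products and quotients gives
$$
\ord_p(\mu_{i_2,i_1,\chi})=\sum_{l=1,2}\uidegree{i_2,l}^*(\chi)\ord_p(\oluithreadfunction{i_2,l})-\sum_{l=1,2}\uidegree{i_1,l}^*(\chi)\ord_p(\oluithreadfunction{i_1,l}).
$$

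Next I would feed in the two structural facts from the construction in Section \ref{sectsuffsystemconstruction}. Because the sections $\uithreadfunction{i_r,l}$ were chosen via Lemma \ref{giexist} for the special point $p$, we have $p\in V_{i_r}$ and $\ord_p(\oluithreadfunction{i_r,l})=-\mathcal D_p(\uidegree{i_r,l})$ for all $r,l\in\{1,2\}$. Moreover, by construction both basis vectors $\uidegree{i_r,1},\uidegree{i_r,2}$ lie in the closed normal subcone $\normalvertexcone{\indexedvertexpt{p}{j_r}}{\stdpolyhedronletter_p}$ of the vertex $\indexedvertexpt{p}{j_r}$ (one of them may sit on a boundary ray when $\uidegree{i_r,1}\in\partial\sck$, but that ray is still a face of the subcone). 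On this subcone the convex piecewise-linear function $\mathcal D_p(\chi')=\min_{a\in\stdpolyhedronletter_p}\chi'(a)$ is linear and coincides with evaluation at the vertex, i.e. $\mathcal D_p(\uidegree{i_r,l})=\uidegree{i_r,l}(\indexedvertexpt{p}{j_r})$.

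Substituting these into the displayed formula turns it into
$$
\ord_p(\mu_{i_2,i_1,\chi})=\sum_{l=1,2}\uidegree{i_1,l}^*(\chi)\,\uidegree{i_1,l}(\indexedvertexpt{p}{j_1})-\sum_{l=1,2}\uidegree{i_2,l}^*(\chi)\,\uidegree{i_2,l}(\indexedvertexpt{p}{j_2}).
$$
To finish I would recognize each sum as $\chi$ evaluated at the relevant vertex: since $\uidegree{i_r,1},\uidegree{i_r,2}$ is a lattice basis of $M$ with dual basis $\uidegree{i_r,1}^*,\uidegree{i_r,2}^*$, the identity $\chi=\uidegree{i_r,1}^*(\chi)\uidegree{i_r,1}+\uidegree{i_r,2}^*(\chi)\uidegree{i_r,2}$ holds in $M$, and evaluating both sides at the lattice point $\indexedvertexpt{p}{j_r}\in N$ collapses the $r$-th sum to $\chi(\indexedvertexpt{p}{j_r})$. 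Hence $\ord_p(\mu_{i_2,i_1,\chi})=\chi(\indexedvertexpt{p}{j_1})-\chi(\indexedvertexpt{p}{j_2})=\chi(\indexedvertexpt{p}{j_1}-\indexedvertexpt{p}{j_2})$, which is the claim.

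The computation itself is routine, so I expect no serious obstacle; the only point demanding care is the bookkeeping in the middle paragraph, namely making the two inputs available \emph{simultaneously}. Concretely, I must be sure that for the index $i_r=\opensetforvertex{p,j_r}$ the chosen basis vectors really do lie in $\normalvertexcone{\indexedvertexpt{p}{j_r}}{\stdpolyhedronletter_p}$ (so that $\mathcal D_p$ is linear there and equals evaluation at $\indexedvertexpt{p}{j_r}$), and that the order-of-vanishing normalization $\ord_p(\oluithreadfunction{i_r,l})=-\mathcal D_p(\uidegree{i_r,l})$ is exactly the one guaranteed by Lemma \ref{giexist} rather than a mere inequality. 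Both are immediate from how the sufficient system was set up, but they are the hypotheses that make the two sums reduce cleanly to vertex evaluations.
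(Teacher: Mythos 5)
Your proof is correct and follows essentially the same route as the paper's: additivity of $\ord_p$ applied to the defining quotient of $\mu_{i_2,i_1,\chi}$, the normalization $\ord_p(\oluithreadfunction{i_r,l})=-\mathcal D_p(\uidegree{i_r,l})$ from the construction of the sufficient system, the identification $\mathcal D_p(\uidegree{i_r,l})=\uidegree{i_r,l}(\indexedvertexpt{p}{j_r})$ on the normal subcone, and the dual-basis identity to reassemble $\chi$. The bookkeeping points you flag at the end are exactly the hypotheses the paper also invokes, so nothing is missing.
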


\begin{proof}
Again denote $i_1=\opensetforvertex{p}{j_1}$, $i_2=\opensetforvertex{p}{j_2}$, $a_{1,1}=\uidegree{i_1,1}^*(\chi)$, $a_{1,2}=\uidegree{i_1,2}^*(\chi)$, 
$a_{2,1}=\uidegree{i_2,1}^*(\chi)$, and $a_{2,2}=\uidegree{i_2,2}^*(\chi)$.
We chose 
$\uithreadfunction{i_1, 1}$, $\uithreadfunction{i_1, 2}$, $\uithreadfunction{i_1, 1}$, and $\uithreadfunction{i_1, 2}$ so that 
$\ord_p(\oluithreadfunction{i_1,1})=-\mathcal D_p(\uidegree{i_1,1})$, $\ord_p(\oluithreadfunction{i_1,2})=-\mathcal D_p(\uidegree{i_1,2})$,
$\ord_p(\oluithreadfunction{i_2,1})=-\mathcal D_p(\uidegree{i_2,1})$, $\ord_p(\oluithreadfunction{i_2,2})=-\mathcal D_p(\uidegree{i_2,2})$.
By the definition of $\mu_{i_2,i_1,\chi}$, we have
\begin{multline*}
\ord_p(\mu_{i_2,i_1,\chi})=
\ord_p\left(
\frac{\oluithreadfunction{i_2,1}^{a_{2,1}}\oluithreadfunction{i_2,2}^{a_{2,2}}}{\oluithreadfunction{i_1,1}^{a_{1,1}}\oluithreadfunction{i_1,2}^{a_{1,2}}}
\right)=\\
a_{1,1}\mathcal D_p(\uidegree{i_1,1})+a_{1,2}\mathcal D_p(\uidegree{i_1,2})-a_{2,1}\mathcal D_p(\uidegree{i_2,1})-a_{2,2}\mathcal D_p(\uidegree{i_2,2}).
\end{multline*}
Since $\uidegree{i_1,1}\in \normalvertexcone{\indexedvertexpt{p}{j_1}}{\stdpolyhedronletter_p}$, 
the minimum $\min_{b\in \stdpolyhedronletter_{p_l}}\uidegree{i_1,1}(b)$ is attained at 
$\indexedvertexpt{p}{j_1}$. 
In other words, $\mathcal D_p(\uidegree{i_1,1})=\uidegree{i_1,1}(\indexedvertexpt{p}{j_1})$.
Similarly, $\mathcal D_p(\uidegree{i_1,2})=\uidegree{i_1,2}(\indexedvertexpt{p}{j_1})$ (since $\uidegree{i_1,2}\in \normalvertexcone{\indexedvertexpt{p}{j_1}}{\stdpolyhedronletter_p}$),
$\mathcal D_p(\indexedvertexpt{p}{j_2})=\uidegree{i_2,1}(\indexedvertexpt{p}{j_2})$, and 
$\mathcal D_p(\uidegree{i_2,2})=\uidegree{i_2,2}(\indexedvertexpt{p}{j_2})$
(since $\uidegree{i_2,1},\uidegree{i_2,2}\in \normalvertexcone{\indexedvertexpt{p}{j_2}}{\stdpolyhedronletter_p}$).
Hence,
\begin{multline*}
a_{1,1}\mathcal D_p(\uidegree{i_1,1})+a_{1,2}\mathcal D_p(\uidegree{i_1,2})
-a_{2,1}\mathcal D_p(\uidegree{i_2,1})-a_{2,2}\mathcal D_p(\uidegree{i_2,2})=\\
(a_{1,1}\uidegree{i_1,1}+a_{1,2}\uidegree{i_1,2})(\indexedvertexpt{p}{j_1})-(a_{2,1}\uidegree{i_2,1}+a_{2,2}\uidegree{i_2,2})(\indexedvertexpt{p}{j_2})=\\
\chi(\indexedvertexpt{p}{j_1})-\chi(\indexedvertexpt{p}{j_2})=\chi(\indexedvertexpt{p}{j_1}-\indexedvertexpt{p}{j_2}).
\end{multline*}
\end{proof}


\begin{lemma}\label{linfuncestimate}
Let $\indexededgept{p}{j}$ be a finite edge of $\stdpolyhedronletter_p$ ($1\le j<\numberofverticespt p$), 
let $\chi=\primitivelattice{\normalvertexcone{\indexededgept{p}{j}}{\stdpolyhedronletter_p}}$.
Choose $\chi'\in\cap \normalvertexcone{\indexedvertexpt{p}{j}}{\stdpolyhedronletter_p} M$ 
so that $\chi$ and $\chi'$ form a 
lattice basis of $M$. 
Then 
$\chi(\indexedvertexpt{p}{j}-\indexedvertexpt{p}{j+1})=0$ and $\chi'(\indexedvertexpt{p}{j}-\indexedvertexpt{p}{j+1})=-\latticelength{\indexededgept{p}{j}}$.
%
\end{lemma}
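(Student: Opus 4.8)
The plan is to treat the two equalities separately, each by unwinding a defining property of the normal fan of $\stdpolyhedronletter_p$. For the first equality, recall that $\chi=\primitivelattice{\normalvertexcone{\indexededgept{p}{j}}{\stdpolyhedronletter_p}}$ lies on the normal ray of the finite edge $\indexededgept{p}{j}$, so by the very definition of that ray the minimum of $\chi$ on $\stdpolyhedronletter_p$ is attained along the whole edge $\indexededgept{p}{j}$. In particular $\chi$ takes equal values at its two endpoints $\indexedvertexpt{p}{j}$ and $\indexedvertexpt{p}{j+1}$, which immediately yields $\chi(\indexedvertexpt{p}{j}-\indexedvertexpt{p}{j+1})=0$.

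For the second equality I would first record the primitive decomposition of the edge vector. Since both vertices are lattice points (this is our standing assumption on $\stdpolyhedronletter_p$), write $\indexedvertexpt{p}{j+1}-\indexedvertexpt{p}{j}=\latticelength{\indexededgept{p}{j}}\,u$, where $u\in N$ is the primitive lattice vector pointing from $\indexedvertexpt{p}{j}$ to $\indexedvertexpt{p}{j+1}$; this is exactly the content of the definition of lattice length. The first equality already shows $\chi(u)=0$.

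Next I would exploit that $\chi,\chi'$ is a lattice basis of $M$. Passing to the dual basis $e,e'$ of $N$, any lattice vector decomposes as $u=\chi(u)\,e+\chi'(u)\,e'$; since $\chi(u)=0$ this reduces to $u=\chi'(u)\,e'$, and as $u$ and $e'$ are both primitive we are forced to have $\chi'(u)=\pm1$. To fix the sign I invoke the hypothesis $\chi'\in\normalvertexcone{\indexedvertexpt{p}{j}}{\stdpolyhedronletter_p}$: the minimum of $\chi'$ on $\stdpolyhedronletter_p$ is attained at $\indexedvertexpt{p}{j}$, so $\chi'(\indexedvertexpt{p}{j+1})\ge\chi'(\indexedvertexpt{p}{j})$, i.e.\ $\chi'(u)\ge0$. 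Hence $\chi'(u)=1$, and therefore $\chi'(\indexedvertexpt{p}{j}-\indexedvertexpt{p}{j+1})=-\latticelength{\indexededgept{p}{j}}\,\chi'(u)=-\latticelength{\indexededgept{p}{j}}$.

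I do not expect a serious obstacle here; the only step that needs a moment of care is the sign determination. One must confirm both that $\chi'(u)\ne0$ (which is automatic from $\chi'(u)=\pm1$, and geometrically reflects the fact that $\chi'$ does not lie on the edge's normal ray, as it would have to if it were a multiple of $\chi$) and that $\chi'(u)\ne-1$ (excluded by the minimality of $\chi'$ at $\indexedvertexpt{p}{j}$). Everything else is a direct reading of the definitions of the normal fan and of lattice length, so no nontrivial computation is involved.
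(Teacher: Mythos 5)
Your proof is correct and follows essentially the same route as the paper: both derive $\chi(\indexedvertexpt{p}{j}-\indexedvertexpt{p}{j+1})=0$ from the minimum of $\chi$ being attained on the whole edge, both identify $(\indexedvertexpt{p}{j+1}-\indexedvertexpt{p}{j})/\latticelength{\indexededgept{p}{j}}$ as a primitive lattice vector on which $\chi'$ must therefore evaluate to $\pm 1$ (you via the dual basis, the paper via surjectivity of evaluation at a primitive vector), and both fix the sign using the minimality of $\chi'$ at $\indexedvertexpt{p}{j}$. No gaps.
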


\begin{proof}
Since $\chi\in\normalvertexcone{\indexededgept{p}{j}}{\stdpolyhedronletter_p}$, 
the minimum $\min_{a\in \stdpolyhedronletter_p}\chi(a)$ is attained at both $a=\indexedvertexpt{p}{j}$ and 
$a=\indexedvertexpt{p}{j+1}$, so 
$\chi(\indexedvertexpt{p}{j})=\chi(\indexedvertexpt{p}{j+1})$, $\chi(\indexedvertexpt{p}{j}-\indexedvertexpt{p}{j+1})=0$, and 
$\chi((1/\latticelength{\indexededgept{p}{j}})(\indexedvertexpt{p}{j}-\indexedvertexpt{p}{j+1}))=0$. 
It follows from the definition of $\latticelength{\indexededgept{p}{j}}$ that $(1/\latticelength{\indexededgept{p}{j}})(\indexedvertexpt{p}{j}-\indexedvertexpt{p}{j+1})$ is a primitive lattice vector.
Hence, 
elements of $M$ can take arbitrary 
values at it. Since $\chi$ and $\chi'$ form a lattice basis of $M$ and $\chi((1/\latticelength{\indexededgept{p}{j}})(\indexedvertexpt{p}{j}-\indexedvertexpt{p}{j+1}))=0$, we
conclude that $\chi'((1/\latticelength{\indexededgept{p}{j}})(\indexedvertexpt{p}{j}-\indexedvertexpt{p}{j+1}))=\pm 1$, and 
$\chi'(\indexedvertexpt{p}{j}-\indexedvertexpt{p}{j+1}))=\pm\latticelength{\indexededgept{p}{j}}$.
But the minimum $\min_{a\in \stdpolyhedronletter_p}\chi'(a)$ is attained at $\indexedvertexpt{p}{j}$ since $\chi'\in\normalvertexcone{\indexedvertexpt{p}{j}}{\stdpolyhedronletter_p}$, so 
$\chi'(\indexedvertexpt{p}{j}-\indexedvertexpt{p}{j+1}))=-\latticelength{\indexededgept{p}{j}}$.
\end{proof}

The following lemma is proved completely similarly to Lemma \ref{linfuncestimate}, one only has 
to interchange $\indexedvertexpt{p}{j}$ and $\indexedvertexpt{p}{j+1}$.

\begin{lemma}\label{linfuncestimateii}
Let $\indexededgept{p}{j}$ be a finite edge of $\stdpolyhedronletter_p$ ($1\le j<\numberofverticespt p$), 
let $\chi=\primitivelattice{\normalvertexcone{\indexededgept{p}{j}}{\stdpolyhedronletter_p}}$.
Choose $\chi'\in\normalvertexcone{\indexededgept{p}{j+1}}{\stdpolyhedronletter_p}\cap M$ 
so that $\chi$ and $\chi'$ form a 
lattice basis of $M$. 
Then 
$\chi(\indexedvertexpt{p}{j+1}-\indexedvertexpt{p}{j})=0$ and $\chi'(\indexedvertexpt{p}{j+1}-\indexedvertexpt{p}{j}))=-\latticelength{\indexededgept{p}{j}}$.\qed
%
\end{lemma}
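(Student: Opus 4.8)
The plan is to mirror the proof of Lemma \ref{linfuncestimate} verbatim, exchanging the roles of the two endpoints $\indexedvertexpt{p}{j}$ and $\indexedvertexpt{p}{j+1}$ of the finite edge $\indexededgept{p}{j}$, and reading the cone in the hypothesis as the normal subcone of the vertex $\indexedvertexpt{p}{j+1}$ (so that $\chi'\in\normalvertexcone{\indexedvertexpt{p}{j+1}}{\stdpolyhedronletter_p}\cap M$), in keeping with the stated symmetry. First I would record that, since $\chi=\primitivelattice{\normalvertexcone{\indexededgept{p}{j}}{\stdpolyhedronletter_p}}$ lies on the normal ray of the edge $\indexededgept{p}{j}$, the linear function $\chi$ attains its minimum over $\stdpolyhedronletter_p$ on the entire edge $\indexededgept{p}{j}$, hence in particular at both of its endpoints. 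Therefore $\chi(\indexedvertexpt{p}{j+1})=\chi(\indexedvertexpt{p}{j})$, which is exactly the first assertion $\chi(\indexedvertexpt{p}{j+1}-\indexedvertexpt{p}{j})=0$, and consequently $\chi$ also vanishes on the scaled vector $(1/\latticelength{\indexededgept{p}{j}})(\indexedvertexpt{p}{j+1}-\indexedvertexpt{p}{j})$.

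Next I would invoke the definition of lattice length to note that $(1/\latticelength{\indexededgept{p}{j}})(\indexedvertexpt{p}{j+1}-\indexedvertexpt{p}{j})$ is a primitive lattice vector of $N$. Since $\chi$ and $\chi'$ were chosen to form a lattice basis of $M$ and $\chi$ vanishes on this primitive vector, the value of $\chi'$ on it must be $\pm 1$; multiplying back by $\latticelength{\indexededgept{p}{j}}$ yields $\chi'(\indexedvertexpt{p}{j+1}-\indexedvertexpt{p}{j})=\pm\latticelength{\indexededgept{p}{j}}$.

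Finally, to determine the sign, I would use that $\chi'\in\normalvertexcone{\indexedvertexpt{p}{j+1}}{\stdpolyhedronletter_p}$, so that $\chi'$ attains its minimum over $\stdpolyhedronletter_p$ at the vertex $\indexedvertexpt{p}{j+1}$; this gives $\chi'(\indexedvertexpt{p}{j+1})\le\chi'(\indexedvertexpt{p}{j})$, i.e. $\chi'(\indexedvertexpt{p}{j+1}-\indexedvertexpt{p}{j})\le 0$, and therefore $\chi'(\indexedvertexpt{p}{j+1}-\indexedvertexpt{p}{j})=-\latticelength{\indexededgept{p}{j}}$. There is no genuine obstacle: the argument is the same chain of elementary facts (minimum attained on an edge, primitivity of the normalized edge vector, the lattice-basis duality, and a sign fixed by the defining minimality of the normal cone) used in Lemma \ref{linfuncestimate}, and the only point requiring care is the bookkeeping of which endpoint plays which role, which is precisely the interchange of $\indexedvertexpt{p}{j}$ and $\indexedvertexpt{p}{j+1}$ indicated in the text.
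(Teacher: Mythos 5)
Your proof is correct and is exactly the argument the paper intends: the paper simply states that Lemma \ref{linfuncestimateii} is proved like Lemma \ref{linfuncestimate} with $\indexedvertexpt{p}{j}$ and $\indexedvertexpt{p}{j+1}$ interchanged, which is precisely what you carry out (including the sensible reading of the hypothesis as $\chi'\in\normalvertexcone{\indexedvertexpt{p}{j+1}}{\stdpolyhedronletter_p}\cap M$). No gaps.
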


\begin{lemma}\label{linfuncchain}
Let $\indexedvertexpt{p}{j_1}$ be a vertex of $\stdpolyhedronletter_p$, 
let $\indexededgept{p}{j_2}$ be a finite edge of $\stdpolyhedronletter_p$ ($1\le j_2<\numberofverticespt p$), 
and suppose that $j_1\le j_2$. 
Pick a degree $\chi''\in\normalvertexcone{\indexedvertexpt{p}{j_1}}{\stdpolyhedronletter_p}$.
Suppose that $\chi''\notin\normalvertexcone{\indexedvertexpt{p}{j_2+1}}{\stdpolyhedronletter_p}$. 
(Note that the contrary is possible since we allow $j_1=j_2$.)

Then $\chi''(\indexedvertexpt{p}{j_2}-\indexedvertexpt{p}{j_2+1})\le -\latticelength{\indexededgept{p}{j_2}}$.
\end{lemma}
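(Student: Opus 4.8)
The plan is to reduce the inequality to a statement about a single coordinate of $\chi''$ in a well-chosen lattice basis, and then to read off the sign of that coordinate from the angular position of $\chi''$ in the normal fan of $\stdpolyhedronletter_p$.

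First I would set $\chi = \primitivelattice{\normalvertexcone{\indexededgept{p}{j_2}}{\stdpolyhedronletter_p}}$ and choose, exactly as in Lemma \ref{linfuncestimate}, a degree $\chi' \in \normalvertexcone{\indexedvertexpt{p}{j_2}}{\stdpolyhedronletter_p} \cap M$ so that $\chi, \chi'$ form a lattice basis of $M$. Lemma \ref{linfuncestimate} then gives $\chi(\indexedvertexpt{p}{j_2} - \indexedvertexpt{p}{j_2+1}) = 0$ and $\chi'(\indexedvertexpt{p}{j_2} - \indexedvertexpt{p}{j_2+1}) = -\latticelength{\indexededgept{p}{j_2}}$. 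Writing $\chi'' = a\chi + b\chi'$ with $a, b \in \ZZ$ (possible since $\chi''$ is a degree, hence a lattice point), I immediately obtain $\chi''(\indexedvertexpt{p}{j_2} - \indexedvertexpt{p}{j_2+1}) = -b\,\latticelength{\indexededgept{p}{j_2}}$. As $\latticelength{\indexededgept{p}{j_2}} > 0$, the desired inequality becomes exactly the claim that $b \ge 1$.

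To prove $b \ge 1$, I would introduce the functional $\ell$ on $M_\QQ$ dual to $\chi'$ in the basis $\{\chi, \chi'\}$, so that $\ell(\chi) = 0$, $\ell(\chi') = 1$, $\ell(\chi'') = b$, and the zero locus of $\ell$ is the line $\QQ\chi$. Since $\sck$ is a pointed two-dimensional cone, the normal subcones and normal rays of $\stdpolyhedronletter_p$ occur in a single monotone angular order, and the ray $\normalvertexcone{\indexededgept{p}{j_2}}{\stdpolyhedronletter_p} = \QQ_{\ge 0}\chi$ separates the subcones of $\indexedvertexpt{p}{1}, \ldots, \indexedvertexpt{p}{j_2}$ from those of $\indexedvertexpt{p}{j_2+1}, \ldots, \indexedvertexpt{p}{\numberofverticespt p}$. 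Because $\chi' \in \normalvertexcone{\indexedvertexpt{p}{j_2}}{\stdpolyhedronletter_p}$ lies in the closed half-plane $\ell \ge 0$, the same holds for every $\normalvertexcone{\indexedvertexpt{p}{j_1}}{\stdpolyhedronletter_p}$ with $j_1 \le j_2$; hence $b = \ell(\chi'') \ge 0$.

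Finally I would upgrade $b \ge 0$ to $b \ge 1$ via the hypothesis. If $b = 0$, then $\chi''$ lies on the line $\QQ\chi$; being also in the pointed cone $\sck \supseteq \normalvertexcone{\indexedvertexpt{p}{j_1}}{\stdpolyhedronletter_p}$, it must lie on the ray $\QQ_{\ge 0}\chi = \normalvertexcone{\indexededgept{p}{j_2}}{\stdpolyhedronletter_p} = \normalvertexcone{\indexedvertexpt{p}{j_2}}{\stdpolyhedronletter_p} \cap \normalvertexcone{\indexedvertexpt{p}{j_2+1}}{\stdpolyhedronletter_p}$, so in particular $\chi'' \in \normalvertexcone{\indexedvertexpt{p}{j_2+1}}{\stdpolyhedronletter_p}$, contradicting the assumption. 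Since $b$ is a nonnegative integer, $b \ge 1$, and $\chi''(\indexedvertexpt{p}{j_2}-\indexedvertexpt{p}{j_2+1}) = -b\,\latticelength{\indexededgept{p}{j_2}} \le -\latticelength{\indexededgept{p}{j_2}}$. The one step requiring genuine care is the separation claim for the normal fan, namely that $\QQ_{\ge 0}\chi$ really does separate the lower-indexed vertex cones from the higher-indexed ones; this is the only nontrivial geometric input beyond Lemma \ref{linfuncestimate}, and it rests on the pointedness of $\sck$ together with the fact that consecutive vertices in the chosen enumeration form the finite edges of $\stdpolyhedronletter_p$.
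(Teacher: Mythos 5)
Your proposal is correct and follows essentially the same route as the paper's own proof: the same auxiliary basis $\chi,\chi'$ from Lemma \ref{linfuncestimate}, the same separation-by-the-normal-ray argument to get the relevant coordinate nonnegative, and the same exclusion of the boundary case (coordinate zero) via pointedness of $\sck$ and the hypothesis $\chi''\notin\normalvertexcone{\indexedvertexpt{p}{j_2+1}}{\stdpolyhedronletter_p}$, with integrality then forcing the coordinate to be at least $1$. The only cosmetic difference is that you package the sign argument through the dual functional $\ell$ and treat the degenerate case as ``$\chi''$ lies on the ray $\QQ_{\ge 0}\chi$,'' whereas the paper splits that case into $a<0$ and $a>0$; these are equivalent.
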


\begin{proof}
Let $\chi=\primitivelattice{\normalvertexcone{\indexededgept{p}{j_2}}{\stdpolyhedronletter_p}}$.
Let $\chi'\in\normalvertexcone{\indexedvertexpt{p}{j_2}}{\stdpolyhedronletter_p}\cap M$ 
be a degree such that $\chi$ and $\chi'$ form a 
lattice basis of $M$. By Lemma \ref{linfuncestimate}, 
$\chi(\indexedvertexpt{p}{j_2}-\indexedvertexpt{p}{j_2+1})=0$ and $\chi'(\indexedvertexpt{p}{j_2}-\indexedvertexpt{p}{j_2+1})=-\latticelength{\indexededgept{p}{j_2}}$.
Since $\chi$ and $\chi'$ from a basis of $M$, we can write
$\chi''=a\chi+a'\chi'$. 
The line containing 
$\normalvertexcone{\indexededgept{p}{j_2}}{\stdpolyhedronletter_p}$
separates the 
normal subcones of the vertices 
$\indexedvertexpt{p}{j}$
with $j\le j_2$ 
from the 
normal subcones of the vertices 
$\indexedvertexpt{p}{j}$
with $j\ge j_2+1$. In particular, it does not separate $\normalvertexcone{\indexedvertexpt{p}{j_1}}{\stdpolyhedronletter_p}$ 
from $\normalvertexcone{\indexedvertexpt{p}{j_2}}{\stdpolyhedronletter_p}$, 
and it does not separate $\chi''$ from $\chi'$. Therefore, $a'\ge 0$.
If $a'=0$ and $a<0$, then $\chi\in\sck$, $-\chi\in\sck$, and $\sck$ cannot be a pointed cone.
If $a'=0$ and $a>0$, then $\chi''\in\normalvertexcone{\indexededgept{p}{j_2}}{\stdpolyhedronletter_p}\subset 
\normalvertexcone{\indexedvertexpt{p}{j_2+1}}{\stdpolyhedronletter_p}$, and this contradicts our assumption.
Therefore, $a' > 0$.
Then
$$
\chi''(\indexedvertexpt{p}{j_2}-\indexedvertexpt{p}{j_2+1})=a\chi(\indexedvertexpt{p}{j_2}-\indexedvertexpt{p}{j_2+1})+
a'\chi'(\indexedvertexpt{p}{j_2}-\indexedvertexpt{p}{j_2+1})
=-a'\latticelength{\indexededgept{p}{j_2}}\le -\latticelength{\indexededgept{p}{j_2}}.
$$
\end{proof}

The following lemma can be proved completely similarly using Lemma \ref{linfuncestimateii}
instead of Lemma \ref{linfuncestimate}.

\begin{lemma}\label{linfuncchainii}
Let $\indexedvertexpt{p}{j_1}$ be a vertex of $\stdpolyhedronletter_p$, 
let $\indexededgept{p}{j_2}$ be a finite edge of $\stdpolyhedronletter_p$ ($1\le j_2<\numberofverticespt p$), 
and suppose that $j_1\ge j_2+1$. 
Pick a degree $\chi''\in\normalvertexcone{\indexedvertexpt{p}{j_1}}{\stdpolyhedronletter_p}$.
Suppose that $\chi''\notin\normalvertexcone{\indexedvertexpt{p}{j_2}}{\stdpolyhedronletter_p}$. 
Then $\chi''(\indexedvertexpt{p}{j_2+1}-\indexedvertexpt{p}{j_2})\le -\latticelength{\indexededgept{p}{j_2}}$.\qed
\end{lemma}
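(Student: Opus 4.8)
The plan is to mirror the proof of Lemma \ref{linfuncchain} essentially verbatim, interchanging the roles of $\indexedvertexpt{p}{j_2}$ and $\indexedvertexpt{p}{j_2+1}$ throughout and replacing every appeal to Lemma \ref{linfuncestimate} by one to Lemma \ref{linfuncestimateii}. Concretely, I would first set $\chi=\primitivelattice{\normalvertexcone{\indexededgept{p}{j_2}}{\stdpolyhedronletter_p}}$ and choose $\chi'\in\normalvertexcone{\indexedvertexpt{p}{j_2+1}}{\stdpolyhedronletter_p}\cap M$ so that $\chi,\chi'$ form a lattice basis of $M$. Lemma \ref{linfuncestimateii} then yields $\chi(\indexedvertexpt{p}{j_2+1}-\indexedvertexpt{p}{j_2})=0$ and $\chi'(\indexedvertexpt{p}{j_2+1}-\indexedvertexpt{p}{j_2})=-\latticelength{\indexededgept{p}{j_2}}$. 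Writing $\chi''=a\chi+a'\chi'$, the quantity to be bounded collapses, since $a\chi(\indexedvertexpt{p}{j_2+1}-\indexedvertexpt{p}{j_2})=0$, to $a'\chi'(\indexedvertexpt{p}{j_2+1}-\indexedvertexpt{p}{j_2})=-a'\latticelength{\indexededgept{p}{j_2}}$, so the whole statement reduces to proving $a'\ge 1$.

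To pin down the sign of $a'$ I would reuse the separation argument. The line spanned by the normal ray $\normalvertexcone{\indexededgept{p}{j_2}}{\stdpolyhedronletter_p}$ splits $\sck$ into the normal subcones of the vertices $\indexedvertexpt{p}{j}$ with $j\le j_2$ on one side and those with $j\ge j_2+1$ on the other. Since $j_1\ge j_2+1$, the subcone $\normalvertexcone{\indexedvertexpt{p}{j_1}}{\stdpolyhedronletter_p}$ containing $\chi''$ lies on the same side as $\normalvertexcone{\indexedvertexpt{p}{j_2+1}}{\stdpolyhedronletter_p}$ containing $\chi'$, which forces $a'\ge 0$. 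To upgrade this to $a'>0$ I would exclude $a'=0$: if $a'=0$ and $a<0$ then both $\chi$ and $-\chi$ lie in $\sck$, contradicting pointedness of $\sck$; and if $a'=0$ and $a>0$ then $\chi''$ lies on the ray $\normalvertexcone{\indexededgept{p}{j_2}}{\stdpolyhedronletter_p}$, which is a face of $\normalvertexcone{\indexedvertexpt{p}{j_2}}{\stdpolyhedronletter_p}$, contradicting the hypothesis $\chi''\notin\normalvertexcone{\indexedvertexpt{p}{j_2}}{\stdpolyhedronletter_p}$. Hence $a'\ge 1$, and $\chi''(\indexedvertexpt{p}{j_2+1}-\indexedvertexpt{p}{j_2})=-a'\latticelength{\indexededgept{p}{j_2}}\le-\latticelength{\indexededgept{p}{j_2}}$ as required.

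The one place demanding genuine attention — rather than a purely formal relabeling — is the excluded case $a'=0$, $a>0$: here one must invoke the \emph{correct} face containment, namely $\normalvertexcone{\indexededgept{p}{j_2}}{\stdpolyhedronletter_p}\subset\normalvertexcone{\indexedvertexpt{p}{j_2}}{\stdpolyhedronletter_p}$, rather than the inclusion into $\normalvertexcone{\indexedvertexpt{p}{j_2+1}}{\stdpolyhedronletter_p}$ used in Lemma \ref{linfuncchain}, so that the contradiction is drawn precisely against the hypothesis $\chi''\notin\normalvertexcone{\indexedvertexpt{p}{j_2}}{\stdpolyhedronletter_p}$ relevant to this version. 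Beyond tracking this asymmetry, no new geometric input is needed: pointedness of $\sck$ and Lemma \ref{linfuncestimateii} do all the work, exactly paralleling the original.
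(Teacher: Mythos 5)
Your proof is correct and is exactly the argument the paper intends: the paper disposes of this lemma with the single remark that it ``can be proved completely similarly using Lemma \ref{linfuncestimateii} instead of Lemma \ref{linfuncestimate}'', and your write-up is precisely that mirrored argument, including the right adjustment in the case $a'=0$, $a>0$, where the containment $\normalvertexcone{\indexededgept{p}{j_2}}{\stdpolyhedronletter_p}\subset\normalvertexcone{\indexedvertexpt{p}{j_2}}{\stdpolyhedronletter_p}$ is the one that contradicts the hypothesis $\chi''\notin\normalvertexcone{\indexedvertexpt{p}{j_2}}{\stdpolyhedronletter_p}$. No gaps.
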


\begin{lemma}\label{vfieldzero}
Let 
$\uidescriptionfunction{}=(0,0,0,\uidescriptionfunctiondiff{2}{1},\uidescriptionfunctiondiff{2}{2},\stdvectorfiledpletter[2],\ldots,
\uidescriptionfunctiondiff{\numberofverticespt p}{1},\uidescriptionfunctiondiff{\numberofverticespt p}{2},\stdvectorfiledpletter[\numberofverticespt p])
\in \abstractvectorspace_{1,0,p}$.
Suppose that
$\kappa_{\OO,p}(\psi_p(\uidescriptionfunction{}))\in
(\bigoplus_{j=1}^{\numberofverticespt p}\Gamma(W_p,\gvisiinv{\opensetforvertex{p,j}}))+\Gamma(W_p,\gvicircinv)
\subseteq\bigoplus_{j=1}^{\numberofverticespt p}\Gamma(W_p,\gvicircinv)$.

Then $\stdvectorfiledpletter_j=0$ for $2\le j\le \numberofverticespt p$.
\end{lemma}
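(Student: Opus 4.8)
The plan is to argue by induction on $j$. The base case $j=1$ is built into the statement: every element of $\abstractvectorspace_{1,0,p}$ has vanishing first triple, so $\stdvectorfiledpletter_1=0$. For the inductive step I assume $\stdvectorfiledpletter_2=\dots=\stdvectorfiledpletter_{j-1}=0$ and show $\stdvectorfiledpletter_j=0$.

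First I would specialize Corollary \ref{diffregularcorii} to $j_1=j-1$, $j_2=j$ and to the degree $\chi=\primitivelattice{\normalvertexcone{\indexededgept{p}{j-1}}{\stdpolyhedronletter_p}}$, the primitive generator of the normal ray of the finite edge $\indexededgept{p}{j-1}$ joining $\indexedvertexpt{p}{j-1}$ and $\indexedvertexpt{p}{j}$ (it lies in $\{\lambda_1,\dots,\lambda_{\numberoflatticegenerators}\}$ by the way the generating set was chosen). By Lemma \ref{linfuncdecomp} together with Lemma \ref{linfuncestimate} one has $\ord_p(\mu_{i_2,i_1,\chi})=\chi(\indexedvertexpt{p}{j-1}-\indexedvertexpt{p}{j})=0$, so the bound $\min(0,\ord_p(\mu_{i_2,i_1,\chi}))$ in that corollary equals $0$ and $\mu:=\mu_{i_2,i_1,\chi}$ is a unit at $p$. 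Using the inductive hypothesis $\stdvectorfiledpletter_{j-1}=0$ to drop the covector term attached to $\indexedvertexpt{p}{j-1}$, and the transition relation $\mu=\oluithreadfunction{i_2,1}^{a_{2,1}}\oluithreadfunction{i_2,2}^{a_{2,2}}/(\oluithreadfunction{i_1,1}^{a_{1,1}}\oluithreadfunction{i_1,2}^{a_{1,2}})$ from Lemma \ref{functtransition} to factor out the unit, the corollary reduces (with $f=\dependentgeneratorsdegree{\chi,k}$) to the assertion that
$$
\stdvectorfiledpletter_j(u_k)-u_k\,Q
$$
is regular at $p$ for every $k$, where $u_k=\oldependentgeneratorsdegree{\chi,k}/(\oluithreadfunction{i_2,1}^{a_{2,1}}\oluithreadfunction{i_2,2}^{a_{2,2}})$ is the $U_{\opensetforvertex{p,j}}$-description of $\dependentgeneratorsdegree{\chi,k}$ and $Q$ is the fixed Laurent polynomial in $t_p^{-1}$, with only strictly negative powers, assembled from $\uidescriptionfunctiondiff{j-1}{1},\uidescriptionfunctiondiff{j-1}{2},\uidescriptionfunctiondiff{j}{1},\uidescriptionfunctiondiff{j}{2}$.

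Next I would exploit that $\chi$ generates a finite edge, so properness gives $\deg\mathcal D(\chi)>0$ and hence $\dim\Gamma(\PP^1,\OO(\mathcal D(\chi)))\ge 2$. Consequently the $u_k$ span a space of functions, regular on $W_p$, realizing each vanishing order $s=0,1,2,\dots$ at $p$ up to $\deg\mathcal D(\chi)$. Writing $\stdvectorfiledpletter_j=(b_{-m}t_p^{-m}+\dots)\partial/\partial t_p$ with $b_{-m}\ne0$ and pole order $m\ge1$, and inspecting a section $u_k$ with $\ord_p u_k=s$, the term $\stdvectorfiledpletter_j(u_k)$ carries a pole of order $s-1-m$ with leading coefficient proportional to $s\,b_{-m}$, while $u_k\,Q$ carries a pole of order $s-q$, where $q=-\ord_p Q$. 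Comparing these two orders as $s$ varies, and invoking $b_{-m}\ne0$, forces the pole of $\stdvectorfiledpletter_j$ to disappear in all configurations except the single borderline case $q=m+1$ with $m=1$; outside that case the induction step already yields $\stdvectorfiledpletter_j=0$.

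To eliminate the remaining borderline case I would rerun the same reduction at a second degree meeting the vertex $\indexedvertexpt{p}{j}$: the generator of the other edge $\indexededgept{p}{j}$, and, when that edge is infinite or otherwise degenerate, a degree $\chi'$ in the interior of $\normalvertexcone{\indexedvertexpt{p}{j}}{\stdpolyhedronletter_p}$ completing $\chi$ to a lattice basis, whose order data are provided by Lemmas \ref{linfuncestimateii}, \ref{linfuncchain} and \ref{linfuncchainii}. Since $\stdvectorfiledpletter_j$ is one fixed vector field whereas the freely chosen components $\uidescriptionfunctiondiff{j}{l}$ produce unrelated tails $Q$ for the two degrees, the two coincidence conditions together with the distinct leading coefficients they impose on $b_{-1}$ cannot hold simultaneously unless $b_{-1}=0$, giving $\stdvectorfiledpletter_j=0$ and closing the induction. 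The genuine difficulty is exactly this last separation: the function components $\uidescriptionfunctiondiff{j}{l}$ are at one's disposal and can partially cancel the pole produced by $\stdvectorfiledpletter_j$, so one must extract enough independent order relations — through the variation of the section $u_k$ and through at least two degrees at $\indexedvertexpt{p}{j}$ — to pin the leading coefficient of $\stdvectorfiledpletter_j$ to zero.
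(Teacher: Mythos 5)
Your route is genuinely different from the paper's, and its core idea (pole bookkeeping against sections of varying order at $p$) can be made to work, but the case analysis as written is wrong and the patch you attach to it does not go through. After reducing, via Corollary \ref{diffregularcorii} and the inductive hypothesis $\stdvectorfiledpletter_{j-1}=0$, to the statement that $uQ-\stdvectorfiledpletter_j(u)$ is regular at $p$ for \emph{every} $u$ in the space of $U_{\opensetforvertex{p,j}}$-descriptions of sections of degree $\chi$ (a space containing functions of order $0$ and of order $1$ at $p$, since $\chi$ is interior to $\sck$ and hence $\deg\mathcal D(\chi)>0$), the argument already closes with no residual case. Indeed, write $\stdvectorfiledpletter_j=(b_{-m}t_p^{-m}+\dots)\partial/\partial t_p$ with $b_{-m}\ne0$ and let $q$ be the pole order of $Q$. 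A section $u$ of order $0$ gives $uQ$ a genuine pole of order $q$ while $\stdvectorfiledpletter_j(u)$ has a pole of order at most $m$ (the constant term of $u$ differentiates away), forcing $q\le m$; a section of order $1$ then gives $\stdvectorfiledpletter_j(u)$ a pole of order exactly $m$ with leading coefficient $b_{-m}$ times that of $u$, while $uQ$ has a pole of order $q-1\le m-1$, so the $t_p^{-m}$ term cannot cancel and $b_{-m}=0$ --- a contradiction (the case $Q=0$ dies by the order-$1$ test alone). In particular your ``borderline case $q=m+1$, $m=1$'' does not survive: it is precisely the configuration killed by the order-$0$ section, which you set aside because its $\stdvectorfiledpletter_j$-term degenerates, whereas that degeneration is exactly what leaves the order-$(m+1)$ pole of $uQ$ unopposed. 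The repair you propose is also broken: the normal ray of $\indexededgept{p}{j}$ couples $\stdvectorfiledpletter_j$ to $\stdvectorfiledpletter_{j+1}$, which is not yet known to vanish at that stage of the induction, and a degree $\chi'$ interior to $\normalvertexcone{\indexedvertexpt{p}{j}}{\stdpolyhedronletter_p}$ makes $\mu_{i_2,i_1,\chi'}$ vanish at $p$ to order $\latticelength{\indexededgept{p}{j-1}}$, so Corollary \ref{diffregularcorii} then yields strictly weaker, not stronger, information.

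For comparison, the paper sidesteps all analysis of $Q$ with a two-section trick: it takes $f$ with $\ord_p(\overline f)=-\mathcal D_p(\chi)$ and $f''=(1+f')f$ with $f'$ having a simple zero at $p$, applies Corollary \ref{diffregularcorii} to both, and subtracts. Since $\overline f$ and $\overline{f''}$ have the same order at $p$, the function term (your $Q$) is literally identical in the two instances and cancels; the remaining logarithmic-derivative terms collapse to $-(\overline f/\overline{f''})\,d(\overline{f''}/\overline f)(\stdvectorfiledpletter_{j-1}-\stdvectorfiledpletter_j)$ with $d(\overline{f''}/\overline f)=df'$ nonvanishing at $p$, so $\stdvectorfiledpletter_{j-1}-\stdvectorfiledpletter_j$ is regular at $p$ and the induction from $\stdvectorfiledpletter_1=0$ finishes. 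Your approach, once the order-$0$ section is exploited correctly, is a legitimate alternative; as submitted, however, its last step rests on a repair of a non-existent gap, and that repair fails.
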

\begin{proof}
Fix an index $j$, $2\le j\le \numberofverticespt p$.
For simplicity of notation, denote $\uidescriptionfunctiondiff{1}{1}=\uidescriptionfunctiondiff{1}{2}=0$, $\stdvectorfiledpletter[1]=0$.
Set $\chi=\primitivelattice{\normalvertexcone{\indexededgept{p}{j-1}}{\stdpolyhedronletter_p}}$.
It follows from the choice of the degrees $\lambda_1,\ldots,\lambda_{\numberoflatticegenerators}$ above that 
$\chi\in\{\lambda_1,\ldots,\lambda_{\numberoflatticegenerators}\}$.
Denote $a_{1,1}=\uidegree{j-1,1}^*(\chi)$, $a_{1,2}=\uidegree{j-1,2}^*(\chi)$, 
$a_{2,1}=\uidegree{j,1}^*(\chi)$, and $a_{2,2}=\uidegree{j,2}^*(\chi)$.

By Lemma \ref{giexist}, there exists a function $f\in \Gamma(\PP^1,\OO(\mathcal D(\chi)))$ defined at all ordinary points such that 
$\ord_p(\overline f)=-\mathcal D_p(\chi)$. $\chi$ is in the interior of $\sck$, 
so $\deg \mathcal D(\chi)>0$, while $\deg \div(f)=0$. Hence, there exists a point $p'\in\PP^1$ 
such that $\ord_{p'}(f)>-\mathcal D_{p'}(\chi)$. Choose a rational function $f'$ on $\PP^1$ that 
has exactly one zero of order one at $p$ and exactly one pole of order one at $p'$. Then $f'f\in \Gamma(\PP^1,\OO(\mathcal D(\chi)))$.
Note also that $df'$ is regular at $p$ and $d_p f'\ne 0$.
Set $f''=(1+f')f\in\Gamma(\PP^1,\OO(\mathcal D(\chi)))$.
Then $\overline{f''}$ is also defined at all ordinary points, and $\ord_p(\overline {f''})=-\mathcal D_p(\chi)$.

Since $\mathcal D_p(\cdot)$ is linear on $\normalvertexcone{\indexedvertexpt{p}{j-1}}{\stdpolyhedronletter_p}$, 
$\mathcal D_p(\chi)=
a_{1,1}\mathcal D_p(\uidegree{j-1,1})+a_{1,2}\mathcal D_p(\uidegree{j-1,2})$.
According to the choice 
of the functions $\uithreadfunction{i,1}$ and $\uithreadfunction{i,2}$ for all indices $i$, we have 
$-\mathcal D_p(\chi)=
a_{1,1}\ord_p(\oluithreadfunction{\opensetforvertex{p,j-1},1})+a_{1,2}\ord_p(\oluithreadfunction{\opensetforvertex{p,j-1},2})$.
Denote $i_1=\opensetforvertex{p,j-1}$, $i_2=\opensetforvertex{p,j}$. We have
$$
\ord_p\left(\frac{\oluithreadfunction{i_1,1}^{a_{1,1}}\oluithreadfunction{i_1,2}^{a_{1,2}}}{\overline f}\right)=
\ord_p\left(\frac{\oluithreadfunction{i_1,1}^{a_{1,1}}\oluithreadfunction{i_1,2}^{a_{1,2}}}{\overline{f''}}\right)=0,
$$
and it follows from Corollary \ref{diffregularcorii} that
\begin{multline*}
\textstyle
\ord_p\left(
(a_{1,1}\uidescriptionfunctiondiff{j-1}{1}+a_{1,2}\uidescriptionfunctiondiff{j-1}{2}-a_{2,1}\uidescriptionfunctiondiff{j}{1}-a_{2,2}\uidescriptionfunctiondiff{j}{2})
\vphantom{+\frac{\oluithreadfunction{i_1,1}^{a_{1,1}}\oluithreadfunction{i_1,2}^{a_{1,2}}}{\overline f}
d\left(\frac{\overline f}{\oluithreadfunction{i_1,1}^{a_{1,1}}\oluithreadfunction{i_1,2}^{a_{1,2}}}\right)\stdvectorfiledpletter[j-1]}
\right.\!\\
\textstyle
\left.\!
+\frac{\oluithreadfunction{i_1,1}^{a_{1,1}}\oluithreadfunction{i_1,2}^{a_{1,2}}}{\overline f}
d\left(\frac{\overline f}{\oluithreadfunction{i_1,1}^{a_{1,1}}\oluithreadfunction{i_1,2}^{a_{1,2}}}\right)\stdvectorfiledpletter[j-1]
-\frac{\oluithreadfunction{i_1,1}^{a_{1,1}}\oluithreadfunction{i_1,2}^{a_{1,2}}}{\overline f}
\mu_{i_2,i_1,\chi}d\left(\frac{\overline f}{\oluithreadfunction{i_2,1}^{a_{2,1}}\oluithreadfunction{i_2,2}^{a_{2,2}}}\right)\stdvectorfiledpletter[j]
\right)\ge\\
\min (0,\ord_p(\mu_{i_2,i_1,\chi})).
\end{multline*}
and
\begin{multline*}
\textstyle
\ord_p\left(
(a_{1,1}\uidescriptionfunctiondiff{j-1}{1}+a_{1,2}\uidescriptionfunctiondiff{j-1}{2}-a_{2,1}\uidescriptionfunctiondiff{j}{1}-a_{2,2}\uidescriptionfunctiondiff{j}{2})
\vphantom{+\frac{\oluithreadfunction{i_1,1}^{a_{1,1}}\oluithreadfunction{i_1,2}^{a_{1,2}}}{\overline{f''}}
d\left(\frac{\overline{f''}}{\oluithreadfunction{i_1,1}^{a_{1,1}}\oluithreadfunction{i_1,2}^{a_{1,2}}}\right)\stdvectorfiledpletter[j-1]}
\right.\!\\
\textstyle
\left.\!
+\frac{\oluithreadfunction{i_1,1}^{a_{1,1}}\oluithreadfunction{i_1,2}^{a_{1,2}}}{\overline{f''}}
d\left(\frac{\overline{f''}}{\oluithreadfunction{i_1,1}^{a_{1,1}}\oluithreadfunction{i_1,2}^{a_{1,2}}}\right)\stdvectorfiledpletter[j-1]
-\frac{\oluithreadfunction{i_1,1}^{a_{1,1}}\oluithreadfunction{i_1,2}^{a_{1,2}}}{\overline{f''}}
\mu_{i_2,i_1,\chi}d\left(\frac{\overline{f''}}{\oluithreadfunction{i_2,1}^{a_{2,1}}\oluithreadfunction{i_2,2}^{a_{2,2}}}\right)\stdvectorfiledpletter[j]
\right)\ge\\
\min (0,\ord_p(\mu_{i_2,i_1,\chi})).
\end{multline*}
By Lemma \ref{linfuncestimate}, 
$\ord_p(\mu_{i_2,i_1,\chi})=0$.
Hence, these two functions under the $\ord$ signs are regular at $p$.
Subtract the expressions under the $\ord$ signs and substitute the definition of $\mu_{i_2,i_1,\chi}$. We see that the following 
function is regular at $p$:
\begin{multline*}
\frac{\oluithreadfunction{i_1,1}^{a_{1,1}}\oluithreadfunction{i_1,2}^{a_{1,2}}}{\overline f}
d\left(\frac{\overline f}{\oluithreadfunction{i_1,1}^{a_{1,1}}\oluithreadfunction{i_1,2}^{a_{1,2}}}\right)\stdvectorfiledpletter[j-1]
-\frac{\oluithreadfunction{i_1,1}^{a_{1,1}}\oluithreadfunction{i_1,2}^{a_{1,2}}}{\overline f}
\frac{\oluithreadfunction{i_2,1}^{a_{2,1}}\oluithreadfunction{i_2,2}^{a_{2,2}}}{\oluithreadfunction{i_1,1}^{a_{1,1}}\oluithreadfunction{i_1,2}^{a_{1,2}}}
d\left(\frac{\overline f}{\oluithreadfunction{i_2,1}^{a_{2,1}}\oluithreadfunction{i_2,2}^{a_{2,2}}}\right)\stdvectorfiledpletter[j]\\
-\frac{\oluithreadfunction{i_1,1}^{a_{1,1}}\oluithreadfunction{i_1,2}^{a_{1,2}}}{\overline{f''}}
d\left(\frac{\overline{f''}}{\oluithreadfunction{i_1,1}^{a_{1,1}}\oluithreadfunction{i_1,2}^{a_{1,2}}}\right)\stdvectorfiledpletter[j-1]
+\frac{\oluithreadfunction{i_1,1}^{a_{1,1}}\oluithreadfunction{i_1,2}^{a_{1,2}}}{\overline{f''}}
\frac{\oluithreadfunction{i_2,1}^{a_{2,1}}\oluithreadfunction{i_2,2}^{a_{2,2}}}{\oluithreadfunction{i_1,1}^{a_{1,1}}\oluithreadfunction{i_1,2}^{a_{1,2}}}
d\left(\frac{\overline{f''}}{\oluithreadfunction{i_2,1}^{a_{2,1}}\oluithreadfunction{i_2,2}^{a_{2,2}}}\right)\stdvectorfiledpletter[j]=\\
\left(
\frac{\oluithreadfunction{i_1,1}^{a_{1,1}}\oluithreadfunction{i_1,2}^{a_{1,2}}}{\overline f}
d\left(\frac{\overline f}{\oluithreadfunction{i_1,1}^{a_{1,1}}\oluithreadfunction{i_1,2}^{a_{1,2}}}\right)
-\frac{\oluithreadfunction{i_1,1}^{a_{1,1}}\oluithreadfunction{i_1,2}^{a_{1,2}}}{\overline{f''}}
d\left(\frac{\overline{f''}}{\oluithreadfunction{i_1,1}^{a_{1,1}}\oluithreadfunction{i_1,2}^{a_{1,2}}}\right)
\right)
\stdvectorfiledpletter[j-1]\\
-\left(
\frac{\oluithreadfunction{i_2,1}^{a_{2,1}}\oluithreadfunction{i_2,2}^{a_{2,2}}}{\overline f}
d\left(\frac{\overline f}{\oluithreadfunction{i_2,1}^{a_{2,1}}\oluithreadfunction{i_2,2}^{a_{2,2}}}\right)
-\frac{\oluithreadfunction{i_2,1}^{a_{2,1}}\oluithreadfunction{i_2,2}^{a_{2,2}}}{\overline{f''}}
d\left(\frac{\overline{f''}}{\oluithreadfunction{i_2,1}^{a_{2,1}}\oluithreadfunction{i_2,2}^{a_{2,2}}}\right)
\right)
\stdvectorfiledpletter[j]
\end{multline*}
By a property of logarithmic derivative we can rewrite this as
$$
\frac{\overline{f''}}{\overline f}d\left(\frac{\overline f}{\overline{f''}}\right)\stdvectorfiledpletter[j-1]
-\frac{\overline{f''}}{\overline f}d\left(\frac{\overline f}{\overline{f''}}\right)\stdvectorfiledpletter[j]=
-\frac{\overline f}{\overline{f''}}d\left(\frac{\overline{f''}}{\overline f}\right)
(\stdvectorfiledpletter[j-1]-\stdvectorfiledpletter[j]).
$$
Now we can rewrite $d(\overline {f''}/\overline f)$ as 
$d(\overline{f''}/\overline f)=d(((1+f')\overline f)/\overline f)=df'$.
As we noted before, $df'$ does not have a zero or a pole at $p$. 
We have chosen $f$ and $f''$ so that $\ord_p(\overline f)=\ord_p(\overline{f''})$, hence
$\overline f/\overline{f''}$ does not have a zero or a pole at $p$ either. We conclude that 
$\stdvectorfiledpletter[j-1]-\stdvectorfiledpletter[j]$ is regular at $p$.

Now recall that $\stdvectorfiledpletter[1]=0$, therefore $\stdvectorfiledpletter[j]$ is regular at $p$ for every $j$. Finally, 
it follows from the definition of $\abstractvectorspace_{1,0,p}$ that if $\stdvectorfiledpletter[j]$ is regular at $p$, then $\stdvectorfiledpletter[j]=0$.
\end{proof}

Now we can reformulate Corollary \ref{diffregularcorii} as follows:

\begin{corollary}\label{diffregularcoriii}
Let $\uidescriptionfunction{}=(0,0,0,\uidescriptionfunctiondiff{2}{1},\uidescriptionfunctiondiff{2}{2},0,\ldots,
\uidescriptionfunctiondiff{\numberofverticespt p}{1},\uidescriptionfunctiondiff{\numberofverticespt p}{1},0)\in \abstractvectorspace_{1,0,p}$.
Suppose that 
$\kappa_{\OO,p}(\psi_p(\uidescriptionfunction{}))\in
(\bigoplus_{j=1}^{\numberofverticespt p}\Gamma(W_p,\gvisi{\opensetforvertex{p,j}}))+\Gamma(W,\gvisi{\opensetforvertex{p,j}})$.
Pick two vertices $\indexedvertexpt{p}{j_1}$ and $\indexedvertexpt{p}{j_2}$ of $\stdpolyhedronletter_p$ and denote 
$i_1=\opensetforvertex{p,j_1}$, $i_2=\opensetforvertex{p,j_2}$. Also choose 
$\chi\in\{\lambda_1,\ldots,\lambda_{\numberoflatticegenerators}\}$
and
denote
$a_{1,1}=\uidegree{i_1,1}^*(\chi)$, $a_{1,2}=\uidegree{i_1,2}^*(\chi)$, 
$a_{2,1}=\uidegree{i_2,1}^*(\chi)$, $a_{2,2}=\uidegree{i_2,2}^*(\chi)$.
Let $f\in\Gamma(\PP^1,\OO(\mathcal D(\chi)))$ be an arbitrary function.

Then
$$
\ord_p\left(\frac{\overline f}{\oluithreadfunction{i_1,1}^{a_{1,1}}\oluithreadfunction{i_1,2}^{a_{1,2}}}
(a_{1,1}\uidescriptionfunctiondiff{j_1}{1}+a_{1,2}\uidescriptionfunctiondiff{j_1}{2}-a_{2,1}\uidescriptionfunctiondiff{j_2}{1}-a_{2,2}\uidescriptionfunctiondiff{j_2}{2})\right)
\ge \min (0,\ord_p(\mu_{i_2,i_1,\chi})).
$$\qed
\end{corollary}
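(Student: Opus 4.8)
The plan is to obtain this corollary as an immediate specialization of Corollary \ref{diffregularcorii}. That corollary establishes the very same lower bound on $\ord_p$, except that the quantity under the $\ord$ sign carries two additional summands, namely the covector-field contributions $d\bigl(\overline f/(\oluithreadfunction{i_1,1}^{a_{1,1}}\oluithreadfunction{i_1,2}^{a_{1,2}})\bigr)\stdvectorfiledpletter_{j_1}$ and $-\mu_{i_2,i_1,\chi}\,d\bigl(\overline f/(\oluithreadfunction{i_2,1}^{a_{2,1}}\oluithreadfunction{i_2,2}^{a_{2,2}})\bigr)\stdvectorfiledpletter_{j_2}$. I would first note that the hypothesis imposed here on $\kappa_{\OO,p}(\psi_p(\uidescriptionfunction{}))$ is precisely the hypothesis of Lemma \ref{diffregular}, so Corollary \ref{diffregularcorii} applies verbatim to the element $\uidescriptionfunction{}$ under consideration.

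The decisive observation is that all vector-field components of $\uidescriptionfunction{}$ vanish. This is built into the statement: $\uidescriptionfunction{}$ is the tuple $(0,0,0,\uidescriptionfunctiondiff{2}{1},\uidescriptionfunctiondiff{2}{2},0,\ldots)$, in which the third entry of every triple is $0$, i.e.\ $\stdvectorfiledpletter_j=0$ for all $j$. (That this is the only case one needs to consider is exactly the content of Lemma \ref{vfieldzero}, which shows that the present hypothesis forces $\stdvectorfiledpletter_j=0$ for $2\le j\le \numberofverticespt p$, while $\stdvectorfiledpletter_1=0$ by the normalization built into $\abstractvectorspace_{1,0,p}$.) Hence, in the inequality furnished by Corollary \ref{diffregularcorii}, both $\stdvectorfiledpletter_{j_1}$ and $\stdvectorfiledpletter_{j_2}$ are zero, and the two extra summands disappear.

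What remains after deleting these vanishing terms is exactly
$$
\ord_p\left(\frac{\overline f}{\oluithreadfunction{i_1,1}^{a_{1,1}}\oluithreadfunction{i_1,2}^{a_{1,2}}}(a_{1,1}\uidescriptionfunctiondiff{j_1}{1}+a_{1,2}\uidescriptionfunctiondiff{j_1}{2}-a_{2,1}\uidescriptionfunctiondiff{j_2}{1}-a_{2,2}\uidescriptionfunctiondiff{j_2}{2})\right)\ge\min(0,\ord_p(\mu_{i_2,i_1,\chi})),
$$
which is the assertion. I expect no genuine obstacle here: the substantive estimate is already carried out in Corollary \ref{diffregularcorii}, and the vanishing of the vector fields is supplied by Lemma \ref{vfieldzero}; the only care required is to confirm that the two hypotheses coincide and that the covector-field summands indeed drop out once the vector fields are set to zero.
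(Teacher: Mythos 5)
Your proposal is correct and is precisely the argument the paper intends: the text introduces this corollary as a "reformulation" of Corollary \ref{diffregularcorii}, valid because Lemma \ref{vfieldzero} (together with the normalization in $\abstractvectorspace_{1,0,p}$) forces all the vector fields $\stdvectorfiledpletter_{j}$ to vanish, so the two covector-field summands drop out. Nothing further is needed.
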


When we deal with elements of $\abstractvectorspace_{1,0,p}$ such that all vector fields $\stdvectorfiledpletter[j]$ are zeros, it 
is more convenient to use $U_{\numberoffixedopensets}$-descriptions instead of $U_{\opensetforvertex{p,j}}$-descriptions. 
So denote by $\abstractvectorspace_{1,1,p}$ the space of $2\numberofverticespt p$-tuples of the form 
form
$$
(0,0,\uidescriptionfunctiondiff{2}{1},\uidescriptionfunctiondiff{2}{2},\ldots,
\uidescriptionfunctiondiff{\numberofverticespt p}{1},\uidescriptionfunctiondiff{\numberofverticespt p}{2}),
$$
where
$$
\uidescriptionfunctiondiff{j}{l}=\sum_{k=-n_{j,l}}^{-1}a_{j,l,k}t_p^{k}.
$$
Denote by $\rho_p\colon \abstractvectorspace_{1,1,p}\to\abstractvectorspace_{1,0,p}$ the map that computes $U_{\opensetforvertex{p,j}}$-descriptions out of 
$U_{\numberoffixedopensets}$-descriptions, i.~e.
$\rho_p(0,0,\uidescriptionfunctiondiff{2}{1},\uidescriptionfunctiondiff{2}{2},\ldots,
\uidescriptionfunctiondiff{\numberofverticespt p}{1},\uidescriptionfunctiondiff{\numberofverticespt p}{2})=
(0,0,0,\uidescriptionfunctiondiff{2}{1}',\uidescriptionfunctiondiff{2}{2}',0,\ldots,
\uidescriptionfunctiondiff{\numberofverticespt p}{1}',\uidescriptionfunctiondiff{\numberofverticespt p}{2}',0)$, 
where
$$
\left(
\begin{array}{c}
\uidescriptionfunctiondiff{j}{1}'\\
\uidescriptionfunctiondiff{j}{2}'\\
0
\end{array}
\right)
=
\uilargetransition{\numberoffixedopensets,\opensetforvertex{p,j}}
\left(
\begin{array}{c}
\uidescriptionfunctiondiff{j}{1}\\
\uidescriptionfunctiondiff{j}{2}\\
0
\end{array}
\right).
$$
In other words,
$$
\left(
\begin{array}{c}
\uidescriptionfunctiondiff{j}{1}'\\
\uidescriptionfunctiondiff{j}{2}'\\
\end{array}
\right)=
\uismalltransition{\numberoffixedopensets,\opensetforvertex{p,j}}
\left(
\begin{array}{c}
\uidescriptionfunctiondiff{j}{1}\\
\uidescriptionfunctiondiff{j}{2}\\
\end{array}
\right).
$$
Clearly, $\rho_p$ is injective. It also follows from Lemma \ref{vfieldzero} that $\rho_p(\abstractvectorspace_{1,1,p})$ contains 
$$
\abstractvectorspace_{1,0,p}\cap \psi_p^{-1}\left(\kappa_{\OO,p}^{-1}\left(\bigg(\bigoplus_{j=1}^{\numberofverticespt p}
\Gamma\left(W_p,\gvisiinv{\opensetforvertex{p,j}}\right)\bigg)+\Gamma\left(W_p,\gvicircinv\right)\right)\right).
$$
So now we are going to find the following preimage: 
$$
\rho_p^{-1}\left(\psi_p^{-1}\left(\kappa_{\OO,p}^{-1}\left(\bigg(\bigoplus_{j=1}^{\numberofverticespt p}
\Gamma\left(W_p,\gvisiinv{\opensetforvertex{p,j}}\right)\bigg)+\Gamma\left(W_p,\gvicircinv\right)\right)\right)\right).
$$

\begin{lemma}\label{linfunczero}
Let 
$$
\uidescriptionfunction{}=(0,0,\uidescriptionfunctiondiff{2}{1},\uidescriptionfunctiondiff{2}{2},\ldots,
\uidescriptionfunctiondiff{\numberofverticespt p}{1},\uidescriptionfunctiondiff{\numberofverticespt p}{2})\in\abstractvectorspace_{1,1,p}
$$
be such that 
$$
\kappa_{\OO,p}(\psi_p(\rho_p(\uidescriptionfunction{})))\in
\bigg(\bigoplus_{j=1}^{\numberofverticespt p}\Gamma\left(W_p,\gvisi{\opensetforvertex{p,j}}\right)\bigg)+\Gamma\left(W_p,\gvicircinv\right).
$$
Pick two vertices $\indexedvertexpt{p}{j_1}$ and $\indexedvertexpt{p}{j_2}$ of $\stdpolyhedronletter_p$, choose 
$\chi\in\{\lambda_1,\ldots,\lambda_{\numberoflatticegenerators}\}\cap \normalvertexcone{\indexedvertexpt{p}{j_1}}{\stdpolyhedronletter_p}$.

Then
$$
\ord_p(\uidegree{\numberoffixedopensets,1}^*(\chi)(\uidescriptionfunctiondiff{j_1}{1}-\uidescriptionfunctiondiff{j_2}{1})+
\uidegree{\numberoffixedopensets,2}^*(\chi)(\uidescriptionfunctiondiff{j_1}{2}-\uidescriptionfunctiondiff{j_2}{2}))\ge 
\chi(\indexedvertexpt{p}{j_1}-\indexedvertexpt{p}{j_2}).
$$
\end{lemma}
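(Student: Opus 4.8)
The plan is to reduce the statement to Corollary \ref{diffregularcoriii}, which gives the analogous order estimate phrased in terms of the $U_{\opensetforvertex{p,j}}$-descriptions (the components of $\rho_p(\uidescriptionfunction{})$) rather than the $U_{\numberoffixedopensets}$-descriptions appearing in the present lemma. The bridge between the two is a purely linear-algebraic change-of-basis identity, and the only genuinely geometric input will be the choice of a convenient section $f$ together with the cited order computations.

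First I would record the conversion formula. Write $i_1=\opensetforvertex{p,j_1}$, $i_2=\opensetforvertex{p,j_2}$, and denote by $\uidescriptionfunctiondiff{j}{1}'$, $\uidescriptionfunctiondiff{j}{2}'$ the components of $\rho_p(\uidescriptionfunction{})$, so that by the definition of $\rho_p$ one has $(\uidescriptionfunctiondiff{j}{1}',\uidescriptionfunctiondiff{j}{2}')^T=\uismalltransition{\numberoffixedopensets,\opensetforvertex{p,j}}(\uidescriptionfunctiondiff{j}{1},\uidescriptionfunctiondiff{j}{2})^T$. Expanding $\uidegree{i,1}^*(\chi)\uidescriptionfunctiondiff{j}{1}'+\uidegree{i,2}^*(\chi)\uidescriptionfunctiondiff{j}{2}'$ (where $i=\opensetforvertex{p,j}$) via the entries of $\uismalltransition{\numberoffixedopensets,i}$ and then invoking the linearity of $\uidegree{\numberoffixedopensets,1}^*$ and $\uidegree{\numberoffixedopensets,2}^*$ together with $\chi=\uidegree{i,1}^*(\chi)\uidegree{i,1}+\uidegree{i,2}^*(\chi)\uidegree{i,2}$, the coefficients collapse to $\uidegree{\numberoffixedopensets,1}^*(\chi)$ and $\uidegree{\numberoffixedopensets,2}^*(\chi)$, giving $\uidegree{i,1}^*(\chi)\uidescriptionfunctiondiff{j}{1}'+\uidegree{i,2}^*(\chi)\uidescriptionfunctiondiff{j}{2}'=\uidegree{\numberoffixedopensets,1}^*(\chi)\uidescriptionfunctiondiff{j}{1}+\uidegree{\numberoffixedopensets,2}^*(\chi)\uidescriptionfunctiondiff{j}{2}$ for each $j$. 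Applying this for $j=j_1$ (with the dual basis of $i_1$) and for $j=j_2$ (with the dual basis of $i_2$) and subtracting shows that the bracket appearing inside the order in Corollary \ref{diffregularcoriii}, namely $a_{1,1}\uidescriptionfunctiondiff{j_1}{1}'+a_{1,2}\uidescriptionfunctiondiff{j_1}{2}'-a_{2,1}\uidescriptionfunctiondiff{j_2}{1}'-a_{2,2}\uidescriptionfunctiondiff{j_2}{2}'$ with $a_{1,1}=\uidegree{i_1,1}^*(\chi)$, $a_{1,2}=\uidegree{i_1,2}^*(\chi)$, $a_{2,1}=\uidegree{i_2,1}^*(\chi)$, $a_{2,2}=\uidegree{i_2,2}^*(\chi)$, equals exactly $\uidegree{\numberoffixedopensets,1}^*(\chi)(\uidescriptionfunctiondiff{j_1}{1}-\uidescriptionfunctiondiff{j_2}{1})+\uidegree{\numberoffixedopensets,2}^*(\chi)(\uidescriptionfunctiondiff{j_1}{2}-\uidescriptionfunctiondiff{j_2}{2})$, the quantity whose order we must bound.

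Next I would apply Corollary \ref{diffregularcoriii} to $\rho_p(\uidescriptionfunction{})$, whose hypothesis is precisely the assumption made here, choosing $f\in\Gamma(\PP^1,\OO(\mathcal D(\chi)))$ via Lemma \ref{giexist} so that $\ord_p(\overline f)=-\mathcal D_p(\chi)$ and $f$ has no zeros or poles at ordinary points. Since $\chi\in\normalvertexcone{\indexedvertexpt{p}{j_1}}{\stdpolyhedronletter_p}$, both $\uidegree{i_1,1}$ and $\uidegree{i_1,2}$ lie in that normal subcone, so $\mathcal D_p(\uidegree{i_1,l})=\uidegree{i_1,l}(\indexedvertexpt{p}{j_1})$ and $\mathcal D_p(\chi)=\chi(\indexedvertexpt{p}{j_1})$; a short order computation then yields $\ord_p(\overline f/(\oluithreadfunction{i_1,1}^{a_{1,1}}\oluithreadfunction{i_1,2}^{a_{1,2}}))=0$. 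Hence the prefactor does not affect the order, and the estimate of Corollary \ref{diffregularcoriii} becomes a bound on the order of the difference quantity itself by $\min(0,\ord_p(\mu_{i_2,i_1,\chi}))$.

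Finally I would evaluate the right-hand side. By Lemma \ref{linfuncdecomp}, $\ord_p(\mu_{i_2,i_1,\chi})=\chi(\indexedvertexpt{p}{j_1}-\indexedvertexpt{p}{j_2})$, and since $\chi$ attains its minimum over $\stdpolyhedronletter_p$ at the vertex $\indexedvertexpt{p}{j_1}$ we have $\chi(\indexedvertexpt{p}{j_1})\le\chi(\indexedvertexpt{p}{j_2})$, so this order is at most $0$ and the minimum with $0$ is attained by it. Substituting gives $\ord_p(\uidegree{\numberoffixedopensets,1}^*(\chi)(\uidescriptionfunctiondiff{j_1}{1}-\uidescriptionfunctiondiff{j_2}{1})+\uidegree{\numberoffixedopensets,2}^*(\chi)(\uidescriptionfunctiondiff{j_1}{2}-\uidescriptionfunctiondiff{j_2}{2}))\ge\chi(\indexedvertexpt{p}{j_1}-\indexedvertexpt{p}{j_2})$, which is exactly the claim. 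I expect the main obstacle to be purely organizational, namely keeping the two families of descriptions (via $U_{\numberoffixedopensets}$ and via $U_{\opensetforvertex{p,j}}$) rigorously separated while verifying the conversion identity, rather than any conceptual difficulty, since every geometric fact needed is already packaged in the cited lemmas.
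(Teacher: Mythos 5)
Your proposal is correct and follows essentially the same route as the paper's proof: the same change-of-basis identity via the matrices $\uismalltransition{\numberoffixedopensets,\opensetforvertex{p,j}}$ reducing the claim to Corollary \ref{diffregularcoriii}, the same choice of $f$ via Lemma \ref{giexist} to make the prefactor have order zero at $p$, and the same evaluation of $\ord_p(\mu_{i_2,i_1,\chi})$ via Lemma \ref{linfuncdecomp} together with the observation that $\chi$ is minimized at $\indexedvertexpt{p}{j_1}$. No gaps.
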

\begin{proof}
Denote 
$i_1=\opensetforvertex{p,j_1}$, $i_2=\opensetforvertex{p,j_2}$,
$b_1=\uidegree{\numberoffixedopensets,1}^*(\chi)$, 
$b_2=\uidegree{\numberoffixedopensets,2}^*(\chi)$.

Denote also $a_{1,1}=\uidegree{i_1,1}^*(\chi)$, $a_{1,2}=\uidegree{i_1,2}^*(\chi)$, 
$a_{2,1}=\uidegree{i_1,1}^*(\chi)$, $a_{2,2}=\uidegree{i_1,2}^*(\chi)$.
Since $\chi=b_1 \uidegree{\numberoffixedopensets,1}+b_2 \uidegree{\numberoffixedopensets,2}$, 
we can write 
$a_{1,1}=b_1\uidegree{i_1,1}^*(\uidegree{\numberoffixedopensets,1})+b_2\uidegree{i_1,1}^*(\uidegree{\numberoffixedopensets,2})$, 
$a_{1,2}=b_1\uidegree{i_1,2}^*(\uidegree{\numberoffixedopensets,1})+b_2\uidegree{i_1,2}^*(\uidegree{\numberoffixedopensets,2})$.
These equalities can be written in a matrix form:
$$
\left(
\begin{array}{cc}
a_{1,1} & a_{1,2}
\end{array}
\right)=
\left(
\begin{array}{cc}
b_1 & b_2
\end{array}
\right)
\uismalltransition{i_1,\numberoffixedopensets}.
$$
Similarly,
$$
\left(
\begin{array}{cc}
a_{2,1} & a_{2,2}
\end{array}
\right)=
\left(
\begin{array}{cc}
b_1 & b_2
\end{array}
\right)
\uismalltransition{i_2,\numberoffixedopensets}.
$$
Denote
$$
\left(
\begin{array}{c}
\uidescriptionfunctiondiff{j_1}{1}'\\
\uidescriptionfunctiondiff{j_1}{2}'\\
\end{array}
\right)=
\uismalltransition{\numberoffixedopensets,i_1}
\left(
\begin{array}{c}
\uidescriptionfunctiondiff{j_1}{1}\\
\uidescriptionfunctiondiff{j_1}{2}\\
\end{array}
\right)
\text{\quad and\quad}
\left(
\begin{array}{c}
\uidescriptionfunctiondiff{j_2}{1}'\\
\uidescriptionfunctiondiff{j_2}{2}'\\
\end{array}
\right)=
\uismalltransition{\numberoffixedopensets,i_2}
\left(
\begin{array}{c}
\uidescriptionfunctiondiff{j_2}{1}\\
\uidescriptionfunctiondiff{j_2}{2}\\
\end{array}
\right).
$$
Then by Lemma \ref{rijcocycle},
$$
\left(
\begin{array}{c}
\uidescriptionfunctiondiff{j_1}{1}\\
\uidescriptionfunctiondiff{j_1}{2}\\
\end{array}
\right)=
\uismalltransition{i_1,\numberoffixedopensets}
\left(
\begin{array}{c}
\uidescriptionfunctiondiff{j_1}{1}'\\
\uidescriptionfunctiondiff{j_1}{2}'\\
\end{array}
\right)
\text{\quad and\quad}
\left(
\begin{array}{c}
\uidescriptionfunctiondiff{j_2}{1}\\
\uidescriptionfunctiondiff{j_2}{2}\\
\end{array}
\right)=
\uismalltransition{i_2,\numberoffixedopensets}
\left(
\begin{array}{c}
\uidescriptionfunctiondiff{j_2}{1}'\\
\uidescriptionfunctiondiff{j_2}{2}'\\
\end{array}
\right).
$$
We can write
\begin{multline*}
b_1(\uidescriptionfunctiondiff{j_1}{1}-\uidescriptionfunctiondiff{j_2}{1})+b_2(\uidescriptionfunctiondiff{j_1}{2}-\uidescriptionfunctiondiff{j_2}{2})=\\
\left(
\begin{array}{cc}
b_1 & b_2
\end{array}
\right)
\left(
\begin{array}{c}
\uidescriptionfunctiondiff{j_1}{1}\\
\uidescriptionfunctiondiff{j_1}{2}\\
\end{array}
\right)
-
\left(
\begin{array}{cc}
b_1 & b_2
\end{array}
\right)
\left(
\begin{array}{c}
\uidescriptionfunctiondiff{j_2}{1}\\
\uidescriptionfunctiondiff{j_2}{2}\\
\end{array}
\right)=\\
\left(
\begin{array}{cc}
b_1 & b_2
\end{array}
\right)
\uismalltransition{i_1,\numberoffixedopensets}
\left(
\begin{array}{c}
\uidescriptionfunctiondiff{j_1}{1}'\\
\uidescriptionfunctiondiff{j_1}{2}'\\
\end{array}
\right)
-
\left(
\begin{array}{cc}
b_1 & b_2
\end{array}
\right)
\uismalltransition{i_2,\numberoffixedopensets}
\left(
\begin{array}{c}
\uidescriptionfunctiondiff{j_2}{1}'\\
\uidescriptionfunctiondiff{j_2}{2}'\\
\end{array}
\right)=\\
\left(
\begin{array}{cc}
a_{1,1} & a_{1,2}
\end{array}
\right)
\left(
\begin{array}{c}
\uidescriptionfunctiondiff{j_1}{1}'\\
\uidescriptionfunctiondiff{j_1}{2}'\\
\end{array}
\right)
-
\left(
\begin{array}{cc}
a_{2,1} & a_{2,2}
\end{array}
\right)
\left(
\begin{array}{c}
\uidescriptionfunctiondiff{j_2}{1}'\\
\uidescriptionfunctiondiff{j_2}{2}'\\
\end{array}
\right)=\\
a_{1,1}\uidescriptionfunctiondiff{j_1}{1}'+a_{1,2}\uidescriptionfunctiondiff{j_1}{2}'-a_{2,1}\uidescriptionfunctiondiff{j_2}{1}'-a_{2,2}\uidescriptionfunctiondiff{j_2}{2}'.
\end{multline*}

By Lemma \ref{giexist}, there exists $f\in \Gamma(\PP^1,\OO(\mathcal D(\chi)))$ such that $\ord_p(\overline f)=-\mathcal D_p(\chi)$.
Since $\uidegree{i_1,1}, \uidegree{i_1,2}, \chi\in \normalvertexcone{\indexedvertexpt{p}{j_1}}{\stdpolyhedronletter_p}$, 
$\mathcal D_p(\cdot)$ is linear on $\normalvertexcone{\indexedvertexpt{p}{j_1}}{\stdpolyhedronletter_p}$, and 
$\chi=a_{1,1}\uidegree{i_1,1}+a_{1,2}\uidegree{i_1,2}$, we can write 
$\mathcal D_p(\chi)=a_{1,1}\mathcal D_p(\uidegree{i_1,1})+a_{1,2}\mathcal D_p(\uidegree{i_1,2})$. We chose 
$\uithreadfunction{i_1, 1}$ and $\uithreadfunction{i_1, 2}$ so that 
$\ord_p(\oluithreadfunction{i_1,1})=-\mathcal D_p(\uidegree{i_1,1})$, $\ord_p(\oluithreadfunction{i_1,2})=-\mathcal D_p(\uidegree{i_1,2})$.
Therefore, 
$$
\ord_p\left(\frac{\overline f}{\oluithreadfunction{i_1,1}^{a_{1,1}}\oluithreadfunction{i_1,2}^{a_{1,2}}}\right)=0.
$$

By Lemma \ref{linfuncdecomp}, $\ord_p(\mu_{i_2,i_1,\chi})=\chi(\indexedvertexpt{p}{j_1}-\indexedvertexpt{p}{j_2})$. 
Since $\chi\in\normalvertexcone{\indexedvertexpt{p}{j_1}}{\stdpolyhedronletter_p}$, 
$\indexedvertexpt{p}{j_1}$ is a point where $\chi$ attains its minimum on $\stdpolyhedronletter_p$.
Hence, $\ord_p(\mu_{i_2,i_1,\chi})\le 0$. The claim now follows from Corollary \ref{diffregularcoriii}.
\end{proof}

%

Now we are ready to formulate an exact description for 
$$
\rho_p^{-1}\left(\psi_p^{-1}\left(\kappa_{\OO,p}^{-1}\left(\bigg(\bigoplus_{j=1}^{\numberofverticespt p}\Gamma\left(W_p,\gvisi{\opensetforvertex{p,j}}\right)\bigg)
+\Gamma\left(W_p,\gvicircinv\right)\right)\right)\right).
$$
Let $\abstractvectorspace_{1,2,p}\subseteq \abstractvectorspace_{1,1,p}$ be the space of $2\numberofverticespt p$-tuples of the form
$$
(\uidescriptionfunctiondiff{1}{1},\uidescriptionfunctiondiff{1}{2},\uidescriptionfunctiondiff{2}{1},\uidescriptionfunctiondiff{2}{2},\ldots,
\uidescriptionfunctiondiff{\numberofverticespt p}{1},\uidescriptionfunctiondiff{\numberofverticespt p}{2})
$$
such that
\begin{enumerate}
\item 
\label{proplaurent}
$\uidescriptionfunctiondiff{j}{k}$ is a Laurent polynomial in $t_p$ with no terms of nonnegative degree.
\item 
\label{propfirstzero}
$\uidescriptionfunctiondiff{1}{1}=\uidescriptionfunctiondiff{1}{2}=0$.
\item 
\label{proplinfunczero}
For each $j$ such that $\indexededgept{p}{j}$ is a finite edge (i.~e. $1\le j<\numberofverticespt{p}$),
$$
\uidegree{\numberoffixedopensets,1}^*(\primitivelattice{\normalvertexcone{\indexededgept{p}{j}}{\stdpolyhedronletter_p}})(\uidescriptionfunctiondiff{j}{1}-\uidescriptionfunctiondiff{j+1}{1})+
\uidegree{\numberoffixedopensets,2}^*(\primitivelattice{\normalvertexcone{\indexededgept{p}{j}}{\stdpolyhedronletter_p}})(\uidescriptionfunctiondiff{j}{2}-\uidescriptionfunctiondiff{j+1}{2})=0.
$$
\item 
\label{proplinfuncestimate}
$\ord_p(\uidescriptionfunctiondiff{j}{1}-\uidescriptionfunctiondiff{j+1}{1})\ge -\latticelength{\indexededgept{p}{j}}$, 
$\ord_p(\uidescriptionfunctiondiff{j}{2}-\uidescriptionfunctiondiff{j+1}{2})\ge -\latticelength{\indexededgept{p}{j}}$ for all finite edges $\indexededgept{p}{j}$
($1\le j<\numberofverticespt p$).
\end{enumerate}

\begin{remark}\label{avs22pdim}
$\dim \abstractvectorspace_{1,2,p}=\latticelength{\indexededgept{p}{1}}+\ldots+\latticelength{\indexededgept{p}{\numberofverticespt p-1}}$.
\end{remark}

\begin{proposition}\label{avs22p}
$$
\abstractvectorspace_{1,2,p}=
\rho_p^{-1}\left(\psi_p^{-1}\left(\kappa_{\OO,p}^{-1}\left(\bigg(\bigoplus_{j=1}^{\numberofverticespt p}
\Gamma\left(W_p,\gvisiinv{\opensetforvertex{p,j}}\right)\bigg)+\Gamma\left(W_p,\gvicircinv\right)\right)\right)\right).
$$
\end{proposition}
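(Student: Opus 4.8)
The plan is to prove the asserted set equality by establishing the two inclusions separately. Observe first that both $\abstractvectorspace_{1,2,p}$ and the right-hand side are subsets of $\abstractvectorspace_{1,1,p}$: the former by definition, the latter because $\rho_p$ is only defined on $\abstractvectorspace_{1,1,p}$. Since conditions \ref{proplaurent} and \ref{propfirstzero} in the definition of $\abstractvectorspace_{1,2,p}$ are exactly the defining constraints of $\abstractvectorspace_{1,1,p}$, the statement reduces to the equivalence, for $\uidescriptionfunction{}\in\abstractvectorspace_{1,1,p}$, between the membership $\kappa_{\OO,p}(\psi_p(\rho_p(\uidescriptionfunction{})))\in(\bigoplus_{j}\Gamma(W_p,\gvisiinv{\opensetforvertex{p,j}}))+\Gamma(W_p,\gvicircinv)$ and conditions \ref{proplinfunczero} and \ref{proplinfuncestimate}. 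Throughout I will use that, since all vector-field components of $\rho_p(\uidescriptionfunction{})$ vanish, the algebraic identity established within the proof of Corollary \ref{diffregularcorii} holds unconditionally, and that the combination of $U_{\opensetforvertex{p,j}}$-descriptions appearing there equals $\uidegree{\numberoffixedopensets,1}^*(\chi)(\uidescriptionfunctiondiff{j_1}{1}-\uidescriptionfunctiondiff{j_2}{1})+\uidegree{\numberoffixedopensets,2}^*(\chi)(\uidescriptionfunctiondiff{j_1}{2}-\uidescriptionfunctiondiff{j_2}{2})$ in the original $U_{\numberoffixedopensets}$-descriptions, as verified inside the proof of Lemma \ref{linfunczero}.

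For the inclusion "$\supseteq$" (membership implies the conditions) I would invoke Lemma \ref{linfunczero}, whose hypothesis is exactly this membership. To obtain condition \ref{proplinfunczero} for a finite edge $\indexededgept{p}{j}$, apply the lemma with $j_1=j$, $j_2=j+1$ and $\chi=\primitivelattice{\normalvertexcone{\indexededgept{p}{j}}{\stdpolyhedronletter_p}}$; this degree lies in $\{\lambda_1,\dots,\lambda_{\numberoflatticegenerators}\}$ and in $\normalvertexcone{\indexedvertexpt{p}{j}}{\stdpolyhedronletter_p}$, and by Lemma \ref{linfuncestimate} one has $\chi(\indexedvertexpt{p}{j}-\indexedvertexpt{p}{j+1})=0$, so the lemma gives $\ord_p\ge 0$ for the relevant combination; as that combination is a Laurent polynomial with only negative powers (condition \ref{proplaurent}), it must vanish, which is condition \ref{proplinfunczero}. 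For condition \ref{proplinfuncestimate} apply the lemma again with the same $j_1,j_2$ but a degree $\chi'\in\{\lambda_1,\dots,\lambda_{\numberoflatticegenerators}\}\cap\normalvertexcone{\indexedvertexpt{p}{j}}{\stdpolyhedronletter_p}$ completing $\chi$ to a lattice basis of $M$ (such $\chi'$ exists by the choice of the $\lambda_i$); here Lemma \ref{linfuncestimate} gives $\chi'(\indexedvertexpt{p}{j}-\indexedvertexpt{p}{j+1})=-\latticelength{\indexededgept{p}{j}}$. Since $\chi,\chi'$ form a basis, the $2\times 2$ matrix of their $\uidegree{\numberoffixedopensets,\cdot}^*$-coordinates is invertible; combining the vanishing from condition \ref{proplinfunczero} with the bound for $\chi'$ and inverting shows that each $\uidescriptionfunctiondiff{j}{l}-\uidescriptionfunctiondiff{j+1}{l}$ is a constant multiple of the $\chi'$-combination, giving condition \ref{proplinfuncestimate}.

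For the inclusion "$\subseteq$" (the conditions imply membership), fix a degree $\chi\in\{\lambda_1,\dots,\lambda_{\numberoflatticegenerators}\}$ and an index $k$; it suffices to produce, for this degree component, a common function whose $U_{\opensetforvertex{p,j}}$-descriptions agree with $\psi_{p,j,\chi,k}(\rho_p(\uidescriptionfunction{}))$ up to a part regular at $p$. Let $j^*$ be a vertex with $\chi\in\normalvertexcone{\indexedvertexpt{p}{j^*}}{\stdpolyhedronletter_p}$, so that $\chi$ attains its minimum on $\stdpolyhedronletter_p$ at $\indexedvertexpt{p}{j^*}$, and take the common function $g$ with $U_{\opensetforvertex{p,j^*}}$-description equal to $\psi_{p,j^*,\chi,k}(\rho_p(\uidescriptionfunction{}))$. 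Using the cocycle relation (Remark \ref{mijcocycle}) and the unconditional identity from Corollary \ref{diffregularcorii}, the claim that $\psi_{p,j,\chi,k}(\rho_p(\uidescriptionfunction{}))$ minus the $U_{\opensetforvertex{p,j}}$-description of $g$ is regular at $p$ reduces to the estimate $\ord_p E_{j^*,j}\ge\chi(\indexedvertexpt{p}{j^*}-\indexedvertexpt{p}{j})$, where $E_{j^*,j}=\uidegree{\numberoffixedopensets,1}^*(\chi)(\uidescriptionfunctiondiff{j^*}{1}-\uidescriptionfunctiondiff{j}{1})+\uidegree{\numberoffixedopensets,2}^*(\chi)(\uidescriptionfunctiondiff{j^*}{2}-\uidescriptionfunctiondiff{j}{2})$, the remaining scalar prefactor $\oldependentgeneratorsdegree{\chi,k}/(\oluithreadfunction{\opensetforvertex{p,j^*},1}^{\uidegree{\opensetforvertex{p,j^*},1}^*(\chi)}\oluithreadfunction{\opensetforvertex{p,j^*},2}^{\uidegree{\opensetforvertex{p,j^*},2}^*(\chi)})$ being regular at $p$ (its order is $\ord_p\oldependentgeneratorsdegree{\chi,k}+\mathcal D_p(\chi)\ge0$). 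Telescoping $E_{j^*,j}$ along the edges between $\indexedvertexpt{p}{j^*}$ and $\indexedvertexpt{p}{j}$ writes it as a sum of edgewise combinations, each with $\ord_p\ge-\latticelength{\indexededgept{p}{m}}$ by condition \ref{proplinfuncestimate}; Lemmas \ref{linfuncchain} and \ref{linfuncchainii} give $\chi(\indexedvertexpt{p}{m}-\indexedvertexpt{p}{m+1})\le-\latticelength{\indexededgept{p}{m}}$ for each such edge, so the minimum of the numbers $-\latticelength{\indexededgept{p}{m}}$ dominates the telescoped value $\chi(\indexedvertexpt{p}{j^*}-\indexedvertexpt{p}{j})$, which yields the required bound and hence membership.

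The main obstacle is this last inclusion, and within it the passage from the purely adjacent bounds of condition \ref{proplinfuncestimate} to the single global estimate $\ord_p E_{j^*,j}\ge\chi(\indexedvertexpt{p}{j^*}-\indexedvertexpt{p}{j})$: condition \ref{proplinfuncestimate} controls only differences of neighbouring vertices, and it is precisely the sign-and-length inequalities of the chain Lemmas \ref{linfuncchain}--\ref{linfuncchainii} (which rely on $\6$ being pointed) that let the edgewise estimates be summed without loss. A secondary technical point, to be treated carefully, is the choice of the minimizing vertex $j^*$ when $\chi$ lies on the boundary between two normal cones, where the hypothesis $\chi\notin\normalvertexcone{\indexedvertexpt{p}{m+1}}{\stdpolyhedronletter_p}$ of the chain lemmas must be verified; this is resolved by selecting $j^*$ so that any degenerate edge contributes a vanishing term through condition \ref{proplinfunczero}.
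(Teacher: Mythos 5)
Your proposal is correct and follows essentially the same route as the paper's proof: the "$\supseteq$" inclusion via Lemma \ref{linfunczero} applied to the primitive normal vector of each finite edge and to a completing basis degree, combined with Lemma \ref{linfuncestimate} and the invertibility of the coordinate matrix; and the "$\subseteq$" inclusion by anchoring at a vertex $j^*$ whose normal cone contains $\chi$, telescoping the difference along intermediate edges, and balancing the edgewise bounds of condition \ref{proplinfuncestimate} against the chain estimates of Lemmas \ref{linfuncchain}--\ref{linfuncchainii}, with condition \ref{proplinfunczero} killing the degenerate edge term when $\chi$ lies on a normal ray. The paper implements the same balancing as "sum of lattice lengths minus their maximum is nonnegative," which is equivalent to your "minimum dominates the telescoped value" formulation.
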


\begin{proof}
The inclusion 
$$
\abstractvectorspace_{1,2,p}\supseteq 
\rho_p^{-1}\left(\psi_p^{-1}\left(\kappa_{\OO,p}^{-1}\left(\bigg(\bigoplus_{j=1}^{\numberofverticespt p}\Gamma\left(W_p,\gvisiinv{\opensetforvertex{p,j}}\right)\bigg)
+\Gamma\left(W_p,\gvicircinv\right)\right)\right)\right)
$$
follows easily from Lemmas \ref{linfunczero} and \ref{linfuncestimate}. Namely, 
let 
$$
\uidescriptionfunction{}\in
\rho_p^{-1}\left(\psi_p^{-1}\left(\kappa_{\OO,p}^{-1}\left(\bigg(\bigoplus_{j=1}^{\numberofverticespt p}\Gamma\left(W_p,\gvisiinv{\opensetforvertex{p,j}}\right)\bigg)
+\Gamma\left(W_p,\gvicircinv\right)\right)\right)\right),
$$
$$
\uidescriptionfunction{}=(\uidescriptionfunctiondiff{1}{1},\uidescriptionfunctiondiff{1}{2},\uidescriptionfunctiondiff{2}{1},\uidescriptionfunctiondiff{2}{2},\ldots,
\uidescriptionfunctiondiff{\numberofverticespt p}{1},\uidescriptionfunctiondiff{\numberofverticespt p}{2}).
$$
Properties \ref{proplaurent} and \ref{propfirstzero} in the definition of $\abstractvectorspace_{1,2,p}$ follow from the definition of $\abstractvectorspace_{1,1,p}$.
Fix a finite edge $\indexededgept{p}{j}$, $1\le j < \numberofverticespt p$.
Let 
$\chi=\primitivelattice{\normalvertexcone{\indexededgept{p}{j}}{\stdpolyhedronletter_p}}$.
According to our choice of the set $\{\lambda_1,\ldots,\lambda_{\numberoflatticegenerators}\}$, $\chi\in \{\lambda_1,\ldots,\lambda_{\numberoflatticegenerators}\}$.
There also exists a degree $\chi'\in\{\lambda_1,\ldots,\lambda_{\numberoflatticegenerators}\}$ 
such that $\chi'\in\normalvertexcone{\indexedvertexpt{p}{j}}{\stdpolyhedronletter_p}$ and $\chi$ and $\chi'$ 
form a basis of $M$. 
By Lemma \ref{linfunczero}, 
$$
\ord_p(\uidegree{\numberoffixedopensets,1}^*(\chi)(\uidescriptionfunctiondiff{j}{1}-\uidescriptionfunctiondiff{j+1}{1})+
\uidegree{\numberoffixedopensets,2}^*(\chi)(\uidescriptionfunctiondiff{j}{2}-\uidescriptionfunctiondiff{j+1}{2}))\ge 
\chi(\indexedvertexpt{p}{j}-\indexedvertexpt{p}{j+1}).
$$
By Lemma \ref{linfuncestimate}, 
$\chi(\indexedvertexpt{p}{j}-\indexedvertexpt{p}{j+1})=0$, 
in other words, 
$$
\uidegree{\numberoffixedopensets,1}^*(\chi)(\uidescriptionfunctiondiff{j}{1}-\uidescriptionfunctiondiff{j+1}{1})+
\uidegree{\numberoffixedopensets,2}^*(\chi)(\uidescriptionfunctiondiff{j}{2}-\uidescriptionfunctiondiff{j+1}{2})
$$
is a function regular at $p$. On the other hand, it is a Laurent polynomial whose terms of nonnegative degree are zeros, so 
$$
\uidegree{\numberoffixedopensets,1}^*(\chi)(\uidescriptionfunctiondiff{j}{1}-\uidescriptionfunctiondiff{j+1}{1})+
\uidegree{\numberoffixedopensets,2}^*(\chi)(\uidescriptionfunctiondiff{j}{2}-\uidescriptionfunctiondiff{j+1}{2})=0.
$$
Now, using Lemmas \ref{linfunczero} and \ref{linfuncestimate} again, 
we see that 
$$
\ord_p(\uidegree{\numberoffixedopensets,1}^*(\chi')(\uidescriptionfunctiondiff{j}{1}-\uidescriptionfunctiondiff{j+1}{1})
+\uidegree{\numberoffixedopensets,1}^*(\chi')(\uidescriptionfunctiondiff{j}{2}-\uidescriptionfunctiondiff{j+1}{2}))\ge -\latticelength{\indexededgept{p}{j}}.
$$
Since $\uidegree{\numberoffixedopensets,1}^*$ and $\uidegree{\numberoffixedopensets,2}^*$ form 
a basis of $N$, and 
$\chi$ and $\chi'$ 
form a basis of $M$, 
the matrix
$$
\left(
\begin{array}{cc}
\uidegree{\numberoffixedopensets,1}^*(\chi) & \uidegree{\numberoffixedopensets,2}^*(\chi) \\
\uidegree{\numberoffixedopensets,1}^*(\chi') & \uidegree{\numberoffixedopensets,2}^*(\chi') 
\end{array}
\right)
$$
is nondegenerate.
Therefore,
$\ord_p((\uidescriptionfunctiondiff{j}{1}-\uidescriptionfunctiondiff{j+1}{1}))\ge -\latticelength{\indexededgept{p}{j}}$ and 
$\ord_p((\uidescriptionfunctiondiff{j}{2}-\uidescriptionfunctiondiff{j+1}{2}))\ge -\latticelength{\indexededgept{p}{j}}$. 
So, the conditions \ref{proplinfunczero} and \ref{proplinfuncestimate} from the definition 
of $\abstractvectorspace_{1,2,p}$ hold, and $\uidescriptionfunction{}\in\abstractvectorspace_{1,2,p}$.

Now we are going to prove the other inclusion. 
Let 
$$
\uidescriptionfunction{}=
(\uidescriptionfunctiondiff{1}{1},\uidescriptionfunctiondiff{1}{2},\uidescriptionfunctiondiff{2}{1},\uidescriptionfunctiondiff{2}{2},\ldots,
\uidescriptionfunctiondiff{\numberofverticespt p}{1},\uidescriptionfunctiondiff{\numberofverticespt p}{2})\in\abstractvectorspace_{1,2,p}.
$$
We have to write $\psi_p(\rho_p(\uidescriptionfunction{}))$
as $f+f'$, where 
\begin{multline*}
f=(f[j]_{\chi,k})_{1\le j\le \numberofverticespt p, \chi\in\{\lambda_1,\ldots,\lambda_{\numberoflatticegenerators}\}, 1\le k\le \dim\Gamma(\PP^1,\OO(\mathcal D(\chi)))}\in
\bigoplus_{j=1}^{\numberofverticespt{p}}\gsvi{p,j},\\
f'=(f'_{j,\chi,k})_{1\le j\le \numberofverticespt p, \chi\in\{\lambda_1,\ldots,\lambda_{\numberoflatticegenerators}\}, 1\le k\le \dim\Gamma(\PP^1,\OO(\mathcal D(\chi)))}\in
\bigoplus_{j=1}^{\numberofverticespt{p}}\gsvicirc{p,j},
\end{multline*}
and, in addition, 
$$
\kappa_{\OO,p,j}((f'_{j,\chi,k})_{\chi\in\{\lambda_1,\ldots,\lambda_{\numberoflatticegenerators}\}, 
1\le k\le \dim\Gamma(\PP^1,\OO(\mathcal D(\chi)))})
$$ 
does not depend on $j$.
In other words, 
we have to find 
functions $f[j]_{\chi,k}$ 
regular at $p$ 
and 
functions
$f'_{j,\chi,k}$ such that (see the definition of $\kappa_{\OO,p,j}$)
$f'_{j_1,\chi,k}=\mu_{j_2,j_1,\chi}f'_{j_2,\chi,k}$ for each $j_1$, $j_2$.
These conditions can be verified for different degrees $\chi$ and different indices $k$ independently, so 
fix a degree 
$\chi\in\{\lambda_1,\ldots,\lambda_{\numberoflatticegenerators}\}$
and 
a generator $\dependentgeneratorsdegree{\chi,k}$
until the end of the proof. 
Denote $a_1=\uidegree{\numberoffixedopensets,1}^*(\chi)$, $a_2=\uidegree{\numberoffixedopensets,2}^*(\chi)$.

The map $\psi_p$ uses $U_{\opensetforvertex{p,j}}$-descriptions of functions and of vector fields on $U_{\numberoffixedopensets}$, 
but it follows from the definitions of
$\psi_p$, of an $U_{\opensetforvertex{p,j}}$-description and of an 
$U_{\numberoffixedopensets}$-description that instead of computing the $(j,\chi,k)$th component 
of $\psi_p(\rho_p(\uidescriptionfunction{}))$ using $\psi_p$ and $\rho_p$, we can first 
compute the $U_{\numberoffixedopensets}$-description of the derivative of $\dependentgeneratorsdegree{\chi,k}$ 
along the vector field on $U_{\numberoffixedopensets}$ whose $U_{\numberoffixedopensets}$-description is 
$(\uidescriptionfunctiondiff{j}{1},\uidescriptionfunctiondiff{j}{2},0)$, 
and then use $\mu_{\numberoffixedopensets,i_{p,j},\chi}$ to compute 
the $U_{\opensetforvertex{p,j}}$-description of the function on $U_{\numberoffixedopensets}$
whose $U_{\numberoffixedopensets}$-description we obtain this way. 
So, consider the $U_{\numberoffixedopensets}$-descriptions of the functions on $U_{\numberoffixedopensets}$ whose 
$U_{\opensetforvertex{p,j}}$-descriptions are functions $f[j]_{\chi,k}$ and $f'_{j,\chi,k}$ we are looking for.
Denote these $U_{\numberoffixedopensets}$-descriptions by by $f[j]''$ and $f'''_j$, respectively (we do not use 
indices $\chi$ and $k$ here, because they are already fixed until the end of the proof, and we do not mean 
that these functions are the same for different $\chi$ and $k$). In other words, 
$f[j]_{\chi,k}=\mu_{\numberoffixedopensets,\opensetforvertex{p,j}, \chi}f[j]''$ and
$f'_{j,\chi,k}=\mu_{\numberoffixedopensets,\opensetforvertex{p,j_1},\chi}f'''_{j_1}$.
In terms of these functions, we need to meet the following conditions:
first, $\mu_{\numberoffixedopensets,\opensetforvertex{p,j}, \chi}f[j]''$ should be regular at $p$ for each $j$, and second, 
$\mu_{\numberoffixedopensets,\opensetforvertex{p,j_1},\chi}f'''_{j_1}$ and 
$\mu_{\numberoffixedopensets,\opensetforvertex{p,j_2},\chi}f'''_{j_2}$
should be the $U_{\opensetforvertex{p,j_1}}$- and $U_{\opensetforvertex{p,j_2}}$-descriptions 
(respectively) of the same function defined on $U_{\numberoffixedopensets}$. These conditions can be reformulated as follows:
the inequality $\ord_p(\mu_{\numberoffixedopensets,\opensetforvertex{p,j},\chi}f[j]''))\ge 0$ 
should hold, and all functions $f'''_j$ should be the same function $f'''$, 
which should not depend on $j$.

Let $j_1$ be the maximal index such that $\chi\in\normalvertexcone{\indexedvertexpt{p}{j_1}}{\stdpolyhedronletter_p}$. 
(The convention that we take the maximal index is nontrivial if $\chi\in\normalvertexcone{\indexededgept{p}{j_1-1}}{\stdpolyhedronletter_p}$.) 
Fix this index $j_1$ until the end of the proof. 
Set 
$$
f'''=
\frac{\oldependentgeneratorsdegree{\chi,k}}{\oluithreadfunction{\numberoffixedopensets,1}^{a_1}\oluithreadfunction{\numberoffixedopensets,2}^{a_2}}
(a_1 \uidescriptionfunctiondiff{j_1}{1}+a_2 \uidescriptionfunctiondiff{j_1}{2}),
$$
and for each $j_2$ ($1\le j_2\le \numberofverticespt{p}$) set
$$
f[j_2]''=
\frac{\oldependentgeneratorsdegree{\chi,k}}{\oluithreadfunction{\numberoffixedopensets,1}^{a_1}\oluithreadfunction{\numberoffixedopensets,2}^{a_2}}
(a_1 \uidescriptionfunctiondiff{j_2}{1}+a_2 \uidescriptionfunctiondiff{j_2}{1})-f'''.
$$
Observe that $f[j_1]''=0$.
By Lemma \ref{uidescpsi}, $f[j_2]''+f'''$ is the $U_{\numberoffixedopensets}$-description 
of the derivative of $\wtdependentgeneratorsdegree{\chi,k}$
along the vector field whose $U_{\numberoffixedopensets}$-description 
is $(\uidescriptionfunctiondiff{j_2}{1},\uidescriptionfunctiondiff{j_2}{2},0)$. It is sufficient to prove that 
$\ord_p(\mu_{\numberoffixedopensets,\opensetforvertex{p,j_2},\chi}f[j_2]'')\ge 0$. 
Denote 
$b_{1,1}=\uidegree{\opensetforvertex{p,j_1},1}^*(\chi)$, 
$b_{1,2}=\uidegree{\opensetforvertex{p,j_1},2}^*(\chi)$,
$b_{2,1}=\uidegree{\opensetforvertex{p,j_2},1}^*(\chi)$, and 
$b_{2,2}=\uidegree{\opensetforvertex{p,j_2},2}^*(\chi)$.
Then we can write this function as follows:
\begin{multline*}
\mu_{\numberoffixedopensets,\opensetforvertex{p,j_2},\chi}f[j_2]''=
\frac{\oluithreadfunction{\numberoffixedopensets,1}^{a_1}\oluithreadfunction{\numberoffixedopensets,2}^{a_2}}
{\oluithreadfunction{\opensetforvertex{p,j_2},1}^{b_{2,1}}\oluithreadfunction{\opensetforvertex{p,j_2},2}^{b_{2,2}}}
\frac{\oldependentgeneratorsdegree{\chi,k}}
{\oluithreadfunction{\numberoffixedopensets,1}^{a_1}\oluithreadfunction{\numberoffixedopensets,2}^{a_2}}
((a_1 \uidescriptionfunctiondiff{j_2}{1}+a_2 \uidescriptionfunctiondiff{j_2}{2})-(a_1 \uidescriptionfunctiondiff{j_1}{1}+a_2 \uidescriptionfunctiondiff{j_1}{2}))
=\\
\frac{\oldependentgeneratorsdegree{\chi,k}}
{\oluithreadfunction{\opensetforvertex{p,j_2},1}^{b_{2,1}}\oluithreadfunction{\opensetforvertex{p,j_2},2}^{b_{2,2}}}
(a_1(\uidescriptionfunctiondiff{j_2}{1}-\uidescriptionfunctiondiff{j_1}{1})+a_2(\uidescriptionfunctiondiff{j_2}{2}-\uidescriptionfunctiondiff{j_1}{2}))
=\\
\frac{\oldependentgeneratorsdegree{\chi,k}}
{\oluithreadfunction{\opensetforvertex{p,j_1},1}^{b_{1,1}}\oluithreadfunction{\opensetforvertex{p,j_1},2}^{b_{1,2}}}
\frac{\oluithreadfunction{\opensetforvertex{p,j_1},1}^{b_{1,1}}\oluithreadfunction{\opensetforvertex{p,j_1},2}^{b_{1,2}}}
{\oluithreadfunction{\opensetforvertex{p,j_2},1}^{b_{2,1}}\oluithreadfunction{\opensetforvertex{p,j_2},2}^{b_{2,2}}}
(a_1(\uidescriptionfunctiondiff{j_2}{1}-\uidescriptionfunctiondiff{j_1}{1})+a_2(\uidescriptionfunctiondiff{j_2}{2}-\uidescriptionfunctiondiff{j_1}{2}))
=\\
\frac{\oldependentgeneratorsdegree{\chi,k}}
{\oluithreadfunction{\opensetforvertex{p,j_1},1}^{b_{1,1}}\oluithreadfunction{\opensetforvertex{p,j_1},2}^{b_{1,2}}}
\mu_{\opensetforvertex{p,j_1},\opensetforvertex{p,j_2},\chi}
(a_1(\uidescriptionfunctiondiff{j_2}{1}-\uidescriptionfunctiondiff{j_1}{1})+a_2(\uidescriptionfunctiondiff{j_2}{2}-\uidescriptionfunctiondiff{j_1}{2})).
\end{multline*}
Since $\dependentgeneratorsdegree{\chi,k}\in\Gamma(\PP^1,\OO(\mathcal D(\chi)))$, $\ord_p(\oldependentgeneratorsdegree{\chi,k})\ge-\mathcal D_p(\chi)$. 
We chose $\uithreadfunction{\opensetforvertex{p,j_1},1}$ and $\uithreadfunction{\opensetforvertex{p,j_1},2}$
so that $\ord_p(\oluithreadfunction{\opensetforvertex{p,j_1},1})=-\mathcal D_p(\uidegree{\opensetforvertex{p,j_1},1})$ and
$\ord_p(\oluithreadfunction{\opensetforvertex{p,j_1},2})=-\mathcal D_p(\uidegree{\opensetforvertex{p,j_1},2})$. 
We know that 
$$
\chi=b_{1,1}\uidegree{\opensetforvertex{p,j_1},1}+b_{1,2}\uidegree{\opensetforvertex{p,j_1},2},
$$ 
$\chi, \uidegree{\opensetforvertex{p,j_1},1}, \uidegree{\opensetforvertex{p,j_1},2}\in
\normalvertexcone{\indexedvertexpt{p}{j_1}}{\stdpolyhedronletter_p}$, and $\mathcal D_p(\cdot)$ 
is linear on $\normalvertexcone{\indexedvertexpt{p}{j_1}}{\stdpolyhedronletter_p}$, 
therefore 
$$
\ord_p(\oluithreadfunction{\opensetforvertex{p,j_1},1}^{b_{1,1}}\oluithreadfunction{\opensetforvertex{p,j_1},2}^{b_{1,2}})
=-b_{1,1}\mathcal D_p(\uidegree{\opensetforvertex{p,j_1},1})-b_{1,2}\mathcal D_p(\uidegree{\opensetforvertex{p,j_1},2})=-\mathcal D_p(\chi).
$$
Hence,
$$
\ord_p\left(
\frac{\oldependentgeneratorsdegree{\chi,k}}{\oluithreadfunction{\opensetforvertex{p,j_1},1}^{b_{1,1}}\oluithreadfunction{\opensetforvertex{p,j_1},2}^{b_{1,2}}}
\right)
\ge 0.
$$

So, now we are done for $j_2=j_1$. Otherwise, we have to consider two cases: $j_2>j_1$ and $j_2<j_1$.
Suppose first that $j_2>j_1$. Then 
\begin{multline*}
\mu_{\opensetforvertex{p,j_1},\opensetforvertex{p,j_2},\chi}
(a_1(\uidescriptionfunctiondiff{j_2}{1}-\uidescriptionfunctiondiff{j_1}{1})+a_2(\uidescriptionfunctiondiff{j_2}{2}-\uidescriptionfunctiondiff{j_1}{2}))=\\
\mu_{\opensetforvertex{p,j_1},\opensetforvertex{p,j_1+1},\chi}\ldots\mu_{\opensetforvertex{p,j_2-1},\opensetforvertex{p,j_2},\chi}
(a_1(\uidescriptionfunctiondiff{j_2}{1}-\uidescriptionfunctiondiff{j_2-1}{1})+a_2(\uidescriptionfunctiondiff{j_2}{2}-\uidescriptionfunctiondiff{j_2-1}{2})+\\
\ldots+
a_1(\uidescriptionfunctiondiff{j_1+1}{1}-\uidescriptionfunctiondiff{j_1}{1})+a_2(\uidescriptionfunctiondiff{j_1+1}{2}-\uidescriptionfunctiondiff{j_1}{2})).
\end{multline*}
By Lemma \ref{linfuncdecomp},
$$
\ord_p(\mu_{\opensetforvertex{p,j_1},\opensetforvertex{p,j_1+1},\chi}\ldots
\mu_{\opensetforvertex{p,j_2-1},\opensetforvertex{p,j_2},\chi})=
\chi(\indexedvertexpt{p}{j_1+1}-\indexedvertexpt{p}{j_1})+\ldots+\chi(\indexedvertexpt{p}{j_2}-\indexedvertexpt{p}{j_2-1}).
$$
Since $\chi\notin\normalvertexcone{\indexedvertexpt{p}{j}}{\stdpolyhedronletter_p}$ for all $j>j_1$, 
by Lemma \ref{linfuncchain} we have
$$
\ord_p(\mu_{\opensetforvertex{p,j_1},\opensetforvertex{p,j_1+1},\chi}\ldots
\mu_{\opensetforvertex{p,j_2-1},\opensetforvertex{p,j_2},\chi})\ge 
\latticelength{\indexededgept{p}{j_1}}+\latticelength{\indexededgept{p}{j_1+1}}+\ldots+\latticelength{\indexededgept{p}{j_2-1}}.
$$
This sum contains at least one summand since $j_2>j_1$.
By the definition of $\abstractvectorspace_{1,2,p}$, 
\begin{multline*}
\ord_p(a_1(\uidescriptionfunctiondiff{j_2}{1}-\uidescriptionfunctiondiff{j_2-1}{1})+
a_2(\uidescriptionfunctiondiff{j_2}{2}-\uidescriptionfunctiondiff{j_2-1}{2})+\\
\ldots+
a_1(\uidescriptionfunctiondiff{j_1+1}{1}-\uidescriptionfunctiondiff{j_1}{1})+
a_2(\uidescriptionfunctiondiff{j_1+1}{2}-\uidescriptionfunctiondiff{j_1}{2}))\ge\\
\min(-\latticelength{\indexededgept{p}{j_2-1}},\ldots,-\latticelength{\indexededgept{p}{j_1}})=
-\max(\latticelength{\indexededgept{p}{j_1}},\ldots,\latticelength{\indexededgept{p}{j_2-1}}).
\end{multline*}
We have 
$$\latticelength{\indexededgept{p}{j_1}}+\latticelength{\indexededgept{p}{j_1+1}}+\ldots+\latticelength{\indexededgept{p}{j_2-1}}
-\max(\latticelength{\indexededgept{p}{j_1}},\ldots,\latticelength{\indexededgept{p}{j_2-1}})\ge0,
$$
therefore
$\mu_{\numberoffixedopensets,\opensetforvertex{p,j_2},\chi}f[j_2]''$ is regular at $p$.

Now consider the case $j_2<j_1$. This time we are going to consider indices 
smaller than $j_1$, and it is possible that 
$\chi\in\normalvertexcone{\indexedvertexpt{p}{j}}{\stdpolyhedronletter_p}$ for some 
$j<j_1$, namely for $j=j_1-1$ (and this is the only possibility). So, we have to consider two 
cases: $\chi\notin\normalvertexcone{\indexedvertexpt{p}{j_1-1}}{\stdpolyhedronletter_p}$ and 
$\chi\in\normalvertexcone{\indexedvertexpt{p}{j_1-1}}{\stdpolyhedronletter_p}$. 
Suppose first that $\chi\notin\normalvertexcone{\indexedvertexpt{p}{j_1-1}}{\stdpolyhedronletter_p}$.
Then we can again write 
\begin{multline*}
\mu_{\opensetforvertex{p,j_1},\opensetforvertex{p,j_2},\chi}
(a_1(\uidescriptionfunctiondiff{j_2}{1}-\uidescriptionfunctiondiff{j_1}{1})+a_2(\uidescriptionfunctiondiff{j_2}{2}-\uidescriptionfunctiondiff{j_1}{2}))=\\
\mu_{\opensetforvertex{p,j_1},\opensetforvertex{p,j_1-1},\chi}\ldots\mu_{\opensetforvertex{p,j_2+1},\opensetforvertex{p,j_2},\chi}
(a_1(\uidescriptionfunctiondiff{j_2}{1}-\uidescriptionfunctiondiff{j_2+1}{1})
+a_2(\uidescriptionfunctiondiff{j_2}{2}-\uidescriptionfunctiondiff{j_2+1}{2})+\\
\ldots
+a_1(\uidescriptionfunctiondiff{j_1-1}{1}-\uidescriptionfunctiondiff{j_1}{1})
+a_2(\uidescriptionfunctiondiff{j_1-1}{2}-\uidescriptionfunctiondiff{j_1}{2})).
\end{multline*}
Since $\chi\notin\normalvertexcone{\indexedvertexpt{p}{j_1-1}}{\stdpolyhedronletter_p}$ 
(and $\chi\notin\normalvertexcone{\indexedvertexpt{p}{j}}{\stdpolyhedronletter_p}$ for all $j<j_1$),
we can apply Lemmas \ref{linfuncdecomp} and \ref{linfuncchainii}. We see that
\begin{multline*}
\ord_p(\mu_{\opensetforvertex{p,j_1},\opensetforvertex{p,j_1-1},\chi}\ldots
\mu_{\opensetforvertex{p,j_2+1},\opensetforvertex{p,j_2},\chi})=\\
\chi(\indexedvertexpt{p}{j_1-1}-\indexedvertexpt{p}{j_1})+\ldots
+\chi(\indexedvertexpt{p}{j_2}-\indexedvertexpt{p}{j_2+1})\ge
\latticelength{\indexededgept{p}{j_1-1}}+\ldots+\latticelength{\indexededgept{p}{j_2}}.
\end{multline*}
And again, by the definition of $\abstractvectorspace_{1,2,p}$, 
\begin{multline*}
\ord_p(a_1(\uidescriptionfunctiondiff{j_2}{1}-\uidescriptionfunctiondiff{j_2+1}{1})
+a_2(\uidescriptionfunctiondiff{j_2}{2}-\uidescriptionfunctiondiff{j_2+1}{2})+\\
\ldots
+a_1(\uidescriptionfunctiondiff{j_1-1}{1}-\uidescriptionfunctiondiff{j_1}{1})
+a_2(\uidescriptionfunctiondiff{j_1-1}{2}-\uidescriptionfunctiondiff{j_1}{2}))\ge\\
\min(-\latticelength{\indexededgept{p}{j_2}},\ldots,-\latticelength{\indexededgept{p}{j_1-1}})=
-\max(\latticelength{\indexededgept{p}{j_2}},\ldots,\latticelength{\indexededgept{p}{j_1-1}}).
\end{multline*}
Therefore, 
$$
\ord_p(\mu_{\numberoffixedopensets,\opensetforvertex{p,j_2},\chi}f[j_2]'')\ge
\latticelength{\indexededgept{p}{j_1-1}}+\ldots+\latticelength{\indexededgept{p}{j_2}}
-\max(\latticelength{\indexededgept{p}{j_2}},\ldots,\latticelength{\indexededgept{p}{j_1-1}})\ge 0.
$$

Finally, consider the case when $j_2<j_1$ and 
$\chi\in\normalvertexcone{\indexedvertexpt{p}{j_1-1}}{\stdpolyhedronletter_p}$. 
Then $\chi\in\normalvertexcone{\indexededgept{p}{j_1-1}}{\stdpolyhedronletter_p}$, and property \ref{proplinfunczero}
in the definition of $\abstractvectorspace_{1,2,p}$ guarantees that 
$$
a_1(\uidescriptionfunctiondiff{j_1-1}{1}-\uidescriptionfunctiondiff{j_1}{1})+a_2(\uidescriptionfunctiondiff{j_1-1}{2}-\uidescriptionfunctiondiff{j_1}{2})=0.
$$
It also follows from Lemmas \ref{linfuncdecomp} and \ref{linfuncestimate} that 
$$
\ord_p(\mu_{\opensetforvertex{p,j_1},\opensetforvertex{p,j_1-1},\chi})=\chi(\indexedvertexpt{p}{j_1-1}-\indexedvertexpt{p}{j_1})=0.
$$
If $j_2=j_1-1$, then we already see that 
$$
\mu_{\opensetforvertex{p,j_1},\opensetforvertex{p,j_2},\chi}
(a_1(\uidescriptionfunctiondiff{j_2}{1}-\uidescriptionfunctiondiff{j_1}{1})
+a_2(\uidescriptionfunctiondiff{j_2}{2}-\uidescriptionfunctiondiff{j_1}{2}))=0,
$$
hence $\mu_{\numberoffixedopensets,\opensetforvertex{p,j_2},\chi}f[j_2]''=0$, in particular, this function is regular at $p$.
If $j_2<j_1-1$ we write 
\begin{multline*}
\mu_{\opensetforvertex{p,j_1},\opensetforvertex{p,j_2},\chi}
(a_1(\uidescriptionfunctiondiff{j_2}{1}-\uidescriptionfunctiondiff{j_1}{1})
+a_2(\uidescriptionfunctiondiff{j_2}{2}-\uidescriptionfunctiondiff{j_1}{2}))=\\
\mu_{\opensetforvertex{p,j_1},\opensetforvertex{p,j_1-1},\chi}\ldots
\mu_{\opensetforvertex{p,j_2+1},\opensetforvertex{p,j_2},\chi}
(a_1(\uidescriptionfunctiondiff{j_2}{1}-\uidescriptionfunctiondiff{j_2+1}{1})
+a_2(\uidescriptionfunctiondiff{j_2}{2}-\uidescriptionfunctiondiff{j_2+1}{2})+\\
\ldots
+a_1(\uidescriptionfunctiondiff{j_1-1}{1}-\uidescriptionfunctiondiff{j_1}{1})
+a_2(\uidescriptionfunctiondiff{j_1-1}{2}-\uidescriptionfunctiondiff{j_1}{2}))
\end{multline*}
as previously.
This time 
\begin{multline*}
\ord_p(\mu_{\opensetforvertex{p,j_1},\opensetforvertex{p,j_1-1},\chi}\ldots
\mu_{\opensetforvertex{p,j_2+1},\opensetforvertex{p,j_2},\chi})=\\
\ord_p(\mu_{\opensetforvertex{p,j_1-1},\opensetforvertex{p,j_1-2},\chi}\ldots\mu_{\opensetforvertex{p,j_2+1},\opensetforvertex{p,j_2},\chi})=\\
\chi(\indexedvertexpt{p}{j_1-2}-\indexedvertexpt{p}{j_1-1})+\ldots+\chi(\indexedvertexpt{p}{j_2}-\indexedvertexpt{p}{j_2+1}).
\end{multline*}
And here we can apply Lemma \ref{linfuncchainii} since 
$\chi\notin \normalvertexcone{\indexededgept{p}{j}}{\stdpolyhedronletter_p}$ for all $j<j_1-1$.
We conclude that 
$$
\ord_p(\mu_{\opensetforvertex{p,j_1},\opensetforvertex{p,j_1-1},\chi}\ldots
\mu_{\opensetforvertex{p,j_2+1},\opensetforvertex{p,j_2},\chi})\ge
\latticelength{\indexededgept{p}{j_1-2}}+\ldots+\latticelength{\indexededgept{p}{j_2}}.
$$
The order of the other multiplier can be rewritten as 
\begin{multline*}
\ord_p(a_1(\uidescriptionfunctiondiff{j_2}{1}-\uidescriptionfunctiondiff{j_2+1}{1})
+a_2(\uidescriptionfunctiondiff{j_2}{2}-\uidescriptionfunctiondiff{j_2+1}{2})+\\
\ldots
+a_1(\uidescriptionfunctiondiff{j_1-1}{1}-\uidescriptionfunctiondiff{j_1}{1})
+a_2(\uidescriptionfunctiondiff{j_1-1}{2}-\uidescriptionfunctiondiff{j_1}{2}))=\\
\ord_p(a_1(\uidescriptionfunctiondiff{j_2}{1}-\uidescriptionfunctiondiff{j_2+1}{1})
+a_2(\uidescriptionfunctiondiff{j_2}{2}-\uidescriptionfunctiondiff{j_2+1}{2})+\\
\ldots
+a_1(\uidescriptionfunctiondiff{j_1-2}{1}-\uidescriptionfunctiondiff{j_1-1}{1})
+a_2(\uidescriptionfunctiondiff{j_1-2}{2}-\uidescriptionfunctiondiff{j_1-1}{2}))\ge\\
\min(-\latticelength{\indexededgept{p}{j_2}},\ldots,-\latticelength{\indexededgept{p}{j_1-2}})=
-\max(\latticelength{\indexededgept{p}{j_2}},\ldots,\latticelength{\indexededgept{p}{j_1-2}}).
\end{multline*}
Again we see that 
\begin{multline*}
\ord_p(\mu_{\opensetforvertex{p,j_1},\opensetforvertex{p,j_2},\chi}
(a_1(\uidescriptionfunctiondiff{j_2}{1}-\uidescriptionfunctiondiff{j_1}{1})
+a_2(\uidescriptionfunctiondiff{j_2}{2}-\uidescriptionfunctiondiff{j_1}{2})))\ge\\
\latticelength{\indexededgept{p}{j_1-2}}+\ldots+\latticelength{\indexededgept{p}{j_2}}
-\max(\latticelength{\indexededgept{p}{j_2}},\ldots,\latticelength{\indexededgept{p}{j_1-2}})\ge 0,
\end{multline*}
and 
$\mu_{\numberoffixedopensets,\opensetforvertex{p,j_2},\chi}f[j_2]''$ is regular at $p$.
\end{proof}

Now it is clear that 
\begin{multline*}
\dim \rho_p^{-1}(\psi_p^{-1}(\kappa_{\OO,p}^{-1}((\bigoplus_{j=1}^{\numberofverticespt p}
\Gamma(W_p,\gvisiinv{\opensetforvertex{p,j}}))+\Gamma(W_p,\gvicircinv))))=
\dim \abstractvectorspace_{1,2,p}=\\
\latticelength{\indexededgept{p}{1}}+\ldots+\latticelength{\indexededgept{p}{\numberofverticespt p-1}},
\end{multline*}
and, since $\rho_p$ is injective,  
$$
\dim (\psi_p^{-1}(\kappa_{\OO,p}^{-1}((\bigoplus_{j=1}^{\numberofverticespt p}
\Gamma(W_p,\gvisiinv{\opensetforvertex{p,j}}))+\Gamma(W_p,\gvicircinv)))\cap \abstractvectorspace_{1,0,p})=
\latticelength{\indexededgept{p}{1}}+\ldots+\latticelength{\indexededgept{p}{\numberofverticespt p-1}}.
$$
By Lemma \ref{delta0suffices}, 
\begin{multline*}
\dim\ker\left(\left.\!\Bigg(
\bigoplus_{j=1}^{\numberofverticespt p}
\left(\Gamma(W_p,\giicircinv)/
\Gamma(W_p,\giisiinv{\opensetforvertex{p,j}})\right)\Bigg)\right/\Gamma(W_p,\giicircinv)
\longrightarrow
\right.\!\\
\left.\!
\left.\!
\Bigg(
\bigoplus_{j=1}^{\numberofverticespt p}\left(\Gamma(W_p,\gvicircinv)/
\Gamma(W_p,\gvisiinv{\opensetforvertex{p,j}})\right)\Bigg)\right/\Gamma(W_p,\gvicircinv)\right)=
\latticelength{\indexededgept{p}{1}}+\ldots+\latticelength{\indexededgept{p}{\numberofverticespt p-1}}-1.
\end{multline*}
By Lemma \ref{consideronespecialpoint}, we have the following equality:
$$
\dim(\ker(\Gamma(\PP^1,\giv)\to\Gamma(\PP^1,\gviii)))=
\sum_{p\in \PP^1\text{ essential}}\left(-1+\sum_{j=1}^{\numberofverticespt p-1}\latticelength{\indexededgept{p}{j}}\right).
$$
Finally, we get the following theorem from Theorem \ref{thmsheavesdownstairs} and Proposition \ref{honepushforwardnumber}:
\begin{theorem}\label{t1aslatticelength}
We maintain the assumptions from Section \ref{probsetup}. Then the dimension of the space of equivariant first order deformations of $X$ 
can be computed as follows.
$$
\dim T^1(X)_0=\max(0, \#(\text{essential special points})-3)+\sum_{p\in\PP^1\text{ essential}}\left(-1+\sum_{j=1}^{\numberofverticespt p-1}\latticelength{\indexededgept{p}{j}}\right),
$$
where $\latticelength{\indexededgept{p}{j}}$ is the number of integer points on the edge $\indexededgept{p}{j}$ of $\stdpolyhedronletter_p$,
including exactly one of its endpoints.
\qed
\end{theorem}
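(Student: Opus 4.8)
The plan is to read off the dimension of $T^1(X)_0$ directly from the four-term exact sequence supplied by Theorem \ref{thmsheavesdownstairs},
$$
0\to H^1(\PP^1,\gi)\to T^1(X)_0\to H^0(\PP^1,\giv)\to H^0(\PP^1,\gviii),
$$
combined with the two dimension counts already assembled above. First I would observe that exactness at $H^1(\PP^1,\gi)$ and at $T^1(X)_0$ means the map $H^1(\PP^1,\gi)\to T^1(X)_0$ is injective with image equal to the kernel of $T^1(X)_0\to H^0(\PP^1,\giv)$, while exactness at $H^0(\PP^1,\giv)$ identifies the image of $T^1(X)_0\to H^0(\PP^1,\giv)$ with $\ker\big(H^0(\PP^1,\giv)\to H^0(\PP^1,\gviii)\big)$. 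Applying rank--nullity to the map $T^1(X)_0\to H^0(\PP^1,\giv)$ then yields the additivity
$$
\dim T^1(X)_0=\dim H^1(\PP^1,\gi)+\dim\ker\big(H^0(\PP^1,\giv)\to H^0(\PP^1,\gviii)\big),
$$
all spaces being finite-dimensional since they are cohomology groups of coherent sheaves on $\PP^1$ (or subquotients thereof).

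The second step is pure substitution. For the first summand I would invoke Proposition \ref{honepushforwardnumber}, which gives $\dim H^1(\PP^1,\gi)=\numberofessentialdivisorpoints-3$ when $\numberofessentialdivisorpoints\ge 3$ and $H^1(\PP^1,\gi)=0$ otherwise; in both cases this equals $\max(0,\numberofessentialdivisorpoints-3)$, and $\numberofessentialdivisorpoints$ is by definition the number of essential special points. For the second summand I would use the dimension of $\ker\big(\Gamma(\PP^1,\giv)\to\Gamma(\PP^1,\gviii)\big)$ computed in the display immediately preceding the statement, namely $\sum_{p\text{ essential}}\big(-1+\sum_{j=1}^{\numberofverticespt p-1}\latticelength{\indexededgept{p}{j}}\big)$, recalling that $\Gamma(\PP^1,-)=H^0(\PP^1,-)$. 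Adding the two contributions produces the asserted formula, with $\latticelength{\indexededgept{p}{j}}$ counting integer points on the edge $\indexededgept{p}{j}$ including exactly one endpoint.

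There is essentially no obstacle left at this stage: the entire substance of the argument lies in the preceding results — the reduction of $T^1(X)_0$ to sheaf cohomology on $\PP^1$ (Theorem \ref{thmsheavesdownstairs}), the count of $\dim H^1(\PP^1,\gi)$ in terms of $\numberofessentialdivisorpoints$ (Proposition \ref{honepushforwardnumber}), and above all the identification of the kernel on $H^0$ with the explicit space $\abstractvectorspace_{1,2,p}$ of tuples of Laurent tails (Proposition \ref{avs22p}, together with Remark \ref{avs22pdim} and Lemma \ref{delta0suffices}). The only point requiring a word of care is that the dimension additivity must be an exact equality, not merely an inequality; this is precisely why I would extract it via rank--nullity applied to the single linear map $T^1(X)_0\to H^0(\PP^1,\giv)$ rather than through a cruder long-exact-sequence estimate, thereby guaranteeing that the resulting formula for $\dim T^1(X)_0$ is exact.
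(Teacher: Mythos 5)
Your proposal is correct and follows the same route as the paper: the paper likewise deduces the formula by combining the exact sequence of Theorem \ref{thmsheavesdownstairs} with Proposition \ref{honepushforwardnumber} and the kernel-dimension computation displayed just before the theorem. Your explicit rank--nullity justification of the dimension additivity is a slight elaboration of what the paper leaves implicit, but the substance is identical.
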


Observe that the sum $\sum_{j=1}^{\numberofverticespt p-1}\latticelength{\indexededgept{p}{j}}$ can 
also be understood as follows. The integer points on the boundary of $\stdpolyhedronletter_p$
split this boundary into segments (containing no integer points in the interior). Then 
$\sum_{j=1}^{\numberofverticespt p-1}\latticelength{\indexededgept{p}{j}}$ is the amount 
of these segments in the \textbf{finite} edges of $\stdpolyhedronletter_p$.
Later, in Chapter \ref{sectksmtoric}, we will see how to construct some actual first order deformations, 
which will span a $(\dim T^1(X)_0)$-dimensional vector space.

\chapter{Connections between the graded component of degree $0$ of $T^1(X)$ and graded components of $T^1$ of 
toric varieties}
\markboth{\chaptermarkformat Connections between $T^1(X)_0$ and graded components of $T^1$ of 
toric varieties}{}

Given an affine toric 3-dimensional variety $X$, one can restrict the space the action of the 3-dimensional torus to 
a 2-dimensional subtorus, and consider $X$ as a 3-dimensional $T$-variety with an action of a 2-dimensional torus.
Toric varieties are parametrized by pointed cones of the same dimension, and $T$-varieties are parametrized 
by polyhedral divisors as described in the Introduction. These two parametrizations are related via the following
toric downgrade procedure.

Let $X$ be an affine toric 3-dimensional variety defined by a pointed cone $\tau$ in 
$\widetilde N_{\QQ}=\widetilde N\otimes_{\ZZ}\QQ$, 
where $\widetilde N$ is a 3-dimensional lattice.
Denote the dual lattice of $\widetilde N$ by $\widetilde M$, and denote the 3-dimensional torus acting on 
$X$ by $\widetilde T$.
Then two-dimensional subtori of $\widetilde T$ are parametrized by primitive vectors $\chi\in\widetilde M$.
Fix one of them until the end of this section, denote it by $\twotorusdelimiter$. We are going to 
consider the action of $T=\ker\twotorusdelimiter$ on $X$. To describe this action by 
a polyhedral divisor, choose a line $N'\subset \widetilde N$ complementary to $N=\ker \twotorusdelimiter$.
These choices are illustrated by the following diagram:
$$
\xymatrix{
0\ar[r] & N\ar[r] & \widetilde N\ar^{\twotorusdelimiter}[r] & \ZZ\ar[r]\ar@{=}[d] & 0 \\
&&& N'\ar@{_{(}-->}[ul]
}
$$
Consider also the projection from $\widetilde N_\QQ$ to $N'_\QQ=N'\otimes_\ZZ \QQ$ along 
$N_\QQ=N\otimes_\ZZ \QQ$. It maps each face of $\tau$ surjectively onto 
a cone in $N'_\QQ$
Then the variety $Y$, where the polyhedral divisor will be constructed, is defined by the coarsest fan in $N'_\QQ$ 
containing all these cones. It can be $\PP^1$, $\CC$, or $\CC^*$, depending on whether the image of $\tau$ is the 
whole line, a half-line, or a point, respectively. We are interested in the case $Y=\PP^1$, so suppose in the 
sequel that it holds. It takes place if and only if $N_\QQ$ separates $\tau$ into two nonempty two-dimensional cones, 
or, equivalently, if $\twotorusdelimiter\notin\tau^\vee$.

To construct the polyhedral divisor itself, recall that the two half-lines of $N'_\QQ$ correspond to 
the two fixed points of a torus acting on $\PP^1$, which we can denote by 0 and $\infty$. More 
exactly, let 0 (resp. $\infty$) correspond to the half-line $\{\twotorusdelimiter>0\}$ (resp. 
$\{\twotorusdelimiter<0\}$). Then the polyhedral divisor contains nontrivial polyhedra at 0 and at $\infty$ only, 
and the polyhedron at 0 (resp. at $\infty$) is the projection of $\tau\cap [\twotorusdelimiter=1]$
(resp. of $\tau\cap [\twotorusdelimiter=-1]$) to $N_\QQ$ along $N'_\QQ$. 
As previously, denote these polyhedra by $\stdpolyhedronletter_0$ and $\stdpolyhedronletter_\infty$.
The tail cone of both of these polyhedra is 
$\6=\tau\cap N_\QQ$. We only considered the cases when it was full-dimensional, and, together with the requirement 
$Y=\PP^1$, this means that $\tau$ is full-dimensional.
An example of this situation is shown by Fig. \ref{fig3dcone}

\begin{figure}[!h]
\begin{center}
\includegraphics{t1_3fold_figures-8.mps}
\end{center}
\caption{An example of toric downgrade: the three-dimensional cone $\tau$ is shown in black,
$\6$ is green, and the polyhedra $\stdpolyhedronletter_0$ and $\stdpolyhedronletter_\infty$ are shown 
in blue and red.}\label{fig3dcone}
\end{figure}

The last requirement we had says that all vertices of $\stdpolyhedronletter_0$ and $\stdpolyhedronletter_\infty$
have to be lattice points. Since $\twotorusdelimiter$ is a primitive vector, $N'\cap[\twotorusdelimiter=1]$
and $N'\cap[\twotorusdelimiter=-1]$ are lattice points, so the projections of the planes $[\twotorusdelimiter=1]$
and $[\twotorusdelimiter=-1]$ onto $N$ along $N'$ map lattice points to lattice points. Hence, the last condition we should impose
says that if a one-dimensional face of $\tau$ intersects one of the planes
$[\twotorusdelimiter=1]$
and $[\twotorusdelimiter=-1]$,
then the intersection point is a lattice point.

Now we need some notation and terminology. Call an edge of $\tau$ \textit{positive} (resp. \textit{nonnegative}, 
\textit{negative}, \textit{nonpositive}) if $\twotorusdelimiter$
takes positive (resp. nonnegative, negative, nonpositive) values on this edge 
(except the origin). Call an edge of $\tau$ \textit{orthogonal} if $\chi$ takes only zero values in this edge.
Call a facet of $\tau$ \textit{positive} (resp. \textit{negative}) if $\chi$ takes only positive (resp.
only negative) values on the interior of this facet.
Denote the edges of $\tau$ by $\indexededge{\tau}{1}, \ldots, \indexededge{\tau}{\numberofedges{\tau}}$
and the facets of $\tau$ by $\indexedfacet{\tau}{1}, \ldots, \indexedfacet{\tau}{\numberofedges{\tau}}$.
The intersections of these edges and facets with the affine planes $\twotorusdelimiter=1$ 
and $\twotorusdelimiter=-1$ are vertices and edges of $\stdpolyhedronletter_0$ and $\stdpolyhedronletter_\infty$, 
respectively, for more details see Remark \ref{intersectionnotation}.
Sometimes we can write $\indexededge{\tau}{0}$ (resp. $\indexededge{\tau}{\numberofedges{\tau}+1}$, 
$\indexedfacet{\tau}{0}$, $\indexedfacet{\tau}{\numberofedges{\tau}+1}$) instead of 
$\indexededge{\tau}{\numberofedges{\tau}}$ (resp. $\indexededge{\tau}{1}$, $\indexedfacet{\tau}{\numberofedges{\tau}}$,
$\indexedfacet{\tau}{1}$). We enumerate edges and facets so that
$\partial \indexedfacet{\tau}{i}=\indexededge{\tau}{i}\cup\indexededge{\tau}{i+1}$. We also require that 
$\indexededge{\tau}{1}$ is a positive edge, and $\indexededge{\tau}{\numberofedges{\tau}}$ is a nonpositive edge. 
This requirement allows one to choose one of exactly two enumerations of edges and facets, we choose one of 
them arbitrarily.

It is also convenient to introduce some notation for positive and negative edges separately.
Denote the number of positive edges
by $\numberofpositiveedges{\tau}$. Denote the positive edges themselves by $\indexedpositiveedge{\tau}{1}, \ldots,
\indexedpositiveedge{\tau}{\numberofpositiveedges{\tau}}$. Here the edges are enumerated in the same order as 
when we enumerated all edges, i.~e. $\indexedpositiveedge{\tau}{i}=\indexededge{\tau}{i}$ for $1\le i\le \numberofpositiveedges{\tau}$.
Similarly, denote the number of negative edges by $\numberofnegativeedges{\tau}$, and denote the 
negative edges themselves by $\indexednegativeedge{\tau}{1}, \ldots,
\indexednegativeedge{\tau}{\numberofnegativeedges{\tau}}$. This time we \textit{\textbf{reverse}} the order that we used 
when we enumerated all edges together. In other words, if $\indexednegativeedge{\tau}{1}=\indexededge{\tau}{i-1}$
for some $i$ (which can equal $\numberofedges{\tau}$ or $\numberofedges{\tau}+1$), 
then $\indexednegativeedge{\tau}{j}=\indexededge{\tau}{i-j}$ for $1\le j\le \numberofnegativeedges{\tau}$.
The notation $\indexedpositiveedge{\tau}{i}$ may look a bit redundant, but it is convenient 
to have uniform notation for positive and negative edges.

Now let us introduce notation for positive and negative facets. Denote the facet
whose boundary is $\indexedpositiveedge{\tau}{i}\cup\indexedpositiveedge{\tau}{i+1}$
(resp. $\indexednegativeedge{\tau}{i}\cup\indexednegativeedge{\tau}{i+1}$)
by $\indexedpositivefacet{\tau}{i}$ (resp. $\indexednegativefacet{\tau}{i}$) for 
$1\le i\le \numberofpositiveedges{\tau}-1$ 
(resp $1\le i\le \numberofnegativeedges{\tau}-1$).
Again we have $\indexedpositivefacet{\tau}{i}=\indexedfacet{\tau}{i}$ for $1\le i\le \numberofpositiveedges{\tau}-1$.
Extend this notation as follows. First, set $\indexedpositivefacet{\tau}{0}=\indexedfacet{\tau}{0}$ 
and $\indexedpositivefacet{\tau}{\numberofpositiveedges{\tau}}=\indexedfacet{\tau}{\numberofpositiveedges{\tau}}$.
If $\indexednegativeedge{\tau}{1}=\indexededge{\tau}{i-1}$, denote $\indexednegativefacet{\tau}{0}=\indexedfacet{\tau}{i}$
and $\indexednegativefacet{\tau}{\numberofnegativeedges{\tau}}=\indexedfacet{\tau}{i-\numberofnegativeedges{\tau}}$.
In other words, $\indexednegativefacet{\tau}{0}$ is the facet of $\tau$ with the highest 
index such that one of the edges on its boundary is negative. The other edge on its boundary is nonnegative, 
and the negative edge on the boundary of $\indexednegativefacet{\tau}{0}$
is $\indexednegativeedge{\tau}{1}$.
And $\indexednegativefacet{\tau}{\numberofnegativeedges{\tau}}$ is the facet of $\tau$ with the lowest 
index such that one of the edges on its boundary is negative.
The other edge on its boundary is nonnegative, 
and the negative edge on the boundary of $\indexednegativefacet{\tau}{\numberofnegativeedges{\tau}}$
is $\indexednegativeedge{\tau}{\numberofnegativeedges{\tau}}$.

An example of this notation is shown by Fig.~\ref{fig3dconenotation}.

\begin{figure}[!h]
\begin{center}
\includegraphics{t1_3fold_figures-9.mps}
\end{center}
\caption{An example of notation for positive and negative edges and facets. The picture shows the 
section of $\widetilde N_{\QQ}$ with an affine hyperplane that intersects all edges of $\tau$.
The only orthogonal edge here is $\indexededge{\tau}{5}$. The facet $\indexedfacet{\tau}{9}$ is 
neither negative nor positive. Here $\numberofedges{\tau}=9$, $\numberofpositiveedges{\tau}=4$, 
and $\numberofnegativeedges{\tau}=4$.}\label{fig3dconenotation}
\end{figure}

\begin{remark}\label{intersectionnotation}
Here is how the notation introduced now is related 
with the notation for edges and vertices of $\stdpolyhedronletter_p$ we 
introduced in the beginning. Namely, within the notation that we introduced now, we have
$\numberofpositiveedges{\tau}=\numberofvertices{\stdpolyhedronletter_0}$, 
$\indexedvertex{\stdpolyhedronletter_0}{i}=\indexedpositiveedge{\tau}{i}\cap[\twotorusdelimiter=1]$ for $1\le i\le \numberofpositiveedges{\tau}$, 
$\indexededge{\stdpolyhedronletter_0}{i}=\indexedpositivefacet{\tau}{i}\cap[\twotorusdelimiter=1]$ for $0\le i\le \numberofpositiveedges{\tau}$, 
$\numberofnegativeedges{\tau}=\numberofvertices{\stdpolyhedronletter_\infty}$,
$\indexedvertex{\stdpolyhedronletter_\infty}{i}=\indexednegativeedge{\tau}{i}\cap[\twotorusdelimiter=-1]$ for $1\le i\le \numberofpositiveedges{\tau}$,
and
$\indexededge{\stdpolyhedronletter_\infty}{i}=\indexednegativefacet{\tau}{i}\cap[\twotorusdelimiter=-1]$ for $0\le i\le \numberofpositiveedges{\tau}$.
\end{remark}

The faces of the cone $\tau^\vee$ dual to $\tau$ put be set into bijection with 
the faces of $\tau$. Namely, each face $\tau'$ of $\tau$ defines a face of $\tau^\vee$
consisting of all $a\in\tau^\vee$ such that $a(\tau')=0$. We call this face of $\tau^\vee$
the \textit{normal face of $\tau'$} and denote it by $\normalvertexcone{\tau'}{\tau}$.
Clearly, the normal faces of edges are facets and vice versa.

A formula for the graded components of the first-order deformation space of a toric variety was given in 
\cite{altgorenstein}. To formulate it, we need to quote also some notation from \cite{altgorenstein}.
(We slightly change the letters we use there to avoid confusion.)
First, let 
$\widetilde{\lambda_1},\ldots,\widetilde{\lambda_{\widetilde{\numberoflatticegenerators}}}$
be the Hilbert basis of $\tau^\vee$.
If $\tau'$ is an edge of $\tau$, and $\chi\in\widetilde M$ is a degree, denote
$$
\Lambda^{\chi}_{\tau'}=\{\widetilde{\lambda_i}\mid \widetilde{\lambda_i}(\primitivelattice{\tau'})<\chi(\primitivelattice{\tau'})\}.
$$
Now, if $\tau'$ is a facet of $\tau$, we set
$$
\Lambda^{\chi}_{\tau'}=\bigcap_{\substack{\text{$\tau''$ is an edge of $\tau$}\\\tau''\subset\partial\tau'}}\Lambda^{\chi}_{\tau''},
$$
and for the origin (which is also a face of $\tau$) we set
$$
\Lambda^{\chi}_0=\bigcup_{\text{$\tau'$ is an edge of $\tau$}}\Lambda^{\chi}_{\tau'}.
$$
Finally, we set
$$
\Lambda^{\chi,i}=\bigoplus_{\substack{\text{$\tau'$ is a face of $\tau$}\\\dim\tau'=i}}\Span_{\widetilde M}(\Lambda^{\chi}_{\tau'})
$$
for $i=0,1,2$.
Here $\Span_{\widetilde M}$ denotes the sublattice of $\widetilde M$ generated by the subset of $\widetilde M$ 
under the $\Span_{\widetilde M}$ sign. In the sequel we will also use notation $\Span_\QQ$ for the $\QQ$-linear subspace 
of $\widetilde M_\QQ=\widetilde M\otimes_\ZZ\QQ$ generated by a set of elements of $\widetilde M$ or of $\widetilde M_\QQ$.
generated 
Consider the complex
$$
(\Lambda^{\chi,0}\otimes_\ZZ\CC)^*\to (\Lambda^{\chi,1}\otimes_\ZZ\CC)^*\to (\Lambda^{\chi,2}\otimes_\ZZ\CC)^*,
$$
where the maps are standard Cech differentials.
Denote the graded component of $T^1(X)$ of degree $\chi$ by $T^1_\chi(X)$.
\begin{theorem}\label{t1toric}\cite[Theorem 2.1]{altgorenstein}
$$
T^1_{-\chi}(X)\cong H^1\Big((\Lambda^{\chi,\bullet}\otimes_\ZZ\CC)^*\Big).
$$
\end{theorem}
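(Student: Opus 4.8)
The plan is to deduce Theorem \ref{t1toric} from the general Schlessinger-type description of $T^1$ provided by Theorem \ref{schlessgen}, carried out degree by degree in the $\widetilde M$-grading. Since $X=\Spec\CC[\tau^\vee\cap\widetilde M]$ is normal, its singular locus has codimension at least $2$, so one may take $U=X_{\mathrm{sm}}$, equivalently the union of the big torus and the codimension-one orbits, i.e. the invariant charts attached to the edges of $\tau$. Theorem \ref{schlessgen} then identifies $T^1(X)$ with the kernel of a map of first cohomology groups computed on $U$; because $U$ is invariant under $\widetilde T$ and the generators $x^{\widetilde\lambda_i}$ are homogeneous, this identification respects the $\widetilde M$-grading. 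Thus it suffices to compute the degree $-\chi$ part of each sheaf-cohomology group appearing there, and I would fix $\chi$ throughout.

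First I would describe the local contributions. On each invariant chart $U_{\tau'}$, for $\tau'$ a face of $\tau$, the degree $-\chi$ component of $\Theta_X$ and of the trivial bundle $\OO_X^{\oplus\widetilde{\numberoflatticegenerators}}$ is finite-dimensional, and whether a homogeneous monomial vector field of degree $-\chi$ extends across the orbit of $\tau'$ is governed by the sign of $\widetilde\lambda_i(\primitivelattice{\tau'})-\chi(\primitivelattice{\tau'})$ for each Hilbert generator. The available directions turn out to be dual to $\Span_{\widetilde M}(\Lambda^\chi_{\tau'})$, where $\Lambda^\chi_{\tau'}$ collects exactly the generators strictly dominated by $\chi$ on $\primitivelattice{\tau'}$; this is the origin of the spaces $(\Span_{\widetilde M}(\Lambda^\chi_{\tau'})\otimes_\ZZ\CC)^*$. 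It also explains why only faces of dimension $0,1,2$ contribute: on smooth charts the local $T^1$ vanishes (as $\Gamma(U',\mathcal T^1)=0$ for smooth $U'$, used in the proof of Theorem \ref{schlessgen}), while the full cone $\tau$, corresponding to the fixed point, lies in the removed codimension-$\geq 2$ locus.

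Next I would assemble the local data into a complex indexed by the face lattice of $\tau$. The restriction maps between charts, together with the Čech differentials, organize the degree $-\chi$ cohomology into an Ishida-type complex whose term in homological degree $i$ is $\bigoplus_{\dim\tau'=i}\Span_{\widetilde M}(\Lambda^\chi_{\tau'})$, with the prescribed rule $\Lambda^\chi_0=\bigcup_{\tau'}\Lambda^\chi_{\tau'}$ over edges $\tau'$ for the origin and $\Lambda^\chi_{\tau''}=\bigcap_{\tau'\subset\partial\tau''}\Lambda^\chi_{\tau'}$ over the bounding edges of a facet $\tau''$. Dualizing and comparing differentials with the standard Čech signs yields the complex $(\Lambda^{\chi,\bullet}\otimes_\ZZ\CC)^*$, and the kernel-of-$H^1$ coming from Theorem \ref{schlessgen} becomes precisely $H^1$ of this complex, giving $T^1_{-\chi}(X)\cong H^1\big((\Lambda^{\chi,\bullet}\otimes_\ZZ\CC)^*\big)$.

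The main obstacle will be the passage in the third step from the naive Čech complex of the edge-charts, whose higher intersections all collapse to the torus, to the face-indexed complex $\Lambda^{\chi,\bullet}$, in which two edges interact only through the facet they bound. Making this reorganization rigorous requires the combinatorial input that the relevant restriction maps are injective and that the spans $\Span_{\widetilde M}(\Lambda^\chi_{\tau'})$ glue according to the face poset, so that the cohomology is computed by the Ishida complex rather than by the full simplex on the set of edges. Verifying the exact identification of the local degree $-\chi$ tangent spaces with $(\Span_{\widetilde M}(\Lambda^\chi_{\tau'})\otimes_\ZZ\CC)^*$, together with the signs of the differentials, is the technical heart; once this is in place the theorem follows and agrees with \cite[Theorem 2.1]{altgorenstein}.
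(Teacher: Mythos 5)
First, a point of comparison: the paper does not prove Theorem \ref{t1toric} at all --- it is quoted verbatim from \cite[Theorem 2.1]{altgorenstein} and used as external input for the consistency check in that chapter. So there is no in-paper proof to measure your argument against; what you have written is a reconstruction of the strategy of Altmann's original proof, recast in terms of the paper's Theorem \ref{schlessgen}. As a plan this is the right one: take $U$ to be the union of the edge charts $U_{\tau'}$ (smooth, complement of codimension $\ge 2$), apply Theorem \ref{schlessgen}, and compute everything degree by degree.

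However, the two steps you defer are not routine verifications; they carry essentially all of the content, and one of them is misdescribed. (1) The local computation: a homogeneous derivation of degree $-\chi$ is $x^{-\chi}\partial_v$ with $v\in \widetilde N\otimes\CC$, and it extends over the orbit of an edge $\tau'$ iff $\lambda(v)=0$ for every Hilbert generator $\lambda$ with $\lambda(\primitivelattice{\tau'})<\chi(\primitivelattice{\tau'})$; so $\Gamma(U_{\tau'},\Theta_X)_{-\chi}$ is the \emph{annihilator} of $\Span_{\widetilde M}(\Lambda^\chi_{\tau'})$, not its dual --- the dual only appears as the quotient $(\widetilde N\otimes\CC)/\Gamma(U_{\tau'},\Theta_X)_{-\chi}$ entering via Corollary \ref{h1computegen}. (2) More seriously, if you compute $H^1(U,\Theta_X)_{-\chi}$ from the edge cover you get $\bigl(\bigoplus_{\tau'\ \mathrm{edge}}\Span(\Lambda^\chi_{\tau'})^*\bigr)/\widetilde N^*_\CC$ and \emph{no facet terms appear at all}: all double intersections of edge charts are the big torus, so the Čech conditions in degree one are vacuous. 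The summands $\Lambda^{\chi,2}$ indexed by facets do not come from ``restriction maps between charts'' as your third step asserts; they arise only after unwinding the kernel condition $\ker\bigl(H^1(U,\Theta_X)\to H^1(U,\OO_X^{\oplus \widetilde{\numberoflatticegenerators}})\bigr)$, i.e.\ from the map $\psi$ given by the generators $x^{\widetilde{\lambda_i}}$, and reorganizing the resulting linear conditions so that two edges interact exactly through the facet they span (non-adjacent edges giving conditions implied by the adjacent ones). Until that reorganization and the identification of the differential signs are carried out, the proposal is an accurate roadmap of the argument in \cite{altgorenstein} rather than a proof.
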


Our goal for this section is to deduce Theorem \ref{t1aslatticelength}
in the case of toric $X$ from Theorem \ref{t1toric}. 
It is known 
that 
the 0th graded component of $X$ considered as a $T$-variety
is isomorphic to
$$
\bigoplus_{a\in\ZZ}
T^1_{a\twotorusdelimiter}(X),
$$
where the degrees are understood with respect to the action of the three-dimensional torus. So, 
in the sequel we will study the spaces $T^1_\chi(X)$, where $\chi$ is a multiple of 
$\twotorusdelimiter$. 

\begin{lemma}\label{triviallambda1zero}
Let $\chi$ be a multiple of $\twotorusdelimiter$ and $\tau'$ be an edge of $\tau$.
Then $\Lambda^{\chi}_{\tau'}=\varnothing$ if one of the following conditions holds:
\begin{enumerate}
\item\label{zero3degree} $\chi=0$.
\item\label{orthogonal3degree} $\tau'$ is an orthogonal edge.
\item\label{negpos3degree} $\chi=a\twotorusdelimiter$, where $a>0$, and $\tau'$ is a negative edge.
\item\label{posneg3degree} $\chi=a\twotorusdelimiter$, where $a<0$, and $\tau'$ is a positive edge.
\end{enumerate}
\end{lemma}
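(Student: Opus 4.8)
The plan is to read off the result directly from the definition of $\Lambda^{\chi}_{\tau'}$ together with one basic positivity fact. Recall that $\Lambda^{\chi}_{\tau'}=\{\widetilde{\lambda_i}\mid \widetilde{\lambda_i}(\primitivelattice{\tau'})<\chi(\primitivelattice{\tau'})\}$, where the $\widetilde{\lambda_i}$ run over the Hilbert basis of $\tau^\vee$. The first and crucial observation is that every Hilbert basis element lies in $\tau^\vee$, while $\primitivelattice{\tau'}$ is the primitive generator of an edge of $\tau$ and hence lies in $\tau$. By the definition of the dual cone this forces $\widetilde{\lambda_i}(\primitivelattice{\tau'})\ge 0$ for all $i$. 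Consequently, whenever $\chi(\primitivelattice{\tau'})\le 0$, the strict inequality $\widetilde{\lambda_i}(\primitivelattice{\tau'})<\chi(\primitivelattice{\tau'})$ can never be satisfied, and $\Lambda^{\chi}_{\tau'}=\varnothing$.

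Thus the whole lemma reduces to verifying that $\chi(\primitivelattice{\tau'})\le 0$ in each of the four listed cases, which I would do by writing $\chi=a\twotorusdelimiter$ and invoking the sign conventions for edges introduced just before the lemma (an edge is positive, negative, or orthogonal according as $\twotorusdelimiter$ is positive, negative, or zero on its primitive generator). Case \ref{zero3degree} ($\chi=0$) gives $\chi(\primitivelattice{\tau'})=0$. Case \ref{orthogonal3degree} ($\tau'$ orthogonal) gives $\twotorusdelimiter(\primitivelattice{\tau'})=0$, hence $\chi(\primitivelattice{\tau'})=a\cdot 0=0$. Case \ref{negpos3degree} ($a>0$, $\tau'$ negative) gives $\twotorusdelimiter(\primitivelattice{\tau'})<0$, so $\chi(\primitivelattice{\tau'})=a\,\twotorusdelimiter(\primitivelattice{\tau'})<0$. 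Case \ref{posneg3degree} ($a<0$, $\tau'$ positive) gives $\twotorusdelimiter(\primitivelattice{\tau'})>0$, so again $\chi(\primitivelattice{\tau'})=a\,\twotorusdelimiter(\primitivelattice{\tau'})<0$. In every case $\chi(\primitivelattice{\tau'})\le 0$, and combining this with the positivity observation of the previous paragraph finishes the argument.

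There is no real obstacle here; the statement is essentially a sign bookkeeping exercise. The only point that requires a moment's care is that the defining inequality for $\Lambda^{\chi}_{\tau'}$ is \emph{strict}: this is precisely what allows the two cases where $\chi(\primitivelattice{\tau'})=0$ (namely \ref{zero3degree} and \ref{orthogonal3degree}) to yield emptiness rather than merely a small set, since $\widetilde{\lambda_i}(\primitivelattice{\tau'})\ge 0 = \chi(\primitivelattice{\tau'})$ violates the strict inequality. I would state the positivity fact $\widetilde{\lambda_i}(\primitivelattice{\tau'})\ge 0$ explicitly as the organizing principle and then dispatch the four cases in a single short paragraph.
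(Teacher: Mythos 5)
Your proof is correct and follows exactly the same route as the paper: observe that every Hilbert basis element of $\tau^\vee$ is nonnegative on $\primitivelattice{\tau'}$, check that $\chi(\primitivelattice{\tau'})\le 0$ in each of the four cases, and conclude that the strict inequality defining $\Lambda^{\chi}_{\tau'}$ is never satisfied. No differences worth noting.
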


\begin{proof}
Choose a Hilbert basis element $\widetilde{\lambda_i}$, where $1\le i\le \widetilde{\numberoflatticegenerators}$.
Since $\widetilde{\lambda_i}\in\tau^\vee$, we have 
$\widetilde{\lambda_i}(\primitivelattice{\tau'})\ge 0$.
On the other hand, $\chi(\primitivelattice{\tau'})=0$ if case \ref{zero3degree} or \ref{orthogonal3degree} from 
the above classification holds. If case \ref{negpos3degree} or \ref{posneg3degree} takes place, then
$\chi(\primitivelattice{\tau'})<0$. Hence, $\widetilde{\lambda_i}(\primitivelattice{\tau'})\ge \chi(\primitivelattice{\tau'})$, 
and $\widetilde{\lambda_i}\notin \Lambda^{\chi}_{\tau'}$.
\end{proof}

\begin{corollary}\label{chieq0trivial}
If $\chi=0$, then $\Lambda^{\chi,1}=0$ and $T^1_0(X)=0$.\qed
\end{corollary}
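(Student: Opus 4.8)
The plan is to derive the corollary directly from Lemma \ref{triviallambda1zero} and Theorem \ref{t1toric}; no new computation is needed, and the only point requiring care is to identify which term of the Cech-type complex computes the relevant cohomology.

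First I would establish $\Lambda^{0,1}=0$. Recall that by definition $\Lambda^{\chi,1}=\bigoplus_{\dim\tau'=1}\Span_{\widetilde M}(\Lambda^{\chi}_{\tau'})$, where the sum runs over the edges $\tau'$ of $\tau$. Applying Lemma \ref{triviallambda1zero} in case \ref{zero3degree} (the case $\chi=0$) to each edge $\tau'$ gives $\Lambda^{0}_{\tau'}=\varnothing$. Since $\Span_{\widetilde M}(\varnothing)=0$, every summand vanishes, and therefore $\Lambda^{0,1}=0$.

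Next I would deduce $T^1_0(X)=0$. By Theorem \ref{t1toric} applied with the degree $\chi=0$ (so that $-\chi=0$ as well), we have $T^1_0(X)\cong H^1\big((\Lambda^{0,\bullet}\otimes_\ZZ\CC)^*\big)$. The complex in question is the three-term complex $(\Lambda^{0,0}\otimes_\ZZ\CC)^*\to(\Lambda^{0,1}\otimes_\ZZ\CC)^*\to(\Lambda^{0,2}\otimes_\ZZ\CC)^*$, and its first cohomology $H^1$ is the cohomology taken at the middle term $(\Lambda^{0,1}\otimes_\ZZ\CC)^*$. By the previous paragraph $\Lambda^{0,1}=0$, so this middle term is the zero vector space; hence $H^1$ vanishes and $T^1_0(X)=0$.

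The argument has no genuine obstacle: once $\Lambda^{0,1}=0$ is in hand, the vanishing of $T^1_0(X)$ is automatic. The only thing to be attentive to is the indexing convention in Theorem \ref{t1toric}, namely that $T^1_{-\chi}$ corresponds to $\Lambda^{\chi,\bullet}$; since we work at $\chi=0$ this sign plays no role, but I would state it explicitly so that the degree in which the deformation space vanishes is unambiguously $0$.
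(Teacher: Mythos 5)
Your proposal is correct and follows exactly the route the paper intends: Lemma \ref{triviallambda1zero}, case \ref{zero3degree}, kills $\Lambda^{0}_{\tau'}$ for every edge $\tau'$, hence $\Lambda^{0,1}=0$, and then Theorem \ref{t1toric} gives $T^1_0(X)=H^1$ of a complex whose middle term vanishes. The paper leaves this as an immediate consequence (hence the \qed with no written proof), and your remark about the sign convention $T^1_{-\chi}\leftrightarrow\Lambda^{\chi,\bullet}$ being irrelevant at $\chi=0$ is accurate.
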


\begin{lemma}\label{delimitervalueone}
If $\tau'$ is a positive (resp. negative) edge of $\tau$, then $\twotorusdelimiter(\primitivelattice{\tau'})$
equals 1 (resp. $-1$).
\end{lemma}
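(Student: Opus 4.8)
The plan is to use the ``lattice point'' condition on the vertices of $\stdpolyhedronletter_0$ and $\stdpolyhedronletter_\infty$ that was imposed during the downgrade construction, together with the primitivity of $\primitivelattice{\tau'}$. Recall the form in which that condition was stated above: whenever a one-dimensional face of $\tau$ meets one of the affine planes $[\twotorusdelimiter=1]$ or $[\twotorusdelimiter=-1]$, the intersection point lies in $\widetilde N$.

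First I would treat the positive case. Let $\tau'$ be a positive edge, so that $\twotorusdelimiter$ is strictly positive on $\tau'\setminus\{0\}$; in particular $d:=\twotorusdelimiter(\primitivelattice{\tau'})$ is a positive integer. The ray $\tau'$ equals $\QQ_{\ge 0}\cdot\primitivelattice{\tau'}$, so it meets the plane $[\twotorusdelimiter=1]$ in the single point $\frac1d\primitivelattice{\tau'}$, since $\twotorusdelimiter\bigl(\frac1d\primitivelattice{\tau'}\bigr)=\frac1d\cdot d=1$. By the lattice condition this point lies in $\widetilde N$, i.e.\ $\frac1d\primitivelattice{\tau'}\in\widetilde N$. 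Now $\primitivelattice{\tau'}$ is by definition a primitive lattice vector, so the only rational multiples $c\cdot\primitivelattice{\tau'}$ lying in $\widetilde N$ are those with $c\in\ZZ$. Hence $\frac1d\in\ZZ$, and since $d$ is itself a positive integer this forces $d=1$, i.e.\ $\twotorusdelimiter(\primitivelattice{\tau'})=1$.

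The negative case is symmetric: if $\tau'$ is a negative edge, then $d:=-\twotorusdelimiter(\primitivelattice{\tau'})$ is a positive integer, the ray meets $[\twotorusdelimiter=-1]$ in the point $\frac1d\primitivelattice{\tau'}$, and the lattice condition together with primitivity of $\primitivelattice{\tau'}$ again give $\frac1d\in\ZZ$, whence $d=1$ and $\twotorusdelimiter(\primitivelattice{\tau'})=-1$. There is no real obstacle here: the only point requiring care is to make explicit that the intersection of the ray with the plane is exactly $\frac1{\twotorusdelimiter(\primitivelattice{\tau'})}\primitivelattice{\tau'}$, and then to invoke primitivity of $\primitivelattice{\tau'}$ to pass from ``this point is integral'' to ``$\twotorusdelimiter(\primitivelattice{\tau'})$ divides $1$''. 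Everything else is routine.
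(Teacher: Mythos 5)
Your proof is correct and follows essentially the same route as the paper: both arguments rest on the imposed condition that edges of $\tau$ meet the planes $[\twotorusdelimiter=\pm1]$ in lattice points, combined with the primitivity of $\primitivelattice{\tau'}$ and the integrality of $\twotorusdelimiter$ on $\widetilde N$. The paper phrases it by writing the intersection point as an integer multiple $k\primitivelattice{\tau'}$ and ruling out $k>1$, whereas you write it as $\tfrac1d\primitivelattice{\tau'}$ and force $\tfrac1d\in\ZZ$; these are the same argument read in opposite directions.
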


\begin{proof}
If $\tau'$ is a positive edge, denote $a=\tau'\cap[\twotorusdelimiter=1]$.
If $\tau'$ is a negative edge, denote $a=\tau'\cap[\twotorusdelimiter=-1]$.
Recall that one of the requirements we have imposed on $\tau$ says that the planes $\twotorusdelimiter=1$ and
$\twotorusdelimiter=-1$ intersect edges of $\tau$ at lattice points (otherwise the polyhedral divisor we obtain from 
$\tau$ does not consist of lattice polyhedra), so $a$ is a lattice point, and 
hence $a$ is a multiple of $\primitivelattice{\tau'}$. On the other hand, if 
$a\ne \primitivelattice{\tau'}$, then $\twotorusdelimiter(\primitivelattice{\tau'})$ 
cannot be an integer. So, $a=\primitivelattice{\tau'}$, 
and $\twotorusdelimiter(\primitivelattice{\tau'})=1$ (resp. $\twotorusdelimiter(\primitivelattice{\tau'})=-1$)
if $\tau'$ is a positive (resp. negative) edge.
\end{proof}

\begin{lemma}
If $\tau'$ is a positive (resp. negative) edge of $\tau$, and $\chi=\twotorusdelimiter$ (resp. $\chi=-\twotorusdelimiter$), 
then $\Span_{\QQ}(\Lambda^\chi_{\tau'})=\Span_{\QQ}(\normalvertexcone{\tau'}{\tau})$
and $\dim\Span_{\QQ}(\Lambda^\chi_{\tau'})=2$.
\end{lemma}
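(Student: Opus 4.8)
The plan is to reduce the defining inequality for $\Lambda^\chi_{\tau'}$ to a face condition and then invoke the standard relationship between the Hilbert basis of a cone and the Hilbert bases of its faces. First I would evaluate $\chi(\primitivelattice{\tau'})$: by Lemma \ref{delimitervalueone}, if $\tau'$ is positive then $\twotorusdelimiter(\primitivelattice{\tau'})=1$, and if $\tau'$ is negative then $\twotorusdelimiter(\primitivelattice{\tau'})=-1$; in either of the two cases of the statement this gives $\chi(\primitivelattice{\tau'})=1$. Since every Hilbert basis element $\widetilde{\lambda_i}$ lies in $\tau^\vee$, it satisfies $\widetilde{\lambda_i}(\primitivelattice{\tau'})\ge 0$, and this value is an integer because $\widetilde{\lambda_i}\in\widetilde M$ and $\primitivelattice{\tau'}\in\widetilde N$. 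Hence the condition $\widetilde{\lambda_i}(\primitivelattice{\tau'})<1$ defining $\Lambda^\chi_{\tau'}$ is equivalent to $\widetilde{\lambda_i}(\primitivelattice{\tau'})=0$, i.e.\ to $\widetilde{\lambda_i}\in\normalvertexcone{\tau'}{\tau}$. Thus $\Lambda^\chi_{\tau'}$ is exactly the set of Hilbert basis elements of $\tau^\vee$ lying in the normal face $\normalvertexcone{\tau'}{\tau}$.

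Next I would show that these elements span the normal face over $\QQ$. The key fact is that for a face $F$ of the pointed rational cone $\tau^\vee$, the Hilbert basis elements of $\tau^\vee$ contained in $F$ are precisely the Hilbert basis of $F$. This is verified by the standard supporting-hyperplane argument: $F=\{a\in\tau^\vee\mid \ell(a)=0\}$ for some $\ell\ge 0$ on $\tau^\vee$; if $\lambda\in F$ decomposes as $\lambda=a+b$ with $a,b$ nonzero lattice points of $\tau^\vee$, then $\ell(a)=\ell(b)=0$ forces $a,b\in F$, so indecomposability in $\tau^\vee$ and in $F$ coincide for lattice points of $F$. Since the Hilbert basis of $F$ generates the monoid $F\cap\widetilde M$ and hence (by Gordan's lemma) generates $F$ as a cone, its $\QQ$-span equals $\Span_\QQ(F)$. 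Applying this with $F=\normalvertexcone{\tau'}{\tau}$ yields $\Span_\QQ(\Lambda^\chi_{\tau'})=\Span_\QQ(\normalvertexcone{\tau'}{\tau})$.

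Finally, for the dimension I would use the face-duality between $\tau$ and $\tau^\vee$. Because $\tau$ is pointed and full-dimensional in the three-dimensional space $\widetilde N_\QQ$, the assignment $\tau'\mapsto\normalvertexcone{\tau'}{\tau}$ is an inclusion-reversing bijection between $k$-dimensional faces of $\tau$ and $(3-k)$-dimensional faces of $\tau^\vee$. An edge $\tau'$ has $k=1$, so $\normalvertexcone{\tau'}{\tau}$ is a two-dimensional facet of $\tau^\vee$, giving $\dim\Span_\QQ(\normalvertexcone{\tau'}{\tau})=2$ and hence $\dim\Span_\QQ(\Lambda^\chi_{\tau'})=2$.

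I expect the only genuinely delicate point to be the claim that the Hilbert basis elements inside a face span that face; everything else is a direct unwinding of definitions together with the standard cone–dual duality. If one prefers to avoid quoting the Hilbert-basis-of-a-face statement, an alternative would be to observe directly that $\normalvertexcone{\tau'}{\tau}\cap\widetilde M$ is generated as a monoid by $\Lambda^\chi_{\tau'}$, which again reduces to the same indecomposability observation.
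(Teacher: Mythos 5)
Your proof is correct and follows essentially the same route as the paper: evaluate $\chi(\primitivelattice{\tau'})=1$ via Lemma \ref{delimitervalueone}, use integrality to turn the strict inequality into the face condition $\widetilde{\lambda_i}(\primitivelattice{\tau'})=0$, and identify $\Lambda^\chi_{\tau'}$ with the Hilbert basis of the normal facet, which spans it. You spell out the indecomposability argument and the dimension count slightly more explicitly than the paper does, but the argument is the same.
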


\begin{proof}
Without loss of generality, suppose that $\tau'$ is a positive edge and $\chi=\twotorusdelimiter$ (the other case 
can be considered completely analogously). Then by Lemma \ref{delimitervalueone}, 
$\chi(\primitivelattice{\tau'})=1$. So, if $\widetilde{\lambda_i}\notin \normalvertexcone{\tau'}{\tau}$, 
then $\widetilde{\lambda_i}(\primitivelattice{\tau'})>0$, so $\widetilde{\lambda_i}(\primitivelattice{\tau'})\ge 1$ (this is an 
integer number), and $\widetilde{\lambda_i}(\primitivelattice{\tau'})\ge \chi(\primitivelattice{\tau'})$.
Hence, $\widetilde{\lambda_i}\notin\Lambda^\chi_{\tau'}$.
On the other hand, if $\widetilde{\lambda_i}\in \normalvertexcone{\tau'}{\tau}$, then $\widetilde{\lambda_i}(\primitivelattice{\tau'})=0$, 
and $\widetilde{\lambda_i}(\primitivelattice{\tau'})<\chi(\primitivelattice{\tau'})$. Hence, 
$\widetilde{\lambda_i}\in\Lambda^\chi_{\tau'}$. 

Therefore, $\Lambda^\chi_{\tau'}$ is the intersection of the Hilbert basis of $\tau^\vee\cap \widetilde M$ and the normal 
facet of $\tau'$, which is the Hilbert basis of $\normalvertexcone{\tau'}{\tau}\cap \widetilde M$. In 
particular, $\Lambda^\chi_{\tau'}$ generates $\Span_{\QQ}(\normalvertexcone{\tau'}{\tau})$
as a $\QQ$-vector space.
\end{proof}

\begin{lemma}
If $\tau'$ is a positive (resp. negative) edge of $\tau$, and $\chi=a\twotorusdelimiter$, where $a\ge 2$ (resp. $a\le -2$), 
then $\Span_{\QQ}(\Lambda^\chi_{\tau'})=\widetilde M_\QQ$.
\end{lemma}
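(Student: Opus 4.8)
The goal is to show that when $\tau'$ is a positive edge and $\chi = a\twotorusdelimiter$ with $a \ge 2$ (the negative case being symmetric), the lattice points $\Lambda^\chi_{\tau'}$ span the full rational space $\widetilde M_\QQ$. The previous lemma already handled the borderline case $a = 1$, where $\Lambda^\chi_{\tau'}$ was exactly the Hilbert basis of the normal facet $\normalvertexcone{\tau'}{\tau}$, spanning only the two-dimensional subspace $\Span_\QQ(\normalvertexcone{\tau'}{\tau})$. The point of increasing $a$ is that the threshold $\chi(\primitivelattice{\tau'}) = a$ (using Lemma \ref{delimitervalueone}, so $\twotorusdelimiter(\primitivelattice{\tau'}) = 1$ and thus $\chi(\primitivelattice{\tau'}) = a$) now admits Hilbert basis elements $\widetilde{\lambda_i}$ with $\widetilde{\lambda_i}(\primitivelattice{\tau'}) \in \{0, 1, \ldots, a-1\}$, not just those vanishing on $\primitivelattice{\tau'}$.

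**Main steps.** First I would record, via Lemma \ref{delimitervalueone}, that $\chi(\primitivelattice{\tau'}) = a \ge 2$, so by definition $\Lambda^\chi_{\tau'} = \{\widetilde{\lambda_i} \mid \widetilde{\lambda_i}(\primitivelattice{\tau'}) < a\}$. This set contains all Hilbert basis elements of $\tau^\vee$ lying on the normal facet $\normalvertexcone{\tau'}{\tau}$ (where the value is $0$), which already span the two-dimensional $\Span_\QQ(\normalvertexcone{\tau'}{\tau})$ as in the preceding lemma. The crux is then to exhibit at least one element of $\Lambda^\chi_{\tau'}$ that lies off this hyperplane, i.e. a Hilbert basis element $\widetilde{\lambda_i}$ with $0 < \widetilde{\lambda_i}(\primitivelattice{\tau'}) < a$; any such element together with the two-dimensional facet span would give the full three-dimensional $\widetilde M_\QQ$. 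To produce such an element, I would argue that $\tau^\vee$ is a full-dimensional cone (since $\tau$ is pointed and full-dimensional, as assumed throughout), hence its Hilbert basis contains elements not lying on the single facet $\normalvertexcone{\tau'}{\tau}$; among these, take any Hilbert basis element $\widetilde{\lambda}$ with $\widetilde{\lambda}(\primitivelattice{\tau'})$ minimal and positive. I would then show this minimal positive value is exactly $1$: because $\primitivelattice{\tau'}$ is a primitive lattice vector on an edge of $\tau$, and $\tau^\vee$ generates $\widetilde M$, there must be a lattice point of $\tau^\vee$ taking value $1$ on $\primitivelattice{\tau'}$; a standard Hilbert-basis decomposition argument then forces some Hilbert basis element to take value $1$ there (a larger value could not be minimal among positive values for a generating set).

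**The main obstacle.** The delicate point is verifying that the minimal positive value of a Hilbert basis element on $\primitivelattice{\tau'}$ is indeed $1$ rather than some value $\ge a$. I expect the clean way to handle this is to use primitivity: since $\primitivelattice{\tau'}$ is primitive in $\widetilde N$ and the pairing $\widetilde M \times \widetilde N \to \ZZ$ is perfect, the functional $\langle -, \primitivelattice{\tau'}\rangle$ is surjective onto $\ZZ$, so value-$1$ lattice points exist in $\widetilde M$; the task is to locate one inside $\tau^\vee$ and decompose it into Hilbert basis elements, each taking a nonnegative value, whose sum is $1$, forcing exactly one summand to have value $1$ and the rest value $0$. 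This guarantees a Hilbert basis element with value $1 < a$, completing the span. I would phrase this as the key lemma of the step. Once that element is in hand, the conclusion
$$
\Span_\QQ(\Lambda^\chi_{\tau'}) = \Span_\QQ(\normalvertexcone{\tau'}{\tau}) + \Span_\QQ(\widetilde{\lambda}) = \widetilde M_\QQ
$$
is immediate from dimension count, since the two-dimensional facet span plus one transverse vector fills the three-dimensional space.
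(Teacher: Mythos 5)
Your proposal is correct and takes essentially the same route as the paper: the value-$0$ Hilbert basis elements span the two-dimensional $\Span_\QQ(\normalvertexcone{\tau'}{\tau})$, and one then produces a Hilbert basis element with value exactly $1$ on $\primitivelattice{\tau'}$ (hence $<a$) by decomposing a value-$1$ lattice point of $\tau^\vee$. The only step you leave as "the task" — locating such a value-$1$ point \emph{inside} $\tau^\vee$, where primitivity/generation of $\widetilde M$ alone is not quite enough (the nonnegative values attained could a priori form a numerical semigroup omitting $1$) — is handled in the paper by taking a lattice point $\chi''$ in the relative interior of $\normalvertexcone{\tau'}{\tau}$ and checking that $b\chi''+\twotorusdelimiter\in\tau^\vee$ for $b$ large, which still evaluates to $1$ on $\primitivelattice{\tau'}$.
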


\begin{proof}
Again, without loss of generality we may suppose that $\tau'$ is a positive edge and $a\ge 2$, the other 
case is completely similar.

First, let us prove that there exists a degree $\chi'\in\tau^\vee\cap \widetilde M$
such that $\chi'(\primitivelattice{\tau'})=1$. This is done by a standard continuity argument.
Namely, consider a lattice point $\chi''$ in the relative interior of $\normalvertexcone{\tau'}{\tau}$.
Consider also a line $\chi''+\QQ\twotorusdelimiter$.
This line 
cannot be contained in the plane containing $\normalvertexcone{\tau'}{\tau}$ 
since $\twotorusdelimiter(\primitivelattice{\tau'})\ne 0$. So, the intersection 
of this line and this plane is exactly $\chi''$, and $\normalvertexcone{\tau'}{\tau}$ 
splits the line 
$\chi''+\QQ\twotorusdelimiter$
into two rays, and one of these rays passes through the interior of $\tau^\vee$.
Since $\chi''(\primitivelattice{\tau'})=0$ and $\twotorusdelimiter(\primitivelattice{\tau'})>0$,
the ray passing through the interior of $\tau^\vee$ cannot be 
$\chi''+\QQ_{\le 0}\twotorusdelimiter$, and it must be 
$\chi''+\QQ_{\ge 0}\twotorusdelimiter$. Hence, if $b\in\NN$ is large enough, 
$\chi''+(1/b)\twotorusdelimiter\in\tau^\vee$. Then $b\chi''+\twotorusdelimiter\in\tau^\vee$, 
but $b\chi''+\twotorusdelimiter$ is a lattice point, and 
$(b\chi''+\twotorusdelimiter)(\primitivelattice{\tau'})=1$, so we can take $\chi'=b\chi''+\twotorusdelimiter$.

Since all $\widetilde{\lambda_i}$ form the Hilbert basis of $\tau^\vee\cap\widetilde M$,
$\chi'$ can be written as a positive integer linear combination of $\widetilde{\lambda_i}$.
Since $\widetilde{\lambda_i}(\primitivelattice{\tau'})\ge 0$, there exists $\widetilde{\lambda_i}$
such that $\widetilde{\lambda_i}(\primitivelattice{\tau'})=1$.

As we have already noted previously, the set of all $\widetilde{\lambda_i}$ such that 
$\widetilde{\lambda_i}(\primitivelattice{\tau'})=0$ form the Hilbert basis of 
$\normalvertexcone{\tau'}{\tau}\cap \widetilde M$, therefore 
they generate 
$\Span_{\QQ}(\normalvertexcone{\tau'}{\tau})$ as a $\QQ$-vector space.
Clearly, all these $\widetilde{\lambda_i}$ are in $\Lambda^\chi_{\tau'}$.
Together they generate a 2-dimensional vector space, so if we add one more vector, 
which is outside $\Span_{\QQ}(\normalvertexcone{\tau'}{\tau})$, 
all vectors together will generate a bigger vector space, but then this 
space must be $\widetilde M_\QQ$ since $\dim \widetilde M_\QQ=3$. But 
we already know that there exists a $\widetilde{\lambda_i}\in\Lambda^\chi_{\tau'}$ such that 
$\widetilde{\lambda_i}(\primitivelattice{\tau'})=1$. By the definition 
of $\normalvertexcone{\tau'}{\tau}$, all vectors from 
$\Span_{\QQ}(\normalvertexcone{\tau'}{\tau})$ vanish on $\primitivelattice{\tau'}$, 
so this $\widetilde{\lambda_i}$ cannot be in $\Span_{\QQ}(\normalvertexcone{\tau'}{\tau})$.
Therefore, $\Span_{\QQ}(\Lambda^\chi_{\tau'})=\widetilde M_\QQ$.
\end{proof}

\begin{corollary}\label{lambda1classification}
If $\chi=a\twotorusdelimiter$, $a\in\ZZ$, $a\ne 0$, then $\Lambda^{\chi,1}\otimes_\ZZ\CC$ can be written as follows:
\begin{enumerate}
\item If $a=1$, then
$$
\Lambda^{\chi,1}\otimes_\ZZ\CC=\bigoplus_{i=1}^{\numberofpositiveedges{\tau}}\Span_\QQ(\normalvertexcone{\indexedpositiveedge{\tau}{i}}{\tau})\otimes_\QQ\CC.
$$
\item If $a\ge 2$, then
$$
\Lambda^{\chi,1}\otimes_\ZZ\CC=\bigoplus_{i=1}^{\numberofpositiveedges{\tau}}\widetilde M_\QQ\otimes_\QQ\CC.
$$
\item If $a=-1$, then
$$
\Lambda^{\chi,1}\otimes_\ZZ\CC=\bigoplus_{i=1}^{\numberofnegativeedges{\tau}}\Span_\QQ(\normalvertexcone{\indexednegativeedge{\tau}{i}}{\tau})\otimes_\QQ\CC.
$$
\item If $a\le -2$, then
$$
\Lambda^{\chi,1}\otimes_\ZZ\CC=\bigoplus_{i=1}^{\numberofnegativeedges{\tau}}\widetilde M_\QQ\otimes_\QQ\CC.
$$
\end{enumerate}\qed
\end{corollary}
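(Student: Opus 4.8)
The plan is to assemble the corollary directly from the definition of $\Lambda^{\chi,1}$ together with the three preceding lemmas, handling the four cases in parallel. Recall that by definition $\Lambda^{\chi,1}=\bigoplus_{\dim\tau'=1}\Span_{\widetilde M}(\Lambda^\chi_{\tau'})$, the sum running over the edges of $\tau$. First I would tensor this with $\CC$ over $\ZZ$ and commute the tensor product past the finite direct sum, obtaining $\Lambda^{\chi,1}\otimes_\ZZ\CC=\bigoplus_{\dim\tau'=1}\bigl(\Span_{\widetilde M}(\Lambda^\chi_{\tau'})\otimes_\ZZ\CC\bigr)$. The key elementary observation is that for any subset $S\subseteq\widetilde M$ the sublattice $\Span_{\widetilde M}(S)$ is free, so $\Span_{\widetilde M}(S)\otimes_\ZZ\CC$ is canonically $\bigl(\Span_{\widetilde M}(S)\otimes_\ZZ\QQ\bigr)\otimes_\QQ\CC=\Span_\QQ(S)\otimes_\QQ\CC$, because $\Span_{\widetilde M}(S)\otimes_\ZZ\QQ$ is exactly the $\QQ$-subspace $\Span_\QQ(S)$ of $\widetilde M_\QQ$ (here one uses that $\widetilde M$ is free, so the sublattice embeds in $\widetilde M_\QQ$). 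This reduces the whole computation to identifying the rational span $\Span_\QQ(\Lambda^\chi_{\tau'})$ for each edge $\tau'$, which is precisely what the earlier lemmas provide.

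Next I would split the edges according to the sign of $\twotorusdelimiter$ on them and apply the lemmas. By Lemma \ref{triviallambda1zero}, whenever $\chi=a\twotorusdelimiter$ with $a>0$ every orthogonal and every negative edge has $\Lambda^\chi_{\tau'}=\varnothing$, hence contributes the zero summand; symmetrically, for $a<0$ the orthogonal and positive edges drop out. Thus for $a>0$ only the positive edges $\indexedpositiveedge{\tau}{1},\ldots,\indexedpositiveedge{\tau}{\numberofpositiveedges{\tau}}$ survive, and for $a<0$ only the negative edges $\indexednegativeedge{\tau}{1},\ldots,\indexednegativeedge{\tau}{\numberofnegativeedges{\tau}}$ survive. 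On the surviving edges I would invoke the two span lemmas preceding the corollary: for $|a|=1$ the lemma giving $\Span_\QQ(\Lambda^\chi_{\tau'})=\Span_\QQ(\normalvertexcone{\tau'}{\tau})$ yields the normal-face summands, while for $|a|\ge 2$ the lemma giving $\Span_\QQ(\Lambda^\chi_{\tau'})=\widetilde M_\QQ$ yields the full-space summands. Combining this with the index conventions for positive and negative edges fixed before Remark \ref{intersectionnotation} produces the four displayed formulas verbatim.

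Since every ingredient is already established, there is no serious obstacle; the only point that needs care is the integral-versus-rational bookkeeping in the first step, namely justifying $\Span_{\widetilde M}(S)\otimes_\ZZ\CC\cong\Span_\QQ(S)\otimes_\QQ\CC$ and checking that this identification is induced by the inclusions $\Span_{\widetilde M}(\Lambda^\chi_{\tau'})\hookrightarrow\widetilde M$. The latter matters for the downstream application in Theorem \ref{t1toric}, where these summands carry the Cech differentials of $(\Lambda^{\chi,\bullet}\otimes_\ZZ\CC)^*$; because the identification is natural in $S$ and $\CC$ is flat over $\ZZ$, the differentials transport automatically and no verification beyond this flatness is required.
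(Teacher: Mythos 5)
Your proposal is correct and is exactly the argument the paper intends: the corollary is stated with \qed as an immediate assembly of the definition of $\Lambda^{\chi,1}$ with Lemma \ref{triviallambda1zero} (killing the orthogonal and wrong-sign edges) and the two preceding span lemmas (identifying $\Span_\QQ(\Lambda^\chi_{\tau'})$ for the surviving edges as the normal facet for $|a|=1$ and all of $\widetilde M_\QQ$ for $|a|\ge 2$). Your extra care with the identification $\Span_{\widetilde M}(S)\otimes_\ZZ\CC\cong\Span_\QQ(S)\otimes_\QQ\CC$ is a correct and harmless elaboration of bookkeeping the paper leaves implicit.
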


These lemmas also enable us to describe $\Lambda^{\chi,0}$ explicitly:

\begin{corollary}\label{lambda0classification}
If $\chi=a\twotorusdelimiter$, $a\in\ZZ$, $a\ne 0$, then $\Lambda^{\chi,0}\otimes_\ZZ\CC$ can be written as follows:
\begin{enumerate}
\item If $a=1$, then
$$
\Lambda^{\chi,0}\otimes_\ZZ\CC=\Span_\QQ\left(\bigcup_{i=1}^{\numberofpositiveedges{\tau}}\normalvertexcone{\indexedpositiveedge{\tau}{i}}{\tau}\right)\otimes_\QQ\CC.
$$
\item If $a\ge 2$, then
$$
\Lambda^{\chi,0}\otimes_\ZZ\CC=\widetilde M_\QQ\otimes_\QQ\CC.
$$
\item If $a=-1$, then
$$
\Lambda^{\chi,0}\otimes_\ZZ\CC=\Span_\QQ\left(\bigcup_{i=1}^{\numberofnegativeedges{\tau}}\normalvertexcone{\indexednegativeedge{\tau}{i}}{\tau}\right)\otimes_\QQ\CC.
$$
\item If $a\le -2$, then
$$
\Lambda^{\chi,0}\otimes_\ZZ\CC=\widetilde M_\QQ\otimes_\QQ\CC.
$$
\end{enumerate}\qed
\end{corollary}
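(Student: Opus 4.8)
The plan is to unwind the definition of $\Lambda^{\chi,0}$ and reduce everything to the per-edge computation already packaged in Corollary \ref{lambda1classification} and Lemma \ref{triviallambda1zero}. Since the only face of $\tau$ of dimension $0$ is the origin, the definition gives $\Lambda^{\chi,0}=\Span_{\widetilde M}(\Lambda^\chi_0)$, and by definition $\Lambda^\chi_0=\bigcup_{\tau'}\Lambda^\chi_{\tau'}$, the union taken over all edges $\tau'$ of $\tau$. The subgroup of $\widetilde M$ generated by a union of sets is the sum of the subgroups generated by each set, and tensoring with $\CC$ commutes with forming such sums of subspaces, so $\Lambda^{\chi,0}\otimes_\ZZ\CC=\sum_{\tau'}\bigl(\Span_{\widetilde M}(\Lambda^\chi_{\tau'})\otimes_\ZZ\CC\bigr)$ as a subspace of $\widetilde M_\QQ\otimes_\QQ\CC$. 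But the summands $\Span_{\widetilde M}(\Lambda^\chi_{\tau'})\otimes_\ZZ\CC$ are exactly the direct summands appearing in $\Lambda^{\chi,1}\otimes_\ZZ\CC$, whose values for each edge $\tau'$ are listed in Corollary \ref{lambda1classification}. So the only thing left is to replace the direct sum there by an (internal) sum inside $\widetilde M_\QQ\otimes_\QQ\CC$.

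Case $a=1$ (and symmetrically $a=-1$). First I would invoke Lemma \ref{triviallambda1zero} to discard the orthogonal and the negative edges, for which $\Lambda^\chi_{\tau'}=\varnothing$; thus only the positive edges $\indexedpositiveedge{\tau}{1},\ldots,\indexedpositiveedge{\tau}{\numberofpositiveedges{\tau}}$ contribute. For each of these Corollary \ref{lambda1classification} gives $\Span_{\widetilde M}(\Lambda^\chi_{\indexedpositiveedge{\tau}{i}})\otimes_\ZZ\CC=\Span_\QQ(\normalvertexcone{\indexedpositiveedge{\tau}{i}}{\tau})\otimes_\QQ\CC$. Summing over $i$, and using that the sum of the subspaces spanned by several sets equals the subspace spanned by their union, I get $\Lambda^{\chi,0}\otimes_\ZZ\CC=\Span_\QQ\bigl(\bigcup_{i=1}^{\numberofpositiveedges{\tau}}\normalvertexcone{\indexedpositiveedge{\tau}{i}}{\tau}\bigr)\otimes_\QQ\CC$, which is the asserted formula. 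The case $a=-1$ is identical after replacing the positive edges by the negative edges $\indexednegativeedge{\tau}{i}$.

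Cases $|a|\ge 2$. Here I would pick a single positive edge (when $a\ge 2$) or a single negative edge (when $a\le -2$); such an edge exists because the standing hypothesis $Y=\PP^1$ means that $N_\QQ$ splits $\tau$ into two nonempty two-dimensional cones, so $\tau$ has both positive and negative edges. Corollary \ref{lambda1classification} tells us that the chosen edge already contributes the full space $\widetilde M_\QQ\otimes_\QQ\CC$, and since $\Lambda^{\chi,0}\otimes_\ZZ\CC$ is a subspace of $\widetilde M_\QQ\otimes_\QQ\CC$ that contains this summand, the two must coincide. I do not expect any genuine obstacle in this corollary: everything is a bookkeeping consequence of the preceding lemmas, and the one point that must not be overlooked is the existence of an edge of the appropriate sign in the $|a|\ge 2$ cases, which is exactly where the assumption $Y=\PP^1$ (equivalently $\twotorusdelimiter\notin\tau^\vee$ and $-\twotorusdelimiter\notin\tau^\vee$) is used.
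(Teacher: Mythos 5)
Your proposal is correct and follows the same route the paper intends: the paper states this corollary with no written proof precisely because it is the immediate consequence of Lemma \ref{triviallambda1zero} and the two lemmas computing $\Span_\QQ(\Lambda^\chi_{\tau'})$ for positive and negative edges, combined with the observation that $\Lambda^{\chi,0}$ is the span of the union over all edges (so the direct sum in $\Lambda^{\chi,1}$ becomes an internal sum of the same subspaces). Your extra remark that the $|a|\ge 2$ cases need an edge of the appropriate sign, guaranteed by the standing assumption $Y=\PP^1$, is a correct and worthwhile observation.
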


Now we have to find $\ker ((\Lambda^{\chi,1}\otimes_\ZZ\CC)^*\to(\Lambda^{\chi,2}\otimes_\ZZ\CC)^*))$, 
where $\chi$ is a multiple of $\twotorusdelimiter$. To compute this kernel, we need some information about 
$\Lambda^{\chi,2}$.
First, let us make the following observation. An element of $(\Lambda^{\chi,2}\otimes_\ZZ\CC)^*$
can be written as a sequence $(a_1,\ldots, a_{\numberofedges{\tau}})$, where 
$a_i\in(\Span_{\widetilde M}(\Lambda^\chi_{\indexedfacet{\tau}{i}})\otimes_\ZZ\CC)^*$.
In particular, the image of an element of $(\Lambda^{\chi,1}\otimes_\ZZ\CC)^*$
can be written in this form. Consider an entry $a_i$ such that $\partial\indexedfacet{\tau}{i}$
consists only of edges such that $\Span_{\QQ}(\Lambda^\chi_{\tau'})=0$.
Observe that in this case $a_i=0$ 
since in this case $a_i$ is the difference of two elements of two vector spaces, and each of this vector spaces has dimension 0.
So, it is sufficient to 
consider only the facets whose boundary contains at least one edge $\tau'$ such that $\Span_{\QQ}(\Lambda^\chi_{\tau'})\ne 0$.
Using Corollary \ref{lambda1classification}, we can say that if $\chi=a\twotorusdelimiter$, where $a>0$ (resp. $a<0$), 
then it is sufficient to consider only the facets of $\tau$ whose boundary contains at least one positive (resp. 
negative) edge. These are exactly the facets we have denoted by $\indexedpositivefacet{\tau}{0}, \ldots, 
\indexedpositivefacet{\tau}{\numberofpositiveedges{\tau}}$ 
(resp. by $\indexednegativefacet{\tau}{0}, \ldots, \indexednegativefacet{\tau}{\numberofnegativeedges{\tau}}$).

\begin{lemma}\label{lambda2noboundaryconditionpos}
If $\chi=a\twotorusdelimiter$, where $a>0$, then $\Span_\QQ(\Lambda^\chi_{\indexedpositivefacet{\tau}{i}})=0$
for $i=0$ and $i=\numberofpositiveedges{\tau}$.
\end{lemma}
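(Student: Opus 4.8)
The plan is to reduce the claim entirely to the definition of $\Lambda^\chi$ on a facet together with Lemma \ref{triviallambda1zero}. Recall that for a facet $\tau'$ of $\tau$ one has $\Lambda^\chi_{\tau'}=\bigcap_{\tau''\subset\partial\tau'}\Lambda^\chi_{\tau''}$, the intersection running over the edges $\tau''$ on the boundary of $\tau'$. Hence as soon as a single boundary edge $\tau''$ satisfies $\Lambda^\chi_{\tau''}=\varnothing$, the whole intersection is empty, and consequently $\Span_\QQ(\Lambda^\chi_{\tau'})=\Span_\QQ(\varnothing)=0$. So the entire task is to produce, on the boundary of each of the two extreme positive facets, an edge whose $\Lambda^\chi$ is already empty.

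First I would unwind the index conventions to read off the boundary edges. For $i=0$ we have $\indexedpositivefacet{\tau}{0}=\indexedfacet{\tau}{0}=\indexedfacet{\tau}{\numberofedges{\tau}}$, whose boundary is $\indexededge{\tau}{\numberofedges{\tau}}\cup\indexededge{\tau}{1}$; by the chosen enumeration $\indexededge{\tau}{\numberofedges{\tau}}$ is a nonpositive edge. For $i=\numberofpositiveedges{\tau}$ we have $\indexedpositivefacet{\tau}{\numberofpositiveedges{\tau}}=\indexedfacet{\tau}{\numberofpositiveedges{\tau}}$, whose boundary is $\indexededge{\tau}{\numberofpositiveedges{\tau}}\cup\indexededge{\tau}{\numberofpositiveedges{\tau}+1}$; since the positive edges are exactly $\indexededge{\tau}{1},\ldots,\indexededge{\tau}{\numberofpositiveedges{\tau}}$ (recall $\indexedpositiveedge{\tau}{i}=\indexededge{\tau}{i}$ for $1\le i\le\numberofpositiveedges{\tau}$), the edge $\indexededge{\tau}{\numberofpositiveedges{\tau}+1}$ is nonpositive.

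Next I would apply Lemma \ref{triviallambda1zero} to the distinguished nonpositive boundary edge in each case. A nonpositive edge is either orthogonal or negative. If it is orthogonal, case \ref{orthogonal3degree} of that lemma gives $\Lambda^\chi=\varnothing$; if it is negative, then since $\chi=a\twotorusdelimiter$ with $a>0$, case \ref{negpos3degree} gives $\Lambda^\chi=\varnothing$. Either way the designated boundary edge contributes the empty set to the intersection defining $\Lambda^\chi_{\indexedpositivefacet{\tau}{i}}$, so that set is empty and $\Span_\QQ(\Lambda^\chi_{\indexedpositivefacet{\tau}{i}})=0$ for both $i=0$ and $i=\numberofpositiveedges{\tau}$.

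The only point demanding care — and the nearest thing to an obstacle — is the bookkeeping of the cyclic edge/facet enumeration: one must be sure that each of the two extreme positive facets really has a nonpositive edge on its boundary. This is immediate from the conventions $\partial\indexedfacet{\tau}{i}=\indexededge{\tau}{i}\cup\indexededge{\tau}{i+1}$, from the identifications $\indexedpositivefacet{\tau}{0}=\indexedfacet{\tau}{\numberofedges{\tau}}$ and $\indexedpositivefacet{\tau}{\numberofpositiveedges{\tau}}=\indexedfacet{\tau}{\numberofpositiveedges{\tau}}$, and from the fact that the positive edges occupy precisely the initial block of indices (so that the edges with indices $\numberofedges{\tau}$ and $\numberofpositiveedges{\tau}+1$ are nonpositive). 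No genuine computation is needed beyond this verification.
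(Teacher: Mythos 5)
Your proof is correct and follows essentially the same route as the paper: identify a nonpositive edge on the boundary of each of the two extreme positive facets, apply Lemma \ref{triviallambda1zero} (cases \ref{orthogonal3degree} or \ref{negpos3degree}) to see that its $\Lambda^\chi$ is empty, and conclude that the intersection defining $\Lambda^\chi$ of the facet is empty. The paper treats only $i=0$ explicitly and declares the other case similar; your more explicit bookkeeping of the cyclic enumeration is the only difference.
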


\begin{proof}
Let us consider the case $i=0$, the other case is completely similar. By the definition of 
$\indexedpositivefacet{\tau}{0}$, its boundary consists of $\indexedpositiveedge{\tau}{1}$, which
is a positive edge, and another edge $\tau'$, which is nonpositive. Hence, 
by Lemma \ref{triviallambda1zero}, $\Lambda^\chi_{\tau'}=\varnothing$, so 
$\Lambda^\chi_{\indexedpositivefacet{\tau}{0}}=\Lambda^\chi_{\tau'}\cap\Lambda^\chi_{\indexedpositiveedge{\tau}{1}}=\varnothing$
as well, and $\Span_\QQ(\Lambda^\chi_{\indexedpositivefacet{\tau}{0}})=0$.
\end{proof}

\begin{lemma}\label{lambda2noboundaryconditionneg}
If $\chi=a\twotorusdelimiter$, where $a<0$, then $\Span_\QQ(\Lambda^\chi_{\indexednegativefacet{\tau}{i}})=0$
for $i=0$ and $i=\numberofnegativeedges{\tau}$.
\end{lemma}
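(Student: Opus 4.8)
The plan is to mirror the proof of Lemma \ref{lambda2noboundaryconditionpos} essentially verbatim, interchanging the roles of positive and negative edges and facets. As there, I would treat the case $i=0$ in detail and then observe that $i=\numberofnegativeedges{\tau}$ is completely analogous.

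First I would unwind the definition of $\indexednegativefacet{\tau}{0}$. By construction this is the facet of $\tau$ of highest index having a negative edge on its boundary, and its boundary consists of the negative edge $\indexednegativeedge{\tau}{1}$ together with a second edge $\tau'$ which, by that same definition, is nonnegative. The only point needing a moment's attention is that a nonnegative edge is, by definition, one on which $\twotorusdelimiter$ takes nonnegative values (equivalently $\twotorusdelimiter(\primitivelattice{\tau'})\ge 0$), so by the trichotomy of the sign of $\twotorusdelimiter$ on an edge it is either orthogonal or positive.

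Next I would apply Lemma \ref{triviallambda1zero} to this edge $\tau'$. If $\tau'$ is orthogonal, case \ref{orthogonal3degree} of that lemma gives $\Lambda^\chi_{\tau'}=\varnothing$; if $\tau'$ is positive, then since $\chi=a\twotorusdelimiter$ with $a<0$, case \ref{posneg3degree} gives $\Lambda^\chi_{\tau'}=\varnothing$. Either way $\Lambda^\chi_{\tau'}=\varnothing$. Since $\Lambda^\chi$ on a facet is the intersection of the sets $\Lambda^\chi$ over the edges of its boundary, this forces $\Lambda^\chi_{\indexednegativefacet{\tau}{0}}=\Lambda^\chi_{\tau'}\cap\Lambda^\chi_{\indexednegativeedge{\tau}{1}}=\varnothing$, and hence $\Span_\QQ(\Lambda^\chi_{\indexednegativefacet{\tau}{0}})=0$. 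For $i=\numberofnegativeedges{\tau}$ the identical argument applies: the boundary of $\indexednegativefacet{\tau}{\numberofnegativeedges{\tau}}$ carries the negative edge $\indexednegativeedge{\tau}{\numberofnegativeedges{\tau}}$ and a second, nonnegative edge, to which Lemma \ref{triviallambda1zero} again applies.

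I do not expect any genuine obstacle here, since the statement is the exact mirror image of Lemma \ref{lambda2noboundaryconditionpos}. The only thing to verify with care is that the nonnegative boundary edge always falls into one of the two cases (orthogonal or positive) covered by Lemma \ref{triviallambda1zero} when $a<0$, which it does precisely because ``nonnegative'' rules out the negative case and thus leaves exactly these two possibilities.
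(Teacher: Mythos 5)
Your proof is correct and follows essentially the same route as the paper's: identify the nonnegative edge on the boundary of the extreme negative facet, apply Lemma \ref{triviallambda1zero} to conclude its $\Lambda^\chi$ is empty, and intersect. The only cosmetic differences are that you detail the case $i=0$ while the paper details $i=\numberofnegativeedges{\tau}$, and you make explicit the (harmless) case split of the nonnegative edge into orthogonal versus positive, which the paper leaves implicit.
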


\begin{proof}
The proof here is again completely similar to the proof of the previous lemma, but this time we present it to ease reading.
Let us consider the case $i=\numberofnegativeedges{\tau}$, the other case is completely similar. By the definition of 
$\indexednegativefacet{\tau}{\numberofnegativeedges{\tau}}$, its boundary consists of 
$\indexednegativeedge{\tau}{\numberofnegativeedges{\tau}}$, which
is a negative edge, and another edge $\tau'$, which is nonnegative. Hence, 
by Lemma \ref{triviallambda1zero}, $\Lambda^\chi_{\tau'}=\varnothing$, so 
$\Lambda^\chi_{\indexednegativefacet{\tau}{\numberofnegativeedges{\tau}}}=
\Lambda^\chi_{\tau'}\cap\Lambda^\chi_{\indexednegativeedge{\tau}{\numberofnegativeedges{\tau}}}=\varnothing$
as well, and $\Span_\QQ(\Lambda^\chi_{\indexednegativefacet{\tau}{\numberofnegativeedges{\tau}}})=0$.
\end{proof}

To understand the behavior of $\Lambda^\chi_{\indexedpositivefacet{\tau}{i}}$,
where $1\le i\le \numberofpositiveedges{\tau}-1$,
(resp. of $\Lambda^\chi_{\indexednegativefacet{\tau}{i}}$, where $1\le i\le \numberofnegativeedges{\tau}-1$)
for degrees $\chi=a\twotorusdelimiter$ with $a>0$ (resp. $a<0$), we start with the following lemma.

\begin{lemma}\label{smalllaticepoints}
Let $\overline N$ be a two-dimensional lattice, and let $\overline M$ be its dual lattice. 
Let $a_1, a_2\in \overline N$ and $\chi\in \overline M$ be such that 
$\chi(a_1)=\chi(a_2)=1$ and $a_1\ne a_2$. Then $a_1$ and $a_2$ generate $\overline N\otimes_\ZZ\QQ$
as a $\QQ$-vector space.

Denote the primitive lattice point on the ray 
$\{\chi'\in \overline M:\chi'(a_1)>0, \chi'(a_2)=0\}$ by $\chi_1$. 
Similarly, denote by $\chi_2$ the primitive lattice point on the ray 
$\{\chi'\in \overline M:\chi'(a_1)=0, \chi'(a_2)>0\}$

Then $\chi_1(a_1)=\chi_2(a_2)=\latticelength{a_1-a_2}$.
The sets 
$$
\overline\Lambda_{\chi,a_1,a_2, b}\{\chi'\in\overline M:\chi'(a_1)\ge 0, \chi'(a_2)\ge 0, \chi'(a_1)<b, \chi'(a_2)<b\}
$$
for $b\in\NN$ behave as follows:
\begin{enumerate}
\item If $0<b\le \latticelength{a_1-a_2}$, then 
$\overline\Lambda_{\chi,a_1,a_2, b}$
is the set of all $\chi'$ of the form 
$\chi'=b'\chi$, $0\le b'< b$.
\item If $b>\latticelength{a_1-a_2}$, 
then $\overline\Lambda_{\chi,a_1,a_2, b}$
contains $\chi_1$ and $\chi_2$.
\end{enumerate}
\end{lemma}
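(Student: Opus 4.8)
The plan is to reduce the entire lemma to a single structural statement about the evaluation homomorphism
$$
\theta\colon \overline M\to\ZZ^2,\qquad \theta(\chi')=(\chi'(a_1),\chi'(a_2)),
$$
namely that $\theta$ is injective with image exactly the sublattice
$$
L=\{(m_1,m_2)\in\ZZ^2\mid m_1\equiv m_2\ (\mathrm{mod}\ \latticelength{a_1-a_2})\}.
$$
Once this is established, all three assertions become elementary statements about lattice points $(m_1,m_2)$ in the square $0\le m_1,m_2<b$ that lie in $L$.

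First I would dispose of the linear independence claim. If $a_1$ and $a_2$ were proportional over $\QQ$, say $a_1=\lambda a_2$, then $1=\chi(a_1)=\lambda\chi(a_2)=\lambda$, forcing $a_1=a_2$ and contradicting the hypothesis. Hence $a_1,a_2$ are linearly independent and span $\overline N\otimes_\ZZ\QQ$; in particular $\theta$ is injective and $\latticelength{a_1-a_2}>0$. For the image, set $d=\latticelength{a_1-a_2}$ and write $a_1-a_2=d\,w$ with $w\in\overline N$ primitive. For every $\chi'\in\overline M$ we have $\chi'(a_1)-\chi'(a_2)=\chi'(a_1-a_2)=d\,\chi'(w)$, so $\theta(\overline M)\subseteq L$. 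To see that equality holds, complete $w$ to a lattice basis $(w,u)$ of $\overline N$ with dual basis $(w^*,u^*)$ of $\overline M$, and write $a_2=\alpha w+\beta u$, so $a_1=(\alpha+d)w+\beta u$; the determinant of $(a_1,a_2)$ in this basis equals $d\beta$. Since $\chi(a_1-a_2)=0$ gives $\chi(w)=0$, and $\chi$ is primitive because $\chi(a_1)=1$, we get $\chi=\pm u^*$ and therefore $\beta=\pm1$. Thus $\lvert\det\rvert=d$, so $\theta(\overline M)$ has index $d$ in $\ZZ^2$; as $L$ also has index $d$ and contains $\theta(\overline M)$, the two lattices coincide. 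This determinant computation is the only slightly delicate point, and it is precisely where the lattice-point hypothesis (primitivity of $w$ and integrality of $\beta$) enters; I expect it to be the main obstacle.

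Part 2 then reads off immediately. The covector $\chi_1$ is characterised by $\theta(\chi_1)=(m_1,0)$ with $m_1>0$ minimal; since $L\cap(\ZZ_{>0}\times\{0\})$ is generated by $(d,0)$, we obtain $\chi_1(a_1)=d=\latticelength{a_1-a_2}$, and symmetrically $\chi_2(a_2)=d$.

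For part 3 I would describe $\overline\Lambda_{\chi,a_1,a_2,b}$ through $\theta$ as the set of $(m_1,m_2)\in L$ with $0\le m_1<b$ and $0\le m_2<b$. If $0<b\le d$, then $\lvert m_1-m_2\rvert<b\le d$ while $m_1\equiv m_2\pmod d$ forces $m_1=m_2$; since $\theta(\chi)=(1,1)$ and $\theta$ is injective, the unique element with $\theta$-image $(b',b')$ is $b'\chi$, giving the asserted description with $0\le b'<b$. If $b>d$, then $(d,0)$ and $(0,d)$ both lie in the square and in $L$, and by part 2 they are $\theta(\chi_1)$ and $\theta(\chi_2)$, so $\chi_1,\chi_2\in\overline\Lambda_{\chi,a_1,a_2,b}$, as claimed.
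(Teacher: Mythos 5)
Your proof is correct, but it is organized differently from the paper's. You prove one structural statement up front --- that $\theta(\chi')=(\chi'(a_1),\chi'(a_2))$ embeds $\overline M$ into $\ZZ^2$ with image exactly the congruence sublattice $L=\{(m_1,m_2):m_1\equiv m_2\ (\mathrm{mod}\ d)\}$, $d=\latticelength{a_1-a_2}$ --- via an index/determinant computation after extending the primitive vector $w=(a_1-a_2)/d$ to a basis of $\overline N$; parts 2 and 3 then fall out as statements about lattice points of $L$ in a square. The paper instead never describes the full image of $\theta$: it introduces the auxiliary point $a'=(1-1/k)a_1+(1/k)a_2$ and a covector $\chi''$ dual to the primitive vector $a'-a_1$, manufactures explicit covectors $\chi'''_i=\chi''-\chi''(a_i)\chi$ vanishing on $a_i$ and taking value $\pm k$ on the other point, and identifies these (up to sign) with $\chi_1,\chi_2$; part 3 is then handled by subtracting a multiple of $\chi$ from a given $\chi'\in\overline\Lambda$ and invoking the value of $\chi_1(a_1)$. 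Your route buys a cleaner derivation (everything is a corollary of the description of $L$, and you in fact get the complete list of elements of $\overline\Lambda_{\chi,a_1,a_2,b}$ for every $b$, not merely the containment asserted for $b>d$), at the price of the basis-extension and determinant step; the paper's route is more hands-on and avoids computing indices, but requires the somewhat fiddly sign bookkeeping around $\chi'''_1,\chi'''_2$. All the delicate points in your argument (primitivity of $\chi$ from $\chi(a_1)=1$, the identification $\chi=\pm u^*$, $\beta=\pm1$, and the equality of the two index-$d$ lattices) check out.
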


\begin{proof}
Consider the $\QQ$-linear span of $a_1$ and $a_2$ in $\overline N\otimes_\ZZ\QQ$.
Since $\chi(a_1)\ne 0$ and $\chi(a_2)\ne 0$, this linear span can be one-dimensional only if 
$a_1$ is a $\QQ$-multiple of $a_2$. But in this case, since $\chi(a_1)=\chi(a_2)\ne 0$, 
$a_1$ and $a_2$ must coincide, and this is a contradiction.

Denote $k=\latticelength{a_1-a_2}$ and denote $a'=(1-1/k)a_1+(1/k)a_2$.
Then $a'\in \overline N$, and 
$a'-a_1$ is a 
primitive lattice vector.
Hence, there exists a function $\chi''\in\overline M$ such that $\chi''(a'-a_1)=1$.
Since $\chi(a_1)=\chi(a_2)=1$, we also have $\chi(a')=1$.
Consider the following functions $\chi'''_i$ ($i=1,2$): $\chi'''_i=\chi''-\chi''(a_i)\chi$.
We have $\chi'''_i(a_i)=\chi''(a_i)-\chi''(a_i)\chi(a_i)=0$, so $\chi'''_1$ is a multiple of 
$\chi_2$ and $\chi'''_2$ is a multiple of $\chi_1$, since $\chi_1$ and $\chi_2$ are primitive
vectors on the corresponding rays.

We also have 
$\chi'''_1(a_2)=\chi'''_1(a_1)+\chi'''_1(a_2-a_1)=k\chi'''_1(a'-a_1)=k(\chi''(a'-a_1)-\chi''(a_1)\chi(a'-a_1))=
k(1-\chi''(a_1)(1-1))=k$ 
and 
$\chi'''_2(a_1)=\chi'''_2(a_2)-\chi'''_2(a_2-a_1)=-k\chi'''_2(a'-a_1)=-k(\chi''(a'-a'_1)-\chi''(a_2)\chi(a'-a_1))=
-k(1-\chi''(a_1)(1-1))=-k$. On the other hand, $\chi_2(a_2)=\chi_2(a_1)+\chi_2(a_2-a_1)=k\chi_2(a'-a_1)$
and $\chi_1(a_1)=\chi_1(a_2)-\chi_1(a_2-a_1)=-k\chi_1(a'-a_1)$. Hence, $\chi_1(a_1)$ is a multiple of $k=\chi'''_2(a_1)$
and $\chi_2(a_2)$ is a multiple of $k=-\chi'''_1(a_2)$. Recall that 
$\chi'''_1$ is a multiple of 
$\chi_2$ and $\chi'''_2$ is a multiple of $\chi_1$. Summarizing, we conclude that $\chi_1=\pm \chi'''_2$ and 
$\chi_2=\pm \chi'''_1$. But then $\chi_1(a_1)=\pm \chi'''_2(a_1)=\pm k$ and $\chi_2(a_2)=\pm \chi'''_1(a_2)=\pm k$.
Since $\chi_1(a_1)>0$ and $\chi_2(a_2)>0$ by the definitions of $\chi_1$ and $\chi_2$, we have $\chi_1(a_1)=\chi_2(a_2)=k$.

Now fix some $b\in \NN$ and consider the set 
$$
\overline\Lambda_{\chi,a_1,a_2, b}=\{\chi'\in\overline M:\chi'(a_1)\ge 0, \chi'(a_2)\ge 0, \chi'(a_1)<b, \chi'(a_2)<b\}.
$$
If $b>\latticelength{a_1-a_2}$, then it is already clear that 
$\overline\Lambda_{\chi,a_1,a_2, b}$
contains $\chi_1$ and 
$\chi_2$ since $\chi_1(a_1)=\latticelength{a_1-a_2}$, $\chi_1(a_2)=0$, 
$\chi_1(a_2)=0$, and $\chi_2(a_2)=\latticelength{a_1-a_2}$. So suppose that $b\le \latticelength{a_1-a_2}$.
In this case it is also clear that $b'\chi\in\overline\Lambda_{\chi,a_1,a_2, b}$ for $0\le b'<b$ since 
$\chi(a_1)=\chi(a_2)=1$.

Suppose that $\chi'\in\overline\Lambda_{\chi,a_1,a_2, b}$. Without loss of generality, $\chi'(a_1)\ge \chi'(a_2)$.
Consider $\chi''=\chi'-\chi'(a_2)\chi$. We have $\chi''(a_1)=\chi'(a_1)-\chi'(a_2)\chi(a_1)=\chi'(a_1)-\chi'(a_2)\ge 0$
and $\chi''(a_2)=\chi'(a_2)-\chi'(a_2)\chi(a_2)=0$. So, $\chi''$ is a lattice point on the (closed) ray 
$\{\chi'''\in \overline M:\chi'''(a_1)\ge0, \chi'''(a_2)=0\}$. But we already know that 
the primitive lattice vector on this ray is $\chi_1$, so $\chi''$ is a (possibly zero) integer 
multiple of $\chi_1$. If $\chi'(a_1)>\chi'(a_2)$, then $\chi''\ne 0$, and we have a contradiction 
with $\chi_1(a_1)=\latticelength{a_1-a_2}$ since 
$\chi'(a_1)<b\le\latticelength{a_1-a_2}$, $\chi'(a_2)\ge 0$, and $\chi''(a_1)=\chi'(a_1)-\chi'(a_2)$.
If $\chi'(a_1)=\chi'(a_2)$, then we see that $\chi'$ and $\chi'(a_1)\chi$ take the same values on 
$a_1$ and $a_2$. Since $a_1$ and $a_2$ $\QQ$-generate $\overline N\otimes_\ZZ\QQ$, 
we can conclude that $\chi'=\chi'(a_1)\chi$ as desired.
\end{proof}

\begin{lemma}\label{shiftalongedge}
Let $\indexedfacet{\tau}{i}$ be facet of $\tau$, and let $\indexededge{\tau}{j_1}$ be an edge of $\tau$ 
on the boundary of $\indexedfacet{\tau}{i}$. Let $\indexededge{\tau}{j_2}$ be the other edge on the 
boundary of $\indexedfacet{\tau}{i}$. Suppose that we have a degree 
$\chi\in\Span_\QQ(\normalvertexcone{\indexededge{\tau}{j_1}}{\tau})\cap \widetilde M$
such that $\chi(\primitivelattice{\indexededge{\tau}{j_2}})>0$. 

Then there exists $a\in \NN$ 
such that 
$\chi+a\primitivelattice{\normalvertexcone{\indexedfacet{\tau}{i}}{\tau}}\in\normalvertexcone{\indexededge{\tau}{j_1}}{\tau}$.
\end{lemma}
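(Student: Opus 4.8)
The plan is to reduce the statement to a two-dimensional computation inside the plane spanned by the facet $\normalvertexcone{\indexededge{\tau}{j_1}}{\tau}$ of $\tau^\vee$. First I would invoke the inclusion-reversing bijection $\tau'\mapsto\normalvertexcone{\tau'}{\tau}$ between the faces of $\tau$ and those of $\tau^\vee$. Since $\tau$ is full-dimensional and pointed, $\tau^\vee$ is pointed as well; the normal face $C:=\normalvertexcone{\indexededge{\tau}{j_1}}{\tau}$ of the edge $\indexededge{\tau}{j_1}$ is then a two-dimensional pointed face of $\tau^\vee$, while the normal face $\normalvertexcone{\indexedfacet{\tau}{i}}{\tau}$ of the facet $\indexedfacet{\tau}{i}$ is a ray. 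Because $\indexededge{\tau}{j_1}\subset\indexedfacet{\tau}{i}$, duality gives $\normalvertexcone{\indexedfacet{\tau}{i}}{\tau}\subset C$, and for dimension reasons this ray is one of the two edges of the two-dimensional cone $C$. Let $u:=\primitivelattice{\normalvertexcone{\indexedfacet{\tau}{i}}{\tau}}$ be its primitive generator, let $\indexedfacet{\tau}{i'}$ be the second facet of $\tau$ containing $\indexededge{\tau}{j_1}$, and let $w:=\primitivelattice{\normalvertexcone{\indexedfacet{\tau}{i'}}{\tau}}$ be the primitive generator of the other edge of $C$. Then $\{u,w\}$ is a $\QQ$-basis of $P:=\Span_\QQ(C)$ and $C=\{su+tw:s,t\ge 0\}$.

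Since $\chi\in\Span_\QQ(C)\cap\widetilde M=P\cap\widetilde M$, I can write $\chi=\alpha u+\beta w$ with $\alpha,\beta\in\QQ$. The desired membership $\chi+au\in C$ for some $a\in\NN$ then reads $(\alpha+a)u+\beta w\in C$, that is $\alpha+a\ge 0$ and $\beta\ge 0$. The first inequality holds for all sufficiently large $a$, so the whole statement comes down to proving $\beta\ge 0$.

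To extract the sign of $\beta$, I would evaluate $\chi$ at $\primitivelattice{\indexededge{\tau}{j_2}}$. The generator $u$ vanishes on all of $\indexedfacet{\tau}{i}$, and both boundary edges $\indexededge{\tau}{j_1}$ and $\indexededge{\tau}{j_2}$ lie in $\indexedfacet{\tau}{i}$, so $u(\primitivelattice{\indexededge{\tau}{j_2}})=0$; hence $\chi(\primitivelattice{\indexededge{\tau}{j_2}})=\beta\,w(\primitivelattice{\indexededge{\tau}{j_2}})$. The key geometric point is that $w(\primitivelattice{\indexededge{\tau}{j_2}})>0$: the generator $w$ vanishes precisely on the facet $\indexedfacet{\tau}{i'}$, and $\indexedfacet{\tau}{i'}$ cannot contain $\indexededge{\tau}{j_2}$, for a facet of a three-dimensional pointed cone is determined by its two boundary edges, so the two distinct facets $\indexedfacet{\tau}{i}$ and $\indexedfacet{\tau}{i'}$ share at most the one edge $\indexededge{\tau}{j_1}$. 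Thus $w(\primitivelattice{\indexededge{\tau}{j_2}})\ne 0$, and since $w\in\tau^\vee$ and $\primitivelattice{\indexededge{\tau}{j_2}}\in\tau$ this value is $>0$. Combining with the hypothesis $\chi(\primitivelattice{\indexededge{\tau}{j_2}})>0$ forces $\beta>0\ge 0$, which finishes the argument by the previous paragraph.

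The only genuinely delicate ingredients are the identification of the two edges of $C$ with the normal rays of the two facets of $\tau$ meeting along $\indexededge{\tau}{j_1}$, and the nonvanishing $w(\primitivelattice{\indexededge{\tau}{j_2}})\ne 0$; both rest on the standard face-lattice duality between $\tau$ and $\tau^\vee$ together with the observation that two facets of a pointed three-dimensional cone cannot share two edges. Everything else is routine two-dimensional linear algebra, so I expect no serious obstacle once the dual-face picture is set up correctly.
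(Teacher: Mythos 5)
Your proof is correct and follows essentially the same route as the paper's: both reduce to the two\-/dimensional cone $\normalvertexcone{\indexededge{\tau}{j_1}}{\tau}$, whose two boundary rays are the normal rays of the two facets of $\tau$ meeting along $\indexededge{\tau}{j_1}$, and both exploit that $\primitivelattice{\normalvertexcone{\indexedfacet{\tau}{i}}{\tau}}$ vanishes at $\primitivelattice{\indexededge{\tau}{j_2}}$ while a suitable pairing is strictly positive because two distinct facets of a pointed three\-/dimensional cone share only one edge. The only cosmetic difference is that you use the generator description of that cone (coordinates $\alpha,\beta$ with respect to $u,w$), whereas the paper uses its inequality description (evaluation against the primitive vectors of the two edges $\indexededge{\tau}{j_2}$ and $\indexededge{\tau}{j_3}$); the two arguments are dual phrasings of the same computation.
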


\begin{proof}
Let $\indexedfacet{\tau}{k}$ be the facet of $\tau$ such that 
$\partial \normalvertexcone{\indexededge{\tau}{j_1}}{\tau}=
\normalvertexcone{\indexedfacet{\tau}{i}}{\tau}\cup\normalvertexcone{\indexedfacet{\tau}{k}}{\tau}$.
In other words, $\indexedfacet{\tau}{i}$ and $\indexedfacet{\tau}{k}$ are the two facets whose boundary 
contains $\indexededge{\tau}{j_1}$. Then $\normalvertexcone{\indexededge{\tau}{j_1}}{\tau}$ is determined 
inside $\Span_\QQ(\normalvertexcone{\indexededge{\tau}{j_1}}{\tau})$ by two inequalities corresponding 
to $\normalvertexcone{\indexedfacet{\tau}{i}}{\tau}$ and $\normalvertexcone{\indexedfacet{\tau}{k}}{\tau}$.
For an inequality corresponding to $\normalvertexcone{\indexedfacet{\tau}{i}}{\tau}$, 
we can take the restriction to $\Span_\QQ(\normalvertexcone{\indexededge{\tau}{j_1}}{\tau})$ of the inequality 
in the definition of $\tau^\vee$ corresponding to the other facet of $\tau'$ whose boundary contains 
$\normalvertexcone{\indexedfacet{\tau}{i}}{\tau}$. This other facet is $\normalvertexcone{\indexededge{\tau}{j_2}}$, 
and the corresponding inequality says that if $\chi'\in\normalvertexcone{\indexededge{\tau}{j_1}}{\tau}$, 
then $\chi'$ takes nonnegative values on $\indexededge{\tau}{j_2}$, in other words, 
$\chi'(\primitivelattice{\indexededge{\tau}{j_2}})\ge 0$.

Similarly, for an inequality corresponding to $\normalvertexcone{\indexedfacet{\tau}{k}}{\tau}$, 
we can take the restriction to $\Span_\QQ(\normalvertexcone{\indexededge{\tau}{j_1}}{\tau})$ of the inequality 
corresponding to the facet of $\tau^\vee$ different from $\normalvertexcone{\indexededge{\tau}{j_1}}{\tau}$
and whose boundary contains $\normalvertexcone{\indexedfacet{\tau}{k}}{\tau}$. This facet is the normal facet of 
the edge on the boundary of $\indexedfacet{\tau}{k}$ different from $\indexededge{\tau}{j_1}$. Denote it 
by $\indexededge{\tau}{j_3}$ so that $\partial \indexedfacet{\tau}{k}=\indexededge{\tau}{j_1}\cup \indexededge{\tau}{j_3}$.
Then the inequality corresponding to $\normalvertexcone{\indexededge{\tau}{j_3}}{\tau}$ in the 
definition of $\tau^\vee$ says that if $\chi'\in\tau^\vee$, then $\chi'$ takes nonnegative values on 
$\indexededge{\tau}{j_3}$, in other words, $\chi'(\primitivelattice{\indexededge{\tau}{j_3}})\ge 0$.
Therefore, $\normalvertexcone{\indexededge{\tau}{j_1}}{\tau}$ is determined 
inside $\Span_\QQ(\normalvertexcone{\indexededge{\tau}{j_1}}{\tau})$ by 
the restrictions to $\Span_\QQ(\normalvertexcone{\indexededge{\tau}{j_1}}{\tau})$
of the inequalities $\chi'(\primitivelattice{\indexededge{\tau}{j_2}})\ge 0$
and $\chi'(\primitivelattice{\indexededge{\tau}{j_3}})\ge 0$ for $\chi'\in\widetilde {M_\QQ}$.

Therefore, if $\chi(\primitivelattice{\indexededge{\tau}{j_3}})\ge 0$, then we can take $a=0$. 
Suppose that $\chi(\primitivelattice{\indexededge{\tau}{j_3}})<0$.

We chose $\indexedfacet{\tau}{k}$ so that 
$$\normalvertexcone{\indexedfacet{\tau}{i}}{\tau}\ne\normalvertexcone{\indexedfacet{\tau}{k}}{\tau},
$$
and we also know that 
$$
\normalvertexcone{\indexededge{\tau}{j_3}}{\tau}\cap\normalvertexcone{\indexededge{\tau}{j_1}}{\tau}=
\normalvertexcone{\indexedfacet{\tau}{k}}{\tau},
$$
so 
$$\primitivelattice{\normalvertexcone{\indexedfacet{\tau}{i}}{\tau}}\notin \normalvertexcone{\indexededge{\tau}{j_3}}{\tau}.
$$
Hence, 
$$
\primitivelattice{\normalvertexcone{\indexedfacet{\tau}{i}}{\tau}}(\primitivelattice{\indexededge{\tau}{j_3}})>0.
$$
Then there exists $a\in\NN$ such that 
$$
a\primitivelattice{\normalvertexcone{\indexedfacet{\tau}{i}}{\tau}}(\primitivelattice{\indexededge{\tau}{j_3}})>
-\chi(\primitivelattice{\indexededge{\tau}{j_3}}).
$$
In other words, 
$$
a\primitivelattice{\normalvertexcone{\indexedfacet{\tau}{i}}{\tau}}(\primitivelattice{\indexededge{\tau}{j_3}})
+\chi(\primitivelattice{\indexededge{\tau}{j_3}})>0.
$$
We have 
$$
(a\primitivelattice{\normalvertexcone{\indexedfacet{\tau}{i}}{\tau}}
+\chi)(\primitivelattice{\indexededge{\tau}{j_3}})>0.
$$
We also have 
$$
(\primitivelattice{\normalvertexcone{\indexedfacet{\tau}{i}}{\tau}})(\primitivelattice{\indexededge{\tau}{j_2}})=0
$$
since $\primitivelattice{\indexededge{\tau}{j_2}}\in\indexedfacet{\tau}{i}$. Hence, 
$$
(a\primitivelattice{\normalvertexcone{\indexedfacet{\tau}{i}}{\tau}}
+\chi)(\primitivelattice{\indexededge{\tau}{j_2}})=\chi(\primitivelattice{\indexededge{\tau}{j_2}})> 0
$$
by assumption, and
$$
a\primitivelattice{\normalvertexcone{\indexedfacet{\tau}{i}}{\tau}}
+\chi\in\normalvertexcone{\indexededge{\tau}{j_1}}{\tau}\cap\widetilde M.
$$
\end{proof}

\begin{lemma}\label{edgepluschiintau}
Let $\indexedpositivefacet{\tau}{i}$ (resp. $\indexednegativefacet{\tau}{i}$), where $1\le i\le \numberofpositiveedges{\tau}-1$
(resp. $1\le i\le \numberofnegativeedges{\tau}-1$), be a facet of $\tau$. Then 
$$
\primitivelattice{\normalvertexcone{\indexedpositivefacet{\tau}{i}}{\tau}}+\twotorusdelimiter\in\tau^\vee
\text{ (resp. }\primitivelattice{\normalvertexcone{\indexednegativefacet{\tau}{i}}{\tau}}-\twotorusdelimiter\in\tau^\vee\text{)}.
$$
\end{lemma}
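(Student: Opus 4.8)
The plan is to verify membership in $\tau^\vee$ edge by edge. Since $\tau$ is generated by the primitive vectors $\primitivelattice{\indexededge{\tau}{j}}$ on its edges, a vector $v\in\widetilde M_\QQ$ lies in $\tau^\vee$ if and only if $v(\primitivelattice{\indexededge{\tau}{j}})\ge 0$ for every edge $\indexededge{\tau}{j}$ of $\tau$. So, writing $w=\primitivelattice{\normalvertexcone{\indexedpositivefacet{\tau}{i}}{\tau}}$, I would reduce the claim $w+\twotorusdelimiter\in\tau^\vee$ to checking $(w+\twotorusdelimiter)(\primitivelattice{\indexededge{\tau}{j}})\ge 0$ for all $j$.

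First I would record two facts about $w$. Since $w$ spans a ray of $\tau^\vee$, we have $w\in\tau^\vee$, hence $w(\primitivelattice{\indexededge{\tau}{j}})\ge 0$ for all $j$; moreover $w$ and all $\primitivelattice{\indexededge{\tau}{j}}$ are lattice elements, so each value $w(\primitivelattice{\indexededge{\tau}{j}})$ is a nonnegative integer. Next, $w(\primitivelattice{\indexededge{\tau}{j}})=0$ holds exactly when $\indexededge{\tau}{j}$ lies in the hyperplane $\ker w$, i.e. when $\indexededge{\tau}{j}$ is an edge of the facet $\indexedpositivefacet{\tau}{i}$; by construction the only such edges are the two positive edges $\indexedpositiveedge{\tau}{i}$ and $\indexedpositiveedge{\tau}{i+1}$ bounding it (here I use $1\le i\le\numberofpositiveedges{\tau}-1$, so both bounding edges are genuinely positive).

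Then I would run a case analysis over the type of $\indexededge{\tau}{j}$, invoking Lemma \ref{delimitervalueone}. If $\indexededge{\tau}{j}$ is positive or orthogonal, then $\twotorusdelimiter(\primitivelattice{\indexededge{\tau}{j}})\ge 0$ and $w(\primitivelattice{\indexededge{\tau}{j}})\ge 0$, so the sum is nonnegative. The only delicate case is a negative edge, where $\twotorusdelimiter(\primitivelattice{\indexededge{\tau}{j}})=-1$; here I would use the observation of the previous paragraph: a negative edge cannot coincide with either of the two positive bounding edges of $\indexedpositivefacet{\tau}{i}$, so $w(\primitivelattice{\indexededge{\tau}{j}})\ne 0$, and being a nonnegative integer it is at least $1$. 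Thus $(w+\twotorusdelimiter)(\primitivelattice{\indexededge{\tau}{j}})\ge 0$ in every case, proving $w+\twotorusdelimiter\in\tau^\vee$. The negative-facet statement follows by the symmetric argument, replacing $\twotorusdelimiter$ by $-\twotorusdelimiter$ and positive edges by negative ones: the bounding edges of $\indexednegativefacet{\tau}{i}$ are negative, and for a positive edge $\indexededge{\tau}{j}$ one has $\twotorusdelimiter(\primitivelattice{\indexededge{\tau}{j}})=1$ together with $w(\primitivelattice{\indexededge{\tau}{j}})\ge 1$.

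The argument is essentially a finite case check, so there is no serious obstacle; the one point requiring care is the identification of the edges on which $w$ vanishes with the two bounding edges of the facet. This is exactly where the hypothesis $1\le i\le\numberofpositiveedges{\tau}-1$ (guaranteeing both bounding edges are positive) enters, and it is what rules out the problematic vanishing of $w$ on a negative edge.
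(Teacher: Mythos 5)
Your proof is correct and follows essentially the same route as the paper's: both verify $(w+\twotorusdelimiter)(\primitivelattice{\indexededge{\tau}{j}})\ge 0$ edge by edge, using that $w$ takes nonnegative integer values on the primitive edge vectors and vanishes only on the two positive bounding edges of the facet, combined with $|\twotorusdelimiter(\primitivelattice{\indexededge{\tau}{j}})|\le 1$ (via Lemma \ref{delimitervalueone} and the lattice-point condition on the edges). The only cosmetic difference is that you organize the case split by edge type (positive/orthogonal/negative) while the paper splits by bounding versus non-bounding edges.
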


\begin{proof}
Since 
$$\primitivelattice{\normalvertexcone{\indexedpositivefacet{\tau}{i}}{\tau}}\in\tau^\vee
\text{ (resp. }\primitivelattice{\normalvertexcone{\indexednegativefacet{\tau}{i}}{\tau}}\in\tau^\vee\text{)},
$$
it takes nonnegative values on the edges of $\tau$.
Since
$$
\partial \indexedpositivefacet{\tau}{i}=\indexedpositiveedge{\tau}{i}\cup \indexedpositiveedge{\tau}{i+1}
\text{ (resp. }\partial \indexednegativefacet{\tau}{i}=\indexednegativeedge{\tau}{i}\cup \indexednegativeedge{\tau}{i+1}\text{)},
$$
the only two edges of $\tau$ where 
$$
\primitivelattice{\normalvertexcone{\indexedpositivefacet{\tau}{i}}{\tau}}
\text{ (resp. }\primitivelattice{\normalvertexcone{\indexednegativefacet{\tau}{i}}{\tau}}\text{)}
$$
vanishes are
$\indexedpositiveedge{\tau}{i}$ and $\indexedpositiveedge{\tau}{i+1}$
(resp. $\indexednegativeedge{\tau}{i}$ and $\indexednegativeedge{\tau}{i+1}$).
But both of these edges are positive (resp. negative), so if $\indexededge{\tau}{j}$ is 
one of these two edges, then 
$$
\twotorusdelimiter(\primitivelattice{\indexededge{\tau}{j}})=1
\text{ (resp. }\twotorusdelimiter(\primitivelattice{\indexededge{\tau}{j}})=-1\text{)}.
$$
Hence, 
$$
(\primitivelattice{\normalvertexcone{\indexedpositivefacet{\tau}{i}}{\tau}}
+\twotorusdelimiter)(\primitivelattice{\indexededge{\tau}{j}})=1
$$
$$\text{(resp. }(\primitivelattice{\normalvertexcone{\indexednegativefacet{\tau}{i}}{\tau}}
-\twotorusdelimiter)(\primitivelattice{\indexededge{\tau}{j}})=1,\text{ observe the $-$ sign 
in front of $\twotorusdelimiter$)}
$$
for $\indexededge{\tau}{j}=\indexedpositiveedge{\tau}{i}$ or 
$\indexededge{\tau}{j}=\indexedpositiveedge{\tau}{i+1}$
(resp. $\indexededge{\tau}{j}=\indexednegativeedge{\tau}{i}$ or 
$\indexededge{\tau}{j}=\indexednegativeedge{\tau}{i+1}$).

Now suppose that $\indexededge{\tau}{j}$ is another edge, i.~e. 
$$
\indexededge{\tau}{j}\notin \partial \indexedpositivefacet{\tau}{i}
\text{ (resp. }\indexededge{\tau}{j}\notin \partial \indexednegativefacet{\tau}{i}\text{)}.
$$
Then 
$$
\primitivelattice{\normalvertexcone{\indexedpositivefacet{\tau}{i}}{\tau}}(\primitivelattice{\indexededge{\tau}{j}})>0
\text{ (resp. }\primitivelattice{\normalvertexcone{\indexednegativefacet{\tau}{i}}{\tau}}(\primitivelattice{\indexededge{\tau}{j}})>0\text{)}, 
$$
and, since $\primitivelattice{\normalvertexcone{\indexedpositivefacet{\tau}{i}}{\tau}}$ 
(resp. $\primitivelattice{\normalvertexcone{\indexednegativefacet{\tau}{i}}{\tau}}$) and $\primitivelattice{\indexededge{\tau}{j}}$ 
are lattice points, we have 
$$
\primitivelattice{\normalvertexcone{\indexedpositivefacet{\tau}{i}}{\tau}}(\primitivelattice{\indexededge{\tau}{j}})\ge 1
\text{ (resp. }\primitivelattice{\normalvertexcone{\indexednegativefacet{\tau}{i}}{\tau}}(\primitivelattice{\indexededge{\tau}{j}})\ge 1\text{)}.
$$
Now recall that if an edge of $\tau$ intersects one of the planes $[\twotorusdelimiter=1]$
and $[\twotorusdelimiter=-1]$, then the intersection point is a lattice point. This lattice 
point must be the primitive lattice vector on this edge, otherwise $\twotorusdelimiter$ would 
have taken a noninteger value at the primitive lattice vector. Therefore, if $\indexededge{\tau}{j}$
intersects one of the planes $[\twotorusdelimiter=1]$
and $[\twotorusdelimiter=-1]$, 
then $\twotorusdelimiter(\primitivelattice{\indexededge{\tau}{j}})$ can only equal 1 or $-1$. 
If $\indexededge{\tau}{j}$ intersects none of these planes, then $\twotorusdelimiter$ vanishes on 
$\indexededge{\tau}{j}$ everywhere, in particular $\twotorusdelimiter(\primitivelattice{\indexededge{\tau}{j}})=0$.
Therefore, in all cases we have 
$|\twotorusdelimiter(\primitivelattice{\indexededge{\tau}{j}})|\le 1$. But then 
$$
(\primitivelattice{\normalvertexcone{\indexedpositivefacet{\tau}{i}}{\tau}}
+\twotorusdelimiter)(\primitivelattice{\indexededge{\tau}{j}})\ge 0
$$
$$
\text{(resp. }(\primitivelattice{\normalvertexcone{\indexednegativefacet{\tau}{i}}{\tau}}
-\twotorusdelimiter)(\primitivelattice{\indexededge{\tau}{j}})\ge 0,\text{ now the sign in front of 
$\twotorusdelimiter$ does not matter)}.
$$

Summarizing, we see that if $\indexededge{\tau}{j}$ is an arbitrary edge of $\tau$, 
then 
$$
(\primitivelattice{\normalvertexcone{\indexedpositivefacet{\tau}{i}}{\tau}}
+\twotorusdelimiter)(\primitivelattice{\indexededge{\tau}{j}})\ge 0
\text{ (resp. }(\primitivelattice{\normalvertexcone{\indexednegativefacet{\tau}{i}}{\tau}}
-\twotorusdelimiter)(\primitivelattice{\indexededge{\tau}{j}})\ge 0\text{)}.
$$
Therefore, 
$$
\primitivelattice{\normalvertexcone{\indexedpositivefacet{\tau}{i}}{\tau}}
+\twotorusdelimiter\in\tau^\vee
\text{ (resp. }\primitivelattice{\normalvertexcone{\indexednegativefacet{\tau}{i}}{\tau}}
-\twotorusdelimiter\in\tau^\vee\text{)}.
$$
\end{proof}

\begin{proposition}\label{lambda2classification}
Let $\indexedpositivefacet{\tau}{i}$ (resp. $\indexednegativefacet{\tau}{i}$), where $1\le i\le \numberofpositiveedges{\tau}-1$
(resp. $1\le i\le \numberofnegativeedges{\tau}-1$), be a facet of $\tau$. Let $\chi=b\chi_0$ (resp. $\chi=-b\chi_0$), where $b\in \NN$.

\begin{enumerate}
\item If $b=1$, then 
$$
\Span_\QQ(\Lambda^\chi_{\indexedpositivefacet{\tau}{i}})=\Span_\QQ(\normalvertexcone{\indexedpositivefacet{\tau}{i}}{\tau})
\text{ (resp. }\Span_\QQ(\Lambda^\chi_{\indexednegativefacet{\tau}{i}})=\Span_\QQ(\normalvertexcone{\indexednegativefacet{\tau}{i}}{\tau})\text{)}.
$$
\item If $\latticelength{\indexedpositivefacet{\tau}{i}\cap [\twotorusdelimiter=1]}\ge 2$
(resp. $\latticelength{\indexednegativefacet{\tau}{i}\cap [\twotorusdelimiter=-1]}\ge 2$)
and $2\le b\le \latticelength{\indexedpositivefacet{\tau}{i}\cap [\twotorusdelimiter=1]}$
(resp. $2\le b\le \latticelength{\indexednegativefacet{\tau}{i}\cap [\twotorusdelimiter=-1]}$), 
then 
$$
\Span_\QQ(\Lambda^\chi_{\indexedpositivefacet{\tau}{i}})=
\Span_\QQ(\twotorusdelimiter,\normalvertexcone{\indexedpositivefacet{\tau}{i}}{\tau})
$$
$$
\text{ (resp. }\Span_\QQ(\Lambda^\chi_{\indexednegativefacet{\tau}{i}})=
\Span_\QQ(\twotorusdelimiter,\normalvertexcone{\indexednegativefacet{\tau}{i}}{\tau})\text{)}.
$$
\item If $b>\latticelength{\indexedpositivefacet{\tau}{i}\cap [\twotorusdelimiter=1]}$
(resp. $b>\latticelength{\indexednegativefacet{\tau}{i}\cap [\twotorusdelimiter=-1]}$), 
then $\Span_\QQ(\Lambda^\chi_{\indexedpositivefacet{\tau}{i}})=\widetilde M_\QQ$.
\end{enumerate}
\end{proposition}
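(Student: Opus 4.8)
The plan is to reduce the three–dimensional computation to the two–dimensional combinatorics already packaged in Lemma \ref{smalllaticepoints}, and then to promote two–dimensional data back into $\tau^\vee$ using Lemmas \ref{edgepluschiintau} and \ref{shiftalongedge}. I treat the positive case; the negative case is word–for–word symmetric after replacing $\twotorusdelimiter$ by $-\twotorusdelimiter$ and invoking the corresponding halves of Lemmas \ref{delimitervalueone} and \ref{edgepluschiintau}. First I would set $a_1=\primitivelattice{\indexedpositiveedge{\tau}{i}}$ and $a_2=\primitivelattice{\indexedpositiveedge{\tau}{i+1}}$ for the primitive generators of the two edges bounding $\indexedpositivefacet{\tau}{i}$; by Lemma \ref{delimitervalueone} one has $\twotorusdelimiter(a_1)=\twotorusdelimiter(a_2)=1$. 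Since $\Lambda^\chi_{\indexedpositivefacet{\tau}{i}}=\Lambda^\chi_{\indexedpositiveedge{\tau}{i}}\cap\Lambda^\chi_{\indexedpositiveedge{\tau}{i+1}}$ and every Hilbert basis element $\widetilde{\lambda_l}$ of $\tau^\vee$ is nonnegative on $a_1,a_2$, the set $\Lambda^\chi_{\indexedpositivefacet{\tau}{i}}$ consists exactly of those $\widetilde{\lambda_l}$ with $0\le\widetilde{\lambda_l}(a_1)<b$ and $0\le\widetilde{\lambda_l}(a_2)<b$ (here $\chi=b\twotorusdelimiter$, so $\chi(a_1)=\chi(a_2)=b$).

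Next I would set up the reduction. Let $\overline N\subseteq\widetilde N$ be the saturation of $\ZZ a_1+\ZZ a_2$, a rank–$2$ primitive sublattice, let $\overline M$ be its dual, and let $r\colon\widetilde M\to\overline M$ be the (surjective) restriction map. Its kernel is $\Span_\QQ(\normalvertexcone{\indexedpositivefacet{\tau}{i}}{\tau})\cap\widetilde M=\ZZ\rho$, where $\rho=\primitivelattice{\normalvertexcone{\indexedpositivefacet{\tau}{i}}{\tau}}$; note that $\rho$ is a Hilbert basis element of $\tau^\vee$ vanishing on $a_1,a_2$, hence $\rho\in\Lambda^\chi_{\indexedpositivefacet{\tau}{i}}$ for every $b\ge1$, so $\Span_\QQ(\normalvertexcone{\indexedpositivefacet{\tau}{i}}{\tau})\subseteq\Span_\QQ(\Lambda^\chi_{\indexedpositivefacet{\tau}{i}})$ always. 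Put $\overline\chi=r(\twotorusdelimiter)$, so $\overline\chi(a_1)=\overline\chi(a_2)=1$ and Lemma \ref{smalllaticepoints} applies to $\overline N,\overline M,a_1,a_2,\overline\chi$; by Remark \ref{intersectionnotation} (identifying $\indexedpositivefacet{\tau}{i}\cap[\twotorusdelimiter=1]$ with the segment from $a_1$ to $a_2$) the quantity $\latticelength{a_1-a_2}$ there equals $\latticelength{\indexedpositivefacet{\tau}{i}\cap[\twotorusdelimiter=1]}$. Since $\widetilde{\lambda_l}(a_1),\widetilde{\lambda_l}(a_2)$ depend only on $r(\widetilde{\lambda_l})$, membership in $\Lambda^\chi_{\indexedpositivefacet{\tau}{i}}$ is governed by whether $r(\widetilde{\lambda_l})$ lies in the set $\overline\Lambda_{\overline\chi,a_1,a_2,b}$ of Lemma \ref{smalllaticepoints}.

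For cases (1) and (2), i.e. $1\le b\le\latticelength{a_1-a_2}$, Lemma \ref{smalllaticepoints}(1) gives $\overline\Lambda_{\overline\chi,a_1,a_2,b}=\{b'\overline\chi:0\le b'<b\}$, so every element of $\Lambda^\chi_{\indexedpositivefacet{\tau}{i}}$ maps under $r$ into $\QQ\overline\chi$; hence $\Span_\QQ(\Lambda^\chi_{\indexedpositivefacet{\tau}{i}})\subseteq r^{-1}(\QQ\overline\chi)=\Span_\QQ(\twotorusdelimiter,\normalvertexcone{\indexedpositivefacet{\tau}{i}}{\tau})$, and for $b=1$ even $\subseteq r^{-1}(0)=\Span_\QQ(\normalvertexcone{\indexedpositivefacet{\tau}{i}}{\tau})$. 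Together with the reverse inclusion already noted this settles case (1). For case (2) I must exhibit one element of $\Lambda^\chi_{\indexedpositivefacet{\tau}{i}}$ outside $\Span_\QQ(\normalvertexcone{\indexedpositivefacet{\tau}{i}}{\tau})$: by Lemma \ref{edgepluschiintau}, $\rho+\twotorusdelimiter\in\tau^\vee$, so writing it as a nonnegative integer combination of Hilbert basis elements and evaluating on $a_1$ (where its value is $1$) produces a Hilbert basis element $\widetilde{\lambda_l}$ with $\widetilde{\lambda_l}(a_1)=1$ and coefficient $\ge1$; evaluating the same combination on $a_2$ (value $1$) forces $\widetilde{\lambda_l}(a_2)\le1<b$, so $\widetilde{\lambda_l}\in\Lambda^\chi_{\indexedpositivefacet{\tau}{i}}$ and $\widetilde{\lambda_l}\notin\Span_\QQ(\normalvertexcone{\indexedpositivefacet{\tau}{i}}{\tau})$ since $\widetilde{\lambda_l}(a_1)\ne0$. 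This pins $\Span_\QQ(\Lambda^\chi_{\indexedpositivefacet{\tau}{i}})$ strictly above the $1$–dimensional $\Span_\QQ(\normalvertexcone{\indexedpositivefacet{\tau}{i}}{\tau})$ and inside the $2$–dimensional $\Span_\QQ(\twotorusdelimiter,\normalvertexcone{\indexedpositivefacet{\tau}{i}}{\tau})$, giving equality.

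Case (3), $b>\latticelength{a_1-a_2}$, is the main obstacle: I need two elements of $\Lambda^\chi_{\indexedpositivefacet{\tau}{i}}$ whose $r$–images are linearly independent in $\overline M_\QQ$, and the natural candidates $\chi_1,\chi_2$ from Lemma \ref{smalllaticepoints}(2) need not lie in $\tau^\vee$, since they satisfy only the inequalities coming from $a_1,a_2$ and not those from the other edges of $\tau$. The plan is to repair this via Lemma \ref{shiftalongedge}: choose any $r$–preimage of $\chi_1$; it automatically vanishes on $a_2$ and equals $\chi_1(a_1)=\latticelength{a_1-a_2}$ on $a_1$ (Lemma \ref{smalllaticepoints}), so Lemma \ref{shiftalongedge}, applied to the facet $\indexedpositivefacet{\tau}{i}$ and its boundary edge $\indexedpositiveedge{\tau}{i+1}$, lets me add a multiple of $\rho$ to land in $\normalvertexcone{\indexedpositiveedge{\tau}{i+1}}{\tau}\subseteq\tau^\vee$ without changing the values on $a_1,a_2$. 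Decomposing this lift into Hilbert basis elements and evaluating on $a_2$ (value $0$) and $a_1$ (value $\latticelength{a_1-a_2}<b$) shows that every contributing $\widetilde{\lambda_l}$ vanishes on $a_2$ and has $a_1$–value $\le\latticelength{a_1-a_2}<b$, hence lies in $\Lambda^\chi_{\indexedpositivefacet{\tau}{i}}$, and at least one, call it $\mu_1$, has positive $a_1$–value. The symmetric argument with $a_1,a_2$ interchanged produces $\mu_2\in\Lambda^\chi_{\indexedpositivefacet{\tau}{i}}$ vanishing on $a_1$ with positive $a_2$–value. Then $r(\mu_1),r(\mu_2)$ are positive multiples of $\chi_1,\chi_2$, which are linearly independent in $\overline M_\QQ$, so $\mu_1,\mu_2$ together with $\rho\in\ker r$ are three linearly independent elements of $\Lambda^\chi_{\indexedpositivefacet{\tau}{i}}$, whence $\Span_\QQ(\Lambda^\chi_{\indexedpositivefacet{\tau}{i}})=\widetilde M_\QQ$.
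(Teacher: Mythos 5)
Your proof is correct and follows essentially the same route as the paper: the same reduction to the rank-two lattice of the facet and Lemma \ref{smalllaticepoints}, with Lemma \ref{edgepluschiintau} supplying the extra Hilbert basis element in case (2) and Lemma \ref{shiftalongedge} repairing the lifts of $\chi_1,\chi_2$ in case (3). The only (harmless) deviation is in case (2), where you extract a basis element with $\widetilde{\lambda_l}(a_1)=1$ directly from the decomposition of $\rho+\twotorusdelimiter$ and conclude by a dimension count, rather than first ruling out the value patterns $(1,0)$ and $(0,1)$ as the paper does.
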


\begin{proof}
Again, the positive and the negative cases here are completely similar. This time let us consider the 
negative case.

Consider the lattices 
$$
\overline M=\widetilde M/(\widetilde M\cap \Span_\QQ(\normalvertexcone{\indexednegativefacet{\tau}{i}}{\tau}))
$$
and 
$$
\overline N=\widetilde N\cap\Span_\QQ(\indexednegativefacet{\tau}{i}).
$$
By the definition of 
$\normalvertexcone{\indexednegativefacet{\tau}{i}}{\tau}$,
a function from $\widetilde M$ vanishes on the whole $\overline N$ 
(which is a saturated sublattice of $\widetilde N$ by construction) if and 
only if this function is contained in $\widetilde M\cap \Span_\QQ(\normalvertexcone{\indexednegativefacet{\tau}{i}}{\tau})$. 
Therefore, $\overline M$ is the dual lattice of $\overline N$, and 
the values of elements of $\overline M$
at points from $\overline N$ are well-defined.
We denote the class of a function $\chi'\in \widetilde M$ in $\overline M$ by $\overline{\chi'}$.

Denote $a_1=\primitivelattice{\indexednegativeedge{\tau}{i}}$, $a_2=\primitivelattice{\indexednegativeedge{\tau}{i+1}}$.
Recall that $\partial\indexednegativefacet{\tau}{i}=\indexednegativeedge{\tau}{i}\cup \indexednegativeedge{\tau}{i+1}$, 
so $a_1, a_2\in \overline N$. We have already seen that $\twotorusdelimiter(a_1)=\twotorusdelimiter(a_2)=-1$
and that 
$$
a_1=\indexednegativeedge{\tau}{i}\cap[\twotorusdelimiter=-1],\quad a_2=\indexednegativeedge{\tau}{i+1}\cap[\twotorusdelimiter=-1].
$$
So, $a_1$, $a_2$, and $\overline{-\twotorusdelimiter}$ satisfy the hypothesis of Lemma \ref{smalllaticepoints}, 
and $\latticelength{a_1-a_2}=\latticelength{\indexednegativefacet{\tau}{i}\cap [\twotorusdelimiter=-1]}$.
Consider the set $\overline\Lambda_{\overline{-\twotorusdelimiter},a_1,a_2,b}$ from Lemma \ref{smalllaticepoints}.
It follows directly from the definitions of $\overline\Lambda_{\overline{-\twotorusdelimiter},a_1,a_2,b}$
and of $\Lambda^\chi_{\indexednegativefacet{\tau}{i}}$ that the image of $\Lambda^\chi_{\indexednegativefacet{\tau}{i}}$ 
under the canonical projection $\widetilde M\to \overline M$ is contained in $\overline\Lambda_{\overline{-\twotorusdelimiter},a_1,a_2,b}$.
Moreover, if $\widetilde{\lambda_j}$ is an element of the Hilbert basis of $\tau^\vee$ such that 
$\chi'=\overline{\widetilde{\lambda_j}}\in \overline\Lambda_{\overline{-\twotorusdelimiter},a_1,a_2,b}$, 
then 
$$
\widetilde{\lambda_j}(a_1)=\chi'(a_1)<b=(-b)\cdot(-1)=-b\twotorusdelimiter(a_1)=\chi(a_1),
$$
so $\widetilde{\lambda_j}\in \Lambda^\chi_{\indexednegativeedge{\tau}{i}}$.
Similarly, 
$$
\widetilde{\lambda_j}(a_2)=\chi'(a_2)<b=(-b)\cdot(-1)=-b\twotorusdelimiter(a_2)=\chi(a_2),
$$
so $\widetilde{\lambda_j}\in \Lambda^\chi_{\indexednegativeedge{\tau}{i+1}}$. Hence, 
$$
\widetilde{\lambda_j}\in \Lambda^\chi_{\indexednegativeedge{\tau}{i}}\cap 
\Lambda^\chi_{\indexednegativeedge{\tau}{i+1}}=\Lambda^\chi_{\indexednegativefacet{\tau}{i}}.
$$

Consider the case $b=1$. Then by Lemma \ref{smalllaticepoints}, 
$\Lambda_{\overline{-\twotorusdelimiter},a_1,a_2,b}=\{0\}$, and all elements of 
$\Lambda^\chi_{\indexednegativefacet{\tau}{i}}$
are in $\ker(\widetilde M\to \overline M)=\widetilde M\cap \Span_\QQ(\normalvertexcone{\indexednegativefacet{\tau}{i}}{\tau})$.
On the other hand, since $\normalvertexcone{\indexednegativefacet{\tau}{i}}{\tau}$ is a face 
of $\tau^\vee$, $\primitivelattice{\normalvertexcone{\indexednegativefacet{\tau}{i}}{\tau}}$
is an element of the Hilbert basis of $\tau^\vee$, 
$\primitivelattice{\normalvertexcone{\indexednegativefacet{\tau}{i}}{\tau}}=\widetilde{\lambda_j}$ for some $j$.
As we have seen previously, this means that $\widetilde{\lambda_j}\in\Lambda^\chi_{\indexednegativefacet{\tau}{i}}$.
Hence, $\Span_\QQ(\Lambda^\chi_{\indexednegativefacet{\tau}{i}})=\Span_\QQ(\normalvertexcone{\indexednegativefacet{\tau}{i}}{\tau})$.

Now suppose that $\latticelength{a_1-a_2}\ge 2$
and $2\le b\le \latticelength{a_1-a_2}$. Then by Lemma \ref{smalllaticepoints}, 
$\overline\Lambda_{\overline{-\twotorusdelimiter},a_1,a_2,b}$ is contained in the line generated by 
$\overline{-\twotorusdelimiter}$. Hence, $\Lambda^\chi_{\indexednegativefacet{\tau}{i}}$ is contained in the plane 
generated by $\normalvertexcone{\indexednegativefacet{\tau}{i}}{\tau}$ and $-\twotorusdelimiter$.
On the other hand, we already know that 
$\primitivelattice{\normalvertexcone{\indexednegativefacet{\tau}{i}}{\tau}}$
is an element of the Hilbert basis of $\tau^\vee$, 
and, since it represents the zero class in $\overline M$ and $0\in \overline\Lambda_{\overline{-\twotorusdelimiter},a_1,a_2,b}$, 
it is also contained in $\Lambda^\chi_{\indexednegativefacet{\tau}{i}}$.
By Lemma \ref{edgepluschiintau}, 
$\chi''=\primitivelattice{\normalvertexcone{\indexednegativefacet{\tau}{i}}{\tau}}-\twotorusdelimiter\in\tau^\vee$.
If $\chi''$ is not an element of the Hilbert basis of $\tau^\vee$, it can be decomposed into an integer positive 
linear combination of elements of the Hilbert basis. Since $\chi''(a_1)=\chi''(a_2)=1$, the elements of the 
Hilbert basis present in this combination may only take values 0 or 1 at $a_1$ and $a_2$ (in arbitrary order).
But if there exists $\widetilde{\lambda_k}$ such that $\widetilde{\lambda_k}(a_1)=1$ and $\widetilde{\lambda_k}(a_2)=0$, then 
$\overline{\widetilde{\lambda_k}}\in\overline\Lambda_{\overline{-\twotorusdelimiter},a_1,a_2,b}$, and this is a contradiction
with Lemma \ref{smalllaticepoints}. Similarly, one cannot have $\widetilde{\lambda_k}(a_1)=0$ and $\widetilde{\lambda_k}(a_2)=1$.
Hence, there exist an element $\widetilde{\lambda_k}$ of the Hilbert basis such that 
$\widetilde{\lambda_k}(a_1)=\widetilde{\lambda_k}(a_2)=1$. By Lemma \ref{smalllaticepoints}, 
$a_1$ and $a_2$ $\QQ$-generate $\overline N\otimes_\ZZ\QQ$. Therefore, 
elements of $\overline M$ are determined by their values at $a_1$ and $a_2$, and
$\overline{\widetilde{\lambda_k}}=\overline{-\twotorusdelimiter}\in\overline\Lambda_{\overline{-\twotorusdelimiter},a_1,a_2,b}$.
We already know that this means that $\widetilde{\lambda_k}\in \Lambda^\chi_{\indexednegativefacet{\tau}{i}}$.
Since $\overline{\widetilde{\lambda_k}}=\overline{-\twotorusdelimiter}$, 
$$
-\twotorusdelimiter-\widetilde{\lambda_k}\in
\Span_\QQ(\normalvertexcone{\indexednegativefacet{\tau}{i}}{\tau}),
$$
and $\widetilde{\lambda_k}$ and 
$\normalvertexcone{\indexednegativefacet{\tau}{i}}{\tau}$ together $\QQ$-generate the same plane as 
$-\twotorusdelimiter$ and $\normalvertexcone{\indexednegativefacet{\tau}{i}}{\tau}$ $\QQ$-generate, i.~e.
they $\QQ$-generate 
$\Span_\QQ(\twotorusdelimiter,\normalvertexcone{\indexednegativefacet{\tau}{i}}{\tau})$.
Therefore, 
$$
\Span_\QQ(\Lambda^\chi_{\indexednegativefacet{\tau}{i}})=
\Span_\QQ(\twotorusdelimiter,\normalvertexcone{\indexednegativefacet{\tau}{i}}{\tau}).
$$

Finally, let us consider the case $b>\latticelength{a_1-a_2}$.
By Lemma \ref{smalllaticepoints}, there exist 
$\chi_1,\chi_2\in \overline\Lambda_{\overline{-\twotorusdelimiter},a_1,a_2,b}$ such that 
$\chi_1(a_1)>0$, $\chi_1(a_2)=0$, $\chi_2(a_1)=0$, and $\chi_2(a_2)>0$.
Pick arbitrary $\chi'_1, \chi'_2\in \widetilde M$ such that 
$\overline{\chi'_1}=\chi_1$ and $\overline{\chi'_2}=\chi_2$.
We have $\chi'_1(a_1)>0$, $\chi'_1(a_2)=0$, $\chi'_2(a_1)=0$, and $\chi'_2(a_2)>0$, 
so, by the definitions of $\normalvertexcone{\indexednegativeedge{\tau}{i}}{\tau}$
and of $\normalvertexcone{\indexednegativeedge{\tau}{i+1}}{\tau}$, we have 
$\chi'_1\in \normalvertexcone{\indexednegativeedge{\tau}{i+1}}{\tau}$ 
and $\chi'_2\in \normalvertexcone{\indexednegativeedge{\tau}{i}}{\tau}$.
Therefore, we can apply Lemma \ref{shiftalongedge} to the facet 
$\indexednegativefacet{\tau}{i}$ of $\tau$, to the edge 
$\indexednegativeedge{\tau}{i+1}$ of $\tau$, and to the degree $\chi'_1$
and find another degree $\chi''_1$ such that $\chi''_1-\chi'_1$ is a multiple of 
$\primitivelattice{\indexednegativefacet{\tau}{i}}$ and $\chi''_1\in \normalvertexcone{\indexednegativeedge{\tau}{i+1}}{\tau}$.
Similarly, by Lemma \ref{shiftalongedge} applied to 
$\indexednegativefacet{\tau}{i}$, to $\indexednegativeedge{\tau}{i}$, and to 
$\chi'_2$, there exists a degree $\chi''_2\in \normalvertexcone{\indexednegativeedge{\tau}{i}}{\tau}$
such that $\chi''_2-\chi'_2$ is a multiple of 
$\primitivelattice{\indexednegativefacet{\tau}{i}}$.
In other words, $\overline{\chi''_1}=\overline{\chi'_1}=\chi_1$
and $\overline{\chi''_2}=\overline{\chi'_2}=\chi_2$.

Now we have degrees $\chi''_1, \chi''_2\in\tau^\vee$ satisfying the following conditions:
$\chi''_1(a_2)=\chi''_2(a_1)=0$, $0<\chi''_1(a_1)=\chi_1(a_1)<b$, 
$0<\chi''_2(a_2)=\chi_2(a_2)<b$. Decompose $\chi''_1$ into a positive integer linear combination
of $\widetilde{\lambda_j}$. The elements $\widetilde{\lambda_j}$ of the Hilbert basis 
occurring in this decomposition satisfy $\widetilde{\lambda_j}(a_2)=0$
and $0\le \widetilde{\lambda_j}(a_1)<b$, and for at least one of them we have 
$\widetilde{\lambda_j}(a_1)>0$. Similarly, there exists $\widetilde{\lambda_k}$
satisfying $\widetilde{\lambda_k}(a_1)=0$ and $0<\widetilde{\lambda_k}(a_2)<b$.
We can write this as 
$$
\widetilde{\lambda_j}(a_1)<b=(-b)\cdot(-1)=
-b\twotorusdelimiter(a_1)=\chi(a_1)
$$
and 
$$\widetilde{\lambda_j}(a_2)=0<b=(-b)\cdot(-1)=
-b\twotorusdelimiter(a_2)=\chi(a_2),
$$
so 
$\widetilde{\lambda_j}\in\Lambda^\chi_{\indexednegativefacet{\tau}{i}}$.
Similarly, $\widetilde{\lambda_k}\in\Lambda^\chi_{\indexednegativefacet{\tau}{i}}$.
Finally, as we saw previously, 
$\primitivelattice{\normalvertexcone{\indexednegativefacet{\tau}{i}}{\tau}}$
is an element of the Hilbert basis, its class in $\overline M$ is 
$0\in\overline\Lambda_{\overline{-\twotorusdelimiter},a_1,a_2,b}$, 
so 
$$
\primitivelattice{\normalvertexcone{\indexednegativefacet{\tau}{i}}{\tau}}
\in\Lambda^\chi_{\indexednegativefacet{\tau}{i}}.
$$

Now we claim that $\widetilde{\lambda_j}$, $\widetilde{\lambda_k}$, and 
$\primitivelattice{\normalvertexcone{\indexednegativefacet{\tau}{i}}{\tau}}$
$\QQ$-generate $\widetilde M_\QQ$. Indeed, $\widetilde{\lambda_j}(a_1)\ne 0$, 
while 
$$
\primitivelattice{\normalvertexcone{\indexednegativefacet{\tau}{i}}{\tau}}(a_1)=0
$$
by the definition of $\normalvertexcone{\indexednegativefacet{\tau}{i}}{\tau}$.
Hence, $\widetilde{\lambda_j}$ 
and $\primitivelattice{\normalvertexcone{\indexednegativefacet{\tau}{i}}{\tau}}$
are linearly independent and
$\QQ$-generate $\Span_\QQ(\normalvertexcone{\indexednegativeedge{\tau}{i+1}}{\tau})$.
Similarly, $\widetilde{\lambda_k}$ 
and $\primitivelattice{\normalvertexcone{\indexednegativefacet{\tau}{i}}{\tau}}$
$\QQ$-generate $\Span_\QQ(\normalvertexcone{\indexednegativeedge{\tau}{i}}{\tau})$.
The linear span of these two planes can be two-dimensional only if these two planes coincide, 
but $\normalvertexcone{\indexednegativefacet{\tau}{i}}{\tau}$ and 
$\normalvertexcone{\indexednegativeedge{\tau}{i+1}}{\tau}$ are two different facets of 
$\tau^\vee$, so 
$$
\Span_\QQ(\widetilde{\lambda_j}, \widetilde{\lambda_k}, 
\primitivelattice{\normalvertexcone{\indexednegativefacet{\tau}{i}}{\tau}})=\widetilde M_\QQ,
$$
and 
$$
\Span_\QQ(\Lambda^\chi_{\indexednegativefacet{\tau}{i}})=\widetilde M_\QQ.
$$
\end{proof}

\begin{corollary}\label{kerlambda1}
If $\chi=\twotorusdelimiter$ (resp. $\chi=-\twotorusdelimiter$), then 
$\ker((\Lambda^{\chi,1}\otimes_\ZZ\CC)^*\to(\Lambda^{\chi,2}\otimes_\ZZ\CC)^*)$ equals the space of 
sequences of the form $(\uidescriptionfunction1,\ldots,\uidescriptionfunction{\numberofpositiveedges{\tau}})$
(resp. $(\uidescriptionfunction1,\ldots,\uidescriptionfunction{\numberofnegativeedges{\tau}})$), 
where $\uidescriptionfunction i$ is a linear function on 
$\Span_\QQ(\normalvertexcone{\indexedpositiveedge{\tau}{i}}{\tau})\otimes_\QQ\CC$ 
(resp on $\Span_\QQ(\normalvertexcone{\indexednegativeedge{\tau}{i}}{\tau})\otimes_\QQ\CC$), 
and where 
$$
\uidescriptionfunction i|_{\Span_\QQ(\normalvertexcone{\indexedpositivefacet{\tau}{i}}{\tau})\otimes_\QQ\CC}=
\uidescriptionfunction{i+1}|_{\Span_\QQ(\normalvertexcone{\indexedpositivefacet{\tau}{i}}{\tau})\otimes_\QQ\CC}
$$
for $1\le i<\numberofpositiveedges{\tau}$
(resp. 
$$
\uidescriptionfunction i|_{\Span_\QQ(\normalvertexcone{\indexednegativefacet{\tau}{i}}{\tau})\otimes_\QQ\CC}=
\uidescriptionfunction{i+1}|_{\Span_\QQ(\normalvertexcone{\indexednegativefacet{\tau}{i}}{\tau})\otimes_\QQ\CC}
$$
for $1\le i<\numberofnegativeedges{\tau}$).
\end{corollary}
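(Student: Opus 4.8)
The plan is to read the kernel off directly from the explicit descriptions of $\Lambda^{\chi,1}$ and $\Lambda^{\chi,2}$ obtained in Corollary \ref{lambda1classification} and Proposition \ref{lambda2classification}, once the Cech differential $(\Lambda^{\chi,1}\otimes_\ZZ\CC)^*\to(\Lambda^{\chi,2}\otimes_\ZZ\CC)^*$ has been made concrete. I would treat only the case $\chi=\twotorusdelimiter$; the case $\chi=-\twotorusdelimiter$ is completely symmetric, replacing positive edges and facets by negative ones and invoking Lemma \ref{lambda2noboundaryconditionneg} in place of Lemma \ref{lambda2noboundaryconditionpos}. First I would use Corollary \ref{lambda1classification} (case $a=1$), which gives $\Lambda^{\chi,1}\otimes_\ZZ\CC=\bigoplus_{i=1}^{\numberofpositiveedges{\tau}}\Span_\QQ(\normalvertexcone{\indexedpositiveedge{\tau}{i}}{\tau})\otimes_\QQ\CC$, so that an element of the dual is precisely a sequence $(\uidescriptionfunction 1,\ldots,\uidescriptionfunction{\numberofpositiveedges{\tau}})$ of linear functionals $\uidescriptionfunction i$ on $\Span_\QQ(\normalvertexcone{\indexedpositiveedge{\tau}{i}}{\tau})\otimes_\QQ\CC$; the summands indexed by non-positive edges vanish by Lemma \ref{triviallambda1zero} and contribute nothing.

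Next I would identify the target. An element of $(\Lambda^{\chi,2}\otimes_\ZZ\CC)^*$ is a tuple indexed by the facets of $\tau$, the facet-$F$ entry lying in $(\Span_\QQ(\Lambda^\chi_F)\otimes_\QQ\CC)^*$. As in the discussion preceding Lemma \ref{lambda2noboundaryconditionpos}, a facet carries a nonzero space only if its boundary contains a positive edge, so only the facets $\indexedpositivefacet{\tau}{0},\ldots,\indexedpositivefacet{\tau}{\numberofpositiveedges{\tau}}$ matter; by Lemma \ref{lambda2noboundaryconditionpos} the two extreme ones $\indexedpositivefacet{\tau}{0}$ and $\indexedpositivefacet{\tau}{\numberofpositiveedges{\tau}}$ have zero span, while for $1\le i\le \numberofpositiveedges{\tau}-1$ Proposition \ref{lambda2classification} (case $b=1$) yields $\Span_\QQ(\Lambda^\chi_{\indexedpositivefacet{\tau}{i}})=\Span_\QQ(\normalvertexcone{\indexedpositivefacet{\tau}{i}}{\tau})$. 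Since $\indexedpositivefacet{\tau}{i}$ is bounded by $\indexedpositiveedge{\tau}{i}$ and $\indexedpositiveedge{\tau}{i+1}$ and $\Lambda^\chi_{\indexedpositivefacet{\tau}{i}}=\Lambda^\chi_{\indexedpositiveedge{\tau}{i}}\cap\Lambda^\chi_{\indexedpositiveedge{\tau}{i+1}}$, the normal-face span sits inside each of the two adjacent edge spans, so the standard Cech differential sends $(\uidescriptionfunction j)_j$ to the tuple whose $\indexedpositivefacet{\tau}{i}$-entry is $\pm(\uidescriptionfunction i-\uidescriptionfunction{i+1})$ restricted to $\Span_\QQ(\normalvertexcone{\indexedpositivefacet{\tau}{i}}{\tau})\otimes_\QQ\CC$.

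From here the kernel is immediate: the entries at the two extreme facets lie in the zero space and impose no condition, while the entry at $\indexedpositivefacet{\tau}{i}$ for $1\le i<\numberofpositiveedges{\tau}$ vanishes if and only if $\uidescriptionfunction i$ and $\uidescriptionfunction{i+1}$ agree on $\Span_\QQ(\normalvertexcone{\indexedpositivefacet{\tau}{i}}{\tau})\otimes_\QQ\CC$, which is exactly the asserted description. The one genuinely delicate point I expect to spend care on is pinning down the Cech differential on the dualized complex: its indexing by facets, the signs (irrelevant for the kernel), and above all the fact that it is literally restriction of functionals along the subspace inclusion $\Span_\QQ(\normalvertexcone{\indexedpositivefacet{\tau}{i}}{\tau})\hookrightarrow\Span_\QQ(\normalvertexcone{\indexedpositiveedge{\tau}{i}}{\tau})$, which rests on $\Lambda^\chi_{\indexedpositivefacet{\tau}{i}}=\Lambda^\chi_{\indexedpositiveedge{\tau}{i}}\cap\Lambda^\chi_{\indexedpositiveedge{\tau}{i+1}}$ together with the spans just computed. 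Everything else is bookkeeping of which faces carry a nonempty $\Lambda^\chi$, already supplied by Lemma \ref{triviallambda1zero}, Lemma \ref{lambda2noboundaryconditionpos}, and Proposition \ref{lambda2classification}.
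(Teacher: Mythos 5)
Your proposal is correct and follows exactly the route the paper takes: the paper's proof of Corollary \ref{kerlambda1} consists of the single sentence that the claim follows directly from Corollary \ref{lambda1classification}, Lemma \ref{lambda2noboundaryconditionpos}, Lemma \ref{lambda2noboundaryconditionneg}, and Proposition \ref{lambda2classification}, and you have simply spelled out the bookkeeping (including the correct observation that the dualized Cech differential acts by restriction of functionals along the inclusions $\Span_\QQ(\Lambda^\chi_{\indexedpositivefacet{\tau}{i}})\subseteq\Span_\QQ(\Lambda^\chi_{\indexedpositiveedge{\tau}{i}})$) that the paper leaves implicit.
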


\begin{proof}
The claim follows directly from Corollary \ref{lambda1classification}, Lemma \ref{lambda2noboundaryconditionpos}, 
Lemma \ref{lambda2noboundaryconditionneg}, and Proposition \ref{lambda2classification}.
\end{proof}

\begin{corollary}\label{kerlambda2}
If $\chi=a\twotorusdelimiter$ (resp. $\chi=-a\twotorusdelimiter$), where $a\in\NN$, $a\ge 2$, then 
$\ker((\Lambda^{\chi,1}\otimes_\ZZ\CC)^*\to(\Lambda^{\chi,2}\otimes_\ZZ\CC)^*)$ equals the space of 
sequences of the form $(\uidescriptionfunction1,\ldots,\uidescriptionfunction{\numberofpositiveedges{\tau}})$
(resp. $(\uidescriptionfunction1,\ldots,\uidescriptionfunction{\numberofnegativeedges{\tau}})$), 
where $\uidescriptionfunction i$ are linear functions on 
$\widetilde M_\QQ\otimes_\QQ\CC$ 
satisfying the following conditions for $1\le i<\numberofpositiveedges{\tau}$ 
(resp. for $1\le i<\numberofnegativeedges{\tau}$):
\begin{enumerate}
\item If $b\le \latticelength{\indexedpositivefacet{\tau}{i}\cap [\twotorusdelimiter=1]}$
(resp. $b\le \latticelength{\indexednegativefacet{\tau}{i}\cap [\twotorusdelimiter=-1]}$), then 
$$
\uidescriptionfunction i|_{\Span_\QQ(\twotorusdelimiter, \normalvertexcone{\indexedpositivefacet{\tau}{i}}{\tau})\otimes_\QQ\CC}=
\uidescriptionfunction{i+1}|_{\Span_\QQ(\twotorusdelimiter, \normalvertexcone{\indexedpositivefacet{\tau}{i}}{\tau})\otimes_\QQ\CC}
$$
(resp. 
$$
\uidescriptionfunction i|_{\Span_\QQ(\twotorusdelimiter, \normalvertexcone{\indexednegativefacet{\tau}{i}}{\tau})\otimes_\QQ\CC}=
\uidescriptionfunction{i+1}|_{\Span_\QQ(\twotorusdelimiter, \normalvertexcone{\indexednegativefacet{\tau}{i}}{\tau})\otimes_\QQ\CC}
$$
).
\item If $b> \latticelength{\indexedpositivefacet{\tau}{i}\cap [\twotorusdelimiter=1]}$
(resp. $b> \latticelength{\indexednegativefacet{\tau}{i}\cap [\twotorusdelimiter=-1]}$), then 
$\uidescriptionfunction i=\uidescriptionfunction{i+1}$.
\end{enumerate}
\end{corollary}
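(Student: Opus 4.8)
The plan is to follow the same strategy as in the proof of Corollary \ref{kerlambda1}, simply replacing the inputs about $\Lambda^{\chi,1}$ and $\Lambda^{\chi,2}$ by their $|a|\ge 2$ counterparts. I would treat the positive case $\chi=a\twotorusdelimiter$ in detail; the negative case $\chi=-a\twotorusdelimiter$ is completely symmetric, interchanging positive edges and facets with negative ones. First I would dualize Corollary \ref{lambda1classification}: for $a\ge 2$ it gives $\Lambda^{\chi,1}\otimes_\ZZ\CC=\bigoplus_{i=1}^{\numberofpositiveedges{\tau}}\widetilde M_\QQ\otimes_\QQ\CC$, so an element of $(\Lambda^{\chi,1}\otimes_\ZZ\CC)^*$ is exactly a sequence $(\uidescriptionfunction1,\ldots,\uidescriptionfunction{\numberofpositiveedges{\tau}})$ of linear functions on $\widetilde M_\QQ\otimes_\QQ\CC$, as asserted in the statement.

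Next I would identify the Cech differential $(\Lambda^{\chi,1}\otimes_\ZZ\CC)^*\to(\Lambda^{\chi,2}\otimes_\ZZ\CC)^*$ concretely. Since $\Lambda^{\chi}_{\indexedfacet{\tau}{i}}=\Lambda^\chi_{\indexededge{\tau}{i}}\cap\Lambda^\chi_{\indexededge{\tau}{i+1}}$, the inclusions of spans induce, after dualization, restriction maps, and the component of the differential indexed by a facet $\indexedfacet{\tau}{i}$ sends $(\uidescriptionfunction j)_j$ to the difference of the restrictions of $\uidescriptionfunction i$ and $\uidescriptionfunction{i+1}$ to $\Span_{\widetilde M}(\Lambda^\chi_{\indexedfacet{\tau}{i}})\otimes_\ZZ\CC$. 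Hence the kernel consists precisely of those sequences for which, at every facet, the restrictions of the two adjacent functionals agree. As already noted in the excerpt (using Lemma \ref{triviallambda1zero} together with Corollary \ref{lambda1classification}), for $a>0$ only the facets containing a positive edge, namely $\indexedpositivefacet{\tau}{0},\ldots,\indexedpositivefacet{\tau}{\numberofpositiveedges{\tau}}$, can contribute a nonzero target summand; and by Lemma \ref{lambda2noboundaryconditionpos} the two extreme facets $\indexedpositivefacet{\tau}{0}$ and $\indexedpositivefacet{\tau}{\numberofpositiveedges{\tau}}$ satisfy $\Span_\QQ(\Lambda^\chi_{\indexedpositivefacet{\tau}{i}})=0$, so they impose no condition. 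This leaves exactly the interior facets $\indexedpositivefacet{\tau}{i}$ with $1\le i<\numberofpositiveedges{\tau}$.

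Finally I would read off the condition at each interior facet from Proposition \ref{lambda2classification}, applied with $b=a$. When $a\le\latticelength{\indexedpositivefacet{\tau}{i}\cap[\twotorusdelimiter=1]}$ (which forces $\latticelength{\indexedpositivefacet{\tau}{i}\cap[\twotorusdelimiter=1]}\ge 2$, matching case (2) of the proposition), the span is $\Span_\QQ(\twotorusdelimiter,\normalvertexcone{\indexedpositivefacet{\tau}{i}}{\tau})$, which yields compatibility condition (1); when $a>\latticelength{\indexedpositivefacet{\tau}{i}\cap[\twotorusdelimiter=1]}$ the span is all of $\widetilde M_\QQ$, forcing $\uidescriptionfunction i=\uidescriptionfunction{i+1}$, which is condition (2). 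Assembling these facet-by-facet conditions gives exactly the description of the kernel in the statement, and the negative case is obtained verbatim by invoking Lemma \ref{lambda2noboundaryconditionneg} and the negative halves of Corollary \ref{lambda1classification} and Proposition \ref{lambda2classification}.

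I do not expect a genuine obstacle here: the statement is a formal corollary of Corollary \ref{lambda1classification}, Lemmas \ref{lambda2noboundaryconditionpos} and \ref{lambda2noboundaryconditionneg}, and Proposition \ref{lambda2classification}, paralleling the derivation of Corollary \ref{kerlambda1} step for step. The only points requiring care are the bookkeeping of signs and indices in the Cech differential, the identification $b=a$ between the parameter of Proposition \ref{lambda2classification} and the exponent here, and the verification that the extreme facets are genuinely unconstrained; all three are handled by the cited results.
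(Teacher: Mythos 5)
Your proposal is correct and follows exactly the paper's own route: the paper's proof is simply the one-line citation of Corollary \ref{lambda1classification}, Lemmas \ref{lambda2noboundaryconditionpos} and \ref{lambda2noboundaryconditionneg}, and Proposition \ref{lambda2classification}, and your write-up fills in precisely the intended bookkeeping (dualizing the direct-sum description, identifying the Cech differential with restriction differences at facets, discarding the extreme and non-positive facets, and reading off the two cases via the identification $b=a$).
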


\begin{proof}
The claim follows directly from Corollary \ref{lambda1classification}, Lemma \ref{lambda2noboundaryconditionpos}, 
Lemma \ref{lambda2noboundaryconditionneg}, and Proposition \ref{lambda2classification}.
\end{proof}

Now we construct a less invariant, but more explicit vector space isomorphic to 
$\ker((\Lambda^{\chi,1}\otimes_\ZZ\CC)^*\to(\Lambda^{\chi,2}\otimes_\ZZ\CC)^*)$.
Namely, denote by $\abstractvectorspace_{2,1,1}$ (resp. by $\abstractvectorspace_{2,1,-1}$)
the space of sequences of the form $(\uidescriptionfunction0',\ldots, \uidescriptionfunction{\numberofpositiveedges{\tau}}')$
(resp. $(\uidescriptionfunction0',\ldots, \uidescriptionfunction{\numberofnegativeedges{\tau}}')$), where 
$\uidescriptionfunction{i}'$ is a linear function on $\Span_\QQ(\normalvertexcone{\indexedpositivefacet{\tau}{i}}{\tau})\otimes_\QQ\CC$
(resp. on $\Span_\QQ(\normalvertexcone{\indexednegativefacet{\tau}{i}}{\tau})\otimes_\QQ\CC$).
For $a\in\NN$, $a\ge 2$, denote by 
$\abstractvectorspace_{2,1,a}$ (resp. by $\abstractvectorspace_{2,1,-a}$)
the space of sequences of the form 
$(\uidescriptionfunction1',\ldots, \uidescriptionfunction{\numberofpositiveedges{\tau}}')$
(resp. $(\uidescriptionfunction1',\ldots, \uidescriptionfunction{\numberofnegativeedges{\tau}}')$), where 
\begin{enumerate}
\item $\uidescriptionfunction1'$ is a linear function on $\widetilde M_\QQ\otimes_\QQ\CC$.
\item If $1<i\le\numberofpositiveedges{\tau}$ (resp. $1<i\le\numberofnegativeedges{\tau}$)
and $a\le\latticelength{\indexededge{\stdpolyhedronletter_0}{i-1}}$
(resp. $a\le\latticelength{\indexededge{\stdpolyhedronletter_\infty}{i-1}}$), then
$\uidescriptionfunction{i}'$ is a linear function on 
$$
(\widetilde M_\QQ\otimes_\QQ\CC)/(\Span_\QQ(\twotorusdelimiter, \normalvertexcone{\indexedpositivefacet{\tau}{i-1}}{\tau})\otimes_\QQ\CC)
$$
$$
\text{(resp. on }
(\widetilde M_\QQ\otimes_\QQ\CC)/(\Span_\QQ(\twotorusdelimiter, \normalvertexcone{\indexednegativefacet{\tau}{i-1}}{\tau})\otimes_\QQ\CC)\text{)}.
$$
\item If $1<i\le\numberofpositiveedges{\tau}$ (resp. $1<i\le\numberofnegativeedges{\tau}$)
and $a>\latticelength{\indexededge{\stdpolyhedronletter_0}{i-1}}$
(resp. $a>\latticelength{\indexededge{\stdpolyhedronletter_\infty}{i-1}}$), then
$\uidescriptionfunction{i}'=0$.
\end{enumerate}

\begin{lemma}\label{kerlambda1bruteforce}
If $\chi=\twotorusdelimiter$ (resp. $\chi=-\twotorusdelimiter$), then 
$\ker((\Lambda^{\chi,1}\otimes_\ZZ\CC)^*\to(\Lambda^{\chi,2}\otimes_\ZZ\CC)^*)$
is isomorphic to $\abstractvectorspace_{2,1,1}$ (resp. to $\abstractvectorspace_{2,1,-1}$).
After this identification, the map 
$(\Lambda^{\chi,0}\otimes_\ZZ\CC)^*\to(\Lambda^{\chi,1}\otimes_\ZZ\CC)^*$
(in fact, the map $(\Lambda^{\chi,0}\otimes_\ZZ\CC)^*\to
\ker((\Lambda^{\chi,1}\otimes_\ZZ\CC)^*\to(\Lambda^{\chi,2}\otimes_\ZZ\CC)^*)$)
becomes the following map: it maps $\uidescriptionfunction{}\in(\Lambda^{\chi,0}\otimes_\ZZ\CC)^*$
to the sequence of restrictions of $\uidescriptionfunction{}$
to the lines
$\Span_\QQ(\normalvertexcone{\indexedpositivefacet{\tau}{i}}{\tau})\otimes_\QQ\CC$
for $0\le i\le \numberofpositiveedges{\tau}$
(resp. $\Span_\QQ(\normalvertexcone{\indexednegativefacet{\tau}{i}}{\tau})\otimes_\QQ\CC$
for $0\le i\le \numberofnegativeedges{\tau}$).
\end{lemma}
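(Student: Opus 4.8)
The plan is to treat the case $\chi=\twotorusdelimiter$ in detail; the case $\chi=-\twotorusdelimiter$ is completely symmetric, with positive edges and facets replaced by negative ones throughout. By Corollary \ref{kerlambda1}, the kernel $\ker((\Lambda^{\twotorusdelimiter,1}\otimes_\ZZ\CC)^*\to(\Lambda^{\twotorusdelimiter,2}\otimes_\ZZ\CC)^*)$ is the space of tuples $(\uidescriptionfunction{1},\ldots,\uidescriptionfunction{\numberofpositiveedges{\tau}})$ in which $\uidescriptionfunction{i}$ is a linear functional on the plane $P_i=\Span_\QQ(\normalvertexcone{\indexedpositiveedge{\tau}{i}}{\tau})\otimes_\QQ\CC$, subject to $\uidescriptionfunction{i}|_{L_i}=\uidescriptionfunction{i+1}|_{L_i}$ for $1\le i<\numberofpositiveedges{\tau}$, where I write $L_i=\Span_\QQ(\normalvertexcone{\indexedpositivefacet{\tau}{i}}{\tau})\otimes_\QQ\CC$ for $0\le i\le\numberofpositiveedges{\tau}$. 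The combinatorial fact I would record first is that the boundary of the two-dimensional cone $\normalvertexcone{\indexedpositiveedge{\tau}{i}}{\tau}$ consists of exactly the two rays $\normalvertexcone{\indexedpositivefacet{\tau}{i-1}}{\tau}$ and $\normalvertexcone{\indexedpositivefacet{\tau}{i}}{\tau}$, these being the normal cones of the two facets of $\tau$ meeting along $\indexedpositiveedge{\tau}{i}$; since distinct facets of $\tau$ have distinct normal rays under the face bijection between $\tau$ and $\tau^\vee$, the lines $L_{i-1}$ and $L_i$ are distinct and therefore span $P_i$.

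With this in hand I would define $\Phi$ from the kernel to $\abstractvectorspace_{2,1,1}$ by restricting each $\uidescriptionfunction{i}$ to its two boundary lines: set $\uidescriptionfunction{0}'=\uidescriptionfunction{1}|_{L_0}$, set $\uidescriptionfunction{\numberofpositiveedges{\tau}}'=\uidescriptionfunction{\numberofpositiveedges{\tau}}|_{L_{\numberofpositiveedges{\tau}}}$, and for $1\le i<\numberofpositiveedges{\tau}$ let $\uidescriptionfunction{i}'$ be the common restriction $\uidescriptionfunction{i}|_{L_i}=\uidescriptionfunction{i+1}|_{L_i}$, which is well defined by the compatibility condition. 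Each $\uidescriptionfunction{i}'$ is then a functional on $L_i$, so $\Phi$ lands in $\abstractvectorspace_{2,1,1}$. The inverse is forced by the spanning property: given $(\uidescriptionfunction{i}')_{i=0}^{\numberofpositiveedges{\tau}}$, there is for every $i$ a unique functional $\uidescriptionfunction{i}$ on $P_i$ with $\uidescriptionfunction{i}|_{L_{i-1}}=\uidescriptionfunction{i-1}'$ and $\uidescriptionfunction{i}|_{L_i}=\uidescriptionfunction{i}'$, precisely because $L_{i-1}$ and $L_i$ span $P_i$. The resulting tuple automatically meets the compatibility relations, and the two constructions are mutually inverse, so $\Phi$ is an isomorphism. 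The end lines $L_0$ and $L_{\numberofpositiveedges{\tau}}$ carry no compatibility constraint, which is exactly why they enter $\abstractvectorspace_{2,1,1}$ as free summands; this matches Lemma \ref{lambda2noboundaryconditionpos}, by which $\Span_\QQ(\Lambda^{\twotorusdelimiter}_{\indexedpositivefacet{\tau}{0}})=\Span_\QQ(\Lambda^{\twotorusdelimiter}_{\indexedpositivefacet{\tau}{\numberofpositiveedges{\tau}}})=0$, so the two end facets impose no equation.

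For the description of the map $(\Lambda^{\twotorusdelimiter,0}\otimes_\ZZ\CC)^*\to(\Lambda^{\twotorusdelimiter,1}\otimes_\ZZ\CC)^*$, I would note that, because $\tau$ has a single zero-dimensional face, the Cech differential on the dual complex is, up to a global sign, the direct sum over edges of the maps dual to the inclusions $\Span_{\widetilde M}(\Lambda^{\twotorusdelimiter}_{\indexedpositiveedge{\tau}{i}})\hookrightarrow\Span_{\widetilde M}(\Lambda^{\twotorusdelimiter}_0)$, i.e. the restriction maps. By Corollary \ref{lambda0classification} the source is $\Span_\QQ(\bigcup_i\normalvertexcone{\indexedpositiveedge{\tau}{i}}{\tau})\otimes_\QQ\CC$, so a class $\uidescriptionfunction{}$ is a single global functional and the differential sends it to $(\uidescriptionfunction{}|_{P_i})_i$, which lies in the kernel since $L_i\subset P_i\cap P_{i+1}$ makes the restrictions to $L_i$ agree. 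Composing with $\Phi$ gives $\Phi((\uidescriptionfunction{}|_{P_i})_i)=(\uidescriptionfunction{}|_{L_i})_{i=0}^{\numberofpositiveedges{\tau}}$, which is exactly the sequence of restrictions of $\uidescriptionfunction{}$ to the lines $\Span_\QQ(\normalvertexcone{\indexedpositivefacet{\tau}{i}}{\tau})\otimes_\QQ\CC$ asserted in the statement. The only genuinely delicate step is the combinatorial bookkeeping of the first paragraph, namely verifying that the facet-normal rays bounding each edge-normal plane are exactly the neighbours $L_{i-1}$ and $L_i$ of the chain and that they are distinct and spanning; once that adjacency is pinned down, both the isomorphism and the identification of the differential reduce to routine linear algebra.
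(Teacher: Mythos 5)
Your proposal is correct and follows essentially the same route as the paper: restrict each $\uidescriptionfunction{i}$ to the two facet-normal lines bounding the plane $\Span_\QQ(\normalvertexcone{\indexedpositiveedge{\tau}{i}}{\tau})\otimes_\QQ\CC$, observe that these two lines are distinct and span the plane so the assignment is bijective, and then note that the differential from $(\Lambda^{\chi,0}\otimes_\ZZ\CC)^*$ is just restriction. The only difference is that you spell out the adjacency bookkeeping and the Cech-differential identification slightly more explicitly than the paper, which simply cites Corollary \ref{kerlambda1} and the definitions.
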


\begin{proof}
Again, the positive and the negative cases are completely analogous, 
so we consider only one of them, for example, the case when 
$\chi=-\twotorusdelimiter$.

First, we should note that a function from $(\Lambda^{\chi,0}\otimes_\ZZ\CC)^*$
is really defined on all lines $\Span_\QQ(\normalvertexcone{\indexednegativefacet{\tau}{i}}{\tau})\otimes_\QQ\CC$
(and the restriction mentioned in the statement of the Lemma really exists) by Lemma \ref{lambda0classification}
since each normal cone $\normalvertexcone{\indexednegativefacet{\tau}{i}}{\tau}$ (for $0\le i\le \numberofnegativeedges{\tau}$)
is contained in (the boundary of) a cone $\normalvertexcone{\indexednegativeedge{\tau}{j}}{\tau}$
for some $j$, $1\le j\le \numberofnegativeedges{\tau}$.

The isomorphism is constructed as follows. Given a sequence 
$$
(\uidescriptionfunction1,\ldots,\uidescriptionfunction{\numberofnegativeedges{\tau}})\in
\ker((\Lambda^{\chi,1}\otimes_\ZZ\CC)^*\to(\Lambda^{\chi,2}\otimes_\ZZ\CC)^*),
$$
we set 
$$
\uidescriptionfunction0'=\uidescriptionfunction1|_{\Span_\QQ(\normalvertexcone{\indexednegativefacet{\tau}{0}}{\tau})\otimes_\QQ\CC}
$$
and
$$
\uidescriptionfunction i'=\uidescriptionfunction i|_{\Span_\QQ(\normalvertexcone{\indexednegativefacet{\tau}{i}}{\tau})\otimes_\QQ\CC}
$$
for $0<i\le \numberofnegativeedges{\tau}$ and say that 
$(\uidescriptionfunction1,\ldots,\uidescriptionfunction{\numberofnegativeedges{\tau}})\mapsto 
(\uidescriptionfunction0',\ldots,\uidescriptionfunction{\numberofnegativeedges{\tau}}')$. Observe that 
by Corollary \ref{kerlambda1}, we also have
$$
\uidescriptionfunction {i-1}'=\uidescriptionfunction i|_{\Span_\QQ(\normalvertexcone{\indexednegativefacet{\tau}{i-1}}{\tau})\otimes_\QQ\CC}
$$
for $0< i\le\numberofnegativeedges{\tau}$. Since 
$\Span_\QQ(\normalvertexcone{\indexednegativeedge{\tau}{i}}{\tau})\otimes_\QQ\CC$
is a two-dimensional space, and 
$\Span_\QQ(\normalvertexcone{\indexednegativefacet{\tau}{i-1}}{\tau})\otimes_\QQ\CC$
and 
$\Span_\QQ(\normalvertexcone{\indexednegativefacet{\tau}{i}}{\tau})\otimes_\QQ\CC$
are its noncoinciding one\-/dimensional subspaces, a linear function on 
$\Span_\QQ(\normalvertexcone{\indexednegativeedge{\tau}{i}}{\tau})\otimes_\QQ\CC$
is uniquely determined by its restrictions to 
$\Span_\QQ(\normalvertexcone{\indexednegativefacet{\tau}{i}}{\tau})\otimes_\QQ\CC$
and
$\Span_\QQ(\normalvertexcone{\indexednegativeedge{\tau}{i}}{\tau})\otimes_\QQ\CC$,
and these restrictions can be arbitrary linear functions.
Therefore, the map we have constructed is really an isomorphism.
The correctness of the explicit description of the map 
$(\Lambda^{\chi,0}\otimes_\ZZ\CC)^*\to\abstractvectorspace_{2,1,-1}$ 
in the statement of the lemma
follows directly 
from the definition of 
the map $(\Lambda^{\chi,0}\otimes_\ZZ\CC)^*\to(\Lambda^{\chi,1}\otimes_\ZZ\CC)^*$
and of the isomorphism between 
$\ker((\Lambda^{\chi,1}\otimes_\ZZ\CC)^*\to(\Lambda^{\chi,2}\otimes_\ZZ\CC)^*)$
and $\abstractvectorspace_{2,1,-1}$.
\end{proof}

\begin{lemma}\label{kerlambda2bruteforce}
If $\chi=a\twotorusdelimiter$ (resp. $\chi=-a\twotorusdelimiter$), where $a\in \NN$, $a\ge 2$, then 
$\ker((\Lambda^{\chi,1}\otimes_\ZZ\CC)^*\to(\Lambda^{\chi,2}\otimes_\ZZ\CC)^*)$
is isomorphic to $\abstractvectorspace_{2,1,a}$ (resp. to $\abstractvectorspace_{2,1,-a}$).
After this identification, the map 
$(\Lambda^{\chi,0}\otimes_\ZZ\CC)^*\to(\Lambda^{\chi,1}\otimes_\ZZ\CC)^*$
becomes the following map: it maps $\uidescriptionfunction{}\in(\Lambda^{\chi,0}\otimes_\ZZ\CC)^*=(\widetilde M\otimes_\QQ\CC)^*$
to $(\uidescriptionfunction{},0,\ldots,0)$.
\end{lemma}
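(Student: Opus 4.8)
The plan is to run the same telescoping argument used for Lemma~\ref{kerlambda1bruteforce}, only reading off the structure of the kernel from Corollary~\ref{kerlambda2} instead of Corollary~\ref{kerlambda1}. As there, I treat only the positive case $\chi=a\twotorusdelimiter$ with $a\ge 2$; the negative case is verbatim the same after replacing $\numberofpositiveedges{\tau}$, $\indexedpositiveedge{\tau}{i}$, $\indexedpositivefacet{\tau}{i}$ and $\twotorusdelimiter$ by $\numberofnegativeedges{\tau}$, $\indexednegativeedge{\tau}{i}$, $\indexednegativefacet{\tau}{i}$ and $-\twotorusdelimiter$, and remembering the reversed ordering of the negative edges. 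By Corollary~\ref{kerlambda2} an element of $\ker((\Lambda^{\chi,1}\otimes_\ZZ\CC)^*\to(\Lambda^{\chi,2}\otimes_\ZZ\CC)^*)$ is a tuple $(\uidescriptionfunction1,\ldots,\uidescriptionfunction{\numberofpositiveedges{\tau}})$ of linear functionals on $\widetilde M_\QQ\otimes_\QQ\CC$ such that, for each $i$ with $1\le i<\numberofpositiveedges{\tau}$, either $\uidescriptionfunction i$ and $\uidescriptionfunction{i+1}$ agree on $\Span_\QQ(\twotorusdelimiter,\normalvertexcone{\indexedpositivefacet{\tau}{i}}{\tau})\otimes_\QQ\CC$ (when $a\le\latticelength{\indexedpositivefacet{\tau}{i}\cap[\twotorusdelimiter=1]}$) or $\uidescriptionfunction i=\uidescriptionfunction{i+1}$ (when $a>\latticelength{\indexedpositivefacet{\tau}{i}\cap[\twotorusdelimiter=1]}$).

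The isomorphism to $\abstractvectorspace_{2,1,a}$ I would define by telescoping: send the tuple above to $(\uidescriptionfunction1',\ldots,\uidescriptionfunction{\numberofpositiveedges{\tau}}')$ with $\uidescriptionfunction1'=\uidescriptionfunction1$ and $\uidescriptionfunction i'=\uidescriptionfunction i-\uidescriptionfunction{i-1}$ for $i>1$. The point is that the compatibility condition at the facet $\indexedpositivefacet{\tau}{i-1}$ forces the difference $\uidescriptionfunction i-\uidescriptionfunction{i-1}$ to land in exactly the space demanded by the definition of $\abstractvectorspace_{2,1,a}$: using the identification $(V/W)^*=\{f\in V^*\mid f|_W=0\}$ together with $\latticelength{\indexedpositivefacet{\tau}{i-1}\cap[\twotorusdelimiter=1]}=\latticelength{\indexededge{\stdpolyhedronletter_0}{i-1}}$ from Remark~\ref{intersectionnotation}, the difference vanishes on $\Span_\QQ(\twotorusdelimiter,\normalvertexcone{\indexedpositivefacet{\tau}{i-1}}{\tau})\otimes_\QQ\CC$ and hence descends to the quotient when $a\le\latticelength{\indexededge{\stdpolyhedronletter_0}{i-1}}$, and is outright zero when $a>\latticelength{\indexededge{\stdpolyhedronletter_0}{i-1}}$. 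The inverse is the partial-sum map $\uidescriptionfunction i=\uidescriptionfunction1'+\sum_{j=2}^i\uidescriptionfunction j'$, where each $\uidescriptionfunction j'$ with $j>1$ is read back as a functional on $\widetilde M_\QQ\otimes_\QQ\CC$ vanishing on the appropriate subspace via the same identification; one checks directly that both compatibility cases of Corollary~\ref{kerlambda2} are recovered, so the two maps are mutually inverse linear isomorphisms. A quick dimension count ($2$ resp.\ $3$ constraints on $\uidescriptionfunction i'$ matching the $1$- resp.\ $0$-dimensional summands of $\abstractvectorspace_{2,1,a}$) confirms the bookkeeping.

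Finally I would identify the Cech differential $(\Lambda^{\chi,0}\otimes_\ZZ\CC)^*\to(\Lambda^{\chi,1}\otimes_\ZZ\CC)^*$. By Corollary~\ref{lambda0classification} we have $\Lambda^{\chi,0}\otimes_\ZZ\CC=\widetilde M_\QQ\otimes_\QQ\CC$, and by Corollary~\ref{lambda1classification} every summand of $\Lambda^{\chi,1}\otimes_\ZZ\CC$ is the full $\widetilde M_\QQ\otimes_\QQ\CC$, so (exactly as in the restriction description of Lemma~\ref{kerlambda1bruteforce}, now with all restrictions being the identity) this differential sends a functional $\uidescriptionfunction{}$ to the constant tuple $(\uidescriptionfunction{},\ldots,\uidescriptionfunction{})$. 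Applying the telescoping isomorphism, all differences $\uidescriptionfunction i-\uidescriptionfunction{i-1}$ vanish, so the image is $(\uidescriptionfunction{},0,\ldots,0)$, as claimed.

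I expect the main obstacle to be purely organizational rather than conceptual: keeping the index shift between an edge $i$ and the facet $\indexedpositivefacet{\tau}{i-1}$ that separates edges $i-1$ and $i$ consistent, translating the lattice-length thresholds correctly through Remark~\ref{intersectionnotation}, and respecting the reversed negative-edge ordering in the negative case. The genuinely mathematical content — that differences of compatible functionals descend to the stated quotients and that partial sums reconstruct compatible tuples — is an immediate consequence of $(V/W)^*\cong W^\perp$, so no geometric input beyond Corollaries~\ref{lambda0classification}, \ref{lambda1classification} and~\ref{kerlambda2} should be required.
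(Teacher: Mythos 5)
Your proposal is correct and follows essentially the same route as the paper's proof: the same telescoping map $\uidescriptionfunction i\mapsto\uidescriptionfunction i-\uidescriptionfunction{i-1}$ justified by Corollary \ref{kerlambda2}, the same partial-sum inverse built by lifting functionals from the quotients, and the same observation that the constant tuple coming from $(\Lambda^{\chi,0}\otimes_\ZZ\CC)^*$ telescopes to $(\uidescriptionfunction{},0,\ldots,0)$. No gaps.
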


\begin{proof}
This time let us consider the case $\chi=a\twotorusdelimiter$, the other case is completely similar.

First, let us construct a map from $\ker((\Lambda^{\chi,1}\otimes_\ZZ\CC)^*\to(\Lambda^{\chi,2}\otimes_\ZZ\CC)^*)$
to $\abstractvectorspace_{2,1,a}$.
Given a sequence
$$
(\uidescriptionfunction1,\ldots,\uidescriptionfunction{\numberofpositiveedges{\tau}})\in
\ker((\Lambda^{\chi,1}\otimes_\ZZ\CC)^*\to(\Lambda^{\chi,2}\otimes_\ZZ\CC)^*),
$$
we set 
$$
\uidescriptionfunction1'=\uidescriptionfunction1
$$
and
$$
\uidescriptionfunction i'=\uidescriptionfunction i-\uidescriptionfunction{i-1}
$$
for $1<i\le \numberofpositiveedges{\tau}$. By Corollary \ref{kerlambda2}, 
$$
\uidescriptionfunction i|_{\Span_\QQ(\twotorusdelimiter, \normalvertexcone{\indexedpositivefacet{\tau}{i-1}}{\tau})\otimes_\QQ\CC}=
\uidescriptionfunction{i-1}|_{\Span_\QQ(\twotorusdelimiter, \normalvertexcone{\indexedpositivefacet{\tau}{i-1}}{\tau})\otimes_\QQ\CC}
$$
if $a\le\latticelength{\indexedpositivefacet{\tau}{i}\cap [\twotorusdelimiter=1]}$, 
and 
$\uidescriptionfunction i=\uidescriptionfunction{i-1}$ if $a>\latticelength{\indexedpositivefacet{\tau}{i}\cap [\twotorusdelimiter=1]}$.
(here $1<i\le \numberofpositiveedges{\tau}$. Recall that 
$\indexededge{\stdpolyhedronletter_0}{i}=\indexedpositivefacet{\tau}{i}\cap[\twotorusdelimiter=1]$.
So, we can say that 
$$
\uidescriptionfunction i'|_{\Span_\QQ(\twotorusdelimiter, 
\normalvertexcone{\indexedpositivefacet{\tau}{i-1}}{\tau})\otimes_\QQ\CC}=
(\uidescriptionfunction i-\uidescriptionfunction{i-1})|_{\Span_\QQ(\twotorusdelimiter, 
\normalvertexcone{\indexedpositivefacet{\tau}{i-1}}{\tau})\otimes_\QQ\CC}=0
$$
if $a\le\latticelength{\indexededge{\stdpolyhedronletter_0}{i-1}}$, 
and 
$\uidescriptionfunction i'=\uidescriptionfunction i-\uidescriptionfunction{i-1}=0$
if $a>\latticelength{\indexededge{\stdpolyhedronletter_0}{i-1}}$.
Therefore, 
$(\uidescriptionfunction1',\ldots,\uidescriptionfunction{\numberofpositiveedges{\tau}}')$
really defines an element of $\abstractvectorspace_{2,1,a}$, and we say that 
$(\uidescriptionfunction1,\ldots,\uidescriptionfunction{\numberofpositiveedges{\tau}})\mapsto
(\uidescriptionfunction1',\ldots,\uidescriptionfunction{\numberofpositiveedges{\tau}}')$.

The inverse map can be constructed by induction on $i$. 
Let $(\uidescriptionfunction1',\ldots,\uidescriptionfunction{\numberofpositiveedges{\tau}}')\in\abstractvectorspace_{2,1,a}$.
First, set $\uidescriptionfunction1=\uidescriptionfunction1'$. Now suppose that 
we already have $\uidescriptionfunction{i-1}\in (\widetilde M_\QQ\otimes_\QQ\CC)^*$.
If $a>\latticelength{\indexededge{\stdpolyhedronletter_0}{i-1}}$, set 
$\uidescriptionfunction i=\uidescriptionfunction{i-1}$. Otherwise, 
$\uidescriptionfunction i'$ is a linear function on
$(\widetilde M_\QQ\otimes_\QQ\CC)/(\Span_\QQ(\twotorusdelimiter, 
\normalvertexcone{\indexednegativefacet{\tau}{i-1}}{\tau})\otimes_\QQ\CC)$.
It gives rise to a function on $\widetilde M_\QQ\otimes_\QQ\CC$, 
which vanishes on 
$\Span_\QQ(\twotorusdelimiter, 
\normalvertexcone{\indexednegativefacet{\tau}{i-1}}{\tau})\otimes_\QQ\CC$
and which we also denote by $\uidescriptionfunction i'$. Set 
$\uidescriptionfunction i=\uidescriptionfunction{i-1}+\uidescriptionfunction i'$. 
Then 
$$
(\uidescriptionfunction i-\uidescriptionfunction{i-1})|_{\Span_\QQ(\twotorusdelimiter, 
\normalvertexcone{\indexedpositivefacet{\tau}{i-1}}{\tau})\otimes_\QQ\CC}=0.
$$
Now we have a sequence $(\uidescriptionfunction1,\ldots,\uidescriptionfunction{\numberofpositiveedges{\tau}})$
of functions on $\widetilde M_\QQ\otimes_\QQ\CC$, 
and by Corollary \ref{kerlambda2}, 
$(\uidescriptionfunction1,\ldots,\uidescriptionfunction{\numberofpositiveedges{\tau}})
\in
\ker((\Lambda^{\chi,1}\otimes_\ZZ\CC)^*\to(\Lambda^{\chi,2}\otimes_\ZZ\CC)^*)$.
So, we have constructed a map $\abstractvectorspace_{2,1,a}\to
\ker((\Lambda^{\chi,1}\otimes_\ZZ\CC)^*\to(\Lambda^{\chi,2}\otimes_\ZZ\CC)^*)$. It is clear from the construction
that the two maps we have are mutually inverse.

By Corollary \ref{lambda0classification}, $(\Lambda^{\chi,0}\otimes_\ZZ\CC)^*=(\widetilde M\otimes_\QQ\CC)^*$.
Again, it is clear from the definition of the map 
$(\Lambda^{\chi,0}\otimes_\ZZ\CC)^*\to(\Lambda^{\chi,1}\otimes_\ZZ\CC)^*$
and from the construction of the isomorphism between 
$\ker((\Lambda^{\chi,1}\otimes_\ZZ\CC)^*\to(\Lambda^{\chi,2}\otimes_\ZZ\CC)^*)$
and $\abstractvectorspace_{2,1,a}$ that after this identification 
$\ker((\Lambda^{\chi,1}\otimes_\ZZ\CC)^*\to(\Lambda^{\chi,2}\otimes_\ZZ\CC)^*)\cong
\abstractvectorspace_{2,1,a}$
the map 
$(\Lambda^{\chi,0}\otimes_\ZZ\CC)^*\to(\Lambda^{\chi,1}\otimes_\ZZ\CC)^*$
becomes the map 
$$
\Big(\uidescriptionfunction{}\in (\widetilde M\otimes_\QQ\CC)^*\Big)\mapsto 
\Big((\uidescriptionfunction{},0,\ldots,0)\in\abstractvectorspace_{2,1,a}\Big).
$$
\end{proof}

\begin{corollary}\label{kerlambda1dim}
If $\chi=\twotorusdelimiter$ (resp. $\chi=-\twotorusdelimiter$) and $\numberofpositiveedges{\tau}=1$
(resp. $\numberofpositiveedges{\tau}=1$), then $\dim T^1_{-\chi}(X)=0$.

If $\chi=\twotorusdelimiter$ (resp. $\chi=-\twotorusdelimiter$) and $\numberofpositiveedges{\tau}\ge 2$
(resp. $\numberofpositiveedges{\tau}\ge 2$), then $\dim T^1_{-\chi}(X)=\numberofpositiveedges{\tau}-2$
(resp. $\dim T^1_{-\chi}(X)=\numberofnegativeedges{\tau}-2$).
\end{corollary}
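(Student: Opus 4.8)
The plan is to identify $H^1$ of the complex $(\Lambda^{\chi,\bullet}\otimes_\ZZ\CC)^*$ as $\ker d_1/\im d_0$, where I write $d_0$ and $d_1$ for the two Cech differentials, and then to compute the two dimensions $\dim\ker d_1$ and $\dim\im d_0$ directly from the explicit models furnished by Lemma \ref{kerlambda1bruteforce}. By Theorem \ref{t1toric} this gives $\dim T^1_{-\chi}(X)=\dim\ker d_1-\dim\im d_0$. I will carry out the case $\chi=\twotorusdelimiter$; the case $\chi=-\twotorusdelimiter$ is entirely symmetric, replacing positive edges and facets by negative ones and $\numberofpositiveedges{\tau}$ by $\numberofnegativeedges{\tau}$, and using Lemma \ref{kerlambda1bruteforce} for $\abstractvectorspace_{2,1,-1}$.

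First I would compute $\dim\ker d_1$. By Lemma \ref{kerlambda1bruteforce} this kernel is isomorphic to $\abstractvectorspace_{2,1,1}$, which by definition is the space of sequences $(\uidescriptionfunction0',\ldots,\uidescriptionfunction{\numberofpositiveedges{\tau}}')$ with $\uidescriptionfunction i'$ a linear function on $\Span_\QQ(\normalvertexcone{\indexedpositivefacet{\tau}{i}}{\tau})\otimes_\QQ\CC$. Each $\indexedpositivefacet{\tau}{i}$, $0\le i\le\numberofpositiveedges{\tau}$, is a genuine facet of $\tau$, so its normal cone is a ray and the corresponding space of linear functions is one-dimensional. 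Hence $\dim\ker d_1=\numberofpositiveedges{\tau}+1$.

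Next I would show that $d_0$ is injective, so that $\dim\im d_0=\dim(\Lambda^{\chi,0}\otimes_\ZZ\CC)$. Under the identification of Lemma \ref{kerlambda1bruteforce}, $d_0$ sends a functional $\uidescriptionfunction{}$ to the tuple of its restrictions to the lines $\Span_\QQ(\normalvertexcone{\indexedpositivefacet{\tau}{i}}{\tau})\otimes_\QQ\CC$, so $\ker d_0$ consists of the functionals on $\Lambda^{\chi,0}\otimes_\ZZ\CC$ vanishing on every facet line. By Corollary \ref{lambda0classification} (case $a=1$) the source is $\Span_\QQ(\bigcup_{i}\normalvertexcone{\indexedpositiveedge{\tau}{i}}{\tau})\otimes_\QQ\CC$, and each two-dimensional cone $\normalvertexcone{\indexedpositiveedge{\tau}{i}}{\tau}$ is spanned by its two boundary rays $\normalvertexcone{\indexedpositivefacet{\tau}{i-1}}{\tau}$ and $\normalvertexcone{\indexedpositivefacet{\tau}{i}}{\tau}$; summing over $i$ shows that the facet lines together span the whole source. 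A functional vanishing on all of them is therefore zero, which establishes $\ker d_0=0$ and hence $\dim\im d_0=\dim\Span_\QQ(\bigcup_{i=1}^{\numberofpositiveedges{\tau}}\normalvertexcone{\indexedpositiveedge{\tau}{i}}{\tau})$.

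The remaining, and main, point is the dimension of this span, which I expect to be the only place requiring a genuine (if elementary) geometric argument. Each $\normalvertexcone{\indexedpositiveedge{\tau}{i}}{\tau}$ spans the plane $(\primitivelattice{\indexedpositiveedge{\tau}{i}})^\perp$, so the total span is $\sum_i(\primitivelattice{\indexedpositiveedge{\tau}{i}})^\perp$, whose orthogonal complement is $\bigcap_i\Span_\QQ(\primitivelattice{\indexedpositiveedge{\tau}{i}})$. If $\numberofpositiveedges{\tau}=1$ the span is a single plane, of dimension $2$. If $\numberofpositiveedges{\tau}\ge2$, the positive edges are distinct extreme rays of the pointed cone $\tau$, so their primitive generators are pairwise non-parallel; hence the intersection of the lines $\Span_\QQ(\primitivelattice{\indexedpositiveedge{\tau}{i}})$ is $0$ and the span is all of $\widetilde M_\QQ$, of dimension $3$. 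Plugging into $\dim T^1_{-\chi}(X)=\dim\ker d_1-\dim\im d_0$ gives $2-2=0$ when $\numberofpositiveedges{\tau}=1$ and $(\numberofpositiveedges{\tau}+1)-3=\numberofpositiveedges{\tau}-2$ when $\numberofpositiveedges{\tau}\ge2$, which is exactly the claim (and $\numberofnegativeedges{\tau}-2$ in the negative case).
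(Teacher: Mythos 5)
Your proof is correct and follows essentially the same route as the paper: identify $\ker d_1$ with $\abstractvectorspace_{2,1,1}$ via Lemma \ref{kerlambda1bruteforce}, observe that $(\Lambda^{\chi,0}\otimes_\ZZ\CC)^*\to\abstractvectorspace_{2,1,1}$ is injective, and subtract dimensions. The only (harmless) divergence is in computing $\dim\Lambda^{\chi,0}\otimes_\ZZ\CC$: you take orthogonal complements of the edge primitives, whereas the paper counts the distinct rays $\normalvertexcone{\indexedpositivefacet{\tau}{i}}{\tau}$ of $\tau^\vee$ and gets $\min(3,\numberofpositiveedges{\tau}+1)$ directly.
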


\begin{proof}
We consider the case $\chi=\twotorusdelimiter$, the other case is completely similar. Note that 
$\dim\Span_\QQ(\normalvertexcone{\indexedpositivefacet{\tau}{i}}{\tau})\otimes_\QQ\CC=1$, so 
$\dim\abstractvectorspace_{2,1,1}=\numberofpositiveedges{\tau}+1$. Also note that
it follows from the description of $\Span_{\widetilde M}(\Lambda^{0,\chi})\otimes_\ZZ\CC$
in Corollary \ref{lambda0classification} and from Lemma \ref{kerlambda1bruteforce}
that the map $(\Lambda^{\chi,0}\otimes_\ZZ\CC)^*\to\abstractvectorspace_{2,1,1}$ is in fact an embedding, 
so $\dim T^1_\chi(X)=\dim\abstractvectorspace_{2,1,1}-\dim(\Lambda^{\chi,0}\otimes_\ZZ\CC)^*$.
Now, since $\normalvertexcone{\indexedpositivefacet{\tau}{i}}$ for different $i$ are different edges of 
$\tau^\vee$, we have $\dim(\Lambda^{\chi,0}\otimes_\ZZ\CC)^*=\min(3,\numberofpositiveedges{\tau}+1)$.
Thus, $\dim(\Lambda^{\chi,0}\otimes_\ZZ\CC)^*=2$ if $\numberofpositiveedges{\tau}=1$ and
$\dim(\Lambda^{\chi,0}\otimes_\ZZ\CC)^*=3$ if $\numberofpositiveedges{\tau}\ge 2$.
Finally, we have $\dim\abstractvectorspace_{2,1,1}=1+1-2=0$ if $\numberofpositiveedges{\tau}=1$ and
$\dim\abstractvectorspace_{2,1,1}=\numberofpositiveedges{\tau}+1-3=\numberofpositiveedges{\tau}-2$ if 
$\numberofpositiveedges{\tau}\ge 2$.
\end{proof}

\begin{proposition}\label{kerlambda2dim}
If $\chi=a\twotorusdelimiter$ (resp. $\chi=-a\twotorusdelimiter$), where $a\in \NN$, $a\ge 2$, then 
$\dim T^1_{-\chi}(X)$ equals the number of indices $i$ such that $1\le i<\numberofpositiveedges{\tau}$
(resp. $1\le i<\numberofnegativeedges{\tau}$) and 
$a\le\latticelength{\indexededge{\stdpolyhedronletter_0}{i}}$
(resp. $a\le\latticelength{\indexededge{\stdpolyhedronletter_\infty}{i}}$).
\end{proposition}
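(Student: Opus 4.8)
The plan is to feed the identifications of Lemma \ref{kerlambda2bruteforce} into Theorem \ref{t1toric} and read off the dimension directly. By Theorem \ref{t1toric}, $T^1_{-\chi}(X) \cong H^1((\Lambda^{\chi,\bullet}\otimes_\ZZ\CC)^*)$, i.e.\ $T^1_{-\chi}(X)$ is the middle cohomology $\ker d^1 / \im d^0$ of the three-term complex
$$(\Lambda^{\chi,0}\otimes_\ZZ\CC)^* \xrightarrow{d^0} (\Lambda^{\chi,1}\otimes_\ZZ\CC)^* \xrightarrow{d^1} (\Lambda^{\chi,2}\otimes_\ZZ\CC)^*.$$
I will treat the case $\chi = a\twotorusdelimiter$; the case $\chi = -a\twotorusdelimiter$ is identical after replacing positive edges and facets by negative ones and $\stdpolyhedronletter_0$ by $\stdpolyhedronletter_\infty$. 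By Lemma \ref{kerlambda2bruteforce}, $\ker d^1 \cong \abstractvectorspace_{2,1,a}$, and under this identification $d^0$ becomes the map $\uidescriptionfunction{} \mapsto (\uidescriptionfunction{},0,\ldots,0)$ from $(\Lambda^{\chi,0}\otimes_\ZZ\CC)^* = (\widetilde M\otimes_\QQ\CC)^*$ into the first summand of $\abstractvectorspace_{2,1,a}$ (the equality $(\Lambda^{\chi,0}\otimes_\ZZ\CC)^* = (\widetilde M\otimes_\QQ\CC)^*$ coming from Corollary \ref{lambda0classification}, valid since $a\ge 2$).

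Consequently $T^1_{-\chi}(X) \cong \abstractvectorspace_{2,1,a}/\im d^0$. I would write $\abstractvectorspace_{2,1,a} = V_1 \oplus \cdots \oplus V_{\numberofpositiveedges{\tau}}$ as the direct sum of the spaces in which the components $\uidescriptionfunction{i}'$ live. By the definition of $\abstractvectorspace_{2,1,a}$ the first summand $V_1$ is exactly $(\widetilde M_\QQ\otimes_\QQ\CC)^*$, and $\im d^0 = V_1 \oplus 0 \oplus \cdots \oplus 0$. Hence the quotient is $V_2 \oplus \cdots \oplus V_{\numberofpositiveedges{\tau}}$, and
$$\dim T^1_{-\chi}(X) = \sum_{i=2}^{\numberofpositiveedges{\tau}} \dim V_i.$$

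It then remains to compute $\dim V_i$ for $2 \le i \le \numberofpositiveedges{\tau}$. If $a > \latticelength{\indexededge{\stdpolyhedronletter_0}{i-1}}$, then $V_i = 0$ by definition. If $a \le \latticelength{\indexededge{\stdpolyhedronletter_0}{i-1}}$, then $V_i$ is the dual of $(\widetilde M_\QQ\otimes_\QQ\CC)/(\Span_\QQ(\twotorusdelimiter, \normalvertexcone{\indexedpositivefacet{\tau}{i-1}}{\tau})\otimes_\QQ\CC)$, so $\dim V_i = 3 - \dim\Span_\QQ(\twotorusdelimiter, \normalvertexcone{\indexedpositivefacet{\tau}{i-1}}{\tau})$. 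Here $\normalvertexcone{\indexedpositivefacet{\tau}{i-1}}{\tau}$ is the normal face of the facet $\indexedpositivefacet{\tau}{i-1}$, hence a ray, and its span is a line consisting of covectors that vanish on $\indexedpositivefacet{\tau}{i-1}$. For $1 \le i-1 \le \numberofpositiveedges{\tau}-1$ this facet is positive, so $\twotorusdelimiter$ does not vanish on its interior, whence $\twotorusdelimiter \notin \Span_\QQ(\normalvertexcone{\indexedpositivefacet{\tau}{i-1}}{\tau})$; thus $\Span_\QQ(\twotorusdelimiter, \normalvertexcone{\indexedpositivefacet{\tau}{i-1}}{\tau})$ is two-dimensional and $\dim V_i = 1$.

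Putting this together, $\dim V_i = 1$ precisely when $a \le \latticelength{\indexededge{\stdpolyhedronletter_0}{i-1}}$ and $\dim V_i = 0$ otherwise, so reindexing by $j = i-1$ gives
$$\dim T^1_{-\chi}(X) = \#\{\,j : 1 \le j < \numberofpositiveedges{\tau},\ a \le \latticelength{\indexededge{\stdpolyhedronletter_0}{j}}\,\},$$
which is the asserted count; the indices $j=0$ and $j=\numberofpositiveedges{\tau}$ corresponding to the two infinite edges never enter, matching the restriction to finite edges. The argument carries no real obstacle beyond bookkeeping: the single point requiring a genuine (if short) geometric observation is the two-dimensionality of $\Span_\QQ(\twotorusdelimiter, \normalvertexcone{\indexedpositivefacet{\tau}{i-1}}{\tau})$, which rests on the positivity of the facet $\indexedpositivefacet{\tau}{i-1}$ for the relevant indices.
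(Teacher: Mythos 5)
Your proposal is correct and follows exactly the route the paper intends: its proof of this proposition is the one-line remark that the claim ``follows directly from the definition of $\abstractvectorspace_{2,1,a}$ and Lemma \ref{kerlambda2bruteforce}'', and your argument simply spells out that computation (quotienting $\abstractvectorspace_{2,1,a}$ by the image of the first summand and counting the one-dimensional contributions of the remaining summands). The one genuinely geometric point you flag --- that $\twotorusdelimiter\notin\Span_\QQ(\normalvertexcone{\indexedpositivefacet{\tau}{i-1}}{\tau})$ because the facet is positive, so each surviving $V_i$ is one-dimensional --- is exactly the detail the paper leaves implicit, and you justify it correctly.
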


\begin{proof}
This follows directly from the definition of $\abstractvectorspace_{2,1,a}$ and 
Lemma \ref{kerlambda2bruteforce}.
\end{proof}

Now it is already easy to deduce Theorem \ref{t1aslatticelength} in the case when $X$ is in fact toric 
from Theorem \ref{t1toric}. First, let us compute the sum
$$
\sum_{a=2}^\infty \dim T^1_{-a\twotorusdelimiter}(X).
$$
By Proposition \ref{kerlambda2dim}, this sum can be decomposed into $\numberofpositiveedges{\tau}-1=
\numberofvertices{\stdpolyhedronletter_0}-1$ 
sums (indexed by $i=1,\ldots, \numberofpositiveedges{\tau}-1$), and each 
of these sums contributes 1 for $2\le a\le \latticelength{\indexededge{\stdpolyhedronletter_0}{i}}$
and 0 for larger values of $a$. Therefore, the $i$th of these sums equals 
$\latticelength{\indexededge{\stdpolyhedronletter_0}{i}}-1$, and we have
$$
\sum_{a=2}^\infty \dim T^1_{-a\twotorusdelimiter}(X)=
\sum_{i=1}^{\numberofvertices{\stdpolyhedronletter_0}-1}(\latticelength{\indexededge{\stdpolyhedronletter_0}{i}}-1).
$$
Observe that this sum vanishes if $\numberofvertices{\stdpolyhedronletter_0}=1$ 
(i.~e. if $0\in\PP^1$ is a removable special point).
Similarly, 
$$
\sum_{a=-2}^{-\infty} \dim T^1_{-a\twotorusdelimiter}(X)=
\sum_{i=1}^{\numberofvertices{\stdpolyhedronletter_\infty}-1}(\latticelength{\indexededge{\stdpolyhedronletter_\infty}{i}}-1).
$$
And again, this sum vanishes if $\numberofvertices{\stdpolyhedronletter_\infty}=1$, 
i.~e. if $\infty\in\PP^1$ is a removable special point.
Now, by Corollary \ref{kerlambda1dim}, $\dim T^1_{-\twotorusdelimiter}(X)=0$ if 
$0\in\PP^1$ is a removable special point, and $\dim T^1_{-\twotorusdelimiter}(X)=\numberofvertices{\stdpolyhedronletter_0}-2$
otherwise. Similarly, 
$\dim T^1_{\twotorusdelimiter}(X)=0$ if 
$\infty\in\PP^1$ is a removable special point, 
$\dim T^1_{-\twotorusdelimiter}(X)=\numberofvertices{\stdpolyhedronletter_\infty}-2$
otherwise.
Hence, if $0\in\PP^1$ is a removable special point, then 
$$
\sum_{a=1}^\infty \dim T^1_{-a\twotorusdelimiter}(X)=0,
$$
and if $0\in\PP^1$ is an essential special point, then
\begin{multline*}
\sum_{a=1}^\infty \dim T^1_{-a\twotorusdelimiter}(X)=
\numberofvertices{\stdpolyhedronletter_0}-2+\sum_{i=1}^{\numberofvertices{\stdpolyhedronletter_0}-1}(\latticelength{\indexededge{\stdpolyhedronletter_0}{i}}-1)\\
=-1+\numberofvertices{\stdpolyhedronletter_0}-1+\sum_{i=1}^{\numberofvertices{\stdpolyhedronletter_0}-1}(\latticelength{\indexededge{\stdpolyhedronletter_0}{i}}-1)
=-1+\sum_{i=1}^{\numberofvertices{\stdpolyhedronletter_0}-1}(\latticelength{\indexededge{\stdpolyhedronletter_0}{i}}).
\end{multline*}
Similarly, if $\infty\in\PP^1$ is a removable special point, then 
$$
\sum_{a=-1}^{-\infty} \dim T^1_{-a\twotorusdelimiter}(X)=0,
$$
and if $\infty\in\PP^1$ is an essential special point, then
$$
\sum_{a=-1}^{-\infty} \dim T^1_{-a\twotorusdelimiter}(X)
=-1+\sum_{i=1}^{\numberofvertices{\stdpolyhedronletter_\infty}-1}(\latticelength{\indexededge{\stdpolyhedronletter_\infty}{i}}).
$$
Finally, recall that by Corollary \ref{chieq0trivial}, $\dim T^1_{0\in \widetilde M}(X)=0$, and we get the 
formula from Theorem \ref{t1aslatticelength}.

\chapter{A formally versal $T$-equivariant deformation over affine space}\label{sectksmtoric}

\section{Construction of the deformation}\label{sectversalconstruction}

In this section we construct a formally versal $T$-equivariant deformation of a $T$-variety $X$ over an affine space used as the parameter space. 
The Kodaira-Spencer map of this deformation maps the tangent space to this parameter space surjectively onto 
the zeroth graded component of $T^1(X)$. These properties of the deformation 
will enable us to prove
that the deformation has some good 
versality properties, namely so-called formal versality.

We maintain the notations and the assumptions from the Introduction. Recall that we have a polyhedral 
divisor $\mathcal D=\sum_{i=1}^{\numberofdivisorpoints}p_i\otimes \Delta_i$, 
where $\Delta_i\subset N_\QQ$ are polyhedra, and all their vertices are lattice points. From now on, without loss of generality, we may and will 
suppose that the point with coordinate $\infty$ on $\PP^1$ is a removable special point, $p_{\numberofdivisorpoints}=\infty$. 
Recall that if we add a principal polyhedral divisor to $\mathcal D$, the $T$-variety 
will not change. So, after we add several principal polyhedral divisors, each of which has two (removable)
special points, $p_i$ ($1\le i< \numberofdivisorpoints$) and $\infty$, to $\mathcal D$, we may suppose that 
$\indexedvertexpt{p}{1}=0$ (the origin in $N$) for all special points $p$ except $\infty$. In other words, 
$\indexededgept{p}{0}$ is always a ray, which begins at the origin.

\begin{remark}\label{finitepointstrivial}
After these changes, all special points except $p_{\numberofdivisorpoints}$
will be either essential or trivial.
\end{remark}

\begin{lemma}\label{finitedivisornonpos}
If $\stdpolyhedronletter\subset N_\QQ$ is a polyhedron with tail cone $\6$ and 
such that $\indexedvertex{\stdpolyhedronletter}{1}=0$,
then its individual evaluation function takes only nonpositive values.
\end{lemma}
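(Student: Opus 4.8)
The plan is to unwind the definition of the individual evaluation function and reduce the claim to the trivial observation that a linear functional attains a nonpositive value at a point where it vanishes. By definition $\eval_\stdpolyhedronletter(f) = \eval(\stdpolyhedronletter, f)$ for $f \in \sck$, and, as already noted in Section~\ref{tvarsreal}, the individual evaluation function corresponds to taking $\stdpolyhedronletter_2 = \6$ in the defining formula for $\eval$, so that $\eval_\stdpolyhedronletter(f) = \min_{v\in\stdpolyhedronletter} f(v) - \min_{v\in\6} f(v)$. Since $f \in \sck$, we have $f(v) \ge 0$ for every $v \in \6$; moreover $\6$ is a cone, so $0 \in \6$, and $f$ is linear, whence $f(0) = 0$. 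Therefore $\min_{v\in\6} f(v) = 0$, and the expression simplifies to $\eval_\stdpolyhedronletter(f) = \min_{v\in\stdpolyhedronletter} f(v)$.

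It then remains to estimate this single minimum. The hypothesis $\indexedvertex{\stdpolyhedronletter}{1} = 0$ says precisely that the origin is a vertex of $\stdpolyhedronletter$, so in particular $0 \in \stdpolyhedronletter$. Using $v = 0$ as a candidate in the minimum, I would conclude $\min_{v\in\stdpolyhedronletter} f(v) \le f(0) = 0$, and hence $\eval_\stdpolyhedronletter(f) \le 0$ for every $f \in \sck$. This is exactly the assertion that the individual evaluation function takes only nonpositive values.

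There is essentially no hard step here: the whole argument is a matter of substituting the definitions and exploiting that $0$ lies in both $\stdpolyhedronletter$ and $\6$. The only point deserving a word of justification is that the two minima are genuinely attained (rather than being $-\infty$), but this is guaranteed by the condition $f \in \sck$ together with the fact that $\6$ is the common tail cone of both $\stdpolyhedronletter$ and $\6$, which is precisely what makes $\eval$ well-defined on $\Pol_\6(N_\QQ)$. Consequently the estimate above holds verbatim, and the proof will be a short paragraph rather than a computation.
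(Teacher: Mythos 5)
Your proof is correct and follows essentially the same route as the paper, which simply observes that $\indexedvertex{\stdpolyhedronletter}{1}=0$ forces $\6\subseteq\stdpolyhedronletter$ (equivalently, $0\in\stdpolyhedronletter$) and leaves the rest implicit. Your version just spells out the two minima explicitly; there is no substantive difference.
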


\begin{proof}
It is clear that if $\indexedvertex{\stdpolyhedronletter}{1}=0$, then $\6\subseteq \stdpolyhedronletter$.
\end{proof}

\begin{lemma}\label{inftydivisornonneg}
In the assumptions stated above, the individual evaluation function of $\stdpolyhedronletter_{p_\numberofdivisorpoints}$
takes only nonnegative values and takes positive values on the interior of $\sck$. Therefore, 
$\stdpolyhedronletter_{p_\numberofdivisorpoints}\subset \6$.
\end{lemma}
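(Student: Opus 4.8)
The plan is to isolate the polyhedron at infinity inside the degree of $\mathcal D(\chi)$ and play off properness against Lemma \ref{finitedivisornonpos}. Recall that for $\chi\in\sck$ the coefficient of the divisor $\mathcal D(\chi)$ at a special point $p$ is exactly the individual evaluation function, which I abbreviate $\mathcal D_p(\chi)=\eval_{\stdpolyhedronletter_p}(\chi)=\min_{a\in\stdpolyhedronletter_p}\chi(a)$, and that ordinary points contribute $0$. Taking degrees on $\PP^1$ (the sum of coefficients) gives
$$
\deg\mathcal D(\chi)=\sum_{p\text{ special}}\mathcal D_p(\chi)=\mathcal D_{p_\numberofdivisorpoints}(\chi)+\sum_{i=1}^{\numberofdivisorpoints-1}\mathcal D_{p_i}(\chi).
$$
First I would solve this identity for $\mathcal D_{p_\numberofdivisorpoints}(\chi)=\eval_{\stdpolyhedronletter_{p_\numberofdivisorpoints}}(\chi)$.

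By Remark \ref{finitepointstrivial} every special point $p_i$ with $i<\numberofdivisorpoints$ is essential or trivial, and under the current normalization $\indexedvertexpt{p_i}{1}=0$; hence Lemma \ref{finitedivisornonpos} applies and yields $\mathcal D_{p_i}(\chi)\le 0$ for all $\chi\in\sck$ and all $i<\numberofdivisorpoints$. Properness, on the other hand, says that $\mathcal D(\chi)$ is semiample for $\chi\in\sck\cap M$ and big when $\chi$ lies in the interior; on $\PP^1$ this is precisely $\deg\mathcal D(\chi)\ge 0$, with $\deg\mathcal D(\chi)>0$ in the interior. Combining the two,
$$
\mathcal D_{p_\numberofdivisorpoints}(\chi)=\deg\mathcal D(\chi)-\sum_{i=1}^{\numberofdivisorpoints-1}\mathcal D_{p_i}(\chi)\ge\deg\mathcal D(\chi),
$$
so $\mathcal D_{p_\numberofdivisorpoints}$ is nonnegative on $\sck\cap M$ and strictly positive at the lattice points interior to $\sck$. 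Since $\mathcal D_{p_\numberofdivisorpoints}$ is piecewise linear and positively homogeneous, nonnegativity (resp.\ positivity) at lattice points propagates along rational rays and then, by continuity, to all of $\sck$ (resp.\ its interior), which gives the two asserted sign properties of the individual evaluation function.

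Finally I would deduce the containment. If $a\in\stdpolyhedronletter_{p_\numberofdivisorpoints}$, then for every $\chi\in\sck$ one has $\chi(a)\ge\min_{a'\in\stdpolyhedronletter_{p_\numberofdivisorpoints}}\chi(a')=\mathcal D_{p_\numberofdivisorpoints}(\chi)\ge 0$, so $a$ lies in the dual cone $\sck^\vee=\6$; hence $\stdpolyhedronletter_{p_\numberofdivisorpoints}\subseteq\6$. I expect the only delicate point to be the passage from lattice points to all of $\sck$ and the matching of the two descriptions of properness. A cleaner alternative that sidesteps the homogeneity argument is to use the Minkowski-sum form of properness directly: each finite $\stdpolyhedronletter_{p_i}$ contains $0$, so $0\in\sum_{i<\numberofdivisorpoints}\stdpolyhedronletter_{p_i}$ and therefore $\stdpolyhedronletter_{p_\numberofdivisorpoints}\subseteq\stdpolyhedronletter_{p_\numberofdivisorpoints}+\sum_{i<\numberofdivisorpoints}\stdpolyhedronletter_{p_i}\subseteq\6$; moreover, were $0\in\stdpolyhedronletter_{p_\numberofdivisorpoints}$, the tail-cone property would force $\6\subseteq\stdpolyhedronletter_{p_\numberofdivisorpoints}$ and hence equality with $\6$, making the total Minkowski sum equal to $\6$ and contradicting its strict containment, which is exactly what upgrades nonnegativity to strict positivity on the interior.
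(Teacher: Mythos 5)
Your argument is correct and is essentially the paper's own proof: both decompose $\deg\mathcal D(\chi)$ as the sum of the individual evaluation functions, bound the contributions of the finite special points by Lemma \ref{finitedivisornonpos}, and invoke semiampleness (resp.\ bigness) of $\mathcal D(\chi)$ to force $\eval_{\stdpolyhedronletter_{p_\numberofdivisorpoints}}(\chi)\ge 0$ (resp.\ $>0$ on the interior), with the containment $\stdpolyhedronletter_{p_\numberofdivisorpoints}\subset\6$ following by duality. The extra care you take in passing from lattice points to all of $\sck$ and the Minkowski-sum alternative are fine but not needed beyond what the paper records.
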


\begin{proof}
If $\chi\in\sck\cap M$, then $\deg\mathcal D(\chi)=\sum_{i=1}^\numberofdivisorpoints\eval_{\stdpolyhedronletter_{p_i}}(\chi)\ge 0$
since $\mathcal D(\chi)$ is semiample. Since $\eval_{\stdpolyhedronletter_{p_i}}(\chi)\le 0$ for $1\le i<\numberofdivisorpoints$, 
$\eval_{\stdpolyhedronletter_{p_\numberofdivisorpoints}}(\chi)\ge 0$. If $\chi$, moreover, is in the interior of $\sck$
then $\deg\mathcal D(\chi)>0$ since $\mathcal D(\chi)$ is big. So, $\eval_{\stdpolyhedronletter_{p_\numberofdivisorpoints}}(\chi)>0$.
\end{proof}

Denote $\totalpolyhedronletter=\sum_{i=1}^{\numberofdivisorpoints}\stdpolyhedronletter_{p_i}$. 

\begin{remark}
If $\chi\in\sck\cap M$, then
$\deg\mathcal D(\chi)=\eval_{\totalpolyhedronletter}(\chi)$ and $\dim\Gamma(\PP^1,\OO(\mathcal D(\chi)))=\eval_{\totalpolyhedronletter}(\chi)+1$.
\end{remark}

\begin{definition}
We call a polyhedron $\stdprimitivepolyhedronletter\subset N_\QQ$ with tail cone $\6$ \textit{primitive} if:
\begin{enumerate}
\item $\numberofvertices{\stdprimitivepolyhedronletter}=2$.
\item $\latticelength{\indexededge{\stdprimitivepolyhedronletter}{1}}=1$.
\item $\indexedvertex{\stdprimitivepolyhedronletter}{1}=0\in N$.
\end{enumerate}
\end{definition}

In other words, a primitive polyhedron is the Minkowski sum of $\6$ and a specially chosen primitive lattice segment. One of the endpoints of 
the segment should be the origin, but this segment cannot be chosen totally arbitrarily, otherwise we could also obtain a polyhedron 
$\stdprimitivepolyhedronletter$ with $\indexedvertex{\stdprimitivepolyhedronletter}{2}=0\in N$, not 
$\indexedvertex{\stdprimitivepolyhedronletter}{1}=0\in N$, or we could also get $\6$ itself, if the segment is inside $\6$.
Fig.~\ref{figprimpolyexample} shows an example of a primitive polyhedron.

\begin{figure}
\begin{center}
\includegraphics{t1_3fold_figures-10.mps}
\end{center}
\caption{An example of a primitive polyhedron.}
\label{figprimpolyexample}
\end{figure}

\begin{remark}\label{primitivenonpos}
If $\stdprimitivepolyhedronletter\subset N_\QQ$ is a primitive polyhedron with tail cone $\6$, 
then its individual evaluation function takes only nonpositive values.
\end{remark}

Clearly, if $\stdpolyhedronletter\subset N_\QQ$ is a lattice polyhedron with tail cone $\6$ and with 
$\indexedvertex{\stdpolyhedronletter}{1}=0$, then $\stdpolyhedronletter$ can be decomposed into a Minkowski 
sum of several primitive polyhedra (each of them can be taken several times). 
Decompose each polyhedron $\stdpolyhedronletter_{p_i}$ ($1\le i<\numberofdivisorpoints$)
into a sum of primitive polyhedra. Denote by 
$\stdprimitivepolyhedronletter_1, \ldots, \stdprimitivepolyhedronletter_\numberofprimitivepolyhedra$ 
all non-isomorphic primitive polyhedra we have.
We have $\stdpolyhedronletter_{p_j}=\sum_i \primitivepolyhedronlocalmultiplicity{i}{j}\stdprimitivepolyhedronletter_i$ for $1\le j<\numberofdivisorpoints$
and for some numbers $\primitivepolyhedronlocalmultiplicity{i}{j}\in\ZZ_{\ge 0}$. Denote 
$\primitivepolyhedronmultiplicity{i}=\sum_j\primitivepolyhedronlocalmultiplicity{i}{j}$. 
In other words, $\primitivepolyhedronmultiplicity{i}$ is the total number of times when a
polyhedron $\stdprimitivepolyhedronletter_i$ occurs in the decomposition of 
some of the polyhedra $\stdpolyhedronletter_{p_j}$ (for some $j$, $1\le j<\numberofdivisorpoints$) into a Minkowski
sum of primitive polyhedra.

\begin{remark}
For each $i$ ($1\le i\le \numberofprimitivepolyhedra$), the individual evaluation function of 
$\stdprimitivepolyhedronletter_i$ is linear on each of the cones in the total normal fan of $\mathcal D$.
\end{remark}

\begin{remark}\label{bardeltadecomp}
$\stdpolyhedronletter_{p_\numberofdivisorpoints}+\sum_{i=1}^\numberofprimitivepolyhedra\primitivepolyhedronmultiplicity{i}\stdprimitivepolyhedronletter_i
=\sum_{i=1}^{\numberofdivisorpoints}\stdpolyhedronletter_{p_i}=\totalpolyhedronletter$.
\end{remark}

First, let us construct an affine variety $\bigtotalspace$, which will be the total space
of the deformation. It will also be a variety with an action of a torus of 
dimension 2,
and we use the general construction of such varieties 
outlined in the Introduction. 
Consider a vector space $\coefficientspace$ with coordinates $\stdpolynomialcoefficient_{1,0},\ldots,\stdpolynomialcoefficient_{1,\primitivepolyhedronmultiplicity{1}-1},
\stdpolynomialcoefficient_{2,0},\ldots,\stdpolynomialcoefficient_{2,\primitivepolyhedronmultiplicity{2}-1},\ldots,
\stdpolynomialcoefficient_{\numberofprimitivepolyhedra,0},\ldots,
\stdpolynomialcoefficient_{\numberofprimitivepolyhedra,\primitivepolyhedronmultiplicity{\numberofprimitivepolyhedra}-1}$
Here we take $\primitivepolyhedronmultiplicity{i}$ coordinates for each primitive polyhedron 
$\stdprimitivepolyhedronletter_i$ we have.
Consider also a projective line $\PP^1$ with coordinate $t_0$.
Set $Y=\coefficientspace\times \PP^1$. For divisors
$Z_i$ ($1\le i\le \numberofprimitivepolyhedra$) we take the vanishing loci 
of the polynomials $t_0^{\primitivepolyhedronmultiplicity{i}}+\sum_{k=0}^{\primitivepolyhedronmultiplicity{i}-1} \stdpolynomialcoefficient_{i,k}t_0^k$ 
(these are polynomials in $\primitivepolyhedronmultiplicity{i}+1$ variables 
$\stdpolynomialcoefficient_{i,0}, \ldots, \stdpolynomialcoefficient_{i,\primitivepolyhedronmultiplicity{i}-1}, t_0$, 
not just in one variable $t_0$). 
Consider one more divisor $Z_0=\{t_0=\infty\}$.
Finally, for a polyhedral divisor we take 
$\mathfrak D=Z_0\otimes \Delta_{\numberofdivisorpoints}+\sum_{i=1}^{\numberofprimitivepolyhedra} Z_i\otimes \stdprimitivepolyhedronletter_i$.

It is not very easy to check directly that this polyhedral divisor is proper, but it is clear that it defines a (possibly non-finitely generated) 
algebra $A$. We will find an easy description of this algebra and then see directly that it is finitely generated.

The easiest way to describe the algebra $A$
is to embed 
it
into an algebra of polynomials.
First, choose a $\ZZ$-basis $\{\chi_1,\chi_2\}$ of $M$ so that all points of $\sck\cap M$ are 
linear combinations of $\chi_1$ and $\chi_2$ \textit{with positive coefficients}. In other words, 
the cone generated by $\chi_1$ and $\chi_2$ contains $\sck$.
Denote the dual basis of $N$ by $\chi_1^*$ and $\chi_2^*$.
Then we will embed $A$ into 
$\CC[\stdpolynomialcoefficient_{1,0},\ldots,\stdpolynomialcoefficient_{1,\primitivepolyhedronmultiplicity{1}-1},
\ldots,
\stdpolynomialcoefficient_{\numberofprimitivepolyhedra,0},\ldots,
\stdpolynomialcoefficient_{\numberofprimitivepolyhedra,\primitivepolyhedronmultiplicity{\numberofprimitivepolyhedra}-1},
t_0,t_1,t_2]$, where the variables $t_1$ and $t_2$ determine a grading (i.~e. we introduce an $M$-grading 
on this algebra, and $\deg(t_1)=\chi_1$ and $\deg(t_2)=\chi_2$, 
while the degrees of all other variables equal zero).
For each $\chi\in\sck\cap M$ denote by $\defparpolynomial_\chi$ the following polynomial:
$$
\defparpolynomial_\chi=\prod_{i=1}^{\numberofprimitivepolyhedra}
(t_0^{\primitivepolyhedronmultiplicity{i}}
+\sum_{k=0}^{\primitivepolyhedronmultiplicity{i}-1} \stdpolynomialcoefficient_{i,k}t_0^k)^{-\eval_{\stdprimitivepolyhedronletter_i}(\chi)}.
$$

\begin{lemma}\label{totalalgebragradecomp}
Let $\chi\in\sck\cap M$ be a degree. 
Then 
$\eval_{\stdpolyhedronletter_{p_\numberofdivisorpoints}}(\chi)
+\sum_{i=1}^\numberofprimitivepolyhedra\primitivepolyhedronmultiplicity{i}\eval_{\stdprimitivepolyhedronletter_i}(\chi)\ge 0$, and
the $\chi$th graded component of 
$A$
is a free 
$\CC[\coefficientspace]$-module
generated by 
$$
\defparpolynomial_\chi t_1^{\chi_1^*(\chi)}t_2^{\chi_2^*(\chi)}, \defparpolynomial_\chi t_1^{\chi_1^*(\chi)}t_2^{\chi_2^*(\chi)} t_0, \ldots, 
\defparpolynomial_\chi t_1^{\chi_1^*(\chi)}t_2^{\chi_2^*(\chi)} t_0^{\eval_{\totalpolyhedronletter}(\chi)}.
$$
\end{lemma}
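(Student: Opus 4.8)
The plan is to reduce the computation of the graded component to an elementary calculation of global sections of a line bundle on $\coefficientspace\times\PP^1$, by exhibiting $\defparpolynomial_\chi$ as a distinguished section and dividing by it. First I would dispose of the inequality: by Remark~\ref{bardeltadecomp} we have $\stdpolyhedronletter_{p_\numberofdivisorpoints}+\sum_{i=1}^{\numberofprimitivepolyhedra}\primitivepolyhedronmultiplicity{i}\stdprimitivepolyhedronletter_i=\totalpolyhedronletter$, and since the individual evaluation function is linear in its polyhedral argument this yields
$$
\eval_{\stdpolyhedronletter_{p_\numberofdivisorpoints}}(\chi)+\sum_{i=1}^{\numberofprimitivepolyhedra}\primitivepolyhedronmultiplicity{i}\eval_{\stdprimitivepolyhedronletter_i}(\chi)=\eval_{\totalpolyhedronletter}(\chi)=\deg\mathcal D(\chi)\ge 0,
$$
where the middle equality and the nonnegativity are exactly the Remark preceding the definition of a primitive polyhedron (semiampleness of $\mathcal D(\chi)$). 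So the asserted sum is not merely nonnegative but equals $\eval_{\totalpolyhedronletter}(\chi)$.

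Next I would write down $\mathfrak D(\chi)$ on $Y=\coefficientspace\times\PP^1$ explicitly as
$\mathfrak D(\chi)=\eval_{\stdpolyhedronletter_{p_\numberofdivisorpoints}}(\chi)\,Z_0+\sum_{i=1}^{\numberofprimitivepolyhedra}\eval_{\stdprimitivepolyhedronletter_i}(\chi)\,Z_i$
and compute the divisor of the rational function $\defparpolynomial_\chi$. The key local observation is that the defining polynomial of $Z_i$ is \emph{monic} of degree $\primitivepolyhedronmultiplicity{i}$ in $t_0$, so as a rational function on $Y$ its divisor is $Z_i-\primitivepolyhedronmultiplicity{i}Z_0$ (a pole of order exactly $\primitivepolyhedronmultiplicity{i}$ along $Z_0=\{t_0=\infty\}$); this step sidesteps any worry about whether $Z_i$ is reduced or irreducible, since I only ever work with the divisor of an explicit rational function. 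Because $-\eval_{\stdprimitivepolyhedronletter_i}(\chi)\ge 0$ by Remark~\ref{primitivenonpos}, $\defparpolynomial_\chi$ is a genuine polynomial, and
$$
\div(\defparpolynomial_\chi)=\sum_{i=1}^{\numberofprimitivepolyhedra}\bigl(-\eval_{\stdprimitivepolyhedronletter_i}(\chi)\bigr)\bigl(Z_i-\primitivepolyhedronmultiplicity{i}Z_0\bigr).
$$
Adding $\mathfrak D(\chi)$ cancels every $Z_i$ and leaves $\div(\defparpolynomial_\chi)+\mathfrak D(\chi)=\eval_{\totalpolyhedronletter}(\chi)\,Z_0$, which is effective by the first part. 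Hence $\defparpolynomial_\chi$ is a section of $\OO(\mathfrak D(\chi))$, and for any rational function $\overline f$ the substitution $g=\overline f/\defparpolynomial_\chi$ gives $\div(g)\ge -\eval_{\totalpolyhedronletter}(\chi)Z_0$ precisely when $\overline f\in\Gamma(Y,\OO(\mathfrak D(\chi)))$; therefore $\Gamma(Y,\OO(\mathfrak D(\chi)))=\defparpolynomial_\chi\cdot\Gamma\bigl(Y,\OO(\eval_{\totalpolyhedronletter}(\chi)Z_0)\bigr)$.

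Finally I would identify $\Gamma(Y,\OO(mZ_0))$ for $m=\eval_{\totalpolyhedronletter}(\chi)\ge 0$: a rational function on $\coefficientspace\times\PP^1$ regular on the affine open $\coefficientspace\times\mathbb A^1$ lies in its coordinate ring $\CC[\coefficientspace][t_0]$, and the bound of a pole of order $\le m$ along $\{t_0=\infty\}$ is exactly the bound $\deg_{t_0}\le m$; thus $\Gamma(Y,\OO(mZ_0))=\bigoplus_{k=0}^{m}\CC[\coefficientspace]\,t_0^k$. Multiplying through by $\defparpolynomial_\chi$ and then by the monomial $t_1^{\chi_1^*(\chi)}t_2^{\chi_2^*(\chi)}$ that implements the embedding of the degree-$\chi$ component into the polynomial ring produces exactly the generators in the statement, and these are $\CC[\coefficientspace]$-linearly independent because they are distinct monomials in $t_0$, giving freeness. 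The one point demanding care — and the likeliest source of a sign error — is the divisor bookkeeping of the previous paragraph (the orders of $\defparpolynomial_\chi$ along $Z_0$ versus the $Z_i$), together with checking that the inequalities of Remark~\ref{primitivenonpos} and Lemma~\ref{inftydivisornonneg} make all the relevant exponents land on the correct side of zero; everything else is formal.
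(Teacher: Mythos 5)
Your proof is correct and follows essentially the same route as the paper's: both identify $\defparpolynomial_\chi$ as the distinguished section, reduce the computation of $\Gamma(\OO(\mathfrak D(\chi)))$ to polynomials in $t_0$ of degree at most $\eval_{\totalpolyhedronletter}(\chi)$ over $\CC[\coefficientspace]$, and invoke Remark~\ref{bardeltadecomp} for the degree count. Your explicit computation of $\div(\defparpolynomial_\chi)$ is just a slightly more systematic packaging of the paper's direct description of the sections as polynomials divisible by the $Z_i$-defining factors and of bounded $t_0$-degree.
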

\begin{proof}
$\mathfrak D(\chi)$ is a linear combination of the divisors $Z_i$ ($1\le i\le \numberofprimitivepolyhedra$) 
with nonpositive coefficients (Remark \ref{primitivenonpos}) and of 
the divisor $Z_0$ with a nonnegative coefficient (Lemma \ref{inftydivisornonneg}).
Therefore, $\Gamma(\OO(\mathfrak D(\chi)))$ is a subspace in the polynomial ring 
in the variables 
$$
\stdpolynomialcoefficient_{1,0},\ldots,\stdpolynomialcoefficient_{1,\primitivepolyhedronmultiplicity{1}-1},
\ldots,
\stdpolynomialcoefficient_{\numberofprimitivepolyhedra,0},\ldots,
\stdpolynomialcoefficient_{\numberofprimitivepolyhedra,\primitivepolyhedronmultiplicity{\numberofprimitivepolyhedra}-1},
t_0.
$$
Namely, these polynomials are of degree at most $\eval_{\stdpolyhedronletter_{p_\numberofdivisorpoints}}(\chi)$ with respect to $t_0$, 
and they are divisible by each of the polynomials 
$$
(t_0^{\primitivepolyhedronmultiplicity{i}}
+\sum_{k=0}^{\primitivepolyhedronmultiplicity{i}-1} \stdpolynomialcoefficient_{i,k}t_0^k)^{-\eval_{\stdprimitivepolyhedronletter_i}(\chi)}.
$$
To get the corresponding graded component of $A$, we have to multiply them by $t_1^{\chi_1^*(\chi)}t_2^{\chi_2^*(\chi)}$.
Therefore, the $\chi$th graded component is generated by 
$$
\defparpolynomial_\chi t_1^{\chi_1^*(\chi)}t_2^{\chi_2^*(\chi)}, \defparpolynomial_\chi t_1^{\chi_1^*(\chi)}t_2^{\chi_2^*(\chi)} t_0, \ldots, 
\defparpolynomial_\chi t_1^{\chi_1^*(\chi)}t_2^{\chi_2^*(\chi)} t_0^{\eval_{\stdpolyhedronletter_{p_\numberofdivisorpoints}}(\chi)
+\sum_{i=1}^\numberofprimitivepolyhedra\primitivepolyhedronmultiplicity{i}\eval_{\stdprimitivepolyhedronletter_i}(\chi)}
$$
as a
$\CC[\coefficientspace]$-module.
The claim follows from Remark \ref{bardeltadecomp}.
\end{proof}

Recall that in Chapter \ref{sectcohomformula} we chose a set of degrees $\{\lambda_1,\ldots,\lambda_{\numberofdivisorpoints}\}$, which 
contained Hilbert bases of all cones in the total normal fan of $\mathcal D$.

\begin{lemma}\label{totalalgebragenerators}
The algebra $A$ is finitely generated. More precisely, the generators from Lemma \ref{totalalgebragradecomp}
for degrees $\lambda_i$ generate $A$.
\end{lemma}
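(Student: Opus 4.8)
The goal is to show that the graded algebra $A$ (the one produced by the polyhedral divisor $\mathfrak D$ on $Y=\coefficientspace\times\PP^1$) is finitely generated, and moreover that the elements produced by Lemma \ref{totalalgebragradecomp} for the finitely many degrees $\lambda_1,\ldots,\lambda_{\numberoflatticegenerators}$ already generate it. The key structural input is Lemma \ref{totalalgebragradecomp}, which describes each graded component $A_\chi$ (for $\chi\in\sck\cap M$) completely explicitly: it is a free $\CC[\coefficientspace]$-module with basis $\defparpolynomial_\chi t_1^{\chi_1^*(\chi)}t_2^{\chi_2^*(\chi)}t_0^s$ for $0\le s\le \eval_{\totalpolyhedronletter}(\chi)$. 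So the statement reduces to a purely combinatorial claim about how these generators multiply.

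\textbf{The main reduction.} First I would observe that $A$ is generated as a $\CC[\coefficientspace]$-algebra (equivalently as an algebra over $\CC$, since $\CC[\coefficientspace]$ is itself a polynomial ring and hence finitely generated) by the homogeneous elements of all degrees $\chi\in\sck\cap M$. It therefore suffices to show that every homogeneous element of degree $\chi$ is a $\CC[\coefficientspace]$-polynomial in the chosen generators of degrees $\lambda_i$. By Lemma \ref{totalalgebragradecomp} it is enough to treat the basis monomials $\defparpolynomial_\chi t_1^{\chi_1^*(\chi)}t_2^{\chi_2^*(\chi)}t_0^s$. The plan is to argue by induction on $\chi$ in the sense already used in the proof of Lemma \ref{generateccx}: since $\{\lambda_1,\ldots,\lambda_{\numberoflatticegenerators}\}$ contains the Hilbert basis of each cone $\tau$ of the total normal fan, any $\chi$ lying in such a cone but not itself a $\lambda_i$ decomposes as $\chi=\chi'+\chi''$ with $\chi',\chi''\in\tau\cap M$ nonzero. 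On this cone all the evaluation functions $\eval_{\stdprimitivepolyhedronletter_i}(\cdot)$ and $\eval_{\totalpolyhedronletter}(\cdot)$ are \emph{linear}, which gives the multiplicativity $\defparpolynomial_\chi=\defparpolynomial_{\chi'}\defparpolynomial_{\chi''}$ and additivity $\eval_{\totalpolyhedronletter}(\chi)=\eval_{\totalpolyhedronletter}(\chi')+\eval_{\totalpolyhedronletter}(\chi'')$.

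\textbf{Carrying out the multiplication.} Given these linearity facts, a basis monomial in degree $\chi$ has the form
$$
\defparpolynomial_{\chi'}\defparpolynomial_{\chi''}\,t_1^{\chi_1^*(\chi')+\chi_1^*(\chi'')}t_2^{\chi_2^*(\chi')+\chi_2^*(\chi'')}\,t_0^{s},
\qquad 0\le s\le \eval_{\totalpolyhedronletter}(\chi')+\eval_{\totalpolyhedronletter}(\chi'').
$$
I would then write $s=s'+s''$ with $0\le s'\le\eval_{\totalpolyhedronletter}(\chi')$ and $0\le s''\le\eval_{\totalpolyhedronletter}(\chi'')$, which is possible precisely because $s$ does not exceed the sum of the two bounds. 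This exhibits the monomial as the product of a degree-$\chi'$ basis element and a degree-$\chi''$ basis element, both of which lie in strictly smaller degrees. Iterating the Hilbert-basis decomposition as in Lemma \ref{generateccx} terminates and writes the element as a product of generators of degrees in the Hilbert bases, hence among the $\lambda_i$. This proves both finite generation and the explicit generating claim simultaneously.

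\textbf{The expected obstacle.} The routine linear-algebra bookkeeping (splitting $s$, checking the exponent identities) is straightforward once linearity on each normal cone is invoked; the one point that needs genuine care is the \emph{base of the induction} and the passage between cones. Specifically, a general $\chi$ in the interior of $\sck$ may require choosing the cone $\tau$ containing it, and one must make sure the decomposition $\chi=\chi'+\chi''$ keeps both summands in the \emph{same} cone so that the linearity of all evaluation functions applies simultaneously to $\defparpolynomial$ and to $\eval_{\totalpolyhedronletter}$. This is exactly the mechanism already validated in Lemma \ref{generateccx}, so the cleanest route is to reduce the present statement to that lemma's argument rather than re-deriving the induction from scratch. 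The remaining subtlety is confirming that the $t_0$-degree bound behaves additively under the decomposition, which follows from Remark \ref{bardeltadecomp} together with the additivity of $\eval_{\totalpolyhedronletter}$ on each cone; I would state this explicitly as the crux of the argument.
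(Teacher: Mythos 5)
Your proof is correct and follows essentially the same route as the paper: fix the cone $\tau$ of the total normal fan containing $\chi$, use linearity of all the individual evaluation functions on $\tau$ to get $\defparpolynomial_{\chi'}\defparpolynomial_{\chi''}=\defparpolynomial_{\chi}$ and additivity of $\eval_{\totalpolyhedronletter}$, split the $t_0$-exponent accordingly, and induct down to the Hilbert basis as in Lemma \ref{generateccx}. The only difference is that you spell out the splitting $s=s'+s''$ explicitly, which the paper leaves implicit.
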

\begin{proof}
Fix a degree $\chi\in\sck\cap M$, and let $\tau$ be a 
cone from the total normal fan of $\mathcal D$ containing $\chi$.
Then all individual evaluation functions of polyhedra $\stdprimitivepolyhedronletter_i$
are linear on $\tau$, and the individual evaluation function of 
$\stdpolyhedronletter_{p_\numberofdivisorpoints}$
is also linear on $\tau$ (and even on $\sck$) since 
$p_\numberofdivisorpoints=\infty$ is a removable special point.
If $\chi',\chi''\in \tau\cap M$ and $\chi'+\chi''=\chi$, then $\defparpolynomial{\chi'}\defparpolynomial_{\chi''}=\defparpolynomial_{\chi}$, and 
each element of the basis of the $\chi$th graded component of $A$ from Lemma \ref{totalalgebragradecomp}
is a product of an element of the basis of the $\chi'$th graded component and 
of an element of the basis of the $\chi''$th graded component.
Therefore, since $\{\lambda_i\}$ contains the Hilbert basis of $\tau$, all components of $A$ of degrees $\lambda_i$ 
generate the $\chi$th graded component.
\end{proof}

So, $\bigtotalspace=\Spec A$ is an algebraic variety, and $T$ acts on it. We have 
$\primitivepolyhedronmultiplicity{1}+\ldots+\primitivepolyhedronmultiplicity{\numberofprimitivepolyhedra}$
global $T$-invariant functions $a_{i,j}$ on $X$, so we have a $T$-invariant 
map $\bigtotalspace\to \coefficientspace$, which we denote by $\bigflatmorphism$. It follows 
directly from Lemma \ref{totalalgebragradecomp} that this morphism is flat.
We can consider both 
$\coefficientspace\times \Spec \CC[t_0,t_1,t_2]$
and $\bigtotalspace$ as varieties over the base 
$\coefficientspace$.

\begin{lemma}\label{bigbiratisstabdom}
The morphism 
$\coefficientspace\times \Spec \CC[t_0,t_1,t_2]\to \bigtotalspace$
of 
$\coefficientspace$-varieties
is stably dominant.
\end{lemma}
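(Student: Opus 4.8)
The plan is to read off the pullback $b^*$ of the morphism in the statement as the tautological inclusion $A\hookrightarrow\CC[\coefficientspace][t_0,t_1,t_2]$ coming from the embedding used to describe $A$ in Lemma \ref{totalalgebragradecomp}, and then to exhibit $A$ as a \emph{direct summand} of $\CC[\coefficientspace][t_0,t_1,t_2]$ in the category of $\CC[\coefficientspace]$-modules. Once this is done, Remark \ref{directsummandinj} gives universal injectivity of $b^*$ as a $\CC[\coefficientspace]$-module map, which is exactly the definition of stable dominance. Since both $A$ and $\CC[\coefficientspace][t_0,t_1,t_2]$ are $M$-graded (with $\deg t_1=\chi_1$, $\deg t_2=\chi_2$, and $t_0$ together with all coordinates of $\coefficientspace$ in degree $0$), and the inclusion respects the grading while $\CC[\coefficientspace]$ sits in degree $0$, it suffices to split each graded piece $A_\chi\hookrightarrow (\CC[\coefficientspace][t_0,t_1,t_2])_\chi$ as $\CC[\coefficientspace]$-modules; the complements then assemble into a global complement.

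For a degree $\chi=\chi_1^*(\chi)\chi_1+\chi_2^*(\chi)\chi_2$ lying in the cone generated by $\chi_1,\chi_2$ I would first observe that $(\CC[\coefficientspace][t_0,t_1,t_2])_\chi=\CC[\coefficientspace][t_0]\cdot t_1^{\chi_1^*(\chi)}t_2^{\chi_2^*(\chi)}$, which is free over $\CC[\coefficientspace]$ on the monomials $t_0^j$ ($j\ge 0$). If $\chi\notin\sck$ then $A_\chi=0$ and there is nothing to prove, so assume $\chi\in\sck\cap M$. By Lemma \ref{totalalgebragradecomp}, under this identification $A_\chi$ is the $\CC[\coefficientspace]$-submodule generated by $\defparpolynomial_\chi t_0^{k}$ for $0\le k\le\eval_{\totalpolyhedronletter}(\chi)$. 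Each factor of $\defparpolynomial_\chi$ is monic in $t_0$, and the exponents $-\eval_{\stdprimitivepolyhedronletter_i}(\chi)$ are nonnegative by Remark \ref{primitivenonpos}, so $\defparpolynomial_\chi$ is monic in $t_0$ of degree
$$
d:=-\sum_{i=1}^{\numberofprimitivepolyhedra}\primitivepolyhedronmultiplicity{i}\,\eval_{\stdprimitivepolyhedronletter_i}(\chi)\ge 0 .
$$
Using Remark \ref{bardeltadecomp} one computes $d+\eval_{\totalpolyhedronletter}(\chi)=\eval_{\stdpolyhedronletter_{p_\numberofdivisorpoints}}(\chi)\ge 0$ (Lemma \ref{inftydivisornonneg}); thus the generators $\defparpolynomial_\chi t_0^{k}$ are monic polynomials in $t_0$ of the consecutive degrees $d,d+1,\ldots,d+\eval_{\totalpolyhedronletter}(\chi)$.

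The splitting then comes from a unitriangular change of basis. I would take as candidate complement the free $\CC[\coefficientspace]$-submodule $C_\chi$ spanned by the monomials $t_0^{j}$ with $j<d$ or $j>d+\eval_{\totalpolyhedronletter}(\chi)$ (still times $t_1^{\chi_1^*(\chi)}t_2^{\chi_2^*(\chi)}$). For every $m\ge 0$ the set $B=\{\defparpolynomial_\chi t_0^{k}\}\cup\{t_0^{j}\}$ contains exactly one element whose top $t_0$-degree equals $m$, and expressing $B$ in the monomial basis $\{t_0^m\}$ ordered by this bijection yields a column-finite lower-unitriangular matrix, which is invertible over $\CC[\coefficientspace]$. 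Hence $B$ is a $\CC[\coefficientspace]$-basis and $(\CC[\coefficientspace][t_0,t_1,t_2])_\chi=A_\chi\oplus C_\chi$. Summing over $\chi$ shows $A$ is a direct summand, so $b^*$ is a direct-summand embedding of $\CC[\coefficientspace]$-modules, hence universally injective by Remark \ref{directsummandinj}, and $b$ is stably dominant.

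I do not expect a genuine obstacle here: the argument is essentially bookkeeping. The one point needing care is the interplay of the two gradings — reducing universal injectivity to the graded pieces requires that $\CC[\coefficientspace]$ acts in degree $0$ so that the $M$-grading is $\CC[\coefficientspace]$-linear — together with verifying the degree identities $d\ge 0$ and $d+\eval_{\totalpolyhedronletter}(\chi)=\eval_{\stdpolyhedronletter_{p_\numberofdivisorpoints}}(\chi)\ge 0$ that guarantee the monic generators occupy a clean consecutive window of $t_0$-degrees, so that the triangular completion to a basis actually works.
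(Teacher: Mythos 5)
Your proposal is correct and follows essentially the same route as the paper: the paper also exhibits $\CC[\bigtotalspace]$ as a graded $\CC[\coefficientspace]$-module direct summand of $\CC[\coefficientspace]\otimes\CC[t_0,t_1,t_2]$ by taking, in each degree $\chi$, the complement spanned by the monomials $t_0^k t_1^{\chi_1^*(\chi)}t_2^{\chi_2^*(\chi)}$ with $k$ outside the window $[\,\eval_{\stdpolyhedronletter_{p_\numberofdivisorpoints}}(\chi)-\eval_{\totalpolyhedronletter}(\chi),\ \eval_{\stdpolyhedronletter_{p_\numberofdivisorpoints}}(\chi)\,]$, using that $\defparpolynomial_\chi$ is monic in $t_0$, and then invokes Remark \ref{directsummandinj}. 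Your unitriangular change-of-basis justification just makes explicit a step the paper leaves implicit.
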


\begin{proof}
We will construct a graded
$\CC[\coefficientspace]$-module
$K\subset \CC[\coefficientspace]\otimes \CC[t_0,t_1,t_2]$
such that 
$K\cap \CC[\bigtotalspace]=0$
and 
$K\oplus\CC[\bigtotalspace]=\CC[\coefficientspace]\otimes\CC[t_0,t_1,t_2]$.
We are going to construct each $M$-graded component of $K$ separately. Fix a degree 
$\chi\in\sck\cap M$.
Note that if we consider $\defparpolynomial_\chi$ as a polynomial in $t_0$ only, 
its leading coefficient will be equal 1, and its degree will be 
$-\sum \primitivepolyhedronmultiplicity{i}\eval_{\stdprimitivepolyhedronletter_i}(\chi)=
\eval_{\stdpolyhedronletter_{p_\numberofdivisorpoints}}(\chi)-\eval_{\totalpolyhedronletter}(\chi)$.
Now it follows from Lemma \ref{totalalgebragradecomp} that 
for the $\chi$th graded component of $K$ we can take the module generated by 
the following generators: $t_0^kt_1^{\chi_1^*(\chi)}t_2^{\chi_2^*(\chi)}$, where 
$0\le k <\eval_{\stdpolyhedronletter_{p_\numberofdivisorpoints}}(\chi)-\eval_{\totalpolyhedronletter}(\chi)$
or $k>\eval_{\stdpolyhedronletter_{p_\numberofdivisorpoints}}(\chi)$.
The claim follows from Remark \ref{directsummandinj}.
\end{proof}

%

Now we are ready to compute the fibers of $\bigflatmorphism$ using Lemma \ref{computefiber}.
For an arbitrary point 
$$
(\stdpolynomialcoefficient_{1,0}^{(0)},\ldots,\stdpolynomialcoefficient_{1,\primitivepolyhedronmultiplicity{1}-1}^{(0)},
\ldots,
\stdpolynomialcoefficient_{\numberofprimitivepolyhedra,0}^{(0)},\ldots,
\stdpolynomialcoefficient_{\numberofprimitivepolyhedra,\primitivepolyhedronmultiplicity{\numberofprimitivepolyhedra}-1}^{(0)})\in \coefficientspace
$$
we define a divisor
$$
\mathfrak D_{\stdpolynomialcoefficient_{1,0}^{(0)},\ldots,\stdpolynomialcoefficient_{1,\primitivepolyhedronmultiplicity{1}-1}^{(0)},
\ldots,
\stdpolynomialcoefficient_{\numberofprimitivepolyhedra,0}^{(0)},\ldots,
\stdpolynomialcoefficient_{\numberofprimitivepolyhedra,\primitivepolyhedronmultiplicity{\numberofprimitivepolyhedra}-1}^{(0)}}
$$
on a projective line as follows.
Consider a projective line $\PP^1$ with a coordinate function $t$.
For each $i$ ($1\le i\le \numberofprimitivepolyhedra$) denote by 
$\stdpolynomialroot_{i,1},\ldots,\stdpolynomialroot_{i,\primitivepolyhedronmultiplicity{i}}$
the zeros of the function 
$$
t^{\primitivepolyhedronmultiplicity{i}}
+\sum_{k=0}^{\primitivepolyhedronmultiplicity{i}-1} \stdpolynomialcoefficient_{i,k}^{(0)}t^k
$$
on $\PP^1$ with multiplicities (i.~e. if we have a zero of order more than one, we write 
the same point several times, for example, $\stdpolynomialroot_{i,1}$ and $\stdpolynomialroot_{i,2}$ 
can be the same point).
Then, for each $i$ ($1\le i\le \numberofprimitivepolyhedra$) and for each $j$ 
($1\le j\le \primitivepolyhedronmultiplicity{i}$)
we put $\stdprimitivepolyhedronletter_i$ at the point $b_{i,j}$.
If we put several polyhedra at the same point of $\PP^1$
(for example, if we had a zero of order more than one, or if the functions different values of $i$ 
vanish at the same point), we take the Minkowski sum of them instead.
Finally, we put $\stdpolyhedronletter_{p_\numberofdivisorpoints}$ at the point of $\PP^1$ 
with coordinate $\infty$.

\begin{lemma}\label{fiberoflargedeformation}
Let 
$$
(\stdpolynomialcoefficient_{1,0}^{(0)},\ldots,\stdpolynomialcoefficient_{1,\primitivepolyhedronmultiplicity{1}-1}^{(0)},
\ldots,
\stdpolynomialcoefficient_{\numberofprimitivepolyhedra,0}^{(0)},\ldots,
\stdpolynomialcoefficient_{\numberofprimitivepolyhedra,\primitivepolyhedronmultiplicity{\numberofprimitivepolyhedra}-1}^{(0)})\in \coefficientspace
$$
be an arbitrary point.
The fiber of $\bigflatmorphism$
over this point 
is the T-variety 
defined by 
$$
\mathfrak D_{\stdpolynomialcoefficient_{1,0}^{(0)},\ldots,\stdpolynomialcoefficient_{1,\primitivepolyhedronmultiplicity{1}-1}^{(0)},
\ldots,
\stdpolynomialcoefficient_{\numberofprimitivepolyhedra,0}^{(0)},\ldots,
\stdpolynomialcoefficient_{\numberofprimitivepolyhedra,\primitivepolyhedronmultiplicity{\numberofprimitivepolyhedra}-1}^{(0)}},
$$
a polyhedral divisor on $\PP^1$.

More precisely, the construction of a T-variety out of a polyhedral divisor identifies the global functions on the T-variety 
with sections of line bundles on $\PP^1$. In this case, this identification works "in the natural way", namely, as follows.
Fix a degree $\chi\in\sck\cap M$.
The restriction of a function 
$\defparpolynomial_\chi t_1^{\chi_1^*(\chi)}t_2^{\chi_2^*(\chi)} t_0^k$ 
to the fiber is identified with the rational function
$$
\defparpolynomial_\chi\Bigg|_{\substack{t_0=t\\\stdpolynomialcoefficient_{i,j}=\stdpolynomialcoefficient_{i,j}^{(0)}}} t^k\in 
\OO_{\PP^1}(\mathfrak D_{\stdpolynomialcoefficient_{1,0}^{(0)},\ldots,\stdpolynomialcoefficient_{1,\primitivepolyhedronmultiplicity{1}-1}^{(0)},
\ldots,
\stdpolynomialcoefficient_{\numberofprimitivepolyhedra,0}^{(0)},\ldots,
\stdpolynomialcoefficient_{\numberofprimitivepolyhedra,\primitivepolyhedronmultiplicity{\numberofprimitivepolyhedra}-1}^{(0)}}(\chi)).
$$
\end{lemma}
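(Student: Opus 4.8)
The plan is to realize the fiber as a base change and then read it off from the explicit generators of $A$ already computed. Concretely, I would set $Z=\coefficientspace$, let $X=\coefficientspace\times\Spec\CC[t_0,t_1,t_2]$ and $Y=\bigtotalspace$, take $f$ to be the stably dominant morphism of Lemma \ref{bigbiratisstabdom}, and let $g\colon Z_1\to Z$ be the closed embedding of the chosen point of $\coefficientspace$. By Lemma \ref{basechangestabdom} the base change $f\times_Z Z_1$ is again stably dominant, so its comorphism is (universally) injective; hence the coordinate ring of the fiber $\bigflatmorphism^{-1}(\text{point})=\bigtotalspace\times_Z Z_1$ is identified, via Lemma \ref{computefiber}, with the subalgebra $g_X^*(f^*(A))$ of $\CC[X\times_Z Z_1]=\CC[t_0,t_1,t_2]$. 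The map $g_X^*$ is exactly the substitution $\stdpolynomialcoefficient_{i,j}\mapsto\stdpolynomialcoefficient_{i,j}^{(0)}$ together with renaming $t_0$ to $t$, so the first step reduces everything to computing the images of a generating set of $A$ under this substitution.

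For the generators I would invoke Lemmas \ref{totalalgebragradecomp} and \ref{totalalgebragenerators}: in each degree $\chi\in\sck\cap M$ the component of $A$ is the free $\CC[\coefficientspace]$-module on $\defparpolynomial_\chi t_1^{\chi_1^*(\chi)}t_2^{\chi_2^*(\chi)}t_0^k$ for $0\le k\le\eval_{\totalpolyhedronletter}(\chi)$, and these suffice to generate $A$. Since $-\eval_{\stdprimitivepolyhedronletter_i}(\chi)\ge 0$ by Remark \ref{primitivenonpos}, $\defparpolynomial_\chi$ is a genuine polynomial in $t_0$, and substituting $\stdpolynomialcoefficient_{i,j}=\stdpolynomialcoefficient_{i,j}^{(0)}$ factors each factor $t_0^{\primitivepolyhedronmultiplicity{i}}+\sum_k\stdpolynomialcoefficient_{i,k}^{(0)}t_0^k$ as $\prod_{j}(t-\stdpolynomialroot_{i,j})$, giving $\defparpolynomial_\chi|_{\ldots}=\prod_{i,j}(t-\stdpolynomialroot_{i,j})^{-\eval_{\stdprimitivepolyhedronletter_i}(\chi)}$. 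Thus the degree-$\chi$ piece of the fiber algebra is spanned (tracking the grading by $t_1,t_2$) exactly by the functions $\defparpolynomial_\chi|_{\ldots}\,t^k$, which is the identification asserted in the statement.

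It then remains to match this with the T-variety attached to the polyhedral divisor $\mathfrak D'$ of the statement, i.e.\ to check that the $\defparpolynomial_\chi|_{\ldots}\,t^k$ form a basis of $\Gamma(\PP^1,\OO(\mathfrak D'(\chi)))$. I would do this by an order count: at a finite root $\stdpolynomialroot_{i,j}$ the order of $\defparpolynomial_\chi|_{\ldots}\,t^k$ equals $\sum_{(i',j'):\,\stdpolynomialroot_{i',j'}=\stdpolynomialroot_{i,j}}-\eval_{\stdprimitivepolyhedronletter_{i'}}(\chi)$, which is precisely $-\mathfrak D'(\chi)$ at that point; at $\infty$ the order is $\sum_i\primitivepolyhedronmultiplicity{i}\eval_{\stdprimitivepolyhedronletter_i}(\chi)-k$, so the membership condition there reads $k\le\eval_{\stdpolyhedronletter_{p_\numberofdivisorpoints}}(\chi)+\sum_i\primitivepolyhedronmultiplicity{i}\eval_{\stdprimitivepolyhedronletter_i}(\chi)=\eval_{\totalpolyhedronletter}(\chi)$ by Remark \ref{bardeltadecomp}, matching the range of $k$ exactly. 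Since $\deg\mathfrak D'(\chi)=\eval_{\totalpolyhedronletter}(\chi)$, there are $\eval_{\totalpolyhedronletter}(\chi)+1$ such functions, which equals $\dim\Gamma(\PP^1,\OO(\mathfrak D'(\chi)))$, and they are linearly independent as distinct monomials times a fixed rational function; hence they form a basis. As both sides are graded subalgebras of one ambient ring with coinciding degree components, the identification is automatically an isomorphism of graded algebras, and the $T$-invariance of $\bigflatmorphism$ makes it $T$-equivariant.

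The routine but most delicate part will be the order bookkeeping at the finite roots when several $\stdpolynomialroot_{i,j}$ coincide (including the possibility $\stdpolynomialroot_{i,j}=0$, where the extra factor $t^k$ interacts) and simultaneously at $\infty$, together with confirming that the substituted factorization really reproduces the coefficient of every prime divisor of $\mathfrak D'(\chi)$; all of this is prepared by Lemmas \ref{totalalgebragradecomp} and \ref{bardeltadecomp}, so no genuinely new estimate is needed. The one conceptual point that must not be skipped is the injectivity of the comorphism of $f\times_Z Z_1$, guaranteeing that $g_X^*(f^*(A))$ is the coordinate ring of the fiber rather than merely its image; this is exactly what stable dominance (Lemma \ref{basechangestabdom}) supplies.
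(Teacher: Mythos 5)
Your proposal is correct and follows essentially the same route as the paper: base change via Lemmas \ref{computefiber} and \ref{bigbiratisstabdom} to identify the fiber's coordinate ring with the substitution image of $A$, then matching the substituted generators of Lemma \ref{totalalgebragradecomp} against $\Gamma(\PP^1,\OO(\mathfrak D'(\chi)))$ using Remark \ref{bardeltadecomp}. The only point worth making explicit (the paper does it first) is that your degree computation $\deg\mathfrak D'(\chi)=\eval_{\totalpolyhedronletter}(\chi)=\deg\mathcal D(\chi)$ is also what certifies that $\mathfrak D'$ is a \emph{proper} polyhedral divisor, so that ``the T-variety defined by $\mathfrak D'$'' makes sense at all.
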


\begin{proof}
First, let us check that 
$$
\mathfrak D_{\stdpolynomialcoefficient_{1,0}^{(0)},\ldots,\stdpolynomialcoefficient_{1,\primitivepolyhedronmultiplicity{1}-1}^{(0)},
\ldots,
\stdpolynomialcoefficient_{\numberofprimitivepolyhedra,0}^{(0)},\ldots,
\stdpolynomialcoefficient_{\numberofprimitivepolyhedra,\primitivepolyhedronmultiplicity{\numberofprimitivepolyhedra}-1}^{(0)}}
$$
is a proper polyhedral divisor. Since this is a polyhedral divisor on $\PP^1$, 
it is sufficient to check that for each $\chi\in\sck\cap M$ 
$$
\deg \mathfrak D_{\stdpolynomialcoefficient_{1,0}^{(0)},\ldots,\stdpolynomialcoefficient_{1,\primitivepolyhedronmultiplicity{1}-1}^{(0)},
\ldots,
\stdpolynomialcoefficient_{\numberofprimitivepolyhedra,0}^{(0)},\ldots,
\stdpolynomialcoefficient_{\numberofprimitivepolyhedra,\primitivepolyhedronmultiplicity{\numberofprimitivepolyhedra}-1}^{(0)}}(\chi)=\deg\mathcal D(\chi),
$$
where $\mathcal D$ is the original divisor on $\PP^1$ describing the variety we are going to deform.
We have 
\begin{multline*}
\deg \mathfrak D_{\stdpolynomialcoefficient_{1,0}^{(0)},\ldots,\stdpolynomialcoefficient_{1,\primitivepolyhedronmultiplicity{1}-1}^{(0)},
\ldots,
\stdpolynomialcoefficient_{\numberofprimitivepolyhedra,0}^{(0)},\ldots,
\stdpolynomialcoefficient_{\numberofprimitivepolyhedra,\primitivepolyhedronmultiplicity{\numberofprimitivepolyhedra}-1}^{(0)}}(\chi)=
\deg \stdpolyhedronletter_{p_\numberofdivisorpoints}(\chi)+
\sum_{i=1}^{\numberofprimitivepolyhedra}\primitivepolyhedronmultiplicity{i}\deg\stdprimitivepolyhedronletter_i(\chi)=\\
\deg \stdpolyhedronletter_{p_\numberofdivisorpoints}(\chi)+
\sum_{i=1}^{\numberofprimitivepolyhedra}(\sum_{j=1}^{\numberofdivisorpoints-1}\primitivepolyhedronlocalmultiplicity{i}{j}\deg\stdprimitivepolyhedronletter_i(\chi))=
\deg \stdpolyhedronletter_{p_\numberofdivisorpoints}(\chi)+
\sum_{j=1}^{\numberofdivisorpoints-1}\deg\stdpolyhedronletter_{p_j}(\chi)=\deg\mathcal D(\chi),
\end{multline*}
and 
$$
\mathfrak D_{\stdpolynomialcoefficient_{1,0}^{(0)},\ldots,\stdpolynomialcoefficient_{1,\primitivepolyhedronmultiplicity{1}-1}^{(0)},
\ldots,
\stdpolynomialcoefficient_{\numberofprimitivepolyhedra,0}^{(0)},\ldots,
\stdpolynomialcoefficient_{\numberofprimitivepolyhedra,\primitivepolyhedronmultiplicity{\numberofprimitivepolyhedra}-1}^{(0)}}
$$
is a proper polyhedral divisor.

Fix a degree $\chi\in\sck\cap M$. Let us compute 
$$
\Gamma(\OO_{\PP^1}(\mathfrak D_{\stdpolynomialcoefficient_{1,0}^{(0)},\ldots,\stdpolynomialcoefficient_{1,\primitivepolyhedronmultiplicity{1}-1}^{(0)},
\ldots,
\stdpolynomialcoefficient_{\numberofprimitivepolyhedra,0}^{(0)},\ldots,
\stdpolynomialcoefficient_{\numberofprimitivepolyhedra,\primitivepolyhedronmultiplicity{\numberofprimitivepolyhedra}-1}^{(0)}}(\chi))).
$$
Recall that Minkowski addition of two polyhedra leads to summation of their individual evaluation 
functions. 
Denote all distinct points among $\stdpolynomialroot_{i,j}$ by $\stdpolynomialroot'_1,\ldots,\stdpolynomialroot'_l$.
For each $j$ ($1\le j\le l$) and for each $i$ ($1\le i\le \numberofprimitivepolyhedra$) 
denote by $\stdpolynomialrootmultiplicity_{j,i}$ the order of zero of the function 
$$
t^{\primitivepolyhedronmultiplicity{i}}
+\sum_{k=0}^{\primitivepolyhedronmultiplicity{i}-1} \stdpolynomialcoefficient_{i,k}^{(0)}t^{k}
$$
at the point $\stdpolynomialroot'_j$.
In other words, $\stdpolynomialrootmultiplicity_{j,i}$ is the amount of 
indices $j'$ ($1\le j'\le \primitivepolyhedronmultiplicity{i}$) such that $\stdpolynomialroot_{i,j}=\stdpolynomialroot'_{j'}$.
Then the polyhedron in 
$$
\mathfrak D_{\stdpolynomialcoefficient_{1,0}^{(0)},\ldots,\stdpolynomialcoefficient_{1,\primitivepolyhedronmultiplicity{1}-1}^{(0)},
\ldots,
\stdpolynomialcoefficient_{\numberofprimitivepolyhedra,0}^{(0)},\ldots,
\stdpolynomialcoefficient_{\numberofprimitivepolyhedra,\primitivepolyhedronmultiplicity{\numberofprimitivepolyhedra}-1}^{(0)}}
$$
above a point $\stdpolynomialroot'_j$ is 
$\sum_{i=1}^{\numberofprimitivepolyhedra}\stdpolynomialrootmultiplicity_{j,i}\stdprimitivepolyhedronletter_i$.

Then the global sections of 
$$
\OO_{\PP^1}(\mathfrak D_{\stdpolynomialcoefficient_{1,0}^{(0)},\ldots,\stdpolynomialcoefficient_{1,\primitivepolyhedronmultiplicity{1}-1}^{(0)},
\ldots,
\stdpolynomialcoefficient_{\numberofprimitivepolyhedra,0}^{(0)},\ldots,
\stdpolynomialcoefficient_{\numberofprimitivepolyhedra,\primitivepolyhedronmultiplicity{\numberofprimitivepolyhedra}-1}^{(0)}}(\chi))
$$
are the polynomials in $t$ 
of degree at most $\eval_{\stdpolyhedronletter_{p_\numberofdivisorpoints}}(\chi)$
divisible by 
\begin{multline*}
\prod_{k=1}^l(t-t(\stdpolynomialroot'_k))^{\sum_i\stdpolynomialrootmultiplicity_{k,i}\eval_{\stdprimitivepolyhedronletter_i}(\chi)}=
\prod_{i=1}^{\numberofprimitivepolyhedra}\prod_{k=1}^l(t-t(\stdpolynomialroot'_k))^{\stdpolynomialrootmultiplicity_{k,i}\eval_{\stdprimitivepolyhedronletter_i}(\chi)}=\\
\prod_{i=1}^{\numberofprimitivepolyhedra}\bigg(\prod_{j=1}^{\primitivepolyhedronmultiplicity{i}}
(t-t(\stdpolynomialroot_{i,j}))^{\eval_{\stdprimitivepolyhedronletter_i}(\chi)}\bigg)=
\prod_{i=1}^{\numberofprimitivepolyhedra}\bigg(t^{\primitivepolyhedronmultiplicity{i}}
+\sum_{k=0}^{\primitivepolyhedronmultiplicity{i}-1} \stdpolynomialcoefficient_{i,k}^{(0)}t^k\bigg)^{\eval_{\stdprimitivepolyhedronletter_i}(\chi)}=\\
\defparpolynomial_\chi|_{t_0=t\text{ and }\stdpolynomialcoefficient_{i,j}=\stdpolynomialcoefficient_{i,j}^{(0)}\text{ for }
1\le i\le \numberofprimitivepolyhedra, 1\le j\le \primitivepolyhedronmultiplicity{i}}.
\end{multline*}

On the other hand, if $\chi=m_1\chi_1+m_2\chi_2$, then, by Lemmas \ref{computefiber} and \ref{bigbiratisstabdom},
the functions of degree $\chi$ on 
$$
\bigflatmorphism^{-1}(\stdpolynomialcoefficient_{1,0}^{(0)},\ldots,\stdpolynomialcoefficient_{1,\primitivepolyhedronmultiplicity{1}-1}^{(0)},
\ldots,
\stdpolynomialcoefficient_{\numberofprimitivepolyhedra,0}^{(0)},\ldots,
\stdpolynomialcoefficient_{\numberofprimitivepolyhedra,\primitivepolyhedronmultiplicity{\numberofprimitivepolyhedra}-1}^{(0)})
$$
are $\CC$-generated by
the images of polynomials
$$
\defparpolynomial_\chi t_1^{m_1}t_2^{m_2}, \defparpolynomial_\chi t_1^{m_1}t_2^{m_2} t_0, \ldots, 
\defparpolynomial_\chi t_1^{m_1}t_2^{m_2} t_0^{\eval_{\totalpolyhedronletter}(\chi)}
$$
under the quotient map 
$$
(\CC[\coefficientspace]\otimes \CC[t_0,t_1,t_2])\to 
(\CC[\coefficientspace]\otimes\CC[t_0,t_1,t_2])/(I(\CC[\coefficientspace]\otimes\CC[t_0,t_1,t_2])),
$$
where $I$ is the ideal in 
$\CC[\coefficientspace]$
generated by equations $\stdpolynomialcoefficient_{i,j}=\stdpolynomialcoefficient_{i,j}^{(0)}$.
In other words, 
the polynomials 
$$
\defparpolynomial_\chi t_1^{m_1}t_2^{m_2}, \defparpolynomial_\chi t_1^{m_1}t_2^{m_2} t_0, \ldots, 
\defparpolynomial_\chi t_1^{m_1}t_2^{m_2} t_0^{\eval_{\totalpolyhedronletter}(\chi)}
$$
after the substitutions $\stdpolynomialcoefficient_{i,j}=\stdpolynomialcoefficient_{i,j}^{(0)}$
$\CC$-generate the space of the functions of degree $\chi$ on the fiber.
\end{proof}

\begin{corollary}
$\dim \bigtotalspace=\dim \coefficientspace+3$.\qed
\end{corollary}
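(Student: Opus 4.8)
The plan is to read off the dimension of $\bigtotalspace$ from the two properties of $\bigflatmorphism$ that are already in hand: its flatness and the explicit description of its fibers. Recall that $\bigflatmorphism\colon\bigtotalspace\to\coefficientspace$ was shown to be flat directly from Lemma \ref{totalalgebragradecomp} (each graded component of $A=\CC[\bigtotalspace]$ is a free $\CC[\coefficientspace]$-module, so $A$ is a free, hence flat, $\CC[\coefficientspace]$-module). Thus the only remaining input is the dimension of the fibers, after which the result will follow from the standard behavior of dimension under flat morphisms.

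First I would pin down the fiber dimension. By Lemma \ref{fiberoflargedeformation}, the fiber of $\bigflatmorphism$ over an arbitrary point of $\coefficientspace$ is the $T$-variety associated with the proper polyhedral divisor on $\PP^1$ constructed there, whose tail cone is the pointed full-dimensional cone $\6\subset N_\QQ$. The general construction recalled in Section \ref{tvarsreal} states that a proper polyhedral divisor on a $(d-k)$-dimensional base carrying a $k$-dimensional torus yields a variety of dimension $d$; here the base is $\PP^1$, so $d-k=1$, and $T$ is two-dimensional, so $k=2$, whence every fiber is $3$-dimensional. In particular every fiber is nonempty, so $\bigflatmorphism$ is surjective (being flat of finite type it is moreover open).

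Finally I would invoke the dimension formula for flat morphisms of finite type: at every point $x\in\bigtotalspace$ one has $\dim_x\bigtotalspace=\dim_{\bigflatmorphism(x)}\coefficientspace+\dim_x\bigflatmorphism^{-1}(\bigflatmorphism(x))$. Since $\coefficientspace$ is a vector space, it is irreducible of dimension $\dim\coefficientspace$ at every point, and by the previous step each fiber is $3$-dimensional, so $\dim_x\bigtotalspace=\dim\coefficientspace+3$ for all $x$; taking the supremum over the irreducible components of $\bigtotalspace$ gives $\dim\bigtotalspace=\dim\coefficientspace+3$. Equivalently, flatness forces every irreducible component of $\bigtotalspace$ to dominate $\coefficientspace$, and equidimensionality of the fibers then forces each such component to have dimension $\dim\coefficientspace+3$. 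The only point requiring genuine care — and the place I expect the (modest) difficulty to sit — is ruling out components of anomalous dimension in $\bigtotalspace$, which is exactly what the flatness of $\bigflatmorphism$ together with the constancy of the fiber dimension guarantees.
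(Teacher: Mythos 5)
Your proof is correct and follows exactly the route the paper intends: the corollary carries no written proof because it is meant to be immediate from Lemma \ref{fiberoflargedeformation} (every fiber is a $3$-dimensional $T$-variety) combined with the flatness of $\bigflatmorphism$ already noted after Lemma \ref{totalalgebragenerators}. Your appeal to the dimension formula for flat morphisms of finite type is the standard way to make this precise, and your care about components of anomalous dimension is exactly the right point to flag.
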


For each $i$ ($1\le i\le \numberofprimitivepolyhedra$), denote by
$\stdpolynomialcoefficient_{i,0}^{(1)}, \ldots, \stdpolynomialcoefficient_{i,\primitivepolyhedronmultiplicity{i}-1}^{(1)}$
the coefficients of the polynomial with leading coefficient 1 and with 
roots at the points $p_j$, where the root at $p_j$ has multiplicity 
$\primitivepolyhedronlocalmultiplicity{i}{j}$. In other words, 
$$
t^{\primitivepolyhedronmultiplicity{i}}
+\sum_{k=0}^{\primitivepolyhedronmultiplicity{i}-1} \stdpolynomialcoefficient_{i,k}^{(1)}t^k=
\prod_{j=1}^{\numberofdivisorpoints}(t-t(p_j))^{\primitivepolyhedronlocalmultiplicity{i}{j}}
$$
as polynomials in $t$. Fix this notation until the end of Chapter \ref{sectksmtoric}.

\begin{corollary}\label{originfibercorrect}
The fiber 
$$
\bigflatmorphism^{-1}(\stdpolynomialcoefficient_{1,0}^{(1)},\ldots,\stdpolynomialcoefficient_{1,\primitivepolyhedronmultiplicity{1}-1}^{(1)},
\ldots,
\stdpolynomialcoefficient_{\numberofprimitivepolyhedra,0}^{(1)},\ldots,
\stdpolynomialcoefficient_{\numberofprimitivepolyhedra,\primitivepolyhedronmultiplicity{\numberofprimitivepolyhedra}-1}^{(1)})
$$
is isomorphic to the original $T$-variety $X$, which we are deforming, and which was constructed from the 
polyhedral divisor $\mathcal D$.\qed
\end{corollary}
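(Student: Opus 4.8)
The plan is to invoke Lemma \ref{fiberoflargedeformation} directly and thereby reduce the statement to an equality of polyhedral divisors on $\PP^1$. By that lemma the fiber $\bigflatmorphism^{-1}(\stdpolynomialcoefficient_{1,0}^{(1)},\ldots)$ is the $T$-variety associated to the polyhedral divisor $\mathfrak D_{\stdpolynomialcoefficient_{1,0}^{(1)},\ldots}$, while $X$ is by construction the $T$-variety associated to $\mathcal D$. Since building a $T$-variety out of a proper polyhedral divisor on the fixed $Y=\PP^1$ (with the fixed coordinate $t$) is canonical, it suffices to check $\mathfrak D_{\stdpolynomialcoefficient_{1,0}^{(1)},\ldots}=\mathcal D$, i.e.\ that the two divisors carry the same polyhedron over every point of $\PP^1$.

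The first step is to identify the zeros of each defining polynomial at the chosen parameter value. By the very definition of $\stdpolynomialcoefficient_{i,k}^{(1)}$ we have the identity of polynomials in $t$
\begin{equation*}
t^{\primitivepolyhedronmultiplicity{i}}+\sum_{k=0}^{\primitivepolyhedronmultiplicity{i}-1}\stdpolynomialcoefficient_{i,k}^{(1)}t^k=\prod_{j=1}^{\numberofdivisorpoints}(t-t(p_j))^{\primitivepolyhedronlocalmultiplicity{i}{j}},
\end{equation*}
where $\primitivepolyhedronlocalmultiplicity{i}{\numberofdivisorpoints}=0$ because $p_{\numberofdivisorpoints}=\infty$ was excluded from the primitive decomposition. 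Hence, in the notation of Lemma \ref{fiberoflargedeformation}, the multiset of zeros $\{\stdpolynomialroot_{i,1},\ldots,\stdpolynomialroot_{i,\primitivepolyhedronmultiplicity{i}}\}$ of the $i$th polynomial is exactly the set of finite special points $p_j$ ($1\le j<\numberofdivisorpoints$), each occurring with multiplicity $\primitivepolyhedronlocalmultiplicity{i}{j}$; this also confirms the degree bookkeeping $\sum_{j}\primitivepolyhedronlocalmultiplicity{i}{j}=\primitivepolyhedronmultiplicity{i}$, so every coefficient $\stdpolynomialcoefficient_{i,k}^{(1)}$ is genuinely well defined.

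Next I would read off the polyhedron placed over each point using the recipe in Lemma \ref{fiberoflargedeformation}: at a finite point $p_j$ the divisor $\mathfrak D_{\stdpolynomialcoefficient_{1,0}^{(1)},\ldots}$ carries the Minkowski sum $\sum_{i=1}^{\numberofprimitivepolyhedra}\primitivepolyhedronlocalmultiplicity{i}{j}\stdprimitivepolyhedronletter_i$, which equals $\stdpolyhedronletter_{p_j}$ by the chosen decomposition $\stdpolyhedronletter_{p_j}=\sum_i\primitivepolyhedronlocalmultiplicity{i}{j}\stdprimitivepolyhedronletter_i$; over an ordinary point no primitive polyhedron sits (so the coefficient is $\6$, matching $\stdpolyhedronletter_p=\6$); and over $\infty=p_{\numberofdivisorpoints}$ the lemma places $\stdpolyhedronletter_{p_{\numberofdivisorpoints}}$, again matching $\mathcal D$. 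Thus $\mathfrak D_{\stdpolynomialcoefficient_{1,0}^{(1)},\ldots}=\mathcal D$ termwise, and the fiber is the $T$-variety of $\mathcal D$, namely $X$.

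The computation above is essentially bookkeeping, so the only genuinely delicate point — and the one I would state explicitly to avoid ambiguity — is that $\mathcal D$ here already denotes the divisor obtained after adding the principal polyhedral divisors that normalize $\indexedvertexpt{p}{1}=0$ and make $p_{\numberofdivisorpoints}=\infty$ removable. Because adding a principal polyhedral divisor leaves the graded algebra $A$, and hence the $T$-variety together with its torus action, unchanged, the identification $\mathfrak D_{\stdpolynomialcoefficient_{1,0}^{(1)},\ldots}=\mathcal D$ still yields an isomorphism with the \emph{original} $X$ we set out to deform.
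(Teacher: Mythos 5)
Your proof is correct and follows exactly the route the paper intends: the corollary is stated with no separate proof precisely because it is the specialization of Lemma \ref{fiberoflargedeformation} to the parameter value $\stdpolynomialcoefficient^{(1)}$, where the defining polynomials factor as $\prod_j(t-t(p_j))^{\primitivepolyhedronlocalmultiplicity{i}{j}}$ and the resulting divisor reassembles $\mathcal D$ termwise. Your explicit remark that $\mathcal D$ here means the divisor after the normalizing principal shifts (which do not change $X$) is a worthwhile clarification but does not change the argument.
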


Therefore, we have constructed a deformation of $X$ over $\coefficientspace$. Now we are going to compute the Kodaira-Spencer map of this 
deformation.

\section{Kodaira-Spencer map for a deformation given by perturbation of generators}\label{ksmgens}


We are going to consider the following general situation. 
Suppose that we have a deformation of a normal variety $X$ over an affine line. Denote the total space of the deformation by $S$ and 
the function from $S$ to $\CC^1$ by $\stdflatmorphism$.
Note that by definition this means that the \textit{scheme-theoretic} fiber $\stdflatmorphism^{-1}(0)$ is $X$.
Suppose also that $S$ is embedded into a vector space, where $\stdflatmorphism$ is one of the coordinates, 
and the other coordinates are $\abstractindependentgenerator1, \ldots,\abstractindependentgenerator{\numberofabstractgenerators}$.
These data canonically define a set of generators of the algebra $\CC[S]$, denote them by 
$\stdflatmorphism, \abstractdependentgenerator1,\ldots,\abstractdependentgenerator{\numberofabstractgenerators}$.
Suppose also that we have another vector space $\CC^{k+1}$
with coordinates $\stdbirationalgenerator_0,\stdbirationalgenerator_1,\ldots, \stdbirationalgenerator_k$, and 
a dominant morphism $\stdbirationalmap\colon \CC^{k+1}\to S$.
Note that it already follows that $S$ is irreducible.
Suppose that $\stdbirationalmap$ 
satisfies the following two conditions:
\begin{enumerate}
\item $\stdflatmorphism\circ \stdbirationalmap=\stdbirationalgenerator_0$, in other words, the coordinate 
$\stdflatmorphism$ of a point $b(\stdbirationalgenerator_0,\ldots, \stdbirationalgenerator_k)$
equals $\stdbirationalgenerator_0$. This condition implies that $\stdbirationalmap^{-1}(X)=
(\stdflatmorphism\circ\stdbirationalmap)^{-1}(0)=\{\stdbirationalgenerator_0=0\}$, moreover, the scheme-theoretic fiber 
also equals $\{\stdbirationalgenerator_0=0\}$.
\item The restriction of $\stdbirationalmap$ to the hyperplane $\stdbirationalgenerator_0=0$ 
is a dominant morphism to $X$. This condition implies that $X$ is irreducible.
\item The restriction of $\stdbirationalmap$ to the hyperplane $\stdbirationalgenerator_0=0$ maps it 
\textit{birationally} to $X$.
\end{enumerate}

In algebraic terms, the existence of such a morphism $\stdbirationalmap$ and these conditions mean 
the following.
Suppose that $\stdbirationalmap$ is given by polynomials: 
$\abstractdependentgenerator i=P_i(\stdbirationalgenerator_0,\ldots,\stdbirationalgenerator_k)$.
Then $\CC[S]$ can be understood as the \emph{subalgebra} of $\CC[\stdbirationalgenerator_0,\ldots,\stdbirationalgenerator_k]$ 
generated by $\stdbirationalgenerator_0$ and $P_1,\ldots P_k$.
For each $a\in\CC$ such that $\stdbirationalmap|_{\stdbirationalgenerator_0=a}$ is dominant, 
the algebra of functions on $\stdflatmorphism^{-1}(a)$ can be understood as the 
subalgebra of $\CC[\stdbirationalgenerator_1,\ldots,\stdbirationalgenerator_k]$
generated by $P_i|_{\stdbirationalgenerator_0=a}$.
In particular, $\CC[\stdflatmorphism^{-1}(0)]=\CC[X]$ becomes the subalgebra
of $\CC[\stdbirationalgenerator_1,\ldots,\stdbirationalgenerator_k]$
generated by $P_i|_{\stdbirationalgenerator_0=0}$, and 
then, informally speaking, 
when $X$ is deformed, 
the generators of the subalgebra are also deformed, and the algebra of functions on the deformed variety
is the subalgebra generated by these deformed generators. 

First, let us prove in this setting the following easy corollary of Hilbert Nullstellensatz.
\begin{lemma}\label{hilbdivisibility}
Let $f\colon S\to \CC$ be a regular function such that $f\circ\stdbirationalmap$ (which is a regular function on 
$\CC^{k+1}$) can be written as $\stdbirationalgenerator_0 h$, where $h\in \CC[\stdbirationalgenerator_0,\ldots,\stdbirationalgenerator_k]$.
Then there exists $f_1\in \CC[S]$ such that $f=\stdflatmorphism f_1$ and $h=f_1\circ \stdbirationalmap$.
\end{lemma}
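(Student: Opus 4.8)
The plan is to work entirely inside the polynomial ring $R=\CC[\stdbirationalgenerator_0,\ldots,\stdbirationalgenerator_k]$ and to reduce the statement to a single saturation identity. First I would use that $\stdbirationalmap$ is dominant, so its comorphism $\stdbirationalmap^*\colon\CC[S]\to R$ is injective, and I identify $\CC[S]$ with the subalgebra $\stdbirationalmap^*(\CC[S])\subseteq R$. Under this identification $\stdflatmorphism$ corresponds to $\stdflatmorphism\circ\stdbirationalmap=\stdbirationalgenerator_0$, and the hypothesis becomes $\stdbirationalmap^*(f)=\stdbirationalgenerator_0 h$ in $R$. Since $R$ is an integral domain and $\stdbirationalgenerator_0$ is a nonzerodivisor, $h=\stdbirationalmap^*(f)/\stdbirationalgenerator_0$ is uniquely determined in $R$. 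Thus everything reduces to showing that $\stdbirationalmap^*(f)$ lies in $\stdbirationalgenerator_0\,\stdbirationalmap^*(\CC[S])$, i.e.\ to the saturation statement $\stdbirationalmap^*(\CC[S])\cap \stdbirationalgenerator_0 R=\stdbirationalgenerator_0\,\stdbirationalmap^*(\CC[S])$.

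The key step is to prove this identity by computing, in two ways, the kernel of the composite $\CC[S]\xrightarrow{\stdbirationalmap^*}R\to R/\stdbirationalgenerator_0 R$, where $R/\stdbirationalgenerator_0 R=\CC[\stdbirationalgenerator_1,\ldots,\stdbirationalgenerator_k]$. On one hand this kernel is exactly $\stdbirationalmap^*(\CC[S])\cap\stdbirationalgenerator_0 R$. On the other hand, because $\stdflatmorphism\circ\stdbirationalmap=\stdbirationalgenerator_0$, the morphism $\stdbirationalmap$ sends $\{\stdbirationalgenerator_0=0\}$ into the fibre $X=\stdflatmorphism^{-1}(0)$, giving a restriction $\stdbirationalmap_0\colon\{\stdbirationalgenerator_0=0\}\to X$, and the composite above factors as $\CC[S]\twoheadrightarrow\CC[S]/\stdbirationalgenerator_0\CC[S]\xrightarrow{\stdbirationalmap_0^*}\CC[\stdbirationalgenerator_1,\ldots,\stdbirationalgenerator_k]$. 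Here I would invoke that the scheme-theoretic fibre over $0$ is $X$, i.e.\ $\CC[S]/\stdbirationalgenerator_0\CC[S]\cong\CC[X]$ via restriction; the projection $\CC[S]\to\CC[S]/\stdbirationalgenerator_0\CC[S]$ has kernel $\stdbirationalgenerator_0\CC[S]$, and $\stdbirationalmap_0^*$ is injective because $\stdbirationalmap_0$ is dominant. Hence the total kernel is $\stdbirationalgenerator_0\CC[S]$, and comparing with the first computation yields $\stdbirationalmap^*(\CC[S])\cap\stdbirationalgenerator_0 R=\stdbirationalgenerator_0\,\stdbirationalmap^*(\CC[S])$.

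Finally I would conclude: applying the saturation identity to $\stdbirationalmap^*(f)$ produces $f_1\in\CC[S]$ with $\stdbirationalmap^*(f)=\stdbirationalgenerator_0\,\stdbirationalmap^*(f_1)$. Since $\stdbirationalmap^*(\stdflatmorphism)=\stdbirationalgenerator_0$ and $\stdbirationalmap^*$ is injective, this gives $f=\stdflatmorphism f_1$ in $\CC[S]$, and cancelling the nonzerodivisor $\stdbirationalgenerator_0$ in the domain $R$ gives $h=\stdbirationalmap^*(f_1)=f_1\circ\stdbirationalmap$, as required. The only genuinely delicate point---the main obstacle---is the bookkeeping: making sure that the induced map on the special fibre really is the pullback $\stdbirationalmap_0^*$ and that ``scheme-theoretic fibre equals $X$'' is used exactly as the identification $\CC[S]/\stdbirationalgenerator_0\CC[S]=\CC[X]$. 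It is worth noting that this argument uses only dominance of $\stdbirationalmap$ and of $\stdbirationalmap_0$ together with reducedness of the fibre; normality of $X$ and birationality of $\stdbirationalmap_0$ are not needed for this particular lemma.
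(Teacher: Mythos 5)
Your proof is correct, and it reaches the same key intermediate fact as the paper --- that $f$ lies in $\stdflatmorphism\,\CC[S]$ --- by a genuinely different route. The paper argues geometrically: $f$ vanishes pointwise on the dense subset $\stdbirationalmap(\{\stdbirationalgenerator_0=0\})$ of $X$, hence $f|_X=0$; the Hilbert Nullstellensatz then puts a power of $f$ into $\stdflatmorphism\,\CC[S]$, and radicality of that ideal (i.e.\ reducedness of the scheme-theoretic fibre) removes the exponent. You instead compute the kernel of $\CC[S]\to\CC[\stdbirationalgenerator_0,\ldots,\stdbirationalgenerator_k]/(\stdbirationalgenerator_0)$ directly by factoring it through $\CC[S]/\stdflatmorphism\,\CC[S]\cong\CC[X]\hookrightarrow\CC[\stdbirationalgenerator_1,\ldots,\stdbirationalgenerator_k]$, which bypasses the Nullstellensatz entirely and isolates exactly the two hypotheses doing the work (dominance of $\stdbirationalmap_0$ and reducedness of the fibre) --- a point you correctly flag, since the paper's version also secretly uses only these. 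The concluding cancellation of the nonzerodivisor $\stdbirationalgenerator_0$ is identical in both arguments. One cosmetic remark: where you write $\CC[S]/\stdbirationalgenerator_0\CC[S]$ you mean $\CC[S]/\stdflatmorphism\,\CC[S]$; this is harmless under your standing identification of $\CC[S]$ with its image in $\CC[\stdbirationalgenerator_0,\ldots,\stdbirationalgenerator_k]$, but worth stating once explicitly, since the commutativity of the square relating $\stdbirationalmap_0^*$ to the reduction of $\stdbirationalmap^*$ modulo $\stdbirationalgenerator_0$ is precisely the bookkeeping step you identify as delicate.
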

\begin{proof}
For each point $x$ of $X$ of the form $x=\stdbirationalmap (0,\stdbirationalgenerator_1,\ldots,\stdbirationalgenerator_k)$
we have 
$$
f(x)=f(\stdbirationalmap (0,\stdbirationalgenerator_1,\ldots,\stdbirationalgenerator_k))=0.
$$
The set $\stdbirationalmap(\{\stdbirationalgenerator_0=0\})$ is open and dense in $X$, so $f|_X=0$.
By Hilbert Nullstellensatz, some power of $f$ is divisible by $\stdflatmorphism$, 
but since $X$ is the scheme-theoretic fiber of $\stdflatmorphism$ above zero, the
ideal $\stdflatmorphism\CC[S]$ is radical, and $f$ itself is divisible by $\stdflatmorphism$. 
Let $f_1\in \CC[S]$ be such that $f=\stdflatmorphism f_1$.
Then $\stdbirationalgenerator_0 h=f\circ \stdbirationalmap=(\stdflatmorphism\circ\stdbirationalmap)(f_1\circ\stdbirationalmap)=
\stdbirationalgenerator_0(f_1\circ\stdbirationalmap)$.
Therefore, $h=f_1\circ\stdbirationalmap$.
\end{proof}

Let $U\subseteq X$ be a smooth open sunset such that $\codim_X(X\setminus U)\ge 2$. And let 
$U'\subseteq U$ be a subset such that $(\stdbirationalmap|_{\stdbirationalgenerator_0=0})^{-1}$
is defined on $U'$. Consider the following section of 
$\Theta_{\CC^{1+\numberofabstractgenerators}=
\Spec \CC[\stdflatmorphism,\abstractindependentgenerator1,\ldots,\abstractindependentgenerator{\numberofabstractgenerators}]}|_{U'}$: 
at each point $x\in U'$ we have
$$
v(x)=d_{(\stdbirationalmap|_{\stdbirationalgenerator_0=0})^{-1}(x)}\stdbirationalmap\left(\frac{\partial}{\partial\stdbirationalgenerator_0} \right).
$$
The first coordinate of this vector (the coefficient in front of $\partial/\partial \stdflatmorphism$) 
is always one.

Consider the restriction of the deformation $\stdflatmorphism \colon S\to\CC^1$ 
to the double point at the origin corresponding to the vector $\partial/\partial \stdflatmorphism$.
Denote the total space of the deformation by $\widetilde S$ and the flat morphism by 
$\varepsilon\colon \widetilde S\to\Spec \CC[\varepsilon]/\varepsilon^2$
Denote the restrictions of functions $\abstractdependentgenerator i$ to 
$\widetilde S$ by $\widetilde{\abstractdependentgenerator i}$.

Let $I\subset \CC[\abstractindependentgenerator1,\ldots,\abstractindependentgenerator{\numberofabstractgenerators}]$
be the ideal of equations of $X$, i.~e. 
$\CC[X]=\CC[\abstractindependentgenerator1,\ldots,\abstractindependentgenerator{\numberofabstractgenerators}]/I$.
Since we have chosen lifts of generators of $\CC[X]$ to $\CC[\widetilde S]$, we have a uniquely determined map 
$I/I^2\to \CC[X]$ representing the deformation.

\begin{proposition}\label{mushift}
For each function $g\in I$, 
denote by $g^\circ\in \CC[\stdflatmorphism,\abstractindependentgenerator1,\ldots,\abstractindependentgenerator{\numberofabstractgenerators}]$
the image of $g$ under the natural embedding 
$\CC[\abstractindependentgenerator1,\ldots,\abstractindependentgenerator{\numberofabstractgenerators}]
\hookrightarrow\CC[\stdflatmorphism,\abstractindependentgenerator1,\ldots,\abstractindependentgenerator{\numberofabstractgenerators}]$
($g^\circ$ actually does not depend on $\stdflatmorphism$ and, informally speaking, equals $g$ as it is written).
Set $\mu(g)=dg^\circ(v)$ 
(we apply the differential of a function to a rational vector field and get a rational function). 

\begin{enumerate}
\item For each $g\in I$, $\mu(g)$ is a regular function on the whole $X$ (by definition we only 
know that it is defined on $U'$)
\item $\mu$ is a well-defined $\CC[X]$-linear morphism $I/I^2\to \CC[X]$.
\item $\mu$ represents the deformation 
$\varepsilon\colon \widetilde S\to\Spec \CC[\varepsilon]/\varepsilon^2$
in $T^1(X)$.
\end{enumerate}
\end{proposition}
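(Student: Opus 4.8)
The plan is to reduce all three assertions to the single identity $\mu(g)=G_1|_X$, where $G_1$ is the function on $S$ obtained from $g$ by dividing $g(\abstractdependentgenerator_1,\ldots,\abstractdependentgenerator_\numberofabstractgenerators)$ by $\stdflatmorphism$, and then to recognize the assignment $g\mapsto G_1|_X$ as exactly the standard map representing the deformation in $T^1(X)$ that was built in Section \ref{deformsreal}.

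First I would unwind the definition of $\mu$. By construction $v$ is the pushforward $d\stdbirationalmap(\vfield{\stdbirationalgenerator_0})$ evaluated at $(\stdbirationalmap|_{\stdbirationalgenerator_0=0})^{-1}(x)$, and $g^\circ$ does not involve $\stdflatmorphism$, so $g^\circ\circ\stdbirationalmap=g(P_1,\ldots,P_{\numberofabstractgenerators})$. The chain rule (functoriality of the differential along $\stdbirationalmap$) then gives, for $g\in I$,
\[
\mu(g)\circ\bigl(\stdbirationalmap|_{\stdbirationalgenerator_0=0}\bigr)
=\frac{\partial}{\partial\stdbirationalgenerator_0}\,g(P_1,\ldots,P_{\numberofabstractgenerators})\Big|_{\stdbirationalgenerator_0=0},
\]
an identity of regular functions on the preimage $\{\stdbirationalgenerator_0=0\}$ (where $(\stdbirationalmap|_{\stdbirationalgenerator_0=0})^{-1}$ and hence $v$ are defined).

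Next I would produce $G_1$. The function $G=g(\abstractdependentgenerator_1,\ldots,\abstractdependentgenerator_\numberofabstractgenerators)\in\CC[S]$ is $g^\circ|_S$; it vanishes on $X=\stdflatmorphism^{-1}(0)$ because $g\in I$. Hence $G\circ\stdbirationalmap=g(P_1,\ldots,P_{\numberofabstractgenerators})$ vanishes on $\{\stdbirationalgenerator_0=0\}$, whose image lies in $X$, so it is divisible by $\stdbirationalgenerator_0$; writing $G\circ\stdbirationalmap=\stdbirationalgenerator_0 h$ and applying Lemma \ref{hilbdivisibility} yields $G_1\in\CC[S]$ with $G=\stdflatmorphism G_1$ and $G_1\circ\stdbirationalmap=h$. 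Differentiating $g(P_1,\ldots,P_{\numberofabstractgenerators})=\stdbirationalgenerator_0 h$ in $\stdbirationalgenerator_0$ and restricting to $\stdbirationalgenerator_0=0$ gives $h|_{\stdbirationalgenerator_0=0}=\tfrac{\partial}{\partial\stdbirationalgenerator_0}g(P_1,\ldots,P_{\numberofabstractgenerators})|_{\stdbirationalgenerator_0=0}$. Combining with the previous display, $\mu(g)\circ(\stdbirationalmap|_{\stdbirationalgenerator_0=0})=h|_{\stdbirationalgenerator_0=0}=(G_1|_X)\circ(\stdbirationalmap|_{\stdbirationalgenerator_0=0})$. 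Since $\stdbirationalmap|_{\stdbirationalgenerator_0=0}$ is dominant (indeed birational onto $X$), pullback is injective on rational functions, so $\mu(g)=G_1|_X$ as rational functions on $X$. As $G_1|_X\in\CC[X]$ is globally regular, the rational function $\mu(g)$, a priori defined only on $U'$, extends to this regular function, which proves part (1).

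Finally I would identify $\mu$ with the standard $T^1$ map. The double point $\widetilde S$ is $S\times_{\CC^1}\Spec\CC[\varepsilon]/\varepsilon^2$ with $\stdflatmorphism\mapsto\varepsilon$, so $\CC[\widetilde S]=\CC[S]/(\stdflatmorphism^2)$ and the canonical lift $\widetilde{\abstractdependentgenerator i}$ is the image of $\abstractdependentgenerator i$. Then $g(\widetilde{\abstractdependentgenerator 1},\ldots,\widetilde{\abstractdependentgenerator \numberofabstractgenerators})$ is the image of $G=\stdflatmorphism G_1$ in $\CC[S]/(\stdflatmorphism^2)$, namely $\varepsilon$ times the image of $G_1$; since $\varepsilon^2=0$ this depends only on $G_1$ modulo $\stdflatmorphism$, whose image under $\iota^*$ is $G_1|_X$. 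Thus the map of Section \ref{deformsreal} sends $g\mapsto G_1|_X=\mu(g)$. Because that map is already shown there to be well defined on $I/I^2$, to be $\CC[X]$-linear, and to represent the deformation in $T^1(X)$, parts (2) and (3) follow immediately. I expect the main obstacle to be the bookkeeping in this last paragraph: carefully matching the canonical lift $\widetilde{\abstractdependentgenerator i}$ coming from restricting $S$ to the double point with the lift used in the construction of the $T^1$-identification, together with the rational-versus-regular subtleties in passing through the birational map $\stdbirationalmap|_{\stdbirationalgenerator_0=0}$.
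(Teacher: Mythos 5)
Your proof is correct and follows essentially the same route as the paper: the chain rule along $\stdbirationalmap$ identifying $\mu(g)$ with $\partial(g^\circ\circ\stdbirationalmap)/\partial\stdbirationalgenerator_0$ on the central fiber, Lemma \ref{hilbdivisibility} to descend divisibility by $\stdbirationalgenerator_0$ to divisibility by $\stdflatmorphism$ in $\CC[S]$, and the identification of the result with the representative supplied by the construction of Section \ref{deformsreal}. Your organization is marginally cleaner — you build $G_1$ directly and read off parts (2) and (3) from the already-established properties of the standard map $I/I^2\to\CC[X]$, whereas the paper compares a rational expression $P/Q$ for $\mu(g)$ with the flatness-provided lift (applying Lemma \ref{hilbdivisibility} twice) and re-verifies $\CC[X]$-linearity by the Leibniz rule — but the mathematical content is the same.
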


\begin{proof}
The function $\mu(g)$ is a rational function on $X$, so it can be written as a ratio of two 
polynomials in $\abstractdependentgenerator1,\ldots,\abstractdependentgenerator{\numberofabstractgenerators}$, 
and the second of them has no zeros inside $U'$.
Fix these two polynomials and consider them now as polynomials in 
$\stdflatmorphism,\abstractdependentgenerator1,\ldots,\abstractdependentgenerator{\numberofabstractgenerators}$.
Then we will get two regular functions on $S$, denote them by $P$ and $Q$, respectively. 
Then $Q(x)\ne 0$ if $x\in U'\subseteq X\subset S$, and $P(x)/Q(x)=\mu(g)(x)$ if $x\in U'$.

Now consider a rational function $(P/Q)\circ\stdbirationalmap$ on $\CC^{k+1}$. 
Let $(0,\stdbirationalgenerator_1,\ldots,\stdbirationalgenerator_k)\in\CC^{k+1}$ be a point such that 
$\stdbirationalmap(0,\stdbirationalgenerator_1,\ldots,\stdbirationalgenerator_k)\in U'$.
Then 
\begin{multline*}
(P/Q)(\stdbirationalmap(0,\stdbirationalgenerator_1,\ldots,\stdbirationalgenerator_k))=
\mu(g)(\stdbirationalmap(0,\stdbirationalgenerator_1,\ldots,\stdbirationalgenerator_k))=\\
d_{\stdbirationalmap(0,\stdbirationalgenerator_1,\ldots,\stdbirationalgenerator_k)}g^\circ
(v(\stdbirationalmap(0,\stdbirationalgenerator_1,\ldots,\stdbirationalgenerator_k)))=\\
d_{\stdbirationalmap(0,\stdbirationalgenerator_1,\ldots,\stdbirationalgenerator_k)}g^\circ
((\partial/\partial \stdbirationalgenerator_0\stdbirationalmap)((\stdbirationalmap|_{\stdbirationalgenerator_0=0})^{-1}
(\stdbirationalmap(0,\stdbirationalgenerator_1,\ldots,\stdbirationalgenerator_k))))=\\
d_{\stdbirationalmap(0,\stdbirationalgenerator_1,\ldots,\stdbirationalgenerator_k)}g^\circ
(\partial/\partial \stdbirationalgenerator_0\stdbirationalmap(0,\stdbirationalgenerator_1,\ldots,\stdbirationalgenerator_k))=
(\partial/\partial \stdbirationalgenerator_0)(g^\circ\circ\stdbirationalmap).
\end{multline*}
Therefore, 
the functions $(P/Q)\circ \stdbirationalmap$ and 
$(\partial/\partial \stdbirationalgenerator_0)(g^\circ\circ\stdbirationalmap)$
coincide on $\stdbirationalmap^{-1}(U')$. Then the functions 
$P\circ \stdbirationalmap$ and $(Q\circ\stdbirationalmap)((\partial/\partial \stdbirationalgenerator_0)(g^\circ\circ\stdbirationalmap))$
(both of them are regular) also coincide on $\stdbirationalmap^{-1}(U')$, which is an 
open subset of the hyperplane $\{\stdbirationalgenerator_0=0\}$, 
and their difference 
$P\circ \stdbirationalmap-(Q\circ\stdbirationalmap)((\partial/\partial \stdbirationalgenerator_0)(g^\circ\circ\stdbirationalmap))$
is a polynomial divisible by $\stdbirationalgenerator_0$.

Consider the following regular function on $\CC^{k+1}$: 
$g^\circ\circ\stdbirationalmap-\stdbirationalgenerator_0 \partial/\partial\stdbirationalgenerator_0(g^\circ\circ\stdbirationalmap)$.
Clearly, it vanishes if $\stdbirationalgenerator_0=0$. Moreover, 
$$
\partial/\partial \stdbirationalgenerator_0 (g^\circ\circ\stdbirationalmap-\stdbirationalgenerator_0 \partial/\partial\stdbirationalgenerator_0(g^\circ\circ\stdbirationalmap))=
-\stdbirationalgenerator_0\partial^2/\partial\stdbirationalgenerator_0^2(g^\circ\circ\stdbirationalmap),
$$ 
so 
$$(\partial/\partial \stdbirationalgenerator_0 
(g^\circ\circ\stdbirationalmap-\stdbirationalgenerator_0 \partial/\partial\stdbirationalgenerator_0(g^\circ\circ\stdbirationalmap)))|_{\stdbirationalgenerator_0=0}=0,
$$
and 
$g^\circ\circ\stdbirationalmap-\stdbirationalgenerator_0 \partial/\partial\stdbirationalgenerator_0(g^\circ\circ\stdbirationalmap)$
is a polynomial divisible by $\stdbirationalgenerator_0^2$.
Hence, 
$(Q\circ\stdbirationalmap)(g^\circ\circ\stdbirationalmap-\stdbirationalgenerator_0 \partial/\partial\stdbirationalgenerator_0(g^\circ\circ\stdbirationalmap))$
is also divisible by $\stdbirationalgenerator_0^2$.
We also know that 
$(P\circ \stdbirationalmap)\stdbirationalgenerator_0
-(Q\circ\stdbirationalmap)\stdbirationalgenerator_0((\partial/\partial \stdbirationalgenerator_0)(g^\circ\circ\stdbirationalmap))$
is divisible by $\stdbirationalgenerator_0^2$, 
so 
$(Q\circ\stdbirationalmap)(g^\circ\circ\stdbirationalmap)-(P\circ \stdbirationalmap)\stdbirationalgenerator_0=
(Qg^\circ-P\stdflatmorphism)\circ\stdbirationalmap$
is divisible by $\stdbirationalgenerator_0^2$.
Then by Lemma \ref{hilbdivisibility} applied twice, 
$Qg^\circ-P\stdflatmorphism$ is divisible by $\stdflatmorphism^2$ in $\CC[S]$.

Recall that we have lifts $\widetilde{\abstractdependentgenerator i}\in \CC[\widetilde S]$ of the 
functions $\abstractdependentgenerator i$ on $X$.
So, the restriction of the deformation $\stdflatmorphism\colon S\to \CC^1$ to the double point 
at the origin can be represented by 
an element of 
$\Hom_{\CC[X]}(I/I^2,\CC[X])$. In particular, there exists
a polynomial $g_1\in \CC[\abstractindependentgenerator 1,\ldots, \abstractindependentgenerator{\numberofabstractgenerators}]$
such that 
$g(\widetilde{\abstractdependentgenerator1},\ldots,\widetilde{\abstractdependentgenerator{\numberofabstractgenerators}})=
\varepsilon g_1(\widetilde{\abstractdependentgenerator1},\ldots,\widetilde{\abstractdependentgenerator{\numberofabstractgenerators}})$
in $\CC[\widetilde S]$.
By the definition of $\CC[\widetilde S]$ this means that 
$g(\abstractdependentgenerator1,\ldots,\abstractdependentgenerator{\numberofabstractgenerators})-
\stdflatmorphism g_1(\abstractdependentgenerator1,\ldots,\abstractdependentgenerator{\numberofabstractgenerators})$
is divisible by $\stdflatmorphism$ in $\CC[S]$.
Denote the function $g_1(\abstractdependentgenerator1,\ldots,\abstractdependentgenerator{\numberofabstractgenerators})$
understood as a function on the whole $S$ by $g_1^\circ$.
Then $Qg^\circ-Q\stdflatmorphism g_1^\circ$ is also divisible by $\stdflatmorphism^2$.
Therefore, $Q\stdflatmorphism g_1^\circ-P\stdflatmorphism$ is divisible by $\stdflatmorphism^2$ in $\CC[S]$.
Since $S$ is an irreducible variety, $Qg_1^\circ-P$ is divisible by $\stdflatmorphism$. 
So, $(Qg_1^\circ-P)|_X=0$, and this by definition of the field of rational functions means that 
$g_1=\mu(g)$ in $\CC(X)$, and $\mu(g)$ is in fact a regular function on $X$.

Let us check that the map $\mu$ is 
$\CC[\abstractindependentgenerator1,\ldots,\abstractindependentgenerator{\numberofabstractgenerators}]$-linear.
The additivity of $\mu$ is clear. Choose a polynomial 
$h\in \CC[\abstractindependentgenerator1,\ldots,\abstractindependentgenerator{\numberofabstractgenerators}]$
and denote by $h^\circ$ the polynomial $h$ understood as a polynomial in 
$\stdflatmorphism,\abstractindependentgenerator1,\ldots,\abstractindependentgenerator{\numberofabstractgenerators}$.
Then $\mu(hg)=d(h^\circ g^\circ)(v)=h^\circ dg^\circ(v)+g^\circ dh^\circ(v)$, but 
$\mu(hg)$ is a function on $X$, and the second summand on $X$ equals zero, and 
the first summand on $X$ equals $h dg^\circ(v)=h\mu (v)$.

Now, since $\mu$ is a
$\CC[\abstractindependentgenerator1,\ldots,\abstractindependentgenerator{\numberofabstractgenerators}]$-linear
map from $I$ to $\CC[X]$, it vanishes on $I/I^2$ and induces a $\CC[X]$-linear map from $I/I^2$ to $\CC[X]$.
And we have already seen before that $\mu(g)$ coincides with the image of $g$ 
under the map $I/I^2\to \CC[X]$ corresponding to the first order deformation in $T^1(X)$.
\end{proof}

We keep the notation $\mu$ introduced in Proposition \ref{mushift} for further usage.
Recall that the first coordinate of any vector $v(x)$, where $x\in U'$, equals 1, 
and that $g^\circ$ does not actually depend on $\stdflatmorphism$.
Denote the projection of $v$ to 
$\Theta_{\CC^{\numberofabstractgenerators}=\Spec\CC[\abstractindependentgenerator1,\ldots,\abstractindependentgenerator{\numberofabstractgenerators}]}$
by $\overline v$.
So, If we replace $v$ by 
$\overline v$
in the definition of $\mu(g)$, we will get the same function.
We will call 
$\overline v$
the \emph{field of deformation speeds} of the deformation $\stdflatmorphism \colon S\to \CC^1$.

To formulate the next proposition, recall that $U\subseteq U'$.

\begin{proposition}\label{ksmcomputation}
There exists a section $v'\in \Gamma(U, \mathcal N_{X\subseteq \CC^{\numberofabstractgenerators}})$
such that $v'|_{U'}$ coincides with the image of the field of deformation speeds under the canonical map of sheaves 
$\Theta_{\CC^{\numberofabstractgenerators}}|_{U'}\to \mathcal N_{X\subseteq \CC^{\numberofabstractgenerators}}|_{U'}$.
Denote the image of $v'$ under the snake map for the exact sequence of sheaves 
$$
0\to \Theta_{U}\to \Theta_{\CC^{\numberofabstractgenerators}}|_{U}\to \mathcal N_{X\subseteq \CC^{\numberofabstractgenerators}}|_{U}\to 0
$$
by $\nu\in H^1(U, \Theta_{U})$. Then in the sense of Theorem \ref{schlessgen}, $\nu$ represents the 
deformation $\varepsilon\colon \widetilde S\to \Spec \CC[\varepsilon]/\varepsilon^2$.
\end{proposition}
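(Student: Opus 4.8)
The plan is to recognize $\nu$ as the image, under the connecting homomorphism already analyzed in the proof of Theorem~\ref{schlessgen}, of the section of $\mathcal I^\vee$ given by the map $\mu$ from Proposition~\ref{mushift}. Recall that in the proof of Theorem~\ref{schlessgen} we worked with the exact sequence of sheaves on $U$
$$0\to\Theta_X|_U\stackrel{\psi|_U}\longrightarrow\OO_X^{\oplus\numberofabstractgenerators}|_U\stackrel{\widetilde\phi|_U}\longrightarrow\mathcal I^\vee|_U\to0,$$
whose connecting map $\delta\colon H^0(U,\mathcal I^\vee)\to H^1(U,\Theta_X)$ identifies $T^1(X)=\coker H^0(\widetilde\phi|_U)$ with $\ker H^1(\psi|_U)$, the deformation being represented on $H^0(U,\mathcal I^\vee)$ by the class of the map $I/I^2\to\CC[X]$ it determines. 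Since $U$ is smooth, $\Theta_X|_U=\Theta_U$, the trivial bundle $\OO_X^{\oplus\numberofabstractgenerators}|_U$ is $\Theta_{\CC^\numberofabstractgenerators}|_U$ (with $\psi$ becoming the inclusion $w\mapsto(d\abstractdependentgenerator1(w),\ldots,d\abstractdependentgenerator{\numberofabstractgenerators}(w))$ of the tangent bundle), and $\mathcal I^\vee|_U$ is the normal sheaf $\mathcal N_{X\subseteq\CC^\numberofabstractgenerators}|_U$. Thus the exact sequence above coincides with the one in the statement, and the two connecting maps $\delta$ agree.

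Next I would spell out the identification $\mathcal N_{X\subseteq\CC^\numberofabstractgenerators}|_U\cong\mathcal I^\vee|_U$: a normal vector field $w$ corresponds to the homomorphism $g\mapsto dg(w)$ for $g\in I$, which is well defined because $g$ vanishes on $X$, so along $X$ the differential $dg$ annihilates $\Theta_X$ and $dg(w)$ depends only on the class of $w$ modulo $\Theta_X$. Under this identification the image of the field of deformation speeds $\overline v$ in $\mathcal N_{X\subseteq\CC^\numberofabstractgenerators}|_{U'}$ corresponds to $\mu|_{U'}$: indeed, since the first coordinate of $v$ is $1$ and $g^\circ$ does not depend on $\stdflatmorphism$, one has $dg(\overline v)=dg^\circ(v)=\mu(g)$ for every $g\in I$.

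It remains to produce $v'$. By Proposition~\ref{mushift} the map $\mu$ is a regular $\CC[X]$-homomorphism $I/I^2\to\CC[X]$, hence defines a global section of $\mathcal I^\vee$ over $X$, in particular over $U$. Transporting it through $\mathcal I^\vee|_U\cong\mathcal N_{X\subseteq\CC^\numberofabstractgenerators}|_U$ yields a section $v'\in\Gamma(U,\mathcal N_{X\subseteq\CC^\numberofabstractgenerators})$ whose restriction to $U'$ is, by the previous paragraph, the image of $\overline v$. This gives the existence assertion and identifies $v'$ with $\mu$.

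Finally, $\nu=\delta(v')=\delta(\mu)$. Since the isomorphism $T^1(X)\cong\ker H^1(\psi|_U)$ of Theorem~\ref{schlessgen} is exactly $\delta$ applied to the $H^0(U,\mathcal I^\vee)$-representative of a deformation, and $\mu$ is the representative of the deformation $\varepsilon\colon\widetilde S\to\Spec\CC[\varepsilon]/\varepsilon^2$ by Proposition~\ref{mushift}, the class $\nu$ represents this deformation in the sense of Theorem~\ref{schlessgen}. I expect the only real work to be the bookkeeping of the three identifications ($\Theta_X|_U=\Theta_U$, $\OO_X^{\oplus\numberofabstractgenerators}|_U=\Theta_{\CC^\numberofabstractgenerators}|_U$, $\mathcal I^\vee|_U=\mathcal N_{X\subseteq\CC^\numberofabstractgenerators}|_U$) and checking that they are compatible with the maps $\psi$, $\widetilde\phi$, $\delta$ used in the proof of Theorem~\ref{schlessgen}; once these are seen to match, the statement becomes a formal consequence of Proposition~\ref{mushift}.
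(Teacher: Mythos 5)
Your proposal is correct and follows essentially the same route as the paper's own proof: use Proposition~\ref{mushift} to view $\mu$ as a global section of $\mathcal I^\vee$, identify the exact sequence $0\to\Theta_X|_U\to\OO_X^{\oplus\numberofabstractgenerators}|_U\to\mathcal I^\vee|_U\to0$ with the normal-bundle sequence on the smooth locus so that $\mu|_U$ becomes the section $v'$ extending the image of $\overline v$, and then apply the connecting map. The only cosmetic difference is that you make the identification $\mathcal N_{X\subseteq\CC^{\numberofabstractgenerators}}|_U\cong\mathcal I^\vee|_U$ explicit where the paper defers to a diagram chase.
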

\begin{proof}
Recall the sheaf $\mathcal I^\vee$ on $X$, which was used in the proof of Theorem \ref{schlessgen}.
Since $X$ is affine, each sheaf on $X$ is determined by the $\CC[X]$-module of its global sections, 
and $\Gamma(X,\mathcal I^\vee)=\Hom_{\CC[X]}(I/I^2,\CC[X])$.
By Proposition \ref{mushift}, $\mu$ represents an element of $\Hom_{\CC[X]}(I/I^2,\CC[X])=\Gamma(X,\mathcal I^\vee)$,
and this element represents the deformation $\varepsilon\colon \widetilde S\to \Spec \CC[\varepsilon]/\varepsilon^2$.
Denote the restriction of this element of $\Gamma(X,\mathcal I^\vee)$ to $U$ by $\mu|_{U}$.

The subset $U$ satisfies the conditions of Theorem \ref{schlessgen}.
Recall one more exact sequence of sheaves we have seen in the proof of Theorem \ref{schlessgen}:
$$
0\to\Theta_X|_{U}\stackrel{\psi|_{U}}\longrightarrow \mathcal O_X^{\oplus \numberofabstractgenerators}|_{U}\stackrel{\widetilde{\phi}|_{U}}\longrightarrow \mathcal I^\vee|_{U}\to 0,
$$
The sheaves $\Theta_{\CC^{\numberofabstractgenerators}}|_{U}$ and 
$\mathcal O_X^{\oplus \numberofabstractgenerators}|_{U}$ are isomorphic, and 
this isomorphism identifies $\psi|_{U}$ and the embedding of the tangent vector bundles.
So, we have an isomorphism 
$\mathcal I^\vee|_{U}\to \mathcal N_{X\subseteq \CC^{\numberofabstractgenerators}}|_{U}$.
By construction, this isomorphism identifies the quotient map of vector bundles and $\psi$.
A direct diagram-chase computation shows that this isomorphism identifies $\mu|_{U}$ 
with a section $v'\in \Gamma(U, \mathcal N_{X\subseteq \CC^{\numberofabstractgenerators}}|_{U})$
whose restriction to $U'$ coincides with $\overline v$.

It follows from the proof of Theorem \ref{schlessgen} that the element $\nu\in \ker(H^1(U,\Theta_X)\to 
H^1(U,\mathcal O_X^{\oplus n}))$ representing the first order deformation
is obtained from $\mu|_{U}$ via the snake map for the short exact sequence 
$$
0\to\Theta_X|_{U}\stackrel{\psi|_{U}}\longrightarrow \mathcal O_X^{\oplus \numberofabstractgenerators}|_{U}\stackrel{\widetilde{\phi}|_{U}}\longrightarrow \mathcal I^\vee|_{U}\to 0.
$$
But we have identified this exact sequence with the short exact sequence 
$$
0\to \Theta_{U}\to \Theta_{\CC^{\numberofabstractgenerators}}|_{U}\to \mathcal N_{X\subseteq \CC^{\numberofabstractgenerators}}|_{U}\to 0
$$
so that $\mu|_{U}$ is identified with $v'$, therefore, 
$\nu$ is also obtained from $v'$ via the snake map for this sequence.
\end{proof}

\section{Kodaira-Spencer map in the particular case of a deformation of a T-variety}\label{ksmtvar}

Let us apply the results of Section \ref{ksmgens} to the deformation of the T-variety we have.
Section \ref{ksmgens} speaks about one-parameter deformations, and we have a deformation 
over a $(k_1+\ldots+k_{\numberofprimitivepolyhedra})$-dimensional space $\coefficientspace$. Moreover, the variety $X$
we want to deform is the fiber over the point 
$$
\stdpolynomialcoefficient^{(1)}=(\stdpolynomialcoefficient_{1,0}^{(1)},\ldots,\stdpolynomialcoefficient_{1,\primitivepolyhedronmultiplicity{1}-1}^{(1)},
\ldots,
\stdpolynomialcoefficient_{\numberofprimitivepolyhedra,0}^{(1)},\ldots,
\stdpolynomialcoefficient_{\numberofprimitivepolyhedra,\primitivepolyhedronmultiplicity{\numberofprimitivepolyhedra}-1}^{(1)}),
$$
not above the origin. We are going to restrict the deformation to lines (with a fixed coordinate, which we will denote by $\stdflatmorphism$)
passing through this point, and then restrict it further to the double point corresponding to the tangent vector 
$\partial/\partial\stdflatmorphism$ at the origin of this line. So we will get a map 
$\Theta_{a^{(1)}}\coefficientspace\to T^1(X)$, which is called Kodaira-Spencer map and which is linear.
Since this map is linear, it is sufficient to compute it for the lines parallel to the 
coordinate axes in $\coefficientspace$ only.

So, until the end of Section \ref{ksmtvar}, fix two indices, $j$ and $k$, $1\le j\le \numberofprimitivepolyhedra$, 
$0\le k\le \primitivepolyhedronmultiplicity{j}-1$ 
and consider 
the following map from an affine line $\CC^1$ with coordinate $\stdflatmorphism$ to $\coefficientspace$:
$$
\stdflatmorphism\mapsto 
(\stdpolynomialcoefficient_{1,0}^{(1)},\ldots,
\stdpolynomialcoefficient_{1,\primitivepolyhedronmultiplicity{1}-1}^{(1)},
\ldots,
\stdpolynomialcoefficient_{j,k}^{(1)}+\stdflatmorphism
\ldots,
\stdpolynomialcoefficient_{\numberofprimitivepolyhedra,0}^{(1)},\ldots,
\stdpolynomialcoefficient_{\numberofprimitivepolyhedra,\primitivepolyhedronmultiplicity{\numberofprimitivepolyhedra}-1}^{(1)}).
$$


Now let us apply the base change $-\times_{\coefficientspace} \CC^1$ to the $\coefficientspace$-varieties 
$\coefficientspace\times \Spec\CC[t_0,t_1,t_2]$ and $\bigtotalspace$ and to the morphism
$\coefficientspace\times \Spec\CC[t_0,t_1,t_2]\to \bigtotalspace$, which was stably dominant by Lemma \ref{bigbiratisstabdom}.
We will get two new $\CC^1$-varieties, $\Spec \CC[\stdflatmorphism,t_0,t_1,t_2]$ and $\bigtotalspace\times_{\coefficientspace} \CC^1$ (denote 
$\bigtotalspace\times_{\coefficientspace} \CC^1=S$) and a morphism $\Spec \CC[\stdflatmorphism,t_0,t_1,t_2]\to S$ 
(denote it by $\stdbirationalmap$). 
By Lemma \ref{basechangestabdom}, this morphism is a stably dominant morphism of $\CC^1$-varieties.
Since $S$ is a $\CC^1$-variety, we have a morphism $S\to \CC^1$, which we will denote in Section \ref{ksmtvar} by $\stdflatmorphism$, because 
it computes the coordinate on $\CC^1$, which is $\stdflatmorphism$. In the subsequent sections, where the indices $j$ and $k$ will not be fixed anymore, 
we will denote this morphism 
by $\stdflatmorphism_{j,k}$.


The fact that 
$S=\bigtotalspace\times_{\coefficientspace} \CC^1$
means that $\stdflatmorphism\colon S\to \CC^1$ is 
the pullback of the deformation $\bigflatmorphism\colon\bigtotalspace\to\coefficientspace$ to the affine line.
Informally speaking, we restrict the deformation to an affine line (with a fixed coordinate function)
in $\coefficientspace$. 
By Corollary \ref{originfibercorrect}, 
$\stdflatmorphism^{-1}(0)=X$.
We are going to 
reformulate Lemmas \ref{totalalgebragradecomp} and \ref{totalalgebragenerators} and 
describe $\CC[S]$.

For each $\chi\in \sck\cap M$, denote
\begin{multline*}
\overline{\defparpolynomial_\chi}=
\defparpolynomial_\chi\Bigg|_{\substack{\stdpolynomialcoefficient_{j,k}=\stdpolynomialcoefficient_{j,k}^{(1)}+\stdflatmorphism\\
\stdpolynomialcoefficient_{j',k'}=\stdpolynomialcoefficient_{j',k'}^{(1)}\text{ if $j'\ne j$ or $k'\ne k$}}}
=\\
\Big(t_0^{\primitivepolyhedronmultiplicity{j}}+(\stdpolynomialcoefficient_{j,k}^{(1)}+\stdflatmorphism)t_0^k
+\sum_{\substack{0\le k'<\primitivepolyhedronmultiplicity{j}\\k'\ne k}}
\stdpolynomialcoefficient_{j,k'}^{(1)}t_0^{k'}\Big)^{-\eval_{\stdprimitivepolyhedronletter_{j}}(\chi)}
\prod_{\substack{1\le j'\le \numberofprimitivepolyhedra\\ j'\ne j}}
\left(t_0^{\primitivepolyhedronmultiplicity{j'}}
+\sum_{k'=0}^{\primitivepolyhedronmultiplicity{j'}-1} 
\stdpolynomialcoefficient_{j',k'}^{(1)}t_0^{k'}\right)^{-\eval_{\stdprimitivepolyhedronletter_{j'}}(\chi)}.
\end{multline*}
These are polynomials in $t_0$ and $\stdflatmorphism$.
\begin{lemma}\label{onedirectionalgebradecomp}
For each $\chi\in\sck\cap M$, 
the $\chi$th graded component of $\CC[S]$
(understood as a subalgebra of $\CC[\stdflatmorphism, t_0,t_1,t_2]$)
is the free 
$\CC[\stdflatmorphism]$-module
generated by 
$$
\overline{\defparpolynomial_\chi} t_1^{\chi_1^*(\chi)}t_2^{\chi_2^*(\chi)}, \overline{\defparpolynomial_\chi} t_1^{\chi_1^*(\chi)}t_2^{\chi_2^*(\chi)} t_0, \ldots, 
\overline{\defparpolynomial_\chi} t_1^{\chi_1^*(\chi)}t_2^{\chi_2^*(\chi)} t_0^{\eval_{\totalpolyhedronletter}(\chi)}.
$$
\end{lemma}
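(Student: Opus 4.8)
The plan is to deduce Lemma \ref{onedirectionalgebradecomp} from the already-established Lemma \ref{totalalgebragradecomp} essentially by functoriality of the base change, without re-running the divisor-theoretic computation from scratch. The variety $S$ is defined as $\bigtotalspace\times_{\coefficientspace}\CC^1$, where the map $\CC^1\to\coefficientspace$ is the affine line through $\stdpolynomialcoefficient^{(1)}$ parallel to the $\stdpolynomialcoefficient_{j,k}$-axis, i.e. it corresponds algebraically to the $\CC$-algebra homomorphism $\CC[\coefficientspace]\to\CC[\stdflatmorphism]$ sending $\stdpolynomialcoefficient_{j,k}\mapsto\stdpolynomialcoefficient_{j,k}^{(1)}+\stdflatmorphism$ and $\stdpolynomialcoefficient_{j',k'}\mapsto\stdpolynomialcoefficient_{j',k'}^{(1)}$ for $(j',k')\ne(j,k)$. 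First I would record the standard fact that base change commutes with taking graded components: since $\CC[S]=\CC[\bigtotalspace]\otimes_{\CC[\coefficientspace]}\CC[\stdflatmorphism]$ and the $M$-grading on $\CC[\bigtotalspace]$ is $\CC[\coefficientspace]$-linear (the variables $\stdpolynomialcoefficient_{i,k}$ have degree zero), the $\chi$th graded component of $\CC[S]$ is $(\CC[\bigtotalspace]_\chi)\otimes_{\CC[\coefficientspace]}\CC[\stdflatmorphism]$.

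Next I would apply Lemma \ref{totalalgebragradecomp}, which states that $\CC[\bigtotalspace]_\chi$ is a \emph{free} $\CC[\coefficientspace]$-module with the explicit basis $\defparpolynomial_\chi t_1^{\chi_1^*(\chi)}t_2^{\chi_2^*(\chi)}t_0^m$ for $0\le m\le\eval_{\totalpolyhedronletter}(\chi)$. Freeness is the crucial hypothesis: tensoring a free module with basis $\{e_m\}$ against $\CC[\stdflatmorphism]$ over $\CC[\coefficientspace]$ yields a free $\CC[\stdflatmorphism]$-module with basis $\{e_m\otimes 1\}$, so no relations can be introduced by the base change. Under the embedding $\CC[S]\hookrightarrow\CC[\stdflatmorphism,t_0,t_1,t_2]$ the element $e_m\otimes 1$ is precisely $\defparpolynomial_\chi t_1^{\chi_1^*(\chi)}t_2^{\chi_2^*(\chi)}t_0^m$ with each $\stdpolynomialcoefficient_{i,k}$ replaced by its image in $\CC[\stdflatmorphism]$, which is exactly the definition of $\overline{\defparpolynomial_\chi}t_1^{\chi_1^*(\chi)}t_2^{\chi_2^*(\chi)}t_0^m$. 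This gives the asserted generating set and the asserted range $0\le m\le\eval_{\totalpolyhedronletter}(\chi)$ verbatim.

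To make the identification of $\CC[S]$ as a \emph{subalgebra} of $\CC[\stdflatmorphism,t_0,t_1,t_2]$ rigorous I would invoke Lemma \ref{bigbiratisstabdom} together with Lemma \ref{basechangestabdom}: the morphism $\coefficientspace\times\Spec\CC[t_0,t_1,t_2]\to\bigtotalspace$ is stably dominant, hence so is its base change $\stdbirationalmap\colon\Spec\CC[\stdflatmorphism,t_0,t_1,t_2]\to S$, and in particular $\stdbirationalmap^*\colon\CC[S]\to\CC[\stdflatmorphism,t_0,t_1,t_2]$ is (universally) injective. This is what lets me view $\CC[S]_\chi$ as sitting inside $\CC[\stdflatmorphism,t_0,t_1,t_2]$ and conclude that the basis elements listed are genuinely distinct and generate the component as claimed; it also identifies the map $\stdbirationalmap^*$ with the substitution $\stdpolynomialcoefficient_{i,k}\mapsto\stdpolynomialcoefficient_{i,k}^{(1)}(+\stdflatmorphism)$ that produces $\overline{\defparpolynomial_\chi}$ out of $\defparpolynomial_\chi$.

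The only genuinely delicate point — and the one I expect to require care rather than difficulty — is confirming that the leading $t_0$-degree bound stays at $\eval_{\totalpolyhedronletter}(\chi)$ after the substitution, i.e. that specializing the coefficients does not lower the $t_0$-degree of the generating expressions. This is safe because in each factor $t_0^{\primitivepolyhedronmultiplicity{i}}+\sum_{k'}\stdpolynomialcoefficient_{i,k'}t_0^{k'}$ the monic leading term $t_0^{\primitivepolyhedronmultiplicity{i}}$ is untouched by any substitution of the $\stdpolynomialcoefficient_{i,k'}$, so $\overline{\defparpolynomial_\chi}$ has the same $t_0$-degree as $\defparpolynomial_\chi$, namely $-\sum_i\primitivepolyhedronmultiplicity{i}\eval_{\stdprimitivepolyhedronletter_i}(\chi)$; combined with the factor $t_0^m$ and Remark \ref{bardeltadecomp} this reproduces the range exactly as in Lemma \ref{totalalgebragradecomp}. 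Thus the whole proof reduces to "freeness is preserved by base change, and base change is the coefficient substitution," which I would write out in a few lines.
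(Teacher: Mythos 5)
Your proposal is correct and follows essentially the same route as the paper: both identify $\CC[S]$ with the base change $\CC[\bigtotalspace]\otimes_{\CC[\coefficientspace]}\CC[\stdflatmorphism]$, use the stable dominance (Lemmas \ref{bigbiratisstabdom}, \ref{basechangestabdom}, \ref{computefiber}) to realize it inside $\CC[\stdflatmorphism,t_0,t_1,t_2]$ as the image under the coefficient substitution, and transport the free basis of Lemma \ref{totalalgebragradecomp} term by term. Your extra remarks on freeness being preserved and on the $t_0$-degree not dropping are fine elaborations of what the paper leaves implicit.
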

\begin{proof}
We are going to use Lemma \ref{computefiber}.
We treat $\CC[S]$ as a subalgebra of $\CC[\stdflatmorphism, t_0,t_1,t_2]$, 
and $\CC[\bigtotalspace]$ 
as a subalgebra of $\CC[\coefficientspace]\otimes \CC[t_0,t_1,t_2]$. 
Then, by Lemma \ref{computefiber}, 
$\CC[S]$ is the image of $\CC[\bigtotalspace]$ under the 
map
$\CC[V]\otimes \CC[t_0,t_1,t_2]\to 
(\CC[V]\otimes \CC[t_0,t_1,t_2])\otimes_{\CC[\coefficientspace]}\CC[\stdflatmorphism]$,
$f\mapsto f\otimes 1_{\CC[\stdflatmorphism]}$ for all $f\in \CC[V]\otimes \CC[t_0,t_1,t_2]$.
By a standard argument for tensor products, this map works as follows:
given a polynomial $f$ in variables 
$$
\stdpolynomialcoefficient_{1,0},\ldots,\stdpolynomialcoefficient_{1,\primitivepolyhedronmultiplicity{1}-1},
\ldots,
\stdpolynomialcoefficient_{\numberofprimitivepolyhedra,0},\ldots,
\stdpolynomialcoefficient_{\numberofprimitivepolyhedra,\primitivepolyhedronmultiplicity{\numberofprimitivepolyhedra}-1},
t_0,t_1,t_2,
$$
one should substitute
$$
\stdpolynomialcoefficient_{1,0}^{(1)},\ldots,
\stdpolynomialcoefficient_{1,\primitivepolyhedronmultiplicity{1}-1}^{(1)},
\ldots,
\stdpolynomialcoefficient_{j,k}^{(1)}+\stdflatmorphism
\ldots,
\stdpolynomialcoefficient_{\numberofprimitivepolyhedra,0}^{(1)},\ldots,
\stdpolynomialcoefficient_{\numberofprimitivepolyhedra,\primitivepolyhedronmultiplicity{\numberofprimitivepolyhedra}-1}^{(1)}
$$
instead of the variables
$$
\stdpolynomialcoefficient_{1,0},\ldots,\stdpolynomialcoefficient_{1,\primitivepolyhedronmultiplicity{1}-1},
\ldots,
\stdpolynomialcoefficient_{\numberofprimitivepolyhedra,0},\ldots,
\stdpolynomialcoefficient_{\numberofprimitivepolyhedra,\primitivepolyhedronmultiplicity{\numberofprimitivepolyhedra}-1},
$$
respectively.
So, the polynomials $\defparpolynomial_\chi t_1^{\chi_1^*(\chi)}t_2^{\chi_2^*(\chi)} t_0^k$
become exactly $\overline{\defparpolynomial_\chi} t_1^{\chi_1^*(\chi)}t_2^{\chi_2^*(\chi)} t_0^k$, 
and the claim follows from Lemma \ref{totalalgebragradecomp}.
\end{proof}

For each $i$, $1\le i\le \numberoflatticegenerators$, 
let us introduce the following notation.
Set
$$
\dependentgeneratorsdegree{i,0}=\overline{\defparpolynomial_{\lambda_i}} t_1^{\chi_1^*(\lambda_i)}t_2^{\chi_2^*(\lambda_i)}, 
\dependentgeneratorsdegree{i,1}=\overline{\defparpolynomial_{\lambda_i}} t_1^{\chi_1^*(\lambda_i)}t_2^{\chi_2^*(\lambda_i)} t_0, \ldots,
\dependentgeneratorsdegree{i,\eval_{\totalpolyhedronletter}(\lambda_i)}=
\overline{\defparpolynomial_{\lambda_i}} t_1^{\chi_1^*(\lambda_i)}t_2^{\chi_2^*(\lambda_i)} t_0^{\eval_{\totalpolyhedronletter}(\lambda_i)}.
$$
Denote the total number of these generators by $\numberoffixedgenerators$.
\begin{lemma}\label{onedirectionalgebragenerators}
$\CC[S]$ can be embedded into the algebra of polynomials in variables 
$\stdflatmorphism, t_0, t_1, t_2$ as the subalgebra
generated by $\stdflatmorphism$ and all $\dependentgeneratorsdegree{i,i'}$, where 
$1\le i\le \numberoflatticegenerators$ and $0\le i'\le \eval_{\totalpolyhedronletter}(\lambda_i)$.
\end{lemma}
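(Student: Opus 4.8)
The plan is to follow the proof of Lemma \ref{totalalgebragenerators} almost verbatim, with the multi-parameter base $\coefficientspace$ replaced by the affine line with coordinate $\stdflatmorphism$. The embedding $\CC[S]\hookrightarrow\CC[\stdflatmorphism,t_0,t_1,t_2]$ is the one already fixed in Lemma \ref{onedirectionalgebradecomp}, and the elements $\stdflatmorphism$ and $\dependentgeneratorsdegree{i,i'}$ all lie in $\CC[S]$: the former is the base coordinate, and the latter are among the free $\CC[\stdflatmorphism]$-module generators exhibited in Lemma \ref{onedirectionalgebradecomp}. Hence the subalgebra they generate is automatically contained in $\CC[S]$, and the entire content of the lemma is the reverse inclusion, that every element of $\CC[S]$ is a polynomial in $\stdflatmorphism$ and the $\dependentgeneratorsdegree{i,i'}$.

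First I would reduce to homogeneous elements. By Lemma \ref{onedirectionalgebradecomp}, for each $\chi\in\sck\cap M$ the $\chi$th graded component of $\CC[S]$ is the free $\CC[\stdflatmorphism]$-module generated by $\overline{\defparpolynomial_\chi}\,t_1^{\chi_1^*(\chi)}t_2^{\chi_2^*(\chi)}t_0^k$ for $0\le k\le\eval_{\totalpolyhedronletter}(\chi)$. Since $\CC[\stdflatmorphism]$ is generated as a $\CC$-algebra by $\stdflatmorphism$, it suffices to show that each such free generator is a monomial in the $\dependentgeneratorsdegree{i,i'}$. So I would fix $\chi$ and $k$, choose a cone $\tau$ of the total normal fan of $\mathcal D$ containing $\chi$, and write $\chi=\lambda_{i_1}+\dots+\lambda_{i_s}$ as a sum of Hilbert basis elements of $\tau$; this is possible because the set $\{\lambda_1,\dots,\lambda_{\numberoflatticegenerators}\}$ was chosen in Chapter \ref{sectcohomformula} to contain the Hilbert basis of every cone of the total normal fan. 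On $\tau$ the individual evaluation functions $\eval_{\stdprimitivepolyhedronletter_i}$ are all linear, which yields the multiplicativity $\overline{\defparpolynomial_{\lambda_{i_1}}}\cdots\overline{\defparpolynomial_{\lambda_{i_s}}}=\overline{\defparpolynomial_\chi}$ exactly as in the proof of Lemma \ref{totalalgebragenerators}; moreover $\chi_1^*,\chi_2^*$ are linear, so $t_1^{\chi_1^*(\chi)}t_2^{\chi_2^*(\chi)}=\prod_r t_1^{\chi_1^*(\lambda_{i_r})}t_2^{\chi_2^*(\lambda_{i_r})}$. Because the total normal fan equals the normal fan of $\totalpolyhedronletter$, the function $\eval_{\totalpolyhedronletter}$ is likewise linear on $\tau$, giving $\eval_{\totalpolyhedronletter}(\chi)=\sum_r\eval_{\totalpolyhedronletter}(\lambda_{i_r})$. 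Consequently the product $\dependentgeneratorsdegree{i_1,k_1}\cdots\dependentgeneratorsdegree{i_s,k_s}$ equals $\overline{\defparpolynomial_\chi}\,t_1^{\chi_1^*(\chi)}t_2^{\chi_2^*(\chi)}t_0^{k_1+\dots+k_s}$ whenever $0\le k_r\le\eval_{\totalpolyhedronletter}(\lambda_{i_r})$.

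It then remains to realize the prescribed exponent: given $0\le k\le\eval_{\totalpolyhedronletter}(\chi)=\sum_r\eval_{\totalpolyhedronletter}(\lambda_{i_r})$, I would pick integers $k_r$ with $0\le k_r\le\eval_{\totalpolyhedronletter}(\lambda_{i_r})$ and $\sum_r k_r=k$ by a greedy choice (each $k_r$ runs over a full interval starting at $0$), which expresses the free generator as the monomial $\dependentgeneratorsdegree{i_1,k_1}\cdots\dependentgeneratorsdegree{i_s,k_s}$ and completes the inclusion.

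I do not expect a serious obstacle, since the argument is a direct transcription of Lemma \ref{totalalgebragenerators}. The only points needing care are that $\overline{\defparpolynomial_\chi}$ is genuinely a polynomial in $t_0,\stdflatmorphism$ rather than a rational function — guaranteed by Remark \ref{primitivenonpos}, which makes the exponents $-\eval_{\stdprimitivepolyhedronletter_i}(\chi)$ nonnegative — and that $\eval_{\totalpolyhedronletter}(\chi)\ge 0$, so that the admissible range of $k$ is nonempty; the latter follows from properness of $\mathcal D$ via the identity $\eval_{\totalpolyhedronletter}(\chi)=\deg\mathcal D(\chi)$. The mildly delicate steps are thus the combinatorial splitting of $k$ and the bookkeeping with evaluation functions on $\tau$, both of which are elementary.
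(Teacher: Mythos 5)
Your proposal is correct and matches the paper's argument: the paper simply notes that $\CC[S]$ is the image of $\CC[\bigtotalspace]$ under the base-change surjection of Lemma \ref{computefiber} and invokes Lemma \ref{totalalgebragenerators}, whereas you re-run that lemma's proof directly on $\CC[S]$ — the same decomposition into Hilbert-basis elements of a cone of the total normal fan, the same multiplicativity of $\overline{\defparpolynomial_\chi}$, and the same splitting of the $t_0$-exponent. Your explicit treatment of the exponent splitting and of the linearity of $\eval_{\totalpolyhedronletter}$ on $\tau$ just spells out details the paper leaves implicit.
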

\begin{proof}
We can use Lemma \ref{computefiber} in the same way as in the proof of the previous Lemma.
Then the claim
follows 
from Lemma 
\ref{totalalgebragenerators}.
\end{proof}

In other words, $S$ is now embedded into an $(\numberoffixedgenerators+1)$-dimensional vector space
with coordinates $\stdflatmorphism$ and $\dependentgeneratorsdegree{i,i'}$, 
and $X$ is the intersection of $S$ and the hyperplane $\stdflatmorphism=0$.

\begin{lemma}\label{xisratparam}
The preconditions of Section \ref{ksmgens} are satisfied for $\stdbirationalmap$, namely:
\begin{enumerate}
\item The first coordinate of a point $\stdbirationalmap(\stdflatmorphism,t_0,t_1,t_2)$
equals $\stdflatmorphism$.
\item The restriction of $\stdbirationalmap$ to the hyperplane $\stdflatmorphism=0$ is a dominant map 
to $X$.
\item The restriction of $\stdbirationalmap$ to the hyperplane $\stdflatmorphism=0$ maps it 
birationally to $X$.
\end{enumerate}
\end{lemma}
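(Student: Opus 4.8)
The plan is to dispatch the three conditions in turn; the first two are formal consequences of the base-change constructions, while condition (3), birationality, carries the real content. For condition (1), I would use that $\stdbirationalmap$ was produced by applying the base change $-\times_{\coefficientspace}\CC^1$ to the morphism $\coefficientspace\times\Spec\CC[t_0,t_1,t_2]\to\bigtotalspace$, so it is tautologically a morphism of $\CC^1$-schemes. Hence $\stdbirationalmap^*$ is a morphism of $\CC[\stdflatmorphism]$-algebras, which forces $\stdbirationalmap^*(\stdflatmorphism)=\stdflatmorphism$; since $\stdflatmorphism$ is precisely the first coordinate of the embedding of $S$ recorded in Lemma \ref{onedirectionalgebragenerators}, the first coordinate of $\stdbirationalmap(\stdflatmorphism,t_0,t_1,t_2)$ is $\stdflatmorphism$. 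In particular $\stdbirationalmap$ carries $\{\stdflatmorphism=0\}$ into $\{\stdflatmorphism=0\}=X$, so $\stdbirationalmap|_{\stdflatmorphism=0}$ is a well-defined morphism to $X$.

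For condition (2), I would invoke that $\coefficientspace\times\Spec\CC[t_0,t_1,t_2]\to\bigtotalspace$ is stably dominant (Lemma \ref{bigbiratisstabdom}) and that stable dominance is preserved under base change (Lemma \ref{basechangestabdom}). Base-changing directly along the closed embedding $\{\stdpolynomialcoefficient^{(1)}\}\hookrightarrow\coefficientspace$ (equivalently, first along the line through $\stdpolynomialcoefficient^{(1)}$ and then along the point $\{\stdflatmorphism=0\}$) shows that $\stdbirationalmap|_{\stdflatmorphism=0}$ is stably dominant; as a universally injective pullback is in particular injective, the morphism $\stdbirationalmap|_{\stdflatmorphism=0}\colon\Spec\CC[t_0,t_1,t_2]\to X$ is dominant.

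Condition (3) is the main point, and the strategy is to pass to function fields. By condition (2) there is an induced embedding $\CC(X)\hookrightarrow\CC(t_0,t_1,t_2)$, and birationality amounts to surjectivity of this embedding. Using Lemma \ref{computefiber} together with the explicit identification of the fiber over $\stdpolynomialcoefficient^{(1)}$ with $X$ (Lemma \ref{fiberoflargedeformation} and Corollary \ref{originfibercorrect}), I would identify $(\stdbirationalmap|_{\stdflatmorphism=0})^*$ with the inclusion $\CC[X]=\CC[S]/\stdflatmorphism\CC[S]\hookrightarrow\CC[t_0,t_1,t_2]$ sending each homogeneous generator $\dependentgeneratorsdegree{i,i'}$ to $\overline{\defparpolynomial_{\lambda_i}}|_{\stdflatmorphism=0}\,t_0^{i'}t_1^{\chi_1^*(\lambda_i)}t_2^{\chi_2^*(\lambda_i)}$, where $\overline{\defparpolynomial_{\lambda_i}}|_{\stdflatmorphism=0}$ is a nonzero rational function of $t_0$ alone. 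I would then recover the coordinates from the image field: choosing an interior degree $\lambda_i$ with $\eval_{\totalpolyhedronletter}(\lambda_i)\ge 1$ (which exists by bigness, cf.\ Lemma \ref{inftydivisornonneg}), the ratio of the images of $\dependentgeneratorsdegree{i,1}$ and $\dependentgeneratorsdegree{i,0}$ equals $t_0$, so $t_0$ lies in the image field; dividing the image of $\dependentgeneratorsdegree{i,0}$ by the now-available function $\overline{\defparpolynomial_{\lambda_i}}|_{\stdflatmorphism=0}$ yields the monomial $t_1^{\chi_1^*(\lambda_i)}t_2^{\chi_2^*(\lambda_i)}$.

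Finally I would argue that these monomials generate everything: the chosen degrees $\lambda_i$ contain the Hilbert basis of $\sck\cap M$, and since $\sck$ is full-dimensional the semigroup $\sck\cap M$ generates $M$ as a group, so the exponent vectors $(\chi_1^*(\lambda_i),\chi_2^*(\lambda_i))$ generate $\ZZ^2$ in the basis $\chi_1,\chi_2$. Consequently $t_1$ and $t_2$ are themselves products and ratios of the recovered monomials, the image field is all of $\CC(t_0,t_1,t_2)$, and $\stdbirationalmap|_{\stdflatmorphism=0}$ is birational. The main obstacle I anticipate is keeping this function-field bookkeeping honest: verifying that $t_0$ is genuinely recoverable (which needs a degree with $\eval_{\totalpolyhedronletter}\ge 1$) and that the \emph{full} lattice $M$, not merely a finite-index sublattice, is generated — exactly the step where full-dimensionality of $\sck$ is essential.
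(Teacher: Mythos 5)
Your proposal is correct and follows essentially the same route as the paper: conditions (1) and (2) are dispatched formally via the base-change lemmas, and condition (3) is proved by recovering $t_0$ as the ratio of two sections of an interior degree and then extracting the monomials $t_1^{\chi_1^*(\lambda_i)}t_2^{\chi_2^*(\lambda_i)}$. The only (harmless) divergence is at the very end: the paper picks two degrees $\lambda_i,\lambda_{i'}$ that form a lattice basis of $M$ (guaranteed by the extra conditions imposed on the set $\{\lambda_j\}$), whereas you argue that the exponent vectors of all the $\lambda_i$ generate $M$ as a group because they contain a Hilbert basis of the full-dimensional cone $\sck$ — both yield $t_1,t_2\in\CC(X)$.
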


\begin{proof}
The first claim is clear. Since $\stdbirationalmap$ is stably dominant, 
the second claim follows from Lemma \ref{basechangestabdom} 
(applied to the change of base from $\CC^1$ to the point $\stdflatmorphism=0$ in $\CC^1$).

For the last claim, let us suppose that $\stdflatmorphism=0$
and express $t_0$, $t_1$, and $t_2$ as rational functions 
on $X=S\cap \stdflatmorphism^{-1}(0)$.

First, let us express $t_0$. Choose a degree $\chi$ in the interior of $\sck$.
Since $\mathcal D$ is an integral proper polyhedral divisor, 
$\dim \Gamma(\PP^1,\OO(\mathcal D(\chi)))\ge 2$, so $\eval_{\totalpolyhedronletter}(\lambda_i)\ge 1$.
Then 
$$
\overline{\defparpolynomial_\chi} t_1^{\chi_1^*(\chi)}t_2^{\chi_2^*(\chi)}, \overline{\defparpolynomial_\chi} t_1^{\chi_1^*(\chi)}t_2^{\chi_2^*(\chi)} t_0\in \CC[S],
$$
and $\overline{\defparpolynomial_\chi}|_{\stdflatmorphism=0} t_1^{\chi_1^*(\chi)}t_2^{\chi_2^*(\chi)}$ 
and
$\overline{\defparpolynomial_\chi}|_{\stdflatmorphism=0} t_1^{\chi_1^*(\chi)}t_2^{\chi_2^*(\chi)} t_0$ are nonzero 
functions on $X$ since they are nonzero polynomials in $\CC[t_0,t_1,t_2]$.
Their ratio is a rational function on $X$, and it equals $t_0$.

After we have an expression for $t_0$, take two degrees $\lambda_i$ and $\lambda_{i'}$ that form a basis of $M$
(such degrees exist by the definition of the set $\{\lambda_1,\ldots,\lambda_{\numberoflatticegenerators}\}$).
Write $\lambda_i=b_3\chi_1+b_4\chi_2$ and $\lambda_{i'}=b_5\chi_1+b_6\chi_2$.
Then $t_1^{b_3}t_2^{b_4}=(\dependentgeneratorsdegree{i,0}|_{\stdflatmorphism=0})/(\overline{\defparpolynomial_{\lambda_i}}|_{\stdflatmorphism=0})$, 
and $\overline{\defparpolynomial_{\lambda_i}}|_{\stdflatmorphism=0}$ is a nonzero function on $X$ since it is a nonzero element of 
$\CC[t_0,t_1,t_2]$.
Similarly, 
$t_1^{b_5}t_2^{b_6}=(\dependentgeneratorsdegree{i',0}|_{\stdflatmorphism=0})/(\overline{\defparpolynomial_{\lambda_{i'}}}|_{\stdflatmorphism=0})$.
So we have rational expressions for 
$t_1^{b_3}t_2^{b_4}$ and $t_1^{b_5}t_2^{b_6}$, and, 
since $\lambda_i$ and $\lambda_{i'}$ form a basis of $M$, 
we can also get rational expressions for $t_1$ and $t_2$ on $X$.
\end{proof}

Now we can apply the results of Subsection \ref{ksmgens}.

\begin{lemma}\label{speedfieldcomputation}
For each $i$ ($1\le i\le \numberoflatticegenerators$) and 
for each $i'$ ($1\le i'\le \eval_{\totalpolyhedronletter}(\lambda_i)$),
the $(i,i')$th coordinate of the field of deformation speeds 
(i.~e. the coordinate in front of $\partial/\partial (\dependentgeneratorsdegree{i,i'}|_{\stdflatmorphism=0})$)
equals
$$
\frac{-\eval_{\stdprimitivepolyhedronletter_j}(\lambda_i)t_0^k}
{t_0^{\primitivepolyhedronmultiplicity{j}}
+\sum_{k'=0}^{\primitivepolyhedronmultiplicity{j}-1}
\stdpolynomialcoefficient_{j,k'}^{(1)}t_0^{k'}}\dependentgeneratorsdegree{i,i'}|_{\stdflatmorphism=0}.
$$
\end{lemma}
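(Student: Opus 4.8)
The plan is to unwind the definition of the field of deformation speeds from Section~\ref{ksmgens} and reduce the statement to a single logarithmic-derivative computation. First I would record, via Lemma~\ref{onedirectionalgebragenerators}, that the stably dominant morphism $\stdbirationalmap\colon\Spec\CC[\stdflatmorphism,t_0,t_1,t_2]\to S$ is given in coordinates by $\stdflatmorphism\mapsto\stdflatmorphism$ together with the polynomials $\dependentgeneratorsdegree{i,i'}=\overline{\defparpolynomial_{\lambda_i}}\,t_1^{\chi_1^*(\lambda_i)}t_2^{\chi_2^*(\lambda_i)}t_0^{i'}$, viewed as polynomials in $\stdflatmorphism,t_0,t_1,t_2$. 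By Lemma~\ref{xisratparam} the restriction of $\stdbirationalmap$ to $\{\stdflatmorphism=0\}$ is birational onto $X$, so the preconditions of Section~\ref{ksmgens} are satisfied and $t_0,t_1,t_2$ become rational functions on $X$. By the definition of $\overline v$ given after Proposition~\ref{mushift}, its coordinate in front of $\vfield{(\dependentgeneratorsdegree{i,i'}|_{\stdflatmorphism=0})}$ at a point $x$ equals $\vfield{\stdflatmorphism}(\dependentgeneratorsdegree{i,i'}\circ\stdbirationalmap)$ evaluated at $(\stdbirationalmap|_{\stdflatmorphism=0})^{-1}(x)$. Since $\dependentgeneratorsdegree{i,i'}\circ\stdbirationalmap$ is literally the polynomial $\dependentgeneratorsdegree{i,i'}$ and the preimage lies on $\{\stdflatmorphism=0\}$, this coordinate is the rational function $\tfrac{\partial\dependentgeneratorsdegree{i,i'}}{\partial\stdflatmorphism}\big|_{\stdflatmorphism=0}$, transported to $X$ through the birational identification.

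Next I would factor $\dependentgeneratorsdegree{i,i'}=\overline{\defparpolynomial_{\lambda_i}}\cdot\bigl(t_1^{\chi_1^*(\lambda_i)}t_2^{\chi_2^*(\lambda_i)}t_0^{i'}\bigr)$ and note that the second factor does not involve $\stdflatmorphism$, so only $\overline{\defparpolynomial_{\lambda_i}}$ contributes to the $\stdflatmorphism$-derivative. In the product defining $\overline{\defparpolynomial_{\lambda_i}}$ only the $j$th factor depends on $\stdflatmorphism$; writing its base as $Q_j=t_0^{\primitivepolyhedronmultiplicity{j}}+(\stdpolynomialcoefficient_{j,k}^{(1)}+\stdflatmorphism)t_0^k+\sum_{k'\neq k}\stdpolynomialcoefficient_{j,k'}^{(1)}t_0^{k'}$ and its exponent as $-\eval_{\stdprimitivepolyhedronletter_j}(\lambda_i)$, and using $\partial Q_j/\partial\stdflatmorphism=t_0^k$, logarithmic differentiation gives $\tfrac{\partial\overline{\defparpolynomial_{\lambda_i}}}{\partial\stdflatmorphism}=\overline{\defparpolynomial_{\lambda_i}}\cdot\tfrac{-\eval_{\stdprimitivepolyhedronletter_j}(\lambda_i)\,t_0^k}{Q_j}$. (Note this is consistent even when $\eval_{\stdprimitivepolyhedronletter_j}(\lambda_i)=0$, where the $j$th factor is constant in $\stdflatmorphism$ and both sides vanish.)

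Finally, setting $\stdflatmorphism=0$ turns $Q_j$ into $t_0^{\primitivepolyhedronmultiplicity{j}}+\sum_{k'=0}^{\primitivepolyhedronmultiplicity{j}-1}\stdpolynomialcoefficient_{j,k'}^{(1)}t_0^{k'}$ and recombines $\overline{\defparpolynomial_{\lambda_i}}|_{\stdflatmorphism=0}\,t_1^{\chi_1^*(\lambda_i)}t_2^{\chi_2^*(\lambda_i)}t_0^{i'}$ back into $\dependentgeneratorsdegree{i,i'}|_{\stdflatmorphism=0}$, which yields the asserted formula. The computation itself is routine; the only point requiring genuine care is the bookkeeping in the first paragraph, namely confirming that the coordinate of $\overline v$ really is the componentwise $\stdflatmorphism$-derivative of the generator pulled back through $\stdbirationalmap$ and then evaluated at the preimage on $\{\stdflatmorphism=0\}$ and re-expressed via $t_0,t_1,t_2$ as a rational function on $X$, so that no Jacobian of $\stdbirationalmap$ intervenes. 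This is exactly what the setup of Section~\ref{ksmgens} (together with Lemmas~\ref{onedirectionalgebragenerators} and~\ref{xisratparam}) provides, so I expect the justification to be short once that identification is spelled out.
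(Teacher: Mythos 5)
Your proposal is correct and is essentially the paper's own argument: the paper likewise observes that only $\overline{\defparpolynomial_{\lambda_i}}$ depends on $\stdflatmorphism$, computes $\bigl(\partial\overline{\defparpolynomial_{\lambda_i}}/\partial\stdflatmorphism\bigr)|_{\stdflatmorphism=0}$ by (implicitly) the same logarithmic-derivative step, and notes that the monomial $t_0^{i'}t_1^{\chi_1^*(\lambda_i)}t_2^{\chi_2^*(\lambda_i)}$ just carries through. You have merely spelled out the bookkeeping identifying the coordinate of $\overline v$ with that partial derivative, which the paper leaves implicit.
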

\begin{proof}
A direct computation of 
$$
\left(\frac{\partial}{\partial\stdflatmorphism}\overline{\defparpolynomial_{\lambda_i}}\right)|_{\stdflatmorphism=0}
$$
proves this. The powers of $t_0$, $t_1$, and $t_2$ do not depend on $\stdflatmorphism$ in $\CC[\stdflatmorphism,t_0,t_1,t_2]$, 
so multiplication by these powers multiplies the derivative by the same powers.
\end{proof}

Denote this field of deformation speeds by $\stdvectorfiledxletter$. Recall that we have a rational map 
$\pi\colon X\to \PP^1$, which is defined on an open set of $X$, which we have denoted by $U_0$.
By Lemma \ref{xisratparam}, $t_0$ can be considered as a rational function on $X$.
Also recall that we have a coordinate function $t$ on $\PP^1$.

\begin{lemma}\label{ttocorrespondence}
$t_0$ is defined on $U_0\setminus \pi^{-1}(t=\infty)$, and, if $x\in U_0\setminus \pi^{-1}(t=\infty)$, 
then $t(\pi(x))=t_0(x)$.
If $x\in U_0\cap \pi^{-1}(\infty)$, then $1/t_0$ is defined at $x$, and $(1/t_0)(x)=0$.
\end{lemma}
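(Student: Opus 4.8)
The plan is to express $t_0$ as a ratio $\widetilde f/\widetilde g$ of two global functions of the same degree on $X$ whose associated rational functions on $\PP^1$ satisfy $\overline f/\overline g=t$, after which all three assertions will follow at once from Proposition \ref{quotmorph}.

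First I would recall the explicit presentation of $t_0$ obtained inside the proof of Lemma \ref{xisratparam}. Fix a degree $\chi$ in the interior of $\sck$; since $\mathcal D$ is an integral proper polyhedral divisor one has $\eval_{\totalpolyhedronletter}(\chi)\ge 1$, so both global functions $\overline{\defparpolynomial_\chi}|_{\stdflatmorphism=0}\,t_1^{\chi_1^*(\chi)}t_2^{\chi_2^*(\chi)}$ and $\overline{\defparpolynomial_\chi}|_{\stdflatmorphism=0}\,t_1^{\chi_1^*(\chi)}t_2^{\chi_2^*(\chi)}t_0$ exist on $X$, and $t_0$ is their ratio as a rational function on $X$. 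Denote the first of these by $\widetilde g$ and the second by $\widetilde f$, both homogeneous of degree $\chi$, so that $t_0=\widetilde f/\widetilde g$. By Corollary \ref{originfibercorrect} the fiber over $\stdflatmorphism=0$ is the original variety $X$, so Lemma \ref{fiberoflargedeformation}, applied with $\stdpolynomialcoefficient^{(0)}=\stdpolynomialcoefficient^{(1)}$ (which recovers $\mathcal D$), identifies the corresponding rational functions on $\PP^1$ as $\overline g=\defparpolynomial_\chi|_{t_0=t,\,\stdpolynomialcoefficient=\stdpolynomialcoefficient^{(1)}}$ and $\overline f=\overline g\cdot t$, both lying in $\Gamma(\PP^1,\OO(\mathcal D(\chi)))$. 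Consequently $\overline f/\overline g=t$ as rational functions on $\PP^1$.

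Next I would apply Proposition \ref{quotmorph} to the pair $\widetilde f,\widetilde g$. For $x\in U_0$ (where $\pi$ is defined) the proposition states that $\widetilde f/\widetilde g$ is regular at $x$ if and only if $\overline f/\overline g=t$ is defined at $\pi(x)$, and that the two values then coincide. Since the coordinate function $t$ is regular on $\PP^1$ away from the single point $\{t=\infty\}$, it follows that $t_0=\widetilde f/\widetilde g$ is defined at every $x\in U_0$ with $\pi(x)\neq\{t=\infty\}$, i.e. on $U_0\setminus\pi^{-1}(t=\infty)$, and there $t_0(x)=(\overline f/\overline g)(\pi(x))=t(\pi(x))$. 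This yields the first two claims. For the last claim I would run the identical argument with the reciprocal ratio $1/t_0=\widetilde g/\widetilde f$, whose associated rational function on $\PP^1$ is $\overline g/\overline f=1/t$; as $1/t$ is regular at $\{t=\infty\}$ and vanishes there, Proposition \ref{quotmorph} shows that for $x\in U_0\cap\pi^{-1}(\infty)$ the function $1/t_0$ is defined at $x$ with value $(1/t)(\infty)=0$.

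The argument reduces to a single application of Proposition \ref{quotmorph}, so there is no genuine analytic obstacle; the only point demanding care is the bookkeeping of the second paragraph, namely correctly identifying $\widetilde f$ and $\widetilde g$ with the restrictions to $\{\stdflatmorphism=0\}$ of the deformation generators and confirming through Lemma \ref{fiberoflargedeformation} that their downstairs quotient is exactly $t$ rather than some other coordinate. Once that identification is in place the three statements are immediate.
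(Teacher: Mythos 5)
Your proposal is correct and follows essentially the same route as the paper: both express $t_0$ as the ratio of the two degree-$\chi$ generators $\overline{\defparpolynomial_\chi}|_{\stdflatmorphism=0} t_1^{\chi_1^*(\chi)}t_2^{\chi_2^*(\chi)} t_0$ and $\overline{\defparpolynomial_\chi}|_{\stdflatmorphism=0} t_1^{\chi_1^*(\chi)}t_2^{\chi_2^*(\chi)}$, identify their images in $\Gamma(\PP^1,\OO(\mathcal D(\chi)))$ via Lemma \ref{fiberoflargedeformation} so that the downstairs quotient is $t$, and then apply Proposition \ref{quotmorph} once for $t_0$ and once for $1/t_0$. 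No gaps.
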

\begin{proof}
Choose an arbitrary degree $\chi$ in the interior of $\sck$.
As we have already seen in the proof of Lemma \ref{xisratparam},
$t_0$ can be expressed as the ratio of two regular functions of degree $\chi$ on $X$, namely,
$$
t_0=\frac{\overline{\defparpolynomial_\chi}|_{\stdflatmorphism=0} t_1^{\chi_1^*(\chi)}t_2^{\chi_2^*(\chi)} t_0}
{\overline{\defparpolynomial_\chi}|_{\stdflatmorphism=0} t_1^{\chi_1^*(\chi)}t_2^{\chi_2^*(\chi)}}.
$$
By Lemma \ref{fiberoflargedeformation}, these generators
of the $\chi$th graded component of $\CC[X]$
are identified with 
$\overline{\defparpolynomial_\chi}|_{\stdflatmorphism=0, t_0=t}t\in\Gamma(\PP^1,\OO(\mathcal D(\chi)))$ 
and 
$\overline{\defparpolynomial_\chi}|_{\stdflatmorphism=0, t_0=t}\in\Gamma(\PP^1,\OO(\mathcal D(\chi)))$, respectively.

Let $x\in U_0$ be a point. By Proposition \ref{quotmorph}, 
$$
t_0=\frac{\overline{\defparpolynomial_\chi}|_{\stdflatmorphism=0} t_1^{\chi_1^*(\chi)}t_2^{\chi_2^*(\chi)} t_0}
{\overline{\defparpolynomial_\chi}|_{\stdflatmorphism=0} t_1^{\chi_1^*(\chi)}t_2^{\chi_2^*(\chi)}}
$$
is defined at $x$ if and only if 
$$
t=\frac{\overline{\defparpolynomial_\chi}|_{\stdflatmorphism=0, t_0=t}t}
{\overline{\defparpolynomial_\chi}|_{\stdflatmorphism=0, t_0=t}}
$$
is defined at 
$\pi(x)$, i.~e. if $t(\pi(x))\ne \infty$.
So, if $t(\pi(x))\ne\infty$, then $t_0$ is defined at $x$,
and in this case Proposition \ref{quotmorph}
also says that $t_0(x)=t(\pi(x))$. 

If $t(\pi(x))=\infty$, then the rational function 
$$
\frac1t=\frac{\overline{\defparpolynomial_\chi}|_{\stdflatmorphism=0, t_0=t}}
{\overline{\defparpolynomial_\chi}|_{\stdflatmorphism=0, t_0=t}t}
$$
is defined at $\pi(x)$, and $(1/t)(\pi(x))=0$. 
By Proposition \ref{quotmorph}, 
$$
\frac1{t_0}=\frac{\overline{\defparpolynomial_\chi}|_{\stdflatmorphism=0} t_1^{\chi_1^*(\chi)}t_2^{\chi_2^*(\chi)}}
{(\overline{\defparpolynomial_\chi}|_{\stdflatmorphism=0} t_1^{\chi_1^*(\chi)}t_2^{\chi_2^*(\chi)} t_0}
$$
is defined at $x$, and $(1/t_0)(x)=0$.
%
%
%
\end{proof}

\begin{lemma}\label{speedfielddefined}
If $x\in U_0$ and $\pi(x)$ is not an essential special point, 
then $\stdvectorfiledxletter$ is defined at $x$.

If $x\in U_0$, $\pi(x)$ is an essential special point, $p=p_{j'}$, and 
$\primitivepolyhedronlocalmultiplicity{j}{j'}=0$
(i.~e. the decomposition of $\stdpolyhedronletter_p$
into a Minkowski sum of polyhedra $\stdprimitivepolyhedronletter_i$
does not contain $\stdprimitivepolyhedronletter_j$), then 
$\stdvectorfiledxletter$ is also defined at $x$.
\end{lemma}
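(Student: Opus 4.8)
The plan is to reduce the entire statement to a regularity question for a single rational function on $\PP^1$ pulled back along $\pi$, using the explicit formula already computed in Lemma \ref{speedfieldcomputation}. By that lemma, for each pair $(i,i')$ the corresponding coordinate of $\stdvectorfiledxletter$ equals
$$
\frac{-\eval_{\stdprimitivepolyhedronletter_j}(\lambda_i)\,t_0^k}{t_0^{\primitivepolyhedronmultiplicity{j}}+\sum_{k'=0}^{\primitivepolyhedronmultiplicity{j}-1}\stdpolynomialcoefficient_{j,k'}^{(1)}t_0^{k'}}\,\dependentgeneratorsdegree{i,i'}|_{\stdflatmorphism=0}.
$$
First I would record that $\dependentgeneratorsdegree{i,i'}|_{\stdflatmorphism=0}$ is a global regular function on $X$ (it is one of the chosen generators of $\CC[X]$), so every pole of this coordinate must come from the rational factor in front. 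Using Lemma \ref{ttocorrespondence}, which identifies $t_0$ with $t\circ\pi$ on $U_0\setminus\pi^{-1}(\infty)$, I would rewrite this factor as $f_2\circ\pi$, where $f_2=-\eval_{\stdprimitivepolyhedronletter_j}(\lambda_i)\,t^k/Q(t)$ is a rational function on $\PP^1$ and $Q(t)=t^{\primitivepolyhedronmultiplicity{j}}+\sum_{k'}\stdpolynomialcoefficient_{j,k'}^{(1)}t^{k'}$. The crucial observation is that $Q$ does not depend on $(i,i')$, so all coordinates share the same pole locus.

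Next I would analyze where $f_2$ is regular on $\PP^1$. By the definition of the numbers $\stdpolynomialcoefficient_{j,k'}^{(1)}$ one has $Q(t)=\prod_{l=1}^{\numberofdivisorpoints}(t-t(p_l))^{\primitivepolyhedronlocalmultiplicity{j}{l}}$, so the finite poles of $f_2$ are exactly at those special points $p_l$ with $\primitivepolyhedronlocalmultiplicity{j}{l}>0$. By Remark \ref{finitepointstrivial}, together with the fact that a trivial special point has $\stdpolyhedronletter_p=\6$ and hence contributes no primitive summand, every such $p_l$ is essential. Since $0\le k\le\primitivepolyhedronmultiplicity{j}-1<\deg Q$, the function $f_2$ is moreover regular (indeed has a zero) at $\infty$. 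Thus $f_2$ is regular at every point of $\PP^1$ except the essential special points $p_l$ with $\primitivepolyhedronlocalmultiplicity{j}{l}>0$.

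Finally I would conclude by a composition argument. In the first case $\pi(x)$ is ordinary, trivial, or $\infty$; in the second case $\pi(x)=p_{j'}$ with $\primitivepolyhedronlocalmultiplicity{j}{j'}=0$. In either case $Q$ does not vanish at $\pi(x)$, so by the previous paragraph $f_2$ is regular at $\pi(x)$. Since $x\in U_0$, the rational map $\pi$ is a morphism in a neighborhood of $x$, whence $f_2\circ\pi$ is regular at $x$; multiplying by the global regular function $\dependentgeneratorsdegree{i,i'}|_{\stdflatmorphism=0}$ keeps it regular at $x$. As this holds for every $(i,i')$, the field $\stdvectorfiledxletter$ is defined at $x$. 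The one place genuinely requiring care is $\pi(x)=\infty$, where $t_0$ itself is undefined: there I would instead express $f_2$ as a rational function in $1/t$ whose denominator does not vanish at $0$, and use that $1/t_0=(1/t)\circ\pi$ is regular at $x$ and vanishes there (Lemma \ref{ttocorrespondence}). The only other step needing a short justification is that $\primitivepolyhedronlocalmultiplicity{j}{l}>0$ forces $p_l$ to be essential, which follows because $\stdprimitivepolyhedronletter_j$ is a genuine two-vertex primitive polyhedron, so its nontrivial bounded edge survives in the Minkowski decomposition of $\stdpolyhedronletter_{p_l}$.
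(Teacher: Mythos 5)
Your proposal is correct and follows essentially the same route as the paper: both reduce to the regularity of the single rational factor $t_0^k/\bigl(t_0^{\primitivepolyhedronmultiplicity{j}}+\sum_{k'}\stdpolynomialcoefficient_{j,k'}^{(1)}t_0^{k'}\bigr)$, identify the zeros of the denominator with the special points $p_l$ having $\primitivepolyhedronlocalmultiplicity{j}{l}>0$ (all essential, by Remark \ref{finitepointstrivial}), and treat the fiber over $t=\infty$ separately by rewriting in powers of $1/t_0$ and invoking Lemma \ref{ttocorrespondence}. No gaps.
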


\begin{proof}
Denote $p=\pi(x)$. 
First, suppose that $t$ is defined at $p$ (in other words, $t(p)\ne \infty$.
Then $t_0$ is defined at $x$ and $t_0(x)=t(p)$.
Recall that if $p$ is a 
removable special point, 
then it must be trivial 
(Remark \ref{finitepointstrivial}).

We chose the numbers $\stdpolynomialcoefficient^{(1)}_{i,i'}$ so that 
the function
$$
t^{\primitivepolyhedronmultiplicity{j}}
+\sum_{k'=0}^{\primitivepolyhedronmultiplicity{j}-1}
\stdpolynomialcoefficient_{j,k'}^{(1)}t^{k'}
$$
only has zeros at special points.
More precisely, at a special point $p_{j'}$
this function has a zero of order 
$\primitivepolyhedronlocalmultiplicity{j}{j'}$, 
where the numbers $\primitivepolyhedronlocalmultiplicity{i}{j'}$
satisfy
$\stdpolyhedronletter_p=\sum_i \primitivepolyhedronlocalmultiplicity{i}{j'}\stdprimitivepolyhedronletter_i$ 
But if $p=p_{j'}$ is a \emph{trivial}
special point ($j'\ne \numberofdivisorpoints$), then $\primitivepolyhedronlocalmultiplicity{i}{j'}=0$ 
for all $i$.
So, if $p$ is either a trivial special point, or an essential
special point such that $\primitivepolyhedronlocalmultiplicity{j}{j'}$
still equals zero, then the function 
$$
t^{\primitivepolyhedronmultiplicity{j}}
+\sum_{k'=0}^{\primitivepolyhedronmultiplicity{j}-1}
\stdpolynomialcoefficient_{j,k'}^{(1)}t^{k'}
$$
has zero of order 0, i.~e. does not have a zero at all, at $p$.

In other words, if $t(p)\ne\infty$,
then 
$$t(p)^{\primitivepolyhedronmultiplicity{j}}
+\sum_{k'=0}^{\primitivepolyhedronmultiplicity{j}-1}
\stdpolynomialcoefficient_{j,k'}^{(1)}t(p)^{k'}\ne 0.
$$
But then 
$$
t_0(x)^{\primitivepolyhedronmultiplicity{j}}
+\sum_{k'=0}^{\primitivepolyhedronmultiplicity{j}-1}
\stdpolynomialcoefficient_{j,k'}^{(1)}t_0(x)^{k'}\ne 0,
$$
and the rational function
$$
\frac{t_0^k}
{t_0^{\primitivepolyhedronmultiplicity{j}}
+\sum_{k'=0}^{\primitivepolyhedronmultiplicity{j}-1}
\stdpolynomialcoefficient_{j,k'}^{(1)}t_0^{k'}}
$$
is defined at $x$.

Now suppose that $t$ is not defined at $p$, or, informally speaking, $t(p)=\infty$.
We can write
$$
\frac{t_0^k}
{t_0^{\primitivepolyhedronmultiplicity{j}}
+\sum_{k'=0}^{\primitivepolyhedronmultiplicity{j}-1}
\stdpolynomialcoefficient_{j,k'}^{(1)}t_0^{k'}}
=
\frac1{t_0^{\primitivepolyhedronmultiplicity{j}-k}}
\frac1{1+\sum_{k'=0}^{\primitivepolyhedronmultiplicity{j}-1}
\stdpolynomialcoefficient_{j,k'}^{(1)}(1/t_0)^{\primitivepolyhedronmultiplicity{j}-k'}}
$$
By Lemma \ref{ttocorrespondence}, the rational function $1/t_0$ is defined at $x$, and $(1/t_0)(x)=0$.
Since $0\le k < \primitivepolyhedronmultiplicity{j}$, 
the rational function 
$$
\frac{t_0^k}
{t_0^{\primitivepolyhedronmultiplicity{j}}
+\sum_{k'=0}^{\primitivepolyhedronmultiplicity{j}-1}
\stdpolynomialcoefficient_{j,k'}^{(1)}t_0^{k'}}
$$
is also defined (and takes value 0) at $x$.
\end{proof}

We need to construct some tangent vector fields on $X$ (i.~e. sections of $\Theta_X$).
Let $f$ be a linear function on $M$ with values in $\QQ$. In other words, let $f$ be a point of $N_\QQ$.
Recall that $X$ is embedded into an $\numberoffixedgenerators$-dimensional 
vector space $\CC^{\numberoffixedgenerators}$ 
with coordinates $\dependentgeneratorsdegree{i,i'}|_{\stdflatmorphism=0}$.
Consider the following section of $\Theta_{\CC^{\numberoffixedgenerators}}|_{X}$
and denote it by $\stdvectorfiledxletter'_f$: 
$$
\stdvectorfiledxletter'_f=\sum_{i=1}^{\numberoflatticegenerators}\sum_{i'=0}^{\eval_{\totalpolyhedronletter}(\lambda_i)}
f(\lambda_i)\dependentgeneratorsdegree{i,i'}|_{\stdflatmorphism=0}\frac\partial{\partial \dependentgeneratorsdegree{i,i'}|_{\stdflatmorphism=0}}.
$$
\begin{lemma}\label{ksmtangentcorrections}
In fact, $\stdvectorfiledxletter'_f$ consists of vectors tangent to $X$, 
i.~e. $\stdvectorfiledxletter'_f\in\Gamma(X,\Theta_X)$.
\end{lemma}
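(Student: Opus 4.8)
The plan is to recognize $\stdvectorfiledxletter'_f$ as nothing but the infinitesimal generator of the $T$-action on $X$ in the direction $f\in N_\QQ$, i.e. the \emph{grading (Euler) derivation} attached to $f$, and then to check tangency via the criterion that the corresponding derivation of the ambient polynomial ring preserves the defining ideal of $X$. Concretely, $X$ is embedded into $\CC^{\numberoffixedgenerators}=\Spec\CC[\dependentgeneratorsdegree{i,i'}|_{\stdflatmorphism=0}]$, so a section of $\Theta_{\CC^{\numberoffixedgenerators}}|_X$ is tangent to $X$ precisely when the derivation $\sum_{i,i'}f(\lambda_i)\dependentgeneratorsdegree{i,i'}\,\partial/\partial\dependentgeneratorsdegree{i,i'}$ of the polynomial ring maps the ideal $I$ of $X$ into itself. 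Thus the whole statement reduces to showing that this derivation preserves $I$.

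The key step is to identify this ambient derivation with the grading derivation and exploit homogeneity. By Lemma \ref{onedirectionalgebragenerators}, each generator equals $\dependentgeneratorsdegree{i,i'}=\overline{\defparpolynomial_{\lambda_i}}\,t_1^{\chi_1^*(\lambda_i)}t_2^{\chi_2^*(\lambda_i)}t_0^{i'}$, and since $\overline{\defparpolynomial_{\lambda_i}}$ and $t_0$ carry degree $0$ while $\deg t_1=\chi_1$, $\deg t_2=\chi_2$, the restriction $\dependentgeneratorsdegree{i,i'}|_{\stdflatmorphism=0}$ is a homogeneous function on $X$ of degree $\chi_1^*(\lambda_i)\chi_1+\chi_2^*(\lambda_i)\chi_2=\lambda_i$. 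Endowing the polynomial ring with the induced $M$-grading ($\deg\dependentgeneratorsdegree{i,i'}=\lambda_i$), the derivation $\widetilde D_f=\sum_{i,i'}f(\lambda_i)\dependentgeneratorsdegree{i,i'}\,\partial/\partial\dependentgeneratorsdegree{i,i'}$ sends any homogeneous polynomial $g$ of degree $\chi$ to $f(\chi)\,g$: on a monomial $\prod\dependentgeneratorsdegree{i,i'}^{a_{i,i'}}$ of degree $\chi=\sum a_{i,i'}\lambda_i$ it acts by multiplication by $\sum a_{i,i'}f(\lambda_i)=f(\chi)$, and one checks the Leibniz rule the same way. Hence $\widetilde D_f$ acts as a scalar on each graded component of the polynomial ring, so it preserves every homogeneous subspace.

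As discussed in Section \ref{deformsreal}, because all chosen generators of $\CC[X]$ are homogeneous the ideal $I$ is $M$-graded; writing $g=\sum_\chi g_\chi\in I$ with each $g_\chi\in I$ gives $\widetilde D_f(g)=\sum_\chi f(\chi)g_\chi\in I$, so $\widetilde D_f(I)\subseteq I$ and $\stdvectorfiledxletter'_f$ descends to a genuine derivation of $\CC[X]$, i.e. $\stdvectorfiledxletter'_f\in\Gamma(X,\Theta_X)$. I do not expect a serious obstacle here: the only points requiring care are bookkeeping ones — confirming the degree of each generator is exactly $\lambda_i$ and that the grading on $\CC[X]$ extends to the ambient ring as in Section \ref{deformsreal} — together with the harmless observation that the scalars $f(\lambda_i)$ may be rational (or complex) rather than integral, which is irrelevant since we work with $\CC$-linear derivations. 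Geometrically this just reflects the fact that $\stdvectorfiledxletter'_f$ is the velocity field of the one-parameter (sub)group of $T$ determined by $f$, which is automatically tangent to the $T$-invariant variety $X$.
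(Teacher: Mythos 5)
Your argument is correct, but it is not the route the paper takes. The paper verifies tangency by checking it on a dense open subset of the (irreducible) variety $X$: it takes the vector field $\stdvectorfiledpletter=f(\chi_1)t_1\,\partial/\partial t_1+f(\chi_2)t_2\,\partial/\partial t_2$ on $\Spec\CC[t_0,t_1,t_2]$ and computes, via the chain rule, that $d(\stdbirationalmap|_{\stdflatmorphism=0})$ sends it exactly to $\stdvectorfiledxletter'_f$ on the locus where the birational parametrization is invertible; since $\stdvectorfiledxletter'_f$ is thus a pushforward of a tangent field on a dense open set, it lies in $\Gamma(X,\Theta_X)$. You instead stay entirely in the ambient polynomial ring: you observe that $\deg(\dependentgeneratorsdegree{i,i'}|_{\stdflatmorphism=0})=\lambda_i$, that the Euler-type derivation $\widetilde D_f$ multiplies each $M$-homogeneous polynomial of degree $\chi$ by the scalar $f(\chi)$, and that the ideal $I$ is $M$-graded (as recorded in Section \ref{deformsreal}), whence $\widetilde D_f(I)\subseteq I$ and the derivation descends to $\CC[X]$ — which is precisely the kernel criterion for $\psi$ spelled out in the proof of Theorem \ref{schlessgen}. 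Your proof buys generality and independence from the map $\stdbirationalmap$: it works for any homogeneous generating set of any $M$-graded affine algebra and needs no irreducibility or dense-open reduction. The paper's computation buys consistency with the surrounding machinery: the same parametrization $\stdbirationalmap$ and the same ``check on the locus where $(\stdbirationalmap|_{\stdflatmorphism=0})^{-1}$ is defined'' device are used for the field of deformation speeds in Lemmas \ref{speedfieldcomputation}--\ref{ksmregularrepresentatives}, so the explicit pushforward identity is reused there, whereas your argument would establish tangency without producing that identity. Both proofs are complete; the only point worth making explicit in yours is the (easy) identification of ``tangent to $X$'' with ``the lifted derivation maps $I$ into $I$,'' which the paper's proof of Theorem \ref{schlessgen} already supplies.
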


\begin{proof}
It is sufficient to verify the condition $\stdvectorfiledxletter'_f\in\Gamma(X,\Theta_X)$
on an open subset of $X$. For such an open subset we can use the open set where 
$(\stdbirationalmap|_{\stdflatmorphism=0})^{-1}$
is defined.

So, consider the following vector field on $\Spec\CC[t_0,t_1,t_2]$: 
$\stdvectorfiledpletter=f(\chi_1)t_1\partial/\partial t_1+f(\chi_2)t_2\partial/\partial t_2$.
The differential of $\stdbirationalmap|_{\stdflatmorphism=0}$ maps it to
\begin{multline*}
\!\!\!\sum_{i=1}^{\numberoflatticegenerators}\sum_{k'=0}^{\eval_{\totalpolyhedronletter}(\lambda_i)}
\left(\left(
t_1f(\chi_1)\frac{\overline{\partial\defparpolynomial_{\lambda_i}}|_{\stdflatmorphism=0}t_0^{k'}t_1^{\chi_1^*(\lambda_i)}t_2^{\chi_2^*(\lambda_i)}}{\partial t_1}
+
t_2f(\chi_2)\frac{\overline{\partial\defparpolynomial_{\lambda_i}}|_{\stdflatmorphism=0}t_0^{k'}t_1^{\chi_1^*(\lambda_i)}t_2^{\chi_2^*(\lambda_i)}}{\partial t_2}
\right)\frac{\partial}{\partial\dependentgeneratorsdegree{i,k'}}\right)=\\
\sum_{i=1}^{\numberoflatticegenerators}
\left(
(f(\chi_1)\chi_1^*(\lambda_i)+f(\chi_2)\chi_2^*(\lambda_i))
\sum_{k'=0}^{\eval_{\totalpolyhedronletter}(\lambda_i)}
\overline{\defparpolynomial_{\lambda_i}}|_{\stdflatmorphism=0}t_0^{k'}t_1^{\chi_1^*(\lambda_i)}t_2^{\chi_2^*(\lambda_i)}
\frac{\partial}{\partial\dependentgeneratorsdegree{i,k'}}
\right)=\stdvectorfiledxletter'_f.
\end{multline*}
\end{proof}

Now recall that we have a sufficient system $\{U_i\}$ of $X$. We have $\numberoffixedopensets$ of these sets, 
and each set $U_i$, except $U_{\numberoffixedopensets}$, corresponds to a pair 
$(p,j')$, where $p\in \PP^1$ is a special point, and $1\le j'\le \numberofverticespt{p}$. 
Sometimes we have two open sets $U_i$ corresponding to one such pair, this happens if 
and only if $p$ is removable special point and 
$\deg\mathcal D(\sckboundarybasis{0})>0$ and $\deg\mathcal D(\sckboundarybasis{1})>0$.
We have $U_{\numberoffixedopensets}\subseteq U_i$ for $1\le i<\numberoffixedopensets$.
The union $\cup_{i=1}^{\numberoffixedopensets-1}U_i$ was denoted by $U$, and $U\subseteq U_0$.
$U$ is smooth, and $\codim_X(X\setminus U)\ge 2$.
We also have an affine covering of $\PP^1$, which consists of 
the sets $W_p=W\cup \{p\}$ for all special points $p$, where
$W$ is the set of all ordinary points.

We are going to define tangent vector fields $\stdvectorfiledxletter_i$ (one for each set $U_i$) defined on some
open subsets of $X$
so that 
$\stdvectorfiledxletter-\stdvectorfiledxletter_i\in \Gamma(U_i,\Theta_{\CC^{\numberoffixedgenerators}}|_{X})$ 
for each $i$ ($1\le i\le \numberoffixedopensets$). Note that $U_{\numberoffixedopensets}\subseteq \pi^{-1}(W)$, 
so, by Lemma \ref{speedfielddefined}, $\stdvectorfiledxletter$ is already defined on $U_{\numberoffixedopensets}$, 
and we can (and we will) set $\stdvectorfiledxletter_{\numberoffixedopensets}=0$.

Now suppose that an open set $U_i$ ($1\le i<\numberoffixedopensets$) 
corresponds to a special point $p$ and a vertex $\indexedvertexpt p{j'}$.
Then $U_i\subseteq \pi^{-1}(W_p)$.
If $p$ is a removable special point (including the point with $t(p)=\infty$),
then by Lemma \ref{speedfielddefined}, $\stdvectorfiledxletter$ is defined on $U_i$, 
and we set $\stdvectorfiledxletter_i=0$. We do the same if $p$ is an essential 
special point, $p=p_{j''}$, but $\primitivepolyhedronlocalmultiplicity{j}{j''}=0$.

Finally, let us consider the case when $p$ is an essential special point,
$p=p_{j''}$, and $\primitivepolyhedronlocalmultiplicity{j}{j''}\ne 0$.
This means that the convex piecewise-linear function 
$\eval_{\stdpolyhedronletter_p}\colon \sck\to \QQ$
can be decomposed into a sum of several convex piecewise-linear functions, 
and one of these summands is $\primitivepolyhedronlocalmultiplicity{j}{j''}\eval_{\stdprimitivepolyhedronletter_j}$.
Addition of convex piecewise-linear functions can only split maximal subcones of
linearity into a smaller cones, and $\normalvertexcone{\indexedvertexpt p{j'}}{\stdpolyhedronletter_p}$
is a maximal subcone of linearity of $\eval_{\stdpolyhedronletter_p}$.
Therefore, $\normalvertexcone{\indexedvertexpt p{j'}}{\stdpolyhedronletter_p}$
is a subcone of one of the maximal subcones of linearity of the function 
$\eval_{\stdprimitivepolyhedronletter_j}$.
The function $\eval_{\stdprimitivepolyhedronletter_j}$
has two maximal subcones of linearity, they are the normal 
vertex cones of the two vertices of $\stdprimitivepolyhedronletter_j$, 
$\normalvertexcone{\indexedvertexpt p{j'}}{\stdpolyhedronletter_p}\subseteq 
\normalvertexcone{\indexedvertex {\stdprimitivepolyhedronletter_j}l}{\stdprimitivepolyhedronletter_j}$
for some $l\in\{0,1\}$. Recall that points of $N$, in particular, vertices of $\stdprimitivepolyhedronletter_j$, 
can be considered as functions on $M$, and set 
$$
\stdvectorfiledxletter_i=
\frac{t_0^k}
{t_0^{\primitivepolyhedronmultiplicity{j}}
+\sum_{k'=0}^{\primitivepolyhedronmultiplicity{j}-1}
\stdpolynomialcoefficient_{j,k'}^{(1)}t_0^{k'}}
\stdvectorfiledxletter'_{-\indexedvertex {\stdprimitivepolyhedronletter_j}l}.
$$

\begin{lemma}\label{ksmregularrepresentatives}
For each $i$, $1\le i\le \numberoffixedopensets$ 
we have 
$\stdvectorfiledxletter-\stdvectorfiledxletter_i\in \Gamma(U_i,\Theta_{\CC^{\numberoffixedgenerators}}|_{X})$.
\end{lemma}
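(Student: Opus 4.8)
The plan is to verify the claim one coordinate at a time. Since both $\stdvectorfiledxletter$ and each $\stdvectorfiledxletter_i$ are written explicitly in the coordinate basis $\partial/\partial(\dependentgeneratorsdegree{m,m'}|_{\stdflatmorphism=0})$ of $\Theta_{\CC^{\numberoffixedgenerators}}$, it suffices to show that every coordinate of $\stdvectorfiledxletter-\stdvectorfiledxletter_i$ is a regular function on $U_i$. First I would dispose of all the cases in which $\stdvectorfiledxletter_i=0$ by definition: namely $i=\numberoffixedopensets$, the sets attached to a removable special point, and the sets attached to an essential special point $p=p_{j''}$ with $\primitivepolyhedronlocalmultiplicity{j}{j''}=0$. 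In each of these cases $U_i\subseteq\pi^{-1}(W_p)$ meets no fibre over an essential special point contributing $\stdprimitivepolyhedronletter_j$, so Lemma \ref{speedfielddefined} applies directly and shows that $\stdvectorfiledxletter$ itself is already regular on $U_i$.

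The remaining case is $U_i$ attached to a pair $(p,j')$ with $p=p_{j''}$ essential and $\primitivepolyhedronlocalmultiplicity{j}{j''}\neq 0$, where $\stdvectorfiledxletter_i$ is the product of $\stdvectorfiledxletter'_{-\indexedvertex{\stdprimitivepolyhedronletter_j}{l}}$ with the rational function $t_0^k/(t_0^{\primitivepolyhedronmultiplicity{j}}+\sum_{k'}\stdpolynomialcoefficient_{j,k'}^{(1)}t_0^{k'})$. Combining the formula for $\stdvectorfiledxletter$ from Lemma \ref{speedfieldcomputation} with the definition of $\stdvectorfiledxletter'_f$, I would compute the coordinate of $\stdvectorfiledxletter-\stdvectorfiledxletter_i$ in front of $\partial/\partial(\dependentgeneratorsdegree{m,m'}|_{\stdflatmorphism=0})$ to be
$$
c_m\,\frac{t_0^k}{t_0^{\primitivepolyhedronmultiplicity{j}}+\sum_{k'}\stdpolynomialcoefficient_{j,k'}^{(1)}t_0^{k'}}\,\dependentgeneratorsdegree{m,m'}\big|_{\stdflatmorphism=0},\qquad
c_m=\indexedvertex{\stdprimitivepolyhedronletter_j}{l}(\lambda_m)-\eval_{\stdprimitivepolyhedronletter_j}(\lambda_m).
$$
Since $\eval_{\stdprimitivepolyhedronletter_j}(\lambda_m)$ is the minimum of $\lambda_m$ over the two vertices of $\stdprimitivepolyhedronletter_j$, the constant $c_m$ vanishes exactly when $\lambda_m\in\normalvertexcone{\indexedvertex{\stdprimitivepolyhedronletter_j}{l}}{\stdprimitivepolyhedronletter_j}$, in which case the coordinate is identically zero. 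When $c_m\neq 0$, the minimum is attained at the other vertex, so $c_m=(\indexedvertex{\stdprimitivepolyhedronletter_j}{l}-\indexedvertex{\stdprimitivepolyhedronletter_j}{1-l})(\lambda_m)$ is the value of a primitive lattice vector on $\lambda_m$, hence a positive integer. Using Lemma \ref{ttocorrespondence} to identify $t_0$ with $t\circ\pi$ on $U_i\subseteq\pi^{-1}(W_p)$ (all points of $W_p$ being finite), this coordinate takes the shape $f_2(\pi(x))\widetilde f_1(x)$ with $f_1=\dependentgeneratorsdegree{m,m'}|_{\stdflatmorphism=0}\in\Gamma(\PP^1,\OO(\mathcal D(\lambda_m)))$ and $f_2=c_m\,t^k/\prod_{j''}(t-t(p_{j''}))^{\primitivepolyhedronlocalmultiplicity{j}{j''}}$, so I can invoke Lemma \ref{uiregularity} with $\chi=\lambda_m$.

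To verify the two conditions of Lemma \ref{uiregularity}, condition (1) is immediate: at the ordinary points of $V_i$ the denominator of $f_2$ does not vanish and $\overline{f_1}$ has no poles. The only special point of $V_i$ is $p=p_{j''}$, and a direct order count using the factorization of $\overline{\defparpolynomial_{\lambda_m}}$ and $\ord_p(t^{k+m'})\ge 0$ gives $\ord_p(f_2\overline{f_1})\ge -\mathcal D_p(\lambda_m)-\primitivepolyhedronlocalmultiplicity{j}{j''}$; since $\uidegree{i,1},\uidegree{i,2}\in\normalvertexcone{\indexedvertexpt{p}{j'}}{\stdpolyhedronletter_p}$ one has $\uidegree{i,r}^*(\lambda_m)\mathcal D_p(\uidegree{i,r})$ summing to $\lambda_m(\indexedvertexpt{p}{j'})$, so condition (2) amounts to the inequality $\lambda_m(\indexedvertexpt{p}{j'})-\mathcal D_p(\lambda_m)\ge\primitivepolyhedronlocalmultiplicity{j}{j''}$. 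This combinatorial inequality is the heart of the argument and the step I expect to be the main obstacle. I would prove it from the Minkowski decomposition $\stdpolyhedronletter_{p}=\sum_{j'}\primitivepolyhedronlocalmultiplicity{j'}{j''}\stdprimitivepolyhedronletter_{j'}$: the vertex $\indexedvertexpt{p}{j'}$ is the sum, over all primitive summands with multiplicity, of the vertex whose normal cone contains $\normalvertexcone{\indexedvertexpt{p}{j'}}{\stdpolyhedronletter_p}$, and the defining inclusion $\normalvertexcone{\indexedvertexpt{p}{j'}}{\stdpolyhedronletter_p}\subseteq\normalvertexcone{\indexedvertex{\stdprimitivepolyhedronletter_j}{l}}{\stdprimitivepolyhedronletter_j}$ forces the $\primitivepolyhedronlocalmultiplicity{j}{j''}$ copies of $\stdprimitivepolyhedronletter_j$ to contribute the vertex $\indexedvertex{\stdprimitivepolyhedronletter_j}{l}$. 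Subtracting from this the analogous vertexwise expression for $\mathcal D_p(\lambda_m)=\sum_{j'}\primitivepolyhedronlocalmultiplicity{j'}{j''}\eval_{\stdprimitivepolyhedronletter_{j'}}(\lambda_m)$ leaves $\primitivepolyhedronlocalmultiplicity{j}{j''}\,c_m$ plus nonnegative contributions from the remaining summands, and since $c_m\ge 1$ this is at least $\primitivepolyhedronlocalmultiplicity{j}{j''}$, which closes the argument.
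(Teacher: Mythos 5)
Your proposal is correct and follows essentially the same route as the paper's own proof: dispose of the trivial cases via Lemma \ref{speedfielddefined}, compute the difference coordinate-wise, observe the coefficient vanishes when $\lambda_m$ lies in $\normalvertexcone{\indexedvertex{\stdprimitivepolyhedronletter_j}{l}}{\stdprimitivepolyhedronletter_j}$, and otherwise apply Lemma \ref{uiregularity}, reducing to the inequality $\lambda_m(\indexedvertexpt{p}{j'})-\mathcal D_p(\lambda_m)\ge\primitivepolyhedronlocalmultiplicity{j}{j''}$, which both you and the paper derive from the Minkowski decomposition of $\stdpolyhedronletter_p$ and the fact that the $j$th summand contributes a strict (hence $\ge 1$, by integrality) gap. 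The only differences are cosmetic, such as absorbing the constant $c_m$ into $f_2$ rather than $f_1$.
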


\begin{proof}
The only nontrivial cases we have to consider are the cases when $i$ satisfies the following conditions:
\begin{enumerate}
\item $i<\numberoffixedopensets$, and hence $U_i$ corresponds to a pair $(p,j')$, where $p$ is an essential special point,
and $1\le j'\le \numberofverticespt{p}$.
\item If $p=p_{j''}$, then $\primitivepolyhedronlocalmultiplicity{j}{j''}\ne 0$.
\end{enumerate}
Under these conditions, $\normalvertexcone{\indexedvertexpt p{j'}}{\stdpolyhedronletter_p}$
is contained in some of the cones
$\normalvertexcone{\indexedvertex {\stdprimitivepolyhedronletter_j}l}{\stdprimitivepolyhedronletter_j}$
(for some $l\in\{0,1\}$), 
and
$$
\stdvectorfiledxletter_i=
\frac{t_0^k}
{t_0^{\primitivepolyhedronmultiplicity{j}}
+\sum_{k'=0}^{\primitivepolyhedronmultiplicity{j}-1}
\stdpolynomialcoefficient_{j,k'}^{(1)}t_0^{k'}}
\stdvectorfiledxletter'_{-\indexedvertex {\stdprimitivepolyhedronletter_j}l}.
$$
Then
$$
\stdvectorfiledxletter-\stdvectorfiledxletter_i=
\sum_{i'=1}^{\numberoflatticegenerators}\sum_{i''=0}^{\eval_{\totalpolyhedronletter}(\lambda_{i'})}
\frac{t_0^k}
{t_0^{\primitivepolyhedronmultiplicity{j}}
+\sum_{k'=0}^{\primitivepolyhedronmultiplicity{j}-1}
\stdpolynomialcoefficient_{j,k'}^{(1)}t_0^{k'}}
(-\eval_{\stdprimitivepolyhedronletter_j}(\lambda_{i'})+\lambda_{i'}(\indexedvertex {\stdprimitivepolyhedronletter_j}l))
\dependentgeneratorsdegree{i',i''}|_{\stdflatmorphism=0}\frac\partial{\partial \dependentgeneratorsdegree{i',i''}|_{\stdflatmorphism=0}}.
$$
This section of $\Theta_{\CC^{\numberoffixedgenerators}}|_{X}$ is defined on $U_i$ if and only if each 
function in front of $\partial/\partial(\dependentgeneratorsdegree{i',i''}|_{\stdflatmorphism=0})$
is defined on $U_i$. So, let us fix indices $i'$ ($1\le i'\le \numberoflatticegenerators$) and 
$i''$ ($0\le i'\le \eval_{\totalpolyhedronletter}(\lambda_{i'})$) until the end of the proof
and check that the function
$$
\frac{t_0^k}
{t_0^{\primitivepolyhedronmultiplicity{j}}
+\sum_{k'=0}^{\primitivepolyhedronmultiplicity{j}-1}
\stdpolynomialcoefficient_{j,k'}^{(1)}t_0^{k'}}
(-\eval_{\stdprimitivepolyhedronletter_j}(\lambda_{i'})+\lambda_{i'}(\indexedvertex {\stdprimitivepolyhedronletter_j}l))
\dependentgeneratorsdegree{i',i''}|_{\stdflatmorphism=0}
$$
is defined on $U_i$.
Denote this (a priori rational) function on $U_i$ by $f$.

First, if $\lambda_{i'}\in\normalvertexcone{\indexedvertex {\stdprimitivepolyhedronletter_j}l}{\stdprimitivepolyhedronletter_j}$, 
then $\eval_{\stdprimitivepolyhedronletter_j}(\lambda_{i'})=\lambda_{i'}(\indexedvertex {\stdprimitivepolyhedronletter_j}l)$, 
and 
$$
\frac{t_0^k}
{t_0^{\primitivepolyhedronmultiplicity{j}}
+\sum_{k'=0}^{\primitivepolyhedronmultiplicity{j}-1}
\stdpolynomialcoefficient_{j,k'}^{(1)}t_0^{k'}}
(-\eval_{\stdprimitivepolyhedronletter_j}(\lambda_{i'})+\lambda_{i'}(\indexedvertex {\stdprimitivepolyhedronletter_j}l))
\dependentgeneratorsdegree{i',i''}|_{\stdflatmorphism=0}=0.
$$

Now suppose that $\lambda_{i'}\notin\normalvertexcone{\indexedvertex {\stdprimitivepolyhedronletter_j}l}{\stdprimitivepolyhedronletter_j}$.
We are going to use Lemma \ref{uiregularity}.
By Lemma \ref{ttocorrespondence}, $t_0=t\circ \pi$ as a rational function on $X$, 
so, if we denote 
$$
f_2=
\frac{t^k}
{t^{\primitivepolyhedronmultiplicity{j}}
+\sum_{k'=0}^{\primitivepolyhedronmultiplicity{j}-1}
\stdpolynomialcoefficient_{j,k'}^{(1)}t^{k'}},
$$
then 
$$
f=(f_2\circ \pi)
(-\eval_{\stdprimitivepolyhedronletter_j}(\lambda_{i'})+\lambda_{i'}(\indexedvertex {\stdprimitivepolyhedronletter_j}l))
\dependentgeneratorsdegree{i',i''}|_{\stdflatmorphism=0}
$$
as a rational function on $X$.
Denote by $f_1$ the following section of 
$\OO_{\PP^1}(\mathcal D(\lambda_{i'}))$:
$$
f_1=(-\eval_{\stdprimitivepolyhedronletter_j}(\lambda_{i'})+\lambda_{i'}(\indexedvertex {\stdprimitivepolyhedronletter_j}l))
\overline{\defparpolynomial_{\lambda_{i'}}}|_{\stdflatmorphism=0, t_0=t}t^{i''}
$$
Then, by Lemma \ref{fiberoflargedeformation}, 
$$
(-\eval_{\stdprimitivepolyhedronletter_j}(\lambda_{i'})+\lambda_{i'}(\indexedvertex {\stdprimitivepolyhedronletter_j}l))
\dependentgeneratorsdegree{i',i''}|_{\stdflatmorphism=0}=\widetilde{f_1},
$$
and $f=(f_2\circ \pi)\widetilde{f_1}$.

Let us verify the conditions of Lemma \ref{uiregularity}.
By construction, $\overline{f_1}$ is defined at all points of $\PP^1$ except $t=\infty$, 
in particular, it is defined at all ordinary point.
And the denominator of $f_2$ does not have zeros at ordinary points, so 
$f_2\overline{f_1}$ is regular at all ordinary points, i.~e. at all points of $V_i$ 
except, possibly, $p$.
Recall that 
$$
\overline{\defparpolynomial_{\lambda_{i'}}}|_{\stdflatmorphism=0,t_0=t}
\prod_{1\le j'''\le \numberofprimitivepolyhedra}
\left(t^{\primitivepolyhedronmultiplicity{j'}}
+\sum_{k'=0}^{\primitivepolyhedronmultiplicity{j'}-1} 
\stdpolynomialcoefficient_{j''',k'}^{(1)}t^{k'}\right)^{-\eval_{\stdprimitivepolyhedronletter_{j'''}}(\lambda_{i'})}.
$$
By choice of the coefficients 
$\stdpolynomialcoefficient_{j''',k'}^{(1)}$, 
$$
\ord_p(\overline{\defparpolynomial_{\lambda_{i'}}}|_{\stdflatmorphism=0,t_0=t})=
-\sum_{j'''=0}^{\numberofprimitivepolyhedra}\primitivepolyhedronlocalmultiplicity{j'''}{j''}\eval_{\stdprimitivepolyhedronletter_{j'''}}(\lambda_{i'})=
-\eval_{\stdpolyhedronletter_p}(\lambda_{i'}).
$$
So, $\ord_p(\overline{f_1})\ge -\eval_{\stdpolyhedronletter_p}(\lambda_{i'})$.
For $f_2$, we have 
$$
\ord_p(f_2)\ge \ord_p\left(\frac1{t^{\primitivepolyhedronmultiplicity{j}}
+\sum_{k'=0}^{\primitivepolyhedronmultiplicity{j}-1}
\stdpolynomialcoefficient_{j,k'}^{(1)}t^{k'}}\right)=-\primitivepolyhedronlocalmultiplicity{j}{j''}.
$$

It suffices to prove that 
$$
-\eval_{\stdpolyhedronletter_p}(\lambda_{i'})-\primitivepolyhedronlocalmultiplicity{j}{j''}\ge
-\uidegree{i,1}^*(\lambda_{i'})\mathcal D_p(\uidegree{i,1})
-\uidegree{i,2}^*(\lambda_{i'})\mathcal D_p(\uidegree{i,2}).
$$
By construction of the sets $U_i$, we have 
$\uidegree{i,1},\uidegree{i,2}\in\normalvertexcone{\indexedvertexpt p{j'}}{\stdpolyhedronletter_p}$, 
so $\mathcal D_p(\uidegree{i,1})=\uidegree{i,1}(\indexedvertexpt p{j'})$ and 
$\mathcal D_p(\uidegree{i,2})=\uidegree{i,2}(\indexedvertexpt p{j'})$.
So, 
$$
-\uidegree{i,1}^*(\lambda_{i'})\mathcal D_p(\uidegree{i,1})
-\uidegree{i,2}^*(\lambda_{i'})\mathcal D_p(\uidegree{i,2})=
-\uidegree{i,1}^*(\lambda_{i'})\uidegree{i,1}(\indexedvertexpt p{j'})
-\uidegree{i,2}^*(\lambda_{i'})\uidegree{i,2}(\indexedvertexpt p{j'})=
-\lambda_{i'}(\indexedvertexpt p{j'}),
$$
and it suffices to prove that 
$-\eval_{\stdpolyhedronletter_p}(\lambda_{i'})-\primitivepolyhedronlocalmultiplicity{j}{j''}\ge
-\lambda_{i'}(\indexedvertexpt p{j'})$.

Since $\stdpolyhedronletter_p=\sum_{j'''=1}^{\numberofprimitivepolyhedra}\primitivepolyhedronlocalmultiplicity{j'''}{j''}\stdprimitivepolyhedronletter_{j'''}$, 
for each polyhedron $\stdprimitivepolyhedronletter_{j'''}$ such that 
$\primitivepolyhedronlocalmultiplicity{j'''}{j''}\ne 0$, the cone 
$\normalvertexcone{\indexedvertexpt p{j'}}{\stdpolyhedronletter_p}$
is contained in a maximal cone of linearity of the function 
$\eval_{\stdprimitivepolyhedronletter_{j'''}}$, which is 
the normal vertex cone of a vertex of $\eval_{\stdprimitivepolyhedronletter_{j'''}}$. 
Denote this vertex by $b_{j'''}\in N$. In other words, 
$\normalvertexcone{\indexedvertexpt p{j'}}{\stdpolyhedronletter_p}\subseteq \normalvertexcone{b_{j'''}}{\stdprimitivepolyhedronletter_{j'''}}$.
(Note that $b_j=\indexedvertex {\stdprimitivepolyhedronletter_j}l$
according to previously chosen notation.)
If $\primitivepolyhedronlocalmultiplicity{j'''}{j''}=0$, denote by $b_{j'''}$
an arbitrary vertex of $\stdprimitivepolyhedronletter_{j'''}$.
Then the points $\indexedvertexpt p{j'}$ and 
$\sum_{j'''=1}^{\numberofprimitivepolyhedra}\primitivepolyhedronlocalmultiplicity{j'''}{j''}b_{j'''}$
define the same function on the two-dimensional cone 
$\normalvertexcone{\indexedvertexpt p{j'}}{\stdpolyhedronletter_p}$.
Therefore, 
$\indexedvertexpt p{j'}=\sum_{j'''=1}^{\numberofprimitivepolyhedra}\primitivepolyhedronlocalmultiplicity{j'''}{j''}b_{j'''}$
in $M$.

Now we can write
$$
-\eval_{\stdpolyhedronletter_p}(\lambda_{i'})=-\sum_{j'''=1}^{\numberofprimitivepolyhedra}
\primitivepolyhedronlocalmultiplicity{j'''}{j''}\eval_{\stdprimitivepolyhedronletter_{j'''}}(\lambda_{i'})
\text{ and }
-\lambda_{i'}(\indexedvertexpt p{j'})=-\sum_{j'''=1}^{\numberofprimitivepolyhedra}
\primitivepolyhedronlocalmultiplicity{j'''}{j''}\lambda_{i'}(b_{j'''}).
$$
Recall that $\eval_{\stdprimitivepolyhedronletter_{j'''}}(\lambda_{i'})$
is the minimum among the values that the function $\lambda_{i'}$
takes at the vertices of $\stdprimitivepolyhedronletter_{j'''}$, 
so 
$-\eval_{\stdprimitivepolyhedronletter_{j'''}}(\lambda_{i'})\ge -\lambda_{i'}(b_{j'''})$.
Moreover, since 
$\lambda_{i'}\notin\normalvertexcone{\indexedvertex {\stdprimitivepolyhedronletter_j}l}{\stdprimitivepolyhedronletter_j}=
\normalvertexcone{b_j}{\stdprimitivepolyhedronletter_j}$, 
$-\eval_{\stdprimitivepolyhedronletter_{j}}(\lambda_{i'})>-\lambda_{i'}(b_{j})$.
These numbers are integer, so
$-\eval_{\stdprimitivepolyhedronletter_{j}}(\lambda_{i'})-1\ge-\lambda_{i'}(b_{j})$.
Therefore, 
$$
-\primitivepolyhedronlocalmultiplicity{j}{j''}\eval_{\stdprimitivepolyhedronletter_{j}}(\lambda_{i'})-\primitivepolyhedronlocalmultiplicity{j}{j''}
\ge
\primitivepolyhedronlocalmultiplicity{j}{j''}\lambda_{i'}(b_{j}),
$$
and
$$
-\eval_{\stdpolyhedronletter_p}(\lambda_{i'})-\primitivepolyhedronlocalmultiplicity{j}{j''}\ge
-\lambda_{i'}(\indexedvertexpt p{j'}).
$$
\end{proof}

\begin{lemma}\label{h1qcomputed}
The image of the deformation $\stdflatmorphism\colon S\to \CC^1$ under the Kodaira-Spencer map in 
$H^1(U,\Theta_U)$
is represented by the following Cech class: on each intersection $U_i\cap U_{i'}$ ($i<i'$) 
we have vector field $\stdvectorfiledxletter_i-\stdvectorfiledxletter_{i'}$. Here $1\le i<i'<\numberoffixedopensets$ 
(resp. $1\le i<i\le \numberoffixedopensets$) if we use $U_1,\ldots, U_{\numberoffixedopensets-1}$
(resp. $U_1,\ldots, U_{\numberoffixedopensets}$) as the affine covering of $U$.
\end{lemma}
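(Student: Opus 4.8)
The plan is to invoke Proposition \ref{ksmcomputation}, which identifies the Kodaira--Spencer image of $\stdflatmorphism\colon S\to\CC^1$ in $H^1(U,\Theta_U)$ with the image of the section $v'\in\Gamma(U,\mathcal N_{X\subseteq\CC^{\numberoffixedgenerators}})$ under the connecting homomorphism of the short exact sequence
$$
0\to\Theta_U\to\Theta_{\CC^{\numberoffixedgenerators}}|_U\to\mathcal N_{X\subseteq\CC^{\numberoffixedgenerators}}|_U\to 0,
$$
where, by that proposition, $v'$ restricts on the dense open $U'$ to the image of the field of deformation speeds $\stdvectorfiledxletter$ computed in Lemma \ref{speedfieldcomputation}. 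Since $\{U_i\}$ is an affine cover of $U$ and $U$ is separated, this connecting homomorphism is computed in Cech cohomology by the standard recipe: choose local lifts of $v'$ to $\Theta_{\CC^{\numberoffixedgenerators}}$ over each $U_i$, and take their pairwise differences, which then automatically land in $\Theta_U$ on the overlaps.

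First I would exhibit the local lifts. For each $i$ the section $\stdvectorfiledxletter-\stdvectorfiledxletter_i$ is a regular section of $\Theta_{\CC^{\numberoffixedgenerators}}|_{U_i}$ by Lemma \ref{ksmregularrepresentatives}. Moreover, by Lemma \ref{ksmtangentcorrections} each $\stdvectorfiledxletter_i$ is tangent to $X$ (it is a regular-function multiple of some $\stdvectorfiledxletter'_f\in\Gamma(X,\Theta_X)$, and $\stdvectorfiledxletter_{\numberoffixedopensets}=0$), so it projects to zero in $\mathcal N_{X\subseteq\CC^{\numberoffixedgenerators}}$. Hence the image of $\stdvectorfiledxletter-\stdvectorfiledxletter_i$ in $\mathcal N_{X\subseteq\CC^{\numberoffixedgenerators}}|_{U_i}$ equals the image of $\stdvectorfiledxletter$ itself, which on $U'\cap U_i$ coincides with $v'$. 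I then need to promote this agreement on the dense open $U'\cap U_i$ to agreement on all of $U_i$; this follows because $\mathcal N_{X\subseteq\CC^{\numberoffixedgenerators}}|_U$ is locally free (as $U$ is smooth and embedded in the smooth space $\CC^{\numberoffixedgenerators}$, so $\mathcal N_{X\subseteq\CC^{\numberoffixedgenerators}}|_U=\mathcal N_{U\subseteq\CC^{\numberoffixedgenerators}}$), hence torsion-free, so its restriction maps to nonempty opens are injective. Thus $\stdvectorfiledxletter-\stdvectorfiledxletter_i$ is a genuine local lift of $v'|_{U_i}$ to $\Theta_{\CC^{\numberoffixedgenerators}}|_{U_i}$.

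The Cech $1$-cocycle representing the connecting homomorphism then has value $(\stdvectorfiledxletter-\stdvectorfiledxletter_i)-(\stdvectorfiledxletter-\stdvectorfiledxletter_{i'})=\stdvectorfiledxletter_{i'}-\stdvectorfiledxletter_i$ on $U_i\cap U_{i'}$ (up to the overall sign fixed by the chosen convention for the Cech differential, which gives the value $\stdvectorfiledxletter_i-\stdvectorfiledxletter_{i'}$ asserted in the statement). This difference is tangent to $X$, being a difference of two rational sections of $\Theta_X$, and it is regular on $U_i\cap U_{i'}$, being the difference of two sections each regular there; hence it lies in $\Gamma(U_i\cap U_{i'},\Theta_U)$, exactly as required. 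The argument applies verbatim whether one uses $U_1,\dots,U_{\numberoffixedopensets-1}$ or $U_1,\dots,U_{\numberoffixedopensets}$ as the affine covering of $U$, since in the latter case $U_{\numberoffixedopensets}$ merely contributes the trivial correction $\stdvectorfiledxletter_{\numberoffixedopensets}=0$.

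The main obstacle is conceptual rather than computational: it is the justification that the Cech prescription ``lift locally, subtract'' genuinely computes the connecting homomorphism of Proposition \ref{ksmcomputation}, and that the candidate lifts $\stdvectorfiledxletter-\stdvectorfiledxletter_i$ really project onto $v'$ (and not merely onto a section agreeing with $v'$ on a dense open). Both points are resolved by the torsion-freeness of $\mathcal N_{X\subseteq\CC^{\numberoffixedgenerators}}|_U$ on the smooth locus together with the standard description of the boundary map in Cech cohomology for an affine cover; all the remaining content, namely the regularity of the local lifts and of the cocycle, is supplied directly by Lemmas \ref{ksmregularrepresentatives} and \ref{ksmtangentcorrections}.
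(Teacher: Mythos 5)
Your proposal is correct and follows essentially the same route as the paper: the paper's own proof is a one-line citation of Proposition \ref{ksmcomputation} together with Lemmas \ref{speedfieldcomputation}, \ref{ksmtangentcorrections}, and \ref{ksmregularrepresentatives}, and what you have written is precisely the expansion of that citation (local lifts $\stdvectorfiledxletter-\stdvectorfiledxletter_i$ of $v'$, differences landing in $\Theta_U$ on overlaps, with the density/torsion-freeness remark correctly handling the passage from $U'$ to $U$).
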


\begin{proof}
This follows directly from Proposition \ref{ksmcomputation} and Lemmas \ref{speedfieldcomputation}, 
\ref{ksmtangentcorrections}, and \ref{ksmregularrepresentatives}.
\end{proof}

\begin{corollary}
The isomorphisms
\begin{multline*}
H^1(U,\Theta_U)=
\left(\ker\Bigg(\bigoplus_{i=1}^\numberoffixedopensets\Big(H^0(U_{\numberoffixedopensets},\Theta_U)/H^0(U_i,\Theta_U)\Big)
\vphantom{\bigoplus_{1\le i<i'\le \numberoffixedopensets}}
\right.\!
\\
\left.\!
\left.\!
\longrightarrow
\bigoplus_{1\le i<i'\le \numberoffixedopensets}\Big(H^0(U_{\numberoffixedopensets},\Theta_U)/H^0(U_i\cap U_{i'},\Theta_U)\Big)\Bigg)\right)\right/H^0(U_{\numberoffixedopensets},\Theta_U)
\end{multline*}
and 
\begin{multline*}
H^1(U,\Theta_U)=
\left(\ker\Bigg(\bigoplus_{i=1}^{\numberoffixedopensets-1}\Big(H^0(U_{\numberoffixedopensets},\Theta_U)/H^0(U_i,\Theta_U)\Big)
\vphantom{\bigoplus_{1\le i<i'\le \numberoffixedopensets-1}}
\right.\!
\\
\left.\!
\left.\!
\longrightarrow 
\bigoplus_{1\le i<i'\le \numberoffixedopensets-1}\Big(H^0(U_{\numberoffixedopensets},\Theta_U)/H^0(U_i\cap U_{i'},\Theta_U)\Big)\Bigg)\right)\right/H^0(U_{\numberoffixedopensets},\Theta_U)
\end{multline*}
(respectively)
from Corollary \ref{h1computegen}
identifies the image of the deformation $\stdflatmorphism\colon S\to \CC^1$ 
under the Kodaira-Spencer map
with the classes of 
$$
(\stdvectorfiledpletter_i)_{1\le i\le \numberoffixedopensets}\in\bigoplus_{i=1}^\numberoffixedopensets H^1(U_{\numberoffixedopensets},\Theta_U)
$$
and 
$$
(\stdvectorfiledpletter_i)_{1\le i\le \numberoffixedopensets-1}\in\bigoplus_{i=1}^{\numberoffixedopensets-1} H^1(U_{\numberoffixedopensets},\Theta_U)
$$
(respectively).
\end{corollary}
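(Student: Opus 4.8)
The plan is to read this off directly from Lemma \ref{h1qcomputed} by unwinding the explicit shape of the isomorphism of Corollary \ref{h1computegen}, which is the $i=1$ instance of Proposition \ref{hnasquotient}. Here the relevant data are $A=H^0(U_{\numberoffixedopensets},\Theta_U)$, with subobjects $A_i=H^0(U_i,\Theta_U)$ and $A_{i,i'}=H^0(U_i\cap U_{i'},\Theta_U)$, all embedded into $A$ by restriction to the dense open $U_{\numberoffixedopensets}\subseteq U_i\cap U_{i'}$ (these restrictions are injective because restriction maps of $\Theta_U$ to nonempty opens are injective). So first I would make the connecting isomorphism $H^1(B^\bullet)\cong H^0(B'^\bullet)$ from the proof of Proposition \ref{hnasquotient} concrete: given a Cech $1$-cocycle $(c_{i,i'})$ with $c_{i,i'}\in A_{i,i'}$, lift it to the same tuple in $B''^1=\bigoplus_{i<i'}A$; by acyclicity of $B''^\bullet$ this lift is a coboundary, so one chooses $(d_i)\in B''^0=\bigoplus_i A$ with Cech coboundary equal to the lift, and the associated class in the quotient description is represented by $(d_i\bmod A_i)_i$.

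Next I would substitute the cocycle of Lemma \ref{h1qcomputed}, namely $c_{i,i'}=\stdvectorfiledxletter_i-\stdvectorfiledxletter_{i'}$ on $U_i\cap U_{i'}$, and observe that $(d_i)=(\stdvectorfiledxletter_i)_i$ is an admissible global lift, since its Cech coboundary is precisely $(\stdvectorfiledxletter_i-\stdvectorfiledxletter_{i'})_{i<i'}$. The one fact requiring verification is that each $\stdvectorfiledxletter_i$ actually lies in $A=H^0(U_{\numberoffixedopensets},\Theta_U)$, i.e.\ is regular on $U_{\numberoffixedopensets}$. This holds because $\stdvectorfiledxletter_i$ is a global field $\stdvectorfiledxletter'_{\bullet}\in\Gamma(X,\Theta_X)$ (Lemma \ref{ksmtangentcorrections}) times the $\pi$-pullback of a rational function on $\PP^1$ whose denominator vanishes only at special points; as $\pi(U_{\numberoffixedopensets})\subseteq W$ consists of ordinary points, this factor is regular there (for $i=\numberoffixedopensets$ one has $\stdvectorfiledxletter_{\numberoffixedopensets}=0$, which is consistent). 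Hence the Kodaira-Spencer image is represented in the quotient form by $(\stdvectorfiledxletter_i\bmod H^0(U_i,\Theta_U))_i$; rewriting each $T$-invariant field $\stdvectorfiledxletter_i$ through its $U_{\numberoffixedopensets}$-description (Corollary \ref{uidescvfield}) gives exactly the asserted classes $(\stdvectorfiledpletter_i)_i$.

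The two displayed isomorphisms differ only in whether the affine cover of $U$ is $U_1,\ldots,U_{\numberoffixedopensets}$ or the smaller sufficient system $U_1,\ldots,U_{\numberoffixedopensets-1}$, and the argument above applies verbatim to either index range, so I would simply note that both cases are identical. I would also invoke the functoriality clause of Proposition \ref{hnasquotient} (together with the acyclicity of $B''^\bullet$) to see that the resulting class is independent of the chosen lift: two admissible lifts differ by an element of $\ker d\subseteq B''^0$, which by acyclicity is the image of $B''^{-1}=A$, i.e.\ the diagonal copy of $A$ that is modded out in the quotient description.

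The main obstacle I expect is purely bookkeeping rather than conceptual: pinning down the sign and restriction conventions in the connecting homomorphism so that $(d_i)=(\stdvectorfiledxletter_i)$ manifestly produces $(c_{i,i'})$ and not its negative, and checking that the reduction of $\stdvectorfiledxletter_i$ modulo $H^0(U_i,\Theta_U)$ matches the object denoted $\stdvectorfiledpletter_i$ in the statement under the $U_{\numberoffixedopensets}$-description identification.
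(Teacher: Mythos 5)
Your argument is correct and is exactly the route the paper takes: the paper's own proof is the one-line citation of Lemma \ref{h1qcomputed} together with the construction of the isomorphism in the proof of Proposition \ref{hnasquotient}, and your write-up simply unwinds that connecting isomorphism explicitly, verifying that $(\stdvectorfiledxletter_i|_{U_{\numberoffixedopensets}})_i$ is an admissible lift of the cocycle $(\stdvectorfiledxletter_i-\stdvectorfiledxletter_{i'})$ and that the choice of lift is immaterial modulo the diagonal copy of $H^0(U_{\numberoffixedopensets},\Theta_U)$. The regularity check for $\stdvectorfiledxletter_i$ on $U_{\numberoffixedopensets}$ and the sign bookkeeping both come out as you expect, so there is nothing to add.
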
\label{h1qquotcomputed}
\begin{proof}
This follows from Lemma \ref{h1qcomputed} and the construction of isomorphisms in the proof of Proposition \ref{hnasquotient}.
\end{proof}

\begin{lemma}
Let $f$ be a homogeneous function of degree $\chi\in\sck\cap M$ on $X$, and let $1\le i\le \numberoffixedopensets$.

If $U_i$ corresponds to an essential special point $p=p_{j''}$ and a vertex $\indexedvertexpt{p}{j'}$, then
$$
df(\stdvectorfiledxletter_i)=-\frac{t_0^k}
{t_0^{\primitivepolyhedronmultiplicity{j}}
+\sum_{k'=0}^{\primitivepolyhedronmultiplicity{j}-1}
\stdpolynomialcoefficient_{j,k'}^{(1)}t_0^{k'}}
\chi(\indexedvertex {\stdprimitivepolyhedronletter_j}l) f,
$$
where $l\in\{0,1\}$ is such that 
$\normalvertexcone{\indexedvertexpt p{j'}}{\stdpolyhedronletter_p}\subseteq 
\normalvertexcone{\indexedvertex {\stdprimitivepolyhedronletter_j}l}{\stdprimitivepolyhedronletter_j}$.

Otherwise, $df(\stdvectorfiledxletter_i)=0$ (recall that $\stdvectorfiledxletter_i=0$ in this case).
\end{lemma}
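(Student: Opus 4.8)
The plan is to reduce everything to a single Euler-type identity for the auxiliary vector fields $\stdvectorfiledxletter'_f$ and then read off the two cases directly from the definition of $\stdvectorfiledxletter_i$.

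First I would establish the key identity: for every linear function $f\in N_\QQ$ and every homogeneous function $g$ of degree $\chi\in\sck\cap M$ on $X$, one has $dg(\stdvectorfiledxletter'_f)=f(\chi)\,g$ as rational (hence regular) functions on $X$. To prove this I would reuse the computation inside the proof of Lemma \ref{ksmtangentcorrections}, where $\stdvectorfiledxletter'_f$ is exhibited as the image under $d(\stdbirationalmap|_{\stdflatmorphism=0})$ of the vector field $\stdvectorfiledpletter=f(\chi_1)t_1\,\partial/\partial t_1+f(\chi_2)t_2\,\partial/\partial t_2$ on $\Spec\CC[t_0,t_1,t_2]$. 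Since $\stdbirationalmap|_{\stdflatmorphism=0}$ is a birational parametrization of $X$ (Lemma \ref{xisratparam}), it suffices to verify the corresponding identity for the pullback $g\circ(\stdbirationalmap|_{\stdflatmorphism=0})$. This pullback is $T$-homogeneous of degree $\chi$ for the grading $\deg t_1=\chi_1$, $\deg t_2=\chi_2$, $\deg t_0=0$; since $\chi_1,\chi_2$ are linearly independent, every monomial it contains carries exactly the factor $t_1^{\chi_1^*(\chi)}t_2^{\chi_2^*(\chi)}$, so the pullback equals $H(t_0)\,t_1^{\chi_1^*(\chi)}t_2^{\chi_2^*(\chi)}$ for some rational function $H$ of $t_0$ alone. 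Applying $\stdvectorfiledpletter$, which differentiates only in the $t_1,t_2$ directions, multiplies this by $f(\chi_1)\chi_1^*(\chi)+f(\chi_2)\chi_2^*(\chi)=f\bigl(\chi_1^*(\chi)\chi_1+\chi_2^*(\chi)\chi_2\bigr)=f(\chi)$. Transporting this equality back along $\stdbirationalmap|_{\stdflatmorphism=0}$ gives $dg(\stdvectorfiledxletter'_f)=f(\chi)\,g$.

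Second I would dispatch the two cases of the lemma by inspecting the definition of $\stdvectorfiledxletter_i$. Whenever $\stdvectorfiledxletter_i=0$ — that is, for $i=\numberoffixedopensets$, for $U_i$ attached to a removable special point, and for $U_i$ attached to an essential point $p=p_{j''}$ with $\primitivepolyhedronlocalmultiplicity{j}{j''}=0$ — the equality $df(\stdvectorfiledxletter_i)=0$ is immediate, which is the ``otherwise'' clause. In the remaining genuinely nonzero case $U_i$ is attached to an essential special point $p=p_{j''}$, to a vertex $\indexedvertexpt{p}{j'}$, and $\primitivepolyhedronlocalmultiplicity{j}{j''}\ne 0$, so by definition $\stdvectorfiledxletter_i$ is the scalar rational function $t_0^k/\bigl(t_0^{\primitivepolyhedronmultiplicity{j}}+\sum_{k'=0}^{\primitivepolyhedronmultiplicity{j}-1}\stdpolynomialcoefficient_{j,k'}^{(1)}t_0^{k'}\bigr)$ times $\stdvectorfiledxletter'_{-\indexedvertex{\stdprimitivepolyhedronletter_j}{l}}$, with $l$ chosen so that $\normalvertexcone{\indexedvertexpt{p}{j'}}{\stdpolyhedronletter_p}\subseteq\normalvertexcone{\indexedvertex{\stdprimitivepolyhedronletter_j}{l}}{\stdprimitivepolyhedronletter_j}$. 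Since $df$ is $\OO_X$-linear it pulls the scalar factor out, and the key identity applied with $f=-\indexedvertex{\stdprimitivepolyhedronletter_j}{l}\in N$ gives $df(\stdvectorfiledxletter'_{-\indexedvertex{\stdprimitivepolyhedronletter_j}{l}})=(-\indexedvertex{\stdprimitivepolyhedronletter_j}{l})(\chi)\,f=-\chi(\indexedvertex{\stdprimitivepolyhedronletter_j}{l})\,f$, yielding exactly the asserted formula.

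The computation itself is short; the only genuine care is needed in the first step, where I must justify transporting the pullback identity from $\Spec\CC[t_0,t_1,t_2]$ back to $X$. The point to watch is that the equality $\stdvectorfiledxletter'_f=d(\stdbirationalmap|_{\stdflatmorphism=0})(\stdvectorfiledpletter)$, and hence the resulting identity, a priori hold only on the dense open locus where $\stdbirationalmap|_{\stdflatmorphism=0}$ is an isomorphism onto its image; but both sides of $dg(\stdvectorfiledxletter'_f)=f(\chi)\,g$ are rational functions on the irreducible variety $X$ agreeing on a dense open set, so they agree everywhere, and the identity then holds as an equality of regular functions wherever both are defined. I would also emphasize that it is precisely the degree-$\chi$ homogeneity of the pullback, used to bring it to the form $H(t_0)\,t_1^{\chi_1^*(\chi)}t_2^{\chi_2^*(\chi)}$, that makes the constant $f(\chi)$ appear rather than a factor varying from point to point.
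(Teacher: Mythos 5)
Your proposal is correct, and the second half (pulling the scalar factor out of $df$ by $\OO_X$-linearity and reading off the two cases from the definition of $\stdvectorfiledxletter_i$) matches what the paper does implicitly. Where you genuinely diverge is in how you establish the Euler-type identity $dg(\stdvectorfiledxletter'_f)=f(\chi)\,g$. The paper stays entirely on the algebraic side: it notes that for a single generator $g=\dependentgeneratorsdegree{i',i''}|_{\stdflatmorphism=0}$ the identity is immediate from the defining formula $\stdvectorfiledxletter'_f=\sum f(\lambda_{i})\dependentgeneratorsdegree{i,i'}\,\partial/\partial\dependentgeneratorsdegree{i,i'}$, and then propagates it to arbitrary homogeneous monomials by the Leibniz rule (the degrees add, so the scalars $f(\lambda_{i'})$ add up to $f(\chi)$) and to all homogeneous functions by linearity. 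You instead transport the computation to $\Spec\CC[t_0,t_1,t_2]$ via the birational parametrization from Lemmas \ref{ksmtangentcorrections} and \ref{xisratparam}, where $\stdvectorfiledxletter'_f$ becomes the explicit Euler field $f(\chi_1)t_1\partial/\partial t_1+f(\chi_2)t_2\partial/\partial t_2$ and the identity follows from the shape $H(t_0)t_1^{\chi_1^*(\chi)}t_2^{\chi_2^*(\chi)}$ of the pullback (Lemma \ref{onedirectionalgebradecomp}). Your route makes the scaling nature of $\stdvectorfiledxletter'_f$ more transparent and avoids the induction, at the cost of having to justify the transport of a rational identity along a birational map and an appeal to the structure of the graded components; the paper's induction is more self-contained and needs only the Leibniz rule. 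Both are complete proofs.
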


\begin{proof}
Each function of degree $\chi$ on $X$ is a polynomial in variables $\dependentgeneratorsdegree{i',i''}|_{\stdflatmorphism=0}$.
Let us first consider the case when $f$ is a monomial.
Then we prove the lemma by induction on 
the number of variables in this monomial.

First, if $f=\dependentgeneratorsdegree{i',i''}|_{\stdflatmorphism=0}$, then $\chi=\lambda_{i'}$, 
and the statement of Lemma holds by the definition of 
$\stdvectorfiledxletter_i$.

Suppose that $f=\dependentgeneratorsdegree{i',i''}|_{\stdflatmorphism=0}f_1$, where $f_1$ is another monomial
of degree $\chi-\lambda_{i'}$, and the statement follows from Leibniz rule.

Finally, the statement of lemma for arbitrary polynomials follows by linearity.
\end{proof}

\begin{lemma}\label{ksmuidesc}
Let $1\le i\le \numberoffixedopensets$.

If $U_i$ corresponds to an essential special point $p=p_{j''}$ and a vertex $\indexedvertexpt{p}{j'}$, then
the $U_i$-description of $\stdvectorfiledxletter_i$ equals
$$
\left(
-\frac{t^k}
{t^{\primitivepolyhedronmultiplicity{j}}
+\sum_{k'=0}^{\primitivepolyhedronmultiplicity{j}-1}
\stdpolynomialcoefficient_{j,k'}^{(1)}t^{k'}}
\uidegree{i,1}(\indexedvertex {\stdprimitivepolyhedronletter_j}l),
-\frac{t^k}
{t^{\primitivepolyhedronmultiplicity{j}}
+\sum_{k'=0}^{\primitivepolyhedronmultiplicity{j}-1}
\stdpolynomialcoefficient_{j,k'}^{(1)}t^{k'}}
\uidegree{i,2}(\indexedvertex {\stdprimitivepolyhedronletter_j}l),
0\right),
$$
where $l\in\{0,1\}$ is such that 
$\normalvertexcone{\indexedvertexpt p{j'}}{\stdpolyhedronletter_p}\subseteq 
\normalvertexcone{\indexedvertex {\stdprimitivepolyhedronletter_j}l}{\stdprimitivepolyhedronletter_j}$.

Otherwise, the $U_i$-description of $\stdvectorfiledxletter_i$ is $(0,0,0)$ (recall that $\stdvectorfiledxletter_i=0$
in this case).
\end{lemma}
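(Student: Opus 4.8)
The plan is to read off the $U_i$-description of $\stdvectorfiledxletter_i$ directly from the preceding lemma together with Corollary \ref{uidescvfield}. That corollary tells us that the $U_i$-description of a homogeneous degree-$0$ vector field $w$ on $U_i'$ is the triple $(\uidescriptionfunction{i,1},\uidescriptionfunction{i,2},\stdvectorfiledpletter_i)$, where $\uidescriptionfunction{i,1}(p)=d\wtuithreadfunction{i,1}(w)(x)$, $\uidescriptionfunction{i,2}(p)=d\wtuithreadfunction{i,2}(w)(x)$, and $\stdvectorfiledpletter_i(p)=d_x\pi\,(w(x))$, with $x$ the canonical point in $\pi^{-1}(p)$ with respect to $U_i$. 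First I would dispose of the case in which $U_i$ does not correspond to an essential special point $p=p_{j''}$ with $\primitivepolyhedronlocalmultiplicity{j}{j''}\ne 0$: there $\stdvectorfiledxletter_i=0$ by its very definition, so its $U_i$-description is $(0,0,0)$, as asserted. Before doing anything else I would also note that $\stdvectorfiledxletter_i$ is a genuine homogeneous vector field of degree $0$ defined on an open set of the form required by Corollary \ref{uidescvfield}, since it is the product of the $T$-invariant function $t_0^k/(t_0^{\primitivepolyhedronmultiplicity{j}}+\sum_{k'}\stdpolynomialcoefficient_{j,k'}^{(1)}t_0^{k'})$ (pulled back from $\PP^1$ through $t_0=t\circ\pi$) and the invariant field $\stdvectorfiledxletter'_{-\indexedvertex{\stdprimitivepolyhedronletter_j}{l}}$; only then is the notion of a $U_i$-description available.

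Next I would compute the two functions. In the essential case $\wtuithreadfunction{i,1}$ (resp.\ $\wtuithreadfunction{i,2}$) is homogeneous of degree $\uidegree{i,1}$ (resp.\ $\uidegree{i,2}$), so the preceding lemma gives, as rational functions on $X$,
$$
d\wtuithreadfunction{i,1}(\stdvectorfiledxletter_i)=-\frac{t_0^k}{t_0^{\primitivepolyhedronmultiplicity{j}}+\sum_{k'=0}^{\primitivepolyhedronmultiplicity{j}-1}\stdpolynomialcoefficient_{j,k'}^{(1)}t_0^{k'}}\uidegree{i,1}(\indexedvertex{\stdprimitivepolyhedronletter_j}{l})\,\wtuithreadfunction{i,1},
$$
and the analogous identity for $\wtuithreadfunction{i,2}$. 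To obtain $\uidescriptionfunction{i,1}$ and $\uidescriptionfunction{i,2}$ I would evaluate these at the canonical point $x$ in $\pi^{-1}(p)$ for an ordinary point $p\in V_i$. There $\wtuithreadfunction{i,1}(x)=\wtuithreadfunction{i,2}(x)=1$ by the definition of the canonical point, and $t_0(x)=t(p)$ by Lemma \ref{ttocorrespondence} (as $p$ is ordinary, $t(p)\ne\infty$); writing $t$ for the coordinate of $p$ under the embedding $V_i\hookrightarrow\CC$ then reproduces exactly the first two entries claimed in the statement. Since these equalities hold on a dense open subset of $V_i$, they hold as rational functions, which is all that is needed.

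It then remains to verify that the third entry vanishes, that is, that $d\pi(\stdvectorfiledxletter_i)=0$ at canonical points. Here I would use that $\stdvectorfiledxletter_i$ is a scalar function times $\stdvectorfiledxletter'_{-\indexedvertex{\stdprimitivepolyhedronletter_j}{l}}$, so $d\pi(\stdvectorfiledxletter_i)$ is the same scalar times $d\pi(\stdvectorfiledxletter'_{-\indexedvertex{\stdprimitivepolyhedronletter_j}{l}})$. By the computation in the proof of Lemma \ref{ksmtangentcorrections}, $\stdvectorfiledxletter'_f$ is the image under $d\stdbirationalmap$ of the field $f(\chi_1)t_1\,\partial/\partial t_1+f(\chi_2)t_2\,\partial/\partial t_2$, which involves only $t_1,t_2$ and is therefore tangent to the fibers of $\pi$ (recall that $\pi$ corresponds to the coordinate $t_0$); hence $d\pi(\stdvectorfiledxletter'_f)=0$ and $\stdvectorfiledpletter_i=0$. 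I do not expect a serious obstacle in this argument: it is essentially a formal consequence of the preceding lemma and Corollary \ref{uidescvfield}. The only genuinely delicate points are the legitimacy of the substitution $t_0\rightsquigarrow t$, which must be routed through Lemma \ref{ttocorrespondence} rather than merely asserted, and the identification of $d\pi(\stdvectorfiledxletter'_f)$ with zero via the orbit-tangency established in Lemma \ref{ksmtangentcorrections}.
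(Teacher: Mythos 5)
Your computation of the first two entries is exactly the paper's: evaluate the identity from the preceding lemma at the canonical point $x$ over an ordinary point $p'$, use $\wtuithreadfunction{i,1}(x)=\wtuithreadfunction{i,2}(x)=1$ and $t_0(x)=t(p')$ from Lemma \ref{ttocorrespondence}, and conclude on a dense open subset of $V_i$. Where you diverge is the vanishing of the third entry. The paper proves $d_x\pi(\stdvectorfiledxletter_i)=0$ by writing $t_0=f_1/f_2$ with $f_1,f_2$ global homogeneous functions of the same degree $\chi$ in the interior of $\sck$ and applying the preceding lemma to both numerator and denominator: the quotient rule then produces $f_2(x)f_1(x)-f_1(x)f_2(x)=0$ in the numerator. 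You instead observe that $\stdvectorfiledxletter_i$ is a scalar function times $\stdvectorfiledxletter'_{-\indexedvertex{\stdprimitivepolyhedronletter_j}{l}}$, and that by the computation inside the proof of Lemma \ref{ksmtangentcorrections} the latter is the pushforward under $d\stdbirationalmap$ of $f(\chi_1)t_1\,\partial/\partial t_1+f(\chi_2)t_2\,\partial/\partial t_2$, which has no $\partial/\partial t_0$ component; since $t\circ\pi\circ\stdbirationalmap=t_0$ by Lemma \ref{ttocorrespondence}, this field is killed by $d(t\circ\pi)$, hence by $d\pi$ wherever $t$ is a local coordinate, and equality of rational vector fields on a dense open set finishes it. Both arguments are correct; yours makes the geometric reason ($\stdvectorfiledxletter'_f$ is vertical for $\pi$ because it is tangent to the torus-orbit directions) more transparent, at the cost of re-entering the proof of Lemma \ref{ksmtangentcorrections} rather than using only its statement, while the paper's quotient-rule computation stays entirely within the formalism of the preceding lemma.
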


\begin{proof}
Suppose that $U_i$ corresponds to an essential special point $p=p_{j''}$ and a vertex $\indexedvertexpt{p}{j'}$.

Let $p'\in\PP^1$ be an ordinary point, and let $x$ be the canonical point in $\pi^{-1}(p')\cap U_i$.
Then the rational function 
$$
-\frac{t^k}
{t^{\primitivepolyhedronmultiplicity{j}}
+\sum_{k'=0}^{\primitivepolyhedronmultiplicity{j}-1}
\stdpolynomialcoefficient_{j,k'}^{(1)}t^{k'}}
\uidegree{i,1}(\indexedvertex {\stdprimitivepolyhedronletter_j}l)
$$
is defined at $p'$, $t_0$ is defined at $x$, and $t_0(x)=t(p')$ (Lemma \ref{ttocorrespondence}).

By definition, the values of the first two components of the $U_i$-description
of $\stdvectorfiledxletter_i$ at $p'$ equal 
$d_x\wtuithreadfunction{i,1}(\stdvectorfiledpletter_i)$ and 
$d_x\wtuithreadfunction{i,2}(\stdvectorfiledpletter_i)$, respectively.
By the previous lemma, 
$$
d_x\wtuithreadfunction{i,1}(\stdvectorfiledpletter_i)=
-\frac{t_0(x)^k}
{t_0(x)^{\primitivepolyhedronmultiplicity{j}}
+\sum_{k'=0}^{\primitivepolyhedronmultiplicity{j}-1}
\stdpolynomialcoefficient_{j,k'}^{(1)}t_0(x)^{k'}}
\uidegree{i,1}(\indexedvertex {\stdprimitivepolyhedronletter_j}l)\wtuithreadfunction{i,1}(x)
$$
and
$$
d_x\wtuithreadfunction{i,2}(\stdvectorfiledpletter_i)=
-\frac{t_0(x)^k}
{t_0(x)^{\primitivepolyhedronmultiplicity{j}}
+\sum_{k'=0}^{\primitivepolyhedronmultiplicity{j}-1}
\stdpolynomialcoefficient_{j,k'}^{(1)}t_0(x)^{k'}}
\uidegree{i,2}(\indexedvertex {\stdprimitivepolyhedronletter_j}l)\wtuithreadfunction{i,2}(x).
$$
But by the definition of a canonical point, 
$\wtuithreadfunction{i,1}(x)=\wtuithreadfunction{i,2}(x)=1$.
So, the values of the first two components of the $U_i$-description
equal 
$$
-\frac{t(p')^k}
{t(p')^{\primitivepolyhedronmultiplicity{j}}
+\sum_{k'=0}^{\primitivepolyhedronmultiplicity{j}-1}
\stdpolynomialcoefficient_{j,k'}^{(1)}t(p')^{k'}}
\uidegree{i,1}(\indexedvertex {\stdprimitivepolyhedronletter_j}l)
$$
and
$$
-\frac{t(p')^k}
{t(p')^{\primitivepolyhedronmultiplicity{j}}
+\sum_{k'=0}^{\primitivepolyhedronmultiplicity{j}-1}
\stdpolynomialcoefficient_{j,k'}^{(1)}t(p')^{k'}}
\uidegree{i,2}(\indexedvertex {\stdprimitivepolyhedronletter_j}l),
$$
respectively.

To compute the third component of the $U_i$-description, note that by Lemma \ref{ttocorrespondence}, 
$t_0$ can be considered as follows. Consider the affine chart $t\ne \infty$ on $\PP^1$. It is an affine line, 
and $t$ is a coordinate on it. Then $t_0$ is a function on $U_0\cap \pi^{-1}(\{t\ne \infty\})$
that computes the coordinate $t$ of the image of a point $x'\in U_0\cap \pi^{-1}(\{t\ne \infty\})$.
In these terms, $d_x\pi(\stdvectorfiledxletter_i)=d_xt_0(\stdvectorfiledxletter_i)(\partial/\partial t)$.

Let us compute $d_xt_0(\stdvectorfiledxletter_i)$. Choose a degree $\chi$ in the interior of $\sck$.
As we have seen in the proof of Lemma \ref{xisratparam}, there exist global functions $f_1$ and $f_2$ of degree $\chi$ on $X$
such that $t_0=f_1/f_2$.
Using the previous lemma again, we can write
\begin{multline*}
d_x\frac{f_1}{f_2}(\stdvectorfiledxletter_i)=\frac{f_2(x)d_x{f_1}(\stdvectorfiledxletter_i)-f_1(x)d_x{f_2}(\stdvectorfiledxletter_i)}{f_2(x)^2}=\\
\frac{-\frac{t_0^k}
{t_0^{\primitivepolyhedronmultiplicity{j}}
+\sum_{k'=0}^{\primitivepolyhedronmultiplicity{j}-1}
\stdpolynomialcoefficient_{j,k'}^{(1)}t_0^{k'}}
\chi(\indexedvertex {\stdprimitivepolyhedronletter_j}l)(f_2(x)f_1(x)-f_1(x)f_2(x))}{f_2(x)^2}=0.
\end{multline*}
\end{proof}

\begin{corollary}\label{ksmuqdesc}
Let $1\le i\le \numberoffixedopensets$.

If $U_i$ corresponds to an essential special point $p=p_{j''}$ and a vertex $\indexedvertexpt{p}{j'}$, then
the $U_{\numberoffixedopensets}$-description of $\stdvectorfiledxletter_i$ equals
$$
\left(
-\frac{t^k}
{t^{\primitivepolyhedronmultiplicity{j}}
+\sum_{k'=0}^{\primitivepolyhedronmultiplicity{j}-1}
\stdpolynomialcoefficient_{j,k'}^{(1)}t^{k'}}
\uidegree{\numberoffixedopensets,1}(\indexedvertex {\stdprimitivepolyhedronletter_j}l),
-\frac{t^k}
{t^{\primitivepolyhedronmultiplicity{j}}
+\sum_{k'=0}^{\primitivepolyhedronmultiplicity{j}-1}
\stdpolynomialcoefficient_{j,k'}^{(1)}t^{k'}}
\uidegree{\numberoffixedopensets,2}(\indexedvertex {\stdprimitivepolyhedronletter_j}l),
0\right),
$$
where $l\in\{0,1\}$ is such that 
$\normalvertexcone{\indexedvertexpt p{j'}}{\stdpolyhedronletter_p}\subseteq 
\normalvertexcone{\indexedvertex {\stdprimitivepolyhedronletter_j}l}{\stdprimitivepolyhedronletter_j}$.

Otherwise, the $U_{\numberoffixedopensets}$-description of $\stdvectorfiledxletter_i$ is $(0,0,0)$.
\end{corollary}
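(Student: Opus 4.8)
The plan is to derive this corollary directly from Lemma \ref{ksmuidesc} by a single application of the transition rule between $U_i$-descriptions and $U_{\numberoffixedopensets}$-descriptions, namely Lemma \ref{vfieldtransition}. First I would dispose of the trivial case: if $U_i$ does not correspond to an essential special point $p=p_{j''}$ with $\primitivepolyhedronlocalmultiplicity{j}{j''}\ne 0$, then $\stdvectorfiledxletter_i=0$, so every description of it is $(0,0,0)$ and there is nothing to prove. So I assume $U_i$ corresponds to an essential special point $p=p_{j''}$ and a vertex $\indexedvertexpt{p}{j'}$, and let $l\in\{0,1\}$ be as in the statement.

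By Lemma \ref{ksmuidesc}, the $U_i$-description of $\stdvectorfiledxletter_i$ is the triple whose first two entries are $-c\,\uidegree{i,1}(\indexedvertex{\stdprimitivepolyhedronletter_j}{l})$ and $-c\,\uidegree{i,2}(\indexedvertex{\stdprimitivepolyhedronletter_j}{l})$, where I abbreviate the $T$-invariant rational coefficient $c=t^k/(t^{\primitivepolyhedronmultiplicity{j}}+\sum_{k'=0}^{\primitivepolyhedronmultiplicity{j}-1}\stdpolynomialcoefficient_{j,k'}^{(1)}t^{k'})$, and whose third (vector-field) entry is $0$. I would then apply Lemma \ref{vfieldtransition} with the pair of indices $(i,\numberoffixedopensets)$ on an open subset of the form $V''\times(\CC\setminus 0)\times L''$ contained in $U_i\cap U_{\numberoffixedopensets}=U_{\numberoffixedopensets}$ (recall $U_{\numberoffixedopensets}\subseteq U_i$), which is legitimate since $\stdvectorfiledxletter_i$ is a homogeneous vector field of degree $0$.

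The key simplification is that the third entry of the $U_i$-description vanishes. In the matrix $\uilargetransition{i,\numberoffixedopensets}$, the only non-constant entries are the two covector-field entries in the third column, and these get multiplied precisely by this vanishing third component; the bottom row $(0,0,1)$ then forces the transported vector-field entry to remain $0$. Hence the transition collapses to multiplying the column $(-c\,\uidegree{i,1}(\indexedvertex{\stdprimitivepolyhedronletter_j}{l}),\,-c\,\uidegree{i,2}(\indexedvertex{\stdprimitivepolyhedronletter_j}{l}))^{\mathrm T}$ by the constant matrix $\uismalltransition{i,\numberoffixedopensets}$. Writing out the product, the first entry of the $U_{\numberoffixedopensets}$-description becomes $-c\bigl(\uidegree{i,1}^*(\uidegree{\numberoffixedopensets,1})\uidegree{i,1}(\indexedvertex{\stdprimitivepolyhedronletter_j}{l})+\uidegree{i,2}^*(\uidegree{\numberoffixedopensets,1})\uidegree{i,2}(\indexedvertex{\stdprimitivepolyhedronletter_j}{l})\bigr)$, and similarly for the second. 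Finally I would use the basis expansion $\uidegree{\numberoffixedopensets,1}=\uidegree{i,1}^*(\uidegree{\numberoffixedopensets,1})\uidegree{i,1}+\uidegree{i,2}^*(\uidegree{\numberoffixedopensets,1})\uidegree{i,2}$ in $M$ (valid because $\uidegree{i,1}^*,\uidegree{i,2}^*$ is the dual basis of $\uidegree{i,1},\uidegree{i,2}$) and evaluate both sides at the lattice point $\indexedvertex{\stdprimitivepolyhedronletter_j}{l}\in N$ to obtain $\uidegree{i,1}^*(\uidegree{\numberoffixedopensets,1})\uidegree{i,1}(\indexedvertex{\stdprimitivepolyhedronletter_j}{l})+\uidegree{i,2}^*(\uidegree{\numberoffixedopensets,1})\uidegree{i,2}(\indexedvertex{\stdprimitivepolyhedronletter_j}{l})=\uidegree{\numberoffixedopensets,1}(\indexedvertex{\stdprimitivepolyhedronletter_j}{l})$, and likewise with index $2$. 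This yields exactly the two entries $-c\,\uidegree{\numberoffixedopensets,1}(\indexedvertex{\stdprimitivepolyhedronletter_j}{l})$ and $-c\,\uidegree{\numberoffixedopensets,2}(\indexedvertex{\stdprimitivepolyhedronletter_j}{l})$ claimed in the corollary.

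There is no genuine obstacle here; the statement is essentially a bookkeeping consequence of Lemma \ref{ksmuidesc} together with the transition rule. The only point that needs a moment of care is confirming that the vanishing of the vector-field component of the $U_i$-description really kills all the non-constant logarithmic-derivative entries of $\uilargetransition{i,\numberoffixedopensets}$, so that the change of description is purely the linear-algebraic action of $\uismalltransition{i,\numberoffixedopensets}$ on the first two coordinates; after that, the computation reduces to the dual-basis identity evaluated at a single lattice point.
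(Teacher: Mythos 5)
Your proposal is correct and follows exactly the route the paper takes: the paper's proof of this corollary is the single line ``This follows directly from Lemma \ref{vfieldtransition},'' and your write-up simply makes explicit the computation behind that citation (the vanishing vector-field component kills the logarithmic-derivative entries of $\uilargetransition{i,\numberoffixedopensets}$, reducing the transition to the constant matrix $\uismalltransition{i,\numberoffixedopensets}$, after which the dual-basis identity evaluated at $\indexedvertex{\stdprimitivepolyhedronletter_j}{l}$ gives the stated entries). No gaps; your spelled-out version is, if anything, more informative than the paper's.
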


\begin{proof}
This follows directly from Lemma \ref{vfieldtransition}.
\end{proof}

So, we have computed the Kodaira-Spencer map for a set of basis vectors of $\Theta_{a^{(1)}}\coefficientspace$, and therefore by linearity 
we can now compute it for an arbitrary vector from $\Theta_{a^{(1)}}\coefficientspace$.
Let us prove the surjectivity of this map.

\section{Surjectivity of the Kodaira-Spencer map}

To prove the surjectivity, we use the results of Chapter \ref{combformula}.
We will prove that the composition
$\Theta_{a^{(1)}}\coefficientspace\to T^1_0(X)\to \ker(H^0(\PP^1,\givinv)\to H^0(\PP^1,\gviiiinv))$
is surjective and that 
$\im(\Theta_{a^{(1)}}\coefficientspace\to T^1_0(X))$
contains $\im(H^1(\PP^1,\gi)\to T^1_0(X))$.

Denote by $\abstractvectorspace_{3,0}$ the space of $3(\numberoffixedopensets-1)$-tuples of the form
$$
(\uidescriptionfunctiondiff11,\uidescriptionfunctiondiff12,\stdvectorfiledpletter[1],\ldots, 
\uidescriptionfunctiondiff{\numberoffixedopensets-1}1,\uidescriptionfunctiondiff{\numberoffixedopensets-1}2,\stdvectorfiledpletter[\numberoffixedopensets-1]),
$$
where each $\uidescriptionfunctiondiff ij$ is a rational function on $\PP^1$, each $\stdvectorfiledpletter[i]$
is a rational vector field on $\PP^1$, and each triple $(\uidescriptionfunctiondiff i1,\uidescriptionfunctiondiff i2, \stdvectorfiledpletter[i])$
is the $U_i$-description of a $T$-invariant vector field defined on $U_{\numberoffixedopensets}$.
This space $\abstractvectorspace_{3,0}$ can be identified (using the notion of an $U_i$-description)
with the zeroth graded component of 
$$
\bigoplus_{i=1}^{\numberoffixedopensets-1} H^0(U_{\numberoffixedopensets},\Theta_U).
$$
Hence, we have a map
$$
\abstractvectorspace_{3,0}\to \bigoplus_{i=1}^{\numberoffixedopensets-1}(H^0(U_{\numberoffixedopensets},\Theta_U)/H^0(U_i,\Theta_U)).
$$
Denote the preimage under this map of 
$$
\ker\Bigg(\bigoplus_{i=1}^{\numberoffixedopensets-1}\Big(H^0(U_{\numberoffixedopensets},\Theta_U)/H^0(U_i,\Theta_U)\Big)\to 
\bigoplus_{1\le i<i'\le \numberoffixedopensets-1}\Big(H^0(U_{\numberoffixedopensets},\Theta_U)/H^0(U_i\cap U_{i'},\Theta_U)\Big)\Bigg)
$$
by $\abstractvectorspace_{3,1}\subseteq\abstractvectorspace_{3,0}$.
By Corollary \ref{h1computegen},
\begin{multline*}
H^1(U,\Theta_U)=
\left(\ker\Bigg(\bigoplus_{i=1}^{\numberoffixedopensets-1}\Big(H^0(U_{\numberoffixedopensets},\Theta_U)/H^0(U_i,\Theta_U)\Big)
\vphantom{\bigoplus_{1\le i<i'\le \numberoffixedopensets-1}}
\right.\!
\\
\left.\!
\left.\!
\longrightarrow 
\bigoplus_{1\le i<i'\le \numberoffixedopensets-1}\Big(H^0(U_{\numberoffixedopensets},\Theta_U)/H^0(U_i\cap U_{i'},\Theta_U)\Big)\Bigg)\right)\right/H^0(U_{\numberoffixedopensets},\Theta_U).
\end{multline*}
Therefore, we have a surjective map from $\abstractvectorspace_{3,1}$ to the zeroth graded component of $H^1(U,\Theta_U)$, 
and each element of $\abstractvectorspace_{3,1}$ can be interpreted as an element of the zeroth graded component of $H^1(U,\Theta_U)$.

Recall that if $p$ is an essential special point and $1\le j\le \numberofverticespt p$, then we have denoted by $\opensetforvertex{p,j}$
the index such that $U_{\opensetforvertex{p,j}}$ is the set among $U_i$ that corresponds to $(p,j)$.
Now we extend this notation so that we could use it also for removable special points. First, if $p$ is an essential 
special point, denote $\numberofverticesptdiff p=\numberofverticespt p$. If $p$ is a removable special point, 
denote by $\numberofverticesptdiff p$ the amount of sets $U_i$ corresponding to $p$ (there can be one or two such sets).
Recall that we enumerate the sets $U_i$ in such an order that if we have two sets $U_i$ corresponding 
to the same removable special point $p$, then they are consequent, i.~e. they are $U_i$ and $U_{i+1}$
for some $i$. Then denote this $i$ by $\opensetforvertex{p,1}$, and set $\opensetforvertex{p,2}=i+1$.
If we have only one set $U_i$ corresponding to a removable special point $p$, 
denote this $i$ by $\opensetforvertex{p,1}$.
Now we can say that in general, we enumerate the sets $U_i$ so that the sequence
$$
\opensetforvertex{p_1,1},\ldots,\opensetforvertex{p_1,\numberofverticesptdiff{p_1}},\ldots,\ldots,\ldots,
\opensetforvertex{p_{\numberofdivisorpoints},1},\ldots,\opensetforvertex{p_{\numberofdivisorpoints},\numberofverticesptdiff{p_{\numberofdivisorpoints}}}
$$
is just
$$
1,2,\ldots,\numberoffixedopensets-1.
$$

In Chapter \ref{combformula}, we also needed one coordinate function on $\PP^1$ (i.~e. a function with one zero and one pole) for each special point $p$.
This function was denoted by $t_p$ and it had its single zero at $p$. Now let us set $t_p=t-t(p)$ for all special points where $t$ is defined, 
and if $t(p)=\infty$ (then $p$ is a removable special point), then set $t_p=1/t$.

We will need one more notation. Fix a primitive polyhedron $\stdprimitivepolyhedronletter_i$ ($1\le i\le \numberofprimitivepolyhedra$).
Let $p_j$ be a special point. If $\primitivepolyhedronlocalmultiplicity{i}{j}=0$, set $\shiftstartingpoint{p_j}{i}=\numberofverticesptdiff{p_j}$.
Otherwise, $p_j$ is an essential special point, and for each vertex $\indexedvertexpt{p_j}{k}$ ($1\le k\le \numberofverticespt{p_j}$)
its normal vertex cone $\normalvertexcone{\stdpolyhedronletter_{p_j}}{\indexedvertexpt{p_j}{k}}$
is a subcone of one of two cones $\normalvertexcone{\stdprimitivepolyhedronletter_i}{\indexedvertex{\stdprimitivepolyhedronletter_i}{0}}$
or $\normalvertexcone{\stdprimitivepolyhedronletter_i}{\indexedvertex{\stdprimitivepolyhedronletter_i}{1}}$.
Moreover, the vertices of $\stdpolyhedronletter_{p_j}$ whose normal vertex cones are subcones of one of these two cones
are consequent, more precisely, the values of $j'$ such that 
$\normalvertexcone{\stdpolyhedronletter_{p_j}}{\indexedvertexpt{p_j}{k}}\subseteq
\normalvertexcone{\stdprimitivepolyhedronletter_i}{\indexedvertex{\stdprimitivepolyhedronletter_i}{0}}$
are all integers between $1$ and some $k_0$ ($1\le j_0<\numberofverticespt{p_j}$), inclusively. 
This $k_0$ is precisely the index such that $\indexededgept{p_j}{k_0}$ is the edge of $\stdpolyhedronletter_{p_j}$
parallel to the finite edge of $\stdprimitivepolyhedronletter_i$. Set $\shiftstartingpoint{i}{p_j}=k_0$.
Then $\normalvertexcone{\stdpolyhedronletter_{p_j}}{\indexedvertexpt{p_j}{k}}\subseteq
\normalvertexcone{\stdprimitivepolyhedronletter_i}{\indexedvertex{\stdprimitivepolyhedronletter_i}{0}}$
if and only if $1\le k\le \shiftstartingpoint{i}{p_j}$, 
and 
$\normalvertexcone{\stdpolyhedronletter_{p_j}}{\indexedvertexpt{p_j}{k}}\subseteq
\normalvertexcone{\stdprimitivepolyhedronletter_i}{\indexedvertex{\stdprimitivepolyhedronletter_i}{1}}$
if and only if $\shiftstartingpoint{i}{p_j}< k\le \numberofverticespt{p_j}$.

\begin{lemma}\label{multiplicityisedgelength}
If $\stdprimitivepolyhedronletter_i$ is a primitive polyhedron, 
$p_j$ is an essential special point, and $\primitivepolyhedronlocalmultiplicity{i}{j}\ne 0$, 
then $|\indexededgept{p_j}{\shiftstartingpoint{i}{p_j}}|=\primitivepolyhedronlocalmultiplicity{i}{j}$.
\end{lemma}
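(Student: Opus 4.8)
The plan is to exploit the Minkowski decomposition $\stdpolyhedronletter_{p_j} = \sum_{i'} \primitivepolyhedronlocalmultiplicity{i'}{j}\,\stdprimitivepolyhedronletter_{i'}$ together with the standard behaviour of faces and support functions under Minkowski addition. First I would recall that for a ray $\rho \subseteq \sck$ and a polyhedron $P$ with tail cone $\6$, the minimal face of $P$ in the direction $\rho$ (the set on which $\eval_P$ is attained along $\rho$) is additive with respect to Minkowski sums: the minimal face of $P+Q$ in direction $\rho$ is the Minkowski sum of the minimal faces of $P$ and of $Q$ in direction $\rho$. I would apply this to the ray $\rho = \normalvertexcone{\indexededgept{p_j}{\shiftstartingpoint{i}{p_j}}}{\stdpolyhedronletter_{p_j}}$, the normal ray of the edge in question. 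By the definition of $\shiftstartingpoint{i}{p_j}$, the edge $\indexededgept{p_j}{\shiftstartingpoint{i}{p_j}}$ is parallel to the finite edge $\indexededge{\stdprimitivepolyhedronletter_i}{1}$ of $\stdprimitivepolyhedronletter_i$, so $\rho$ coincides with $\normalvertexcone{\indexededge{\stdprimitivepolyhedronletter_i}{1}}{\stdprimitivepolyhedronletter_i}$.

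Next I would analyse the contribution of each summand. For any primitive polyhedron $\stdprimitivepolyhedronletter_{i'}$, which has exactly two vertices and one finite edge, the minimal face in direction $\rho$ is that finite edge if and only if $\rho$ is the normal ray of the finite edge, and is a single (lattice) vertex for every other ray. The key point to establish is that distinct primitive polyhedra have distinct normal rays of their finite edges: a primitive polyhedron equals $\6 + [0,b]$ for a primitive lattice vector $b$, and the ordering convention fixing $\indexededge{\6}{0}$ together with the requirement $\indexedvertex{\stdprimitivepolyhedronletter}{1}=0$ pins down $b$ (and not $-b$) once the direction of its finite edge, equivalently its normal ray, is known; hence two primitive polyhedra sharing the normal ray of their finite edges must coincide. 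Consequently, among the summands of $\stdpolyhedronletter_{p_j}$ only the $\primitivepolyhedronlocalmultiplicity{i}{j}$ copies of $\stdprimitivepolyhedronletter_i$ contribute an edge in direction $\rho$, while every $\stdprimitivepolyhedronletter_{i'}$ with $i'\ne i$ contributes only a lattice vertex.

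Combining these observations, I would conclude that $\indexededgept{p_j}{\shiftstartingpoint{i}{p_j}}$ equals $\primitivepolyhedronlocalmultiplicity{i}{j}\,\indexededge{\stdprimitivepolyhedronletter_i}{1}$ Minkowski-summed with a sum $\sum_{i'\ne i}\primitivepolyhedronlocalmultiplicity{i'}{j} v_{i'}$ of lattice vertices $v_{i'}$ of the remaining primitive polyhedra. Since $\indexededge{\stdprimitivepolyhedronletter_i}{1}$ is a primitive lattice segment ($\latticelength{\indexededge{\stdprimitivepolyhedronletter_i}{1}}=1$ by the definition of primitivity), the Minkowski sum of $\primitivepolyhedronlocalmultiplicity{i}{j}$ parallel copies of it is a lattice segment of lattice length $\primitivepolyhedronlocalmultiplicity{i}{j}$, and Minkowski-adding the lattice points merely translates this segment by a lattice vector, which leaves its lattice length unchanged. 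This yields $\latticelength{\indexededgept{p_j}{\shiftstartingpoint{i}{p_j}}}=\primitivepolyhedronlocalmultiplicity{i}{j}$, as claimed.

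The main obstacle I anticipate is the rigorous verification that distinct primitive polyhedra cannot share the normal ray of their finite edges; this rests on carefully unwinding the vertex-ordering convention (the fixed choice of $\indexededge{\6}{0}$) so as to rule out the antipodal possibility $b\mapsto -b$, and it is the one place where the full definition of a primitive polyhedron, rather than merely ``a lattice segment plus $\6$,'' is genuinely needed. By contrast, the additivity of minimal faces under Minkowski sum and the additivity of lattice length for parallel Minkowski summands are standard, and I would invoke them without grinding through the elementary verifications.
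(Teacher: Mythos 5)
Your proof is correct and follows essentially the same route as the paper, which disposes of this lemma in one line by citing the definition of $\shiftstartingpoint{i}{p_j}$ and the decomposition $\stdpolyhedronletter_{p_j}=\sum_{i'} \primitivepolyhedronlocalmultiplicity{i'}{j}\stdprimitivepolyhedronletter_{i'}$; you have simply spelled out the face-additivity of Minkowski sums and the orientation argument showing distinct primitive polyhedra have non-parallel finite edges, both of which the paper leaves implicit.
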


\begin{proof}
This follows from the definition of $\shiftstartingpoint{i}{p_j}$
and the fact that $\stdpolyhedronletter_{p_j}=\sum_i \primitivepolyhedronlocalmultiplicity{i}{j}\stdprimitivepolyhedronletter_i$.
\end{proof}

Now note that in Lemma \ref{ksmuidesc} and in Corollary \ref{ksmuqdesc}, if $l=0$, then 
$\indexedvertex{\stdprimitivepolyhedronletter_j}{l}=0$, so the $U_i$-description and 
the $U_{\numberoffixedopensets}$-description are both zero. So, the image of the Kodaira-Spencer map 
computed in the previous section can be written as follows.

\begin{lemma}
The image of the Kodaira-Spencer map for the deformation $\stdflatmorphism_{j,k}$ in $H^1(U,\Theta_U)$
is represented by the following class in $\abstractvectorspace_{3,1}$:
$$
s_{3,2,j,k}=(\uidescriptionfunctiondiff11,\uidescriptionfunctiondiff12,\stdvectorfiledpletter[1],\ldots, 
\uidescriptionfunctiondiff{\numberoffixedopensets-1}1,\uidescriptionfunctiondiff{\numberoffixedopensets-1}2,\stdvectorfiledpletter[\numberoffixedopensets-1]),
$$
where:
\begin{enumerate}
\item $\stdvectorfiledpletter[i]=0$ for all $i$ ($1\le i\le \numberoffixedopensets-1$).
\item $\uidescriptionfunctiondiff{\opensetforvertex{p,j'}}{j''}=0$ if $p$ is a special point, $1\le j'\le \shiftstartingpoint jp$, and $j''=1,2$.
\item 
$$
\uidescriptionfunctiondiff{\opensetforvertex{p,j'}}{j''}=
-\frac{t^k}
{t^{\primitivepolyhedronmultiplicity{j}}
+\sum_{k'=0}^{\primitivepolyhedronmultiplicity{j}-1}
\stdpolynomialcoefficient_{j,k'}^{(1)}t^{k'}}
\uidegree{\opensetforvertex{p,j'},j''}(\indexedvertex {\stdprimitivepolyhedronletter_j}1),
$$
if $p$ is a special point, $\shiftstartingpoint jp<j'\le\numberofverticesptdiff p$, and $j''=1,2$.\qed
\end{enumerate}
\end{lemma}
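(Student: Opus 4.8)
The plan is to read off the statement directly from the two computations already available: the identification of the Kodaira--Spencer image in Corollary \ref{h1qquotcomputed} and the explicit $U_i$-descriptions of the correction fields $\stdvectorfiledxletter_i$ in Lemma \ref{ksmuidesc}. The only real work is to translate the geometric condition on normal cones appearing in Lemma \ref{ksmuidesc} into the combinatorial bookkeeping encoded by the integers $\shiftstartingpoint jp$.

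First I would recall that, by Corollary \ref{h1qquotcomputed}, the image of $\stdflatmorphism_{j,k}$ under the Kodaira--Spencer map is the class of the tuple $(\stdvectorfiledxletter_i)_{1\le i\le \numberoffixedopensets-1}$ of restrictions of the correction fields to $U_{\numberoffixedopensets}$, where the identification of the zeroth graded component of $H^1(U,\Theta_U)$ with a subquotient of $\bigoplus_i H^0(U_{\numberoffixedopensets},\Theta_U)$ replaces each $\stdvectorfiledxletter_i|_{U_{\numberoffixedopensets}}$ by its $U_i$-description. Since $\abstractvectorspace_{3,0}$ is by definition this direct sum written via $U_i$-descriptions, and $\abstractvectorspace_{3,1}$ is the preimage of the relevant Cech kernel, the element of $\abstractvectorspace_{3,1}$ representing the Kodaira--Spencer image is exactly the tuple whose $i$th triple is the $U_i$-description of $\stdvectorfiledxletter_i$. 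That this tuple genuinely lands in the kernel, hence in $\abstractvectorspace_{3,1}$, is precisely the content of Lemma \ref{ksmregularrepresentatives}, so no extra regularity argument is needed here.

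Next I would substitute the formula of Lemma \ref{ksmuidesc} triple by triple. Its third component is always zero, which immediately gives $\stdvectorfiledpletter[i]=0$ for all $i$, i.e.\ property 1. For an index $i=\opensetforvertex{p,j'}$ attached to a special point $p$ that is removable, trivial, or has $\primitivepolyhedronlocalmultiplicity{j}{p}=0$, Lemma \ref{ksmuidesc} returns the zero triple; and in all these cases the definition of $\shiftstartingpoint jp$ forces $\shiftstartingpoint jp=\numberofverticesptdiff p$, so every such $j'$ lies in the range $1\le j'\le \shiftstartingpoint jp$ of property 2, where the description must vanish. This also explains why property 3 is vacuous for removable and trivial points (and for $p_{\numberofdivisorpoints}=\infty$), since then there is no $j'$ with $\shiftstartingpoint jp<j'\le\numberofverticesptdiff p$. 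For an essential special point $p$ with $\primitivepolyhedronlocalmultiplicity{j}{p}\ne 0$, Lemma \ref{ksmuidesc} produces a description governed by the vertex $\indexedvertex{\stdprimitivepolyhedronletter_j}{l}$ for which $\normalvertexcone{\indexedvertexpt p{j'}}{\stdpolyhedronletter_p}\subseteq\normalvertexcone{\indexedvertex{\stdprimitivepolyhedronletter_j}{l}}{\stdprimitivepolyhedronletter_j}$; the key step is then to invoke the definition of $\shiftstartingpoint jp$, which says exactly that $l=0$ for $1\le j'\le\shiftstartingpoint jp$ and $l=1$ for $\shiftstartingpoint jp<j'\le\numberofverticespt p=\numberofverticesptdiff p$.

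Finally, using $\indexedvertex{\stdprimitivepolyhedronletter_j}{0}=0$ to see that the $l=0$ triples vanish (matching property 2) and reading off the $l=1$ triples (matching property 3) completes the transcription. I expect the main obstacle to be purely organizational rather than analytic: one must keep the index conventions straight --- in particular distinguishing the $U_i$-descriptions that define $\abstractvectorspace_{3,0}$ from the $U_{\numberoffixedopensets}$-descriptions of Corollary \ref{ksmuqdesc} --- and carefully check the degenerate cases of trivial and removable special points and of $p_{\numberofdivisorpoints}=\infty$. Once the dictionary supplied by $\shiftstartingpoint jp$ is in place, the lemma follows directly from Lemma \ref{ksmuidesc}.
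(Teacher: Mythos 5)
Your proposal is correct and follows exactly the route the paper intends: the paper states this lemma with no proof beyond the preceding remark, treating it as the direct combination of Corollary \ref{h1qquotcomputed}, Lemma \ref{ksmuidesc} (with membership in $\abstractvectorspace_{3,1}$ supplied by Lemma \ref{ksmregularrepresentatives}), the observation that $\indexedvertex{\stdprimitivepolyhedronletter_j}{0}=0$, and the dictionary encoded by $\shiftstartingpoint{j}{p}$. Your handling of the degenerate cases (removable, trivial, and essential points with $\primitivepolyhedronlocalmultiplicity{j}{p}=0$) matches the paper's intent.
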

We keep the notation $s_{3,2,j,k}$ 
for further usage.


Denote the subspace of $\abstractvectorspace_{3,1}$ 
spanned by 
all $s_{3,2,j,k}$ 
for $1\le j\le \numberofprimitivepolyhedra$ and $0\le k\le \primitivepolyhedronmultiplicity{j}-1$
by $\abstractvectorspace_{3,2}$. 

Now it suffices to prove that the map $\abstractvectorspace_{3,2}\to
\ker(H^0(\PP^1,\givinv)\to H^0(\PP^1,\gviiiinv))$ is a surjection, and that 
$\im(\abstractvectorspace_{3,2}\to H^1(U,\Theta_U))$
contains $\im(H^1(\PP^1,\gi)\to H^1(U,\Theta_U))$.

Let us start with $\abstractvectorspace_{3,2}\to
\ker(H^0(\PP^1,\givinv)\to H^0(\PP^1,\gviiiinv))$.
First, we need to understand the map 
$\abstractvectorspace_{3,2}\to
H^0(\PP^1,\givinv)$.
Recall that we interpret $\givinv$ as
$$
\givinv=\left.\!\left(\ker\Bigg(\bigoplus_{i=1}^{\numberoffixedopensets-1}(\giicircinv/\giisiinv{i})\to
\bigoplus_{1\le i<j\le \numberoffixedopensets-1}(\giicircinv/\giipsijinv{i,j})\Bigg)\right)\right/\giicircinv.
$$
In particular, global sections of $\bigoplus_{i=1}^{\numberoffixedopensets-1}\giicircinv$
that project down to the appropriate kernel define global sections of $\givinv$.
Now it follows from the discussion in the end of Section \ref{sectleray} that 
the map $\abstractvectorspace_{3,2}\to
H^0(\PP^1,\givinv)$
is induced by the following map 
$\abstractvectorspace_{3,2}\to \bigoplus_{i=1}^{\numberoffixedopensets-1}\Gamma(\PP^1,\giicircinv)$:
Given
$$
(\uidescriptionfunctiondiff11,\uidescriptionfunctiondiff12,\stdvectorfiledpletter[1],\ldots, 
\uidescriptionfunctiondiff{\numberoffixedopensets-1}1,\uidescriptionfunctiondiff{\numberoffixedopensets-1}2,\stdvectorfiledpletter[\numberoffixedopensets-1])
\in \abstractvectorspace_{3,2},
$$
let 
$$
(\stdvectorfiledxletter[1],\ldots,\stdvectorfiledpletter[\numberoffixedopensets-1])\in \bigoplus_{i=1}^{\numberoffixedopensets-1}\Gamma(\PP^1,\giicircinv)
$$
be such that each $\stdvectorfiledxletter[i]$ is the vector field of degree 0 on $U_{\numberoffixedopensets}$
with $U_i$-description
$(\uidescriptionfunctiondiff i1,\uidescriptionfunctiondiff i2,\stdvectorfiledpletter[i])$.

We will follow the argument from Chapter \ref{combformula}.
There we have introduced the notion of an excessive index $i$, 
which is one of two indices corresponding to a removable special point $p$.
We checked that we can replace $\bigoplus_{i=1}^{\numberoffixedopensets-1} \giicircinv$
with 
$$
\bigoplus_{\substack{1\le i\le\numberoffixedopensets-1\\\text{$i$ is not exessive}}} \giicircinv
$$
(the morphism 
$$
\bigoplus_{\substack{1\le i\le\numberoffixedopensets-1\\\text{$i$ is not exessive}}} \giicircinv\to \bigoplus_{i=1}^{\numberoffixedopensets-1} \giicircinv
$$
duplicates the entries with the non-excessive indices $i$ corresponding to the same removable special points
to get the entries with excessive indices)
so that 
$$
\bigoplus_{\substack{1\le i\le\numberoffixedopensets-1\\\text{$i$ is not exessive}}} \giicircinv
$$
is mapped surjectively onto
$\givinv$.
Since in each element of $\abstractvectorspace_{3,2}$ 
all entries with indices (both excessive and non-excessive) corresponding to 
essential special points are zeros, we can just forget entries corresponding to 
the excessive indices to get the map
$$\abstractvectorspace_{3,2}\to \bigoplus_{\substack{1\le i\le\numberoffixedopensets-1\\\text{$i$ is not exessive}}}\Gamma(\PP^1,\giicircinv)$$
from the map
$\abstractvectorspace_{3,2}\to \bigoplus_{i=1}^{\numberoffixedopensets-1}\Gamma(\PP^1,\giicircinv)$
described above.

After that, we have split the interpretation of $\Gamma(\PP^1, \givinv)$ as
$$
\Gamma\left(\PP^1,
\left.\!\Bigg(\bigoplus_{\substack{1\le i\le \numberoffixedopensets-1\\ i\text{ is not excessive}}}(\giicircinv/\giisiinv{i})\Bigg)\right/\giicircinv
\right)
$$
into $\numberofdivisorpoints$ direct summands, each of them 
consisted of sections over $W_p$ for a special point $p$ (Corollary \ref{g4g8directsum} ad discussion below).
The morphism between these two interpretations was the restriction map for sheaves from 
$\PP^1$ to $W_p$. More important, the kernel $\ker \Gamma(\PP^1,\givinv)\to \Gamma(\PP^1,\gviiiinv)$
also gets split into $\numberofdivisorpoints$ direct summands, in other words, 
the kernel equals the sum of its intersections with each direct summand.

Then some of the summands of the double direct sum turned out to be zero, and 
we got the following interpretation of $\Gamma(\PP^1, \givinv)$ (Lemma \ref{consideronespecialpoint}):
$$
\Gamma(\PP^1,\givinv)\cong\bigoplus_{\substack{p\text{ special}\\\text{point}}}\left(\left.\!\Bigg(
\bigoplus_{\substack{1\le i\le \numberoffixedopensets-1\\ i\text{ is not excessive}\\ V_i=W_p}}\left(\Gamma(W_p,\giicircinv)/
\Gamma(W_p,\giisiinv{i\vphantom{\opensetforvertex{p,j}}})\right)\Bigg)\right/\Gamma(W_p,\giicircinv)\right).
$$
The isomorphism between these two interpretations of $\Gamma(\PP^1,\givinv)$ works as follows:
given an element $g$ of 
$$
\Gamma\left(\PP^1,
\left.\!\Bigg(\bigoplus_{\substack{1\le i\le \numberoffixedopensets-1\\ i\text{ is not excessive}}}(\giicircinv/\giisiinv{i})\Bigg)\right/\giicircinv
\right),
$$
each entry of its image in 
$$
\bigoplus_{p\text{ special point}}\left(\left.\!\Bigg(
\bigoplus_{\substack{1\le i\le \numberoffixedopensets-1\\ i\text{ is not excessive}\\ V_i=W_p}}\left(\Gamma(W_p,\giicircinv)/
\Gamma(W_p,\giisiinv{i\vphantom{\opensetforvertex{p,j}}})\right)\Bigg)\right/\Gamma(W_p,\giicircinv)\right).
$$
with index $i$ in the inner direct sum (in fact, there is only one such entry 
for each $i$, where $1\le i\le \numberoffixedopensets-1$, $i$ is not excessive)
is (locally on $\PP^1$) the $i$th entry the direct sum for $g$. 


In particular, if $g$ originates from a global section of 
$$
\bigoplus_{\substack{1\le i\le \numberoffixedopensets-1\\ i\text{ is not excessive}}}\giicircinv,
$$
then the entry of the result with index $i$ in the inner sum is the restriction of the $i$th entry 
of $g$ from $\PP^1$ to $W_p$. In fact, this restriction is a trivial operation since $\pi^{-1}(W_p)\cap U_{\numberoffixedopensets}=U_{\numberoffixedopensets}$.

Hence, the map 
$$
\abstractvectorspace_{3,2}\to \bigoplus_{p\text{ special point}}\left(\left.\!\Bigg(
\bigoplus_{\substack{1\le i\le \numberoffixedopensets-1\\ i\text{ is not excessive}\\ V_i=W_p}}\left(\Gamma(W_p,\giicircinv)/
\Gamma(W_p,\giisiinv{i\vphantom{\opensetforvertex{p,j}}})\right)\Bigg)\right/\Gamma(W_p,\giicircinv)\right).
$$
works as follows. Given
$$
(\uidescriptionfunctiondiff11,\uidescriptionfunctiondiff12,\stdvectorfiledpletter[1],\ldots, 
\uidescriptionfunctiondiff{\numberoffixedopensets-1}1,\uidescriptionfunctiondiff{\numberoffixedopensets-1}2,\stdvectorfiledpletter[\numberoffixedopensets-1])
\in \abstractvectorspace_{3,2},
$$
for each $i$ ($1\le i\le \numberoffixedopensets-1$, $i$ is not excessive), the entry of the result with index $i$ 
in the inner direct sum is the vector field on $U_{\numberoffixedopensets}$ with 
the $U_i$-description $(\uidescriptionfunctiondiff i1,\uidescriptionfunctiondiff i2,\stdvectorfiledpletter[i])$.

After that, we proved that the summands of the outer direct sum where $p$ is a removable special point
are in fact zero, and removed them, rewriting $\Gamma(\PP^1,\givinv)$ as
$$
\bigoplus_{\text{$p$ essential special point}}\left(\left.\!
\Bigg(\bigoplus_{j=1}^{\numberofverticespt p}
\left(\Gamma(W_p,\giicircinv)/
\Gamma(W_p,\giisiinv{\opensetforvertex{p,j}})\right)\Bigg)
\right/\Gamma(W_p,\giicircinv)\right).
$$
Again, the kernel $\ker \Gamma(\PP^1,\givinv)\to \Gamma(\PP^1,\gviiiinv)$
is also split in the direct sum of its intersections with each of the direct summands.
The map from $\abstractvectorspace_{3,2}$ again computes the $(p,j)$th entry out of the 
$\opensetforvertex{p,j}$th entry of an element of $\abstractvectorspace_{3,2}$ 
treated as an $U_{\opensetforvertex{p,j}}$-description.

Recall that for each essential special point $p$ and for each $j$ ($1\le j\le \numberofverticespt p$) we have denoted 
by $\gsiicirc{p,j}$ the space of triples of two regular functions and one vector field defined on $W\subset \PP^1$. 
We also have denoted by $\kappa_{\Theta,p,j}$ the map (actually, it is an isomorphism) 
$\kappa_{\Theta,p,j}\colon \gsiicirc{p,j}\to \Gamma(W_p,\giicircinv)$
that computes a vector field defined on $U_{\numberoffixedopensets}$ out if its 
$U_{\opensetforvertex{p,j}}$-description. Note also that 
$\Gamma(W_p,\giicircinv)=\Gamma(\PP^1,\giicircinv)$
since both spaces are the spaces of $T$-invariant vector fields defined on 
$U_{\numberoffixedopensets}=U_{\opensetforvertex{p,j}}\cap U_{\numberoffixedopensets}$.
The direct sum of maps $\kappa_{\Theta,p,j}$ for a fixed essential special point $p$ 
and for all $j$ ($1\le j\le \numberofverticespt p$) was denoted by $\kappa_{\Theta,p}$

%

Let us also denote 
$$
\gsiicircgen{p}=\bigoplus_{j=1}^{\numberofverticespt p} \gsiicirc{p,j}\text{ and }
\gsiicircgengen=\bigoplus_{\text{$p$ essential special point}}\gsiicircgen{p}.
$$
Denote
the direct sum of all isomorphisms 
$\kappa_{\Theta,p}$
for all essential special points $p$ by
by 
$$
\kappa_\Theta\colon 
\gsiicircgengen\to
\bigoplus_{\text{$p$ essential special point}}\bigoplus_{j=1}^{\numberofverticespt p} \Gamma(\PP^1,\giicircinv).
$$
Using this isomorphism, we can say that 
we have a map 
$\abstractvectorspace_{3,2}\to \gsiicircgengen$,
and this map works as follows:
the $(p,j)$th entry of the image of an element $g$ of $\abstractvectorspace_{3,2}$ is the $\opensetforvertex{p,j}$th triple 
(i.~e. the $(3\opensetforvertex{p,j}-2)$th, the $(3\opensetforvertex{p,j}-1)$th, and the $3\opensetforvertex{p,j}$th entries) of $g$.

To proceed, we will need a more convenient basis for $\abstractvectorspace_{3,2}$.
Recall that for each primitive polyhedron $\stdprimitivepolyhedronletter_i$ ($1\le i\le\numberofprimitivepolyhedra$),
$$
t^{\primitivepolyhedronmultiplicity{i}}
+\sum_{k=0}^{\primitivepolyhedronmultiplicity{i}-1}
\stdpolynomialcoefficient_{i,k}^{(1)}t^{k}=
\prod_{j=1}^{\numberofdivisorpoints}(t-t(p_j))^{\primitivepolyhedronlocalmultiplicity{i}{j}}.
$$

\begin{lemma}\label{partialfraction}
Fix a primitive polyhedron $\stdprimitivepolyhedronletter_i$ ($1\le i\le\numberofprimitivepolyhedra$)
and denote temporarily $f=t^{\primitivepolyhedronmultiplicity{i}}
+\sum_{k'=0}^{\primitivepolyhedronmultiplicity{i}-1}
\stdpolynomialcoefficient_{i,k}^{(1)}t^{k}=
\prod_{j=1}^{\numberofdivisorpoints}(t-t(p_j))^{\primitivepolyhedronlocalmultiplicity{i}{j}}$
Consider the rational functions 
$$
\frac{t^{\primitivepolyhedronmultiplicity{i}}
+\sum_{k=0}^{\primitivepolyhedronmultiplicity{i}-1}
\stdpolynomialcoefficient_{i,k'}^{(1)}t^{k'}}{(t-t(p_j))^{k}}
$$
for all $j$ and $k$ such that $1\le j\le \numberofdivisorpoints$ and 
$1\le k\le \primitivepolyhedronlocalmultiplicity{i}{j}$.

All these functions together span the same subspace in rational functions on $\PP^1$ as 
$1,t,\ldots, t^{\primitivepolyhedronmultiplicity{i}-1}$.
\end{lemma}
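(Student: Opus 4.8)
The plan is to reduce the claim to a dimension count together with the uniqueness of partial fraction decompositions. First I would observe that each of the functions in question is in fact a polynomial: since $1\le k\le \primitivepolyhedronlocalmultiplicity{i}{j}$ and $f$ is divisible by $(t-t(p_j))^{\primitivepolyhedronlocalmultiplicity{i}{j}}$, we have
$$
\frac{f}{(t-t(p_j))^{k}}=(t-t(p_j))^{\primitivepolyhedronlocalmultiplicity{i}{j}-k}\prod_{j'\ne j}(t-t(p_{j'}))^{\primitivepolyhedronlocalmultiplicity{i}{j'}},
$$
which is a polynomial of degree $\primitivepolyhedronmultiplicity{i}-k\le \primitivepolyhedronmultiplicity{i}-1$. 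Hence all of these functions lie in the span of $1,t,\ldots,t^{\primitivepolyhedronmultiplicity{i}-1}$, giving one inclusion of spans for free.

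Next I would count dimensions. The functions are indexed by pairs $(j,k)$ with $1\le j\le \numberofdivisorpoints$ and $1\le k\le \primitivepolyhedronlocalmultiplicity{i}{j}$, so their number is $\sum_{j=1}^{\numberofdivisorpoints}\primitivepolyhedronlocalmultiplicity{i}{j}=\primitivepolyhedronmultiplicity{i}$, which is exactly the dimension of the target space $\Span(1,t,\ldots,t^{\primitivepolyhedronmultiplicity{i}-1})$. Therefore it suffices to prove that the given functions are linearly independent; once that is established, they form a basis of the polynomial space of degree $<\primitivepolyhedronmultiplicity{i}$, and the two spans coincide.

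The linear independence is where the only real content lies, and it follows from the uniqueness of partial fraction decomposition. Suppose $\sum_{j,k}c_{j,k}\,f/(t-t(p_j))^{k}=0$ as a rational function on $\PP^1$. Dividing by the nonzero rational function $f$ gives $\sum_{j,k}c_{j,k}/(t-t(p_j))^{k}=0$, and since the values $t(p_j)$ are distinct, uniqueness of the partial fraction expansion of the zero function forces every $c_{j,k}$ to vanish. I do not expect any genuine obstacle here; the only points requiring care are keeping the degree bookkeeping straight (so that the count $\sum_j \primitivepolyhedronlocalmultiplicity{i}{j}=\primitivepolyhedronmultiplicity{i}$ matches the dimension of the target) and making sure the reduction to the standard partial-fractions statement is legitimate, in particular noting that $f\ne 0$ so that division by $f$ is allowed.
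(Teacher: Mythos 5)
Your proof is correct and rests on the same key fact as the paper's: the partial fraction decomposition with denominator $f$. The paper simply cites that the functions $1/(t-t(p_j))^k$ form a basis of $\{g/f : \deg g\le \primitivepolyhedronmultiplicity{i}-1\}$ and multiplies through by $f$, while you unpack this into a span inclusion, a dimension count, and a linear-independence check via uniqueness of partial fractions — essentially the same argument in slightly more detail.
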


\begin{proof}
By partial fraction decomposition theorem, the functions
$$
\frac1{(t-t(p_j))^{k}}
$$
for all $j$ and $k$ such that $1\le j\le \numberofdivisorpoints$ and 
$1\le k\le \primitivepolyhedronlocalmultiplicity{i}{j}$
form a basis of all rational functions of the form
$$
\frac f{\prod_{j=1}^{\numberofdivisorpoints}(t-t(p_j))^{\primitivepolyhedronlocalmultiplicity{i}{j}}}=
\frac f
{t^{\primitivepolyhedronmultiplicity{i}}
+\sum_{k=0}^{\primitivepolyhedronmultiplicity{i}-1}
\stdpolynomialcoefficient_{i,k}^{(1)}t^{k}},
$$
where $f$ is a polynomial in $t$ of degree at most $-1+\sum_{j=1}^{\numberofprimitivepolyhedra} \primitivepolyhedronlocalmultiplicity{i}{j}=
\primitivepolyhedronmultiplicity{i}-1$.
After the multiplication by 
$t^{\primitivepolyhedronmultiplicity{i}}
+\sum_{k=0}^{\primitivepolyhedronmultiplicity{i}-1}
\stdpolynomialcoefficient_{i,k}^{(1)}t^{k}$,
we get the claim of the lemma.
\end{proof}

For each $j$ ($1\le j\le \numberofprimitivepolyhedra$) and for each pair $(j',k)$, where $1\le j'\le \numberofdivisorpoints$
and $1\le k\le \primitivepolyhedronlocalmultiplicity{j}{j'}$ denote 
$$
s'_{3,2,j,j',k}=(\uidescriptionfunctiondiff11,\uidescriptionfunctiondiff12,\stdvectorfiledpletter[1],\ldots, 
\uidescriptionfunctiondiff{\numberoffixedopensets-1}1,\uidescriptionfunctiondiff{\numberoffixedopensets-1}2,\stdvectorfiledpletter[\numberoffixedopensets-1]),
$$
where:
\begin{enumerate}
\item $\stdvectorfiledpletter[i]=0$ for all $i$.
\item $\uidescriptionfunctiondiff{\opensetforvertex{p,j''}}1=\uidescriptionfunctiondiff{\opensetforvertex{p,j''}}2=0$
if $p$ is a special point and $1\le j''\le \shiftstartingpoint jp$.
\item 
$$
\uidescriptionfunctiondiff{\opensetforvertex{p,j''}}{j'''}=
-\frac{1}
{(t-t(p_{j'}))^k}
\uidegree{\opensetforvertex{p,j''},j'''}(\indexedvertex {\stdprimitivepolyhedronletter_j}1),
$$
if $p$ is a special point, $\shiftstartingpoint jp<j''\le\numberofverticesptdiff p$, and $j'''=1,2$.
\end{enumerate}

\begin{lemma}\label{s3pspans}
$\abstractvectorspace_{3,2}$ is spanned by all functions $s'_{3,2,j,j',k}$, where 
$1\le j\le \numberofprimitivepolyhedra$, 
$1\le j'\le \numberofdivisorpoints$,
and $1\le k\le \primitivepolyhedronlocalmultiplicity{j}{j'}$.
\end{lemma}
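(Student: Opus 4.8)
The plan is to handle each primitive polyhedron $\stdprimitivepolyhedronletter_j$ separately and to notice that, for a fixed $j$, the two families $\{s_{3,2,j,k}\}_{k}$ and $\{s'_{3,2,j,j',k}\}_{j',k}$ consist of tuples carrying one and the same combinatorial pattern, differing only by a scalar rational function in $t$ multiplying that pattern. Concretely, for fixed $j$ and a rational function $\phi$ on $\PP^1$ I would introduce the tuple $\Phi_j(\phi)$ whose triple in position $\opensetforvertex{p,j''}$ equals $(0,0,0)$ when $1\le j''\le\shiftstartingpoint jp$ and equals $\bigl(-\phi\,\uidegree{\opensetforvertex{p,j''},1}(\indexedvertex{\stdprimitivepolyhedronletter_j}1),\,-\phi\,\uidegree{\opensetforvertex{p,j''},2}(\indexedvertex{\stdprimitivepolyhedronletter_j}1),\,0\bigr)$ when $\shiftstartingpoint jp<j''\le\numberofverticesptdiff p$, for every special point $p$. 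Since every index $1,\ldots,\numberoffixedopensets-1$ is of the form $\opensetforvertex{p,j''}$, this determines $\Phi_j(\phi)$ completely, and $\Phi_j$ is visibly linear in $\phi$. Comparing with the definitions, I would record that, writing $F_j=t^{\primitivepolyhedronmultiplicity j}+\sum_{k'=0}^{\primitivepolyhedronmultiplicity j-1}\stdpolynomialcoefficient_{j,k'}^{(1)}t^{k'}$,
\[
s_{3,2,j,k}=\Phi_j\!\left(\frac{t^k}{F_j}\right),\qquad s'_{3,2,j,j',k}=\Phi_j\!\left(\frac{1}{(t-t(p_{j'}))^k}\right).
\]

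Then the two spans become images under $\Phi_j$ of two spaces of rational coefficients. On the one hand, $\{t^k/F_j:0\le k\le\primitivepolyhedronmultiplicity j-1\}$ spans $\tfrac1{F_j}\Span_\CC\{1,t,\ldots,t^{\primitivepolyhedronmultiplicity j-1}\}$. On the other hand, $\{1/(t-t(p_{j'}))^k:1\le j'\le\numberofdivisorpoints,\,1\le k\le\primitivepolyhedronlocalmultiplicity j{j'}\}$ is obtained by dividing the functions $F_j/(t-t(p_{j'}))^k$ by the fixed function $F_j$, and Lemma \ref{partialfraction} says precisely that the functions $F_j/(t-t(p_{j'}))^k$ span $\Span_\CC\{1,t,\ldots,t^{\primitivepolyhedronmultiplicity j-1}\}$; hence the second family also spans $\tfrac1{F_j}\Span_\CC\{1,t,\ldots,t^{\primitivepolyhedronmultiplicity j-1}\}$. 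Thus the two coefficient spaces coincide, and since $\Phi_j$ is linear their images under $\Phi_j$ coincide as well; that is, $\Span\{s_{3,2,j,k}:k\}=\Span\{s'_{3,2,j,j',k}:j',k\}$ for every $j$.

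Summing these equalities over $j=1,\ldots,\numberofprimitivepolyhedra$ and recalling that $\abstractvectorspace_{3,2}$ is by definition the span of all $s_{3,2,j,k}$, I would conclude that $\abstractvectorspace_{3,2}$ equals the span of all $s'_{3,2,j,j',k}$. The only point requiring genuine care is the first step: one must check against the explicit definitions that the ``pattern'' (the choice of vanishing slots and the weights $\uidegree{\opensetforvertex{p,j''},j'''}(\indexedvertex{\stdprimitivepolyhedronletter_j}1)$) is literally identical for $s_{3,2,j,k}$ and $s'_{3,2,j,j',k}$ at the same $j$, so that both are genuinely $\Phi_j$ applied to a scalar coefficient; once this is in place, the remainder is the transfer of Lemma \ref{partialfraction} from its ``numerator $F_j$'' form to the desired ``$1/F_j$'' form, which is immediate.
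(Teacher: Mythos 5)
Your argument is correct and is essentially the paper's own proof, made explicit: the paper likewise observes that both families share the same pattern of zero slots and weights $\uidegree{\opensetforvertex{p,j''},j'''}(\indexedvertex{\stdprimitivepolyhedronletter_j}1)$ and differ only in the scalar numerator, and then invokes Lemma \ref{partialfraction} to replace the numerators $1,t,\ldots,t^{\primitivepolyhedronmultiplicity{j}-1}$ by the functions $F_j/(t-t(p_{j'}))^k$ without changing the span. Your factorization through the linear map $\Phi_j$ is just a cleaner way of recording that replacement.
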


\begin{proof}
By Lemma \ref{partialfraction}, we may replace the numerators of functions in $s_{3,2,j,k}$
from $1,t,\ldots, t^{\primitivepolyhedronmultiplicity{j}}$
to the functions from the statement of Lemma \ref{partialfraction}. After doing this, we get exactly
$s_{3,2,j,j',k}$.
\end{proof}

%
%
%


Now let us recall that for each essential special point $p$ we had a vector space $\abstractvectorspace_{1,2,p}$
and a map $\rho_p\colon \abstractvectorspace_{1,2,p}\to \gsiicircgen{p}$
(actually, it maps $\abstractvectorspace_{1,2,p}$ to a subspace of $\gsiicircgen{p}$, which 
was called $\abstractvectorspace_{1,0,p}$)
such that the composition $\abstractvectorspace_{1,2,p}\to \gsiicircgen{p}\to \Gamma(\PP^1,\givinv)$
maps $\abstractvectorspace_{1,2,p}$ surjectively onto the graded component corresponding to $p$ of the kernel 
$\ker (\Gamma(\PP^1,\givinv)\to \Gamma(\PP^1,\gviiiinv))$ 
(see Proposition \ref{avs22p} and Remark \ref{commdiagremark}).

So, denote $\abstractvectorspace_{1,2}=\bigoplus_{\text{$p$ essential special point}}\abstractvectorspace_{1,2,p}$,
and denote the direct sum of all maps $\rho_p$ by $\rho$. Then 
the composition 
$\abstractvectorspace_{1,2}\to 
\gsiicircgengen\to 
\Gamma(\PP^1,\givinv)$
maps $\abstractvectorspace_{1,2}$ surjectively onto 
$\ker (\Gamma(\PP^1,\givinv)\to \Gamma(\PP^1,\gviiiinv))$.
By the definition of the maps $\rho_p$, $\rho$ actually works as follows:
it computes $U_{\opensetforvertex{p,j}}$-descriptions out of $U_{\numberoffixedopensets}$-descriptions 
and adds some zeros.

It would be sufficient to prove that the image of $\abstractvectorspace_{3,2}$ in 
$\gsiicircgengen$
contains $\rho(\abstractvectorspace_{3,2})$, but this is not true in general.
Instead, we will construct another vector space $\abstractvectorspace_{3,3}$, whose elements 
will define the same classes in $\Gamma(\PP^1,\givinv)$ (and even in $H^1(U,\Theta_U)$) as elements of 
$\abstractvectorspace_{3,2}$, and whose image in 
$\gsiicircgengen$
will contain $\abstractvectorspace_{1,2}$.

Namely, $\abstractvectorspace_{3,3}\subseteq \abstractvectorspace_{3,0}$ will consist of some of the $3(\numberoffixedopensets-1)$-tuples
of the form
$$
(\uidescriptionfunctiondiff11,\uidescriptionfunctiondiff12,\stdvectorfiledpletter[1],\ldots, 
\uidescriptionfunctiondiff{\numberoffixedopensets-1}1,\uidescriptionfunctiondiff{\numberoffixedopensets-1}2,\stdvectorfiledpletter[\numberoffixedopensets-1]),
$$
where each $\uidescriptionfunctiondiff ij$ is a regular function on $W\subset \PP^1$, each $\stdvectorfiledpletter[i]$
is a vector field on $W\subset \PP^1$. We will take only some of these $3(\numberoffixedopensets-1)$-tuples,
not all of them. More precisely, $\abstractvectorspace_{3,3}$ will be spanned by the following elements 
$s_{3,3,j,j',k}$, where 
$1\le j\le \numberofprimitivepolyhedra$, $1\le j'\le \numberofdivisorpoints$,
and $1\le k\le \primitivepolyhedronlocalmultiplicity{j}{j'}$:
$$
s_{3,3,j,j',k}=(\uidescriptionfunctiondiff11,\uidescriptionfunctiondiff12,\stdvectorfiledpletter[1],\ldots, 
\uidescriptionfunctiondiff{\numberoffixedopensets-1}1,\uidescriptionfunctiondiff{\numberoffixedopensets-1}2,\stdvectorfiledpletter[\numberoffixedopensets-1]),
$$
where:
\begin{enumerate}
\item $\stdvectorfiledpletter[i]=0$ for all $i$.
\item 
\begin{enumerate}
\item If $i=\opensetforvertex{p_{j'},j''}$, where $\shiftstartingpoint j{p_{j'}}<j''\le \numberofverticesptdiff{p_{j'}}$, then
$$
\uidescriptionfunctiondiff{i}{j'''}=
-\frac{1}
{(t-t(p_{j'}))^k}
\uidegree{i,j'''}(\indexedvertex {\stdprimitivepolyhedronletter_j}1),
$$
for $j'''=1,2$.
\item 
Otherwise (if $i$ is not of this form),
$\uidescriptionfunctiondiff{i}1=\uidescriptionfunctiondiff{i}2=0$.
\end{enumerate}
\end{enumerate}

\begin{lemma}
For each $j$, $j'$, and $k$ ($1\le j\le \numberofprimitivepolyhedra$, $1\le j'\le \numberofdivisorpoints$,
and $1\le k\le \primitivepolyhedronlocalmultiplicity{j}{j'}$),
$s'_{3,2,j,j',k}\in\abstractvectorspace_{3,2}$ and $s_{3,3,j,j',k}\in\abstractvectorspace_{3,3}$
define the same class in 
$\bigoplus_{i=1}^{\numberofabstractopensets-1}(\Gamma(U_{\numberoffixedopensets},\Theta_U)/\Gamma(U_i,\Theta_U))$.
\end{lemma}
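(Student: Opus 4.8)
The plan is to prove the statement by showing that the difference $s'_{3,2,j,j',k}-s_{3,3,j,j',k}$ already maps to zero in $\bigoplus_{i=1}^{\numberoffixedopensets-1}(\Gamma(U_{\numberoffixedopensets},\Theta_U)/\Gamma(U_i,\Theta_U))$, entry by entry. First I would compute this difference directly from the two defining lists. In both tuples every vector-field component $\stdvectorfiledpletter[i]$ vanishes, and the function components agree at every index of the form $i=\opensetforvertex{p_{j'},j''}$ with $\shiftstartingpoint{j}{p_{j'}}<j''\le\numberofverticesptdiff{p_{j'}}$, since there both equal $-\frac{1}{(t-t(p_{j'}))^k}\uidegree{i,j'''}(\indexedvertex{\stdprimitivepolyhedronletter_j}{1})$. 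Hence the difference is supported precisely on the indices $i=\opensetforvertex{p,j''}$ for which $p$ is a special point \emph{different} from $p_{j'}$ and $\shiftstartingpoint{j}{p}<j''\le\numberofverticesptdiff{p}$; on such an index the two function components are $\uidescriptionfunctiondiff{i}{j'''}=-\frac{1}{(t-t(p_{j'}))^k}\uidegree{i,j'''}(\indexedvertex{\stdprimitivepolyhedronletter_j}{1})$ for $j'''=1,2$, and the vector-field component is zero. Recall that $\shiftstartingpoint{j}{p}=\numberofverticesptdiff{p}$ whenever $\primitivepolyhedronlocalmultiplicity{j}{\cdot}$ vanishes at $p$, so these surviving indices in fact occur only at essential special points.

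Next I would verify that each surviving entry is zero in its quotient $\Gamma(U_{\numberoffixedopensets},\Theta_U)/\Gamma(U_i,\Theta_U)$. Fix such an index $i=\opensetforvertex{p,j''}$; then $V_i=W_p$, the set of all ordinary points together with $p$. The rational function $1/(t-t(p_{j'}))^k$ has its only pole at $p_{j'}$, which is a special point distinct from $p$ and therefore not contained in $W_p$; consequently $1/(t-t(p_{j'}))^k$ is regular on all of $V_i$. Since the coefficients $\uidegree{i,j'''}(\indexedvertex{\stdprimitivepolyhedronletter_j}{1})$ are constants, the triple $(\uidescriptionfunctiondiff{i}{1},\uidescriptionfunctiondiff{i}{2},0)$ is regular on the whole of $V_i=W_p$, not merely on $W$.

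Finally I would invoke the correspondence between $U_i$-descriptions and vector fields. By Corollary \ref{uidescvfield} there is a homogeneous degree-$0$ vector field on $U_i$ whose $U_i$-description is exactly this regular triple, and by Remark \ref{uidescvfieldrestriction} its restriction to $U_{\numberoffixedopensets}$ has the same $U_i$-description; hence the $i$th entry of the difference lies in the image of the restriction map $\Gamma(U_i,\Theta_U)\to\Gamma(U_{\numberoffixedopensets},\Theta_U)$ and so vanishes in $\Gamma(U_{\numberoffixedopensets},\Theta_U)/\Gamma(U_i,\Theta_U)$. As every nonzero entry of the difference dies in its quotient, $s'_{3,2,j,j',k}$ and $s_{3,3,j,j',k}$ define the same class. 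The computation is routine; the only real content is the observation that the pole of $1/(t-t(p_{j'}))^k$ sits at $p_{j'}$ and is therefore invisible at each special point $p\ne p_{j'}$, which is exactly why the entries where the two tuples differ are the regular ones that become trivial in the quotient. The main care needed is bookkeeping: confirming that the index ranges match and that $V_i=W_p$ genuinely excludes $p_{j'}$.
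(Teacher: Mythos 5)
Your proof is correct and follows essentially the same route as the paper: both arguments observe that the two tuples differ only at indices $\opensetforvertex{p,j''}$ with $p\ne p_{j'}$ and $\shiftstartingpoint{j}{p}<j''\le\numberofverticesptdiff{p}$, and that there the offending triple is regular on $V_i=W_p$ because the only pole of $1/(t-t(p_{j'}))^k$ sits at $p_{j'}\notin W_p$, so it is the $U_i$-description of a vector field defined on all of $U_i$ and hence dies in the quotient. The bookkeeping of index ranges and the appeal to Corollary \ref{uidescvfield} and Remark \ref{uidescvfieldrestriction} match the paper's reasoning.
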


\begin{proof}
Write
$$
s'_{3,2,j,j',k}=(\uidescriptionfunctiondiff11,\uidescriptionfunctiondiff12,\stdvectorfiledpletter[1],\ldots, 
\uidescriptionfunctiondiff{\numberoffixedopensets-1}1,\uidescriptionfunctiondiff{\numberoffixedopensets-1}2,\stdvectorfiledpletter[\numberoffixedopensets-1]),
$$
$$
s_{3,3,j,j',k}=(\uidescriptionfunctiondiff11',\uidescriptionfunctiondiff12',\stdvectorfiledpletter[1]',\ldots, 
\uidescriptionfunctiondiff{\numberoffixedopensets-1}1',\uidescriptionfunctiondiff{\numberoffixedopensets-1}2',\stdvectorfiledpletter[\numberoffixedopensets-1]').
$$

Choose an index $i$ such that the $i$th entries of the images of 
$s'_{3,2,j,j',k}$ and of $s_{3,3,j,j',k}$ in 
$\bigoplus_{i=1}^{\numberofabstractopensets-1}\Gamma(U_{\numberoffixedopensets},\Theta_U)$
differ. First, note that $i$ corresponds to an essential special point $p$, otherwise 
both entries are zeros since $\shiftstartingpoint jp=\numberofverticesptdiff p$
for removable special points $p$.

So, $p$ is an essential special point and there exists an index $j''$ ($1\le j''\le\numberofverticespt p$)
such that $i=\opensetforvertex{p,j''}$.
such that the $(p,j'')$th entries of the images of 

Now the fact that 
the $\opensetforvertex{p,j''}$th entries of the images of 
$s'_{3,2,j,j',k}$ and of $s_{3,3,j,j',k}$ in 
$\bigoplus_{i=1}^{\numberofabstractopensets-1}\Gamma(U_{\numberoffixedopensets},\Theta_U)$
differ
means that $p\ne p_{j'}$, $\shiftstartingpoint jp<j''\le\numberofverticesptdiff p$, 
and the different entries 
have the following $U_i$-descriptions:
$$
(\uidescriptionfunctiondiff{\opensetforvertex{p,j''}}1,\uidescriptionfunctiondiff{\opensetforvertex{p,j''}}2,\stdvectorfiledpletter[\opensetforvertex{p,j''}])
=\left(-\frac{1}{(t-t(p_{j'}))^k}
\uidegree{\opensetforvertex{p,j''},1}(\indexedvertex {\stdprimitivepolyhedronletter_j}1),
-\frac{1}{(t-t(p_{j'}))^k}
\uidegree{\opensetforvertex{p,j''},2}(\indexedvertex {\stdprimitivepolyhedronletter_j}1),
0\right)
$$
and
$$
(\uidescriptionfunctiondiff{\opensetforvertex{p,j''}}1',\uidescriptionfunctiondiff{\opensetforvertex{p,j''}}2',\stdvectorfiledpletter[\opensetforvertex{p,j''}]')
=(0,0,0).
$$
Since $p\ne p_{j'}$, the functions 
$\uidescriptionfunctiondiff{\opensetforvertex{p,j''}}1$ and $\uidescriptionfunctiondiff{\opensetforvertex{p,j''}}2$
are defined on $W_p$, 
so 
the vector fields with these descriptions are defined on $U_i$, and 
$s'_{3,2,j,j',k}\in\abstractvectorspace_{3,2}$ and $s_{3,3,j,j',k}\in\abstractvectorspace_{3,3}$
define the same class in 
$\bigoplus_{i=1}^{\numberofabstractopensets-1}(\Gamma(U_{\numberoffixedopensets},\Theta_U)/\Gamma(U_i,\Theta_U))$.
\end{proof}

\begin{corollary}
$\abstractvectorspace_{3,3}$ is contained in $\abstractvectorspace_{3,1}$ and therefore defines a subspace of 
$H^1(U,\Theta_U)$. Moreover, images of $\abstractvectorspace_{3,2}$ and of $\abstractvectorspace_{3,3}$
in $H^1(U,\Theta_U)$ are the same.\qed
\end{corollary}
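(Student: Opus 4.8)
The plan is to derive both assertions directly from the Lemma immediately preceding this corollary, together with Lemma \ref{s3pspans}, the definition of $\abstractvectorspace_{3,1}$, and the description of $H^1(U,\Theta_U)$ supplied by Corollary \ref{h1computegen}. The decisive observation is that both the condition defining $\abstractvectorspace_{3,1}$ and the resulting class in $H^1(U,\Theta_U)$ depend on an element of $\abstractvectorspace_{3,0}$ only through its image in $\bigoplus_{i=1}^{\numberoffixedopensets-1}(\Gamma(U_{\numberoffixedopensets},\Theta_U)/\Gamma(U_i,\Theta_U))$, and the preceding Lemma asserts exactly that each generator $s_{3,3,j,j',k}$ of $\abstractvectorspace_{3,3}$ and the matching generator $s'_{3,2,j,j',k}$ of $\abstractvectorspace_{3,2}$ have the same image there.

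First I would record that $\abstractvectorspace_{3,3}$ really is a subspace of $\abstractvectorspace_{3,0}$: every non-zero entry of a generator $s_{3,3,j,j',k}$ is a scalar multiple of $1/(t-t(p_{j'}))^k$, which is regular on the set $W$ of ordinary points since $p_{j'}$ is special, so each $s_{3,3,j,j',k}$ is a legitimate tuple of $U_i$-descriptions of degree-zero vector fields on $U_{\numberoffixedopensets}$. For the inclusion $\abstractvectorspace_{3,3}\subseteq\abstractvectorspace_{3,1}$ it then suffices, by linearity, to show each $s_{3,3,j,j',k}$ lies in $\abstractvectorspace_{3,1}$. By Lemma \ref{s3pspans} the element $s'_{3,2,j,j',k}$ belongs to $\abstractvectorspace_{3,2}$, hence to $\abstractvectorspace_{3,1}$, so its image in $\bigoplus_i(\Gamma(U_{\numberoffixedopensets},\Theta_U)/\Gamma(U_i,\Theta_U))$ is annihilated by the Cech differential into $\bigoplus_{i<i'}(\Gamma(U_{\numberoffixedopensets},\Theta_U)/\Gamma(U_i\cap U_{i'},\Theta_U))$. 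By the preceding Lemma, $s_{3,3,j,j',k}$ has the same image in that direct sum, hence its image is annihilated too, which is precisely the membership condition for $\abstractvectorspace_{3,1}$. Spanning then yields $\abstractvectorspace_{3,3}\subseteq\abstractvectorspace_{3,1}$, and the subspace of $H^1(U,\Theta_U)$ it cuts out is well defined through the identification of Corollary \ref{h1computegen}.

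For the equality of images, I would use that the class in $H^1(U,\Theta_U)$ attached to an element of $\abstractvectorspace_{3,1}$ is obtained from its image in $\bigoplus_i(\Gamma(U_{\numberoffixedopensets},\Theta_U)/\Gamma(U_i,\Theta_U))$ simply by passing further to the quotient by the diagonal copy of $\Gamma(U_{\numberoffixedopensets},\Theta_U)$. Since $s_{3,3,j,j',k}$ and $s'_{3,2,j,j',k}$ already agree in $\bigoplus_i(\Gamma(U_{\numberoffixedopensets},\Theta_U)/\Gamma(U_i,\Theta_U))$, they determine the same class in $H^1(U,\Theta_U)$. Because $\abstractvectorspace_{3,3}$ is spanned by the $s_{3,3,j,j',k}$ and $\abstractvectorspace_{3,2}$ is spanned by the $s'_{3,2,j,j',k}$ (Lemma \ref{s3pspans}), the two spanning families have identical images in $H^1(U,\Theta_U)$, so the images of $\abstractvectorspace_{3,2}$ and $\abstractvectorspace_{3,3}$ coincide.

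I do not expect a genuine obstacle: the whole argument is bookkeeping that transports the generator-by-generator statement of the preceding Lemma through the two successive quotients of Corollary \ref{h1computegen}. The only point demanding care is to confirm explicitly that \emph{both} the $\abstractvectorspace_{3,1}$-membership test and the assignment of a class in $H^1(U,\Theta_U)$ factor through the image in $\bigoplus_i(\Gamma(U_{\numberoffixedopensets},\Theta_U)/\Gamma(U_i,\Theta_U))$, which is immediate from the construction underlying Corollary \ref{h1computegen}, and to keep track that we are throughout working in the degree-zero graded component.
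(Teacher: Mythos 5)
Your argument is correct and is exactly the deduction the paper intends: the corollary is stated with no written proof because, as you observe, both the membership test for $\abstractvectorspace_{3,1}$ and the assignment of a class in $H^1(U,\Theta_U)$ via Corollary \ref{h1computegen} factor through the image in $\bigoplus_{i}\bigl(\Gamma(U_{\numberoffixedopensets},\Theta_U)/\Gamma(U_i,\Theta_U)\bigr)$, where the preceding Lemma identifies the generators $s'_{3,2,j,j',k}$ and $s_{3,3,j,j',k}$, and Lemma \ref{s3pspans} lets you pass from generators to the spans. Nothing is missing.
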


The map $\abstractvectorspace_{3,3}\to \gsiicircgengen$ is defined in the same way as the map 
$\abstractvectorspace_{3,2}\to \gsiicircgengen$: Given
$$
g=(\uidescriptionfunctiondiff11,\uidescriptionfunctiondiff12,\stdvectorfiledpletter[1],\ldots, 
\uidescriptionfunctiondiff{\numberoffixedopensets-1}1,\uidescriptionfunctiondiff{\numberoffixedopensets-1}2,\stdvectorfiledpletter[\numberoffixedopensets-1])\in\abstractvectorspace_{3,3},
$$
if $p$ is an essential special point and $1\le j\le\numberofverticespt p$, then the $(p,j)$th entry of the image of $g$ 
is $(\uidescriptionfunctiondiff{\opensetforvertex{p,j}}1,\uidescriptionfunctiondiff{\opensetforvertex{p,j}}2,\stdvectorfiledpletter[\opensetforvertex{p,j}])$.

\begin{corollary}
The images of $\abstractvectorspace_{3,2}$ and of $\abstractvectorspace_{3,3}$ in $\gsiicircgengen$
define the same subspace of $\Gamma(\PP^1,\givinv)$.\qed
\end{corollary}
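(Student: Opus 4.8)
The plan is to reduce the statement to the preceding lemma by observing that the map from $\abstractvectorspace_{3,2}$ (and likewise from $\abstractvectorspace_{3,3}$) into $\Gamma(\PP^1,\givinv)$ factors through the quotient $\bigoplus_{i=1}^{\numberoffixedopensets-1}\bigl(\Gamma(U_{\numberoffixedopensets},\Theta_U)/\Gamma(U_i,\Theta_U)\bigr)$, in which the two families of generators have already been matched. First I would record that, by Lemma \ref{s3pspans}, $\abstractvectorspace_{3,2}$ is spanned by the elements $s'_{3,2,j,j',k}$, while $\abstractvectorspace_{3,3}$ is spanned by the elements $s_{3,3,j,j',k}$ by its very definition; the two families share the same index set $(j,j',k)$. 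Since the maps into $\Gamma(\PP^1,\givinv)$ are linear, it suffices to prove that $s'_{3,2,j,j',k}$ and $s_{3,3,j,j',k}$ have the same image in $\Gamma(\PP^1,\givinv)$ for every admissible triple $(j,j',k)$: then the two spanned images, and hence the two subspaces, coincide.

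Next I would make the factorization precise. Recall from the description of $\givinv$ used before Lemma \ref{consideronespecialpoint} that $\Gamma(\PP^1,\givinv)$ is a subquotient — the kernel modulo the diagonally embedded $\Gamma(W_p,\giicircinv)$ — of $\bigoplus_{p\text{ essential}}\bigoplus_{j=1}^{\numberofverticespt p}\bigl(\Gamma(W_p,\giicircinv)/\Gamma(W_p,\giisiinv{\opensetforvertex{p,j}})\bigr)$, and that the map out of $\gsiicircgengen$ is the composite of $\kappa_\Theta$, the projection onto this intermediate quotient, and the final quotient by the diagonals. In degree zero, Corollary \ref{uidescvfield} identifies $\Gamma(W_p,\giicircinv)$ with $\Gamma(U_{\numberoffixedopensets},\Theta_U)$ and, via Remark \ref{uidescvfieldrestriction}, the subspace $\Gamma(W_p,\giisiinv{\opensetforvertex{p,j}})$ with $\Gamma(U_{\opensetforvertex{p,j}},\Theta_U)$; under these identifications the $(p,j)$th entry of the intermediate quotient is exactly the class of the $\opensetforvertex{p,j}$th entry of $g$ in $\Gamma(U_{\numberoffixedopensets},\Theta_U)/\Gamma(U_{\opensetforvertex{p,j}},\Theta_U)$. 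Thus the composite map into $\Gamma(\PP^1,\givinv)$ factors through the natural projection onto $\bigoplus_{i=1}^{\numberoffixedopensets-1}\bigl(\Gamma(U_{\numberoffixedopensets},\Theta_U)/\Gamma(U_i,\Theta_U)\bigr)$, keeping only the non-excessive indices attached to essential special points, which are precisely the indices $\opensetforvertex{p,j}$. I would also note that both $\abstractvectorspace_{3,2}$ and $\abstractvectorspace_{3,3}$ lie in $\abstractvectorspace_{3,1}$, so both genuinely land in the kernel and the classes in $\Gamma(\PP^1,\givinv)$ are well defined.

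With the factorization in hand, the conclusion is immediate from the preceding lemma, which asserts precisely that $s'_{3,2,j,j',k}$ and $s_{3,3,j,j',k}$ define the same class in $\bigoplus_{i=1}^{\numberoffixedopensets-1}\bigl(\Gamma(U_{\numberoffixedopensets},\Theta_U)/\Gamma(U_i,\Theta_U)\bigr)$: applying the factoring map into $\Gamma(\PP^1,\givinv)$ gives equal images, and ranging over the generators yields the equality of the two subspaces. One may alternatively phrase this through the Leray short exact sequence of Section \ref{sectleray}, whose edge map realizes a surjection $\bigl(H^1(U,\Theta_U)\bigr)_0 \twoheadrightarrow H^0(\PP^1,\givinv)$ compatibly with the above factorization, and then invoke the preceding corollary on the equality of images in $H^1(U,\Theta_U)$. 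The only real work — and the step I expect to be the main obstacle — is the degree-zero bookkeeping of the second paragraph: one must check carefully that the identifications $\Gamma(W_p,\giicircinv)\cong\Gamma(U_{\numberoffixedopensets},\Theta_U)$ and $\Gamma(W_p,\giisiinv{\opensetforvertex{p,j}})\cong\Gamma(U_{\opensetforvertex{p,j}},\Theta_U)$ furnished by the $U_i$-description formalism genuinely intertwine the projection used to build $\givinv$ with the projection onto $\bigoplus_i\bigl(\Gamma(U_{\numberoffixedopensets},\Theta_U)/\Gamma(U_i,\Theta_U)\bigr)$, so that the preceding lemma can be quoted verbatim. Everything else is linearity and the already-established spanning statements.
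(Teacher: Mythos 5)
Your argument is correct and is exactly the route the paper intends: the corollary is stated with no proof precisely because the maps from $\abstractvectorspace_{3,2}$ and $\abstractvectorspace_{3,3}$ into $\Gamma(\PP^1,\givinv)$ factor through $\bigoplus_{i}\bigl(\Gamma(U_{\numberoffixedopensets},\Theta_U)/\Gamma(U_i,\Theta_U)\bigr)$, where the preceding lemma has already matched the generators $s'_{3,2,j,j',k}$ and $s_{3,3,j,j',k}$ pairwise. Your careful bookkeeping of the identifications via $U_i$-descriptions is sound and matches the constructions set up earlier in the section.
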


We will need a slightly different set of generators for $\abstractvectorspace_{3,3}$.
If $p_{j'}$ is an essential special point, $1\le l<\numberofverticespt{p_{j'}}$, and 
$1\le k\le |\indexededgept{p_{j'}}{l}|$, 
denote
$$
s'_{3,3,l,j',k}=(\uidescriptionfunctiondiff11,\uidescriptionfunctiondiff12,\stdvectorfiledpletter[1],\ldots, 
\uidescriptionfunctiondiff{\numberoffixedopensets-1}1,\uidescriptionfunctiondiff{\numberoffixedopensets-1}2,\stdvectorfiledpletter[\numberoffixedopensets-1]),
$$
where:
\begin{enumerate}
\item $\stdvectorfiledpletter[i]=0$ for all $i$.
\item 
\begin{enumerate}
\item If $i=\opensetforvertex{p_{j'},j''}$, where $l<j''\le \numberofverticespt{p_{j'}}$, then
$$
\uidescriptionfunctiondiff{i}{j'''}=
-\frac{1}
{(t-t(p_{j'}))^k}
\uidegree{i,j'''}(\indexedvertexpt{p_{j'}}{l+1}-\indexedvertexpt{p_{j'}}{l}),
$$
for $j'''=1,2$.
\item 
Otherwise (if $i$ is not of this form),
$\uidescriptionfunctiondiff{i}1=\uidescriptionfunctiondiff{i}2=0$.
\end{enumerate}
\end{enumerate}
(we do not claim a priori that $s'_{3,3,l,j',k}\in\abstractvectorspace_{3,3}$).

\begin{lemma}
If $p_{j'}$ is an essential special point, $1\le j\le \numberofprimitivepolyhedra$, and
$1\le k\le \primitivepolyhedronlocalmultiplicity{j}{j'}$, then
$|\indexededgept{p_{j'}}{\shiftstartingpoint{j}{p_{j'}}}|s_{3,3,j,j',k}=s'_{3,3,\shiftstartingpoint{j}{p_{j'}},j',k}$.

Moreover, all $s'_{3,3,l,j',k}$ (for all essential special points $p_{j'}$, 
$1\le l<\numberofverticespt{p_{j'}}$, and 
$1\le k\le |\indexededgept{p_{j'}}{l}|$) are elements of $\abstractvectorspace_{3,3}$ and generate 
$\abstractvectorspace_{3,3}$.
\end{lemma}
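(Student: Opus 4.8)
The plan is to prove the displayed scalar identity first, and then read off both membership and generation as bookkeeping consequences of it. Throughout I write $l=\shiftstartingpoint{j}{p_{j'}}$; since the hypothesis $1\le k\le\primitivepolyhedronlocalmultiplicity{j}{j'}$ forces $\primitivepolyhedronlocalmultiplicity{j}{j'}\ge1$, the definition of $\shiftstartingpoint{}{}$ guarantees that $p_{j'}$ is essential and that $\indexededgept{p_{j'}}{l}$ is a genuine finite edge ($1\le l<\numberofverticespt{p_{j'}}$), so $s'_{3,3,l,j',k}$ is indeed defined. Comparing the two tuples entry by entry, both have vanishing vector-field components, and both are supported exactly on the indices $i=\opensetforvertex{p_{j'},j''}$ with $l<j''\le\numberofverticespt{p_{j'}}$ (here I use $\numberofverticesptdiff{p_{j'}}=\numberofverticespt{p_{j'}}$ for essential points). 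Since each nonzero entry carries the common scalar factor $-1/(t-t(p_{j'}))^k$ and the linear functional $\uidegree{i,j'''}$, the whole identity reduces, by linearity of $\uidegree{i,j'''}$, to the single vector equality in $N$
\begin{equation*}
\indexedvertexpt{p_{j'}}{l+1}-\indexedvertexpt{p_{j'}}{l}=\latticelength{\indexededgept{p_{j'}}{l}}\cdot\indexedvertex{\stdprimitivepolyhedronletter_j}{1},
\end{equation*}
where $\indexedvertex{\stdprimitivepolyhedronletter_j}{1}$ denotes the nonzero vertex of the primitive polyhedron $\stdprimitivepolyhedronletter_j$ (following the convention of Corollary \ref{ksmuqdesc}), i.e. the primitive lattice vector spanning its finite edge out of the origin.

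To prove this vector equality I would use the Minkowski decomposition $\stdpolyhedronletter_{p_{j'}}=\sum_i\primitivepolyhedronlocalmultiplicity{i}{j'}\stdprimitivepolyhedronletter_i$ together with additivity of the individual evaluation (support) function under Minkowski addition, which implies that the normal fan of $\stdpolyhedronletter_{p_{j'}}$ refines those of the summands and that the edge of $\stdpolyhedronletter_{p_{j'}}$ in a prescribed direction is the Minkowski sum of the corresponding edges of the summands. By the definition of $\shiftstartingpoint{}{}$ the edge $\indexededgept{p_{j'}}{l}$ is parallel to the finite edge of $\stdprimitivepolyhedronletter_j$; and since $\stdprimitivepolyhedronletter_1,\ldots,\stdprimitivepolyhedronletter_{\numberofprimitivepolyhedra}$ are pairwise non-isomorphic, each such primitive polyhedron has a distinct primitive finite-edge vector, so $\stdprimitivepolyhedronletter_j$ is the only summand with a finite edge in that direction. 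Hence the edge vector $\indexedvertexpt{p_{j'}}{l+1}-\indexedvertexpt{p_{j'}}{l}$ equals $\primitivepolyhedronlocalmultiplicity{j}{j'}$ copies of the edge vector $\indexedvertex{\stdprimitivepolyhedronletter_j}{1}$ of $\stdprimitivepolyhedronletter_j$; the orientation comes out correctly because all polyhedra with tail cone $\6$ are given the same cyclic vertex ordering, fixed to start from the ray $\indexededge{\6}{0}$. Finally Lemma \ref{multiplicityisedgelength} gives $\latticelength{\indexededgept{p_{j'}}{l}}=\primitivepolyhedronlocalmultiplicity{j}{j'}$, which turns this into the desired equality and simultaneously shows the scalar factors match.

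For the second statement, membership is immediate: for an essential $p_{j'}$, a finite edge index $l$, and $1\le k\le\latticelength{\indexededgept{p_{j'}}{l}}$, let $\stdprimitivepolyhedronletter_j$ be the unique primitive polyhedron whose finite edge is parallel to $\indexededgept{p_{j'}}{l}$; then $\primitivepolyhedronlocalmultiplicity{j}{j'}=\latticelength{\indexededgept{p_{j'}}{l}}\ge1$ and $\shiftstartingpoint{j}{p_{j'}}=l$, so $s_{3,3,j,j',k}$ is one of the defining generators of $\abstractvectorspace_{3,3}$, and the first statement exhibits $s'_{3,3,l,j',k}=\latticelength{\indexededgept{p_{j'}}{l}}\,s_{3,3,j,j',k}$ as a scalar multiple of it. To see that the $s'_{3,3,l,j',k}$ generate $\abstractvectorspace_{3,3}$, I would observe that $(j,j',k)\mapsto(\shiftstartingpoint{j}{p_{j'}},j',k)$ is a bijection between the index set of the defining generators $s_{3,3,j,j',k}$ (those with $\primitivepolyhedronlocalmultiplicity{j}{j'}\ne0$, which forces $p_{j'}$ essential) and the index set of the $s'_{3,3,l,j',k}$: it is injective because distinct non-isomorphic primitive polyhedra give distinct edge directions hence distinct values of $\shiftstartingpoint{}{}$, and surjective because every finite edge of $\stdpolyhedronletter_{p_{j'}}$ inherits its direction from a summand of the Minkowski decomposition. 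Since the $k$-ranges agree (both run up to $\primitivepolyhedronlocalmultiplicity{j}{j'}=\latticelength{\indexededgept{p_{j'}}{l}}$) and corresponding generators differ by the nonzero scalar $\latticelength{\indexededgept{p_{j'}}{l}}$, the two families span the same subspace, which is $\abstractvectorspace_{3,3}$ by definition.

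The main obstacle is the vector identity of the first paragraph, and within it specifically pinning down the orientation of the edge vectors; everything else is formal manipulation of the index sets. The orientation argument is where one must invoke that the enumeration of vertices (and hence of finite edges) is fixed consistently for all polyhedra sharing the tail cone $\6$, so that the nonzero vertex $\indexedvertex{\stdprimitivepolyhedronletter_j}{1}$ and the oriented edge $\indexedvertexpt{p_{j'}}{l+1}-\indexedvertexpt{p_{j'}}{l}$ point the same way rather than opposite ways; checking this carefully against the conventions of the notation section is the one genuinely non-mechanical step.
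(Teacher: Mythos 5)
Your proof is correct and follows essentially the same route as the paper: both reduce the scalar identity to the vector equality $\indexedvertexpt{p_{j'}}{l+1}-\indexedvertexpt{p_{j'}}{l}=\latticelength{\indexededgept{p_{j'}}{l}}\,\indexedvertex{\stdprimitivepolyhedronletter_j}{1}$, prove it from the Minkowski decomposition of $\stdpolyhedronletter_{p_{j'}}$ together with Lemma \ref{multiplicityisedgelength} and $\indexedvertex{\stdprimitivepolyhedronletter_j}{0}=0$, and then obtain generation from the correspondence between primitive summands with nonzero multiplicity and finite edges (noting that removable special points contribute no generators). Your extra care with the orientation convention and the explicit bijection of index sets is a slightly more detailed write-up of the same argument, not a different one.
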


\begin{proof}
The first claim follows from the following equality:
$$
\indexedvertexpt {p_{j'}}{\shiftstartingpoint{j}{p_{j'}}+1}-\indexedvertexpt {p_{j'}}{\shiftstartingpoint{j}{p_{j'}}}=
|\indexededgept{p_{j'}}{\shiftstartingpoint{j}{p_{j'}}}|\indexedvertex {\stdprimitivepolyhedronletter_j}1.
$$
This is true because $\indexededgept{p_{j'}}{\shiftstartingpoint{j}{p_{j'}}}$
is the edge of $\stdpolyhedronletter_{p_{j'}}$ parallel to 
$\indexededge {\stdprimitivepolyhedronletter_j}1$, 
$\stdpolyhedronletter_{p_{j'}}=\sum_i \primitivepolyhedronlocalmultiplicity{i}{j'}\stdprimitivepolyhedronletter_i$, 
$|\indexededgept{p_{j'}}{\shiftstartingpoint{j}{p_{j'}}}|=\primitivepolyhedronlocalmultiplicity{j}{j'}$ (Lemma \ref{multiplicityisedgelength}), 
and $\indexedvertex {\stdprimitivepolyhedronletter_j}0=0$.


To check the second claim, we need to do the following. For each essential special point $p_{j'}$
and for each $l$ ($1\le l<\numberofverticespt{p_{j'}}$)
we have to check that there exists a primitive polyhedron $\stdprimitivepolyhedronletter_j$ 
($1\le j\le \numberofprimitivepolyhedra$)
such that $l=\shiftstartingpoint{j}{p_{j'}}$. But this follows from the equality
$\stdpolyhedronletter_{p_{j'}}=\sum_j \primitivepolyhedronlocalmultiplicity{j}{j'}\stdprimitivepolyhedronletter_j$
and the fact that if $\primitivepolyhedronlocalmultiplicity{j}{j'}\ne 0$, then 
$\indexededgept{p_{j'}}{\shiftstartingpoint{j}{p_{j'}}}$
is the edge of $\stdpolyhedronletter_{p_{j'}}$
parallel to $\indexededge{\stdprimitivepolyhedronletter_j}1$.

To finish the proof, note that for removable special points $p_{j'}$, 
$\primitivepolyhedronlocalmultiplicity{j}{j'}=0$ for all $j$, 
so it is not possible to take $k$ so that $1\le k\le \primitivepolyhedronlocalmultiplicity{j}{j'}$, 
so there are no generators $s_{3,3,j,j',k}$, where $p_{j'}$ is a removable special point.
\end{proof}


\begin{remark}
For each $l$, $j$, and $k$ ($p_j$ is an essential special point, $1\le l<\numberofverticespt{p_j}$,
and $1\le k\le |\indexededgept{p_j}l|$),
the image of $s'_{3,3,l,j,k}$ in $\gsiicircgengen$ actually belongs to $\gsiicircgen{p_j}\subseteq \gsiicircgengen$.
\end{remark}

The space $\abstractvectorspace_{1,2}$ consists of functions, which are interpreted as $U_{\numberoffixedopensets}$-descriptions
of vector fields. The functions on $\abstractvectorspace_{3,3}$ are interpreted as $U_i$-descriptions
for different values of $i$. To work with $\abstractvectorspace_{1,2}$ easier, let us construct 
another space $\abstractvectorspace_{3,4}$, whose elements will be interpreted as $U_{\numberoffixedopensets}$-descriptions.
So, by definition $\abstractvectorspace_{3,4}$ 
consists of some of the $3(\numberoffixedopensets-1)$-tuples of the form
$$
(\uidescriptionfunctiondiff11,\uidescriptionfunctiondiff12,\stdvectorfiledpletter[1],\ldots, 
\uidescriptionfunctiondiff{\numberoffixedopensets-1}1,\uidescriptionfunctiondiff{\numberoffixedopensets-1}2,\stdvectorfiledpletter[\numberoffixedopensets-1]),
$$
where each $\uidescriptionfunctiondiff ij$ is a regular function on $W\subset \PP^1$, each $\stdvectorfiledpletter[i]$
is a vector field on $W\subset \PP^1$.
More precisely, $\abstractvectorspace_{3,4}$
is spanned by $3(\numberoffixedopensets-1)$-tuples called
$s_{3,4,l,j,k}$, where 
$p_j$ is an essential special point, 
$1\le l\le \numberofverticespt{p_j}$,
$1\le k\le |\indexededgept{p_j}l|$.
By definition,
$$
s_{3,4,l,j,k}=
(\uidescriptionfunctiondiff11,\uidescriptionfunctiondiff12,\stdvectorfiledpletter[1],\ldots, 
\uidescriptionfunctiondiff{\numberoffixedopensets-1}1,\uidescriptionfunctiondiff{\numberoffixedopensets-1}2,\stdvectorfiledpletter[\numberoffixedopensets-1]),
$$
where:
\begin{enumerate}
\item $\stdvectorfiledpletter[i]=0$ for all $i$.
\item 
\begin{enumerate}
\item If $i=\opensetforvertex{p_j,j'}$, where $l<j'\le \numberofverticespt{p_j'}$, then
$$
\uidescriptionfunctiondiff{i}{j''}=
-\frac{1}
{(t-t(p_{j'}))^k}
\uidegree{\numberoffixedopensets,j''}(\indexedvertexpt{p_j}{l+1}-\indexedvertexpt{p_j}{l}),
$$
for $j''=1,2$.
\item 
Otherwise (if $i$ is not of this form),
$\uidescriptionfunctiondiff{i}1=\uidescriptionfunctiondiff{i}2=0$.
\end{enumerate}
\end{enumerate}
Clearly, $\abstractvectorspace_{3,4}$ is isomorphic to $\abstractvectorspace_{3,3}$, 
and the isomorphism computes the $(3i-2)$th, $(3i-1)$th, and $3i$th entries of an element of 
$\abstractvectorspace_{3,3}$
as the $U_i$-description of the vector field with the $U_{\numberoffixedopensets}$-description
consisting of the $(3i-2)$th, $(3i-1)$th, and $3i$th entries of the corresponding 
element of $\abstractvectorspace_{3,4}$.

The resulting morphism $\abstractvectorspace_{3,4}\to \gsiicircgengen$ works as follows. Given $g\in \abstractvectorspace_{3,4}$, 
the $(p,j)$th entry of its image in $\gsiicircgengen$ is the $U_{\opensetforvertex{p,j}}$-description 
of the vector field on $U_{\numberoffixedopensets}$ with the 
$U_{\numberoffixedopensets}$-description formed by the 
$(3\opensetforvertex{p,j}-2)$th, the $(3\opensetforvertex{p,j}-1)$th, and the $3\opensetforvertex{p,j}$th 
entries of $g$.

Recall that $\rho\colon \abstractvectorspace_{1,2}\to \gsiicircgengen$ works in a similar way:
the $(p,j)$th entry of the image is the 
$U_{\opensetforvertex{p,j}}$-description computed from the 
$U_{\numberoffixedopensets}$-description formed by the $(p,2j-1)$th and the $(p,2j)$th entries of an element of 
$\abstractvectorspace_{1,2}$ and 0. 
So, if we construct a morphism that reorders entries of am element of $\abstractvectorspace_{3,4}$ in the appropriate way 
(and removes some zeros)
and maps $\abstractvectorspace_{3,4}$ to $\abstractvectorspace_{1,2}$, 
we will factor the map $\abstractvectorspace_{3,4}\to \gsiicircgengen$ through $\abstractvectorspace_{1,2}$.

\begin{lemma}\label{avs33to22goescorrectly}
Let 
$$
g=(\uidescriptionfunctiondiff11,\uidescriptionfunctiondiff12,\stdvectorfiledpletter[1],\ldots, 
\uidescriptionfunctiondiff{\numberoffixedopensets-1}1,\uidescriptionfunctiondiff{\numberoffixedopensets-1}2,\stdvectorfiledpletter[\numberoffixedopensets-1])
\in\abstractvectorspace_{3,4}.
$$
Fix a special point $p$ and 
consider the following sequences, which we will denote by $\rho_p'(g)$:
$$
\rho_p'(g)=(\uidescriptionfunctiondiff{\opensetforvertex{p,1}}1,\uidescriptionfunctiondiff{\opensetforvertex{p,1}}2,\ldots,
\uidescriptionfunctiondiff{\opensetforvertex{p,\numberofverticespt{p}}}1,\uidescriptionfunctiondiff{\opensetforvertex{p,\numberofverticespt{p}}}2).
$$
Then $\rho_p'(g)\in\abstractvectorspace_{1,2,p}$.
\end{lemma}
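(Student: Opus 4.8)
The plan is to exploit that $\rho_p'$ is manifestly a linear map (it merely extracts and reorders certain entries of its argument) and that $\abstractvectorspace_{1,2,p}$ is a linear subspace of $\abstractvectorspace_{1,1,p}$; hence it suffices to verify $\rho_p'(g)\in\abstractvectorspace_{1,2,p}$ for $g$ running over the spanning set $\{s_{3,4,l,j,k}\}$ of $\abstractvectorspace_{3,4}$. By the definition of $s_{3,4,l,j,k}$, its only nonzero entries sit at the indices $\opensetforvertex{p_j,j'}$ with $l<j'\le\numberofverticespt{p_j}$, so $\rho_p'(s_{3,4,l,j,k})=0$ unless $p=p_j$; in particular $\rho_p'(g)=0$ whenever $p$ is a removable special point, since the generators are indexed only by essential special points. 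Thus the entire content reduces to checking the four defining conditions of $\abstractvectorspace_{1,2,p}$ for the tuple $\rho_p'(s_{3,4,l,j,k})$ in the case $p=p_j$ essential.

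First I would write this tuple explicitly: its $j'$-th pair equals $(0,0)$ for $1\le j'\le l$ and equals $(c_1\,t_p^{-k},c_2\,t_p^{-k})$ for $l<j'\le\numberofverticespt p$, where $c_{j''}=-\uidegree{\numberoffixedopensets,j''}(\indexedvertexpt{p}{l+1}-\indexedvertexpt{p}{l})$ and $t_p=t-t(p)$ (recall $p$ is finite, as the only point at infinity is removable by the standing assumption of Chapter~\ref{sectksmtoric}). Conditions~1 and~2 of $\abstractvectorspace_{1,2,p}$ are then immediate: each entry is a constant multiple of $t_p^{-k}$, hence a Laurent polynomial in $t_p$ with only a term of negative degree, and the first pair vanishes because $l\ge 1$.

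The core of the argument is Condition~3. Since consecutive pairs of the tuple differ only at the index $j_0=l$, only that single instance of the condition is nontrivial, the difference being $\uidescriptionfunctiondiff{l}{j''}-\uidescriptionfunctiondiff{l+1}{j''}=\uidegree{\numberoffixedopensets,j''}(\indexedvertexpt{p}{l+1}-\indexedvertexpt{p}{l})\,t_p^{-k}$. Writing $\chi=\primitivelattice{\normalvertexcone{\indexededgept{p}{l}}{\stdpolyhedronletter_p}}$ and $a=\indexedvertexpt{p}{l+1}-\indexedvertexpt{p}{l}$, the left-hand side of Condition~3 becomes $t_p^{-k}\big(\uidegree{\numberoffixedopensets,1}^*(\chi)\uidegree{\numberoffixedopensets,1}(a)+\uidegree{\numberoffixedopensets,2}^*(\chi)\uidegree{\numberoffixedopensets,2}(a)\big)$. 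Here I would use that $\uidegree{\numberoffixedopensets,1}^*,\uidegree{\numberoffixedopensets,2}^*$ is the basis of $N$ dual to $\uidegree{\numberoffixedopensets,1},\uidegree{\numberoffixedopensets,2}$, so that $\chi=\uidegree{\numberoffixedopensets,1}^*(\chi)\uidegree{\numberoffixedopensets,1}+\uidegree{\numberoffixedopensets,2}^*(\chi)\uidegree{\numberoffixedopensets,2}$ and the bracket collapses to $\chi(a)=\chi(\indexedvertexpt{p}{l+1}-\indexedvertexpt{p}{l})$. By Lemma~\ref{linfuncestimate} this equals $0$, since the primitive generator of the normal ray of the edge $\indexededgept{p}{l}$ takes equal values on the two endpoints of that edge; hence Condition~3 holds.

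Finally, Condition~4 at $j_0=l$ amounts to $\ord_p(t_p^{-k}\cdot\mathrm{const})=-k\ge -\latticelength{\indexededgept{p}{l}}$, which holds precisely because the generator $s_{3,4,l,j,k}$ is defined only for $1\le k\le\latticelength{\indexededgept{p}{l}}$; for all other edges $\indexededgept{p}{j_0}$ the relevant difference is zero and the inequality is trivial. The statement for a general $g$ then follows by linearity. I expect the only mild obstacle to be the bookkeeping that isolates the single edge $\indexededgept{p}{l}$ together with the dual-basis manipulation that turns the bracket in Condition~3 into $\chi(a)$; once those are in place the verification on generators is routine.
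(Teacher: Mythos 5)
Your proof is correct and follows essentially the same route as the paper's: reduce by linearity to the generators $s_{3,4,l,j,k}$, observe that $\rho_p'$ kills them unless $p=p_j$, and then verify the four conditions of $\abstractvectorspace_{1,2,p}$, with the only substantive point being that the dual-basis expression in Condition~3 collapses to $\primitivelattice{\normalvertexcone{\indexededgept{p}{l}}{\stdpolyhedronletter_p}}(\indexedvertexpt{p}{l+1}-\indexedvertexpt{p}{l})=0$ (the paper argues this directly from the normal-cone property where you cite Lemma~\ref{linfuncestimate}, but it is the same fact). No gaps.
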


\begin{proof}
Without loss of generality, $g=s_{3,4,l,j,k}$
for some essential special point $p_j$, $1\le l\le \numberofverticespt{p_j}$, 
$1\le k\le |\indexededgept{p_j}l|$.
Moreover, if this is true, and $p\ne p_{j'}$, then 
$\rho_p'(g)$ is the zero sequence by the definition of 
$s_{3,4,l,j,k}$. So suppose that $p=p_{j'}$.
Then 
$$
\rho_p'(g)=(\underbrace{0,0,\ldots,0,0}_{\text{$2l$ zeros}},
\underbrace{g'_1,g'_2,g'_1,g'_2,\ldots,g'_1,g'_2}_{\text{$2(\numberofverticespt p-l)$ entries}}),
$$
where
$$
g'_1=
-\frac{1}
{(t-t(p_{j'}))^k}
\uidegree{\numberoffixedopensets,1}(\indexedvertexpt{p_j}{l+1}-\indexedvertexpt{p_j}{l}),
g'_2=
-\frac{1}
{(t-t(p_{j'}))^k}
\uidegree{\numberoffixedopensets,2}(\indexedvertexpt{p_j}{l+1}-\indexedvertexpt{p_j}{l}).
$$

Let us check the conditions in the definition of $\abstractvectorspace_{1,2,p}$ one by one.
Condition \ref{proplaurent} is satisfied by the choice of functions $t_p$ in this section.
Condition \ref{propfirstzero} is satisfied since 
$l\ge 1$.
Condition \ref{proplinfunczero} is only nontrivial
for the edge $\indexededgept{p}{l}$. For this edge, 
it suffices to check that 
$$
\uidegree{\numberoffixedopensets,1}^*(\primitivelattice{\normalvertexcone{\indexededgept{p}{l}}{\stdpolyhedronletter_p}})
\uidegree{\numberoffixedopensets,1}(\indexedvertexpt{p_j}{l+1}-\indexedvertexpt{p_j}{l})+
\uidegree{\numberoffixedopensets,2}^*(\primitivelattice{\normalvertexcone{\indexededgept{p}{l}}{\stdpolyhedronletter_p}})
\uidegree{\numberoffixedopensets,2}(\indexedvertexpt{p_j}{l+1}-\indexedvertexpt{p_j}{l})=0.
$$
The expression at the left is the way of writing in coordinates of 
$\primitivelattice{\normalvertexcone{\indexededgept{p}{l}}{\stdpolyhedronletter_p}}(\indexedvertexpt{p_j}{l+1}-\indexedvertexpt{p_j}{l})$.
And by a property of the normal cone of an edge of a polyhedron, if we shift an argument of 
$\primitivelattice{\normalvertexcone{\indexededgept{p}{l}}{\stdpolyhedronletter_p}}$
along the edge, the value will not change. So, 
$\primitivelattice{\normalvertexcone{\indexededgept{p}{l}}{\stdpolyhedronletter_p}}(\indexedvertexpt{p_j}{l+1})=
\primitivelattice{\normalvertexcone{\indexededgept{p}{l}}{\stdpolyhedronletter_p}}(\indexedvertexpt{p_j}{l})$, 
and 
$$
\uidegree{\numberoffixedopensets,1}^*(\primitivelattice{\normalvertexcone{\indexededgept{p}{l}}{\stdpolyhedronletter_p}})
\uidegree{\numberoffixedopensets,1}(\indexedvertexpt{p_j}{l+1}-\indexedvertexpt{p_j}{l})+
\uidegree{\numberoffixedopensets,2}^*(\primitivelattice{\normalvertexcone{\indexededgept{p}{l}}{\stdpolyhedronletter_p}})
\uidegree{\numberoffixedopensets,2}(\indexedvertexpt{p_j}{l+1}-\indexedvertexpt{p_j}{l})=0.
$$
Finally, Condition \ref{proplinfuncestimate} is again nontrivial only for the edge $\indexededgept{p}{l}$, 
and for this edge we have $k\le |\indexededgept{p}{l}|$.
\end{proof}

So, we have defined a map $\rho_p'\colon \abstractvectorspace_{3,4}\to \abstractvectorspace_{1,2,p}$.
Now, if $g\in \abstractvectorspace_{3,4}$, denote 
$$
\rho'(g)=(\rho'_p(g))_{\text{$p$ essential special point}}.
$$
Then we have a map $\rho'\colon \abstractvectorspace_{3,4}\to\abstractvectorspace_{1,2}$, and 
it follows directly from the definitions of these maps that the map 
$\abstractvectorspace_{3,4}\to \gsiicircgengen$ we have is the composition of $\rho'$ and $\rho$.
Now it suffices to check that $\rho'$ is surjective.

If $p_j$ is an essential special point, $1\le l<\numberofverticespt{p_j}$, and 
$1\le k\le |\indexededgept{p_j}{l}|$, denote the following element of $\abstractvectorspace_{1,2,p_j}$
by $s_{2,l,j,k}$:
$$
s_{2,l,j,k}=
(\underbrace{0,0,\ldots,0,0}_{\text{$2l$ zeros}},
\underbrace{g_1,g_2,g_1,g_2,\ldots,g_1,g_2}_{\text{$2(\numberofverticespt p-l)$ entries}}),
$$
where
$$
g_1=
-\frac{1}
{(t-t(p_j))^k}
\uidegree{\numberoffixedopensets,1}(\indexedvertexpt {p_j}{l+1}-\indexedvertexpt {p_j}l),
g_2=
-\frac{1}
{(t-t(p_j))^k}
\uidegree{\numberoffixedopensets,2}(\indexedvertexpt {p_j}{l+1}-\indexedvertexpt {p_j}l).
$$

\begin{corollary}
If $p_j$ is an essential special point, $1\le l<\numberofverticespt{p_j}$, and
$1\le k\le |\indexededgept{p_j}{l}|$, then
$\rho'_{p_j}(s_{3,4,l,j,k})=s_{2,l,j,k}$, 
and $\rho'_p(s_{3,4,l,j,k})=0$ if $p\ne p_j$.\qed
\end{corollary}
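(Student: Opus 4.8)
The plan is to prove both assertions by directly unfolding the definitions of $\rho'_p$, of $s_{3,4,l,j,k}$, and of $s_{2,l,j,k}$, and comparing components index by index. Recall that $\rho'_p$ is nothing but the extraction operator that reads off, in order, the components of its argument sitting at the indices $\opensetforvertex{p,1},\ldots,\opensetforvertex{p,\numberofverticespt{p}}$. Hence the whole corollary reduces to identifying which components of $s_{3,4,l,j,k}$ land at these positions. Since each $s_{3,4,l,j,k}$ has all its vector-field components equal to zero, only the two scalar functions attached to each index need to be tracked.

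First I would treat the case $p\ne p_j$. By construction, the only indices $i$ at which $s_{3,4,l,j,k}$ has a nonzero component are those of the form $i=\opensetforvertex{p_j,j'}$ with $l<j'\le \numberofverticespt{p_j}$. By the enumeration convention fixed earlier (the blocks $\opensetforvertex{p,\bullet}$ attached to distinct special points occupy disjoint ranges of $1,\ldots,\numberoffixedopensets-1$), none of the indices $\opensetforvertex{p,1},\ldots,\opensetforvertex{p,\numberofverticespt{p}}$ coincides with any $\opensetforvertex{p_j,j'}$ when $p\ne p_j$. Therefore $\rho'_p$ extracts only zero components, and $\rho'_p(s_{3,4,l,j,k})=0$.

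Next I would treat $p=p_j$. Here I split the vertex range into $1\le j'\le l$ and $l<j'\le\numberofverticespt{p_j}$. For the first block the defining formula for $s_{3,4,l,j,k}$ sets the corresponding components to zero, which accounts for the leading $2l$ zeros in $s_{2,l,j,k}$. For the second block the components equal $-\frac{1}{(t-t(p_j))^k}\uidegree{\numberoffixedopensets,j''}(\indexedvertexpt{p_j}{l+1}-\indexedvertexpt{p_j}{l})$ for $j''=1,2$, which are precisely the functions called $g_1$ and $g_2$ in the definition of $s_{2,l,j,k}$, repeated across the remaining $2(\numberofverticespt{p_j}-l)$ slots. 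Matching the two patterns term by term yields $\rho'_{p_j}(s_{3,4,l,j,k})=s_{2,l,j,k}$. To see that this identification is legitimate as an element of $\abstractvectorspace_{1,2,p_j}$, I would invoke Lemma \ref{avs33to22goescorrectly}, which already guarantees $\rho'_{p}(s_{3,4,l,j,k})\in\abstractvectorspace_{1,2,p}$.

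The argument is essentially bookkeeping, so I do not anticipate a genuine obstacle; the only point demanding care is the index convention, namely checking that the blocks $\opensetforvertex{p,\bullet}$ for different special points are disjoint and that within the block for $p_j$ the cutoff at $j'=l$ separates the vanishing slots from the nonzero ones exactly as in $s_{2,l,j,k}$. One should also read the subscript in the denominator of the defining formula for $s_{3,4,l,j,k}$ as $t(p_j)$, the special point being deformed, consistent with the formula for $s_{2,l,j,k}$.
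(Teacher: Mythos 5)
Your proposal is correct and matches the paper, which states this corollary with no proof precisely because it is the direct unfolding of the definitions of $\rho'_p$, $s_{3,4,l,j,k}$, and $s_{2,l,j,k}$ that you carry out (including the observation, via Lemma \ref{avs33to22goescorrectly}, that the extracted tuple lies in $\abstractvectorspace_{1,2,p_j}$). Your reading of the denominator as $t-t(p_j)$ is the intended one, consistent with the formula for $s_{2,l,j,k}$.
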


\begin{lemma}
Let $p_j$ be an essential special point.
Then all $s_{2,l,j,k}$ for all possible $l$ and $k$ 
($1\le l< \numberofverticespt{p_j}$, and 
$1\le k\le |\indexededgept{p_j}{l}|$)
span (and even form a basis of)
$\abstractvectorspace_{1,2,p_j}$.
\end{lemma}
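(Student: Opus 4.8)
The plan is to reduce the statement to a per-edge computation by passing to consecutive differences, and then to match dimensions against Remark \ref{avs22pdim}. First I would record that each $s_{2,l,j,k}$ actually lies in $\abstractvectorspace_{1,2,p_j}$: this is immediate from Lemma \ref{avs33to22goescorrectly}, since $s_{2,l,j,k}=\rho'_{p_j}(s_{3,4,l,j,k})$ and $\rho'_{p_j}$ takes values in $\abstractvectorspace_{1,2,p_j}$ (alternatively one verifies conditions \ref{proplaurent}--\ref{proplinfuncestimate} directly).

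Next I would set up the ``difference'' description of $\abstractvectorspace_{1,2,p_j}$. To a tuple $(\uidescriptionfunctiondiff{1}{1},\uidescriptionfunctiondiff{1}{2},\ldots,\uidescriptionfunctiondiff{\numberofverticespt{p_j}}{1},\uidescriptionfunctiondiff{\numberofverticespt{p_j}}{2})$ I associate, for each finite edge $\indexededgept{p_j}{l}$ ($1\le l<\numberofverticespt{p_j}$), the difference pair $(d_l^1,d_l^2)=(\uidescriptionfunctiondiff{l}{1}-\uidescriptionfunctiondiff{l+1}{1},\uidescriptionfunctiondiff{l}{2}-\uidescriptionfunctiondiff{l+1}{2})$. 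Because condition \ref{propfirstzero} forces the first pair to vanish, the whole tuple is recovered from the differences via $\uidescriptionfunctiondiff{j}{m}=-\sum_{l=1}^{j-1}d_l^m$, so the difference map is an isomorphism onto the space of difference tuples. Since conditions \ref{proplaurent}, \ref{proplinfunczero}, \ref{proplinfuncestimate} are phrased edge by edge on each $d_l$ separately, this identifies $\abstractvectorspace_{1,2,p_j}\cong\bigoplus_{l=1}^{\numberofverticespt{p_j}-1}D_l$, where $D_l$ is the space of pairs $(d^1,d^2)$ of Laurent polynomials in $t_{p_j}^{-1}$ with only negative powers, of order at least $-\latticelength{\indexededgept{p_j}{l}}$, subject to the single linear relation
$$
\uidegree{\numberoffixedopensets,1}^*(\primitivelattice{\normalvertexcone{\indexededgept{p_j}{l}}{\stdpolyhedronletter_{p_j}}})d^1+\uidegree{\numberoffixedopensets,2}^*(\primitivelattice{\normalvertexcone{\indexededgept{p_j}{l}}{\stdpolyhedronletter_{p_j}}})d^2=0.
$$
As the coefficient vector is nonzero (it is the coordinate vector of the nonzero lattice point $\primitivelattice{\normalvertexcone{\indexededgept{p_j}{l}}{\stdpolyhedronletter_{p_j}}}$ in the basis dual to $\uidegree{\numberoffixedopensets,1},\uidegree{\numberoffixedopensets,2}$), this relation forces, for each power $t_{p_j}^{-k}$, the coefficient pair to lie on a fixed line, so it cuts one dimension out of each of the $\latticelength{\indexededgept{p_j}{l}}$ available powers; hence $\dim D_l=\latticelength{\indexededgept{p_j}{l}}$, consistently with Remark \ref{avs22pdim}.

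Then I would identify where the $s_{2,l,j,k}$ land. Inspecting the entries of $s_{2,l,j,k}$ shows that its difference pair vanishes at every edge except $\indexededgept{p_j}{l}$, where it equals $t_{p_j}^{-k}\mathbf{w}_l$ with
$$
\mathbf{w}_l=\bigl(\uidegree{\numberoffixedopensets,1}(\indexedvertexpt{p_j}{l+1}-\indexedvertexpt{p_j}{l}),\,\uidegree{\numberoffixedopensets,2}(\indexedvertexpt{p_j}{l+1}-\indexedvertexpt{p_j}{l})\bigr).
$$
The vector $\mathbf{w}_l$ is nonzero because $\indexedvertexpt{p_j}{l+1}\ne\indexedvertexpt{p_j}{l}$ and $\uidegree{\numberoffixedopensets,1},\uidegree{\numberoffixedopensets,2}$ form a basis of $M$, and it spans the kernel line of the relation defining $D_l$ by Lemma \ref{linfuncestimate}, which gives $\primitivelattice{\normalvertexcone{\indexededgept{p_j}{l}}{\stdpolyhedronletter_{p_j}}}(\indexedvertexpt{p_j}{l+1}-\indexedvertexpt{p_j}{l})=0$. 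Consequently, for fixed $l$, the $\latticelength{\indexededgept{p_j}{l}}$ elements $t_{p_j}^{-k}\mathbf{w}_l$ for $1\le k\le\latticelength{\indexededgept{p_j}{l}}$ lie in $D_l$, are linearly independent (distinct monomials times a fixed nonzero vector), and exhaust $\dim D_l$, hence form a basis of $D_l$.

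Finally, since the $s_{2,l,j,k}$ with different $l$ land in different summands $D_l$ of the direct sum, the entire collection is a basis of $\bigoplus_l D_l\cong\abstractvectorspace_{1,2,p_j}$, which is the assertion. The only step requiring genuine care is the bookkeeping of the difference description together with the verification that conditions \ref{proplaurent}--\ref{proplinfuncestimate} decouple edge by edge, so that $\abstractvectorspace_{1,2,p_j}$ splits as $\bigoplus_l D_l$ with $\dim D_l=\latticelength{\indexededgept{p_j}{l}}$; once this is in place, the remainder is the elementary observation that scaling the fixed nonzero direction $\mathbf{w}_l$ by the independent monomials $t_{p_j}^{-1},\ldots,t_{p_j}^{-\latticelength{\indexededgept{p_j}{l}}}$ exactly fills $D_l$.
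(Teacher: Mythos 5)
Your proof is correct and, at its core, is the same argument as the paper's: the paper simply asserts that the $s_{2,l,j,k}$ are nonzero and linearly independent and observes that their number equals $\dim\abstractvectorspace_{1,2,p_j}$ by Remark \ref{avs22pdim}. Your edge\-/by\-/edge difference decomposition is just a more explicit packaging of that independence\-/plus\-/dimension\-/count (and in effect re-derives Remark \ref{avs22pdim}), so it is a valid, somewhat more self-contained elaboration rather than a genuinely different route.
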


\begin{proof}
Clearly, all these sequences $s_{2,l,j,k}$
are nonzero and linearly independent.
The amount of them is 
$|\indexededgept{p_j}1|+\ldots+|\indexededgept{p_j}{\numberofverticespt{p_j}-1}|=\dim \abstractvectorspace_{1,2,p_j}$ (Remark \ref{avs22pdim}).
\end{proof}

\begin{corollary}
$\rho'\colon \abstractvectorspace_{3,4}\to \abstractvectorspace_{1,2}$ is surjective.\qed
\end{corollary}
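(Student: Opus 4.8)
The plan is to prove surjectivity by exhibiting a spanning set of $\abstractvectorspace_{1,2}$ inside the image of $\rho'$, exploiting the direct sum decomposition $\abstractvectorspace_{1,2}=\bigoplus_{p\text{ essential}}\abstractvectorspace_{1,2,p}$. Since $\rho'$ is linear, its image is a subspace, so it suffices to show that this subspace contains a generating family; I would choose the family $\{s_{2,l,j,k}\}$, which is already available since the preceding lemma shows that for each essential special point $p_j$ the elements $s_{2,l,j,k}$ (for $1\le l<\numberofverticespt{p_j}$ and $1\le k\le\latticelength{\indexededgept{p_j}{l}}$) span $\abstractvectorspace_{1,2,p_j}$.

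First I would invoke the corollary immediately above, which computes $\rho'$ on the distinguished generators of $\abstractvectorspace_{3,4}$: for each essential special point $p_j$ and each admissible pair $(l,k)$ one has $\rho'_{p_j}(s_{3,4,l,j,k})=s_{2,l,j,k}$ while $\rho'_{p}(s_{3,4,l,j,k})=0$ for every essential special point $p\ne p_j$. Reading this off through the definition $\rho'=(\rho'_p)_p$ and the direct sum structure, it says precisely that $\rho'(s_{3,4,l,j,k})$ equals the element of $\bigoplus_{p}\abstractvectorspace_{1,2,p}$ whose $p_j$-component is $s_{2,l,j,k}$ and whose remaining components vanish; that is, $\rho'(s_{3,4,l,j,k})=s_{2,l,j,k}$ when $s_{2,l,j,k}$ is regarded as an element of $\abstractvectorspace_{1,2}$ supported in the single summand $\abstractvectorspace_{1,2,p_j}$.

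Consequently the image of $\rho'$ contains every $s_{2,l,j,k}$, ranging over all essential special points $p_j$ and all admissible $(l,k)$. By the spanning lemma these elements generate each summand $\abstractvectorspace_{1,2,p_j}$, and hence generate the whole direct sum $\abstractvectorspace_{1,2}$. Therefore the image of $\rho'$ is all of $\abstractvectorspace_{1,2}$, proving surjectivity.

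There is essentially no serious obstacle here — the statement is a bookkeeping consequence of the two results just proved. The only point requiring a little care is the index range: the spanning family of $\abstractvectorspace_{3,4}$ is indexed by $1\le l\le\numberofverticespt{p_j}$, whereas the generators $s_{2,l,j,k}$ of $\abstractvectorspace_{1,2,p_j}$ only use $1\le l<\numberofverticespt{p_j}$. This causes no difficulty, since the extra generators $s_{3,4,\numberofverticespt{p_j},j,k}$ are defined by an empty condition and hence are the zero tuple; I would simply restrict attention to the generators $s_{3,4,l,j,k}$ with $l<\numberofverticespt{p_j}$, which map onto exactly the spanning set $\{s_{2,l,j,k}\}$ used above.
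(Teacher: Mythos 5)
Your argument is correct and is exactly the reasoning the paper intends (the corollary is stated with no proof precisely because it follows immediately from the preceding corollary computing $\rho'_{p}(s_{3,4,l,j,k})$ and the lemma that the $s_{2,l,j,k}$ span each $\abstractvectorspace_{1,2,p_j}$). Your remark about the index $l=\numberofverticespt{p_j}$ giving the zero tuple is a sensible clarification but does not change the substance.
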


Finally, we get the following proposition:
\begin{proposition}\label{secondsurjectivity}
The map $\abstractvectorspace_{3,2}\to 
\ker(H^0(\PP^1,\givinv)\to H^0(\PP^1,\gviiiinv))$
is surjective.\qed
\end{proposition}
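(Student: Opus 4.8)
The plan is to reduce the claim to the surjectivity already recorded for $\abstractvectorspace_{1,2}$ in Proposition \ref{avs22p} and Remark \ref{commdiagremark}, by threading the image of $\abstractvectorspace_{3,2}$ through the chain of spaces $\abstractvectorspace_{3,3}$, $\abstractvectorspace_{3,4}$, and $\abstractvectorspace_{1,2}$ built up above. Since $\PP^1$ is one-dimensional we have $H^0(\PP^1,\givinv)=\Gamma(\PP^1,\givinv)$ and $H^0(\PP^1,\gviiiinv)=\Gamma(\PP^1,\gviiiinv)$, so it suffices to identify the image of $\abstractvectorspace_{3,2}$ in $\Gamma(\PP^1,\givinv)$ with the full kernel $\ker(\Gamma(\PP^1,\givinv)\to\Gamma(\PP^1,\gviiiinv))$; this simultaneously shows that the image lands in the kernel and that it fills it.

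First I would assemble the two identifications of images already established. By the corollary following the comparison of $s'_{3,2,j,j',k}$ with $s_{3,3,j,j',k}$, the image of $\abstractvectorspace_{3,2}$ in $\Gamma(\PP^1,\givinv)$ coincides with that of $\abstractvectorspace_{3,3}$. Moreover, under the isomorphism $\abstractvectorspace_{3,3}\cong\abstractvectorspace_{3,4}$ the image of $\abstractvectorspace_{3,3}$ in $\gsiicircgengen$, and hence in $\Gamma(\PP^1,\givinv)$, coincides with that of $\abstractvectorspace_{3,4}$. Thus the image of $\abstractvectorspace_{3,2}$ in $\Gamma(\PP^1,\givinv)$ equals the image of $\abstractvectorspace_{3,4}$.

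Next I would invoke the factorization of the structure map $\abstractvectorspace_{3,4}\to\gsiicircgengen$ as $\rho\circ\rho'$, where $\rho'\colon\abstractvectorspace_{3,4}\to\abstractvectorspace_{1,2}$ is surjective (the corollary immediately preceding this proposition) and $\rho\colon\abstractvectorspace_{1,2}\to\gsiicircgengen$ is exactly the map appearing in Proposition \ref{avs22p}. Surjectivity of $\rho'$ forces the image of $\abstractvectorspace_{3,4}$ in $\gsiicircgengen$, and therefore in $\Gamma(\PP^1,\givinv)$ after composing with the canonical projection, to coincide with the image of $\abstractvectorspace_{1,2}$. By Proposition \ref{avs22p} together with Remark \ref{commdiagremark}, the latter image is precisely $\ker(\Gamma(\PP^1,\givinv)\to\Gamma(\PP^1,\gviiiinv))$. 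Chaining these equalities gives that the image of $\abstractvectorspace_{3,2}$ is exactly this kernel, which is the asserted surjectivity.

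The one point needing genuine care, rather than formal chasing, is the commutativity of the whole diagram of identifications: specifically, that the canonical projection $\gsiicircgengen\to\Gamma(\PP^1,\givinv)$ used to compute the image of $\abstractvectorspace_{3,4}$ is the same one underlying the description of the image of $\abstractvectorspace_{1,2}$ in Proposition \ref{avs22p}. This follows from the functoriality of the isomorphism in Proposition \ref{hnasquotient} and from the fact that $\abstractvectorspace_{3,2}$, $\abstractvectorspace_{3,3}$, $\abstractvectorspace_{3,4}$, and $\abstractvectorspace_{1,2}$ were all set up using $U_{\numberoffixedopensets}$- and $U_{\opensetforvertex{p,j}}$-descriptions relative to the same sufficient system. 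I would confirm it by tracking an arbitrary generator $s_{3,4,l,j,k}$ and its image $\rho'(s_{3,4,l,j,k})=s_{2,l,j,k}$ through both routes, checking by Lemma \ref{avs33to22goescorrectly} that they produce the same element of $\Gamma(\PP^1,\givinv)$. Everything else is a direct concatenation of results already proved, so I expect this bookkeeping of the descriptions to be the only real obstacle.
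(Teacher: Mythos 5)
Your argument is correct and is essentially the paper's own: the proposition is stated with an immediate \qed precisely because it is the concatenation of the preceding chain — the image of $\abstractvectorspace_{3,2}$ equals that of $\abstractvectorspace_{3,3}$, hence of $\abstractvectorspace_{3,4}$, whose map to $\gsiicircgengen$ factors as $\rho\circ\rho'$ with $\rho'$ surjective onto $\abstractvectorspace_{1,2}$, and Proposition \ref{avs22p} with Remark \ref{commdiagremark} identifies $\rho(\abstractvectorspace_{1,2})$ with the full kernel. Your attention to the compatibility of the two projections to $\Gamma(\PP^1,\givinv)$, resolved via the functoriality in Proposition \ref{hnasquotient} and Lemma \ref{avs33to22goescorrectly}, matches how the paper handles the same point.
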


Now we continue with $\im(\abstractvectorspace_{3,2}\to H^1(U,\Theta_U))$. We will have to prove that 
it contains $\im(H^1(\PP^1,\gi)\to H^1(U,\Theta_U))$. Let us start with the following lemma.

\begin{lemma}\label{recalcregularity}
Let $p$ be an essential special point, $1\le j_1\le\numberofverticespt p$, $1\le j_2\le\numberofverticespt p$.
Let $\stdvectorfiledxletter$ be the vector field on $U_\numberoffixedopensets$
with $U_{\opensetforvertex{p,j_1}}$-description $(0,0,\stdvectorfiledpletter)$, 
where $\stdvectorfiledpletter=\partial/\partial t$ (recall that $t$ is defined at all \emph{essential}
special points).

Let $(\uidescriptionfunction{1},\uidescriptionfunction{2},\stdvectorfiledpletter)$ be the 
$U_{\opensetforvertex{p,j_2}}$-description of $\stdvectorfiledxletter$.
Then 
$$
\uidescriptionfunction{1}-\uidegree{\opensetforvertex{p,j_2},1}(\indexedvertexpt{p}{j_1}-\indexedvertexpt{p}{j_2})\frac1{t-t(p)}
$$
and
$$
\uidescriptionfunction{2}-\uidegree{\opensetforvertex{p,j_2},2}(\indexedvertexpt{p}{j_1}-\indexedvertexpt{p}{j_2})\frac1{t-t(p)}
$$
are rational functions on $\PP^1$ regular at $p$.
\end{lemma}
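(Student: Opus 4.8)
The plan is to read off the $U_{\opensetforvertex{p,j_2}}$-description of $\stdvectorfiledxletter$ directly from Lemma \ref{vfieldtransition} and then extract the principal part of each component at $p$ by a residue computation for logarithmic derivatives. If $j_1=j_2$ the claim is trivial, since $\uilargetransition{\opensetforvertex{p,j_1},\opensetforvertex{p,j_2}}$ is the identity matrix, so $\uidescriptionfunction{1}=\uidescriptionfunction{2}=0$, while $\indexedvertexpt{p}{j_1}-\indexedvertexpt{p}{j_2}=0$. So I assume $j_1\ne j_2$ and write $i_1=\opensetforvertex{p,j_1}$, $i_2=\opensetforvertex{p,j_2}$, together with $a_{1,1}=\uidegree{i_1,1}^*(\uidegree{i_2,1})$, $a_{1,2}=\uidegree{i_1,2}^*(\uidegree{i_2,1})$, and analogously $a_{2,1},a_{2,2}$ for $\uidegree{i_2,2}$.

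Applying Lemma \ref{vfieldtransition} with $i=i_1$, $j=i_2$ to the $U_{i_1}$-description $(0,0,\partial/\partial t)$, the constant contributions $a_{\cdot,\cdot}\uidescriptionfunction{i_1,\cdot}$ vanish, and since the prefactor $\oluithreadfunction{i_1,1}^{a_{1,1}}\oluithreadfunction{i_1,2}^{a_{1,2}}/\oluithreadfunction{i_2,1}$ is exactly $1/g_1$, each component is a logarithmic derivative evaluated on $\partial/\partial t$:
\[
\uidescriptionfunction{1}=\frac{dg_1}{g_1}\left(\frac{\partial}{\partial t}\right),\qquad
g_1=\frac{\oluithreadfunction{i_2,1}}{\oluithreadfunction{i_1,1}^{a_{1,1}}\oluithreadfunction{i_1,2}^{a_{1,2}}},
\]
and similarly $\uidescriptionfunction{2}=(dg_2/g_2)(\partial/\partial t)$ with $g_2=\oluithreadfunction{i_2,2}/(\oluithreadfunction{i_1,1}^{a_{2,1}}\oluithreadfunction{i_1,2}^{a_{2,2}})$. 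These are equalities of rational functions on $\PP^1$ (the lemma verifies them on ordinary points, hence on a dense set), so I may compute their orders and principal parts at $p$.

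Next I compute $\ord_p(g_1)$. The sections are chosen so that $\ord_p(\oluithreadfunction{i,l})=-\mathcal D_p(\uidegree{i,l})$, whence
\[
\ord_p(g_1)=-\mathcal D_p(\uidegree{i_2,1})+a_{1,1}\mathcal D_p(\uidegree{i_1,1})+a_{1,2}\mathcal D_p(\uidegree{i_1,2}).
\]
Since $\uidegree{i_1,1},\uidegree{i_1,2}\in\normalvertexcone{\indexedvertexpt{p}{j_1}}{\stdpolyhedronletter_p}$, the minimum defining $\mathcal D_p$ is attained at $\indexedvertexpt{p}{j_1}$, so $\mathcal D_p(\uidegree{i_1,l})=\uidegree{i_1,l}(\indexedvertexpt{p}{j_1})$; likewise $\mathcal D_p(\uidegree{i_2,1})=\uidegree{i_2,1}(\indexedvertexpt{p}{j_2})$ as $\uidegree{i_2,1}\in\normalvertexcone{\indexedvertexpt{p}{j_2}}{\stdpolyhedronletter_p}$. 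Because $a_{1,1},a_{1,2}$ are the coordinates of $\uidegree{i_2,1}$ in the basis $\uidegree{i_1,1},\uidegree{i_1,2}$, one has $a_{1,1}\uidegree{i_1,1}+a_{1,2}\uidegree{i_1,2}=\uidegree{i_2,1}$, so the sum collapses to
\[
\ord_p(g_1)=\uidegree{i_2,1}(\indexedvertexpt{p}{j_1})-\uidegree{i_2,1}(\indexedvertexpt{p}{j_2})
=\uidegree{\opensetforvertex{p,j_2},1}(\indexedvertexpt{p}{j_1}-\indexedvertexpt{p}{j_2}),
\]
and the identical computation gives $\ord_p(g_2)=\uidegree{\opensetforvertex{p,j_2},2}(\indexedvertexpt{p}{j_1}-\indexedvertexpt{p}{j_2})$.

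Finally I invoke the elementary residue fact: for any rational $g$ with $m=\ord_p(g)$, writing $g=c\,(t-t(p))^m u$ with $u$ regular and nonvanishing at $p$ gives $dg/g=m\,dt/(t-t(p))+du/u$, hence $(dg/g)(\partial/\partial t)=m/(t-t(p))+(\text{regular at }p)$. Applied to $g_1$ and $g_2$ with the orders just computed, this says precisely that $\uidescriptionfunction{1}-\uidegree{\opensetforvertex{p,j_2},1}(\indexedvertexpt{p}{j_1}-\indexedvertexpt{p}{j_2})\frac1{t-t(p)}$ and $\uidescriptionfunction{2}-\uidegree{\opensetforvertex{p,j_2},2}(\indexedvertexpt{p}{j_1}-\indexedvertexpt{p}{j_2})\frac1{t-t(p)}$ are regular at $p$. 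The whole argument is bookkeeping; the only step requiring care is matching the residue $\ord_p(g_1)$ against the prescribed coefficient, which is where the vertex/normal-cone incidences and the chosen orders of vanishing of the $\oluithreadfunction{\cdot,\cdot}$ must be combined correctly.
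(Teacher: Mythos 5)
Your proof is correct and follows essentially the same route as the paper: apply Lemma \ref{vfieldtransition} to identify each component with a logarithmic derivative evaluated on $\partial/\partial t$, compute $\ord_p$ of the relevant ratio of sections via the normal-cone incidences, and extract the principal part $\ord_p(g)\cdot(t-t(p))^{-1}$. The paper phrases the last step by multiplying by $(t-t(p))^{-\ord_p(g)}$ and observing the product has order zero at $p$, which is the same residue computation you state directly.
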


\begin{proof}
By Lemma \ref{vfieldtransition},
$$
\uidescriptionfunction i=
\frac
{\oluithreadfunction{\opensetforvertex{p,j_1},1}^{\uidegree{\opensetforvertex{p,j_1},1}^*(\uidegree{\opensetforvertex{p,j_2},i})}
\oluithreadfunction{\opensetforvertex{p,j_1},2}^{\uidegree{\opensetforvertex{p,j_1},2}^*(\uidegree{\opensetforvertex{p,j_2},i})}}
{\oluithreadfunction{\opensetforvertex{p,j_2},i}} 
d\left(\frac
{\oluithreadfunction{\opensetforvertex{p,j_2},i}}
{\oluithreadfunction{\opensetforvertex{p,j_1},1}^{\uidegree{\opensetforvertex{p,j_1},1}^*(\uidegree{\opensetforvertex{p,j_2},i})}
\oluithreadfunction{\opensetforvertex{p,j_1},2}^{\uidegree{\opensetforvertex{p,j_1},2}^*(\uidegree{\opensetforvertex{p,j_2},i})}}\right)
\stdvectorfiledpletter
$$
for $i=1,2$. Denote 
$$
f_i=
\frac
{\oluithreadfunction{\opensetforvertex{p,j_2},i}}
{\oluithreadfunction{\opensetforvertex{p,j_1},1}^{\uidegree{\opensetforvertex{p,j_1},1}^*(\uidegree{\opensetforvertex{p,j_2},i})}
\oluithreadfunction{\opensetforvertex{p,j_1},2}^{\uidegree{\opensetforvertex{p,j_1},2}^*(\uidegree{\opensetforvertex{p,j_2},i})}},
$$
then
$$
\uidescriptionfunction i=\frac{df_i}{f_i}\stdvectorfiledpletter.
$$
Let us find $\ord_p(f_i)$. We have 
$$
\ord_p(f_i)=\ord_p(\oluithreadfunction{\opensetforvertex{p,j_2},i})
-\uidegree{\opensetforvertex{p,j_1},1}^*(\uidegree{\opensetforvertex{p,j_2},i})\ord_p(\oluithreadfunction{\opensetforvertex{p,j_1},1})
-\uidegree{\opensetforvertex{p,j_1},2}^*(\uidegree{\opensetforvertex{p,j_2},i})\ord_p(\oluithreadfunction{\opensetforvertex{p,j_1},2}).
$$
Since $p\in W_p=V_{\opensetforvertex{p,j_1}}=V_{\opensetforvertex{p,j_2}}$,
we have 
$\ord_p(\oluithreadfunction{\opensetforvertex{p,j_1},i})=-\mathcal D_p(\uidegree{\opensetforvertex{p,j_1},i})=
-\eval_{\stdpolyhedronletter_p}(\uidegree{\opensetforvertex{p,j_1},i})$
and 
$\ord_p(\oluithreadfunction{\opensetforvertex{p,j_2},i})=-\mathcal D_p(\uidegree{\opensetforvertex{p,j_2},i})=
-\eval_{\stdpolyhedronletter_p}(\uidegree{\opensetforvertex{p,j_2},i})$.
By the definition of $\opensetforvertex{p,j_1}$ and of $\opensetforvertex{p,j_2}$, 
$\uidegree{\opensetforvertex{p,j_1},i}\in \normalvertexcone{\stdpolyhedronletter_p}{\indexedvertexpt p{j_1}}$ for $i=1,2$ 
and 
$\uidegree{\opensetforvertex{p,j_1},i}\in \normalvertexcone{\stdpolyhedronletter_p}{\indexedvertexpt p{j_2}}$ for $i=1,2$.
So, 
$\ord_p(\oluithreadfunction{\opensetforvertex{p,j_1},i})=-\uidegree{\opensetforvertex{p,j_1},i}(\indexedvertexpt p{j_1})$
and
$\ord_p(\oluithreadfunction{\opensetforvertex{p,j_2},i})=-\uidegree{\opensetforvertex{p,j_2},i}(\indexedvertexpt p{j_2})$.
Now,
\begin{multline*}
\ord_p(f_i)=-\uidegree{\opensetforvertex{p,j_2},i}(\indexedvertexpt p{j_2})
+\uidegree{\opensetforvertex{p,j_1},1}^*(\uidegree{\opensetforvertex{p,j_2},i})\uidegree{\opensetforvertex{p,j_1},1}(\indexedvertexpt p{j_1})
+\uidegree{\opensetforvertex{p,j_1},2}^*(\uidegree{\opensetforvertex{p,j_2},i})\uidegree{\opensetforvertex{p,j_1},2}(\indexedvertexpt p{j_1})=\\
-\uidegree{\opensetforvertex{p,j_2},i}(\indexedvertexpt p{j_2})+\uidegree{\opensetforvertex{p,j_2},i}(\indexedvertexpt p{j_1})=
\uidegree{\opensetforvertex{p,j_2},i}(\indexedvertexpt p{j_1}-\indexedvertexpt p{j_2}).
\end{multline*}

Consider also functions 
$$
f'_i=(t-t(p))^{-\uidegree{\opensetforvertex{p,j_2},i}(\indexedvertexpt p{j_1}-\indexedvertexpt p{j_2})}.
$$
Its logarithmic derivative equals
$$
\frac{df'_i}{f'_i}=-\uidegree{\opensetforvertex{p,j_2},i}(\indexedvertexpt p{j_1}-\indexedvertexpt p{j_2})\frac{dt}{t-t(p)},
$$
and
$$
\frac{df'_i}{f'_i}\stdvectorfiledpletter=-\uidegree{\opensetforvertex{p,j_2},i}(\indexedvertexpt p{j_1}-\indexedvertexpt p{j_2})\frac{1}{t-t(p)}.
$$
Clearly, $\ord_p(f'_i)=\uidegree{\opensetforvertex{p,j_2},i}(\indexedvertexpt p{j_1}-\indexedvertexpt p{j_2})$,
and $\ord_p(f_if'_i)=0$, so the logarithmic derivative of $f_if'_i$ is regular at $p$. We have
$$
\frac{df_if'_i}{f_if'_i}\stdvectorfiledpletter=
\frac{df_i}{f_i}+\frac{df'_i}{f'_i}=
\uidescriptionfunction i-\uidegree{\opensetforvertex{p,j_2},i}(\indexedvertexpt p{j_1}-\indexedvertexpt p{j_2})\frac{1}{t-t(p)}.
$$
\end{proof}

\begin{corollary}\label{recalcregularitycor}
Let $p$ be an essential special point, $1\le j_1\le\numberofverticespt p$, $1\le j_2\le\numberofverticespt p$.
Let $\stdvectorfiledxletter$ be the vector field on $U_\numberoffixedopensets$
with $U_{\opensetforvertex{p,j_1}}$-description $(\uidescriptionfunction{j_1,1},\uidescriptionfunction{j_1,2},\stdvectorfiledpletter)$, 
where $\stdvectorfiledpletter=\partial/\partial t$ (recall that $t$ is defined at all \emph{essential}
special points) and $\uidescriptionfunction{j_1,i}$ are regular at $p$.

Let $(\uidescriptionfunction{j_2,1},\uidescriptionfunction{j_2,2},\stdvectorfiledpletter)$ be the 
$U_{\opensetforvertex{p,j_2}}$-description of $\stdvectorfiledxletter$.
Then 
$$
\uidescriptionfunction{j_2,1}-\uidegree{\opensetforvertex{p,j_2},1}(\indexedvertexpt{p}{j_1}-\indexedvertexpt{p}{j_2})\frac1{t-t(p)}
$$
and
$$
\uidescriptionfunction{j_2,2}-\uidegree{\opensetforvertex{p,j_2},2}(\indexedvertexpt{p}{j_1}-\indexedvertexpt{p}{j_2})\frac1{t-t(p)}
$$
are rational functions on $\PP^1$ regular at $p$.
\end{corollary}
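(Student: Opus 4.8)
The plan is to deduce this corollary from Lemma \ref{recalcregularity} by a linearity argument, exploiting that the operation of taking a $U_i$-description is additive and that the hypothesis here differs from that of Lemma \ref{recalcregularity} only by the presence of function components that are already regular at $p$. Concretely, I would write $\stdvectorfiledxletter = \stdvectorfiledxletter_0 + \stdvectorfiledxletter_1$ as a sum of two $T$-invariant vector fields of degree $0$ on $U_{\numberoffixedopensets}$, where $\stdvectorfiledxletter_0$ has $U_{\opensetforvertex{p,j_1}}$-description $(0,0,\stdvectorfiledpletter)$ and $\stdvectorfiledxletter_1$ has $U_{\opensetforvertex{p,j_1}}$-description $(\uidescriptionfunction{j_1,1},\uidescriptionfunction{j_1,2},0)$. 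Such vector fields exist by Corollary \ref{uidescvfield}, which says the three data of a $U_i$-description may be prescribed arbitrarily; since descriptions are additive and determine the vector field uniquely, the sum of these two descriptions is the description of $\stdvectorfiledxletter$, so $\stdvectorfiledxletter = \stdvectorfiledxletter_0 + \stdvectorfiledxletter_1$ indeed.

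Next I would invoke Lemma \ref{vfieldtransition}, which computes the $U_{\opensetforvertex{p,j_2}}$-description from the $U_{\opensetforvertex{p,j_1}}$-description by left multiplication with the matrix $\uilargetransition{\opensetforvertex{p,j_1},\opensetforvertex{p,j_2}}(p)$. As this is linear, the $U_{\opensetforvertex{p,j_2}}$-description of $\stdvectorfiledxletter$ is the sum of those of $\stdvectorfiledxletter_0$ and $\stdvectorfiledxletter_1$. For $\stdvectorfiledxletter_0$, Lemma \ref{recalcregularity} applies verbatim and gives that the first two components of its $U_{\opensetforvertex{p,j_2}}$-description become regular at $p$ after subtracting $\uidegree{\opensetforvertex{p,j_2},i}(\indexedvertexpt{p}{j_1}-\indexedvertexpt{p}{j_2})/(t-t(p))$. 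For $\stdvectorfiledxletter_1$ the crucial structural observation is that the only entries of $\uilargetransition{\opensetforvertex{p,j_1},\opensetforvertex{p,j_2}}$ that can have a pole at $p$ are the two non-constant entries in its third column (logarithmic derivatives of rational functions), and these are exactly the entries that multiply the vector-field component of the description. Since that component vanishes for $\stdvectorfiledxletter_1$, its $U_{\opensetforvertex{p,j_2}}$-description is obtained by applying the constant block $\uismalltransition{\opensetforvertex{p,j_1},\opensetforvertex{p,j_2}}$ to $(\uidescriptionfunction{j_1,1},\uidescriptionfunction{j_1,2})$; hence its first two components are $\CC$-linear combinations of $\uidescriptionfunction{j_1,1}$ and $\uidescriptionfunction{j_1,2}$, which are regular at $p$ by hypothesis.

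Adding the two contributions, the two differences claimed for $\stdvectorfiledxletter$ are each a sum of a function regular at $p$ (the contribution of $\stdvectorfiledxletter_0$) and a function regular at $p$ (the contribution of $\stdvectorfiledxletter_1$), hence regular at $p$, which is the assertion. I do not expect a serious obstacle: the whole content is the linearity of the description operation together with the remark that the pole-bearing entries of the transition matrix couple only to $\stdvectorfiledpletter$, so setting $\stdvectorfiledpletter=0$ introduces no poles. The only point needing a line of justification is the legitimacy of the splitting $\stdvectorfiledxletter = \stdvectorfiledxletter_0 + \stdvectorfiledxletter_1$, which is immediate from the free choice of descriptions in Corollary \ref{uidescvfield}.
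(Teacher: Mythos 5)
Your proposal is correct and follows essentially the same route as the paper: the paper likewise splits the description into $(\uidescriptionfunction{j_1,1},\uidescriptionfunction{j_1,2},0)$, transformed by the constant block $\uismalltransition{\opensetforvertex{p,j_1},\opensetforvertex{p,j_2}}$ and hence regular at $p$, plus $(0,0,\stdvectorfiledpletter)$, handled by Lemma \ref{recalcregularity}, and adds the two contributions by linearity.
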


\begin{proof}
Set
$$
\left(
\begin{array}{c}
\uidescriptionfunction{j_2,1}'\\
\uidescriptionfunction{j_2,2}'\\
\end{array}
\right)=
\uismalltransition{\opensetforvertex{p,j_1},\opensetforvertex{p,j_2}}
\left(
\begin{array}{c}
\uidescriptionfunction{j_1,1}\\
\uidescriptionfunction{j_1,2}\\
\end{array}
\right)
$$
and
$$
\left(
\begin{array}{c}
\uidescriptionfunction{j_2,1}''\\
\uidescriptionfunction{j_2,2}''\\
\stdvectorfiledpletter
\end{array}
\right)=
\uilargetransition{\opensetforvertex{p,j_1},\opensetforvertex{p,j_2}}
\left(
\begin{array}{c}
0\\
0\\
\stdvectorfiledpletter
\end{array}
\right).
$$
Then $\uidescriptionfunction{j_2,i}=\uidescriptionfunction{j_2,i}'+\uidescriptionfunction{j_2,i}''$.
Since the entries of $\uismalltransition{\opensetforvertex{p,j_1},\opensetforvertex{p,j_2}}$
are constants, the functions $\uidescriptionfunction{j_2,1}'$ and $\uidescriptionfunction{j_2,2}'$
are regular at $p$. The claim follows from Lemma \ref{recalcregularity}.
\end{proof}

We need to introduce a notation. Let $p$ be an essential special point. 
Let $v=\partial/\partial t$ be a vector field on $\PP^1$.
Let $(\uidescriptionfunction1,\uidescriptionfunction2,\stdvectorfiledpletter)$
be the $U_{\numberoffixedopensets}$-description of the vector field on $U_{\numberoffixedopensets}$
with the $U_{\opensetforvertex{p,1}}$-description $(0,0,\stdvectorfiledpletter)$.
It follows from the form of the matrix 
$\uilargetransition{\numberoffixedopensets,\opensetforvertex{p,1}}$
and properties of logarithmic derivatives
that the functions 
$\uidescriptionfunction1$ and $\uidescriptionfunction2$ have poles of order at most one at $p$. 
So, functions $(t-t(p))\uidescriptionfunction i$ ($i=1,2$) are regular at $p$. 
Denote their values at $p$ by $a^{(2)}_{p,1}$ and $a^{(2)}_{p,2}$, respectively.
Then functions $\uidescriptionfunction i-a^{(2)}_{p,i}(t-t(p))^{-1}$ ($i=1,2$) are regular at $p$.
We keep this notation $a^{(2)}_{p,1}$ and $a^{(2)}_{p,2}$
until the end of the section, while $p$, $\stdvectorfiledpletter$, 
$\uidescriptionfunction1$, and $\uidescriptionfunction2$ will be used in the sequel to denote other objects.


\begin{lemma}\label{firstisregular}
Let $p$ be an essential special point, $\stdvectorfiledpletter=\partial/\partial t$, 
and let $\stdvectorfiledxletter$ be the vector field on $U_{\numberoffixedopensets}$
with $U_{\numberoffixedopensets}$-description $(a^{(2)}_{p,1}(t-t(p))^{-1}, a^{(2)}_{p,2}(t-t(p))^{-1}, \stdvectorfiledpletter)$.
Let $(\uidescriptionfunction1,\uidescriptionfunction2, \stdvectorfiledpletter)$
be the $U_{\opensetforvertex{p,1}}$-description of $\stdvectorfiledxletter$.
Then $\uidescriptionfunction1$ and $\uidescriptionfunction2$ are regular at $p$.
\end{lemma}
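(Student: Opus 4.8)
The plan is to reduce everything to the auxiliary vector field introduced immediately before the statement together with Lemma \ref{vfieldtransition}. Write $\stdvectorfiledxletter'$ for the degree-zero vector field on $\pi^{-1}(W_p)\cap U_{\numberoffixedopensets}$ whose $U_{\opensetforvertex{p,1}}$-description is $(0,0,\stdvectorfiledpletter)$, with $\stdvectorfiledpletter=\partial/\partial t$, and let $(\uidescriptionfunction{1}',\uidescriptionfunction{2}',\stdvectorfiledpletter)$ be its $U_{\numberoffixedopensets}$-description. By the very definition of $a^{(2)}_{p,1}$ and $a^{(2)}_{p,2}$ (which makes sense because, as noted before the lemma, the poles of $\uidescriptionfunction{1}'$ and $\uidescriptionfunction{2}'$ at $p$ have order at most one), each function $\uidescriptionfunction{i}'-a^{(2)}_{p,i}(t-t(p))^{-1}$ is regular at $p$ for $i=1,2$.

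Next I would form the difference $\stdvectorfiledxletter'-\stdvectorfiledxletter$. Both $\stdvectorfiledxletter'$ and $\stdvectorfiledxletter$ have the same third entry $\stdvectorfiledpletter$ in their $U_{\numberoffixedopensets}$-descriptions, so the $U_{\numberoffixedopensets}$-description of $\stdvectorfiledxletter'-\stdvectorfiledxletter$ is
$$
\left(\uidescriptionfunction{1}'-a^{(2)}_{p,1}(t-t(p))^{-1},\ \uidescriptionfunction{2}'-a^{(2)}_{p,2}(t-t(p))^{-1},\ 0\right),
$$
whose first two entries are regular at $p$ by the previous step and whose vector-field entry vanishes.

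I would then pass to the $U_{\opensetforvertex{p,1}}$-description of $\stdvectorfiledxletter'-\stdvectorfiledxletter$ by applying Lemma \ref{vfieldtransition} with the matrix $\uilargetransition{\numberoffixedopensets,\opensetforvertex{p,1}}$. The key observation is that the vector-field entry of its $U_{\numberoffixedopensets}$-description is $0$, so the two non-constant (logarithmic-derivative) entries occupying the third column of $\uilargetransition{\numberoffixedopensets,\opensetforvertex{p,1}}$ are multiplied by $0$ and drop out; consequently the first two components of the resulting $U_{\opensetforvertex{p,1}}$-description are obtained from the regular functions above by applying the \emph{constant} matrix $\uismalltransition{\numberoffixedopensets,\opensetforvertex{p,1}}$. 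Being constant linear combinations of functions regular at $p$, they are themselves regular at $p$.

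Finally I would combine, using $\stdvectorfiledxletter=\stdvectorfiledxletter'-(\stdvectorfiledxletter'-\stdvectorfiledxletter)$. Since the $U_{\opensetforvertex{p,1}}$-description of $\stdvectorfiledxletter'$ is $(0,0,\stdvectorfiledpletter)$, whose first two components are (trivially) regular at $p$, the first two components of the $U_{\opensetforvertex{p,1}}$-description of $\stdvectorfiledxletter$ are differences of functions regular at $p$, hence regular at $p$, which is the claim. I do not expect a genuine obstacle here: the argument is pure bookkeeping resting on Lemma \ref{vfieldtransition} and the definition of $a^{(2)}_{p,i}$. The only points requiring care are getting the direction of the transition matrix correct (from $U_{\numberoffixedopensets}$ to $U_{\opensetforvertex{p,1}}$, i.e.\ $\uilargetransition{\numberoffixedopensets,\opensetforvertex{p,1}}$) and making explicit that the logarithmic-derivative entries disappear precisely because the vector-field component of the difference is zero.
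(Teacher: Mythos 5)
Your argument is correct and is essentially the paper's own proof: the paper likewise subtracts the $U_{\numberoffixedopensets}$-description of the auxiliary field (whose $U_{\opensetforvertex{p,1}}$-description is $(0,0,\stdvectorfiledpletter)$) from that of $\stdvectorfiledxletter$, observes that the vector-field component of the difference vanishes so that only the constant matrix $\uismalltransition{\numberoffixedopensets,\opensetforvertex{p,1}}$ acts, and concludes regularity at $p$ from the defining property of $a^{(2)}_{p,i}$. Your phrasing via the difference of vector fields rather than via the explicit matrix identities is only a cosmetic variation.
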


\begin{proof}
Set
$$
\left(
\begin{array}{c}
\uidescriptionfunction{\numberoffixedopensets,1}\\
\uidescriptionfunction{\numberoffixedopensets,2}\\
\stdvectorfiledpletter
\end{array}
\right)=
\uilargetransition{\opensetforvertex{p,1},\numberoffixedopensets}
\left(
\begin{array}{c}
0\\
0\\
\stdvectorfiledpletter
\end{array}
\right).
$$
Then
$$
\left(
\begin{array}{c}
0\\
0\\
\stdvectorfiledpletter
\end{array}
\right)=
\uilargetransition{\numberoffixedopensets,\opensetforvertex{p,1}}
\left(
\begin{array}{c}
\uidescriptionfunction{\numberoffixedopensets,1}\\
\uidescriptionfunction{\numberoffixedopensets,2}\\
\stdvectorfiledpletter
\end{array}
\right)
$$
and
$$
\left(
\begin{array}{c}
\uidescriptionfunction1\\
\uidescriptionfunction2\\
\stdvectorfiledpletter
\end{array}
\right)=
\uilargetransition{\numberoffixedopensets,\opensetforvertex{p,1}}
\left(
\begin{array}{c}
(a^{(2)}_{p,1}(t-t(p))^{-1}\\
(a^{(2)}_{p,2}(t-t(p))^{-1}\\
\stdvectorfiledpletter
\end{array}
\right),
$$
so
$$
\left(
\begin{array}{c}
\uidescriptionfunction1\\
\uidescriptionfunction2\\
\end{array}
\right)=
\uismalltransition{\numberoffixedopensets,\opensetforvertex{p,1}}
\left(
\begin{array}{c}
(a^{(2)}_{p,1}(t-t(p))^{-1}-\uidescriptionfunction{\numberoffixedopensets,1}\\
(a^{(2)}_{p,2}(t-t(p))^{-1}-\uidescriptionfunction{\numberoffixedopensets,2}\\
\end{array}
\right).
$$
Functions $(a^{(2)}_{p,i}(t-t(p))^{-1}-\uidescriptionfunction{\numberoffixedopensets,i}$
are regular at $p$, the entries of $\uismalltransition{\numberoffixedopensets,\opensetforvertex{p,1}}$
are constants,
so $\uidescriptionfunction1$ and $\uidescriptionfunction2$ are regular at $p$.
\end{proof}

Consider the following elements of $\abstractvectorspace_{3,3}$: For each essential 
special point $p_j$ set
$$
s_{3,5,j}=\sum_{l=1}^{\numberofverticespt{p_j}-1}s_{l,j,1}.
$$
Denote by $\abstractvectorspace_{3,5}$ the subspace of $\abstractvectorspace_{3,3}$ spanned by all $s_{3,5,j}$.

We are going to prove that $\im (\abstractvectorspace_{3,5}\to H^1(U,\Theta_U))=\im(H^1(\PP^1,\gi)\to H^1(U,\Theta_U))$.
As before, we will replace $\abstractvectorspace_{3,5}$ by another vector space that will represent the same subspace 
of $H^1(U,\Theta_U)$.
Namely, for each essential special point $p_j$ denote by $s_{3,6,j}$ the following element of $\abstractvectorspace_{3,0}$.
$$
s_{3,6,j}=
(\uidescriptionfunctiondiff11,\uidescriptionfunctiondiff12,\stdvectorfiledpletter[1],\ldots, 
\uidescriptionfunctiondiff{\numberoffixedopensets-1}1,\uidescriptionfunctiondiff{\numberoffixedopensets-1}2,\stdvectorfiledpletter[\numberoffixedopensets-1]),
$$
where:
\begin{enumerate}
\item If $i=\opensetforvertex{p_j,k}$, where $1\le k\le \numberofverticespt{p_j'}$, then
$(\uidescriptionfunctiondiff i1,\uidescriptionfunctiondiff i2,\stdvectorfiledpletter[i])$
is the $U_i$-description of the vector field on $U_{\numberoffixedopensets}$
with the $U_{\numberoffixedopensets}$-description $(a^{(2)}_{p_j,1}(t-t(p_j))^{-1},a^{(2)}_{p_j,2}(t-t(p_j))^{-1},\partial/\partial t)$.
\item Otherwise (if $i$ is not of this form),
$\uidescriptionfunctiondiff{i}1=\uidescriptionfunctiondiff{i}2=\stdvectorfiledpletter[i]=0$.
\end{enumerate}

\begin{lemma}
For each essential special point $p_j$, 
$s_{3,5,j}$ and $s_{3,6,j}$
define the same class in 
$\bigoplus_{i=1}^{\numberofabstractopensets-1}(\Gamma(U_{\numberoffixedopensets},\Theta_U)/\Gamma(U_i,\Theta_U))$.
\end{lemma}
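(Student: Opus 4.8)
The plan is to verify the claimed equality one summand at a time: two elements of $\bigoplus_i(\Gamma(U_{\numberoffixedopensets},\Theta_U)/\Gamma(U_i,\Theta_U))$ agree if and only if, for every $i$, their $i$th entries (vector fields on $U_{\numberoffixedopensets}$) differ by an element of $\Gamma(U_i,\Theta_U)$. By Corollary \ref{uidescvfield} and Remark \ref{uidescvfieldrestriction} this is equivalent to asking that the $U_i$-description of the difference extend regularly over all of $V_i=W_{p_j}$; since the transition data are regular at ordinary points by Lemma \ref{rijregular}, the only condition to check is regularity at the single point $p_j$. For every index $i$ not of the form $\opensetforvertex{p_j,j''}$ both $s_{3,5,j}$ and $s_{3,6,j}$ have zero $i$th entry, so only the indices attached to $p_j$ require work.

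First I would compute $s_{3,5,j}=\sum_{l=1}^{\numberofverticespt{p_j}-1}s'_{3,3,l,j,1}$ entry by entry. Fixing $i=\opensetforvertex{p_j,j''}$, the summand $s'_{3,3,l,j,1}$ contributes a nonzero $i$th entry precisely when $l<j''$, with functions $-\uidegree{i,1}(\indexedvertexpt{p_j}{l+1}-\indexedvertexpt{p_j}{l})/(t-t(p_j))$ and $-\uidegree{i,2}(\indexedvertexpt{p_j}{l+1}-\indexedvertexpt{p_j}{l})/(t-t(p_j))$. Summing over $l=1,\dots,j''-1$, the lattice vectors telescope, $\sum_{l=1}^{j''-1}(\indexedvertexpt{p_j}{l+1}-\indexedvertexpt{p_j}{l})=\indexedvertexpt{p_j}{j''}-\indexedvertexpt{p_j}{1}$, so the $i$th entry of $s_{3,5,j}$ is the field on $U_{\numberoffixedopensets}$ whose $U_i$-description is $(-\uidegree{i,1}(\indexedvertexpt{p_j}{j''}-\indexedvertexpt{p_j}{1})/(t-t(p_j)),\,-\uidegree{i,2}(\indexedvertexpt{p_j}{j''}-\indexedvertexpt{p_j}{1})/(t-t(p_j)),\,0)$.

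Next I would analyze $s_{3,6,j}$. Let $\stdvectorfiledxletter$ be the field on $U_{\numberoffixedopensets}$ with $U_{\opensetforvertex{p_j,1}}$-description $(0,0,\partial/\partial t)$, and let $\stdvectorfiledxletter'$ be the field defining $s_{3,6,j}$, namely the one with $U_{\numberoffixedopensets}$-description $(a^{(2)}_{p_j,1}(t-t(p_j))^{-1},a^{(2)}_{p_j,2}(t-t(p_j))^{-1},\partial/\partial t)$; note every nonzero slot of $s_{3,6,j}$ holds this same field $\stdvectorfiledxletter'$, merely expressed through different $U_i$-descriptions. By the very definition of the numbers $a^{(2)}_{p_j,i}$ as the simple-pole coefficients of the $U_{\numberoffixedopensets}$-description of $\stdvectorfiledxletter$, the difference $\stdvectorfiledxletter'-\stdvectorfiledxletter$ has $U_{\numberoffixedopensets}$-description with both functions regular at $p_j$ and zero field part; passing to the $U_i$-description through the constant block $\uismalltransition{\numberoffixedopensets,i}$ (Lemma \ref{vfieldtransition}, with no derivative terms because the field part vanishes) preserves this regularity. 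Hence the $U_i$-descriptions of $\stdvectorfiledxletter'$ and of $\stdvectorfiledxletter$ agree modulo functions regular at $p_j$. Finally I apply Corollary \ref{recalcregularitycor} with $j_1=1$, $j_2=j''$ (the hypothesis $\uidescriptionfunction{j_1,1}=\uidescriptionfunction{j_1,2}=0$ being regular is met), using $\indexedvertexpt{p_j}{1}-\indexedvertexpt{p_j}{j''}=-(\indexedvertexpt{p_j}{j''}-\indexedvertexpt{p_j}{1})$, to conclude that the $U_i$-description of $\stdvectorfiledxletter$ has exactly the pole parts $-\uidegree{i,1}(\indexedvertexpt{p_j}{j''}-\indexedvertexpt{p_j}{1})/(t-t(p_j))$ and $-\uidegree{i,2}(\indexedvertexpt{p_j}{j''}-\indexedvertexpt{p_j}{1})/(t-t(p_j))$ modulo functions regular at $p_j$, with field part $\partial/\partial t$.

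Comparing the two computations, the pole parts of the $i$th entries of $s_{3,5,j}$ and $s_{3,6,j}$ coincide, so their difference has a $U_i$-description with both functions regular at $p_j$ and field part $\partial/\partial t$, which is regular at the essential point $p_j$; thus the difference lies in $\Gamma(U_i,\Theta_U)$ and the two entries represent the same class. For the one remaining index $i=\opensetforvertex{p_j,1}$, the entry of $s_{3,5,j}$ vanishes while the entry of $s_{3,6,j}$ is regular on $U_{\opensetforvertex{p_j,1}}$ by Lemma \ref{firstisregular}, so they again agree in the quotient. The main obstacle is purely bookkeeping: tracking correctly which transition governs each step — the full matrix $\uilargetransition{\numberoffixedopensets,i}$ when the field part is present versus its constant block $\uismalltransition{\numberoffixedopensets,i}$ when it is not — and confirming that the $a^{(2)}$-correction cancels the pole discrepancy with matching signs, so that the telescoping identity, Corollary \ref{recalcregularitycor}, and the regularity of $\stdvectorfiledxletter'-\stdvectorfiledxletter$ all line up.
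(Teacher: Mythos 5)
Your proof is correct and follows essentially the same route as the paper: telescoping the sum defining $s_{3,5,j}$ to get the pole coefficient $-\uidegree{i,j'''}(\indexedvertexpt{p_j}{j''}-\indexedvertexpt{p_j}{1})$, handling the index $\opensetforvertex{p_j,1}$ via Lemma \ref{firstisregular}, and extracting the matching pole parts of the entries of $s_{3,6,j}$ from Corollary \ref{recalcregularitycor}. The only (immaterial) difference is that you apply the corollary to the auxiliary field with $U_{\opensetforvertex{p_j,1}}$-description $(0,0,\partial/\partial t)$ and separately show the $a^{(2)}$-correction is regular, whereas the paper applies it directly to the field defining $s_{3,6,j}$ after invoking Lemma \ref{firstisregular}.
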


\begin{proof}
Let
$$
s_{3,5,j}=
(\uidescriptionfunctiondiff11,\uidescriptionfunctiondiff12,\stdvectorfiledpletter[1],\ldots, 
\uidescriptionfunctiondiff{\numberoffixedopensets-1}1,\uidescriptionfunctiondiff{\numberoffixedopensets-1}2,\stdvectorfiledpletter[\numberoffixedopensets-1])
$$
and
$$
s_{3,6,j}=
(\uidescriptionfunctiondiff11',\uidescriptionfunctiondiff12',\stdvectorfiledpletter[1]',\ldots, 
\uidescriptionfunctiondiff{\numberoffixedopensets-1}1',\uidescriptionfunctiondiff{\numberoffixedopensets-1}2',\stdvectorfiledpletter[\numberoffixedopensets-1]').
$$
It is sufficient to prove the following: for each $k$ ($1\le k\le \numberofverticespt{p_j}$), 
$$
(\uidescriptionfunctiondiff{\opensetforvertex{p_j,k}}1'-\uidescriptionfunctiondiff{\opensetforvertex{p_j,k}}1,
\uidescriptionfunctiondiff{\opensetforvertex{p_j,k}}2'-\uidescriptionfunctiondiff{\opensetforvertex{p_j,k}}2,
\stdvectorfiledpletter[\opensetforvertex{p_j,k}]'-\stdvectorfiledpletter[\opensetforvertex{p_j,k}])
$$
is the $U_{\opensetforvertex{p_j,k}}$-description of a vector field defined on $U_{\opensetforvertex{p_j,k}}$.
In other words, we have to check that the functions 
$\uidescriptionfunctiondiff{\opensetforvertex{p_j,k}}1'-\uidescriptionfunctiondiff{\opensetforvertex{p_j,k}}1$
and 
$\uidescriptionfunctiondiff{\opensetforvertex{p_j,k}}2'-\uidescriptionfunctiondiff{\opensetforvertex{p_j,k}}2$
are regular at $p_j$
(for $\stdvectorfiledpletter[\opensetforvertex{p_j,k}]'-\stdvectorfiledpletter[\opensetforvertex{p_j,k}]=\partial/\partial t$
this is clear).

First, let us find a precise expression for $\uidescriptionfunctiondiff{\opensetforvertex{p_j,k}}1$ and $\uidescriptionfunctiondiff{\opensetforvertex{p_j,k}}2$.
By the definition of $s'_{3,3,k,j,1}$, we have
$$
\uidescriptionfunctiondiff{\opensetforvertex{p_j,k}}{j'}=
-\frac{1}
{t-t(p_j)}
\sum_{l=1}^{k-1}
\uidegree{\opensetforvertex{p_j,k},j'}(\indexedvertexpt{p_j}{l+1}-\indexedvertexpt{p_j}{l})=
-\frac{1}
{t-t(p_j)}
\uidegree{\opensetforvertex{p_j,k},j'}(\indexedvertexpt{p_j}k-\indexedvertexpt{p_j}1).
$$
(Note that for $k=1$ we get 
$\uidescriptionfunctiondiff{\opensetforvertex{p_j,1}}1=\uidescriptionfunctiondiff{\opensetforvertex{p_j,k}}2=0$.)

For $k=1$, the functions 
$\uidescriptionfunctiondiff{\opensetforvertex{p_j,k}}1'-\uidescriptionfunctiondiff{\opensetforvertex{p_j,k}}1=\uidescriptionfunctiondiff{\opensetforvertex{p_j,k}}1'$
and 
$\uidescriptionfunctiondiff{\opensetforvertex{p_j,k}}2'-\uidescriptionfunctiondiff{\opensetforvertex{p_j,k}}2=\uidescriptionfunctiondiff{\opensetforvertex{p_j,k}}2'$
are regular at $p$ by Lemma \ref{firstisregular}.
For other values of $k$, we have
$$
\uidescriptionfunctiondiff{\opensetforvertex{p_j,k}}{j'}'-\uidescriptionfunctiondiff{\opensetforvertex{p_j,k}}{j'}=
\uidescriptionfunctiondiff{\opensetforvertex{p_j,k}}{j'}'
-\frac{1}
{t-t(p_j)}
\uidegree{\opensetforvertex{p_j,k},j'}(\indexedvertexpt{p_j}1-\indexedvertexpt{p_j}k).
$$
These functions are regular at $p$ by Corollary \ref{recalcregularitycor}
since $(\uidescriptionfunctiondiff{\opensetforvertex{p_j,k}}1', \uidescriptionfunctiondiff{\opensetforvertex{p_j,k}}2', \partial/\partial t)$
is the $U_{\opensetforvertex{p_j,k}}$-description of 
a vector field on $U_{\numberoffixedopensets}$
with $U_{\opensetforvertex{p_1,1}}$-description 
$(\uidescriptionfunctiondiff{\opensetforvertex{p_j,1}}1', \uidescriptionfunctiondiff{\opensetforvertex{p_j,1}}2', \partial/\partial t)$,
and functions 
$\uidescriptionfunctiondiff{\opensetforvertex{p_j,k}}1'$
and 
$\uidescriptionfunctiondiff{\opensetforvertex{p_j,k}}2'$
are regular at $p$.
\end{proof}

\begin{corollary}\label{avs35issubspace}
For each essential special point $p_j$, 
$s_{3,6,j}\in \abstractvectorspace_{3,1}$.
Moreover, 
$s_{3,5,j}$ and $s_{3,6,j}$
define the same classes in $H^1(U,\Theta_U)$.\qed
\end{corollary}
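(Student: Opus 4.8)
The plan is to deduce both assertions formally from the immediately preceding lemma, from the definition of $\abstractvectorspace_{3,1}$, and from the description of $H^1(U,\Theta_U)$ furnished by Corollary \ref{h1computegen}. Recall that $\abstractvectorspace_{3,1}$ was introduced as the preimage, under the natural map
$$
\alpha\colon \abstractvectorspace_{3,0}\to \bigoplus_{i=1}^{\numberoffixedopensets-1}\big(H^0(U_{\numberoffixedopensets},\Theta_U)/H^0(U_i,\Theta_U)\big),
$$
of the kernel $K$ of the Cech differential
$$
\bigoplus_{i=1}^{\numberoffixedopensets-1}\big(H^0(U_{\numberoffixedopensets},\Theta_U)/H^0(U_i,\Theta_U)\big)\to \bigoplus_{1\le i<i'\le \numberoffixedopensets-1}\big(H^0(U_{\numberoffixedopensets},\Theta_U)/H^0(U_i\cap U_{i'},\Theta_U)\big).
$$

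First I would observe that $s_{3,5,j}$ already lies in $\abstractvectorspace_{3,1}$: by construction $s_{3,5,j}\in\abstractvectorspace_{3,5}\subseteq\abstractvectorspace_{3,3}$, and the corollary stated earlier (that $\abstractvectorspace_{3,3}\subseteq\abstractvectorspace_{3,1}$) gives the claim; in particular $\alpha(s_{3,5,j})\in K$. The preceding lemma asserts precisely that $s_{3,5,j}$ and $s_{3,6,j}$ have the same image under $\alpha$, so $\alpha(s_{3,6,j})=\alpha(s_{3,5,j})\in K$. Since $\abstractvectorspace_{3,1}=\alpha^{-1}(K)$ by definition, this yields $s_{3,6,j}\in\abstractvectorspace_{3,1}$, which is the first assertion.

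For the second assertion I would use that, by the identification of Corollary \ref{h1computegen} (as applied in Corollary \ref{h1qquotcomputed}), one has $H^1(U,\Theta_U)=K/H^0(U_{\numberoffixedopensets},\Theta_U)$, and that the resulting map $\abstractvectorspace_{3,1}\to H^1(U,\Theta_U)$ factors as $\alpha$ followed by the quotient projection $K\to K/H^0(U_{\numberoffixedopensets},\Theta_U)$. Because $\alpha(s_{3,5,j})=\alpha(s_{3,6,j})$, their images under this projection coincide, so $s_{3,5,j}$ and $s_{3,6,j}$ define the same class in $H^1(U,\Theta_U)$.

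There is no genuinely hard step here: the corollary is a formal consequence of the preceding lemma. The only point deserving a moment's care is the verification that $\abstractvectorspace_{3,1}\to H^1(U,\Theta_U)$ really factors through $\alpha$, so that equality of the $\alpha$-images forces equality of the $H^1$-classes; but this is exactly the content of Corollary \ref{h1computegen}, under which an element of $\abstractvectorspace_{3,0}$ whose $\alpha$-image lands in $K$ is sent to that image taken modulo $H^0(U_{\numberoffixedopensets},\Theta_U)$.
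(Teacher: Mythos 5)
Your argument is correct and is exactly the deduction the paper intends (the corollary carries a \qed precisely because it follows formally from the preceding lemma together with the containment $\abstractvectorspace_{3,5}\subseteq\abstractvectorspace_{3,3}\subseteq\abstractvectorspace_{3,1}$ and the description of $H^1(U,\Theta_U)$ from Corollary \ref{h1computegen}). Nothing to add.
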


Denote the subspace of $\abstractvectorspace_{3,1}$
generated by all $s_{3,6,j}$ by $\abstractvectorspace_{3,6}$.
By Corollary \ref{avs35issubspace},
$\im(\abstractvectorspace_{3,6}\to H^1(U,\Theta_U))$
is a subspace of 
$\im(\abstractvectorspace_{3,2}\to H^1(U,\Theta_U))$.
We will prove that 
$\im(\abstractvectorspace_{3,6}\to H^1(U,\Theta_U))=\im(H^1(\PP^1,\gi)\to H^1(U,\Theta_U))$.

Let us recall the results of Chapter \ref{combformula} related with $H^1(\PP^1,\gi)$.
There we have introduced vector spaces 
$$
\abstractvectorspace_{0,1}=\bigoplus_{\text{$p$ essential special point}}\Theta_{\PP^1,p}
$$
and (for each special point $p$) $\abstractvectorspace_{0,0,p}$, which was the space of triples of Laurent polynomials in $t_p$
of a certain form, where the first two polynomials were rational functions on $\PP^1$, and 
the last one was a rational vector field on $\PP^1$. $\abstractvectorspace_{0,1}$ was mapped to 
$\bigoplus_{\text{$p$ special point}}\abstractvectorspace_{0,0,p}$, 
namely, a sequence of tangent vectors $(\uidescriptionfunction p\partial/\partial t_p)_{\text{$p$ essential special point}}$,
where $\uidescriptionfunction p\in \CC$, was mapped to a sequence of rational functions and vector fields on $\PP^1$, 
where all functions are zeros, and the vector fields on $\PP^1$ defined by the same formulas 
(plus zero vector fields for removable special points). 
$\bigoplus_{\text{$p$ special point}}\abstractvectorspace_{0,0,p}$ 
was further mapped to $H^1(\PP^1,\gi)$. Then we proved (Lemma \ref{honepushforwardnumber})
that $\abstractvectorspace_{0,1}$ is mapped to $H^1(\PP^1,\gi)$
surjectively.

The space $\abstractvectorspace_{0,1}$
has the following obvious basis: for each $j$ such that $p_j$ is an essential 
special point, 
let $s_{1,j}\in \abstractvectorspace_{0,1}$ be the sequence with the $p_j$th 
entry $\partial/\partial t$ (recall that $t_{p_j}=t-t(p_j)$ for essential special points)
and all other entries are zeros. The image of $s_{1,j}$ in $\abstractvectorspace_{0,0,p_j}$
is $(0,0,\partial/\partial t)$, and the image of $s_{1,j}$ in $\abstractvectorspace_{0,0,p}$
with $p\ne p_j$ is $(0,0,0)$.

We also checked (Lemma \ref{functreduction})
that if we change the first two entries of an element of $\abstractvectorspace_{0,0,p}$, 
where $p$ is a special point, arbitrarily, then the class of this element in 
$H^1(\PP^1,\gi)$ will not change. So, 
if $p_j$ is an essential special point, set
$s'_{1,j}=(a^{(2)}_{p_j,1}(t-t(p_j))^{-1},a^{(2)}_{p_j,2}(t-t(p_j))^{-1},\partial/\partial t)\in \abstractvectorspace_{0,0,p_j}$.
Denote the subspace of $\bigoplus_{j=1}^{\numberofdivisorpoints}\abstractvectorspace_{0,0,p_j}$
spanned by all $s'_{1,j}$ by $\abstractvectorspace_{0,2}$. 
By Lemma \ref{functreduction}, $\abstractvectorspace_{0,2}$ is mapped surjectively onto $H^1(\PP^1,\gi)$.

The sheaf $\gi$ was constructed as follows. Its sections on an open set $V\subseteq \PP^1$
were sequences of length $2\numberoffixedopensets+1$, where the first $2\numberoffixedopensets$ entries were 
rational functions on $\PP^1$ and the last entry was a rational vector field on $\PP^1$. More 
precisely, for each $i$ ($1\le i\le \numberoffixedopensets$) the $(2i-1)$th, the $2i$th, and the $(2\numberoffixedopensets+1)$th
entries from the $U_i$-description of the same (i.~e. not depending on $i$) vector field on $U\cap \pi^{-1}(V)$.
For each special point $p$ we had a morphism $\abstractvectorspace_{0,0,p}\to \Gamma(W,\gi)$, 
which computed all $U_i$-descriptions of a vector field by its $U_{\numberoffixedopensets}$-description.
Then these morphisms were summed up to a map
\begin{multline*}
\bigoplus_{j=1}^{\numberofdivisorpoints}\abstractvectorspace_{0,0,p}\to \bigoplus_{j=1}^{\numberofdivisorpoints} \Gamma(W,\gi)\\
\longrightarrow 
\left.\!\left(\bigoplus_{j=1}^{\numberofdivisorpoints}\Big(\Gamma(W,\gi)/\Gamma(W_{p_j},\gi)\Big)\right)\right/\Gamma(W,\gi)=H^1(\PP^1,\gi),
\end{multline*}
where the second arrow is the canonical projection.

We also had a sheaf $\giinv$, which was the zeroth graded component of 
$\pi_*\Theta_U$. And we had an isomorphism $\gi\to \giinv$, which computed vector fields out of 
their $U_i$-descriptions.

Finally, we need to understand the map 
$H^1(\PP^1,\giinv)\to H^1(U,\Theta_U)$. We have affine 
coverings $\{W_p\}_{\text{$p$ special point}}$ of $\PP^1$ and $\{U_i\}_{1\le i\le \numberoffixedopensets-1}$
of $U$.
We interpret 
$H^1(\PP^1,\giinv)$ as a quotient of 
$\bigoplus_{j=1}^{\numberofdivisorpoints}\Gamma(W,\giinv)$
and 
$H^1(U,\Theta_U)$
as a subquotient of 
$\bigoplus_{i=1}^{\numberoffixedopensets-1}\Gamma(U_{\numberoffixedopensets},\Theta_U)$.
As it was explained in Section \ref{sectleray},
to describe the map 
$H^1(\PP^1,\giinv)\to H^1(U,\Theta_U)$,
we need to enumerate the sets $U_i$ by pairs of indices so that the 
first index in such a pair corresponds to one of the open sets from the
affine covering of $\PP^1$.
For such an enumeration, we use the notation $\opensetforvertex{p,j}$.
Namely, recall that for each $i$ ($1\le i\le \numberoffixedopensets-1$)
there exists a (removable or essential) special point $p$ 
and an index $j$ ($1\le j\le \numberofverticesptdiff p$)
such that $i=\opensetforvertex{p,j}$. 
So, denote $U_{(p,j)}=U_{\opensetforvertex{p,j}}$ for all special points $p$ and
for all $j$ ($1\le j\le \numberofverticesptdiff p$).
Then $U_{(p,j)}\subseteq\pi^{-1}(W_p)$, and the conditions of Section \ref{sectleray} are satisfied.
(Note that the set that was denoted in the "generic" situation of Section \ref{sectleray}
by $U$ is now $U_{\numberoffixedopensets}$, and the set that was denoted in the 
the "generic" situation of Section \ref{sectleray}
by $V$ is now $W$).
After we have introduced these notations, 
we can say that the map $H^1(\PP^1,\giinv)\to H^1(U,\Theta_U)$
is induced by the following map 
$\bigoplus_{i=1}^{\numberofdivisorpoints}\Gamma(W,\giinv)\to 
\bigoplus_{i=1}^{\numberofdivisorpoints}\bigoplus_{j=1}^{\numberofverticesptdiff{p_i}}\Gamma(U_{\numberoffixedopensets},\Theta_U)$.
The $(p_i,j)$th entry of the result is the $i$th entry of the preimage restricted to $U_{\numberoffixedopensets}$
(originally it was a vector field on $\pi^{-1}(W)\supseteq U_{\numberoffixedopensets}$).

Summarizing, we see that the map 
$\bigoplus_{j=1}^{\numberofdivisorpoints}\abstractvectorspace_{0,0,p_j}\to H^1(U,\Theta_U)$
is induced by
the following map 
$\bigoplus_{j=1}^{\numberofdivisorpoints}\abstractvectorspace_{0,0,p_j}\to \bigoplus_{i=1}^{\numberoffixedopensets} \Gamma(U_{\numberoffixedopensets},\Theta_U)$.
The $\opensetforvertex{p_j,k}$th entry of the result is the vector field whose $U_{\numberoffixedopensets}$-description is the $j$th entry 
of the preimage.
In particular, each $s'_{1,j}$ (for each essential special point $p_j$) is mapped to the following 
sequence. If $i=\opensetforvertex{p_j,k}$ for some $k$ ($1\le k\le \numberofverticespt{p_j}$), 
then the $i$th entry of the result is the vector field on $U_{\numberoffixedopensets}$
with the $U_{\numberoffixedopensets}$-description 
$(a^{(2)}_{p_j,1}(t-t(p_j))^{-1},a^{(2)}_{p_j,2}(t-t(p_j))^{-1},\partial/\partial t)$.
Otherwise (for other values of $i$), the $i$th entry of the result is 0.
By the definition of $s_{3,6,j}$, the image of $s_{3,6,j}$
in $\bigoplus_{i=1}^{\numberoffixedopensets} \Gamma(U_{\numberoffixedopensets},\Theta_U)$
is the same. Therefore, 
$\im(\abstractvectorspace_{3,6}\to H^1(U,\Theta_U))=\im(\abstractvectorspace_{0,2}\to H^1(U,\Theta_U))$, 
and we get the following proposition.

\begin{proposition}\label{firstsurjectivity}
$\im(\abstractvectorspace_{3,2}\to H^1(U,\Theta_U))$
contains $\im(H^1(\PP^1,\gi)\to H^1(U,\Theta_U))$.\qed
\end{proposition}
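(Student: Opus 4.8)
The plan is to read this Proposition as the second of the two ingredients needed for the surjectivity of the Kodaira--Spencer map onto $T^1(X)_0$: by the exact sequence of Theorem \ref{thmsheavesdownstairs} the subgroup $\im(H^1(\PP^1,\gi)\to T^1(X)_0)$ is exactly the kernel of $T^1(X)_0\to H^0(\PP^1,\giv)$, so together with the surjectivity onto $\ker(H^0(\PP^1,\giv)\to H^0(\PP^1,\gviii))$ furnished by Proposition \ref{secondsurjectivity} this would force the image of $\Theta_{a^{(1)}}\coefficientspace$ to be all of $T^1(X)_0$. Since the Kodaira--Spencer classes of the one-parameter subfamilies $\stdflatmorphism_{j,k}$ are represented by the explicit cocycles $s_{3,2,j,k}$ spanning $\abstractvectorspace_{3,2}$, and since $\im(H^1(\PP^1,\gi)\to H^1(U,\Theta_U))$ is the image of the span $\abstractvectorspace_{0,2}$ (which maps \emph{onto} $H^1(\PP^1,\gi)$ by Lemma \ref{functreduction}), the whole task reduces to comparing the images in $H^1(U,\Theta_U)$ of two explicit finite-dimensional spaces of Cech data.

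First I would build a chain of auxiliary spaces, each step either changing the spanning set without changing the represented subspace, or replacing a cocycle by a cohomologous one. Concretely: rewrite $\abstractvectorspace_{3,2}$ as $\abstractvectorspace_{3,3}$ using the partial-fraction basis of Lemma \ref{partialfraction} together with the fact that for $p\neq p_{j'}$ the relevant rational functions are regular on $W_p$, so that $s'_{3,2,j,j',k}$ and $s_{3,3,j,j',k}$ differ by a cocycle whose entries extend over each $U_i$; then pass to the edge-indexed generators $s'_{3,3,l,j,k}$, whose defining vectors $\indexedvertexpt{p_j}{l+1}-\indexedvertexpt{p_j}{l}$ telescope, so the sums $s_{3,5,j}=\sum_l s'_{3,3,l,j,1}$ carry the vertex differences $\indexedvertexpt{p_j}{k}-\indexedvertexpt{p_j}{1}$ and cut out the subspace $\abstractvectorspace_{3,5}\subseteq\abstractvectorspace_{3,3}$. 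Next I would replace $\abstractvectorspace_{3,5}$ by $\abstractvectorspace_{3,6}$, which I represent through $U_{\numberoffixedopensets}$-descriptions carrying a single simple pole $a^{(2)}_{p_j,i}(t-t(p_j))^{-1}$ at the essential special point $p_j$, and check that $s_{3,5,j}$ and $s_{3,6,j}$ are cohomologous so that $\abstractvectorspace_{3,6}$ represents the same subspace as $\abstractvectorspace_{3,5}$.

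The final step is to identify $\im(\abstractvectorspace_{3,6}\to H^1(U,\Theta_U))$ with $\im(\abstractvectorspace_{0,2}\to H^1(U,\Theta_U))$: after tracing the Leray description of the map $H^1(\PP^1,\giinv)\to H^1(U,\Theta_U)$ from Section \ref{sectleray}, with the cover relabelled by $U_{\opensetforvertex{p,j}}\subseteq\pi^{-1}(W_p)$, the generator $s'_{1,j}\in\abstractvectorspace_{0,2}$ and the generator $s_{3,6,j}$ are sent to the \emph{same} tuple in $\bigoplus_i\Gamma(U_{\numberoffixedopensets},\Theta_U)$, namely the one whose $\opensetforvertex{p_j,k}$-entries all equal the vector field with $U_{\numberoffixedopensets}$-description $(a^{(2)}_{p_j,1}(t-t(p_j))^{-1},a^{(2)}_{p_j,2}(t-t(p_j))^{-1},\partial/\partial t)$ and whose other entries vanish. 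Assembling the chain $\abstractvectorspace_{3,6}\equiv\abstractvectorspace_{3,5}\subseteq\abstractvectorspace_{3,3}\equiv\abstractvectorspace_{3,2}$ of represented subspaces with this identification then yields $\im(H^1(\PP^1,\gi)\to H^1(U,\Theta_U))=\im(\abstractvectorspace_{0,2}\to H^1(U,\Theta_U))\subseteq\im(\abstractvectorspace_{3,2}\to H^1(U,\Theta_U))$, which is the assertion.

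I expect the genuine difficulty to lie in the regularity bookkeeping underlying the cohomologous-cocycle steps, above all in passing from $\abstractvectorspace_{3,5}$ to $\abstractvectorspace_{3,6}$ and in verifying that $s_{3,6,j}$ really lands in $\abstractvectorspace_{3,1}$. The heart of the matter is that when a vector field singular at $p$ is re-expressed from one vertex-description to a neighboring one, the discrepancy must stay regular at $p$; this is exactly what is controlled by the order computations for the transition matrices $\uilargetransition{i,j}$ and the scalar transition functions, i.e. by Lemma \ref{recalcregularity}, Corollary \ref{recalcregularitycor}, and Lemma \ref{firstisregular}. All of these ultimately rest on the identity $\mathcal D_p(\uidegree{i,k})=\uidegree{i,k}(\indexedvertexpt{p}{j})$, valid because $\uidegree{i,k}\in\normalvertexcone{\indexedvertexpt{p}{j}}{\stdpolyhedronletter_p}$, which converts orders of logarithmic derivatives into the lattice pairings $\chi(\indexedvertexpt{p}{j_1}-\indexedvertexpt{p}{j_2})$; arranging for the simple poles to cancel with precisely the residues prescribed by $a^{(2)}_{p,i}$ is where the argument is most delicate.
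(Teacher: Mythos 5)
Your proposal follows essentially the same route as the paper: the chain $\abstractvectorspace_{3,2}\rightsquigarrow\abstractvectorspace_{3,3}\rightsquigarrow\abstractvectorspace_{3,5}\rightsquigarrow\abstractvectorspace_{3,6}$ of cohomologous spanning sets, the telescoping of the edge-indexed generators, the simple-pole representatives with residues $a^{(2)}_{p_j,i}$, and the final matching with $\abstractvectorspace_{0,2}$ via the Leray description are exactly the paper's argument, and you correctly locate the technical core in Lemma \ref{recalcregularity}, Corollary \ref{recalcregularitycor}, and Lemma \ref{firstisregular}. No gaps.
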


The following proposition follows from Propositions \ref{secondsurjectivity}
and \ref{firstsurjectivity}.

\begin{proposition}\label{propksmsurj}
The deformation $\bigflatmorphism\colon\bigtotalspace\to \coefficientspace$ of $X$ constructed in Section \ref{sectversalconstruction} 
has surjective Kodaira-Spencer map.\qed
\end{proposition}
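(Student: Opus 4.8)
The plan is to assemble the statement from the exact sequence of Theorem \ref{thmsheavesdownstairs} together with the two surjectivity results just established, Propositions \ref{secondsurjectivity} and \ref{firstsurjectivity}. Since the deformation $\bigflatmorphism\colon\bigtotalspace\to\coefficientspace$ is $T$-equivariant, its Kodaira-Spencer map takes values in the degree-zero component, giving a linear map $\kappa\colon\Theta_{a^{(1)}}\coefficientspace\to T^1(X)_0$. The computations of Section \ref{ksmtvar}, culminating in the explicit \v Cech representatives $s_{3,2,j,k}$ (Lemma \ref{h1qcomputed} and Corollary \ref{ksmuqdesc}), identify the image of $\kappa$, viewed inside $H^1(U,\Theta_U)_0$ through Schlessinger's formula (Theorem \ref{schlessgen}), with the image of the subspace $\abstractvectorspace_{3,2}$. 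Hence it suffices to show that this image fills out all of $T^1(X)_0$.

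The key tool is the four-term exact sequence of Theorem \ref{thmsheavesdownstairs}. Extracting from it the induced short exact sequence gives
$$0\to H^1(\PP^1,\gi)\stackrel{\iota}\longrightarrow T^1(X)_0\stackrel{\pi'}\longrightarrow \ker\big(H^0(\PP^1,\giv)\to H^0(\PP^1,\gviii)\big)\to 0,$$
where $\iota$ is the injection and $\pi'$ is the surjection onto the kernel coming from exactness at $H^0(\PP^1,\giv)$. First I would invoke Proposition \ref{secondsurjectivity}, which, under the identifications $\givinv\cong\giv$ and $\gviiiinv\cong\gviii$ established earlier, says precisely that $\pi'\circ\kappa$ is surjective, so that $\pi'(\im\kappa)=\ker(H^0(\PP^1,\giv)\to H^0(\PP^1,\gviii))$. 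Then I would invoke Proposition \ref{firstsurjectivity}, which states that $\im\kappa$ contains the image of $H^1(\PP^1,\gi)$ in $H^1(U,\Theta_U)_0$, that is, $\im\kappa\supseteq\iota\big(H^1(\PP^1,\gi)\big)=\ker\pi'$.

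Granting these two facts, a routine diagram chase closes the argument: for an arbitrary $x\in T^1(X)_0$, surjectivity of $\pi'\circ\kappa$ produces $y\in\im\kappa$ with $\pi'(y)=\pi'(x)$; then $x-y\in\ker\pi'\subseteq\im\kappa$, whence $x=y+(x-y)\in\im\kappa$. Therefore $\im\kappa=T^1(X)_0$ and $\kappa$ is surjective, as claimed.

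The real obstacle is not this final assembly, which is purely formal, but the bookkeeping that makes the two propositions line up with the short exact sequence: one must be sure that the map appearing in Proposition \ref{secondsurjectivity} is genuinely $\pi'$ restricted to the Kodaira-Spencer image, and that the inclusion of Proposition \ref{firstsurjectivity} really lands in $\ker\pi'=\iota\big(H^1(\PP^1,\gi)\big)$ rather than merely in $T^1(X)_0$. This compatibility rests on the functoriality of the Leray short exact sequence from Section \ref{sectleray} and on the explicit descriptions of the Kodaira-Spencer class in terms of $U_i$- and $U_{\numberoffixedopensets}$-descriptions; however, the substantive content has already been discharged in proving Propositions \ref{secondsurjectivity} and \ref{firstsurjectivity}, so this step of the proof is short.
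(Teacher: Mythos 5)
Your proposal is correct and is essentially the paper's own argument: the paper states Proposition \ref{propksmsurj} as an immediate consequence of Propositions \ref{secondsurjectivity} and \ref{firstsurjectivity}, whose combination via the exact sequence of Theorem \ref{thmsheavesdownstairs} is exactly the two-step diagram chase you describe. The only cosmetic point is that the surjectivity of $T^1(X)_0$ onto the kernel (the ``$\to 0$'' you append to the sequence) is not asserted by Theorem \ref{thmsheavesdownstairs} itself but is supplied a fortiori by Proposition \ref{secondsurjectivity}, so nothing is lost.
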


Finally, let us recall the definition of a formally versal deformation. A deformation 
$\bigflatmorphism'\colon\bigtotalspace'\to \coefficientspace'$ of $X$ with the basepoint 
$\stdpolynomialcoefficient^{(3)}\in\coefficientspace'$
is called \textit{formally versal in the class of $T$-equivariant
deformations} if the following holds.

Let $\bigflatmorphism''\colon\bigtotalspace''\to \coefficientspace''$
be another $T$-equivariant deformation of $X$, and let $\stdpolynomialcoefficient^{(4)}\in\coefficientspace''$
be the basepoint of this deformation.

Denote by $\widetilde{\coefficientspace'}$
the formal neighborhood of $\stdpolynomialcoefficient^{(3)}$ in $\coefficientspace'$.
Denote by $\widetilde{\bigflatmorphism'}\colon \widetilde{\bigtotalspace'}\to \widetilde{\coefficientspace'}$
the restriction of the deformation $\bigflatmorphism'$ to $\widetilde{\coefficientspace'}$.

Similarly, let $\widetilde{\coefficientspace''}$ be the formal neighborhood of 
$\stdpolynomialcoefficient^{(4)}$ in $\coefficientspace''$, 
and let 
$\widetilde{\bigflatmorphism''}\colon \widetilde{\bigtotalspace''}\to \widetilde{\coefficientspace''}$
be the restriction of the deformation $\bigflatmorphism''$ to $\widetilde{\coefficientspace''}$.

Then fromal versality means that there exists a morphism $f\colon \widetilde{\coefficientspace''}\to \widetilde{\coefficientspace'}$
such that the deformation $\widetilde{\bigflatmorphism''}$
is the pullback of the deformation $\widetilde{\bigflatmorphism'}$
via this map $f$.

\begin{proposition}\label{ksmsurjimpliesversal}
Let $X$ be a $T$-variety, let $\coefficientspace$ be a vector space, 
and let $\bigflatmorphism\colon\bigtotalspace\to \coefficientspace$ be an equivariant
deformation. Suppose that the marked point of this deformation is the origin in $\coefficientspace$. 
Suppose that the Kodaira-Spencer map for this deformation is surjective onto $T^1(X)_0$, 
which is finite dimensional.

Then $\bigflatmorphism\colon\bigtotalspace\to \coefficientspace$ is an equivariant formally versal deformation of $X$.
\end{proposition}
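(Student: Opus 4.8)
The plan is to recast the assertion functorially and reduce it to the classical fact that a formal family over a smooth base with surjective Kodaira--Spencer map is versal. Write $\mathrm{Def}$ for the functor of $T$-equivariant deformations of $X$, defined on Artinian local $\CC$-algebras equipped with a $T$-action (equivalently, an $M$-grading) and residue field $\CC$, and $\widehat{\mathrm{Def}}$ for its extension to complete local algebras via inverse limits. By the discussion in Section~\ref{deformsreal} its tangent space is canonically $T^1(X)_0$. Since $\coefficientspace$ is a vector space, the completed local ring $R=\widehat{\OO}_{\coefficientspace,0}$ is a power series ring $\CC[[x_1,\dots,x_N]]$, hence formally smooth; the family $\bigflatmorphism$ gives a formal element $\xi\in\widehat{\mathrm{Def}}(R)$ (equivalently a morphism $h_R\to\mathrm{Def}$), and the induced map on tangent spaces $t_R=\coefficientspace^{*}\to T^1(X)_0$, which I denote $\kappa$, is exactly the given Kodaira--Spencer map, surjective by hypothesis.

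First I would observe that the paper's definition of formal versality is literally the statement that the morphism $h_R\to\mathrm{Def}$ is \emph{smooth}: a second equivariant deformation over $\coefficientspace''$ with marked point $\stdpolynomialcoefficient^{(4)}$ yields, upon completing, a ring $R''=\widehat{\OO}_{\coefficientspace'',\stdpolynomialcoefficient^{(4)}}$ and an element $\eta\in\widehat{\mathrm{Def}}(R'')$; a local homomorphism $f^{*}\colon R\to R''$ carrying $\xi$ to $\eta$ is precisely a map $f$ of formal neighborhoods pulling the first family back to the second, and the existence of such $f^{*}$ for every $\eta$ is the smoothness of $h_R\to\mathrm{Def}$. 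Thus it suffices to prove smoothness, which I would do by the standard small-extension lifting criterion, building $f^{*}$ as the inverse limit of compatible homomorphisms $R\to R''/(\mathfrak m'')^{n+1}$.

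The core inductive step is this. Let $A'\to A$ be a small extension of graded Artinian algebras with kernel $I\cong\CC$, let $a\colon R\to A$ be a local homomorphism, and let $\eta'\in\mathrm{Def}(A')$ lift $a_{*}\xi\in\mathrm{Def}(A)$; I must find a lift $a'\colon R\to A'$ of $a$ with $a'_{*}\xi=\eta'$. As $R$ is a power series ring and $A'\to A$ is a graded surjection, $a$ lifts to some $a'_0\colon R\to A'$ (send each $x_i$ to a homogeneous preimage of $a(x_i)$). Now $(a'_0)_{*}\xi$ and $\eta'$ both lie over $a_{*}\xi$ in the fibre of $\mathrm{Def}(A')\to\mathrm{Def}(A)$, which is a torsor under $T^1(X)_0\otimes_\CC I\cong T^1(X)_0$; let $v\in T^1(X)_0$ be their difference. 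The lifts of $a$ form a torsor under $t_R\otimes_\CC I=\coefficientspace^{*}$, and replacing $a'_0$ by $a'_0+w$ changes $(a'_0)_{*}\xi$ by $\kappa(w)$; surjectivity of $\kappa$ solves $\kappa(w)=v$, producing the desired $a'$. Iterating along the filtration of $R''$ by powers of $\mathfrak m''$, refined into small extensions, assembles $f^{*}$ together with a compatible isomorphism of the pulled-back formal families in the limit.

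The only place the equivariant hypothesis genuinely intervenes---and the main point to get right---is the bookkeeping ensuring that the torsors above are controlled by the degree-zero part $T^1(X)_0$ rather than by all of $T^1(X)$: one must carry the $M$-grading through every Artinian base, check that $\mathrm{Def}$ satisfies Schlessinger's conditions so that its small-extension fibres are torsors under its tangent space, and identify that tangent space with $T^1(X)_0$ using that the $T$-action on the base is determined up to isomorphism of the deformation. Finite-dimensionality of $T^1(X)_0$ makes the completions Noetherian and guarantees a hull, so the inverse limit converges to a genuine homomorphism $f^{*}$. With these standard facts (citable from \cite{hartdef}) in hand, the surjectivity furnished by Proposition~\ref{propksmsurj} is exactly the hypothesis the criterion requires, and formal versality follows.
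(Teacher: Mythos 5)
Your proof is correct, but it takes a genuinely different route from the paper's. The paper first invokes Schlessinger's theorem to produce a hull $Y$ (verifying $(\mathrm H_1)$--$(\mathrm H_3)$ in the graded setting), reduces to the case where the Kodaira--Spencer map is bijective by splitting off its kernel from $\coefficientspace$, and then shows that the classifying map $f$ from the formal neighborhood of $0$ in $\coefficientspace$ to $Y$ is invertible by an explicit formal inverse-function-theorem computation with the power series $f_1,\dots,f_m$. You instead bypass the hull entirely and verify formal versality directly as smoothness of $h_R\to\mathrm{Def}$ via the small-extension lifting criterion, using only that $R$ is a power series ring, that liftings along a small extension are acted on transitively by $T^1(X)_0\otimes I$, and that changing the lift of $a$ by $w$ moves the pushforward of $\xi$ by $\kappa(w)$. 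Your route has two advantages: it needs only surjectivity of $\kappa$ (no reduction to the bijective case), and it sidesteps the delicate final step of the paper's argument, where the constructed $g$ is a priori only a one-sided inverse of $f$ and concluding that $f$ is an isomorphism implicitly uses that the hull is smooth; the paper's route, in exchange, yields the extra information that the family becomes miniversal on a complement of $\ker\kappa$. Both arguments rest on the same Schlessinger-type bookkeeping for the graded functor, which you correctly flag as the place where equivariance enters. Two cosmetic slips to fix: the tangent space $t_R$ is $\coefficientspace$ itself, not $\coefficientspace^{*}$, and the fibres of $\mathrm{Def}(A')\to\mathrm{Def}(A)$ over a small extension are in general only pseudo-torsors (the action need not be free because of infinitesimal automorphisms) --- but transitivity is all your argument uses.
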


\begin{proof}[Idea of a proof]
First, one can check that a formally versal deformation exists using
\cite[Theorem 2.11]{schlessdef}. 
The conditions ($\mathrm H_1$) and ($\mathrm H_2$) are verified exactly 
in the same way as they are verified for non-equivariant deformation, see 
Section 3.7 of \cite{schlessdef}. One has to use graded algebras and equivariant maps between 
them, but the arguments stay the same. Condition ($\mathrm H_3$)
is our assumption that $T^1(X)_0$
is finite dimensional.
The parameter space (denote it by $Y$) of a formally versal deformation we can obtain this way 
is the spectrum of a complete Noetherian local algebra. By Cohen structure theorem, 
$\CC[Y]$ is a quotient of a formal power series ring over $\CC$ in finitely many variables. 
Note that it is not true in general that $\CC[Y]$ is a finitely generated $\CC$-algebra 
(i.~e. a quotient of a polynomial ring).
In the proof of this proposition, choose and denote by $b_1,\ldots, b_m$ a set of variables
such that $\CC[Y]$ is a quotient of $\CC[[b_1,\ldots, b_m]]$, 
and the maximal ideal of $\CC[Y]$ is the image of $(b_1,\ldots, b_m)$.

$T^1(X)_0$ can be identified with the tangent space of $Y$ at the geometric point (see \cite[Definition 2.7]{schlessdef}).
Denote by $W$ the vector space with coordinates $b_1. \ldots, b_m$.
Then the tangent space of $Y$ at the geometric point
becomes a subspace of $W$. After a linear change of variables we may 
suppose that this tangent space
is defined by the equations $b_{n+1}=\ldots=b_m=0$. 
Then $b_1,\ldots, b_n$ are coordinates on $T^1(X)_0$.

%


In the proof of this proposition, we denote the dimension of $\dim T^1(X)_0$ by $n$.

Also, in the proof of this proposition we can suppose without loss of generality 
that the Kodaira-Spencer map is an isomorphism 
(otherwise we can replace $\coefficientspace$ with a complement to the kernel of the Kodaira-Spencer map).

 
Let $f$ be a morphism from the formal neighborhood of zero in $\coefficientspace$ to 
$Y$ such that $\bigflatmorphism\colon\bigtotalspace\to \coefficientspace$ is the 
pullback via $f$ of the formally versal equivariant deformation over $Y$. Then $df$ is the Kodaira-Spencer map.

Choose coordinates $a_1,\ldots, a_n$ in $\coefficientspace$. Then $f$ can be written using $m$ 
power series in the variables $a_i$. Denote these power series by $f_1,\ldots, f_m$
so that $b_i=f_i(a_1,\ldots, a_n)$. These series do not have constant terms. 
The first $n$ of them have nontrivial linear terms, the last $m-n$ power series 
do not have terms of degree less than two.

Since the Kodaira-Spencer map is an isomorphism, without loss of generality (after a suitable linear change of coordinates 
in $\coefficientspace$) we may suppose that the linear term in $f_i$, where $1\le i\le n$, is exactly $a_i$.
In other words, $b_i=a_i+(\text{terms of degree ${}\ge 2$})$ for $1\le i\le n$.

Now, using iterated corrections in higher and higher degrees, we can find power series $g_1,\ldots, g_n$ in $b_1,\ldots, b_n$
(the variables $b_{n+1},\ldots, b_m$ will not appear there)
such that $g_i(f_1,\ldots, f_n)=a_i$. In other words, the map $f$ between 
the formal neighborhoods of the marked points is invertible, in other words, it is an isomorphism. 
Hence, $\bigflatmorphism\colon\bigtotalspace\to \coefficientspace$ is also an equivariant formally versal deformation.
\end{proof}

\begin{remark}
In fact, this proposition holds true if $\coefficientspace$ is smooth, but not necessarily a vector space. The proof is more complicated 
in this case.
\end{remark}

Therefore, we get the following theorem from Theorem \ref{t1aslatticelength}, Proposition \ref{propksmsurj}, 
and Proposition \ref{ksmsurjimpliesversal}.

\begin{theorem}
The deformation $\bigflatmorphism\colon\bigtotalspace\to \coefficientspace$ of $X$ constructed in Section \ref{sectversalconstruction} 
is formally versal in the class of $T$-equivariant deformations.\qed
\end{theorem}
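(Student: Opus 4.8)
The plan is to obtain the theorem as a direct synthesis of the three results the remark cites, by checking that the deformation $\bigflatmorphism\colon\bigtotalspace\to\coefficientspace$ of Section \ref{sectversalconstruction} meets every hypothesis of Proposition \ref{ksmsurjimpliesversal}. That proposition requires: a $T$-variety $X$; a parameter space that is a vector space; a $T$-equivariant deformation whose marked point is the origin; a Kodaira-Spencer map surjecting onto $T^1(X)_0$; and finite-dimensionality of $T^1(X)_0$. First I would dispatch the structural hypotheses that were built into the construction. By definition $\coefficientspace$ is the vector space with coordinates $\stdpolynomialcoefficient_{i,k}$; the morphism $\bigflatmorphism$ is $T$-invariant and $T$ acts on $\bigtotalspace$, so the deformation is $T$-equivariant with $T$ acting trivially on the base; and by Corollary \ref{originfibercorrect} the fiber isomorphic to $X$ is the one over the point $\stdpolynomialcoefficient^{(1)}$.

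The single point requiring care is precisely this last one: the distinguished fiber sits over $\stdpolynomialcoefficient^{(1)}$, whereas Proposition \ref{ksmsurjimpliesversal} is phrased with the marked point at the origin. I would resolve this by translating the coordinates on $\coefficientspace$, replacing each $\stdpolynomialcoefficient_{i,k}$ by $\stdpolynomialcoefficient_{i,k}-\stdpolynomialcoefficient_{i,k}^{(1)}$. Since $\coefficientspace$ is a vector space, this translation is an isomorphism onto a vector space whose origin is $\stdpolynomialcoefficient^{(1)}$; it carries the marked point to the origin, leaves the deformation unchanged up to this reparametrization, and does not disturb $T$-equivariance because the torus acts trivially on the base. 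The one-parameter restrictions $\stdflatmorphism_{j,k}$ used to compute the Kodaira-Spencer map in Section \ref{ksmtvar} were already taken through $\stdpolynomialcoefficient^{(1)}$, so the computation of the Kodaira-Spencer map is unaffected by the translation.

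With the structural form of Proposition \ref{ksmsurjimpliesversal} applicable, the two substantive inputs are supplied verbatim: Proposition \ref{propksmsurj} gives surjectivity of the Kodaira-Spencer map onto $T^1(X)_0$, and Theorem \ref{t1aslatticelength} gives an explicit finite value for $\dim T^1(X)_0$, hence the finite-dimensionality that feeds Schlessinger's condition $(\mathrm H_3)$ inside Proposition \ref{ksmsurjimpliesversal}. Invoking that proposition then yields formal versality in the class of $T$-equivariant deformations. I expect no genuine obstacle here, since the theorem is an assembly of results whose proofs occupy the bulk of the preceding chapters; the only step worth writing out carefully is the base translation that normalizes the marked point to the origin, and the explicit remark that finite-dimensionality of $T^1(X)_0$ is exactly what Theorem \ref{t1aslatticelength} provides.
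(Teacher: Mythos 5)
Your proposal is correct and follows exactly the route the paper takes: the theorem is obtained by combining Theorem \ref{t1aslatticelength} (finite-dimensionality of $T^1(X)_0$), Proposition \ref{propksmsurj} (surjectivity of the Kodaira--Spencer map), and Proposition \ref{ksmsurjimpliesversal}. Your extra remark about translating the base so that the marked point $\stdpolynomialcoefficient^{(1)}$ becomes the origin is a harmless normalization the paper leaves implicit, not a different argument.
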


\appendix

\chapter*{Acknowledgments}\addchaptertocentry{}{Acknowledgments}

I thank a lot my academic supervisor Klaus Altmann for bringing my attention to the problem, for 
useful discussions, and for the attention he paid to my work.
I thank Robert Vollmert for a good introduction to T-varieties.
I also thank Sergey Loktev from Moscow for paying attention to my work and Valentina Kiritchenko 
from Moscow for bringing my attention to papers about T-varieties in the very beginning of (and even before) my PhD studies.
I thank Alexander Schmidt and H\'el\`ene Esnault for answering my questions in algebraic geometry.
I thank Jan Stevens, Dmitriy Kaledin, Duco van Straten, and Jan Christophersen
for useful discussions on deformation theory.
Finally, I thank members of my research group, namely 
Nikolai Beck,
Ana Maria Botero,
Alexandru Constantinescu, 
Joana Cirici,
Maria Donten-Bury,
Matej Filip, 
Alejandra Rinc\'on Hidalgo,
Victoria Hoskins, 
Lars Kastner, 
Marianne Merz, 
Mateusz Michalek,
Lars Petersen, 
Irem Portakal,
Eva Mart\'inez Romero,
Giangiacomo Sanna,
Richard Sieg, and
Anna-Lena Winz
for answering various mathematical and technical questions.

\end{document}